\newtheorem{defeng}{Definition}[chapter] 
\newcommand{\policecom}{} 
\newcommand{\pexo}[1]{\noindent\addtocounter{exercice}{1}%
\setcounter{q}{0}%
{\sc Exercice~\arabic{exercice}. }%
{\ifthenelse{\boolean{avecinfotech}}{\hfill#1}{}}
\input{#1}}
\newcommand{\exo}[1]{\mbox{}\\\mbox{}\noindent\addtocounter{exercice}{1}%
\setcounter{q}{0}%
{\sc Exercice~\arabic{exercice}. }%
{\ifthenelse{\boolean{avecinfotech}}{\hfill#1}{}}
\input{#1}}
\newcounter{exercice}
\newcounter{q}
\newcounter{sq}
\newcommand{\espR}{\vspace{3.5ex}}
\newcommand{\espSR}{\vspace{1ex}}
\newcommand{\espSSR}{\vspace{.3ex}}
\DeclareMathOperator{\gap}{gap}
\newcommand{\sm}{\setminus} 
\newcommand{\qed}{\relax\ifmmode\hskip2em\Box\else\unskip\nobreak\hfill$\Box$\fi}
\newcommand{\R}{\ensuremath{\mathbb{R}}}
\newcommand{\N}{\ensuremath{\mathbb{N}}}
\newcommand{\suchthat}{\ |\ }
\newcommand{\bp}{}
\newcommand{\tp}{\!-\!}
\newcommand{\ep}{}
\newcounter{claim}
\newtheorem{theoreme}[defeng]{Théorème}
\newtheorem{theorem}[defeng]{Theorem}
\newtheorem{lemme}[defeng]{Lemme}
\newtheorem{lemma}[defeng]{Lemma}
\newtheorem{conjecture}[defeng]{Conjecture}
\newtheorem{exercise}[defeng]{Exercise}
{\theorembodyfont{\rmfamily} }
{\theorembodyfont{\rmfamily} \newtheorem{exercice}[defeng]{Exercice\setcounter{q}{0}}}
{\theorembodyfont{\rmfamily} }
{\theorembodyfont{\rmfamily} \newtheorem{question}[defeng]{Question}}
{\theoremstyle{break}\theorembodyfont{\rmfamily} }
{\theoremstyle{break}\theorembodyfont{\rmfamily} }
\newenvironment{preuve}{{\setcounter{claim}{0}\noindent\sc preuve
    ---}}{\hfill$\Box$\vspace{2ex}} 
\newenvironment{claim}[1][]%
{\refstepcounter{claim}\vspace{1ex}\noindent{(\it\arabic{claim}){#1}{}}\it}{\vspace{1ex}}
\newenvironment{proofclaim}[1][]%
	{\noindent {}{#1}{}}{ This proves~(\arabic{claim}).\vspace{1ex}}
 \newenvironment{proof}[1][]%
 {\noindent {\setcounter{claim}{0}\sc proof ---
    }{#1}{}}{\hfill$\Box$\vspace{2ex}}
\newcommand{\avantiteml}{}
\newcommand{\apresiteml}{\hspace{\labelsep}}
\newcommand{\iteml}{%
\ifthenelse{\equal{\value{enumii}}{0}}{\avantiteml\addtocounter{enumi}{1}\labelenumi\apresiteml}{%
\ifthenelse{\equal{\value{enumiii}}{0}}{\avantiteml\addtocounter{enumii}{1}\labelenumii\apresiteml}{%
\ifthenelse{\equal{\value{enumiv}}{0}}{\avantiteml\addtocounter{enumiii}{1}\labelenumiii\apresiteml}{%
\avantiteml\addtocounter{enumii}{1}\labelenumii\apresiteml%
}%
}%
}%
}
\newcommand{\piteml}{%
\ifthenelse{\equal{\value{enumii}}{0}}{\avantiteml\labelenumi\apresiteml}{%
\ifthenelse{\equal{\value{enumiii}}{0}}{\avantiteml\labelenumii\apresiteml}{%
\ifthenelse{\equal{\value{enumiv}}{0}}{\avantiteml\labelenumiii\apresiteml}{%
\avantiteml\labelenumii\apresiteml%
}%
}%
}%
}
\newcounter{myenumerate}
\newcommand{\insererfigure}[3]%
{
  \begin{figure}[htb]
    \center
    \includegraphics{#1}
    \caption{#2}
    \ifthenelse{\boolean{avecinfotech}}{Fichier: #1}{}
    \label{#3}
  \end{figure}
}
\newcommand{\insererfigurepage}[3]%
{
  \begin{figure}[p]
    \center
    \includegraphics{#1}
    \caption{#2}
    \ifthenelse{\boolean{avecinfotech}}{Fichier: #1}{}
    \label{#3}
  \end{figure}
}
\newcommand{\insererfiguret}[3]%
{
  \center
  \resizebox{#3}{!}{\includegraphics{#1}}
  \\{#2}
  \ifthenelse{\boolean{avecinfotech}}{\\Fichier: #1}{}
}
\begin{document}

\thispagestyle{empty}
{{
\huge
{

\noindent \rule{1.5cm}{0cm}Habilitation à diriger des recherches\\}

\vspace{2ex}

\Large
\noindent\rule{1.5cm}{0cm}Mémoire présenté par {\sc Nicolas Trotignon}\\
\noindent\rule{1.5cm}{0cm}CNRS, LIAFA, Université Paris 7, Paris Diderot
}\\

\vspace{2ex}
{
\vspace{10ex}\mbox{}\\
\huge
\noindent\rule{1.5cm}{0cm}Structure~des~classes~de~graphes~définies\\
\noindent\rule{1.5cm}{0cm}par l'exclusion de sous-graphes induits\\
\huge
\vspace{5ex}\mbox{}\\
}

{
\Large

\vfill

\enlargethispage*{50cm}

\vspace{6cm}

\begin{minipage}{10cm}
\noindent\rule{1.5cm}{0cm}Soutenue 15  décembre 2009\\ 

\noindent\rule{1.5cm}{0cm}Jury~: \\

\noindent\rule{1.5cm}{0cm}{\sc Maria Chudnovsky} (rapporteur)\\
\noindent\rule{1.5cm}{0cm}{\sc Michele Conforti} (rapporteur)\\
\noindent\rule{1.5cm}{0cm}{\sc Jean-Paul Delahaye} (examinateur)\\
\noindent\rule{1.5cm}{0cm}{\sc Michel Habib} (rapporteur interne)\\
\noindent\rule{1.5cm}{0cm}{\sc Frédéric Maffray} (examinateur)\\
\noindent\rule{1.5cm}{0cm}{\sc Stéphan Thomassé} (rapporteur) \\
\end{minipage}}

\newpage
\thispagestyle{empty}
\mbox{}\newpage
\thispagestyle{empty}
\mbox{}

\vspace{13ex}

{

\leftmargin0cm
{\noindent\bf\hfill\Large Résumé --- Abstract\hfill}

\vspace{8ex}

  \noindent Ce document présente les recherches de l'auteur durant ces dix
  dernieres années, sur les classes de graphes définies en excluant
  des sous-graphes induits.
  
\vspace{3ex}

  \noindent This document presents the work of the author over the last ten
  years on classes of graphs defined by forbidding induced subgraphs.

}\newpage
\thispagestyle{empty}
\chapter*{Remerciements}
\thispagestyle{empty}
Je remercie Maria Chudnovsky, Michele Conforti et Stéphan Thomassé
d'avoir accepté d'être les rapporteurs externes de ce travail.  Merci
également aux autres membres du jury : Jean-Paul Delahaye, Michel
Habib et  Frédéric Maffray. 

Merci à Christelle Petit, Jean-Sébastien Sereni et Rachel Wieviorka
qui m'ont aidé à améliorer ce document.  Merci surtout à Juraj Stacho
et Béatrice Trotignon qui en ont relu de nombreuses pages.

Merci à mon épouse Christelle et à mes enfants Émile, Alice et Coline
pour leur soutien et leur affection.

\setcounter{page}{0}
\tableofcontents
\nocite{lovasz:exercices2}
\nocite{DBLP:conf/bcc/2005}
{\chapter*{Introduction (en français)}
\addcontentsline{toc}{chapter}{Introduction (en français)}
\renewcommand{\leftmark}{Introduction (en français)}
\renewcommand{\rightmark}{Introduction (en français)}

Ce document présente mon travail de ces dix dernières années en
théorie des graphes.  Le premier chapitre expose une étude sur le
``gap'' d'un graphe, c'est-à-dire l'écart entre la taille d'une plus
grande de ses cliques et son nombre chromatique. C'est l'occasion
d'introduire des notions importantes pour le reste du document, en
particulier la notion de \emph{fonction majorante} (\emph{bounding
  function}) due initialement à Andr\'as Gy\'arf\'as.  Ce chapitre
utilise des outils de plusieurs branches de la théorie des graphes,
comme la théorie de Ramsey et celle des couplages.  J'espère que ce
chapitre divertira tout ceux qui apprécient la théorie des graphes,
peut-être même ceux qui ne l'apprécient pas.

Le reste du document se concentre sur ce que je fais ordinairement, à
savoir étudier des classes de graphes définies par l'exclusion de
certains sous-graphes induits, donner des théorèmes de décomposition
et des algorithmes pour ces classes.  Pourquoi exclure des
sous-graphes induits~?  Essayons de donner une réponse meilleure que
``parce qu'il y a 10 ans, mon directeur de thèse, Frédéric Maffray,
m'a dit de le faire'' ou ``parce que il y a beaucoup des sous-graphes
induits possibles, ce qui conduira à publier beaucoup d'articles''.
Notons tout d'abord que tout classe fermée pour la relation
``sous-graphe induit'' est nécessairement définie de manière
équivalente par une liste de sous-graphes induits exclus.  Et cette
relation ``sous-graphe induit'' est mathématiquement ``naturelle'' en
ce qu'elle correspond à la notion classique de ``sous-structure''
présente partout en algèbre et dans toutes les branches des
mathématiques.  De plus, dans plusieurs modèles de Recherche
Opérationnelle, des problèmes pratiques sont modélisés par des
graphes.  Les objets considérés sont les sommets du graphe tandis que
les contraintes entre objets sont représentées par les arêtes.
Souvent, la classe de graphes résultant d'une telle modélisation est
fermée par sous-graphes induits.  Car supprimer des objets dans le
monde réel correspond à supprimer des sommets du graphes, tandis que
supprimer des contraintes entre objets ne corres\-pond à aucune
opération du monde réel ; de telle sorte que toutes les arêtes qui
existent entre les sommets non-suprimés du graphe doivent demeurer.

Dans les années 1960, le travail pionnier de Gabriel Dirac sur les
graphes chordaux, de Tibor Gallai sur les graphes de comparabilité et
les deux conjectures des graphes parfaits de Claude Berge ont inauguré
le domaine.  Au long des quarante années qui ont suivi, beaucoup de
recherches ont été consacrées aux graphes parfaits et à d'autres
classes des graphes, avec de nombreux succès, comme la preuve de la
conjecture faible des graphes parfaits par Laszlo Lov\'asz, les
résultats de Va\v sek Chv\'atal et Delbert Fulkerson sur le lien entre
les graphes parfaits et la programmation linéaire, et la preuve de la
conjecture forte des graphes parfaits par Maria Chudnovsky, Neil
Robertson, Paul Seymour et Robin Thomas.

À partir des années 1980, Neil Robertson et Paul Seymour ont développé
le ``Graph Minor Project''.  Il s'agit d'une théorie très profonde qui
décrit toutes les classes de graphes fermées pour la relation de
mineur (et non pas de sous-graphe induit).  Il est naturel de se
demander si une telle théorie pourrait exister pour la relation de
sous-graphe induit.  Jusqu'à présent, il semblerait que la réponse
soit négative.  Les classes fermées pour la relation de sous-graphe
induit ne semblent pas assez régulières pour être décrites par une
théorie unifiée.  Un indice parmi d'autres est donné au
chapitre~\ref{chap:reco} de ce travail, où plusieurs classes de
graphes sont poynomiales ou NP-complètes à reconnaître, en fonction de
changements apparemment insignifiants dans leur définition.  Pourtant,
il pourrait y avoir des caractéristiques partagées par toutes les
classes de graphes définies en interdisant un sous-graphe
induit (voir par exemple la conjecture de Erd\H os-Hajnal), ou par
beaucoup de classes définies plus généralement (voir par exemple la
conjecture de Gy\'arf\'as sur les arbres,
Conjecture~\ref{conj:gyarfas}, ou la conjecture de Scott,
Conjecture~\ref{conj:scott}).  La question qui m'intéresse le plus est
de comprendre comment les classes de graphes fermées par sous-graphes
induits peuvent être décrites de la manière la plus générale possible.
Mais comme le lecteur le constatera, la plupart des résultats
présentés ici concerne des classes particulières.

\section*{Plan du document}
\addcontentsline{toc}{section}{Plan du document}

Rappelons que le chapitre~\ref{chap:gap} présente des résultats à
propos de l'écart entre le nombre chromatique et la taille d'une plus
grande clique d'un graphe.  Ce chapitre s'inspire
de~\cite{nicolas.gyarfas.sebo:gap}.

L'outil le plus puissant ces dernières années pour l'étude des classes
fermées par sous-graphe induit est l'approche structurelle, qui
consiste en la description des classes de graphes à travers des
\emph{théorèmes de décomposition}.  Ceci est expliqué au
chapitre~\ref{chap:6th} où six théorèmes de décomposition sont
présentés.  Ces théorèmes sont tous assez simples, mais la motivation
de ce chapitre est de présenter un échantillon typique de théorie
structurelle des graphes à destination de lecteurs ne voulant pas se
lancer dans la lecture d'articles ou d'analyses de cas trop longs. Les
preuves sont toutes assez courtes, mais elles tentent d'illustrer des
idées qui seront utilisées dans le reste du document. Ce chapitre peut
aussi être utilisé pour enseigner la théorie structurelle des graphes.
Ce chapitre ne s'inspire pas d'un article particulier, il reprend des
théorèmes divers, donne parfois des nouvelles preuves, et aussi
quelques résultats originaux.

Le chapitre~\ref{chap:reco} présente un aspect important des classes
de graphes définies par sous-graphes induits exclus.  Comment décider
avec un algorithme si tel graphe est dans telle classe~?  L'approche
la plus simple semble de voir comment on peut détecter des
sous-graphes induits dans un graphe donné.  Nous montrerons que
certains problèmes de ce type sont polynomiaux, d'autres NP-complets.
Les outils pour la NP-complétude proviennent tous d'une construction
de Bienstock.  L'outil le plus général pour montrer la polynomialité
semble être l'algorithme dit ``three-in-a-tree'' dû à Chudnovsky et
Seymour.  Nous donnerons des variantes de cet algorithme.  Ce chapitre
s'inspire de~\cite{leveque.lmt:detect}, \cite{maffray.t:reco},
\cite{nicolas.wei:kTree} et \cite{nicolas.d.p:fourTree}.

Le chapitre~\ref{chap:2dec} est consacré à deux classes de graphes~:
les graphes qui ne contiennent pas de cycle avec une seule corde, et
les graphes qui ne contiennent pas de subdivision induite de $K_4$.
La motiviation initiale pour l'étude de ces classes était leur
reconnaissance en temps polynomial, un problème issu du chapitre
précédant.  Mais leur étude nous a conduit à des résultats sur le
nombre chromatique.  Ce chapitre s'inspire
de~\cite{nicolas.kristina:one} et \cite{nicolas:isk4}.

Le chapitre~\ref{chap:Berge} est consacré aux graphes de Berge.  On
procède d'abord à un survol des résultats importants concernant leur
structure. Nous donnons un théorème de structure pour les graphes de
Berge sans partition antisymétrique paire (balanced skew partition).
Comme application de ce théorème, nous donnons un algorithme de
coloration des graphes de Berge sans partition antisymétrique paire et
sans paire homogène.  Ce chapitre s'inspire de~\cite{nicolas:bsp} et
\cite{nicolas.kristina:2-join}.

Les annexes A à F présentent des données obligatoires pour tout
mémoire d'habilitation.  L'annexe~F donne la liste de mes publications
et indique où trouver dans ce mémoire le contenu de tel ou tel
article.

La plupart des travaux présentés ci-après ont été réalisés en
collaboration.  Les contributions de chacun seront précisées au fur et
à mesure, mais je suis heureux de donner maintenant la liste de mes
co-auteurs~: Amine Abdelkader, Nicolas Dehry, Sylvain Gravier,
Andr\'as Gy\'arf\'as, Benjamin L\'ev\^eque, David Lin, Christophe
Picouleau, J\'er\^ome Renault, Andr\'as Seb\H o, Juraj Stacho et Liu
Wei.  Je voudrais remercier plus particulièrement deux co-auteurs avec
qui mes collaborations ont été très proches et enrichissantes~:
Fr\'ed\'eric Maffray et Kristina Vu\v skovi\'c.

Le lecteur francophone qui a fait l'effort de me lire jusqu'ici,
constatera s'il poursuit que ce travail a été rédigé en anglais, ce
qui le situe à la marge de la légalité.  Avec le conseil scientifique
de mon UFR, il a été convenu qu'une dizaine de pages en français
devrait suffir et constituer officiellement mon mémoire, le reste
étant une ``annexe''.  Ne pouvant pas raisonnablement faire passer ce
qui précède pour une ``dizaine'', ni même pour une ``petite'' dizaine
de pages, je me propose de donner ci-dessous la traduction de la
section de ce document dont la lecture est la plus profitable selon
moi, la section~\ref{sec:wt}.  On a d'abord besoin de quelques
rappels.

\section*{Rappels}

Un \emph{trou} dans un graphe est un cycle induit de longueur au moins
4.  Un \emph{antitrou} est un trou du graphe complémentaire.  Un
\emph{graphe de Berge} est un graphe qui ne contient ni trou impair ni
antitrou impair.  Notons qu'on utilise le mot \emph{contenir} au sens
des sous-graphes induits.  Une \emph{clique} est un graphe dont tous
les sommets sont reliés (deux à deux).  Par $\chi(G)$ on dénote le
nombre chromatique de $G$, par $\omega(G)$ le plus grand nombre de
sommets deux à deux adjacents de $G$.  Claude Berge a conjecturé au
début des années soixante que tout graphe de Berge $G$ satisfait ce
qu'il appelait la \emph{belle propriété} : $\chi(G) = \omega(G)$.
Ceci est devenu la célèbre \emph{conjecture forte des graphes
  parfaits}, prouvée par Chudnovsky, Robertson, Seymour et Thomas en
2002.  Il est facile de voir que les trous impairs et les antitrous
impairs ne satisfont pas la belle propriété, mais cette remarque
n'aide pas tellement à montrer la conjecture\dots\ Un trou ou un
antitrou est \emph{long} s'il contient au moins 5 sommets.  Un graphe
est dit \emph{parfait} si pour tout sous-graphe induit $G'$ on a
$\chi(G') = \omega(G')$.

Le théorème de décomposition le plus simple pour une classe de graphes
est sans doute le suivant.

\begin{theoreme}[folklore]
  \label{th:P3F}
  Un graphe ne contient aucun $P_3$ induit si et seulement si c'est
  une union disjointe de cliques. 
\end{theoreme}

\begin{preuve}
  Une union disjointe de cliques est manifestement sans $P_3$.
  Réciproquement, considérons une composante connexe $C$ d'un graphe
  sans $P_3$ et supposons en vue d'une contradiction que deux sommets
  $u, v$ de $C$ ne sont pas adjacents.  Un plus court chemin de $C$
  joignant $u$ à $v$ contient un $P_3$, contradiction (ou comme eût
  dit Claude Berge, \emph{d'où l'absurdité}).
\end{preuve}

\section*{Section \ref{sec:wt} traduite en français} 
\addcontentsline{toc}{section}{Section \ref{sec:wt} traduite en français}

Un graphe est \emph{faiblement triangulé} s'il ne contient ni trou
long ni antitrou long.  Les graphes faiblement triangulés sont donc de
Berge, et nous allons montrer qu'ils satisfont la belle propriété, ce qui
prouve leur perfection et donne une version affaiblie du théorème fort
des graphes parfaits.  Les graphes faiblement triangulés ont été
étudiés par Chv\'atal et Hayward dans les années 1980 et le but de
cette section est de convaincre le lecteur qu'ils constituent l'une
des classes de graphes de Berge les plus intéressantes.  Car avec eux,
on peut comprendre plusieurs concepts en lisant seulement six pages~:
les lemmes du type Roussel-et-Rubio, comment traiter les antitrous,
comment des décompositions désagréables comme les partitions
antisymétriques (skew partitions) entrent en jeux, en quoi elle sont
désagréables, comment on peut parfois s'en débarrasser grâce à des
sommets spéciaux (les paires d'amis, en anglais ``even pairs'') et obtenir
des algorithmes de coloration simples et efficaces.

Le lemme de Roussel et Rubio~\cite{roussel.rubio:01} est un outil
technique important pour la preuve du théorème fort des graphes
parfaits.  L'équipe qui a prouvé ce théorème a redécouvert le lemme
indépendamment de ses auteurs et l'a baptisé le ``wonderful lemma'' en
raison de ses nombreuses applications.  Il dit qu'en un sens, tout
ensemble anticonnexe de sommets d'un graphe de Berge se comporte comme
un sommet (\emph{anticonnexe} signifie connexe dans le
complémentaire).  Comment un sommet se ``comporte''-t-il dans un
graphe de Berge~?  Si un chemin de longueur impaire (au moins 3) a ses
deux extrémités adjacentes à $v$, alors $v$ doit avoir d'autres
voisins dans le chemin car sinon il y a un trou impair.  Un ensemble
anticonnexe $T$ de sommets se comporte de la même manière~: si un
chemin de longueur impaire (au moins~3) a ses deux extrémités
complètes à $T$, alors au moins un sommet intérieur du chemin doit
aussi être complet à $T$.  En fait, il y a deux exceptions à cet
énoncé et le lemme de Roussel et Rubio est un peu plus compliqué.  
Nous ne donnons pas ici son énoncé exact.  Pour plus d'informations,
variantes et preuves courtes notamment, voir le Chapitre~4
de~\cite{nicolas:these}.

Voici un lemme qui peut être vu comme une version du lemme de Roussel
et Rubio~\cite{roussel.rubio:01} pour les graphes faiblement triangulés.

\begin{lemme}[avec Maffray \cite{nicolas:artemis}]
  \label{l:rrwtF}
  Soit $G$ un graphe faiblement triangulé.  Soit $P = \bp x \tp \cdots
  \tp y \ep$ un chemin de $G$ de longueur au moins 3 et $T \subseteq
  V(G)$, disjoint de $V(P)$ et tel que $G[T]$ soit anticonnexe, et
  les extrémités de $P$ sont $T$-complètes.  Alors $P$ contient un sommet
  intérieur qui est $T$-complet.
\end{lemme}

\begin{preuve}
  Noter qu'aucun sommet $t \in T$ peut être non-adjacent à deux
  sommets consécutifs de $P$ car alors $V(P) \cup \{t\}$ contient un
  trou long.  Soit $z$ un sommet intérieur de $P$ adjacent à un nombre
  maximum de sommets de $T$.  Supposons, en vue d'une contradiction,
  qu'il existe un sommet $u \in T \setminus N(z)$.  Soient $x'$ et
  $y'$ les voisins de $z$ sur $P$, nommés de sorte que $x, x', z, y',
  y$ apparaissent dans cet ordre le long de $P$.  Alors, d'après la
  première phrase de cette preuve, $ux', uy' \in E(G)$.  À un
  renommage près de $x$ et $y$, on suppose que $x' \neq x$.  D'après
  le choix de $z$, puisque $ux' \in E(G)$ et $uz \notin E(G)$, il
  existe un sommet $v \in T$ tel que $vz\in E(G)$ et $vx'\notin E(G)$.
  Puisque $G[T]$ est anticonnexe, il existe un antichemin $Q$ de
  $G[T]$ de $u$ vers $v$, et $u, v$ sont choisis pour que cet
  antichemin soit minimal.  D'après la première phrase de cette
  preuve, les sommets intérieurs de $Q$ sont tous adjacents à $x'$ ou
  $z$ et d'après la minimalité de $Q$, les sommets intérieurs de $Q$
  sont tous adjacents à $x'$ et $z$.  Si $x'x \notin E(G)$ alors $V(Q)
  \cup \{ x, x', z\}$ induit un antitrou long. Donc $x'x \in E(G)$.
  Si $zy \notin E(G)$ alors $V(Q) \cup \{z, x', y\}$ induit un
  antitrou long.  Donc $zy \in E(G)$ et $y = y'$. Mais alors, $V(Q)
  \cup \{x, x', z, y\}$ induit un antitrou long, une contradiction.
\end{preuve}

Quand $T$ est un ensemble de sommets, $C(T)$ dénote l'ensemble des
sommets complets à $T$.

\begin{lemma}
  \label{l:pathWTF}
  Soit $G$ un graphe faiblement triangulé et $T$ un ensemble de
  sommets tel que $G[T]$ est anticonnexe et $C(T)$ contient au moins
  deux sommets non adjacents.  Supposons que $T$ soit maximal au sens
  de l'inclusion avec ces propriétés.  Alors tout chemin de $G\sm T$
  dont les extrémités sont dans $C(T)$ a tous ses sommets dans $C(T)$.
\end{lemma}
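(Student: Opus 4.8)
The plan is to argue by minimal counterexample and to separate two cases according to the length of the path: the long case is a direct application of the Roussel--Rubio-type Lemma~\ref{l:rrwtF}, while the short case is exactly where the maximality of $T$ is needed. Throughout I use the convention (already implicit in Lemma~\ref{l:rrwtF}) that a path is induced, so that non-consecutive vertices are non-adjacent; this is what makes the two endpoints of a length-two path a non-adjacent pair.

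First I would suppose the statement fails and choose, among all paths of $G\sm T$ with both endpoints in $C(T)$ but having some vertex outside $C(T)$, a path $P = p_0 \cdots p_k$ of minimum length $k$. Since the endpoints lie in $C(T)$ while some vertex does not, we have $k \ge 2$. By minimality, no interior vertex of $P$ lies in $C(T)$: if some $p_i$ with $0 < i < k$ were in $C(T)$, then the two strictly shorter subpaths $p_0 \cdots p_i$ and $p_i \cdots p_k$ would each have all their vertices in $C(T)$, hence so would $P$, a contradiction. Now if $k \ge 3$, the path $P$ satisfies all the hypotheses of Lemma~\ref{l:rrwtF} (length at least $3$, disjoint from $T$, with $G[T]$ anticonnected and both endpoints $T$-complete), so $P$ has an interior $T$-complete vertex, that is, an interior vertex in $C(T)$ --- contradicting what we just proved.

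The only remaining case is $k = 2$, and this is the crux of the argument, since Lemma~\ref{l:rrwtF} says nothing about paths of length $2$. Here $P = p_0 p_1 p_2$ with $p_0, p_2 \in C(T)$ (and non-adjacent, as $P$ is induced), while $p_1 \notin C(T)$ is adjacent to both $p_0$ and $p_2$. I would then consider $T' = T \cup \{p_1\}$. Because $p_1 \notin C(T)$ it has a non-neighbour in $T$, so, $G[T]$ being anticonnected, $G[T']$ is anticonnected as well; and every vertex complete to $T'$ is complete to $T$ and adjacent to $p_1$, so $C(T') = C(T) \cap N(p_1)$ contains the non-adjacent pair $p_0, p_2$. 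Thus $T'$ enjoys the two defining properties while $T' \supsetneq T$ (as $p_1 \in V(P)$ is disjoint from $T$), contradicting the maximality of $T$. I expect this length-two configuration to be the main obstacle: it is the single place where the maximality hypothesis is genuinely used, and realising that it is maximality --- rather than the wonderful-lemma machinery --- that rules it out is the only real idea in the proof.
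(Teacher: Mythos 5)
Your proof is correct and follows essentially the same route as the paper's: extract a shortest offending (sub)path, handle length $2$ via the maximality of $T$ (adjoining the middle vertex $t$ to $T$ preserves anticonnectedness and keeps two non-adjacent $T$-complete vertices), and handle length at least $3$ via Lemma~\ref{l:rrwtF}. The only difference is cosmetic: the paper extracts a subpath of minimum length directly rather than phrasing it as a minimal counterexample.
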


\begin{preuve}
  Soit $P$ un chemin de $G\sm T$ dont les extrémité sont dans $C(T)$.
  Si un sommet de $P$ n'est pas dans $C(T)$, alors $P$ contient un
  sous-chemin $P'$ de longueur au moins 2 dont les extrémités sont
  dans $C(T)$ et dont l'intérieur est disjoint de $C(T)$.  Si $P'$ est
  de longueur~2, soit $P'=a\tp t \tp b$, alors $T\cup \{t\}$ est
  ensemble qui contredit la maximalité de $T$.  Si $P'$ est de
  longueur supérieure à 2, alors il contredit le lemme~\ref{l:rrwtF}.
\end{preuve}

Le théorème suivant est dû à Hayward, mais j'en propose ici une
nouvelle preuve.  Ma preuve n'est pas vraiment plus courte que celle
de Hayward, mais elle montre comment des lemmes du type Roussel et
Rubio peuvent être utilisés.  Un \emph{ensemble d'articulation} d'un
graphe est un ensemble $S$ de sommets tel que $G\sm S$ n'est pas
connexe.  Une \emph{étoile} d'un graphe est un ensemble de sommets $S$
qui contient un sommet $c$ tel que $S \subseteq N[c]$.  Une
\emph{étoile d'articulation} est une étoile qui est un ensemble
d'articulation.

\begin{theoreme}[Hayward \cite{hayward:wt}]
  \label{th:wtF}
  Soit $G$ un graphe faiblement triangulé. Alors ou bien~:
  \begin{itemize}
  \item $G$ est une  clique;
  \item $G$ est le complémentaire d'un couplage parfait;
  \item $G$ possède une étoile d'articulation.
  \end{itemize}
\end{theoreme}

\begin{preuve}
  Si $G$ est une union disjointe de cliques, en particulier quand
  $|V(G)| \leq 2$, alors la conclusion du théorème est satisfaite.
  D'après le théoreme~\ref{th:P3F}, on peut donc supposer que $G$
  contient un $P_3$.  Donc, il existe un ensemble $T$ de sommets tel
  que $G[T]$ est anticonnexe et $C(T)$ contient au moins deux sommets
  non-adjacents, parce que le milieu d'un $P_3$ forme un tel ensemble.
  Supposons alors $T$ maximal comme dans le lemme~\ref{l:pathWTF}.
  Puisque $C(T)$ n'est pas une clique, par induction, nous avons deux
  cas à considérer~:

  \noindent{\bf Cas~1:} le graphe induit par $C(T)$ possède une étoile
  d'articulation $S$.  D'après le lemme~\ref{l:pathWTF}, $T\cup S$ est
  une étoile d'articulation de $G$.

  \noindent{\bf Case~2:} le graphe induit par $C(T)$ est le
  complémentaire d'un couplage parfait.

  Supposons d'abord que $V(G) = T \cup C(T)$.  Alors, par induction,
  ou bien $T= \{t\}$, ou bien $T$ induit le complementaire d'un
  couplage parfait, ou bien $T$ possède une étoile d'articulation $S$.
  Mais dans le premier cas, $\{t\} \cup C(T) \sm \{a, b\}$ où $a, b$
  sont non-adjacents dans $C(T)$, est une étoile d'articulation de $G$.
  Dans le second cas, $G$ lui-même est le complémentaire d'un couplage
  parfait.  Dans le troisième cas, $S\cup C(T)$ est une étoile
  d'articulation de $G$.

  Donc, on peut supposer qu'il existe $x \in V(G) \sm (T \cup C(T))$.
  On choisit $x$ avec un voisin $y$ dans $C(T)$, ce qui est possible
  car sinon $T$ avec n'importe quel sommet de $C(T)$ forme une étoile
  d'articulation de $G$.

  Rappelons que $C(T)$ est le complémentaire d'un couplage parfait.
  Soit donc $y'$ le non-voisin de $y$ dans $C(T)$.  Noter que
  $xy'\notin E(G)$ car sinon $T\cup \{x\}$ contredirait la maximalité
  de $T$.  Nous affirmons que $S = T \cup C(T) \sm \{y'\}$ est une
  étoile d'articulation de $G$ séparant $x$ de $y'$.  Tout d'abord,
  c'est une étoile centrée en $y$.  Et c'est un ensemble
  d'articulation car s'il y a un chemin dans $G\sm S$ de $x$ vers
  $y'$, en ajoutant $y$ à ce chemin, on voit que $G\sm T$ contient un
  chemin de $y$ vers $y'$ qui n'est pas inclus dans $C(T)$, une
  contradiction au lemme~\ref{l:pathWTF}.
\end{preuve}

D'après les théorèmes ci-dessus et ci-dessous, les graphes faiblement
triangulés sont parfaits. Pour s'en rendre compte, considérons un
graphe faiblement triangulé, non parfait, et minimal avec ces
propriétés au sens de l'inclusion des sommets.  Donc, c'est un graphe
\emph{minimalement imparfait} (c'est-à-dire un graphe non parfait dont
tous les sous-graphes induits sont parfaits).  Puisque les cliques et
les complémentaires de couplages parfaits sont parfaits, il doit avoir
une étoile d'articulation d'après le théorème~\ref{th:wtF}.  Donc il
contredit le théorème ci-dessous (qui est admis).

\begin{theoreme}[Chv\'atal \cite{chvatal:starcutset}]
  Un graphe minimalement imparfait n'a pas d'étoile d'articulation. 
\end{theoreme}

Le théorème~\ref{th:wtF} a un vice caché~: il utilise l'étoile
d'articulation, l'exemple le plus simple de ce que Kristina Vu\v
skovi\'c appelle les \emph{décompositions fortes}~: des types de
décomposition qui disent très peu sur la structure du graphe.  Pour
s'en rendre compte, notons qu'une étoile d'articulation peut être très
grosse.  Par exemple, ce peut être tout l'ensemble des sommets, sauf
deux.  Et puisque dans l'étoile elle-même, il y a peu de contraintes
sur les arêtes, savoir qu'un graphe a une étoile d'articulation ne
dit pas grand-chose sur sa structure.  Un autre exemple de
décomposition forte que nous rencontrerons est la partition
antisymétrique, voir section~\ref{sec:decompBerge}.

Les décompositions fortes ne donnent pas de théorème de structure.
Pour s'en rendre compte, essayons de voir comment construire un graphe
en re\-collant deux graphes plus petits à l'aide d'une étoile
d'articulation.  Cela ne sera sans doute pas satisfaisant.  Car
trouver la même étoile dans deux graphes distincts est
algorithmiquement assez difficile.  Cela suppose de savoir déterminer
si deux étoiles sont isomorphes, problème aussi difficile que le
célèbre problème de l'isomorphisme.  Évidemment, il n'y a pas de
définition formelle des théorèmes de structure, il peut donc y avoir des
discussions sans fin à ce sujet.  Mais on sent bien que recoller des
graphes avec des étoiles d'articulations est moins automatique qu'avec
des paires d'articulation comme on le fait section~\ref{sec:chordless}.

Autre problème, les décompositions fortes sont difficiles à utiliser
dans des algorithmes en temps polynomial.  Car lorsqu'on construit des
blocs de décompositions, dans le cas malchanceux où l'ensemble
d'articulation est presque aussi gros que le graphe, on doit mettre
presque tout le graphe de départ dans chacun des deux blocs.  Donc,
les algorithmes récursifs qui utilisent des décompositions fortes sont
typiquement en temps exponentiel.  Un méthode délicate, inventée par
Conforti et Rao~\cite{ConfortiR:92,ConfortiR:93}, appelée
\emph{netto\-yage}, ou \emph{cleaning}, permet de donner des algorithmes
de reconnaissance rapides pour des classes de graphes dont les
décompositions sont fortes.  Mais pour des problèmes d'optimisation
combinatoire, il semble que nul ne sache comment utiliser les
décompositions fortes.  Cependant, quand une classe de graphes est
suffisamment complexe pour que les décompositions fortes semblent
inévitables, il y a encore un espoir. En effet, dans certains
théorèmes, l'existence de décompositions est remplacée par l'existence
d'un sommet, ou d'une paire de sommets, avec des propriétés spéciales.
L'exemple le plus ancien est le théorème suivant, à comparer au
théorème~\ref{th:chordal}.  Un sommet est \emph{simplicial} si son
voisinage est une clique.

\begin{theoreme}[Dirac, \cite{dirac:chordal}]
  \label{th:simplicialF}
  Tout graphe chordal possède un sommet simplicial. 
\end{theoreme}

Pour les graphes parfaits, la bonne notion de ``sommets spéciaux''
semble être la paire d'amis.  Une \emph{paire d'amis} est une paire de
sommets telle que tous les chemins les reliant soient de longueur paire.
D'après le théorème suivant, les paires d'amis sont un bon outil pour
prouver la perfection d'une classe de graphes.

\begin{theoreme}[Meyniel \cite{meyniel:87}]
  Un graphe minimalement imparfait ne possède pas de paire d'amis. 
\end{theoreme}

\emph{Contracter} une paire d'amis $a, b$ signifie remplacer $a, b$
par un sommet complet à $N(a) \cup N(b)$.  D'après le théorème
suivant, les paires d'amis sont aussi un bon outil pour la coloration
des graphes.

\begin{theoreme}[Fonlupt et Uhry \cite{fonlupt.uhry:82}]
  Contracter une paire d'amis d'un graphe conserve son nombre
  chromatique et la taille d'une plus grande clique.
\end{theoreme}

Le théorème suivant a d'abord été prouvé par Hayward, Ho\`ang et
Maffray mais la preuve donnée ici a été obtenue en collaboration avec
Maffray, voir~\cite{nicolas:artemis}.  Une \emph{2-paire} $a, b$ est
une paire d'amis particulière: tous les chemins de $a$ vers $b$ sont
de longueur~2.

\begin{theoreme}[Hayward, Ho\`ang et Maffray \cite{hayward.hoang.m:90}]
  \label{th:2pairF}
  Un graphe fai\-ble\-ment triangulé possède une 2-paire ou est une
  clique. 
\end{theoreme}

\begin{preuve}
  Si $G$ est une union disjointe de cliques (en particulier quand
  $|V(G)| \leq 2$) alors le théorème est trivialement satisfait.
  Donc, on peut supposer que $G$ contient un $P_3$.  Donc, il existe
  un ensemble $T$ comme dans le lemme~\ref{l:pathWTF} (commencer avec
  le milieu d'un $P_3$ pour construire $T$).  Puisque $C(T)$ n'est pas
  une clique, par induction, on sait que $C(T)$ possède une 2-paire de
  $G[C(T)]$.  D'après le lemme~\ref{l:pathWTF}, c'est une 2-paire de
  $G$.
\end{preuve}

La technique ci-dessus pour trouver une paire d'amis peut être
retracée jusqu'à l'article fondateur de Henri
Meyniel~\cite{meyniel:87}, voir l'exercice~\ref{ex:meynielF}
ci-dessous.  En utilisant des idées de Cl\'audia Linhares Sales et
Maffray~\cite{linhares.maffray:evenpairsansc4}, cette technique peut
être étendue aux graphes d'Artémis~\cite{nicolas:artemis}, qui sont
une généralisation de plusieurs classes de graphes parfaits connues
pour posséder des paires d'amis (graphes faiblement triangulés,
graphes de Meyniel, graphes parfaitement ordonnables etc; nous ne
définissons pas toutes les classes, un lecteur qui veut consulter le
bestiaire peut lire le Chapitre~3 de~\cite{nicolas:these}).  Les
techniques utilisées dans~\cite{nicolas:artemis} ainsi que d'autres
types de sommets spéciaux et des variantes complexes du lemme de
Roussel et Rubio sont utilisées par Chudnovsky et
Seymour~\cite{chudnovsky.seymour:even} pour raccourcir
significativement la preuve du théorème fort des graphes parfaits.

À partir du théorème~\ref{th:2pairF}, il est facile de déduire un
algorithme de co\-lo\-ration en temps polynomial pour colorier les graphes
faiblement triangulés (en contractant des 2-paires tant qu'il y en a).
Hayward, Spinrad et Sritharan \cite{hayward.S.S:fastWT} ont accéléré
cet algorithme jusqu'à $O(n^3)$.  Puisque la contraction d'une 2-paire
préserve $\chi$ et $\omega$, l'algorithme transforme tout graphe
faiblement triangulé $G$ en une clique $K$ de taille $\omega(G) =
\omega(K) = \chi(K) = \chi(G)$, prouvant ainsi la perfection de $G$
(car tout cela peut être fait pour tout sous-graphe induit de~$G$).
Donc, le théorème~\ref{th:2pairF} donne  une preuve bien plus
courte de la perfection des graphes faiblement triangulés.

D'une certaine manière, les graphes de Berge se comportent comme les
graphes faiblement triangulés.  Le lemme de Roussel et Rubio est un
outil important pour prouver leur perfection, des décompositions fortes
sont utilisées pour les décomposer (les partitions antisymétriques
paires), mais en utilisant les paires d'amis, on peut très nettement
raccourcir la preuve de leur perfection (ceci est fait par Chudnovsky
et Seymour~\cite{chudnovsky.seymour:even}).  Une grande différence est
bien sûr que pour les graphes de Berge en général, les preuves sont
beaucoup plus longues et hautement techniques.  Et jusqu'à présent,
aucun algorithme combinatoire de coloration des graphes de Berge n'est
connu.  Les paires d'amis pourraient être un ingrédient d'un tel
algorithme.  Une série de théorèmes et de conjectures militent pour
cette idée, mais une lourde machinerie de définitions doit précéder
leur simple énoncé, qui est reporté à la section~\ref{sec:SF}.

Une autre question importante à propos des graphes de  Berge est
l'existence d'un véritable théorème de structure les décrivant. 
La question suivante est apparemment plus facile mais reste ouverte à
ce jour. 

\begin{question}
  Trouver un théorème de structure pour les graphes faiblement
  triangulés. 
\end{question}

En fait, je ne serais pas surpris qu'\emph{il n'existe pas de théorème
  de structure pour les graphes de Berge}.  Il serait bon d'avoir un
outil, comme la NP-complétude, pour convaincre les collègues d'énoncés
négatifs de cette sorte, voire mieux encore de les prouver. La
non-existence d'un algorithme polynomial de reconnaissance pourrait
constituer un argument, mais cela ne fonctionne pas pour les graphes
de Berge qui peuvent être reconnus en temps $O(n ^9)$.

Une autre classe bien connue a été à l'origine de beaucoup d'idées
dans la théorie des graphes parfaits~: les graphes de Meyniel.  Un
graphe est dit de \emph{Meyniel} si tous ses cycles induits ont au
moins deux cordes.  La perfection des graphes de Meyniel a été prouvée
très tôt par Meyniel~\cite{meyniel:76}, et ils ont été la première
classe après les graphes triangulés et les graphes sans $P_4$ pour
laquelle un théorème de décomposition a été prouvé (par Burlet et
Fonlupt~\cite{burlet.fonlupt:meyniel}).  En ce qui concerne les paires
d'amis, les graphes de Meyniel ont un comportement similaire aux
graphes faiblement triangulés.  Nous laissons cela en exercice.

\begin{exercice}
  \label{ex:meynielF}
  Donner une version du lemme de Roussel et Rubio pour les graphes
  de Meyniel.  Déduire qu'un graphe de Meyniel différent d'une clique
  possède une paire d'amis.  Pour une solution, voir
  \cite{meyniel:87}.
\end{exercice}

\chapter*{Introduction (in English)}
\addcontentsline{toc}{chapter}{Introduction (in English)}
\renewcommand{\leftmark}{Introduction (in English)}
\renewcommand{\rightmark}{Introduction (in English)}
\label{chap:intro}

Title of the document in english: {\bf Structure of classes of graphs
  defined by forbidding induced subgraphs}

\vspace{2ex}

\noindent This document presents my work over the last ten years in Graph
Theory.  The first chapter presents a study about the gap between the
chromatic number of a graph and the largest size of a clique.  This
notion of gap allows to introduce some key notions for the rest of
this study, in particular the notion of \emph{bounding function} first
defined by Andr\'as Gy\'arf\'as.  This chapter uses tools from several
branches of graph theory like Ramsey Theory and Matching Theory.  I
hope it will be entertaining for all those who enjoy graph theory,
perhaps for some who usually do not.

The rest of the document is more focused on what I do usually:
studying classes of graphs defined by forbidding induced subgraphs,
giving decomposition theorems and algorithms for them.  Why forbidding
induced subgraphs?  I try to give an better answer than ``because 10
years ago, my PhD-adviser, Fr\'ed\'eric Maffray, told me to do so'' or
``because there are many possible induced subgraphs, so this will lead
to publish many papers''.  First note that any class of graphs closed
under taking induced subgraphs can be defined equivalently by
forbidding a list of induced subgraphs.  And the ``induced subgraph''
containment relation is mathematically ``natural''.  It corresponds
for graphs to the classical notion of ``substructure'' that is
everywhere in algebra and all branches of mathematics.  Also, in
several Operation Research models, problems are modeled by graphs.
The objects under consideration are represented by vertices of a graph
while the constraints between them are represented by edges.  Often,
the class of graphs arising by such a kind of model is closed under
taking induced subgraphs.  Because deleting objects in the real world
situation corresponds to deleting vertices in the graph, while
deleting constraints does not make sense in the real world, so that
all the edges between the remaining vertices must stay in the graph.

In the 1960's, pioneer works of Gabriel Dirac on chordal graphs, of
Tibor Gallai on comparability graphs and the two perfect graph
conjectures of Claude Berge really started the field.  Over the next
forty years, many researches were devoted to perfect graphs and other
classes of graphs, with much success like the proof of the weak
perfect graph conjecture by Laszlo Lov\'asz, the results of Va\v sek
Chv\'atal and Delbert Fulkerson on the link between Perfect Graphs and
Linear Programming and the proof of the Strong Perfect Graph
Conjecture by Maria Chudnovsky, Neil Robertson, Paul Seymour and Robin
Thomas.

From the eighties onwards the Graph Minor Project was developed,
mainly by Neil Robertson and Paul Seymour.  It is a very deep general
theory of all the classes of graphs closed under taking minors
(instead of induced subgraphs).  A natural question is whether there
exists such a general theory for classes closed under taking induced
subgraphs.  Up to now, the answer has seemed to be no.  Classes closed
under taking induced subgraphs seem to be a bit too messy to be
described by a unified theory.  As an evidence among others,
Chapter~\ref{chap:reco} of this work gives several classes of graphs
that are polynomial or NP-complete to recognize according to very
slight changes in the excluded induced subgraphs of their definitions.
Yet, there might be features shared by all classes of graphs defined
by forbidding at least one induced subgraph (see for instance the
Erd\H os-Hajnal's Conjecture), or by many classes (see for instance
Gy\'arf\'as' Conjecture~\ref{conj:gyarfas} on trees or Scott's
Conjecture~\ref{conj:scott}).  My main interest in research is to
understand more how classes of graphs closed under taking induced
subgraphs can be described in the most general possible way, and what
properties can be proved about them.  But as the reader will see, most
of the theorems presented here concern particular classes.

\section*{Outline of the document}
\addcontentsline{toc}{section}{Outline of the document}

We recall that Chapter~\ref{chap:gap} presents several results about
the gap between the chromatic number and the size of a largest clique
of a graph.  This chapter presents results
from~\cite{nicolas.gyarfas.sebo:gap}.

The tool that was perhaps the most successful in the last decades in
the study of classes closed under taking induced subgraph is the
structural approach, that is describing structures of classes of
graphs through \emph{decomposition theorems}.  All this will be
explained in Chapter~\ref{chap:6th} where six simple decomposition
theorems for classes defined by forbidding induced subgraphs are
presented.  These theorems are all quite simple.  But the motivation
for this chapter is to present typical structural graph theory for
readers who do not want to go into reading longer proofs, case
analysis and so on.  The proofs are all quite short, but they capture
ideas that will be explained and used more substantially in the rest
of the document. This chapter can also be used for teaching.  This
chapter does not present results from a particular paper, but several
theorems, some new proofs, and several new results.

Chapter~\ref{chap:reco} presents an important aspect of classes of
graphs defined by excluding induced subgraphs: how to decide
algorithmically whether a given graph is in a given class.  The
simplest approach seems to be to study how one can detect an induced
subgraph in some graph.  Several examples of such problems are shown
to be polynomial, some others NP-complete.  Tools for NP-completeness
all follow from a construction of Bienstock.  The most general tool
for polynomiality seems to be the three-in-a-tree algorithm of
Chudnovsky and Seymour.  Variations on this algorithm will be given.
This chapter presents result
from~\cite{leveque.lmt:detect}, \cite{maffray.t:reco},
\cite{nicolas.wei:kTree} and \cite{nicolas.d.p:fourTree}.

Chapter~\ref{chap:2dec} is devoted to two classes of graphs: graphs
that do not contain cycles with a unique chord and graphs that do not
contain induced subdivision of $K_4$.  The initial motivation for
these classes is their recognition, a problem arising from the
previous chapter.  But by studying them, some results were found about
their chromatic number.  This chapter presents results
from~\cite{nicolas.kristina:one} and \cite{nicolas:isk4}.

Chapter~\ref{chap:Berge} is devoted to Berge graphs.  Important
results are surveyed.  A structure theorem for Berge graphs with no
balanced skew partition is given.  As an application, a combinatorial
algorithms for coloring Berge graphs with no balanced skew partition
and no homogeneous pairs is given.  This chapter presents results
from~\cite{nicolas:bsp} et \cite{nicolas.kristina:2-join}.

Appendices A to F present mandatory material for the administration.
In Appendix~F, a list of my publications is given together with the
chapter of this document where results from each papers are to be
found.

Most of the studies presented below were done jointly with coauthors.
Their contributions will be acknowledged precisely, but I am happy to
list them: Amine Abdelkader, Nicolas Dehry, Sylvain Gravier, Andr\'as
Gy\'arf\'as, Benjamin L\'ev\^eque, David Lin, Christophe Picouleau,
J\'er\^ome Renault, Andr\'as Seb\H o, Juraj Stacho and Liu Wei.  I
would like to thank particularly two co-authors with whom
collaboration has been very close, rich and boosting for me:
Fr\'ed\'eric Maffray and Kristina Vu\v skovi\'c.

\section*{How to read this document}
\addcontentsline{toc}{section}{How to read this document}

We assume that the reader is familiar with the most basic concepts in
Graph Theory.  We use more or less standard definitions and notation
from the book of Bondy and Murty~\cite{bondy.murty:book}.  Here are
explained the conventions specific to this document.  By \emph{graph}\index{graph}
we mean simple and finite graph except when specified.  

By $N[v]$ we mean the closed neighborhood of a vertex, that is $N(v)
\cup \{v\}$.  Sometimes we allow a slight confusion between a graph
and its vertex set.  For instance when $G$ and $H$ are two graphs, we
write $G\sm H$ instead of $G[V(G) \sm V(H)]$.  Also, when $v$ is a
vertex of a graph $G$, we write $G\sm \{v\}$ or even $G\sm v$ instead
of $G[V(G) \sm \{v\}]$.  When $S$ is a set of vertices of $G$, we
write $G\sm S$ instead of $G[V(G) \sm S]$.  When we say that $C$ is a
connected component of a graph $G$, we usually do not specify whether
$C$ is an induced subgraph of $G$ or a subset of $V(G)$.  I hope all
this brings about more convenience than confusion.

Since all the document is about induced subgraphs, we say that $G$
\emph{contains}\index{contain!$G$ contains $H$} $H$ when $G$ has an
induced subgraph isomorphic to $H$.  We say that $G$ is
\emph{$H$-free}\index{free!$H$-free} (or \emph{with no $H$}) if it
does not contain $H$.  We call \emph{path}\index{path} any connected
graph with at least one vertex of degree~1 and no vertex of degree
greater than~2. A path has at most two vertices of degree~1, which are
the \emph{ends}\index{ends!of a path} of the path. If $a, b$ are the
ends of a path $P$ we say that $P$ is \emph{from $a$ to~$b$}. The
other vertices are the \emph{interior}\index{interior} vertices of the
path.  When $P$ is a path, we say that $P$ is \emph{a path of $G$} if
$P$ is an induced subgraph of $G$.  To sum up, what we call ``path of
$G$'' is what is called ``induced path of $G$'' in texts not focused
on induced subgraphs.  If $P$ is a path and if $a, b$ are two vertices
of $P$ then we denote by $aPb$ or $a \tp P \tp b$ the only induced
subgraph of $P$ that is path from $a$ to $b$.  The
\emph{length}\index{length of a path} of a path is the number of its
edges.

By $uvw$ we denote the path on vertices $uvw$ with edges $uv$, $vw$.
We also use the notation $u \tp v \tp w$. These notations are formally
equivalent, but we use the second one when we want to emphasize that
the path is an induced subgraph of some graph that we are working on
(that is most of the time).  When $G, G'$ are graphs, we denote by
$G\cup G'$ the graph whose vertex set is $V(G)\cup V(G')$ and whose
edge set is $E(G) \cup E(G')$.

In the document, the reader will find his/her beloved Definitions,
Lemmas, Theorems, Conjectures and Exercises, but also ``Questions''.
These are to be understood as questions on which I plan to work during
the next years.  But I would be happy if someone else solves them.

\section*{Perfect graphs}
\addcontentsline{toc}{section}{Perfect graphs}

Many interesting classes of graphs are defined by forbidding induced
subgraphs, see~\cite{chudnovsky.seymour:excluding} for a survey.  But
one class is worth presenting in the introduction since it has been
studied more than all others and will be present everywhere in this
document: the class of perfect graphs.

A graph $G$ is \emph{perfect}\index{perfect graph} if for every
induced subgraph $G'$ of $G$, the chromatic number of $G'$ is equal to
the maximum size of a clique of~$G'$.  A \emph{hole}\index{hole} in a
graph is an induced cycle of length at least~4.  An
\emph{antihole}\index{antihole} is the complement of a hole.  A graph
is said to be \emph{Berge}\index{Berge graph} if it does not contain
an odd hole nor an odd antihole.  It is easy to check that every
perfect graph must be Berge.  In 1961, Berge~\cite{berge:61}
conjectured that a graph is perfect if and only if its complement is
so.  This was known as the Perfect Graph Conjecture and was proved by
Lov\'asz~\cite{lovasz:nh}.  Berge also conjectured a stronger
statement: every Berge graph is perfect.  This was known as the
\emph{Strong Perfect Graph Conjecture}\index{Strong Perfect Graph
  Conjecture} and was an object of much research until it was finally
proved by Chudnovsky, Robertson, Seymour and Thomas in
2002~\cite{chudnovsky.r.s.t:spgt}.  So Berge graphs and perfect graphs
are the same class of graphs, but we prefer to write ``Berge'' for
results which rely on the structure of the graphs, and ``perfect'' for
results which rely on the properties of their colorings.  To prove
that all Berge graphs are perfect, Chudnovsky et al.\ proved a
decomposition theorem for Berge graphs.  This idea of using
decomposition was promoted in the 1980's and 1990's by Chv\'atal and
several others.  What seems to be now the good decomposition statement
was first guessed by Conforti, Cornu\'ejols and Vu\v skovi\'c who
could prove it in the square-free case~\cite{conforti.c.v:square}.  A
more complete survey can be found in Chapter~\ref{chap:Berge} that is
devoted to the decomposition of Berge graphs.

In 2002, Chudnovsky, Cornu\'ejols, Liu, Seymour and Vu\v
skovi\'c~\cite{chudnovsky.c.l.s.v:reco} gave a polynomial time
algorithm that decides whether a given graph is Berge (and therefore
perfect).  In the 1980's, Gr\"otschel, Lov\'asz and
Schrijver~\cite{grostchel.l.s:color} gave a polynomial time algorithm
that colors any perfect graph.  Their algorithm relies on linear
programming and the ellipsoid method, so it is difficult to implement.
A purely combinatorial algorithm, that would for instance rely on a
decomposition theorem, is still an open
question. Section~\ref{sec:SF} gives theorems and
conjectures about what such an algorithm could be.

There are several surveys on perfect graphs.  The survey of
Lov\'asz~\cite{lovasz:pgsurvey} is a bit old now but is still an very
good reading.  Two books contain many material: \emph{Topics on
  Perfect Graphs}~\cite{berge.chvatal:topics} edited Berge and
Chv\'atal and a more recent one, \emph{Perfect
  graphs}~\cite{livre:perfectgraphs}, edited by Ram{\'{i}}rez
Alfons{\'{i}}n and Reed.  The most recent survey is by the team who
proved the Strong Perfect Graph
Conjecture~\cite{chudvovsky.r.s.t:progress}.  A very entertaining
reading is a paper by Seymour~\cite{seymour:how} that tells the story
of how the Strong Perfect Graph Conjecture was proved.  I would
recommend this one even for non-mathematicians

\chapter{Mind the gap}
\label{chap:gap}
}

The \emph{gap}\index{gap} of a graph is the difference between its
chromatic number and the size of one of its largest cliques.  As the
reader will see in the next chapters, ``understanding a class of
graphs'' often means giving a bound for the gap of the graphs in the
class.  Gy\'arf\'as~\cite{gyarfas:perfect} defined a graph $G$ to be
\emph{$\chi$-bounded}\index{bounded!$\chi$-bounded} with
\emph{$\chi$-bounding function $f$}\index{bounding function} if for
all induced subgraphs $G'$ of $G$ we have $\chi(G') \leq f(\omega(G'))$.
A class of graphs is \emph{$\chi$-bounded} if there exists a
$\chi$-bounding function that holds for all graphs of the class.  Many
interesting classes of graphs are $\chi$-bounded or better, have a gap
bounded by a constant; the rest of this document will provide several
examples.  For instance, perfect graphs are precisely these graphs
whose induced subgraphs have all gap~0, or equivalently have a
$\chi$-bounding function defined by $f(x)=x$.  So the gap of a graph
can be seen as a measure of its perfectness.

Clearly for all graphs $G$ we have $\chi(G) \geq \omega(G)$, so the
gap of a graph is always non-negative.  Very early in the development
of graph theory it has been discovered that graphs of arbitrarily
large gap do exist.  Blanche Descartes gave a construction of graphs
with no 3, 4 and 5-cycles and of arbitrarily large chromatic number,
see~\cite{lovasz:Ex27P69}.  Mycielski~\cite{mycielski:color} gave a
famous construction of triangle-free graphs of arbitrarily large
chromatic number.  Mycielski's construction is sometimes given as the
simplest example of graphs of arbitrary gap.  To obtain a graph of a
given gap $k$, both these constructions require more than $2^k$
vertices.  Erd\H os proved that given two integers $k\geq 1$, $g \geq
3$, there exist graphs of girth at least $g$ and of chromatic number
at least $k$, see Section~VII.1 in~\cite{bollobas:mgt}.
Interestingly, Erd\H os does not give an explicit construction but a
probabilistic argument showing that the desired graph exists.  He
needs $k^{12}$ vertices to obtain a graph of gap $k$.  Erd\H os'
result implies that for any $k\geq 3$, the class of graphs of girth at
least $k$ is not $\chi$-bounded.

After this short survey, a reader might expect that a construction for
all $k\geq 0$ of a graph of gap $k$ whose order is linear into $k$ is
something quite involved, but this is not the case: consider the graph
$G_k$ obtained from $k$ disjoint $C_5$'s and add all edges between
them.  It is a routine matter to check that $\chi(G_k) = 3k$ and
$\omega(G) = 2k$, so with $5k$ vertices, we obtain a graph of gap $k$.
Of course this construction is far less interesting than those
mentioned above because it does not show for instance that
triangle-free graphs are not $\chi$-bounded.  But strangely, to my
knowledge, this trivial little construction is not mentioned in any
textbook.  In fact, I was quite surprised when I found it by accident.
Because when explaining to a mathematician what is a perfect graph,
after showing that odd holes have gap 1, often the question of whether
larger gaps exist pops up\dots I was not aware of such a striking
simple answer to this question.

Luckily, in September 2008, I asked to Andr\'as Gy\'arf\'as who was
visiting Grenoble: ``does gluing $C_5$'s as above gives the smallest
graph of a given gap?''.  The day after, Gy\'arf\'as had an example
showing that the answer is no.

\begin{exercise}
  Do not read further and find an integer $k$ and an example of a
  graph of gap $k$ on less than $5k$ vertices.  
\end{exercise}

Luckily again Andr\'as Seb\H o got interested in the problem and
discovered an unexpected link between this question and Matching
Theory.  All this lead us to several results about the minimum number
of vertices needed to construct a graph of gap $k$.

\section{Basic facts about the gap}

We use the following standard notation.  By $\alpha(G)$ we denote the
size of a largest stable set of $G$.  By $\zeta(G)$ we denote the
minimum number of edges that cover the vertices of $G$.  By $\nu(G)$
we denote the size of a maximum matching of $G$.  By $\theta(G)$ we
denote minimum number of cliques that cover the vertices of $G$.

For reasons that will be clearer later, we prefer to think about the
difference between $\chi$ and $\omega$ in the complement.  Hence the
\emph{gap}\index{gap} of a graph is defined by: $$\gap(G) = \theta(G) -
\alpha(G).$$

A graph $G$ is \emph{gap-critical}\index{gap-critical} if for any
vertex $v$ we have $\gap(G\sm v) < \gap(G)$.  For $t\geq 0$, we denote
by $s(t)$ the order of a smallest graph with a gap of~$t$.  A graph is
\emph{$t$-extremal}\index{extremal!$t$-extremal} if it has gap $t$ and
order $s(t)$.  A graph is
\emph{gap-extremal}\index{extremal!gap-extremal} if it is $t$-extremal
for some $t$.  Sometime, we write \emph{extremal} instead of
gap-extremal.  Note that the empty graph has gap 0, so $s(0) = 0$.
Note that $s(1) = 5$ and $C_5$ is the only 1-extremal graph.  It is
clear that every gap-extremal graph is gap-critical, but the converse
does not hold as shown by $C_7$.

\begin{lemma}
  \label{l:components}
  If a graph $G$ has $k$ connected components $C_1, \dots, C_k$ then
  $\gap(G) = \gap (C_1) + \cdots + \gap(C_k)$.  Every connected
  component of a gap-critical graph is gap-critical. Every connected
  component of a gap-extremal graph is gap-extremal.
\end{lemma}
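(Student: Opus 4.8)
The plan is to prove the statement in three parts, corresponding to the three sentences of the lemma. The central object is the gap function $\gap(G) = \theta(G) - \alpha(G)$, and the key observation is that both $\theta$ and $\alpha$ are additive over connected components, since stable sets and clique covers can be chosen independently in each component with no interaction between them.

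First I would establish the additivity of the gap. If $G$ has connected components $C_1, \dots, C_k$, then a stable set of $G$ is exactly a disjoint union of stable sets of the $C_i$, so $\alpha(G) = \alpha(C_1) + \cdots + \alpha(C_k)$. Likewise, a clique of $G$ lies entirely inside one component (since a clique is connected), so an optimal clique cover of $G$ is obtained by taking an optimal clique cover of each $C_i$ and taking their union; hence $\theta(G) = \theta(C_1) + \cdots + \theta(C_k)$. Subtracting gives $\gap(G) = \sum_i \gap(C_i)$, which is the first claim.

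Next I would handle the gap-critical claim. Fix a component, say $C_1$, and a vertex $v \in C_1$. Removing $v$ leaves the other components $C_2, \dots, C_k$ untouched, so by additivity $\gap(G \sm v) = \gap(C_1 \sm v) + \gap(C_2) + \cdots + \gap(C_k)$, and $\gap(G) = \gap(C_1) + \gap(C_2) + \cdots + \gap(C_k)$. Thus $\gap(G) - \gap(G\sm v) = \gap(C_1) - \gap(C_1 \sm v)$. If $G$ is gap-critical, the left side is strictly positive for every $v$, so the right side is too, which says precisely that $C_1$ is gap-critical. Since $C_1$ and $v$ were arbitrary, every component is gap-critical.

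Finally, for the gap-extremal claim, suppose $G$ is gap-extremal, say $t$-extremal, so $|V(G)| = s(t)$ where $t = \gap(G) = \sum_i \gap(C_i)$ and $s(t_i)$ denotes the order of a smallest graph of gap $t_i$. Each $C_i$ has gap $t_i := \gap(C_i)$, so by definition $|V(C_i)| \geq s(t_i)$, whence $s(t) = |V(G)| = \sum_i |V(C_i)| \geq \sum_i s(t_i)$. The hard part will be the reverse inequality: I would argue that $s$ is superadditive, $s(t_1) + \cdots + s(t_k) \geq s(t_1 + \cdots + t_k)$, by taking smallest graphs $H_i$ of gap $t_i$ and forming their disjoint union, which has gap $\sum_i t_i$ by additivity and order $\sum_i s(t_i)$, so $s(\sum_i t_i) \leq \sum_i s(t_i)$. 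Combining the two inequalities forces $|V(C_i)| = s(t_i)$ for every $i$, so each component attains the minimum order for its gap and is therefore gap-extremal. This superadditivity step, tying the abstract minimum-order function $s$ back to explicit disjoint-union constructions, is the crux of the argument; the rest is bookkeeping with the additivity of $\gap$.
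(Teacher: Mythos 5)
Your proof is correct, and it is exactly the argument the paper leaves implicit: its entire proof of this lemma is the single word ``Clear.'', relying on the additivity of $\alpha$ and $\theta$ over components and the disjoint-union construction you spell out for the extremal part. One cosmetic remark: the inequality $s(t_1+\cdots+t_k) \leq s(t_1)+\cdots+s(t_k)$ that you prove via disjoint unions is usually called \emph{sub}additivity rather than superadditivity, but the inequality itself is the right one and your argument goes through.
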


\begin{proof}
  Clear.
\end{proof}

\begin{lemma}
  $s(t) \leq 5t$.
\end{lemma}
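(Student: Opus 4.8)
The plan is to exhibit, for each $t \geq 1$, a graph on exactly $5t$ vertices whose gap equals $t$; the bound $s(t) \leq 5t$ then follows at once from the definition of $s(t)$ as the order of a smallest graph of gap $t$. The natural candidate is the disjoint union of $t$ copies of $C_5$, which has $5t$ vertices.

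First I would verify the base computation $\gap(C_5) = 1$. A largest stable set of $C_5$ has two vertices, so $\alpha(C_5) = 2$; and since the only cliques of $C_5$ are single vertices and single edges, covering all five vertices requires at least three cliques and three clearly suffice, so $\theta(C_5) = 3$. Hence $\gap(C_5) = \theta(C_5) - \alpha(C_5) = 3 - 2 = 1$. This is in fact already recorded in the remark that $s(1) = 5$ and that $C_5$ is the unique $1$-extremal graph.

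Then I would assemble the copies. Let $G$ be the disjoint union of $t$ copies of $C_5$. Its connected components are precisely these $t$ copies, each of gap $1$, so by the additivity of the gap over connected components (Lemma~\ref{l:components}) we get $\gap(G) = t \cdot \gap(C_5) = t$. Since $G$ has $5t$ vertices, a graph of gap $t$ on $5t$ vertices exists, and therefore $s(t) \leq 5t$.

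There is essentially no obstacle here: once Lemma~\ref{l:components} is available, the only real content is the single-copy computation, which is routine. The point worth emphasizing is that working in the complement, where the gap is measured as $\theta - \alpha$, lets us use the \emph{disjoint union} directly, rather than the join (``add all edges between'') construction of $C_5$'s described in the introduction to this chapter; since $C_5$ is self-complementary, the two viewpoints yield the same extremal example.
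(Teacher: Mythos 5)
Your proposal is correct and follows exactly the paper's argument: the paper's one-line proof also takes $t$ disjoint copies of $C_5$ and invokes the additivity of the gap over connected components (Lemma~\ref{l:components}). Your write-up merely fills in the routine verification that $\gap(C_5)=\theta(C_5)-\alpha(C_5)=3-2=1$, which the paper leaves implicit.
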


\begin{proof}
  Because the graph obtained by taking $t$ disjoint copies of $C_5$
  has gap $t$.
\end{proof}

\begin{lemma}
  \label{l:removeK}
  Let $G$ be a gap-critical graph and $K$ be a non-empty clique of
  $G$.  Then $\theta(G\sm K) = \theta(G) - 1$, $\alpha(G\sm K) =
  \alpha(G)$ and $\gap(G\sm K) = \gap(G)- 1$.
\end{lemma}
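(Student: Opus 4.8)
The plan is to reduce everything to the effect of deleting a single vertex, and then to handle the clique $K$ by matching one-sided bounds, separately for $\theta$ and for $\alpha$.

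First I would record the single-vertex consequence of gap-criticality. Deleting one vertex $v$ changes each of $\theta$ and $\alpha$ by $0$ or $1$: adding $\{v\}$ as an extra clique shows $\theta(G)\le\theta(G\sm v)+1$, and a stable set loses at most one vertex. Hence $\gap(G)-\gap(G\sm v)=(\theta(G)-\theta(G\sm v))-(\alpha(G)-\alpha(G\sm v))$ is a difference of two numbers in $\{0,1\}$, so it is at least $1$ only when $\theta(G\sm v)=\theta(G)-1$ and $\alpha(G\sm v)=\alpha(G)$ simultaneously. Thus gap-criticality gives, for every vertex $v$, both $\theta(G\sm v)=\theta(G)-1$ and $\alpha(G\sm v)=\alpha(G)$; in particular every non-empty gap-critical graph has $\gap\ge 1$, so $\alpha(G)\le\theta(G)-1$.

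The identity for $\theta$ then follows cleanly. Fix $v\in K$. Taking a minimum clique cover of $G\sm K$ and adding the clique $K$ covers $G$, so $\theta(G)\le\theta(G\sm K)+1$, i.e.\ $\theta(G\sm K)\ge\theta(G)-1$. Conversely $G\sm K$ is an induced subgraph of $G\sm v$, so by monotonicity and the single-vertex result $\theta(G\sm K)\le\theta(G\sm v)=\theta(G)-1$. Hence $\theta(G\sm K)=\theta(G)-1$. Once this and $\alpha(G\sm K)=\alpha(G)$ are known, the gap statement is immediate: $\gap(G\sm K)=(\theta(G)-1)-\alpha(G)=\gap(G)-1$.

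The real content is $\alpha(G\sm K)=\alpha(G)$. The upper bound is easy, since $\alpha(G\sm K)\le\alpha(G\sm v)=\alpha(G)$ for $v\in K$; for the lower bound it suffices to exhibit one maximum stable set of $G$ disjoint from $K$. The single-vertex information alone cannot do this (for instance $C_4$ satisfies the per-vertex condition $\alpha(C_4\sm v)=\alpha(C_4)$, yet deleting an edge drops $\alpha$; what excludes $C_4$ here is only $\gap(C_4)=0$), so the argument must genuinely use $\gap\ge 1$. My route is to use the minimum clique cover $\{K,Q_1,\dots,Q_m\}$ with $m=\theta(G)-1$ produced above, together with a maximum stable set $S$ having $S\cap K=\{w\}$. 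Since $S$ meets exactly $\alpha(G)-1$ of the cliques $Q_i$, it misses $m-(\alpha(G)-1)=\gap(G)\ge 1$ of them. The aim is to reroute $w$ out of $K$: if some missed clique $Q_j$ contains a vertex $q$ non-adjacent to $S\sm\{w\}$, then $(S\sm\{w\})\cup\{q\}$ is a maximum stable set avoiding $K$, as desired. I expect the main obstacle to be the remaining case, in which every vertex of every missed clique has a neighbour in $S\sm\{w\}$; closing it calls for an exchange/augmentation argument along the cliques of the cover (swapping representatives), and it is precisely here that gap-criticality at $w$, equivalently $\theta(G\sm w)=\theta(G)-1$, has to be invoked to rule such a configuration out.
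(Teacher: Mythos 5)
Your $\theta$ half and your overall reduction are correct, and in fact more detailed than what the paper itself writes: the per-vertex consequences of criticality ($\theta(G\sm v)=\theta(G)-1$ and $\alpha(G\sm v)=\alpha(G)$ for every $v$), the identity $\theta(G\sm K)=\theta(G)-1$ obtained by combining monotonicity of $\theta$ with re-adding $K$ to a cover of $G\sm K$, and the final computation $\gap(G\sm K)=\gap(G)-1$ are all sound. You have also correctly located the real content of the lemma in the lower bound $\alpha(G\sm K)\geq\alpha(G)$, and your $C_4$ remark rightly shows that per-vertex information on $\alpha$ alone cannot deliver it. But you do not prove that bound. Your rerouting step works only when some clique missed by $S$ contains a vertex with no neighbour in $S\sm\{w\}$, and you explicitly leave open the complementary case, saying an exchange argument invoking $\theta(G\sm w)=\theta(G)-1$ ``has to be invoked to rule such a configuration out.'' That is a hope, not an argument, and the case is genuinely hard to close: for instance, the natural swap along the symmetric difference of two maximum stable sets $S$ and $T$ with $S\cap K=\{w\}$, $T\cap K=\{w'\}$ fails outright, because $w$ and $w'$ are adjacent and therefore lie in the same component of $G[S\,\Delta\, T]$, so every set produced by swapping components still meets $K$. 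What is missing is thus the entire content of $\alpha(G\sm K)=\alpha(G)$, not a finishing detail.

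For comparison, the paper disposes of exactly this point in one sentence: ``since $G$ is critical, the removal of a clique must decrease the gap by at least one,'' after which its case analysis coincides with yours. Note that this sentence does not follow formally from the definition of gap-criticality, which constrains single-vertex deletions only: the gap is not monotone under vertex deletion (deleting the pendant vertex from $C_5$ plus a pendant vertex raises the gap from $0$ to $1$), so one cannot pass from $\gap(G\sm v)<\gap(G)$ to $\gap(G\sm K)<\gap(G)$ by monotonicity; given the easy $0/{-1}$ bounds, that assertion is equivalent to the very statement $\alpha(G\sm K)=\alpha(G)$ you could not prove. So your attempt has, in effect, isolated precisely the step the paper treats as immediate --- a useful diagnosis, but isolating a difficulty is not resolving it. For a complete treatment you should consult the full version of this work in~\cite{nicolas.gyarfas.sebo:gap}.
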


\begin{proof}
  In general, deleting a clique from a graph either leaves $\theta$
  unchanged or decreases it by one.  And deleting a clique from a
  graph either leaves $\alpha$ unchanged or decreases it by one.  But
  since $G$ is critical, the removal of a clique must decrease
  the gap by at least one, so the only option is $\theta(G\sm K) =
  \theta(G) - 1$ and $\alpha(G\sm K) = \alpha (G)$.  So, $\gap(G\sm K)
  = \gap(G) - 1$.
\end{proof}

\begin{lemma}
  \label{l:jumpk}
  If there exists a $(t+1)$-extremal graph $G$ that has a $k$-clique then
  $s(t+1) \geq s(t) +k$.
\end{lemma}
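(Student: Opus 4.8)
The plan is to delete the $k$-clique from $G$ and invoke the clique-removal lemma. First I would record that since $G$ is $(t+1)$-extremal it is in particular gap-extremal, and hence gap-critical (as observed in the text just before Lemma~\ref{l:components}). This is exactly the hypothesis needed to apply Lemma~\ref{l:removeK} to the $k$-clique $K$, which is non-empty because $k\geq 1$. That lemma then gives $\gap(G\sm K) = \gap(G) - 1 = (t+1) - 1 = t$.

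The rest is a counting argument. The graph $G\sm K$ has gap exactly $t$, so by the very definition of $s(t)$ as the order of a smallest graph of gap $t$, we must have $|V(G\sm K)| \geq s(t)$. On the other hand, removing the clique $K$ deletes exactly its $k$ vertices, and $|V(G)| = s(t+1)$ since $G$ is $(t+1)$-extremal; therefore $|V(G\sm K)| = s(t+1) - k$. Combining the two gives $s(t+1) - k \geq s(t)$, which is the desired inequality $s(t+1) \geq s(t) + k$.

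There is essentially no obstacle: all the substantive work is carried by Lemma~\ref{l:removeK}, whose conclusion $\gap(G\sm K)=\gap(G)-1$ is precisely what forces $G\sm K$ into the class of graphs of gap $t$. The only points deserving a line of care are verifying the two hypotheses of that lemma, namely that $G$ is gap-critical (immediate from gap-extremality) and that $K$ is non-empty (immediate for $k\geq 1$). I expect the final write-up to be only two or three lines long.
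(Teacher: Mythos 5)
Your proposal is correct and follows exactly the paper's argument: apply Lemma~\ref{l:removeK} to the clique $K$ (using that a $(t+1)$-extremal graph is gap-critical) to get $\gap(G\sm K)=t$, then conclude $s(t)\leq |V(G\sm K)| = s(t+1)-k$. The only difference is that you spell out the hypotheses of Lemma~\ref{l:removeK} (gap-criticality of $G$ and non-emptiness of $K$), which the paper leaves implicit.
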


\begin{proof}
  By Lemma~\ref{l:removeK}, $\gap(G\sm K) = \gap(G) - 1$.  So $$s(t)
  \leq |V(G\sm K)| = |V(G)| - k = s(t+1) - k.$$
\end{proof}

\begin{lemma}
  \label{l:jump2}
  $s(t+1) \geq s(t) +2$.
\end{lemma}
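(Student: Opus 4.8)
The plan is to reduce the statement to Lemma~\ref{l:jumpk} by exhibiting, in some $(t+1)$-extremal graph, a clique of size at least~$2$. Recall that Lemma~\ref{l:jumpk} already says that if some $(t+1)$-extremal graph has a $k$-clique, then $s(t+1) \geq s(t) + k$. So it suffices to produce a single edge, i.e.\ a $2$-clique, in a $(t+1)$-extremal graph, and then invoke Lemma~\ref{l:jumpk} with $k = 2$.

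First I would check that a $(t+1)$-extremal graph exists at all, so that the argument has something to work with: the disjoint union of $t+1$ copies of $C_5$ has gap $t+1$, so the class of graphs of gap $t+1$ is nonempty and $s(t+1)$ is well defined. Fix any graph $G$ with $\gap(G) = t+1$ and $|V(G)| = s(t+1)$; note $\gap(G) = t+1 \geq 1$.

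The key observation is that a graph of positive gap must contain an edge. Suppose for contradiction that $G$ has no edge. Then its whole vertex set is stable, so $\alpha(G) = |V(G)|$; and since the only cliques of an edgeless graph are single vertices, covering all vertices needs exactly $|V(G)|$ cliques, so $\theta(G) = |V(G)|$. Hence $\gap(G) = \theta(G) - \alpha(G) = 0$, contradicting $\gap(G) \geq 1$. Therefore $G$ contains an edge, that is, a clique $K$ with $|K| = 2$.

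Finally, applying Lemma~\ref{l:jumpk} with $k = 2$ to this $(t+1)$-extremal graph $G$ and its $2$-clique gives $s(t+1) \geq s(t) + 2$, as desired. I do not expect any real obstacle here: the entire content is the elementary remark that an edgeless graph has gap $0$, which is precisely why a graph realizing a strictly positive gap cannot avoid containing a $2$-clique.
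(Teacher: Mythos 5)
Your proposal is correct and is essentially the paper's own proof: the paper also derives the lemma from Lemma~\ref{l:jumpk} with $k=2$, noting that any extremal graph of positive gap "obviously contains an edge." You have merely spelled out the (easy) justification that an edgeless graph has $\theta = \alpha = |V|$ and hence gap $0$, which the paper leaves implicit.
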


\begin{proof}
  Clear by Lemma~\ref{l:jumpk} since any $t$-extremal graph, $t\geq
  1$, obviously contains an edge.
\end{proof}

A vertex of a graph is a \emph{simplicial vertex}\index{simplicial} if its neighbors
induce a complete graph.

\begin{lemma}
  \label{simplicial} 
  If $G$ is a gap-critical graph then $G$ has no simplicial vertex.
\end{lemma}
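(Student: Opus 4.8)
The plan is to show that if a gap-critical graph $G$ had a simplicial vertex $v$, then deleting $v$ would not decrease the gap, contradicting criticality. The natural idea is to relate $G$ and $G \sm v$ directly through the clique-cover number $\theta$ and the stability number $\alpha$, using the special structure forced by $v$ being simplicial: its neighborhood $N(v)$ is a clique, so $\{v\} \cup N(v) = N[v]$ is a clique of $G$.

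The key observation is that a simplicial vertex gives an optimal stable set and an optimal clique cover ``for free''. First I would argue that $\alpha(G \sm v) = \alpha(G)$. Indeed, any stable set of $G$ contains at most one vertex of the clique $N[v]$; given a maximum stable set $S$ of $G$ that contains $v$, we may replace $v$ by any other vertex of $N[v]$ that is free, or more simply, if $v \in S$ then $S$ uses no vertex of $N(v)$, and since $v$ is simplicial we can swap $v$ for a neighbour only if that neighbour is non-adjacent to the rest of $S$; the cleanest route is to note that $v$ has a neighbour $u$ (else $v$ is isolated and by Lemma~\ref{l:components} the single-vertex component has gap $0$, so deleting it would not decrease the gap, again contradicting criticality), and then argue $\alpha(G \sm v) \geq \alpha(G) - 0$. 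Concretely, from a maximum stable set $S$ of $G$: if $v \notin S$ then $S \subseteq V(G \sm v)$ and $\alpha(G \sm v) \geq \alpha(G)$; if $v \in S$, then $(S \sm \{v\}) \cup \{u\}$ is stable in $G \sm v$ for a suitable neighbour $u$, or one shows $v$ is not needed. Either way $\alpha(G \sm v) = \alpha(G)$.

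Next I would show $\theta(G \sm v) \leq \theta(G) - 1$. Take a minimum clique cover $\mathcal{Q}$ of $G \sm v$ using $\theta(G \sm v)$ cliques; then adding the singleton clique $\{v\}$ covers $G$, so $\theta(G) \leq \theta(G \sm v) + 1$, that is $\theta(G \sm v) \geq \theta(G) - 1$. For the reverse, take a minimum clique cover of $G$; since $v$ is simplicial, $N[v]$ is a clique, and a standard exchange argument lets us assume the clique $Q_v$ containing $v$ satisfies $Q_v \subseteq N[v]$, so $Q_v \sm \{v\}$ is still a clique and the modified cover restricted to $G \sm v$ uses at most $\theta(G)$ cliques, with the clique of $v$ either emptied or shrunk; hence $\theta(G \sm v) \leq \theta(G) - 1$ when $Q_v = \{v\}$, and in general one checks $\theta(G \sm v) \le \theta(G)$. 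Combining, $\gap(G \sm v) = \theta(G \sm v) - \alpha(G \sm v) \geq (\theta(G) - 1) - \alpha(G) = \gap(G) - 1$, but criticality demands $\gap(G \sm v) < \gap(G)$, so in fact equality throughout is consistent; the contradiction comes from showing $\gap(G\sm v) = \gap(G)$.

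The main obstacle, and the step deserving the most care, is the $\theta$ bookkeeping: I must verify that deleting a simplicial vertex $v$ drops $\theta$ by \emph{exactly} $1$ (not $0$). The clean way is to invoke that $v$ lies in the clique $N[v]$, so in a minimum cover we may take the clique through $v$ to be contained in $N[v]$; removing $v$ then either kills that clique (if it was the singleton $\{v\}$) or leaves a nonempty clique, giving $\theta(G \sm v) = \theta(G) - 1$ precisely when $\alpha$ is preserved. Together with $\alpha(G \sm v) = \alpha(G)$, this yields $\gap(G \sm v) = \gap(G)$, contradicting the assumption that $G$ is gap-critical. Hence no simplicial vertex exists. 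I would also dispatch the degenerate case where $v$ is isolated separately via Lemma~\ref{l:components}, since an isolated vertex is trivially simplicial and forms a gap-$0$ component.
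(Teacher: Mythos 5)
Your proof collapses at the last step, and the collapse cannot be repaired within your setup. The two facts you assemble --- $\theta(G\sm v)=\theta(G)-1$ and $\alpha(G\sm v)=\alpha(G)$ --- are true in any gap-critical graph (they are exactly Lemma~\ref{l:removeK} applied to the clique $K=\{v\}$; no simpliciality is needed), but together they give $\gap(G\sm v)=(\theta(G)-1)-\alpha(G)=\gap(G)-1$, \emph{not} $\gap(G)$ as your final paragraph claims. A drop of exactly one is precisely what gap-criticality demands, so there is no contradiction at all. Moreover, no argument that inspects only the parameters of $G\sm v$ can ever yield one: since Lemma~\ref{l:removeK} already pins down $\theta(G\sm v)$ and $\alpha(G\sm v)$ for \emph{every} vertex of a gap-critical graph, these quantities carry no trace of whether $v$ is simplicial. (A secondary flaw: your derivation of $\alpha(G\sm v)=\alpha(G)$ from simpliciality alone, via swapping $v$ for a neighbour, is invalid in general --- a leaf of the star $K_{1,3}$ is simplicial, yet deleting it drops $\alpha$ from $3$ to $2$. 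Here the equality holds only because of criticality, i.e.\ because of Lemma~\ref{l:removeK}, not because of simpliciality.)

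Simpliciality has to be exploited by deleting the whole closed neighbourhood $K=N[v]$, which is a clique; this is what the paper does. Then both parameters of $G\sm K$ are forced: first, $\theta(G)=1+\theta(G\sm K)$, because a minimum cover of $G\sm K$ together with the clique $K$ covers $G$, and conversely in a minimum cover of $G$ the clique containing $v$ lies inside $K$; second, $\alpha(G\sm K)=\alpha(G)-1$, because every stable set of $G\sm K$ extends by $v$ (all neighbours of $v$ are in $K$), while a maximum stable set of $G$ meets the clique $K$ in at most one vertex. Hence $\gap(G\sm K)=(\theta(G)-1)-(\alpha(G)-1)=\gap(G)$: deleting $K$ leaves the gap unchanged. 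But Lemma~\ref{l:removeK}, applied to the non-empty clique $K=N[v]$, forces $\gap(G\sm K)=\gap(G)-1$ --- that is the contradiction. It lives at the level of the clique $N[v]$, not of the single vertex $v$.
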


\begin{proof} 
  Suppose that we have a simplicial vertex $v$ in $G$. Let us put
  $K=N(v)\cup \{v\}$.  Then $$\theta(G)=1+\theta(G\sm K) = 1 +
  \gap(G\sm K) + \alpha(G\sm K) = 1 + \gap(G\sm K) + \alpha(G) -1,$$
  so by Lemma~\ref{l:removeK},
$$\theta(G) = \gap(G) - 1 +
\alpha(G),$$ a contradiction.
\end{proof}

\section{Ramsey Theory and asymptotic results}

By $S_k$ we denote a stable set on $k$ vertices.  The mathematician,
economist and philosopher Franck Ramsey proved the following which has
been the starting point of a very rich theory.

\begin{theorem}[Ramsey \cite{ramsey:fl}]
  For all integers $k, l \geq 1$ there exits a number $R$ such that
  any graph on at least $R$ vertices contains either a clique on $k$
  vertices or a stable set on $l$ vertices.
\end{theorem}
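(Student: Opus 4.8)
The plan is to prove finiteness of the Ramsey number by induction. For $k, l \geq 1$, let $R(k,l)$ denote the least integer (if one exists) such that every graph on at least $R(k,l)$ vertices contains either a clique on $k$ vertices or a stable set on $l$ vertices; I would show that $R(k,l)$ is well defined and finite for all such $k,l$, which is exactly the statement. The induction is on $k+l$. For the base cases I note that $R(1,l) = 1$ and $R(k,1) = 1$ for all $k,l \geq 1$: a single vertex is simultaneously a clique on one vertex and a stable set on one vertex, so every nonempty graph contains both $K_1$ and $S_1$.

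The heart of the argument is the recursive inequality $R(k,l) \leq R(k-1,l) + R(k,l-1)$, which I would establish in the inductive step (so I may assume $k,l \geq 2$, the cases $k=1$ or $l=1$ being the base cases). Consider a graph $G$ on $N = R(k-1,l) + R(k,l-1)$ vertices, the two quantities on the right being finite by the induction hypothesis since they involve a smaller value of $k+l$. Fix any vertex $v$ and split the remaining vertices into the set $A = N(v)$ of neighbors of $v$ and the set $B = V(G) \sm N[v]$ of its non-neighbors. Since $|A| + |B| = N - 1 = R(k-1,l) + R(k,l-1) - 1$, the pigeonhole principle forces either $|A| \geq R(k-1,l)$ or $|B| \geq R(k,l-1)$.

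In the first case, $G[A]$ contains an $l$-stable set (and we are done), or else it contains a clique on $k-1$ vertices, which together with $v$ --- adjacent to all of $A$ --- yields a clique on $k$ vertices. In the second case, $G[B]$ contains a clique on $k$ vertices (and we are done), or else it contains a stable set on $l-1$ vertices, which together with $v$ --- non-adjacent to all of $B$ --- yields a stable set on $l$ vertices. In every case $G$ contains one of the two required structures, which proves the inequality; finiteness of $R(k,l)$ for all $k,l \geq 1$ then follows immediately by induction on $k+l$.

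The main obstacle here is conceptual rather than computational: one must hit upon the right quantity $R(k-1,l)+R(k,l-1)$ to drive the induction and the right dichotomy (neighbors versus non-neighbors of a single vertex), after which each branch reduces cleanly to a strictly smaller instance. Once the recursion is in place there is no delicate estimation left to do; unwinding it even gives the explicit bound $R(k,l) \leq \binom{k+l-2}{k-1}$, although the theorem as stated only asks for existence of some finite $R$.
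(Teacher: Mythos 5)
Your proof is correct. Note that the paper itself offers no proof of this statement: it is quoted as a classical result with a citation to Ramsey's 1930 paper, so there is nothing internal to compare against. Your argument is the standard Erd\H os--Szekeres induction on $k+l$ via the recursion $R(k,l) \leq R(k-1,l) + R(k,l-1)$, splitting on the neighbors and non-neighbors of a fixed vertex, and every step checks out (the only cosmetic point is that you argue for a graph on exactly $N$ vertices, whereas the statement concerns graphs on at least $N$ vertices; passing to an induced subgraph on $N$ vertices settles this immediately). The explicit bound $R(k,l) \leq \binom{k+l-2}{k-1}$ you extract is a bonus beyond what the statement requires.
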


By $R(k, l)$\index{R@$R(k, l)$} we mean the smallest integer $N$ such
that every graph on $n\geq N$ vertices contains a clique on $k$
vertices or a stable set on $l$ vertices.  A graph is \emph{$R(k,
  l)$-extremal}\index{extremal!$R(k, l)$-extremal} if it has $R(k, l)
- 1$ vertices, and contains no clique on $k$ vertices and no stable
set on $l$ vertices.  Each time in the sequel we say ``it is known
from Ramsey Theory that'', the fact that we claim is to be found in
Table~\ref{t:ramsey}.  This table indicates the notation that we use
for several Ramsey extremal graphs, but they will be described when
needed.  All the information that we use along with the corresponding
references can be found in the survey of Radziszowski
\cite{radzi:ramsey}.

\begin{table}
\begin{center}
\begin{tabular}{lllll}
  $(k, l)$ & $R(k, l)$ & Extremal graphs & References\\\hline

  (3, 3) & 6  & 1 graph: $C_5$ & Folklore\\

  (3, 4) & 9 & 3 graphs: $W, W', W''$ &\cite{gg:ramsey,kery:ramsey}\\

  (3, 5) & 14 & 1 graph: $R$ &\cite{gg:ramsey,kery:ramsey} \\

  (3, 6) & 18 & 7 graphs &\cite{kery:ramsey,Kalbfleisch:ramsey}\\

  (3, 7) & 23 & 191 graphs &\cite{Kalbfleisch:ramsey,graverYackel:ramsey,mcKayZhang:ramsey,radziKreher:ramsey}\\

  (3, 8) & 28 &  At least 430215 graphs &\cite{grinsteadRoberts:ramsey,mcKayZhang:ramsey,radzi:ramsey}\\

  (3, 9) & 36 & At least 1 graph &\cite{Kalbfleisch:ramsey,grinsteadRoberts:ramsey} \\

  (4, 4)  & 18   & 1 graph &\cite{gg:ramsey,Kalbfleisch:ramsey,mcKayRadzi:R55}\\

  (4, 5)  & 25   & At least 350904 graphs&\cite{kalbfleisch:edge,mcKayRadzi:ramsey}
\end{tabular}
\caption{Small Ramsey numbers\label{t:ramsey}}
\end{center}
\end{table}

Ramsey Theorem provides the first example of a non-trivial
$\chi$-bounded class of graphs.  Because any $S_3$-free graphs $G$ has
a $\chi$-bounding function defined by $f(x) = R(3, x) - 1$ since
$$\chi(G) \leq |V(G)| \leq R(3, \omega(G))-1.$$

Here is why Ramsey numbers are related to small graphs of large gap.
Suppose that a graph $G$ is $R(3, k)$-extremal.  Since it has no
$K_3$, we have $\theta(G) \geq |V(G)|/2 \geq R(3, k)/2$ while
$\alpha(G) = k$.  So, if Ramsey numbers are large enough, we may
obtain a large gap.  Since $R(3, 5) = 14$, an $R(3, 5)$-extremal graph
provides a graph of gap at least~3 on 13 vertices, which is better
than 3 disjoint copies of $C_5$ (see Figure~\ref{fig:R} page
\pageref{fig:R}).  We prove now that using Ramsey-extremal graphs
gives rather small graphs of a given gap in general.  Jeong Han Kim
proved the following deep result:

\begin{theorem}[Kim \cite{kim:95}]
  \label{th:kim}
  $R(3, p) \geq c(1 - o(1))p^2 / \log(p)$ where $c = 1/162$.
\end{theorem}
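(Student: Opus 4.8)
The plan is to translate the lower bound on $R(3, p)$ into the existence of a triangle-free graph whose independence number is small. By the definition of Ramsey numbers recalled above, $R(3, p) > n$ holds precisely when there is a graph on $n$ vertices containing neither a $K_3$ nor a stable set of size $p$. So to prove $R(3, p) \geq c(1 - o(1)) p^2 / \log p$ it suffices to produce, for each large $p$, a triangle-free graph on roughly $n = c p^2 / \log p$ vertices with $\alpha(G) < p$; equivalently, a triangle-free graph on $n$ vertices whose largest stable set has size $O(\sqrt{n \log n})$. This reformulation is the whole game, since the matching upper bound $R(3, p) = O(p^2/\log p)$ is already known.

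Rather than exhibiting an explicit graph, I would build a random triangle-free graph incrementally by the semi-random (``nibble'') method. Starting from the empty graph on $n$ vertices, at each stage one adds a small random portion of the still-\emph{available} edges, where an edge is available if inserting it creates no triangle. The essential quantity to track is, for every pair of non-adjacent vertices, the number of available edges in the relevant neighborhood structure, together with finer local statistics. The heart of the argument is to show that, throughout the process, these counts stay tightly concentrated around values predicted by a deterministic fluid-limit system governing the joint evolution of the edge density and the available-pair density as the nibble proceeds.

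The hard part will be the concentration analysis. The variables tracked at each step are highly correlated, and one must show that with high probability \emph{every} member of a large collection of local statistics deviates from its predicted trajectory by only a lower-order term. I would use martingale concentration of Azuma--Hoeffding type, applied to carefully chosen exposure martingales, together with a union bound over all statistics and all stages. Controlling the accumulation of errors across the many nibbling stages — so that the independence number comes out as $O(\sqrt{n \log n})$ rather than something larger — is the crux, and this is exactly where Kim's contribution lies. Naively, the per-step fluctuations are of a size that, summed over the full schedule, would swamp the main term; the delicate point is to keep the error terms self-correcting.

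Finally, once the process terminates I would verify the two required properties. Triangle-freeness is automatic, since no edge completing a triangle is ever added. For the independence number, I would argue that the achieved edge density — on the order of $\sqrt{(\log n)/n}$ per vertex, as forced by the fluid limit — is high enough that the graph is quasi-random in the relevant sense, so that any vertex set of size $\Omega(\sqrt{n \log n})$ must span an edge. Tracking the constants through the concentration estimates and the nibble schedule then yields the explicit value $c = 1/162$; I expect this particular constant to be an artifact of the chosen schedule and the crudeness of the bounds rather than something forced by a clean identity.
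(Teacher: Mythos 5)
Your proposal correctly identifies the standard reduction (a lower bound on $R(3,p)$ is equivalent to exhibiting a triangle-free graph on $n \approx cp^2/\log p$ vertices with independence number below $p$) and correctly names the method Kim actually used: a semi-random ``nibble'' process that adds triangle-free-preserving edges in small random increments, with martingale concentration controlling a large family of local statistics. Note, however, that the paper itself does not prove this statement at all --- it is quoted as a known deep result with a citation to Kim --- so there is no internal proof to compare against; the only honest comparison is with Kim's original argument, of which your text is an outline.

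And as an outline it has a genuine gap: the entire mathematical content is deferred. You write that ``the hard part will be the concentration analysis'' and that keeping the error terms self-correcting ``is exactly where Kim's contribution lies,'' but you do not define the process precisely (the nibble schedule, the tracked variables, the stopping rule), do not state or prove any concentration inequality, and do not carry out the error propagation across stages --- which is the theorem. The final step is also shakier than you suggest: bounding $\alpha(G)$ by $O(\sqrt{n\log n})$ does not follow from the graph being ``quasi-random in the relevant sense'' at the achieved edge density. One must show that each fixed set of size $C\sqrt{n\log n}$ remains independent with probability small enough to survive a union bound over roughly $\binom{n}{C\sqrt{n\log n}}$ sets, i.e.\ failure probability of order $e^{-\Omega(s\log n)}$ per set; engineering concentration that strong, uniformly over all stages of the process, is precisely the difficulty, and generic quasi-randomness arguments do not deliver it. As it stands, your text is a correct description of a proof strategy, not a proof; the constant $1/162$ in particular cannot be extracted from anything you have written.
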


\begin{theorem}[with Gy\'arf\'as and Seb\H o \cite{nicolas.gyarfas.sebo:gap}]
  $$2t + 9 \leq s(t) \leq 2t + o(t).$$
\end{theorem}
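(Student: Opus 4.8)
The plan is to prove the two bounds separately: the upper bound by an explicit construction from Ramsey-extremal graphs, and the lower bound by a monotonicity argument on the quantity $\delta(t) = s(t) - 2t$.

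For the upper bound, I would first record that for a triangle-free graph $H$ one has $\theta(H) = |V(H)| - \nu(H)$ (a minimum clique cover uses only edges and isolated vertices), and that the set of vertices left uncovered by a maximum matching is stable, so $|V(H)| - 2\nu(H) \le \alpha(H)$. Hence $\gap(H) = |V(H)| - \nu(H) - \alpha(H) \ge |V(H)|/2 - \alpha(H)$. I apply this to the $R(3,k)$-extremal graph $B_k$, which has $n_k = R(3,k)-1$ vertices, is triangle-free, and has $\alpha(B_k) = k-1$. By Kim's Theorem~\ref{th:kim}, $n_k \ge c(1-o(1))k^2/\log k$, so $k = O(\sqrt{n_k \log n_k}) = o(n_k)$ and therefore $\gap(B_k) \ge n_k/2 - (k-1) = (1-o(1))\,n_k/2$; equivalently $n_k/\gap(B_k) \to 2$. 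Thus for every $\varepsilon > 0$ there is a fixed triangle-free block $B$ with $|V(B)|/\gap(B) < 2 + \varepsilon$. Writing $t = m\,\gap(B) + r$ with $0 \le r < \gap(B)$ and taking $m$ disjoint copies of $B$ together with $r$ disjoint copies of $C_5$, additivity of the gap over connected components (Lemma~\ref{l:components}) yields a graph of gap exactly $t$ on $m\,|V(B)| + 5r < (2+\varepsilon)t + 5\,\gap(B)$ vertices. As $\gap(B)$ is a constant depending only on $\varepsilon$, this gives $\limsup_t s(t)/t \le 2 + \varepsilon$ for every $\varepsilon$, hence $s(t) \le 2t + o(t)$.

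For the lower bound, set $\delta(t) = s(t) - 2t$. Lemma~\ref{l:jump2} gives $s(t+1) \ge s(t) + 2$, so $\delta$ is non-decreasing, and $\delta(1) = s(1) - 2 = 3$ since $C_5$ is the unique $1$-extremal graph; thus it suffices to show $\delta$ eventually reaches $9$. The key dichotomy is: fix $t$ and a $t$-extremal graph $G$. If $G$ is triangle-free, then the inequality derived above forces $s(t) \ge 2t + \alpha(G)$, and since a triangle-free graph satisfies $\alpha(G) \ge \sqrt{|V(G)|} \ge \sqrt{2t}$, we get $\delta(t) \ge \sqrt{2t}$. If instead some $t$-extremal graph contains a triangle, then Lemma~\ref{l:jumpk} with $k=3$ gives $s(t) \ge s(t-1) + 3$, i.e. $\delta(t) \ge \delta(t-1) + 1$. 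I would then argue on an interval $[T, 2T]$ with $T$ large: either some $t$ in the interval has all its extremal graphs triangle-free, whence $\delta(t) \ge \sqrt{2T} > 8$, or every such $t$ produces a triangle and hence a unit increase of $\delta$, whence $\delta(2T) \ge \delta(T) + T > 8$. Either way $\delta$ exceeds $8$, and by monotonicity $\delta(t) \ge 9$ for all $t$ beyond this point.

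The main obstacle is pinning down the exact additive constant $9$ rather than merely ``$\delta(t) \to \infty$''. The asymptotic argument above shows $s(t) \ge 2t + 9$ only for sufficiently large $t$; extracting the precise value $9$ and the exact range in which it holds requires a finite case analysis of the small extremal graphs, determining $s(2), s(3), \dots$ and exploiting the structural restrictions on gap-critical graphs (connectedness and the absence of simplicial vertices, Lemma~\ref{simplicial}), together with the small Ramsey data of Table~\ref{t:ramsey}, for instance the $R(3,5)$-extremal graph on $13$ vertices. This bookkeeping, rather than any single clever idea, is where the real work lies.
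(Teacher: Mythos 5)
Your upper bound is correct and shares its core with the paper's: both apply the triangle-free inequality $\gap(H) \ge |V(H)|/2 - \alpha(H)$ to $R(3,k)$-extremal graphs and invoke Kim's Theorem~\ref{th:kim} to get blocks whose order-to-gap ratio tends to $2$. Where you genuinely differ is the completion step. The paper only obtains $s(t_p) \le 2t_p + o(t_p)$ along the subsequence $t_p = \gap(H_p)$ and then appeals to monotonicity of $s$ --- an appeal that tacitly requires consecutive values $t_p$ to be close together (true, via $R(3,p+1) \le R(3,p) + p + 1$, but never said). Your tiling construction ($m$ copies of a fixed block $B$ with $|V(B)|/\gap(B) < 2+\varepsilon$, plus $r$ copies of $C_5$, with Lemma~\ref{l:components} giving gap exactly $t$) yields $s(t) \le (2+\varepsilon)t + 5\gap(B)$ for \emph{every} $t$ directly, so it sidesteps that density issue; this is a small but real gain in self-containedness.

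The lower bound, however, has a genuine gap. The constant $9$ is not an asymptotic artifact: $2t+9 \le s(t)$ is false for $t=1,2,3$ (e.g.\ $s(3)=13<15$) and tight at $t=4$ ($s(4)=17$), so the theorem can only mean ``for all $t\ge 4$'', and all of its content beyond monotonicity lives in that small range. The paper's proof is precisely: base case $s(4)=17$, i.e.\ Theorem~\ref{th:gapSumUp}, whose proof is the chapter's long small-case analysis (uniqueness of $2C_5$ and of $R$, the Wagner-graph arguments, Lemmas~\ref{l:s2=10}, \ref{l:twelveTh}, \ref{l:s417}), followed by the induction $s(t+1)\ge s(t)+2$ of Lemma~\ref{l:jump2} --- the same monotonicity of $\delta$ you use. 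Your dichotomy (a triangle-free $t$-extremal graph forces $\delta(t) \ge \alpha(G) \ge \sqrt{2t}$ roughly, via $s(t) \ge 2t+\alpha$; a triangle in a $t$-extremal graph forces $\delta(t) \ge \delta(t-1)+1$ via Lemma~\ref{l:jumpk}) is correct and is a nice idea absent from the paper; it even shows $\delta(t)$ grows at least like $\sqrt{t}$, far more than any constant, asymptotically. But run quantitatively it only delivers $\delta(t)\ge 9$ for $t$ beyond roughly $70$--$100$ (you need $\sqrt{2T}$ comfortably above $8$ before the interval argument can even start). The range $4 \le t \lesssim 100$, where the constant $9$ is actually sharp, is untouched, and bridging it is exactly the ``finite case analysis'' you defer as bookkeeping. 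That deferred part is not bookkeeping: determining $s(2)$, $s(3)$, $s(4)$ is the bulk and the hardest part of the paper's chapter (the paper itself remarks that $s(2)$ was the hardest piece of the whole work). So as written, your proposal proves $s(t) = 2t+o(t)$ together with ``$s(t)\ge 2t+9$ for all sufficiently large $t$'', which is strictly weaker than the theorem on its intended range.
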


\begin{proof}
  From Theorem~\ref{th:gapSumUp} on page~\pageref{th:gapSumUp} (sorry for
  this non-circular forward reference), we know that $2t + 9 \leq
  s(t)$ holds for $t\leq 4$.  For greater values of $t$, the
  inequality follows by an easy induction from Lemma~\ref{l:jump2}.
  Let us prove the second inequality.

  Let $H_p$ be an $R(3, p)$-extremal graph and $t_p = \gap(H_p)$.  So,
  since $H_p$ is triangle-free,
  $$
  t_p = \theta(H_p) - \alpha(H_p) \geq |V(H_p)|/2 -p. 
  $$

  Hence,
  $$
  s(t_p) \leq |V(H_p)| \leq 2t_p+p.
  $$

From Theorem~\ref{th:kim},
  we have

\begin{claim}\label{i:gapIneq}
\hfill  $  
  t_p \geq |V(H_p)|/2 -p \geq c(1-o(p)) (p^2 / 2\log(p)) - p,
  $ \hfill
\end{claim}

\noindent which proves in particular that $t_p$ tends to infinity with
$p$.  So, it remains to prove that $p = o(t_p)$ because since $s(t)$
increases with $t$, $s(t_p) \leq 2t_p + o(t_p)$ implies $s(t) \leq 2t
+ o(t)$.

By solving~(\ref{i:gapIneq}), we obtain that $p \leq c(\varepsilon)
t_p^{(1/2) + \varepsilon}$ for any $\varepsilon > 0$, where
$c(\varepsilon)$ is a constant depending only on $\varepsilon$.  This
proves that $p = o(t_p)$ and therefore the theorem.
\end{proof}

\section{Triangle-free gap-critical graphs}

A graph is \emph{factor-critical}\index{factor-critical} if the removal of any vertex yields
a graph with a perfect matching. A proof in English of the following
theorem can be found in \cite{lovasz:Ex26P58}.

\begin{theorem}[Gallai, \cite{gallai:factorCritical}]
  \label{th:gallai}
  If $G$ is connected and  $\nu(G\sm v) = \nu(G)$ for all $v \in
  V(G)$, then $G$ is factor-critical.
\end{theorem}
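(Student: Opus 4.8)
The plan is to derive factor-criticality from a single numerical fact: that every maximum matching of $G$ leaves exactly one vertex uncovered. First I would note that the hypothesis rules out a perfect matching, since if $\nu(G)=|V(G)|/2$ then for every $v$ we would have $\nu(G\setminus v)\le(|V(G)|-1)/2<\nu(G)$, contradicting $\nu(G\setminus v)=\nu(G)$. Hence every maximum matching misses at least one vertex. The reduction I would then establish is that it suffices to prove $\nu(G)=(|V(G)|-1)/2$: in that case a maximum matching covers all but one vertex, so $|V(G)|$ is odd, and for each $v$ the matching of size $\nu(G)$ in $G\setminus v$ furnished by the hypothesis covers all of $V(G)\setminus\{v\}$, i.e.\ is a perfect matching of $G\setminus v$. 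Thus the whole statement reduces to showing that no maximum matching leaves two distinct vertices exposed.

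To prove this, suppose for contradiction that some maximum matching $M$ exposes two distinct vertices $x$ and $y$. Two exposed vertices of a maximum matching are non-adjacent, and by connectedness $x$ has a neighbour $p$ (necessarily $p\neq y$); since $x$ is exposed and $M$ is maximum, $p$ is covered, say $pp'\in M$. Here I would bring in the hypothesis applied to the \emph{interior} vertex $p$: because $p$ is inessential there is a maximum matching $N$ with $p$ uncovered, and then $pp'\in M\setminus N$. The symmetric difference $M\triangle N$ consists of even paths and even cycles only, both matchings being maximum; since $p$ is covered by $M$ but not by $N$, it is an endpoint of a path component of $M\triangle N$, that is, an $M$-alternating path starting with $pp'$ and ending at some $M$-exposed vertex $e$. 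Prepending the non-matching edge $xp$ produces an $M$-alternating walk from the exposed vertex $x$ to the exposed vertex $e$, beginning and ending with non-matching edges. If $e\neq x$ and this walk is a simple path, it is an $M$-augmenting path, which is impossible for a maximum matching; this contradiction would complete the argument.

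The main obstacle is precisely the degenerate case in which the walk is not a genuine augmenting path, namely when the alternating path from $p$ folds back to $x$ (or otherwise repeats a vertex): the edge $xp$ then closes the alternating path into an odd cycle, a \emph{blossom}, which blocks the direct contradiction. To handle this I would invoke Edmonds' blossom trick and run an induction on $|V(G)|$: contract such a blossom $B$ to a single vertex, observe that the quotient $G/B$ stays connected and, more delicately, that it again has every vertex inessential, apply the induction hypothesis to conclude $G/B$ is factor-critical (hence has a unique exposed vertex under any maximum matching), and finally lift a near-perfect matching of $G/B$ back through the factor-critical blossom to one of $G$, forcing $\nu(G)=(|V(G)|-1)/2$. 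The step that must be checked with care is that contracting a blossom preserves the hypothesis, which rests on the bookkeeping $\nu(G/B)=\nu(G)-(|B|-1)/2$ together with the correspondence between maximum matchings of $G$ and of $G/B$; this is the technical heart, while the remaining manipulations of alternating paths are routine.
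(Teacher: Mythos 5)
The paper does not prove this statement at all: it is quoted as Gallai's theorem, with the reference to Lov\'asz's exercise book standing in for a proof, so your proposal can only be measured against the classical argument rather than anything in the text. Its skeleton is correct: the reduction to ``no maximum matching misses two vertices'' is right, and so is the dichotomy you extract from $M\triangle N$. Indeed, since interior vertices of the path component through $p$ are covered by both matchings and $x$ is $M$-exposed, $x$ can only occur as the far endpoint $e$; so either you have an $M$-augmenting path, or the walk closes into an odd cycle $B$ through $x$ that is $M$-alternating except at $x$, has $x$ exposed, and has no $M$-edge leaving it --- a genuine blossom with exposed base.

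The gap is the step you defer, and the justification you offer for it would fail. There is no ``correspondence between maximum matchings of $G$ and of $G/B$'' in the direction you need: a maximum matching of $G$ missing a vertex $v$ may have many edges crossing into $B$, and contraction keeps at most one of them, so its image need not be maximum in $G/B$. In fact, preservation of the hypothesis is false for odd cycles in general: let $G$ consist of three triangles $T_1,T_2,T_3$ with all edges between $T_1$ and $T_3$ and between $T_2$ and $T_3$, and none between $T_1$ and $T_2$. This $G$ is easily checked to be factor-critical, hence connected with every vertex inessential; yet $(G/T_3)\setminus c$ is the disjoint union of two triangles (where $c$ denotes the contracted vertex), so $c$ is essential in $G/T_3$ and the hypothesis is destroyed. (No maximum matching of $G$ makes $T_3$ a blossom, which is why this does not refute your plan --- but it shows the blossom structure must carry the whole weight; bookkeeping alone cannot.) The argument that actually closes your induction is per-vertex and uses Edmonds' lemma in its augmenting-path form: for $v\notin B$ covered by $M$, delete the $M$-edge at $v$; since $\nu(G\setminus v)=\nu(G)$, the punctured matching is not maximum in $G\setminus v$ and so admits an augmenting path there; $B$ is still a blossom with exposed base in $G\setminus v$, so $(G/B)\setminus v$ admits an augmenting path with respect to the contracted matching, and augmenting produces a maximum matching of $G/B$ missing $v$. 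With that substitution your proof is complete, but it then rests on Edmonds' blossom lemma, machinery at least as heavy as the theorem itself; the classical proof stays elementary by choosing the exposed pair $u,v$ at minimum distance, an interior vertex $w$ of a shortest $u$--$v$ path, a maximum matching $N$ missing $w$ with $|M\cap N|$ maximum, and reading the contradiction off $M\triangle N$.
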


The following is useful to avoid many case checking in the
sequel.

\begin{theorem}[with Gy\'arf\'as and Seb\H o \cite{nicolas.gyarfas.sebo:gap}]
  \label{l:fcrit}
  If $G$ is triangle-free and gap-critical graph then every component
  of $G$ is factor-critical.
\end{theorem}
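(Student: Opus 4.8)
The plan is to reduce everything to the matching number and then invoke Gallai's theorem (Theorem~\ref{th:gallai}). First I would use Lemma~\ref{l:components}: since every component of a gap-critical graph is gap-critical, and since being triangle-free is inherited by induced subgraphs, it suffices to treat the case where $G$ is connected and to show that a connected triangle-free gap-critical graph is factor-critical. The key preliminary observation is that in a triangle-free graph the only cliques are single vertices and edges, so a minimum clique cover is obtained by taking a maximum matching and then covering the remaining vertices by singletons. Hence $\theta(G) = |V(G)| - \nu(G)$, and therefore $\gap(G) = |V(G)| - \nu(G) - \alpha(G)$.

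Next I would compute how the gap changes under deletion of a vertex $v$. Writing $n = |V(G)|$, we have $\gap(G\sm v) = (n-1) - \nu(G\sm v) - \alpha(G\sm v)$, so
\[
  \gap(G) - \gap(G\sm v) = 1 + \bigl(\nu(G\sm v) - \nu(G)\bigr) + \bigl(\alpha(G\sm v) - \alpha(G)\bigr).
\]
Removing one vertex lowers the matching number by at most one and never raises it, and the same holds for the stability number; so each of the two parenthesised differences lies in $\{-1, 0\}$. Thus $\gap(G) - \gap(G\sm v) \le 1$, with equality exactly when $\nu(G\sm v) = \nu(G)$ and $\alpha(G\sm v) = \alpha(G)$.

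Gap-criticality says precisely that $\gap(G\sm v) < \gap(G)$, i.e.\ $\gap(G) - \gap(G\sm v) \ge 1$, for every $v$. Combined with the inequality above, this forces equality, and in particular $\nu(G\sm v) = \nu(G)$ for all $v \in V(G)$. Since $G$ is connected, Theorem~\ref{th:gallai} then yields that $G$ is factor-critical, which completes the argument.

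I do not expect a serious obstacle here: the whole proof hinges on the identity $\theta(G) = n - \nu(G)$ for triangle-free graphs, which turns the rather opaque gap-criticality condition into the transparent statement that deleting a vertex never drops the matching number --- exactly the hypothesis of Gallai's theorem. The only point needing a little care is checking the elementary bounds that $\nu$ and $\alpha$ each change by at most one under vertex deletion, and noticing that gap-criticality squeezes both of these changes to zero simultaneously.
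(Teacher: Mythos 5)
Your proof is correct and takes essentially the same route as the paper: triangle-freeness converts the clique cover number into matching-number language, gap-criticality is shown to force $\nu(G\sm v)=\nu(G)$ for every vertex $v$, and Gallai's theorem (Theorem~\ref{th:gallai}) finishes. The only difference is bookkeeping: the paper passes through the edge cover number $\zeta$ (using $\theta=\zeta$ for triangle-free graphs without isolated vertices, $\nu=|V|-\zeta$, and Lemma~\ref{l:removeK}), whereas you use the direct identity $\theta=|V|-\nu$ and re-derive the needed consequence of criticality inline via the observation that $\nu$ and $\alpha$ each drop by at most one under vertex deletion.
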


\begin{proof}
  Let $H$ be a component of a triangle-free gap-critical graph.  By
  Lemma~\ref{l:components}, all connected components of a gap-critical
  graph are gap-critical, so $H$ is gap-critical.  Since the removal
  of an isolated vertex does not change the gap, $H$ has no isolated
  vertex.  So $\theta(H) = \zeta(H)$ since $H$ is triangle-free.  By
  Lemma~\ref{l:removeK}, for all $v\in V(H)$ we have $\theta (H\sm v)
  = \theta(H) - 1$, so $\zeta (H\sm v) = \zeta(H) - 1$.

  For all graphs $G$ we have $\nu(G) = |V(G)| - \zeta(G)$.  So,
  $\nu(H\sm v) = |V(H\sm v)| - \zeta(H\sm v) = |V(H)| - \zeta(H) =
  \nu(H)$.  Hence, $H$ is factor-critical by Theorem~\ref{th:gallai}.
\end{proof}

\section{Small gap-extremal graphs}

In this section, we compute $s(2)$, $s(3)$ and $s(4)$.  For $s(2)$ and
$s(3)$, we prove that the corresponding gap-extremal graphs are
unique.  The proofs are a bit tedious and we apologize for this.  Yet,
they give a feeling of what is going on (maybe some nicer argument can
shorten them?)  They are all given here also because they are not
published or even submitted.  They have been obtained jointly with
Gy\'arf\'as and Seb\H o.

\subsection*{About graphs on 10 vertices}

There exists a graph on 10 vertices with gap 2: take two disjoint
$C_5$'s.  We denote this graph by $2C_5$.  Our aim in this section is
to prove $s(2)=10$ and to prove that $2C_5$ is the only 2-extremal
graph.

\begin{lemma}
  \label{l:gap2Triangle}
  A graph on at most 10 vertices with a gap of 2 is either $2C_5$ or
  contains a triangle.
\end{lemma}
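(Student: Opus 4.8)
The plan is to reduce to the gap-critical case and then combine the factor-critical structure of Theorem~\ref{l:fcrit} with the small Ramsey numbers recorded in Table~\ref{t:ramsey}. Throughout assume $G$ is triangle-free (otherwise there is nothing to prove), with $\gap(G)=2$ and $|V(G)|\le 10$; the goal is to force $G=2C_5$. I will repeatedly use the elementary fact that deleting a single vertex changes each of $\theta$ and $\alpha$ by at most one, so $\gap$ changes by at most one under a vertex deletion.

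First I would pass to a gap-critical induced subgraph of the same gap. Let $H$ be an induced subgraph of $G$ with the \emph{fewest} vertices subject to $\gap(H)\ge 2$ (the graph $G$ itself qualifies). For every $v\in V(H)$, minimality gives $\gap(H\sm v)\le 1$, so $\gap(H\sm v)<\gap(H)$ and hence $H$ is gap-critical; moreover $\gap(H)\le \gap(H\sm v)+1\le 2$, so in fact $\gap(H)=2$. Note that $H$ is triangle-free and $|V(H)|\le 10$.

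Next I would exploit the structure of triangle-free gap-critical graphs. By Lemma~\ref{l:components} every component of $H$ is gap-critical, hence has no isolated vertex and gap at least $1$. By Theorem~\ref{l:fcrit} every component is factor-critical, so it has odd order; being triangle-free it is not a single vertex and the only connected triangle-free graph on $3$ vertices, namely $P_3$, is not factor-critical, so each component has odd order at least $5$. Since $\gap(H)=2$ is the sum of the component gaps, each of which is at least $1$, there are at most two components, giving two cases. In the single-component case $D$ (with $\gap(D)=2$), factor-criticality yields $\nu(D)=(|D|-1)/2$, and as in the proof of Theorem~\ref{l:fcrit} one has $\theta(D)=|D|-\nu(D)$; hence $\alpha(D)=(|D|-3)/2$, i.e.\ $|D|=2\alpha(D)+3$. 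As $D$ is triangle-free, $|D|\le R(3,\alpha(D)+1)-1$. Testing the small cases against Table~\ref{t:ramsey}: $\alpha(D)=1$ forces a triangle-free complete graph on $5$ vertices, which is impossible; $\alpha(D)=2$ gives $|D|=7>R(3,3)-1=5$; and $\alpha(D)=3$ gives $|D|=9>R(3,4)-1=8$. Thus $\alpha(D)\ge 4$ and $|D|\ge 11>10$, a contradiction. Therefore $H$ has exactly two components $D_1,D_2$, each of gap $1$. Each has order at least $5$ and the two orders sum to at most $10$, so $|D_1|=|D_2|=5$; the same computation with gap $1$ gives $\alpha(D_i)=2$, so each $D_i$ is a triangle-free graph on $5$ vertices with no stable set of size $3$, i.e.\ an $R(3,3)$-extremal graph. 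By Table~\ref{t:ramsey} the only such graph is $C_5$, so $H=2C_5$. Since $|V(H)|=10=|V(G)|$, we conclude $G=H=2C_5$.

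The main obstacle is the single-component case: there the gap-critical reduction by itself does not finish, and one must show that a connected (factor-critical) triangle-free graph of gap $2$ necessarily has at least $11$ vertices. This is exactly where factor-criticality is indispensable, since it pins down $\nu(D)$ and thereby converts $\gap(D)=2$ into the clean relation $|D|=2\alpha(D)+3$; the Ramsey values $R(3,3)=6$ and $R(3,4)=9$ then rule out every admissible order $|D|\le 10$. The two-component case, by contrast, is essentially forced by the size bound together with the uniqueness of the $R(3,3)$-extremal graph.
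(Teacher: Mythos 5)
Your proof is correct and follows essentially the same route as the paper's: pass to a gap-critical graph by minimality, apply Lemma~\ref{l:components} and Theorem~\ref{l:fcrit}, rule out the connected case via Gallai's identity ($\theta = n - \nu$ for triangle-free graphs with no isolated vertex) together with $R(3,3)=6$ and $R(3,4)=9$, and conclude $2C_5$ in the two-component case. Your treatment of the minimality step (showing the minimal induced subgraph with gap at least~2 is gap-critical of gap exactly~2, and transferring the conclusion back to $G$) is slightly more explicit than the paper's, but the argument is the same.
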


\begin{proof}
  Suppose that $G$ is a triangle-free graph on at most 10 vertices and
  $\gap(G)=2$.  We suppose $G$ minimal with respect to this property,
  so $G$ is gap-critical.

  By Lemma~\ref{l:fcrit}, all components of $G$ are factor-critical.
  If $G$ is connected, then $G$ is on $2k+1$ vertices and we have
  $\theta(G) = k+1$.  If $k=4$ then $\alpha(G) \geq 4$ because $R(3,
  4) = 9$.  So $\gap(G) \leq 1$, a contradiction.  If $k=3$ then
  $\alpha(G) \geq 3$ because $R(3, 3) = 6$.  So $\gap(G) \leq 1$, a
  contradiction.  If $k=2$, then $G$ is on 5 vertices, so we know that
  $\gap(G) \leq 1$, a contradiction.

  Hence, $G$ has at least two components that are all gap-critical by
  Lemma~\ref{l:components}.  So all components of $G$ have gap 1, and
  since $|V(G)| \leq 10$, there must be two of them, both isomorphic to
  $C_5$.  So $G$ is indeed $2C_5$.
\end{proof}

The \emph{Grotzsch graph}\index{Grotzsch graph}, or \emph{Mycielski
  graph}\index{Mycielski graph}, is a famous graph on eleven vertices
constructed as follows: take an chordless cycle $C = a_1, \dots, a_5$.
For every $i$ take a vertex $b_i$ adjacent to the neighbors of $a_i$
in $C$.  Then add a last vertex $c$ adjacent to $b_1, \dots, b_5$.
Chv\'atal \cite{chvatal:74} proved that the Grotzsch graph is the only
graph on at most eleven vertices such that $\omega=2$ and $\chi\geq
4$.  Rephrased in the complement, this gives:

\begin{lemma}[Chv\'atal, \cite{chvatal:74}]
  \label{l:chvatal}
  If $G$ is a graph on at most 10 vertices such that $\alpha(G) \leq
  2$ then $\gap(G) \leq 1$.
\end{lemma}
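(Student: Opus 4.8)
The plan is to translate the statement into the complement, where it becomes precisely Chv\'atal's theorem about the Grotzsch graph. Write $\overline{G}$ for the complement of $G$. A clique of $G$ is a stable set of $\overline{G}$ and conversely, so a cover of $V(G)$ by cliques of $G$ is the same data as a proper coloring of $\overline{G}$; hence $\theta(G) = \chi(\overline{G})$ and $\alpha(G) = \omega(\overline{G})$. Consequently
$$\gap(G) = \theta(G) - \alpha(G) = \chi(\overline{G}) - \omega(\overline{G}),$$
and the hypothesis $\alpha(G) \leq 2$ reads $\omega(\overline{G}) \leq 2$.

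Set $H = \overline{G}$, so that $|V(H)| = |V(G)| \leq 10$ and $\omega(H) \leq 2$; the goal is to show $\chi(H) - \omega(H) \leq 1$. First I would dispose of the degenerate case: if $H$ has no edge, then $\chi(H) = \omega(H) \in \{0,1\}$ and the gap is $0$, so there is nothing to prove. Otherwise $\omega(H) = 2$, and it suffices to establish $\chi(H) \leq 3$.

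This is where Chv\'atal's result is invoked. Suppose for contradiction that $\chi(H) \geq 4$. Then $H$ is a graph on at most eleven vertices with $\omega(H) = 2$ and $\chi(H) \geq 4$, so by the theorem of Chv\'atal quoted above it must be the Grotzsch graph. But the Grotzsch graph has eleven vertices, whereas $|V(H)| \leq 10$, a contradiction. Hence $\chi(H) \leq 3$ and therefore $\gap(G) = \chi(H) - 2 \leq 1$.

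The argument is a direct application once the complement dictionary is in place, and there is no substantial obstacle beyond correctly matching the four invariants $\theta, \alpha, \chi, \omega$ across the complement and treating the edgeless case separately. The single point that must not be overlooked is that the hypothesis $|V(G)| \leq 10$ lies strictly below eleven, which is exactly what excludes the unique exceptional graph on eleven vertices and makes the conclusion go through.
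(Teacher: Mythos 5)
Your proof is correct and is exactly the paper's argument: the paper derives this lemma from Chv\'atal's theorem (the Grotzsch graph is the only graph on at most eleven vertices with $\omega=2$ and $\chi\geq 4$) with the single remark ``Rephrased in the complement, this gives,'' which is precisely the complementation dictionary $\theta(G)=\chi(\overline{G})$, $\alpha(G)=\omega(\overline{G})$ that you spell out. Your treatment of the edgeless case and the eleven-versus-ten vertex count just makes explicit what the paper leaves implicit.
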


\begin{lemma}
  \label{l:s2=10}
  $s(2) = 10$.
\end{lemma}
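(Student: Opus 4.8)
The plan is to combine the upper bound $s(2)\le 10$, which comes from $2C_5$ (a graph of gap $2$ by Lemma~\ref{l:components}, since each $C_5$ has gap $1$), with a matching lower bound $s(2)\ge 10$. For the lower bound I would assume, for contradiction, that some graph of gap $2$ has at most $9$ vertices, and take $G$ to be such a graph of minimum order; then $G$ is gap-critical. By Lemma~\ref{l:chvatal} we get $\alpha(G)\ge 3$, hence $\theta(G)=\alpha(G)+2\ge 5$, and by Lemma~\ref{l:gap2Triangle}, since $G\neq 2C_5$, the graph $G$ contains a triangle; let $K$ be a largest clique of $G$, of size $\omega=|K|\ge 3$.

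The first step is to delete a largest clique. By Lemma~\ref{l:removeK}, $G':=G\sm K$ satisfies $\gap(G')=1$ and $\alpha(G')=\alpha(G)$, so $|V(G')|\ge s(1)=5$ and therefore $|V(G)|\ge \omega+5\ge 8$; in particular the case $\omega\ge 5$ gives $|V(G)|\ge 10$ at once. The key observation is that whenever $|V(G')|=5$, the graph $G'$ is $1$-extremal and hence equal to $C_5$, the unique $1$-extremal graph, so $\alpha(G)=\alpha(G')=\alpha(C_5)=2$, contradicting $\alpha(G)\ge 3$. This single remark eliminates every configuration with $|V(G)|=\omega+5$, namely $\omega=3,\ |V(G)|=8$ and $\omega=4,\ |V(G)|=9$. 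Since $|V(G)|\le 9$ and $|V(G)|\ge \omega+5$, the only surviving possibility is $\omega=3$ and $|V(G)|=9$, in which $G'$ has exactly $6$ vertices, gap $1$, and $\alpha(G')=\alpha(G)\ge 3$.

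The hard part is this last configuration, which I expect to need a genuinely finite analysis of $6$-vertex graphs of gap $1$, carried out via Ramsey's value $R(3,3)=6$, the uniqueness of $C_5$, and the absence of simplicial vertices (Lemma~\ref{simplicial}). If $G'$ is triangle-free, its gap forces an induced $C_5$, and the sixth vertex can only attach to $K$; one then checks that every possible attachment either builds a $K_4$ (impossible as $\omega=3$), makes that vertex simplicial, or leaves it isolated, each forbidden by gap-criticality and Lemma~\ref{simplicial}. If instead $G'$ contains a triangle $K'$, then $K$ and $K'$ are disjoint triangles in $G$; covering $K$, $K'$ and the three remaining vertices yields $\theta(G)\le 5$, which together with $\theta(G)\ge 5$ pins down $\alpha(G)=3$ and forces those three vertices to be pairwise nonadjacent, after which a short edge-analysis between the two triangles and these vertices (again using $\omega=3$ and Lemma~\ref{simplicial}) produces the final contradiction. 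The principal obstacle is managing this $\omega=3$, $9$-vertex case cleanly; everything else follows mechanically from Lemmas~\ref{l:removeK}, \ref{l:chvatal} and \ref{l:gap2Triangle}.
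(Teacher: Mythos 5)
Your top-level skeleton is the same as the paper's (minimal counterexample, gap-criticality, $\alpha(G)\ge 3$ via Lemma~\ref{l:chvatal} so $\theta(G)\ge 5$, a triangle via Lemma~\ref{l:gap2Triangle}, clique deletion via Lemma~\ref{l:removeK}), your elimination of $\omega\ge 4$ through the uniqueness of the $1$-extremal graph is correct, and your triangle subcase is workable (it is essentially Case~3 of the paper's proof of Lemma~\ref{unique10} with one vertex fewer). The genuine gap is in the subcase you yourself flag as the hard one, when $G'$ is triangle-free: you assert that ``the sixth vertex can only attach to $K$'' and justify it by the trichotomy ``builds a $K_4$ / simplicial / isolated''. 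That trichotomy only classifies attachments inside $K$ and is false for attachments to the $C_5$: if the sixth vertex $v$ satisfies $N(v)=\{c_1,c_3\}$ for two non-adjacent vertices $c_1,c_3$ of the $C_5$ (adjacent ones are excluded only by triangle-freeness of $G'$), then no $K_4$ contains $v$, $N(v)$ is not a clique so $v$ is not simplicial, and $v$ is not isolated. None of your three outcomes occurs, so the step fails as written. What actually rules out every attachment of $v$ to the $C_5$ is the clique-cover bound you already have available: if $v$ has a neighbour $c$ on the $C_5$, then $K$, the edge $vc$, and two further edges covering the remaining four $C_5$-vertices cover $G$ with four cliques, so $\theta(G)\le 4$, contradicting $\theta(G)\ge 5$. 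You use exactly this covering trick in your other subcase, so the repair is small --- but it is the one essential idea missing from the step.

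Once that covering argument is in hand, your whole case structure becomes unnecessary, and this is how the paper proceeds: it deletes an \emph{arbitrary} triangle $T$ (not a maximum clique), so that $G\sm T$ has at most $6$ vertices and gap $1$ by Lemma~\ref{l:removeK}, hence contains an induced $C_5$ (otherwise it would be perfect); then the at most one vertex of $G\sm T$ outside this $C_5$ either has a neighbour on it, giving $\theta(G)\le 4$ by the covering above, or has all its neighbours in the triangle $T$ and is simplicial, contradicting Lemma~\ref{simplicial}. No split on $\omega$, no appeal to the uniqueness of the $1$-extremal graph, and no distinction between triangle-free and non-triangle-free remainders is needed.
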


\begin{proof}
  Because of $2C_5$ we know that $s(2) \leq 10$ so it remains to prove
  that no graph on at most nine vertices has a gap of 2.  So suppose
  for a contradiction that there exists a graph $G$ on at most nine
  vertices and $\gap(G)=2$.  We choose $G$ minimal with respect to
  this property, hence $G$ is gap-critical.  By Lemma~\ref{l:chvatal}
  we know that $\alpha(G)$ is at least 3, so it is sufficient to prove
  $\theta(G) \leq 4$.  By Lemma~\ref{l:gap2Triangle}, we know that $G$
  must contain a triangle $T$.  By Lemma~\ref{l:removeK}, $G\sm T$ has
  gap 1 and is on at most 6 vertices.  So, $G\sm T$ must contain an
  induced $C_5$ (else it is perfect).  If there is $v\in X\setminus
  V(C_5)$ such that $v$ is adjacent to a vertex of $C_5$ or $X=V(C_5)$
  then $\theta(G)\le 4$, a contradiction. Otherwise $v$ is a
  simplicial vertex in $G$ and the contradiction is by
  Lemma~\ref{simplicial}.
\end{proof}

Let $W$ be the Wagner graph, that is the graph on eight vertices $w_1,
\dots, w_8$, with the edges $w_iw_{i+1}$ and $w_iw_{i+4}$, $i=1,
\dots, 8$, where the addition is taken modulo 8.  It is well known
from Ramsey Theory that $R(3, 2) = 9$ and that $\alpha(W) = 3$,
$\omega(W) = 2$.  Note that the Wagner graph is not the unique graph
on eight vertices with $\alpha = 3$, $\omega = 2$.  Two other graphs
exist, that may be obtained from $W$ by removing respectively one and
two well chosen edges.

Since we want to cover a graph $G$ on 10 vertices $\alpha(G) + 1$
cliques, it would be useful to show that every such graph contains
sufficiently many ``big'' cliques or contains a ``big'' stable set.
Many such statements may exist, but let us discuss the following: ``if
$G$ has 10 vertices, then either it contains $K_4$, $S_4$ or two
disjoint triangles''.  Note first that $R(4, 4) = 18$.  Hence, the
output ``two disjoint triangles'' really needs to be there for else,
we may obtain counter-examples up to 17 vertices.  If $G$ is on only 9
vertices, it may fail to have one of the desired subset.  Indeed, if
we add a vertex complete to $W$, we obtain a graph on 9 vertices that
does not satisfy our conclusion.  The output ``clique on four
vertices'' needs to be there also because if we forget it, by adding
to $W$ a $K_2$ complete to $\{w_1, w_2, w_3\}$, we obtain a graph on
ten vertices that does not satisfy the conclusion.  Finally, the
output ``stable set on four vertices'' is needed and best possible as
shown by two disjoint copies of $C_5$, where no stable set on five
vertices exists.  Therefore the following lemma is in a sense best
possible.

A graph $G$ has the \emph{second stable set property}\index{second
  stable set property} if for every stable set $S$ of $G$ there exists
a maximum stable set of $G$ disjoint from $S$.  To check that a graph
has the second stable set property, it is sufficient to check that for
every \emph{maximal} stable set $S$ of $G$ there exists a maximum
stable set of $G$ disjoint from $S$.  But it maybe not sufficient to
check that for every \emph{maximum} stable set $S$ of $G$ there exists
a maximum stable set of $G$ disjoint from $S$.

\begin{lemma}
  \label{threecases}
  If $G$ is a graph on at least 10 vertices then either $G$ contains
  a clique or a stable set on four vertices, or $G$ contains two
  disjoint triangles.
\end{lemma}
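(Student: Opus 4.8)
The plan is to prove the contrapositive: assuming that $G$ contains neither a $K_4$ nor an $S_4$ and no two vertex-disjoint triangles, I will show $|V(G)| \le 9$. Since $G$ has no $K_4$ we have $\omega(G) \le 3$, and since $G$ has no $S_4$ we have $\alpha(G) \le 3$. If $G$ is triangle-free, then $\omega(G)\le 2$ and $\alpha(G)\le 3$, so $R(3,4)=9$ gives $|V(G)|\le 8$ and we are done. Hence I may assume $G$ contains a triangle $T=\{a,b,c\}$ and set $G'=G\sm T$. Any triangle of $G'$ would be disjoint from $T$, so $G'$ is triangle-free; together with $\alpha(G')\le 3$ and $R(3,4)=9$ this yields $|V(G')|\le 8$, hence $|V(G)|\le 11$. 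The whole difficulty is to squeeze this bound from $11$ down to $9$, i.e.\ to prove $|V(G')|\le 6$, and this is where the ``no two disjoint triangles'' hypothesis must finally be used.

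First I would organize $V(G')$ according to the trace of each vertex on $T$. No vertex of $G'$ is adjacent to all of $a,b,c$, for otherwise it would complete $T$ into a $K_4$; so every vertex of $G'$ is non-adjacent to at least one vertex of $T$. This gives a partition into the empty-type set $X$ (adjacent to none of $a,b,c$), the singleton-type sets $Y_a,Y_b,Y_c$, and the pair-type sets $Z_{ab},Z_{ac},Z_{bc}$ (where $Z_{ab}$ consists of the common neighbours of $a$ and $b$ in $G'$, which are automatically non-adjacent to $c$). For each $T$-vertex $t$, the set $M_t$ of its non-neighbours in $G'$ is triangle-free and, after adding $t$, has $\alpha \le 2$; so $R(3,3)=6$ gives $|M_t|\le 5$, and summing over $t=a,b,c$ yields $3|X| + 2(|Y_a|+|Y_b|+|Y_c|) + (|Z_{ab}|+|Z_{ac}|+|Z_{bc}|) \le 15$.

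The decisive observation is a triangle-disjointness mechanism. An edge inside $Z_{ab}$ yields both a triangle through $a$ and a triangle through $b$; a single vertex of $Z_{bc}$ yields, with the edge $bc$, a triangle $\{b,c,\cdot\}$; more generally each type-set produces triangles meeting $T$ in a prescribed way. Whenever two such triangles can be chosen to meet $T$ in two \emph{distinct} vertices and to be otherwise vertex-disjoint, we obtain two disjoint triangles, a contradiction. Exploiting this I would prove: each $G[Z_{pq}]$ has matching number at most $1$ (two disjoint edges give disjoint triangles through $p$ and through $q$) and $\alpha \le 2$, hence $|Z_{pq}|\le 3$; and an edge inside a pair-set $Z_{pq}$ forces the two other pair-sets to be empty (a triangle through $p$ coming from that edge would be disjoint from a $\{q,r,\cdot\}$-triangle supplied by a nonempty $Z_{qr}$). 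Feeding these restrictions into the inequality above, and treating separately the extreme branch in which all vertices of $G'$ are pair-type (which collapses to $|V(G')|\le 4$), drives the count down to $|V(G')|\le 6$.

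The main obstacle is bookkeeping, not ideas: one must run through the cases according to which of $X,Y_a,Y_b,Y_c,Z_{ab},Z_{ac},Z_{bc}$ are non-empty and which contain edges, checking in each branch that the counting inequality together with the disjoint-triangle restrictions leaves no room for a seventh vertex in $G'$. The tight boundary cases are the delicate ones: they correspond to $G'$ being, or nearly being, a Ramsey-extremal triangle-free graph with $\alpha=3$ (such as $C_7$ or the Wagner graph $W$), and there one should pin down a maximum stable set $S$ of $G'$, use that each of $a,b,c$ must have a neighbour in $S$ (else $S$ extends to an $S_4$), and locate two disjoint triangles directly. Since ``$W$ plus a vertex complete to $W$'' already realises a $9$-vertex graph with none of the three outcomes, the bound is best possible, which is a useful consistency check that the case analysis must reproduce.
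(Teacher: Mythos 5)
Your skeleton is sound, and it is a genuinely different organization from the paper's: the paper fixes a triangle $T$, sets $H=G\setminus T$ (seven vertices, triangle-free), and runs a structural case analysis on $H$ itself (bipartite / contains a ``pentoline'' / has the second stable set property / contains $C_5$ or $C_7$), whereas you partition $V(G')$ by attachment type to $T$ and add the Ramsey counting inequality $3|X|+2|Y|+|Z|\le 15$, which is correct and pleasant. But the proposal is not a proof: everything after the counting inequality is deferred as ``bookkeeping,'' and the tools you list are not sufficient to close it. The counting leaves alive precisely the configurations where $G'$ is a Ramsey-near-extremal graph (e.g.\ the Wagner graph $W$, or $C_5$ plus two attached vertices), and for those your stated device --- ``each of $a,b,c$ has a neighbour in every maximum stable set of $G'$'' --- is strictly weaker than what is needed. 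The paper's key ingredient is the \emph{second stable set property}: for every stable set of $G'$, in particular for $N_{G'}(t)$, there is a \emph{maximum} stable set of $G'$ disjoint from it. That upgrades your observation to ``$N_{G'}(t)$ contains an edge for each $t\in T$''; then, since the three resulting triangles must pairwise intersect and $G'$ is triangle-free, the three edges share a common vertex $x$, and $T\cup\{x\}$ is a $K_4$. Without verifying this property for the extremal graphs (it is a nontrivial check, and it is exactly where the paper spends its effort --- its ``bookkeeping'' runs to two pages and culminates in the Wagner-graph endgame, using that $W$ is cubic and edge-transitive), the tight cases do not close.

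There is also a concrete structural oversight in your treatment of the pair-type sets: two adjacent vertices $u,v\in Z_{pq}$ form a $K_4$ together with $p$ and $q$ (all six edges are present), so $G[Z_{pq}]$ is stable outright --- your matching-number branch is vacuous --- and then $Z_{pq}\cup\{r\}$ being stable forces $|Z_{pq}|\le 2$, not $3$. This slip happens to work in your favor (the true constraints are stronger than the ones you state), but it signals that the interactions your case analysis actually hinges on have not been worked out: edges between distinct $Z$-sets (an edge between $Z_{ab}$ and $Z_{bc}$ plus a vertex of $Z_{ac}$ gives two disjoint triangles), edges inside the $Y$-sets, and the fact that any non-neighbourhood $M_t$ of size exactly $5$ must induce a $C_5$. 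Those facts, together with the second-stable-set argument above, are the proof; until they are carried out, the argument has a genuine gap rather than a routine verification left to the reader.
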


\begin{proof}
  Let $G$ be on 10 vertices.  We suppose that $G$ contains no $K_4$
  and no two disjoint triangles.  We look for a stable set of size 4
  in $G$.  Since $R(4, 3) = 9$, we may assume that $G$ contains a
  triangle $T = uvw$.  We put $H = G\sm T$.  Note that since
  $G$ contains no two disjoint triangles, $H$ is triangle-free.  We
  give now three sufficient conditions on $H$ for the existence of an
  $S_4$ in $G$.

  \begin{claim}
    \label{c:bip}
    It is sufficient to prove that $H$ is bipartite.
  \end{claim}

  \begin{proofclaim}
    Clear because then $H$ (and then $G$) contains an $S_4$ since $H$
    is on $7$ vertices.
  \end{proofclaim}

  \begin{claim}
    \label{c:pento}
    It is sufficient to prove that $H$ contains an induced subgraph
    isomorphic to the \emph{pentoline}\index{pentoline} where the pentoline is the graph on
    $\{a, b, c, d, x\}$ whose edge-set is $\{ab, cd\}$.
  \end{claim}

  \begin{proofclaim}
    For suppose that $H$ contains a pentoline on $\{a, b, c, d, x\}$
    with edges $ab, cd$.  Then one vertex of $T$, say $u$, needs to be
    non-adjacent to $x$ for otherwise $G$ contains a $K_4$.  So, if
    $u$ has a non-neighbor in $ab$ and a non-neighbor in $cd$ we have
    an $S_4$.  Hence, we may assume that $u$ is complete to $ab$.  So
    none of $v, w$ can be a non-neighbor of $x$, because then by a
    similar argument, it would be complete to $ab$ or $cd$, and this
    would yield a $K_4$ (if $ab$) or 2 disjoint triangles (if $cd$).
    So both $v, w$ are adjacent to $x$.  Hence, $vwx$ and $uab$ are
    two disjoint triangles, a contradiction.
  \end{proofclaim}

  \begin{claim}
    \label{c:2ndstable10}
    It is sufficient to prove that $H$ has the second stable set
    property.
  \end{claim}

  \begin{proofclaim}
    Because then, suppose that $N_H(u)$ is a stable set of $H$.  Then $H$
    contains a stable set $S$ of size $\alpha(H)$ disjoint from
    $N_H(u)$.  And since $\alpha(H) = 3$ (because $H$ is triangle-free
    on at least $R(3, 3) = 6$ vertices), $S\cup \{u\}$ is a stable set
    of size 4 of $G$.  So $u$, and similarly every vertex of $T$ must
    have an edge in its neighborhood.  But since $G$ contains no two
    disjoint triangles, all these edges must be pairwise intersecting,
    so since $H$ is triangle-free, they must share a common vertex $x$.
    Therefore $T\cup \{x\}$ is a $K_4$, a contradiction.
  \end{proofclaim}

  Now we study more precisely the structure of $H$.  By~(\ref{c:bip})
  we may assume that $H$ is not bipartite, and since it is
  triangle-free, it must contain an induced $C_7$ or $C_5$.  If it
  contains an induced $C_7$, then $H=C_7$.  It easy to see that $C_7$
  has the second stable set property so we are done
  by~(\ref{c:2ndstable10}).  Hence we may assume that $H$ contains an
  induced $C_5$, say $C = z_1 \dots z_5 z_1$, and two other vertices
  $x, y$.

  If $x$ and $y$ have a common neighbor in $C$, say $z_1$, then since
  $H$ is triangle-free, $xy, xz_2, xz_5, yz_2, yz_5 \notin E(G)$.  So,
  $\{x, y, z_2, z_5\}$ is an $S_4$.  Hence, from here on we assume
  that $x$ and $y$ have no common neighbor in $C$.

  We suppose first that both $x, y$ have at most one neighbor in $C$.
  Then we may assume $N_C(x) \subseteq \{z_1\}$.  Up to symmetry we
  also assume $N_C(y) \subseteq \{z_2\}$ or $N_C(y) \subseteq
  \{z_3\}$.  In the second case, either $\{x, y, z_2, z_4\}$ is an
  $S_4$ (when $xy\notin E(G)$) or $\{x, y, z_2, z_4, z_5\}$ is a
  pentoline (when $xy\in E(G)$) and we
  apply~(\ref{c:pento}). Therefore we may assume $N_C(y) \subseteq
  \{z_2\}$.  We have $xz_1, yz_2, xy \in E(G)$ for otherwise $H$
  contains an $S_4$ or a pentoline.  We observe now that $H$ has the
  second stable set property, so we are done by~(\ref{c:2ndstable10}).

  Hence, we may assume from here on that $x$ or $y$ (say $x$) has at
  least 2 neighbors in $C$, and in fact exactly 2 neighbors in $C$ (say
  $z_1, z_3$) because $H$ is triangle free.  If $y$ has no neighbor in
  $C$ then $\{z_2, z_4, z_5, x, y\}$ either contains an $S_4$ or is a
  pentoline.  So, we may assume that $y$ has a neighbor in $C$ and we
  suppose first that $y$ has a unique neighbor in $C$.  Up to symmetry
  there are two case: $yz_2\in E(G)$ and $yz_5 \in E(G)$.  In the
  first case, we have $xy\in E(G)$ or otherwise, $\{z_2, y, x, z_4,
  z_5\}$ is a pentoline.  We observe now that $H$ has the second
  stable set property.  Hence we are in the second case where $yz_5\in
  E(G)$.  We have then $xy\in E(G)$ or $\{z_2, x, y, z_4\}$ is an
  $S_4$.  Now we observe that $z_1 x z_3 z_4 z_5 z_1$ is an induced
  $C_5$ of $H$ and that $z_2, y$ have both two neighbors in this
  $C_5$.  So, up to an isomorphism, it sufficient from here on to
  study the case when both $x, y$ have two neighbors in $C$.

  Since $x, y$ have both 2 neighbors in $C$ and $H$ is triangle-free,
  we may assume $N_C(x) = \{z_1, z_3\}$ and $N_C(y) = \{z_2, z_4\}$.
  If $xy\in E(G)$, we can see that $H$ has the second stable set
  property.  To get convinced, it maybe convenient to notice that $H$
  is in fact isomorphic to $K_{3, 3}$ with one edge subdivided.  So we
  may assume that $xy\notin E(G)$.  Now $H$ contains no pentoline and
  does not have the second stable set property.  Indeed $S = \{x, y,
  z_5\}$ is a stable set of $H$ that intersects every maximum
  stable set of $H$.  Also $S$ is the only stable set with this
  property.   Like in the proof of~(\ref{c:2ndstable10}),  we
  notice that if every vertex among $u, v, w$ sees an edge of $H$,
  this gives a $K_4$ or two disjoint triangles.  So may assume that
  $N_H(u)$ is a stable set, and unless this stable set is $S$, we find
  a stable set of size 3 disjoint from it implying $\alpha(G)\geq 4$.
  Hence, we may assume $N_C(u) = \{x, y, z_5\}$.

  Now, $G[H \cup \{u\}]$ is graph isomorphic to $W$.  To see this, one
  can consider the Hamiltonian cycle $z_1 z_2 y u x z_3 z_4 z_5 z_1$.
  So, we use our notation $w_1, \dots, w_8$ for the vertices of $W =
  G\sm \{v, w\}$.  If $v$ and $w$ both have two edges in their
  neighborhood in $W$ then they must see a common edge or two disjoint
  edges because $W$ is a cubic graph.  So, we have a $K_4$ or two
  disjoint triangles.  Hence, we may assume that $v$ sees at most one
  edge of $W$.  If $v$ sees no edge in $W$, then $v$ sees a stable set
  of $W$, and since $W$ has the second stable set property, there a
  stable set $S$ of size 3 of $W$ disjoint from $N_W(v)$, and $S\cup
  \{v\}$ shows $\alpha(G) \geq 4$.  So in fact, $v$ sees exactly one
  edge of $W$, say $w_1w_2$ (because $W$ is edge transitive).  Now,
  $N_W(v) \subseteq \{w_1, w_2, w_4, w_7\}$.  So, $\{v, w_3, w_5,
  w_8\}$ is a stable set of size 4 of $G$.
\end{proof}

\begin{lemma}\label{unique10}
  The unique 2-extremal graph is $2C_5$.
\end{lemma}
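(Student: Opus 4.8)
The plan is to classify all $2$-extremal graphs by first reducing to the connected case and then showing that no connected gap-critical graph on $10$ vertices has gap $2$, so that the only survivor is the disconnected graph $2C_5$.

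First I would reduce to connectivity. A $2$-extremal graph $G$ is gap-critical, has $10 = s(2)$ vertices and gap $2$, and by Lemma~\ref{simplicial} has no simplicial vertex; in particular it has no isolated vertex. By Lemma~\ref{l:components} every component of $G$ is itself gap-critical, and the component gaps sum to $2$. A gap-critical graph with at least one vertex has gap at least $1$, since a nonempty perfect graph cannot be gap-critical (deleting a vertex cannot push the gap below $0$). Hence either $G$ is connected of gap $2$, or $G$ has exactly two components each of gap $1$. In the second case each component is a gap-$1$ graph and so has at least $s(1) = 5$ vertices; as the two orders sum to $10$ both equal $5$, and since $C_5$ is the only gap-$1$ graph on $5$ vertices, $G = 2C_5$. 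So it remains to rule out the connected case.

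So assume $G$ is connected, gap-critical, on $10$ vertices with gap $2$. By Lemma~\ref{l:chvatal}, $\alpha(G) \geq 3$, whence $\theta(G) = \alpha(G) + 2$. Being connected, $G$ is not $2C_5$, so by Lemma~\ref{l:gap2Triangle} it contains a triangle, and we may invoke the trichotomy of Lemma~\ref{threecases}: $G$ contains a $K_4$, an $S_4$, or two disjoint triangles. The common tool is clique deletion via Lemma~\ref{l:removeK}: deleting a $k$-clique $K$ from the critical graph $G$ gives $\gap(G\sm K) = 1$ on $10-k$ vertices, and a gap-$1$ graph is imperfect, hence contains an induced odd hole or odd antihole; on at most $6$ vertices this is forced to be a $C_5$, and on $7$ vertices it is one of $C_5$, $C_7$, $\overline{C_7}$. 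If $G$ has a $K_4$, the residue has $6$ vertices and is $C_5$ plus one extra vertex attached as wheel-spokes; if $G$ has two disjoint triangles $T_1,T_2$, then $G\sm(T_1\cup T_2)$ has $4$ vertices, hence gap $0$ since $s(1)=5$, and comparing $\theta$-values (using Lemma~\ref{l:removeK} for $T_1$ and the fact that removing a clique drops $\theta$ by at most one) forces $\alpha(G)=4$ and pins the residue to a $4$-element stable set; otherwise only a single triangle $T$ is available, and one removes it to get a gap-$1$ graph $H$ on $7$ vertices. In each configuration one puts the deleted clique back and derives a contradiction: either some vertex of the residue has its whole neighborhood inside a clique, contradicting Lemma~\ref{simplicial}, or the induced $C_5$ together with the reattached clique supplies a clique cover/independent set whose sizes are incompatible with $\gap(G)=2$.

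The main obstacle is exactly this bookkeeping, which is why the authors apologize for the tedium. The three outcomes of Lemma~\ref{threecases} overlap — a $K_4$ together with the triangle of the wheel-residue lands one back in the two-disjoint-triangle case — so the cases must be organized carefully by $\omega(G)$, and in the two-triangle case one must control all adjacencies between $T_1$, $T_2$ and the four remaining vertices. I expect the cleanest route is to establish $\alpha(G)=4$ and $\theta(G)=6$ along the way (the value $\alpha(G)=3$ forces, via Lemma~\ref{threecases}, a $K_4$ or two disjoint triangles and is absorbed into those subcases, while $\alpha(G)\geq 5$ forces too many singleton cliques in a minimum cover and hence a simplicial vertex), and then to verify that no arrangement of two disjoint triangles and a $4$-element stable set on $10$ vertices is simultaneously connected, free of simplicial vertices, and of gap $2$. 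Every individual case is settled by Lemmas~\ref{l:removeK} and~\ref{simplicial} together with the $C_5$-containment of gap-$1$ graphs; the only graph passing all constraints is $2C_5$, which connectivity excludes, giving the desired contradiction and the uniqueness claim.
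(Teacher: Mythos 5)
Your frame is the right one, and your Cases 1 and 2 essentially match the paper's (delete the $K_4$, respectively a triangle, and analyse the gap-1 residue via Lemma~\ref{l:removeK}, the $C_5$/$C_7$/$\overline{C_7}$ dichotomy, and Lemma~\ref{simplicial}). But the proposal collapses exactly where the real work lies: the case of two disjoint triangles. Your claim that two disjoint triangles ``force $\alpha(G)=4$ and pin the residue to a $4$-element stable set'' is false. The $\theta$-comparison you invoke only yields $\alpha(G\sm(T_1\cup T_2))=\alpha(G)\le 4$; and in the situation that remains after $K_4$ and $S_4$ have been excluded (Lemma~\ref{l:chvatal} giving $\alpha\ge 3$, hence $\alpha(G)=\omega(G)=3$), the four leftover vertices are a stable triple $C$ plus a fourth vertex $d$ with edges into $C$, $T_1$, $T_2$. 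Pinning down those adjacencies is the paper's Case 3, which occupies three subcases (according to how many neighbours $d$ has in $C$) and a page of bespoke arguments: covers by two triangles and two edges, forced edges $a_ib_i$, the forced clique $\{a_1,b_1,c_3,d\}$, and so on. None of this is ``settled by Lemmas~\ref{l:removeK} and~\ref{simplicial} together with the $C_5$-containment of gap-1 graphs'' --- the paper's Case 3 never even looks for a $C_5$. Asserting that ``the only graph passing all constraints is $2C_5$'' is asserting the lemma, not proving it.

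A second genuine error: your route for disposing of $\alpha(G)\ge 5$ --- ``too many singleton cliques in a minimum cover and hence a simplicial vertex'' --- is invalid, because a singleton class of a minimum clique cover need not be a simplicial vertex; $C_5$ itself (cover: two edges and one singleton) is a counterexample. No bound of this kind is needed: any $S_4$, hence any $\alpha\ge 4$, is dispatched wholesale by Case 2, which you sketch correctly in your third paragraph before abandoning it in favour of the flawed $\alpha=4$, $\theta=6$ reorganisation. (Your opening reduction to the connected case is correct but superfluous: once $G\neq 2C_5$ is assumed, the trichotomy of Lemma~\ref{threecases} applies whether or not $G$ is connected, which is how the paper proceeds.)
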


\begin{proof}
  Let $G$ be a 2-extremal graph.  By Lemma~\ref{l:s2=10} we have
  $|V(G)| = 10$.  By Lemma~\ref{l:chvatal}, we may assume that
  $\alpha(G) \geq 3$.  We suppose that $G$ is not $2C_5$ and we shall
  reach a contradiction by proving that $\theta(G)\le \alpha(G)+1$.
  By Lemma~\ref{threecases}, there are three cases:

  \noindent\bf Case 1: \rm $G$ contains a clique $K$ on 4 vertices.
  By Lemma~\ref{l:removeK} we have $\gap(G\sm K) = 1$ so the graph
  $G\sm K$ is not perfect.  Therefore $G\sm K$ contains an induced
  $C_5$.  Let $v\in V(G\sm K)$ be the vertex not on $C_5$.  If there
  is an edge from $v$ to $C_5$, we have a cover of $V(G)$ by $K$ and
  three edges, a contradiction.  Thus $v$ is not adjacent to any
  vertex of $C_5$ so it is a simplicial vertex in $G$.  Therefore
  there is a contradiction by Lemma~\ref{simplicial}.

  \noindent\bf Case 2: \rm $G$ contains a stable set $S$ on $4$
  vertices.  By Lemma~\ref{l:gap2Triangle}, $G$ contains a triangle
  $T$.  Note that in this paragraph, a cover of $G$ with at most 5
  cliques brings a contradiction.  The argument is similar to Case 1.
  Like in Case 1, $G \sm T$ is not perfect, thus it contains either
  $C_5$ or is isomorphic to one of the graphs $C_7$, $\overline{C_7}$.
  The latter cases lead to a contradiction since those graphs can be
  covered by at most four cliques.  Thus there is an induced $C_5$ in
  $G\setminus T$. If the two vertices uncovered by $C_5$ are adjacent,
  or any of them adjacent to a vertex of $C_5$, we have a similar
  contradiction.  Thus both of these uncovered vertices are simplicial
  and we finish by using Lemma \ref{simplicial}.

  \noindent\bf Case 3: \rm $G$ contains two vertex-disjoint triangles,
  $T_1=\{a_1,a_2,a_3\}, T_2=\{b_1,b_2,b_3\}$ and $\alpha(G) =
  \omega(G)=3$. If the remaining four vertices contain a triangle or
  two independent edges, we have $\theta(G)\le 4$, a contradiction.
  Therefore three of these vertices form an independent set
  $C=\{c_1,c_2,c_3\}$ and we have the following subcases according to
  the adjacencies of the last vertex~$d$ (which has a neighbor among
  $c_1, c_2, c_3$ because $\alpha(G) = 3$).

  \bf Subcase 3.1, \rm $dc_i\in E(G)$ for $i=1,2,3$.  Each vertex of
  $T_1$ must have a neighbor in $C$ because $\alpha(G) = 3$.  If
  $a_1c_1, a_2c_1 \in E(G)$ then we must have $a_3c_2 \in E(G)$ or
  $a_3c_3\in E(G)$ because there is no $K_4$. But then, we can cover
  $G$ with two triangles and two edges.  So we proved that no two
  vertices in $T_1$ can have a common neighbor in $C$.  Hence, we may
  assume that the only edges between $T_1$ (and similarly $T_2$) and
  $C$ are $c_ia_i$ (and similarly $c_ib_i$), $i=1,2,3$.  Using that
  $\alpha(G)=3$, it follows that $a_ib_i\in E(G)$ and now
  $a_i,b_i,c_i$ for $i=1,2,3$ give three disjoint triangles showing
  that $\theta(G)\le 4$, a contradiction.

  \bf Subcase 3.2, \rm $dc_3\in E(G),dc_1,dc_2\notin E(G)$.  Suppose
  first that every vertex of $T_1$ has a neighbor in $\{c_1, c_2\}$.
  Since there is no $K_4$ we may assume $a_1c_1, a_2c_2, a_3c_2 \in
  E(G)$, so we can cover $G$ with two triangles and two edges, a
  contradiction.  So there must be a vertex in $T_1$ with no neighbor
  in $\{c_2, c_1\}$, say $a_1$, and by the same argument a similar
  vertex in $T_2$, say $b_1$.  Using five times that $\alpha(G)=3$, we
  get that $a_1c_3,b_1c_3,a_1b_1,da_1,db_1\in E(G)$, a contradiction
  because $\{a_1,b_1,c_3,d\}$ is a clique.

  \bf Subcase 3.3, \rm $dc_2,dc_3\in E(G),dc_1\notin E(G)$.  We claim
  that $c_1$ is nonadjacent to at least two vertices of both
  $T_1,T_2$.  If not, say $c_1$ is adjacent to $a_2, a_3$, then
  $c_2a_1,c_3a_1 \notin E(G)$ otherwise we have a cover with two
  triangles and two edges. Depending on $c_1a_1\in E(G)$ or not, we
  have either a clique or an independent set of size four, a
  contradiction that proves the claim.  Therefore, w.l.o.g.\ $c_1$ is
  non-adjacent to $a_2, a_3, b_2, b_3$.  If $c_1a_1\notin E(G)$ or
  $c_1b_1\notin E(G)$ or $a_1b_1 \in E(G)$ then $c_1$ is a simplicial
  vertex, a contradiction to Lemma~\ref{simplicial}.  Thus $c_1a_1,
  c_1b_1\in E(G), a_1b_1\notin E(G)$.

  Next we note that each of $a_2, a_3$ must have a neighbor in $\{c_2,
  c_3\}$, else there is an $S_4$.  But $a_2, a_3$ may not have a
  common neighbor in $\{c_2, c_3\}$ because then there is a cover with
  two triangles and two edges.  Hence w.l.o.g.\ the only edges between
  $T_1$ and $C$ are $c_1a_1, c_2a_2, c_3a_3$.  Similarly, the only
  edges between $T_2$ and $C$ are $c_1b_1, c_2b_2, c_3b_3$.

  Now $\alpha(G)=3$ implies $a_2b_2,a_3b_3\in E(G)$.  Moreover $da_2,
  da_3, db_2, db_3 \notin E(G)$ otherwise there is a clique cover with
  two triangles and two edges. Then $a_2b_3, a_3b_2 \in E(G)$ for
  otherwise $a_2, b_3, c_1, d$ or $a_3, b_2, c_1, d$ would form an
  independent set.  But now have the final contradiction since
  $a_2, a_3, b_2, b_3$ span a clique.
\end{proof}

\subsection*{About graphs on 13 vertices}

Let $R$ be the graph on $\{r_1, \dots, r_{13}\}$ with the following
edges: $r_ir_{i+1}$ and $r_ir_{i+5}$, $i=1, \dots, 13$, where the
addition taken modulo 13 (see Figure~\ref{fig:R}).  It well known from
Ramsey Theory that $R$ is the largest graph such that $\omega=2$ and
$\alpha=4$.  Note that $\gap(G) = 3$.

\begin{figure}[ht]
  \center
  {\includegraphics{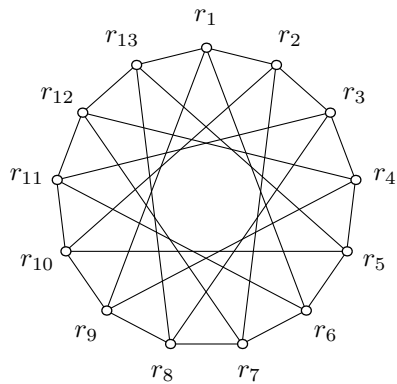}}
 \caption{The $R(3, 5)$-extremal graph\label{fig:R}}
\end{figure}

\begin{lemma}
  \label{l:s313}
  $s(3) = 13$ and every 3-extremal graph is connected.
\end{lemma}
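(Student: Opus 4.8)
The plan is to prove $s(3)\le 13$ and $s(3)\ge 13$ separately, and then to extract connectivity from the lower-bound argument. The upper bound is immediate from the graph $R$ of Figure~\ref{fig:R}: it is triangle-free on $13$ vertices with $\alpha(R)=4$, and since it has no isolated vertex and a matching missing only one vertex, $\theta(R)=\zeta(R)=13-\nu(R)=7$, so $\gap(R)=3$ and $s(3)\le 13$. For the lower bound I would suppose $s(3)\le 12$, take a $3$-extremal graph $G$ (so $|V(G)|=s(3)\le 12$), and recall that every gap-extremal graph is gap-critical. The whole task then reduces to deriving a contradiction from the existence of a gap-critical graph $G$ with $\gap(G)=3$ and $|V(G)|\le 12$.

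First I would rule out triangles. If $G$ contained a triangle $T$, then Lemma~\ref{l:removeK} gives $\gap(G\sm T)=2$, whence $|V(G)|=|V(G\sm T)|+3\ge s(2)+3=13$ by Lemma~\ref{l:s2=10}, a contradiction; so $G$ is triangle-free. Next I would rule out disconnection: by Lemma~\ref{l:components} the components of $G$ are gap-critical and their gaps sum to $3$, and since the gap is never negative no nonempty gap-critical graph has gap $0$, so each component has positive gap. A disconnected $G$ would thus split as $1+2$ or $1+1+1$; since a gap-critical graph of gap $g$ has at least $s(g)$ vertices, both splittings force $|V(G)|\ge s(1)+s(2)=15$ or $3\,s(1)=15$, again a contradiction. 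Hence $G$ is connected.

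Now $G$ is a connected triangle-free gap-critical graph, so by Lemma~\ref{l:fcrit} it is factor-critical, hence of odd order $n=2m+1\le 11$. Being triangle-free with no isolated vertex, $\theta(G)=\zeta(G)=n-\nu(G)$, and factor-criticality gives $\nu(G)=m$, so $\theta(G)=m+1$ and $\alpha(G)=\theta(G)-3=m-2$. The finish is pure Ramsey bookkeeping. For $n=11$ ($m=5$) we would need $\alpha(G)=3$, impossible for a triangle-free graph on $11\ge R(3,4)=9$ vertices, where $\alpha\ge 4$; for $n=9$ ($m=4$) we would need $\alpha(G)=2$, impossible on $9\ge R(3,3)=6$ vertices, where $\alpha\ge 3$; and for $n\le 7$ the value $\alpha(G)=m-2$ is at most $1$, which is absurd (already $\alpha\ge 1$ always, and $\alpha\ge 3$ once $n\ge 6$). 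This contradiction yields $s(3)\ge 13$, hence $s(3)=13$.

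Finally, any $3$-extremal graph has $|V(G)|=13<15$, so the disconnection argument of the second paragraph applies verbatim and forces it to be connected. The step that would ordinarily be the main obstacle is the connected case, but Lemma~\ref{l:fcrit} collapses it: factor-criticality pins down both $\theta(G)$ and $\alpha(G)$ exactly, so the only genuine content is the handful of Ramsey inequalities above.
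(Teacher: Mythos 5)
Your proof is correct, and its skeleton matches the paper's: the upper bound from $R$, connectivity via the component decomposition (a disconnected graph of gap $3$ splits as $2+1$ or $1+1+1$, forcing at least $s(2)+s(1)=15$ vertices), and triangle-exclusion plus Theorem~\ref{l:fcrit} for the lower bound. The finish of the lower bound, however, is genuinely different. The paper first invokes Lemma~\ref{l:jump2} to get $s(3)\geq s(2)+2=12$, so it only needs to exclude a $3$-extremal graph on exactly $12$ vertices, and there it finishes by pure parity: by Lemma~\ref{l:fcrit} every component is factor-critical, hence of odd order, so a graph on $12$ (even) vertices would have to be disconnected, contradicting the $\geq 15$ bound --- no Ramsey numbers are needed at all. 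You instead rule out every order at most $12$ directly: connectivity and factor-criticality pin down $\theta(G)=m+1$ and $\alpha(G)=m-2$ on $n=2m+1\leq 11$ vertices, and then $R(3,4)=9$ and $R(3,3)=6$ eliminate each odd order. This costs you the two Ramsey facts and the Gallai identity bookkeeping, but it avoids Lemmas~\ref{l:jumpk} and~\ref{l:jump2} as named tools (you rederive the triangle step from Lemma~\ref{l:removeK}), and it mirrors the method the paper itself uses for $s(2)$ in Lemma~\ref{l:gap2Triangle}. Both arguments are sound; the paper's reduction-plus-parity trick is the shorter of the two, while yours is more self-contained and makes explicit exactly where the Ramsey input enters.
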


\begin{proof}
  Because of $R$, $s(3) \leq 13$.  Since $s(2)=10$
  (Lemma~\ref{l:s2=10}), $s(1) = 5$ and by Lemma~\ref{l:components},
  it is impossible to have a disconnected graph with gap 3 on less
  than 15 vertices.  So every 3-extremal graph is connected.  By
  Lemma~\ref{l:jump2} and since $s(2) = 10$ we have $s(3) \geq 12$.
  So suppose for a contradiction that $s(3) = 12$ and consider an
  extremal graph on 12 vertices.  By Lemma~\ref{l:jumpk}, $G$ is
  triangle-free.  By Lemma~\ref{l:fcrit}, every component of $G$ is
  factor-critical.  In particular, every component of $G$ has an odd
  number of vertices, so $G$ is not connected a contradiction.
\end{proof}

\begin{lemma}
  \label{l:3exT}
  A 3-extremal graph is either $R$ or contains a triangle.
\end{lemma}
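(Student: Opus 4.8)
The plan is to prove the contrapositive: if $G$ is a triangle-free $3$-extremal graph, then $G = R$. So assume $G$ is triangle-free with $\gap(G) = 3$ and $|V(G)| = 13$ (the order $s(3)=13$ is fixed by Lemma~\ref{l:s313}). First I would record the structural facts already available. By Lemma~\ref{l:s313}, $G$ is connected, and since every gap-extremal graph is gap-critical, $G$ is gap-critical as well. Being triangle-free and gap-critical, Lemma~\ref{l:fcrit} tells us that every component of $G$ is factor-critical; as $G$ is connected, $G$ itself is factor-critical. This is the key reduction, exactly parallel to the way the triangle-free case on $10$ vertices was handled in Lemma~\ref{l:gap2Triangle}.

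The next step is a short computation that pins down the Ramsey-type parameters of $G$. A factor-critical graph on $13$ vertices has a maximum matching covering $12$ of them, so $\nu(G) = 6$, and it has no isolated vertex (removing a vertex leaves a perfect matching, so no other vertex is isolated). Since $G$ is triangle-free with no isolated vertex, its cliques are just vertices and edges, whence $\theta(G) = \zeta(G) = |V(G)| - \nu(G) = 13 - 6 = 7$. From $\gap(G) = \theta(G) - \alpha(G) = 3$ we deduce $\alpha(G) = 4$. Thus $G$ is a graph on $13$ vertices with $\omega(G) = 2$ (triangle-free) and $\alpha(G) = 4$.

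To finish, I would invoke Ramsey theory. Since $R(3, 5) = 14$, any graph with no $K_3$ and no $S_5$ has at most $13$ vertices, so $G$ is an $R(3,5)$-extremal graph. As $R$ is the unique such extremal graph (Table~\ref{t:ramsey}), we conclude $G = R$, as desired.

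The argument is essentially bookkeeping resting on the earlier lemmas; the one place that requires any care is the chain ``triangle-free and gap-critical'' $\Rightarrow$ ``factor-critical'' $\Rightarrow$ the exact values $\theta(G)=7$, $\alpha(G)=4$, since this is what converts a gap-extremality hypothesis into a Ramsey-extremality hypothesis. Once those values are in hand, the uniqueness of the $R(3,5)$-extremal graph does all the remaining work, so I do not expect a genuine obstacle here.
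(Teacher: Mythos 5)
Your proof is correct and follows essentially the same route as the paper's: connectivity and $|V(G)|=13$ from Lemma~\ref{l:s313}, factor-criticality from Lemma~\ref{l:fcrit}, the computation $\theta(G)=7$, $\alpha(G)=4$, and then uniqueness of the $R(3,5)$-extremal graph. You merely spell out the bookkeeping ($\nu(G)=6$, $\theta=\zeta=n-\nu$) that the paper leaves implicit.
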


\begin{proof}
  If $G$ is triangle-free and 3-extremal $|V(G)|=13$, $G$ is connected
  by Lemma~\ref{l:s313} and $G$ is factor-critical by
  Lemma~\ref{l:fcrit}.  So $\theta(G) = 7$ and $\alpha(G) = \theta(G)
  - \gap(G) = 4$.  Hence $\omega(G) = 2$ and $\alpha(G) = 4$, so from
  Ramsey Theory, $G$ is isomorphic to $R$.
\end{proof}

\begin{lemma}
  \label{l:twelveTh}
  The unique 3-extremal graph is $R$.
\end{lemma}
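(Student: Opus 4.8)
The plan is to combine Lemma~\ref{l:3exT} with the uniqueness result for $s(2)$. By Lemma~\ref{l:3exT} a $3$-extremal graph is either $R$ or contains a triangle, so it suffices to rule out the second possibility. First I would fix a $3$-extremal graph $G$ containing a triangle $T$. By Lemma~\ref{l:s313} we have $|V(G)| = 13$ and $G$ is gap-critical, so Lemma~\ref{l:removeK} gives $\gap(G\sm T) = 2$, $\alpha(G\sm T) = \alpha(G)$ and $\theta(G\sm T) = \theta(G)-1$. Since $|V(G\sm T)| = 10$ and $s(2) = 10$ (Lemma~\ref{l:s2=10}), the graph $G\sm T$ is $2$-extremal, hence $G\sm T = 2C_5$ by Lemma~\ref{unique10}. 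In particular $\alpha(G) = \alpha(2C_5) = 4$ and $\theta(G) = \theta(2C_5) + 1 = 7$. The whole task is therefore to show that no triangle can be glued onto $2C_5$ so as to keep $\alpha = 4$ while forcing $\theta = 7$.

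The key structural observation comes from $\alpha(G) = 4$. Write the two pentagons as $A$ and $B$. A maximum stable set of $2C_5$ is the union of a (size-$2$) maximum stable set of $A$ and one of $B$, so if a vertex $t \in T$ were nonadjacent to both vertices of some maximum stable set of $A$ and to both of some maximum stable set of $B$, we would obtain a stable set of size $5$. Hence each $t \in T$ must meet every maximum stable set of at least one pentagon; I will say that $t$ \emph{dominates} that pentagon. A short check on $C_5$ shows that dominating a pentagon forces $t$ to have at least three neighbors in it (two neighbors leave three nonneighbors, which always contain a nonadjacent pair), and that any three vertices of $C_5$ contain two consecutive ones (as $\alpha(C_5) = 2$); thus a dominating vertex forms a triangle with an edge of the pentagon it dominates.

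Now I would assign to each of the three vertices of $T$ one pentagon that it dominates, so that each pentagon receives at most three of them. The plan is to cover $G$ with only six cliques, contradicting $\theta(G) = 7$: cover each pentagon together with the (at most three) vertices of $T$ assigned to it using just three cliques, and add the two covers. Since $C_5$ alone is covered by three cliques (two edges and a vertex), the content is that each assigned $t$ can be absorbed into such a cover at no extra cost, by attaching it to an edge of the pentagon it dominates (turning an edge-clique into a triangle, or two coincident attachments into a $K_4$ or $K_5$). Carrying this out for two and for three assigned vertices is the heart of the argument.

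The main obstacle is exactly this last absorption step: verifying that a pentagon plus up to three dominating vertices is always coverable by three cliques. This is a finite but slightly fiddly case analysis over the possible attachment patterns (a dominating vertex with exactly three neighbors ``misses'' one edge of $C_5$, giving five patterns per vertex), where one must exhibit in every case three cliques --- sometimes a triangle and two smaller cliques, sometimes a $K_4$ arising from two vertices of $T$ sharing a pentagon-edge. Should some pattern resist a six-clique cover, the fallback is to produce a stable set of size $5$ (contradicting $\alpha(G) = 4$) or a simplicial vertex (contradicting Lemma~\ref{simplicial}); but I expect the covering argument alone to close every case, so that $\theta(G) \le 6$ always, contradicting $\theta(G) = 7$ and leaving $R$ as the only possibility.
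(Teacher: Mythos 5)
Your plan is correct and is, in its skeleton, the paper's own proof: remove the triangle $T$, apply Lemma~\ref{l:removeK} together with Lemma~\ref{unique10} to identify $G\sm T$ with $2C_5$, deduce $\alpha(G)=4$ and $\theta(G)=7$, show that each vertex of $T$ must be complete to three consecutive vertices of one pentagon (your domination argument is exactly the paper's), and then contradict $\gap(G)=3$ by exhibiting a cover by six cliques. Where you diverge is the endgame, and the comparison is instructive. Before covering, the paper makes one more use of extremality: $G$ contains no $K_4$, because deleting a $K_4$ from the gap-critical $G$ would, by Lemma~\ref{l:removeK}, leave a graph on $9$ vertices with gap $2$, contradicting $s(2)=10$. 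Since the three vertices of $T$ are pairwise adjacent, no two of their $P_3$'s can share a pentagon edge (that would create a $K_4$), so the three $P_3$'s are pairwise edge-disjoint; and since three edge-disjoint $P_3$'s cannot fit in a single $C_5$, the configuration is unique up to symmetry (two $P_3$'s meeting in one vertex of one pentagon, the third $P_3$ in the other), and the six-clique cover is then written down in one line with no case analysis. Your version tolerates $K_4$'s and $K_5$'s and absorbs them into the cover, which is precisely what generates your ``fiddly'' absorption step. That step does close in every case --- when three vertices of $T$ are assigned to one pentagon, pigeonhole on its five edges gives two of them sharing an edge, hence a $K_4$, and the third vertex either supplies a triangle on an edge of the remaining three-vertex path or is complete to the shared edge and extends the $K_4$ to a $K_5$, leaving the path coverable by two cliques --- so your proof is valid. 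What the paper's extra observation buys is a case-free finish; what your route buys is avoiding one lemma, at the price of an enumeration you would have to carry out in full.
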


\begin{proof}
  Let $G$ be a 3-extremal graph.  So $|V(G)| = 13$.  By
  Lemma~\ref{l:3exT} we may assume that $G$ contains a triangle $T =
  c_1c_2c_3$.  Let us see that this leads to a contradiction.  By
  Lemma~\ref{l:removeK}, $\gap(G\sm T) = 2$ so $G\sm T$ is isomorphic
  to $2C_5$.  So $G\sm T$ contains two disjoint $C_5$, $C = a_1 \dots
  a_5$ and $C' = b_1 \dots b_5$.  Note that $\omega(G) = 3$ for
  otherwise, by removing a $K_4$ we obtain by Lemma~\ref{l:removeK} a
  graph with gap 2 on 9 vertices, a contradiction to $s(2)=10$.

  We see that $G$ admits a clique cover with 7 cliques, so $\alpha(G)
  \leq 4$.  So, $c_i$, $i=1, 2, 3$, has a neighbor in every $S_4$ of
  $G\sm T$.  It follows that $c_i$, $i=1, 2, 3$, must be complete to
  an induced $P_3$ of at least one of $C, C'$.  But since there is no
  $K_4$, these three $P_3$'s must be edge-disjoint.  So w.l.o.g.\ we
  have $c_1a_1, c_1a_2, c_1a_3$, $c_2a_3, c_2a_4, c_2a_5$, $c_3b_1,
  c_3b_2, c_3b_3$, $\in E(G)$.  Now $\{a_1, a_2, c_1\}$, $\{a_4, a_5,
  c_2\}$, $\{a_3\}$, $\{b_1, b_2, c_3\}$, $\{b_3, b_4\}$ and $\{b_5\}$
  are 6 cliques that cover $G$, a contradiction.
\end{proof}

\subsection*{About graphs on 16 vertices}

The aim of this section is to prove $s(4) = 17$.  We need to study $R$
a bit further.

\begin{lemma}
  \label{l:Rsecond}
  If $S$ is a set of vertices of $R$ such that $|S|\leq 4$ then there
  exists a maximum stable set $S'$ of $R$ such that $S\cap S' =
  \emptyset$.  In particular, $R$ has the second stable set property.
\end{lemma}

\begin{proof}
  Note that $S$ is any set (possibly not a stable set).  Since
  $|S|\leq 4$ we have $|V(R\sm S)| \geq 9$.  Since $R(3, 4) = 9$,
  $R\sm S$ contains a stable set of size 4.
\end{proof}

\begin{lemma}
  \label{l:s416}
  $s(4) \geq 16$.
\end{lemma}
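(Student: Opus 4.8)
The plan is to start from the cheap bound $s(4) \geq s(3) + 2 = 15$ given by Lemma~\ref{l:jump2}, and then to rule out the value $s(4) = 15$. So I would assume, for contradiction, that $s(4) = 15$ and fix a $4$-extremal graph $G$; thus $|V(G)| = 15$ and $G$ is gap-critical.

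The first step is to reduce to the case where $G$ is connected. If $G$ were disconnected, then by Lemma~\ref{l:components} its components would be gap-extremal graphs whose gaps are positive integers summing to~$4$, so $|V(G)| = \sum_i s(g_i)$ for some partition $4 = \sum_i g_i$ with each $g_i \geq 1$ and at least two parts. Using the already computed values $s(1)=5$, $s(2)=10$, $s(3)=13$, every such partition gives at least $\min\{4\cdot 5,\ 2\cdot 10,\ 10+2\cdot 5,\ 13+5\} = 18$ vertices, contradicting $|V(G)| = 15$. Hence $G$ is connected.

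Next I would split on whether $G$ contains a triangle. If $G$ contains a triangle $T$, then $T$ is a non-empty clique of the gap-critical graph~$G$, so Lemma~\ref{l:removeK} gives $\gap(G\sm T) = 3$ while $|V(G\sm T)| = 12 < 13 = s(3)$, which is impossible (equivalently, Lemma~\ref{l:jumpk} with $k=3$ would force $s(4)\geq s(3)+3 = 16$). If instead $G$ is triangle-free, then by Lemma~\ref{l:fcrit} every component of $G$ is factor-critical, and since $G$ is connected, $G$ itself is factor-critical on the odd number $15$ of vertices; thus $\nu(G) = 7$, and since $G$ has no isolated vertex (gap-criticality) and is triangle-free we get $\theta(G) = \zeta(G) = |V(G)| - \nu(G) = 8$. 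Then $\alpha(G) = \theta(G) - \gap(G) = 4$ and $\omega(G) = 2$.

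The main obstacle, and the heart of the argument, is this last triangle-free case: the contradiction comes from Ramsey theory. Since $R(3,5) = 14 \leq 15 = |V(G)|$, the graph $G$ (having $\omega(G)=2$) must contain a stable set of size~$5$, so $\alpha(G) \geq 5$, contradicting $\alpha(G) = 4$. As both cases are impossible, the value $s(4) = 15$ cannot occur, and therefore $s(4) \geq 16$. The delicate points to get right are the bookkeeping in the connectivity reduction (checking that no two-or-more-part partition of $4$ beats $18$) and the chain $\theta = \zeta = |V| - \nu$ in the factor-critical case, which relies on triangle-freeness and on the absence of isolated vertices.
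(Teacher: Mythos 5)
Your proof is correct, but it ends differently from the paper's. After the common opening (Lemma~\ref{l:jump2} gives $s(4)\geq 15$; a triangle is killed by Lemma~\ref{l:removeK}/Lemma~\ref{l:jumpk} since $s(3)=13$), the paper finishes by removing an \emph{edge} $xy$ from the hypothetical 15-vertex graph $G$: by Lemma~\ref{l:removeK} the result has gap~3 on 13 vertices, hence is isomorphic to $R$ by the uniqueness statement (Lemma~\ref{l:twelveTh}), and then the second stable set property of $R$ (Lemma~\ref{l:Rsecond}) produces an $S_4$ avoiding $N(x)$, giving $\alpha(G)\geq 5$ against $\theta(G)\leq\theta(R)+1=8$. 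You instead avoid the uniqueness of the 3-extremal graph entirely: you first rule out disconnectedness by Lemma~\ref{l:components} and the values $s(1),s(2),s(3)$, then use Lemma~\ref{l:fcrit} to make the connected triangle-free $G$ factor-critical, compute $\nu(G)=7$, hence $\theta(G)=\zeta(G)=15-\nu(G)=8$ and $\alpha(G)=\theta(G)-\gap(G)=4$, and finally invoke $R(3,5)=14\leq 15$ to force $\alpha(G)\geq 5$, a contradiction. Both endgames are sound and all the tools you cite are available in the chapter (the Ramsey value $R(3,5)=14$ is in the paper's table, and your identities $\theta=\zeta=|V|-\nu$ for triangle-free graphs with no isolated vertex are exactly those used in the proof of Lemma~\ref{l:fcrit}). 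What the two routes buy: the paper's argument is shorter on the page and needs no connectivity reduction, but it leans on the heavier structural input that $R$ is the \emph{unique} 3-extremal graph together with its second stable set property; yours needs only the numerical value $s(3)=13$ plus matching theory and one Ramsey number, so it would survive even if uniqueness of the 3-extremal graph were unknown, at the cost of the extra connectivity bookkeeping.
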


\begin{proof}
  By Lemma~\ref{l:jump2} and since $s(3) = 13$ we have $s(4) \geq
  15$.  So our lemma holds unless $s(4) = 15$.  Then let $G$ be a
  4-extremal graph on 15 vertices.  By Lemma~\ref{l:jumpk}, $G$ is
  triangle free, and by Lemma~\ref{l:removeK}, for any edge $xy$,
  $G\sm xy$ has gap 3, so $G\sm xy$ is isomorphic to $R$ by
  Lemma~\ref{l:twelveTh}.  But since $G$ is triangle-free, $N(x)$ is a
  stable set, and since $R$ has the second stable set property, $G$
  contains a stable set $S$ of size four disjoint from $N(x)$.  So
  $\alpha(G) \geq 5$ because of $S\cup \{x\}$.  Since $\theta(G) \leq
  \theta(R) + 1 = 8$, we have $\gap(G) \leq 3$, a contradiction.
\end{proof}

\begin{lemma}
  \label{l:16omega2}
  If $G$ is triangle-free and 4-extremal then $G$ contains at least 17
  vertices.
\end{lemma}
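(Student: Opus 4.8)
The plan is to show that a triangle-free, 4-extremal graph $G$ cannot have $16$ vertices, which together with Lemma~\ref{l:s416} forces $|V(G)| \geq 17$. I would argue by contradiction, assuming $G$ is triangle-free, 4-extremal, and on exactly $16$ vertices. Since $G$ is triangle-free we have $\omega(G) = 2$ (assuming $G$ has an edge, which it does since $\gap(G) = 4 > 0$), so $\theta(G) = \zeta(G)$ equals the minimum edge cover. By Lemma~\ref{l:fcrit} every component of $G$ is factor-critical, hence each component has an odd number of vertices; since the whole graph has an even number of vertices $16$, there must be an even number of components, in particular at least two components. This is the structural foothold I would exploit.

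The heart of the argument is to push the component analysis against the gap and order constraints. Since $G$ is gap-critical, by Lemma~\ref{l:components} every component is itself triangle-free, gap-critical, and factor-critical. A single factor-critical triangle-free component on $2k+1$ vertices has $\theta = k+1$ and, because it is triangle-free, $\alpha \geq$ the Ramsey bound; concretely, as in the proofs of Lemma~\ref{l:gap2Triangle} and Lemma~\ref{l:s313}, small factor-critical components can realize only small gaps. I would enumerate the possible multiset of component orders summing to $16$ with an even number of odd parts, compute the gap contributed by each (using $\gap(G) = \sum_i \gap(C_i) = 4$ from Lemma~\ref{l:components}), and use $s(1) = 5$, $s(2) = 10$, $s(3) = 13$ to rule out the partitions. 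For instance, two components of gaps summing to $4$ would need orders at least $s(a) + s(b)$ where $a+b = 4$; since $s(1)+s(3) = 5+13 = 18 > 16$ and $s(2)+s(2) = 20 > 16$, no two-component split fits in $16$ vertices, and more components only make the order requirement worse by Lemma~\ref{l:jump2}.

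The main obstacle I anticipate is handling the interaction between the Ramsey-theoretic lower bounds on $\alpha$ and the factor-critical parity constraint cleanly, rather than by brute enumeration of many subcases. The cleanest route is likely to observe that any disconnected gap-critical graph of gap $t$ needs at least $\min_{a+b=t,\,a,b\geq 1}(s(a)+s(b))$ vertices, and then verify this minimum for $t=4$ exceeds $16$ using the already-established values $s(1)=5$, $s(2)=10$, $s(3)=13$. Since a triangle-free 4-extremal graph on $16$ vertices is disconnected (by the parity argument above), this contradiction completes the proof. The only delicate point is confirming that no \emph{connected} triangle-free factor-critical graph on $16$ vertices can arise—but $16$ is even, whereas factor-critical graphs have odd order, so the connected case is immediately excluded, and the whole difficulty reduces to the disconnected tallying.
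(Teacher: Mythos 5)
Your proof is correct and takes essentially the same route as the paper's: Lemma~\ref{l:fcrit} together with the parity of $16$ (factor-critical components have odd order) forces $G$ to be disconnected, and then Lemma~\ref{l:components} with the known values $s(1)=5$, $s(2)=10$, $s(3)=13$ shows no disconnected graph on at most $16$ vertices can have gap $4$. The paper compresses the final step into ``it is easy to see''; your enumeration of the partitions of $4$ into at least two positive parts ($5+13=18$, $10+10=20$, etc.) is exactly that omitted tally.
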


\begin{proof}
  Suppose for a contradiction that there exists a triangle-free
  4-extremal graph $G$ on at most 16 vertices.  Then by
  Lemma~\ref{l:fcrit} every connected component of $G$ is
  factor-critical, so $G$ cannot be connected.  But since $s(1) = 5$,
  $s(2) = 10$, $s(3) = 13$ and $s(4) \geq 16$, it is easy to see by
  Lemma~\ref{l:components} that a disconnected graph on at most 16
  vertices has gap at most 3.
\end{proof}

The following is useful.

\begin{lemma}
  \label{l:3edge}
  Let $G$ be a non-empty graph and $E_1, E_2, E_3 \subseteq E(G)$ be
  three pairwise disjoint sets of edges.  Suppose that there are no
  disjoint edges $e,f$ such that $e\in E_i$, $f\in E_j$ where $1\leq
  i<j \leq 3$.  Then either:
  \begin{itemize}
  \item
    there exists one vertex $v$ and one integer $1\leq i\leq 3$
    such that for all edges $e\in E_i$ we have $v\in e$
  \item
    the graph spanned by $E_1, E_2, E_3$ is a $K_4$ and $E_1, E_2,
    E_3$ are three disjoint perfect matchings of this $K_4$.
  \end{itemize}
\end{lemma}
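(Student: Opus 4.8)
The plan is to assume the first alternative fails and to force the $K_4$ configuration. Say that two edges \emph{meet} if they share a vertex, so the hypothesis reads: every edge of $E_i$ meets every edge of $E_j$ whenever $i\neq j$. If some $E_i$ is empty or consists of a single edge, then the first bullet already holds (vacuously, using that $G$ is non-empty to supply a vertex $v$; or taking $v$ to be an endpoint of the single edge), so I may assume each $E_i$ is non-empty and has no vertex lying on all of its edges; in particular each $E_i$ has at least two edges. I will use the folklore fact that a family of \emph{pairwise} meeting edges is either a star (all edges through one vertex) or is exactly the three edges of a triangle: if it has no common vertex, picking an edge $ab$ and an edge avoiding $a$ yields $bc$, an edge avoiding $b$ yields $ac$, and any further edge must meet all of $ab,bc,ca$ and hence lies inside $\{a,b,c\}$.

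First I would show that some $E_i$ contains two disjoint edges. Suppose not: then the edges \emph{within} each family pairwise meet, so by the folklore fact and the no-common-vertex assumption each $E_i$ is exactly the edge set of a triangle $T_i$. But the families are pairwise disjoint as edge sets, so $T_1$ and $T_2$ are edge-disjoint triangles, and two edge-disjoint triangles can never be mutually cross-meeting: if they share $0$ or $1$ vertices they contain disjoint edges, while if they share $2$ or $3$ vertices they share an edge. This contradiction shows that, after relabeling, $E_1$ contains two disjoint edges $ab$ and $cd$.

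With $ab,cd\in E_1$ fixed, every edge of $E_2\cup E_3$ must meet both, hence has one endpoint in $\{a,b\}$ and one in $\{c,d\}$; thus $E_2,E_3\subseteq\{ac,ad,bc,bd\}$, the edge set of a $4$-cycle $a,c,b,d$. On this $4$-cycle a set of edges has no common vertex precisely when it contains a pair of opposite (disjoint) edges, $\{ac,bd\}$ or $\{ad,bc\}$. Since $E_2$ has no common vertex it contains such a pair, say $\{ac,bd\}\subseteq E_2$ (the other case is symmetric). Then every edge of $E_3$ must meet both $ac$ and $bd$, which inside the $4$-cycle leaves only $ad$ and $bc$; as $E_3$ also has no common vertex, $E_3=\{ad,bc\}$. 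Disjointness of the families then forces $E_2=\{ac,bd\}$, and finally any further edge of $E_1$ would have to meet all of $ac,bd,ad,bc$, which only $ab$ and $cd$ do (an edge using a vertex outside $\{a,b,c,d\}$ cannot meet all four), so $E_1=\{ab,cd\}$. Hence $E_1\cup E_2\cup E_3$ is the complete graph on $\{a,b,c,d\}$ and the three families are its three perfect matchings, which is the second alternative.

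The arithmetic of which crossing edges meet which is routine; the one place that needs care is the step in the last paragraph, where the three mutual cross-meeting constraints must be combined with the pairwise disjointness of $E_1,E_2,E_3$ to pin the configuration down to the unique $K_4$ labelling. I expect that to be the main obstacle, together with making sure the degenerate cases (an empty or single-edge family) are cleanly absorbed into the first alternative.
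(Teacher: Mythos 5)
Your proof is correct, and it is organized genuinely differently from the paper's. The paper pivots on a single edge: it takes $uv \in E_1$, notes that every edge of $E_2$ must hit $u$ or $v$, and that unless $E_2$ is a star at $u$ or at $v$ (which is the first outcome) there are edges $ux, vy \in E_2$ with $x \neq y$; the cross-intersection constraints then force $uy, vx \in E_3$ (the paper's line ``$vy \in E_3$'' is evidently a typo for $vx \in E_3$, since $vy \in E_2$ and the families are disjoint), force any further edge of $E_1$ to be $xy$, and a final maximality check pins everything to the $K_4$. In particular the paper never examines how edges \emph{inside} one $E_i$ intersect each other beyond ``no common vertex,'' so it never meets your triangle case. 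You instead begin with an internal dichotomy --- each common-vertex-free family either contains two disjoint edges or, by the folklore star/triangle classification of pairwise-intersecting edge families, is exactly a triangle --- kill the triangle case using edge-disjointness of the families together with cross-meeting, and then let the two disjoint edges $ab, cd \in E_1$ confine $E_2 \cup E_3$ to the $4$-cycle on $\{a,b,c,d\}$, after which opposite pairs and disjointness pin down the three perfect matchings. Your route costs one extra case (the edge-disjoint triangles) that the paper's choice of pivot avoids for free; in exchange, your endgame is more symmetric and transparent, it makes explicit exactly where the pairwise disjointness of $E_1, E_2, E_3$ is used, and the star/triangle lemma you isolate is a clean reusable fact. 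Both arguments are elementary and of comparable length, and both are complete.
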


\begin{proof}
  If $E_i = \emptyset$ we are done since then any vertex of $G$ is
  contained in all edges of $E_i$ by vacuity.  Also we are done
  trivially if $|E_i|=1$ by choosing any vertex in the unique edge of
  $E_i$.  Hence we may assume $|E_i| \geq 2$, $i= 1, 2, 3$.  So let
  $e_1 = uv$ be an edge of $E_1$.  Now every edge of $E_2$ must be
  incident to either $u$ or $v$ by assumption.  We may assume that
  there are edges $ux$, $vy \in E_2$ for otherwise $u$ or $v$ will be
  in every every edge of $E_2$, and we may assume $x\neq y$ for
  otherwise $x$ will be in every edge of $E_2$.  Similarly there is an
  edge $ux'\in E_3$, but $x'=y$ for otherwise $vy, ux'$ contradict our
  assumption.  Also, $vy\in E_3$.  Now there is one more edge in $E_1$
  and it must be $xy$ (or we are done).  Now the graph spanned by
  $E_1, E_2, E_3$ is a $K_4$ and $E_1, E_2, E_3$ are three perfect
  matchings of this $K_4$.  Adding any edge to $E_1 \cup E_2 \cup E_3$
  contradicts our assumption.
\end{proof}

\begin{lemma}
  \label{l:s4leq17}
  The seven $R(3,6)$-extremal graphs have gap 4 and $s(4) \leq 17$.
\end{lemma}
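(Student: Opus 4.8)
The plan is to prove the statement for an arbitrary $R(3,6)$-extremal graph $G$, so that all seven graphs are handled uniformly and their explicit structure is never needed. By definition such a $G$ has $17$ vertices, is triangle-free, and has no stable set of size $6$. First I would pin down $\alpha(G)$ and rule out isolated vertices. Since $R(3,5)=14\le 17$, the triangle-free graph $G$ contains an $S_5$, so $\alpha(G)\ge 5$, and having no $S_6$ forces $\alpha(G)=5$. If $G$ had an isolated vertex $v$, then $G\sm v$ would be triangle-free on $16\ge 14$ vertices, hence contain an $S_5$, which together with $v$ gives an $S_6$, a contradiction; so $G$ has no isolated vertex.

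Because $G$ is triangle-free with no isolated vertex, $\theta(G)=\zeta(G)=|V(G)|-\nu(G)=17-\nu(G)$, exactly as in the proof of Theorem~\ref{l:fcrit}. Hence $\gap(G)=\theta(G)-\alpha(G)=12-\nu(G)$. Since $17$ is odd we have $\nu(G)\le 8$, so $\gap(G)\ge 4$ comes for free; the whole content of the lemma is the reverse inequality, that is, the existence of a near-perfect matching $\nu(G)=8$. This is the step I expect to be the main obstacle, since $G$ is only given abstractly and we cannot simply inspect a matching.

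To force $\nu(G)=8$ I would argue by the Berge--Tutte deficiency formula. The deficiency $17-2\nu(G)$ is odd, so it suffices to exclude deficiency $\ge 3$. Suppose some $S\subseteq V(G)$ satisfied $c_{odd}(G\sm S)\ge |S|+3$. The components of $G\sm S$ are pairwise non-adjacent and triangle-free, so selecting one vertex per component shows that the number of components is at most $\alpha(G\sm S)\le\alpha(G)=5$; combined with $c_{odd}(G\sm S)\ge |S|+3$ this forces $|S|\le 2$ and at least $3$ components. Writing $\alpha_K=\alpha(K)$ for each component $K$, triangle-freeness gives $|K|\le R(3,\alpha_K+1)-1$, while $\sum_K\alpha_K=\alpha(G\sm S)\le 5$. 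Maximizing $\sum_K\bigl(R(3,\alpha_K+1)-1\bigr)$ over all partitions of an integer at most $5$ into at least $3$ positive parts (using $R(3,2)-1=2$, $R(3,3)-1=5$, $R(3,4)-1=8$) yields at most $12$, attained for instance by $3+1+1$ or $2+2+1$. Therefore $17=|V(G)|=|S|+\sum_K|K|\le 2+12=14$, a contradiction. Hence the deficiency is $1$, so $\nu(G)=8$ and $\gap(G)=12-8=4$. Finally $s(4)\le 17$ is immediate, since $s(t)$ is the order of a smallest graph of gap $t$ and we have just exhibited graphs of gap $4$ on $17$ vertices.
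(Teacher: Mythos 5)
Your proof is correct, but it reaches the heart of the lemma --- the inequality $\gap(G)\leq 4$ --- by a genuinely different route than the paper. Both arguments begin identically: $\omega(G)=2$ and $\alpha(G)=5$, so $\theta(G)\geq \lceil 17/2\rceil = 9$ and $\gap(G)\geq 4$ (your version via $\nu(G)\leq 8$ and Gallai's identity $\theta=\zeta=17-\nu$ is the same computation in matching language). For the upper bound, the paper simply invokes its own machinery: since $s(4)\geq 16$ (Lemma~\ref{l:s416}) and $s(t+1)\geq s(t)+2$ (Lemma~\ref{l:jump2}), we get $s(5)\geq 18>17$, so no graph on $17$ vertices can have gap $5$ or more. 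You instead prove from scratch that $\nu(G)=8$: by the Tutte--Berge formula a deficiency of at least $3$ would give a set $S$ with $c_{odd}(G\sm S)\geq |S|+3$, whence $|S|\leq 2$ and at least three components whose stability numbers sum to at most $5$, and the Ramsey bounds $R(3,2)-1=2$, $R(3,3)-1=5$, $R(3,4)-1=8$ cap the total number of vertices at $2+12=14<17$, a contradiction. Your superadditivity check on $a\mapsto R(3,a+1)-1$ (so that three parts summing to $5$ maximize the bound at $12$) is the one place a reader must pause, and it holds. The trade-off: the paper's proof is two lines because it leans on $s(4)\geq 16$, which in turn needed the uniqueness of the $3$-extremal graph $R$ and the second-stable-set lemmas --- but that machinery is needed anyway for Lemma~\ref{l:s417}, so within the paper it is essentially free. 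Your proof is self-contained (only small Ramsey numbers and Tutte--Berge), does not need any prior gap-extremal results, and yields extra structural information --- every $R(3,6)$-extremal graph has a near-perfect matching, pleasantly in the spirit of the factor-criticality theme of Theorem~\ref{l:fcrit} --- at the cost of importing a matching-theoretic theorem the paper never uses explicitly.
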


\begin{proof}
  From Ramsey Theory, it is known that there are seven
  $R(3,6)$-extremal graphs on 17 vertices. If $G$ is one one of them
  then $\omega(G) = 2$ and $\alpha(G) = 5$.  So $\theta(G) \geq 9$ and
  $\gap(G) \geq 4$.  Since $s(4) \geq 16$ (Lemma~\ref{l:s416}), we
  know by Lemma~\ref{l:jump2} that $s(5) \geq 18$, so $\gap(G) \leq
  4$.  Hence $\gap(G) = 4$ proving $s(4) \leq 17$.  
\end{proof}

\begin{lemma}
  \label{l:s417}
  $s(4) = 17$. 
\end{lemma}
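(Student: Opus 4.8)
The plan is to trap $s(4)$ between the bounds already established and then kill the lower one. By Lemma~\ref{l:s416} and Lemma~\ref{l:s4leq17} we have $16 \le s(4) \le 17$, so it suffices to show that no graph on $16$ vertices has gap $4$. Suppose for contradiction that $G$ is $4$-extremal with $|V(G)| = 16$. By Lemma~\ref{l:16omega2} a triangle-free $4$-extremal graph needs at least $17$ vertices, so $G$ contains a triangle $T = \{c_1, c_2, c_3\}$. By Lemma~\ref{l:removeK}, $\gap(G \sm T) = 3$ and $|V(G \sm T)| = 13 = s(3)$, so $G \sm T$ is $3$-extremal and hence isomorphic to $R$ by Lemma~\ref{l:twelveTh}. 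Thus $G$ arises from $R$ by adding the triangle $T$ together with some edges between $T$ and $R$.

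Next I would fix the basic parameters of $G$. First, $\omega(G) = 3$: it is at least $3$ because of $T$, and if it were at least $4$ then deleting a $K_4$ would, by Lemma~\ref{l:removeK}, produce a graph of gap $3$ on $12$ vertices, impossible since $s(3) = 13$ (Lemma~\ref{l:s313}). Second, $\alpha(G) = 4$: it is at least $\alpha(R) = 4$, while covering $R$ by $\theta(R) = 7$ cliques and $T$ by one more gives $\theta(G) \le 8$, whence $\alpha(G) = \theta(G) - 4 \le 4$. Now I claim each $c_i$ has at least five neighbours in $R$. Indeed, if $|N_R(c_i)| \le 4$, then by Lemma~\ref{l:Rsecond} there is a maximum stable set of $R$ disjoint from $N_R(c_i)$, and adding $c_i$ to it yields a stable set of size $5$ in $G$, contradicting $\alpha(G) = 4$. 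Since $|N_R(c_i)| \ge 5 > 4 = \alpha(R)$, the set $N_R(c_i)$ is not stable, so it contains an edge of $R$; equivalently each $c_i$ lies in a triangle $\{c_i, x_i, y_i\}$ with $x_i y_i \in E(R)$.

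The finish is to contradict $\gap(G) = 4$ by exhibiting a clique cover of $G$ with only seven cliques, which gives $\gap(G) = \theta(G) - \alpha(G) \le 7 - 4 = 3$. Recall that $R$, being triangle-free, connected and gap-critical, is factor-critical by Lemma~\ref{l:fcrit}, so $\nu(R) = 6$ and $\zeta(R) = \theta(R) = 7$. The idea is to absorb $c_1, c_2, c_3$ \emph{for free} into three vertex-disjoint triangles. Let $E_i$ be the set of edges of $R$ with both endpoints in $N_R(c_i)$; each $E_i$ is non-empty by the previous paragraph. I would choose a maximum matching $M$ of $R$ containing pairwise-disjoint edges $f_1 \in E_1$, $f_2 \in E_2$, $f_3 \in E_3$; replacing $f_1, f_2, f_3$ by the triangles $\{c_i\} \cup f_i$ and keeping the remaining three matching edges together with the single $M$-uncovered vertex then produces a cover of $G$ by three triangles, three edges and one singleton, that is seven cliques covering all $16$ vertices.

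The main obstacle is exactly the selection of three \emph{pairwise disjoint} representatives $f_1, f_2, f_3$, and this is where Lemma~\ref{l:3edge} enters. Applied to $E_1, E_2, E_3$: if one could not even find two disjoint edges taken from different $E_i$, the lemma would force either a common vertex for some $E_i$ or a $K_4$ spanned by $E_1 \cup E_2 \cup E_3$. The $K_4$ alternative is excluded by $\omega(G) = 3$, and the common-vertex alternative forces $N_R(c_i)$ to consist of a centre $v$ together with an otherwise stable set of size $4$ (a maximum stable set of $R$), a rigid configuration to be eliminated using that each pairwise intersection $N_R(c_i) \cap N_R(c_j)$ is stable (again because $\omega(G) = 3$) together with the fact that $R$ has no simplicial vertex (Lemma~\ref{simplicial}). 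Once two disjoint cross-edges are secured, a short counting argument on the $\ge 5$ vertices of the third neighbourhood produces the third disjoint edge, and the factor-criticality of $R$ lets the resulting size-$3$ matching be extended to a maximum one. I expect this disjointness-and-extension bookkeeping, guided by Lemma~\ref{l:3edge}, to be the only genuinely delicate part of the argument.
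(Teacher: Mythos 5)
Your setup coincides with the paper's and is sound: the reduction to $G\sm T\cong R$, then $\omega(G)=3$, $\alpha(G)=4$, $\theta(G)=8$, $|N_R(c_i)|\geq 5$ and $E_i\neq\emptyset$ are all correct. The endgame, however, has a genuine gap: your case split does not match the hypothesis of Lemma~\ref{l:3edge}. That lemma requires that \emph{no} two disjoint edges $e\in E_i$, $f\in E_j$ ($i\neq j$) exist, whereas your Case~A only treats the existence of \emph{three pairwise} disjoint representatives. The intermediate situation --- two disjoint cross-edges, say $f_1\in E_1$ and $f_2\in E_2$, but no edge of $E_3$ disjoint from both, for any admissible choice of $f_1,f_2$ --- is covered by neither branch, and the ``short counting argument'' you promise there does not exist at this level of generality: $E_3$ is only known to be non-empty, so it may consist of a single edge meeting $V(f_1)\cup V(f_2)$, and re-choosing $f_1,f_2$ gives no purchase on it. Two smaller flaws: factor-criticality does \emph{not} imply that a matching extends to a maximum one (take the $7$-cycle $v_1\cdots v_7$ with chords $v_1v_3$ and $v_4v_6$; the matching $\{v_1v_3,v_4v_6\}$ is maximal but not maximum), so your extension step in Case~A needs the separate fact that any $7$ vertices of $R$ span a $3$-matching, which is true but requires its own little argument from triangle-freeness and $\alpha(R)=4$; and your Case~B elimination sketch (``pairwise intersections are stable, no simplicial vertex'') is far from what is actually needed, namely showing $N_R(c_1)=N[v]$ and then checking against the explicit structure of $R$ that every edge of $R$ avoiding $v$ misses some edge of the star at $v$, which forces $N_R(c_2)$ to be stable.

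The idea you are missing --- and it is exactly the paper's --- is that your own reduction applies to \emph{every} triangle of $G$, not just to $T$: since $G$ is gap-critical, Lemma~\ref{l:removeK} together with $s(3)=13$ and Lemma~\ref{l:twelveTh} gives $G\sm T'\cong R$ for every triangle $T'$ of $G$. As $R$ is triangle-free, $G$ therefore contains no two vertex-disjoint triangles at all. This single observation makes your Case~A and the intermediate case vanish instantly (the disjoint triangles $\{c_1\}\cup f_1$ and $\{c_2\}\cup f_2$ would survive in $G\sm(\{c_1\}\cup f_1)\cong R$, a contradiction), so the whole clique-cover machinery is unnecessary; and it places you squarely in the hypothesis of Lemma~\ref{l:3edge}, after which only the common-vertex outcome remains to be killed, which is precisely where the paper spends its remaining effort.
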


\begin{proof}
  We know $16 \leq s(4) \leq 17$, so suppose for a contradiction that
  $s(4) = 16$.  Let $G$ be 4-extremal on 16 vertices, so $\gap(G) =
  4$.  Then by Lemma~\ref{l:16omega2} we must have $\omega(G) \geq 3$.
  By Lemma~\ref{l:removeK}, $G$ cannot contain a $K_4$ (else $G\sm
  K_4$ has 12 vertices and gap 3 contradicting $s(3)=13$), hence
  $\omega(G)=3$.  So let $T = t_1t_2t_3$ be a triangle of $G$.  By
  Lemma~\ref{l:removeK}, we have $\gap(G\setminus T) = \gap(G) - 1 =
  3$.  So by Lemma~\ref{l:twelveTh}, we have that $G\setminus T$ is
  isomorphic to $R$.  In fact we proved that for every triangle $T$,
  $G\sm T$ is isomorphic to $R$.  Since $R$ is triangle-free we deduce
  that $G$ does not contain two vertex-disjoint triangles.  Since
  $\theta(R) = 7$, we have $\theta(G) \leq 8$ and since $\alpha(R) =
  4$ we have $\alpha(G) \geq 4$.  But since $\gap(G) = 4$ we must have
  $\theta(G) = 8$ and $\alpha(G) = 4$.  Let us sum up the properties
  of $G$ needed in the end of the proof:

  \begin{claim}
    $\omega(G) = 3$, $\alpha(G) = 4$ and $G$ contains no 2 disjoint
    triangles.
  \end{claim}

  Also:

  \begin{claim}
    \label{c:geq5}
    $|N_R(t_i)| \geq 5$, $i = 1, 2, 3$. 
  \end{claim}
  
  \begin{proofclaim}
    If $|N_R(t_i)| < 5$ then by Lemma~\ref{l:Rsecond}, there exists an
    $S_4$ in $G\sm N_R(t_i)$.  Together with $t_i$, this gives an
    $S_5$ in $G$, a contradiction to the properties of $G$.
  \end{proofclaim}

  Let $N_i = N_R(t_i)$ and $E_i$ be the edge-set of $R[N_i]$, $i=1, 2,
  3$.  It is impossible to have $1 \leq i < j \leq 3$, $e_i\in E_i$,
  $e_j \in E_j$ and $e_i \cap e_j = \emptyset$.  Because then $\{t_i\}
  \cup e_i$ and $\{t_j\} \cup e_j$ are two vertex-disjoint triangles.
  So $E_1, E_2, E_3$ are three disjoint sets of edges of $R$ (disjoint
  because else, there is a $K_4$ in $G$) that satisfies the assumption
  of Lemma~\ref{l:3edge}.  Since $R$ is triangle-free, the only
  possible output of Lemma~\ref{l:3edge} is that some vertex of $v$ of
  $R$ is in all edge of $E_1$ say.  So $S = N_R(t_1) \setminus \{v\}$
  is a stable set of $R$.  Hence, $|N_R(t_1)| \leq 5$.
  By~(\ref{c:geq5}), $|N_R(t_1)| = 5$ so, since $R$ is 4-regular,
  $N_R(t_1) = N[v]$.  Hence since $R$ is vertex-transitive we may
  assume $N_R(t_1) = \{r_1, r_2, r_{13}, r_6, r_9\}$.  Now it is a
  routine matter to check that any edge of $R$ is disjoint from at
  least one edge of $R[\{r_1, r_2, r_{13}, r_6, r_9\}]$.  Since $G$
  does not contain 2 disjoint triangles, this means that $N_R(t_2)$ is
  a stable set, contradicting~(\ref{c:geq5}).
\end{proof}

\section{Conclusion}

From all the lemmas of the previous section, we have:

\begin{theorem}[with Gy\'arf\'as and Seb\H o
  \cite{nicolas.gyarfas.sebo:gap}]
  \label{th:gapSumUp}
  $s(1) = 5$, $s(2) = 10$, $s(3) = 13$ and $s(4) = 17$.
\end{theorem}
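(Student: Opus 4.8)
The plan is to assemble this statement directly from the individual lemmas already proved throughout the section, since each of the four values of $s(t)$ has been pinned down separately. No new argument is needed beyond collecting the pieces and checking that the cited results cover exactly the four cases $t = 1, 2, 3, 4$.

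First I would record $s(1) = 5$, which was established in the basic-facts discussion: the empty graph has gap $0$, and $C_5$ is the unique $1$-extremal graph on five vertices, so $s(1) = 5$. Next, $s(2) = 10$ is precisely Lemma~\ref{l:s2=10}, whose proof combines the upper bound coming from $2C_5$ with the argument that no graph on at most nine vertices can have gap $2$ (using Lemma~\ref{l:chvatal}, Lemma~\ref{l:gap2Triangle}, Lemma~\ref{l:removeK} and Lemma~\ref{simplicial}). Then $s(3) = 13$ is given by Lemma~\ref{l:s313}, where the upper bound comes from the $R(3,5)$-extremal graph $R$ and the lower bound rules out $s(3) = 12$ by invoking the factor-criticality of triangle-free gap-critical components (Lemma~\ref{l:fcrit}), which forces an odd number of vertices in each component. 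Finally, $s(4) = 17$ is Lemma~\ref{l:s417}, and reading off these four values proves the theorem.

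The main difficulty is not in this final assembly but is concentrated entirely in the $s(4)$ case, which is the deepest of the four. Its lower bound $s(4) \geq 16$ (Lemma~\ref{l:s416}) together with the upper bound $s(4) \leq 17$ (Lemma~\ref{l:s4leq17}) leaves only the possibility $s(4) = 16$ to eliminate, and ruling this out (Lemma~\ref{l:s417}) requires the full strength of the structural analysis: forcing $\omega(G) = 3$, showing that deleting any triangle from a hypothetical $16$-vertex example yields a copy of $R$, and then applying the three-edge dichotomy of Lemma~\ref{l:3edge} together with the vertex-transitivity and triangle-freeness of $R$ to derive a contradiction. Once all these lemmas are established, the theorem follows immediately.
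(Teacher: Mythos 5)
Your assembly is correct and is essentially identical to the paper's own proof: the paper states this theorem as an immediate consequence of the preceding section's lemmas, namely the observation $s(1)=5$ from the basic-facts discussion together with Lemmas~\ref{l:s2=10}, \ref{l:s313} and~\ref{l:s417}, which is exactly how you put it together. Your identification of the $s(4)$ case as the locus of all the real work also matches the structure of the paper's argument.
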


It is strange that the hardest part of the work was devoted to $s(2)$
and that $s(3)$ was much easier.  It seems that the difficulty of
computing $s(k+1)$ depends more on the jump $s(k+1)-s(k)$ than on $k$
itself.  Maybe the jump of 5 between $s(2)$ and $s(1)$ does not exist
later in the sequence, and maybe after a while, jumps of 4 disappear
also.  So, a kind of easiness may occur for big numbers, but it could
be of no use because ``bigness'' brings its own kind of trouble.

We believe $s(5) = 21$ and that any gap-extremal on 21 vertices can be
obtained by removing one vertex from an $R(3, 7)$-extremal graph.  With
some tedious checking, this might be provable, but one could get
tired of trying to get there and further\dots\ More generally, it is
tempting to conjecture that all $R(3, p)$-extremal graphs (where
$p\geq 5$ and after possibly removing one vertex) are gap-extremal and
that all the gap-extremal graphs are obtained by removing vertices
from these.  But we are far from a proof.

Proving $s(4) = 17$ without a computer and with a reasonably long
proof (well, perhaps a bit too long\dots) is not so bad since small
extremal objects are often difficult to compute.  Of course, we
cheated a bit: we took advantage from the knowledge of small Ramsey
numbers, and these helped a lot.  The fact that Ramsey numbers helped
suggests that what we did is a kind of dual of the computation of the
$R(3, p)$'s.  Let us explain this.

Ramsey Theory says: when your graph is big, it has some structure
(here a triangle or a big stable set).  Gap Theory (if any) says: when
your graph is small, it has some structure (here, a small gap).  So,
Ramsey Theory is a matter of maximization problems (find the maximum
number of vertices without creating a triangle or a big stable set).
And Gap Theory is a matter of minimisation problems (find a minimum
number of vertices with a given gap).  When a maximization and a
minimisation problem have the same solutions and when moreover the
solution to one helps to find the solution to the other, they are
likely to be dual.

\begin{question}
  Try to give a formal evidence that computing $R(3, p)$-extremal
  graphs and gap-extremal graphs are dual problems.  
\end{question}

Maybe an answer to this question can help to compute bounds on small
Ramsey numbers.

\chapter{Six simple decomposition theorems}
\label{chap:6th}

A \emph{decomposition theorem}\index{decomposition vs structure} for a
class $\cal C$ of mathematical objects is any statement saying that
every object of the class either belongs to some well understood
\emph{basic} class or can broken into pieces according to some well
described rules.

The oldest decomposition theorem perhaps states that any $n$-gon is
either a triangle or can be obtained by gluing a triangle along an
edge to an $(n-1)$-gon.  This theorem is of great practical interest
since it allows computing the area of any $n$-gon by just knowing how to
proceed with triangles.

Another famous and very old decomposition theorem states that any integer
$n\geq 2$ can be obtained uniquely by multiplying primes.  Here, the
practical interest is less direct.  Also, the ``basic class'', i.e.,\
prime numbers, is till today far from being ``well understood''.  Yet,
any mathematician would agree that this theorem describes an essential
aspect of integers.

Decomposition theorems for classes of graphs are just as these above.
Some will have a clear practical interest: allowing to devise fast
algorithms.  Others will be of a more theoretical flavour, but one
feels in front of them that they really describe the essence of the
class.  Some will be more artificial and are designed only for proving
a single theorem.  The notion of basic class also can be different
from one theorem to another, as above.  Some basic classes will be
clearly ``simple'', as cycles or paths.  Others will be simple only
with respect to some questions.  For example, bipartite graphs are
very simple with respect to graph coloring, but are as complicated as
general graphs with respect to the isomorphism problem.

Some decomposition theorems are in fact something stronger, they are
what we call structure theorems.  A \emph{structure theorem} tells how
all objects from a class can be built from basic pieces by gluing them
together.  So, the two examples given above are structure theorems:
all $n$-gons can be built by gluing triangles (with perhaps some
restrictions, like requiring that the triangles do not overlap, but
this is not essential), all integers can be built by multiplying
primes.

It is not obvious to give a simple example of decomposition theorem
that is not a structure theorem.  The famous Bolzano-Weierstrass
Theorem is the following: any bounded sequence of real numbers either
converges to some limit, or contains a subsequence that converges to
some limit.  This is a kind decomposition theorem where ``basic''
sequences are these which converge.  But it does not tell how all
bounded sequences can be built from these that converge, so it is not
a structure theorem for bounded sequences.  A better example from
Graph Theory is Hayward's characterization of weakly triangulated
graphs, see Section~\ref{sec:wt}.

The simplest decomposition theorem for a class of graphs defined by
forbidding induced subgraphs is the following.

\begin{theorem}[folklore]
  \label{th:P3}
  A graph is $P_3$-free if and only if it is a disjoint union of
  cliques.
\end{theorem}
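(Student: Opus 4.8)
The plan is to prove the two implications of the equivalence separately, since ``disjoint union of cliques'' is a symmetric and transparent structural condition while ``$P_3$-free'' is a local forbidden-configuration condition. Throughout I use the paper's convention that $P_3$ denotes the path on three vertices and that $G$ contains $P_3$ means $G$ has an \emph{induced} subgraph isomorphic to it.

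For the easy direction, I would assume $G$ is a disjoint union of cliques and show it is $P_3$-free. Suppose for contradiction that $G$ contains an induced $P_3$, say on vertices $a, b, c$ with edges $ab, bc$ and non-edge $ac$. Since $a$ and $b$ are adjacent they lie in the same connected component, and likewise $b$ and $c$; hence $a, b, c$ all belong to a single clique of $G$. But then $ac \in E(G)$, contradicting that $a-b-c$ is an induced $P_3$. So no induced $P_3$ exists.

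For the converse, I would show that if $G$ is $P_3$-free then each of its connected components is a clique, which is exactly what ``disjoint union of cliques'' means. Fix a component $C$ and suppose, toward a contradiction, that $C$ is not a clique, so there exist two non-adjacent vertices $u, v \in C$. Since $C$ is connected, there is a path from $u$ to $v$ in $C$; choose one of minimum length, say $P$ with vertices $u = p_0, p_1, p_2, \dots$. As $uv \notin E(G)$ this path has length at least $2$, so $p_0, p_1, p_2$ exist. The crux of the argument is that $p_0 p_2 \notin E(G)$: otherwise $p_0, p_2, p_3, \dots, v$ would be a strictly shorter $u$--$v$ path, contradicting minimality of $P$. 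Hence $p_0 - p_1 - p_2$ is an induced $P_3$ of $G$, contradicting $P_3$-freeness. Therefore $C$ is a clique.

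There is essentially no serious obstacle here; the only point requiring a moment's care is the last one, namely verifying that a \emph{shortest} path yields an \emph{induced} $P_3$. The minimality of $P$ is precisely what forbids the chord $p_0 p_2$, so the shortest-path choice is what converts a mere walk between non-adjacent vertices into a genuine induced $P_3$. Once that observation is in place, both directions close immediately.
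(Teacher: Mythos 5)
Your proof is correct and follows essentially the same route as the paper: the forward direction is immediate, and for the converse the paper likewise takes two non-adjacent vertices in a connected component of a $P_3$-free graph and observes that a shortest path between them contains a $P_3$. Your only addition is spelling out explicitly why minimality forbids the chord $p_0p_2$ (in fact a shortest path is entirely induced), which the paper leaves implicit.
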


\begin{proof}
  A disjoint union of cliques is obviously $P_3$-free.  Conversely,
  consider a connected component $C$ of a $P_3$-free graph and suppose
  for a contradiction that two vertices $u, v$ of $C$ are not
  adjacent.  A shortest path of $C$ linking $u$ to $v$ contains a
  $P_3$, a contradiction.
\end{proof}

Each of the next six sections is devoted to a simple decomposition (or
structure) theorem illustrating notions that are interesting in a more
general context.

\section{Subdivisions of a paw}

The \emph{paw}\index{paw} is the graph on four vertices, with four edges and that
contains a triangle.  A set $A$ of vertices of a graph is
\emph{complete}\index{complete} (resp.\ \emph{anticomplete}\index{anticomplete}) to a set of vertices $B$
when $A\cap B = \emptyset$ and there are all possible edges
(resp. no edge) between $A$ and $B$.  A \emph{complete $k$-partite}\index{complete $k$-partite}
graph where $k\geq 1$ is a graph made of $k$ disjoint non-empty stable
sets pairwise complete to one another.

\begin{theorem}[with Abdelkader \cite{abdelkader:memoire}]
  \label{th:paw}
  A connected graph $G$ does not contain any subdivision of the paw if
  and only if $G$ is a cycle or $G$ is a complete $k$-partite graph
  where $k\geq 2$ or $G$ is a tree.
\end{theorem}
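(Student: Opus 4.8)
The plan is to read the hypothesis concretely: a subdivision of the paw is a cycle $C$ of length at least $3$ with a path of length at least $1$ attached by one end to a single vertex of $C$ (a ``tadpole''); abstractly it is the unique connected graph with one vertex of degree $3$, one of degree $1$, and all others of degree $2$. So the statement is that the connected graphs with no induced tadpole are exactly the trees, the cycles, and the complete multipartite graphs.

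For the easy ($\Leftarrow$) direction I would first record that every tadpole other than the paw itself contains an induced $P_4$: four consecutive vertices of the cycle if its length is at least $4$, or the last two edges of the tail together with one cycle-edge if the tail has length at least $2$; the paw has only four vertices and is not $P_4$. Then a tree has no cycle, hence no tadpole; a cycle $C_n$ has as induced subgraphs only disjoint unions of paths (or $C_n$ itself), none a tadpole; and a complete multipartite graph $G$ is $P_4$-free, since its complement is a disjoint union of cliques, hence $P_3$-free by Theorem~\ref{th:P3}, hence $P_4$-free, and $P_4$ is self-complementary. Such a $G$ therefore contains no tadpole except possibly the paw, and it contains no induced paw either, because a paw needs two adjacent vertices $a,b$ and a vertex $c$ non-adjacent to both, which would force $a,b$ into a common part and thus $ab\notin E(G)$. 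This gives the three inclusions.

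For the converse, let $G$ be connected with no induced tadpole. If $G$ is acyclic it is a tree, so assume $G$ has a cycle. I would first show $G$ is $2$-connected: if $x$ were a cut vertex, pick a block $B$ with $x\in B$ containing a cycle and build an induced cycle $C$ through $x$ inside $B$ (take a shortest path in $B\sm x$ between two neighbours of $x$ that is internally disjoint from $N(x)$ and close it through $x$); a neighbour $y$ of $x$ in another block meets $B$, and hence $C$, only at $x$, so $V(C)\cup\{y\}$ induces a tadpole, a contradiction. Now $G$ is $2$-connected. If $G$ is $2$-regular it is a cycle and we are done; otherwise $G$ has a vertex of degree at least $3$, and it remains to prove that $G$ is complete multipartite, i.e.\ (by Theorem~\ref{th:P3} read in the complement) that $G$ has no induced $\overline{P_3}$, an edge together with a vertex non-adjacent to both its ends.

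The engine for this last part is that on \emph{any} induced cycle $C$, a vertex $v\notin C$ with a neighbour on $C$ must have at least two neighbours on $C$, since a single neighbour makes $V(C)\cup\{v\}$ a tadpole. Applied to a shortest cycle $C$ and a vertex $v$ off it adjacent to it (which exists once $G$ is not itself a cycle), the two arcs between two neighbours of $v$ yield a shorter cycle unless the girth is at most $4$. The main obstacle is the finish in the girth-$3$ and girth-$4$ cases: ruling out an induced $\overline{P_3}$. Here I would argue by contradiction from an induced $\overline{P_3}$ on $a,b,c$ (edge $ab$, $c$ non-adjacent to both); using $2$-connectivity, the edge $ab$ lies on an induced cycle $D$ (close a shortest $a$--$b$ path of $G\sm\{ab\}$), and I analyse how $c$ attaches to $D$. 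A single neighbour of $c$ on $D$ gives a tadpole at once; the multi-neighbour configurations are the genuinely delicate point, and I expect to dispose of them by choosing the pair $(D,c)$ minimising $|V(D)|$ so that any extra neighbour again forces a tadpole. Once non-adjacency is seen to be transitive, $G$ is complete multipartite, completing the classification.
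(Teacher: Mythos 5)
Your reduction of the hard direction to ``$G$ has no induced $\overline{P_3}$'' is a reasonable plan, but the proposal stops exactly where the real work begins. When the vertex $c$ of the induced $\overline{P_3}$ has two or more neighbours on the induced cycle $D$ through $ab$ --- or has no neighbour on $D$ at all, or lies on $D$ itself, two cases you never mention --- nothing immediately forces a tadpole; you write ``I expect to dispose of them by choosing the pair $(D,c)$ minimising $|V(D)|$'', but no argument is given, and these configurations are not trivial (for instance $c\in V(D)$ with $D$ of length at least $5$ is perfectly compatible with girth $3$ or $4$ and with $2$-connectivity). This is precisely the point where the paper's proof does something different: rather than analysing one bad triple against one cycle, it takes an inclusion-wise \emph{maximal} induced complete $k$-partite subgraph $H$ (seeded by a triangle when one exists, and by a square in the triangle-free case) and shows that any vertex of $G\sm H$ with a neighbour in $H$ either creates a paw on four vertices or can be absorbed into $H$, contradicting maximality; connectivity then yields $G=H$. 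Some substitute for that maximality argument is required, and the minimisation of $|V(D)|$ is not one as stated.

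There is a second gap: your $2$-connectivity step is proved incorrectly. You ``pick a block $B$ with $x\in B$ containing a cycle'', but no block containing the cut vertex $x$ need contain a cycle --- the cycles of $G$ could a priori all lie in blocks far from $x$ in the block tree, and for such an $x$ your argument derives no contradiction. (The statement is true, but only because of the theorem itself; proving it directly means analysing how a shortest path from $x$ reaches a shortest cycle, and when the last vertex of that path has two or more neighbours on the cycle you are back to the same delicate dense-case analysis as above.) Two smaller points: in the easy direction, for the tadpole whose cycle has length exactly $4$ and tail length $1$, four consecutive cycle vertices induce $C_4$, not $P_4$, so you must use the tail vertex and three cycle vertices there; and note that the paper disposes of the easy direction more directly, observing that a subdivision of the paw has a vertex of degree one with an edge between two of its non-neighbours, whereas in a complete multipartite graph every vertex's non-neighbourhood is a stable set.
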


\begin{proof}
  Clearly a tree and a cycle do not contain a subdivision of the paw.
  In a subdivision of a paw, there is a vertex of degree one and there
  is at least one edge between its non-neighbors.  So, such a vertex
  cannot exist in a complete multipartite graph.  Hence, a complete
  multipartite graph, a tree or a cycle contains no subdivision of the
  paw.  Let us prove the converse by considering a graph $G$ that
  contains no subdivision of the paw.

  If $G$ contains a triangle then let $H$ be a complete $k$-partite
  graph that is an induced subgraph of $G$, where $k\geq 3$.  Let us
  suppose that $H$ is inclusion-wise maximal with respect to this
  property.  So, $V(H)$ can be partitioned into stable sets $H_1$,
  \dots, $H_k$ that are pairwise complete to one another.  If $G=H$ we
  are done, so let us suppose for a contradiction that there is a
  vertex $v$ in $G\sm H$.  Since $G$ is connected, we may assume that
  $v$ has a neighbor $v_1$ in $H_1$ say.  If $v$ has a non-neighbor
  $v_i \in H_i$ and a non-neighbor $v_j\in H_j$ where $1< i < j \leq
  k$ then $\{v, v_1, v_i, v_j\}$ induces a paw, so $v$ must be
  complete to all $H_i$'s, $1<i\leq k$ except possibly one, say $H_2$.
  So $v$ has a neighbor in $H_3$ and by a symmetric argument, $v$ is
  complete to $H_1$.  Suppose that $v$ has a neighbor $v_2 \in H_2$
  and a non-neighbor $v'_2 \in H_2$.  Then $\{v, v_1, v_2, v'_2\}$
  where $v_1\in H_1$ induces a paw, a contradiction.  Hence $v$ is
  either complete or anticomplete to $H_2$.  In either case, $H\cup
  \{v\}$ induces a complete multipartite graph, a contradiction to the
  maximality of $H$.  So, from here on we may assume that $G$ contains
  no triangle.

  If $G$ contains a square then let $H$ be a complete bipartite graph
  with both sides of size at least two and that is an induced subgraph
  of $G$.  Let us suppose that $H$ is inclusion-wise maximal with
  respect to this property.  So, $V(H)$ can be partitioned into stable sets
  $H_1$, $H_2$ that are pairwise complete to one another.  If $G=H$ we
  are done, so let us suppose for a contradiction that there is a
  vertex $v$ in $G\sm H$.  Since $G$ is connected, we may assume that
  $v$ has a neighbor $v_1$ in $H_1$ say.  Since $G$ contains no
  triangle, $v$ has no neighbor in $H_2$. If $v$ has a non-neighbor
  $v'_1 \in H_1$ then $\{v, v_1, v'_1, v_2, v'_2\}$ where $v_2,
  v'_2\in H_2$ induces a subdivision of a paw, a contradiction.  So,
  $v$ is complete to $H_1$.  Hence, $H\cup \{v\}$ induces a complete
  bipartite graph, a contradiction to the maximality of $H$.  So, from
  here on we may assume that $G$ contains no square.

  If $G$ contains a cycle then let $C= c_1\dots c_lc_1$ be a shortest
  cycle.  If $G=C$ we are done, so let us suppose that $v$ is a vertex
  of $G\sm C$.  Since $G$ is connected, we may assume that $v$ has a
  neighbor $c_1$ in $C$ say.  If $v$ has no other neighbor in $C$ then
  $C\cup \{v\}$ is a subdivision of a paw, so $v$ has another neighbor
  $c_i$.  Let us choose such a neighbor with $i$ minimum.  Since $G$
  contains no triangle, $v$ is not adjacent to $c_{i+1}$ and since $G$
  contains no square, $c_{i+1}$ is not adjacent to $c_1$.  Hence,
  $\{v, c_1, \dots, c_{i+1}\}$ induces a subdivision of a paw, a
  contradiction.  So, from here on we may assume that $G$ contains no
  cycle.

  Since $G$ is connected with no cycle, it is a tree. 
\end{proof}

The hardest part in finding and proving the theorem above was maybe to
guess the statement from a bunch of examples.  The proof goes
through~3 steps: when the graph contains a triangle, when it contains
a square, when it contains a cycle.  In each step, it is proved that
the whole graph must be a kind of extension of the considered
subgraph.  Technically, for the sake of writing the proof, it is
convenient to assume that the graph under consideration contains a
``maximal extension'' of the subgraph under consideration.  This is
typical of how proving decomposition theorems usually goes.  The order
in which the subgraphs are considered is the key to short proofs of
simple theorems.  For more complicated theorems, it is simply the key
to the proof.  As an exercise, the reader could try to reprove the
theorem above by first supposing that the graph contains a
sufficiently big tree (more than a claw).  It is a likely that the
proof will be uncomfortable but I would not bet too much on that; it
might as well lead to a shorter proof.

Let us add an informal remark.  By reading carefully the proof, one
can see that there are three basic classes but that they are of
different flavor.  Cycles and complete multipartite graphs form what I
call \emph{connected classes}\index{connected class vs sparse class},
that are basic classes of graphs that are sufficiently rich,
``connected'', so that adding a vertex to them very easily yields an
obstruction.  On the other hand, trees rather form what I call a
\emph{sparse class}\index{sparse class vs connected class},.  So, the
proof goes this way: trying to get rid of as many connected-class
subgraphs as possible (by showing that their presence entails a
decomposition), and then proving that the graph is so impoverished
that it is in a sparse class.  Readers familiar with the proof of the
Strong Perfect Graph Theorem will recognize the line-graphs of
bipartite graph as the main connected class of Berge graphs, while
bipartite graphs form the main sparse class.

The original motivation for Theorem~\ref{th:paw} is Scott's
conjecture.  When $H$ is a graph, we denote by ${\rm
  Forb}(H)$\index{Forb} the class of those graphs that do not contain
$H$.  We denote by ${\rm Forb}^*(H)$\index{Forb$^*$} the class of
those graphs that do not contain any subdivision of $H$ (so ${\rm
  Forb}^*(H)$ is a superclass of ${\rm Forb}(H)$ since we view a graph
as one of its own subdivisions).

\begin{conjecture}[Scott]
  \label{conj:scott}
  For all graphs $H$, ${\rm Forb}^*(H)$ is $\chi$-bounded.
\end{conjecture}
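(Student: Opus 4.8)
The plan is to reduce to the case of a connected $H$ containing a cycle, and then to force an induced subdivision of $H$ inside any graph of ${\rm Forb}^*(H)$ whose chromatic number is large. First I would record the easy cases and reductions. If $H$ is a forest, the statement is Scott's theorem on subdivisions of trees (a strengthening of the Gy\'arf\'as--Sumner program), so one may assume $H$ contains a cycle. A disconnected $H = H_1 \sqcup H_2$ reduces to its pieces: if $G \in {\rm Forb}^*(H)$ contains an induced subdivision $S$ of $H_1$, then $G \sm S$ can contain no induced subdivision of $H_2$ (the two would be disjoint, yielding a subdivision of $H$), so $G \sm S \in {\rm Forb}^*(H_2)$; since $S$ is an \emph{induced} subdivision we have $\chi(G[V(S)]) = \chi(S)$, which is bounded by a constant depending only on $H_1$ (at most $\max(\chi(H_1),3)$), and $\chi(G) \leq \chi(S) + \chi(G \sm S)$ then yields a $\chi$-bounding function for ${\rm Forb}^*(H)$ from those of ${\rm Forb}^*(H_1)$ and ${\rm Forb}^*(H_2)$. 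The first genuinely nontrivial instance, where $H$ is the paw, is already settled by Theorem~\ref{th:paw}: a connected paw-subdivision-free graph is a cycle, a tree, or a complete $k$-partite graph, each of which is $\chi$-bounded (indeed with bounded gap).

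For the remaining case of a connected $H$ with a cycle, the core tool is the layering method behind essentially all $\chi$-bounding results. Since a hereditary class is $\chi$-bounded if and only if, for each fixed bound on the clique number, the chromatic number is bounded, it suffices to fix $k$ and seek a contradiction from a graph $G \in {\rm Forb}^*(H)$ with $\omega(G) \leq k$ and $\chi(G)$ arbitrarily large. A breadth-first layering from a root confines all edges to within a layer or between consecutive layers, and the standard consequence is that large chromatic number cannot hide: one can extract long induced paths, and more delicately, a long induced path all of whose vertices are complete to a common set of still-large chromatic number. Feeding such a step into the branching pattern prescribed by $H$, the aim is to realise the branch vertices of $H$ as genuine vertices and its edges as internally disjoint induced paths, assembling an induced subdivision of $H$ and contradicting $G \in {\rm Forb}^*(H)$.

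The hard part will be precisely the point at which this program is forced to close a cycle of $H$, and it is exactly here that the conjecture is most in doubt. Large chromatic number does \emph{not} by itself force subdivisions of graphs containing cycles: triangle-free graphs of arbitrarily large chromatic number exist (Mycielski's construction, and the girth constructions of Erd\H os recalled in Chapter~\ref{chap:gap}), and one can hope to engineer such high-chromatic, low-clique graphs so as to avoid every induced subdivision of a fixed $H$ with a cycle. The layering argument manufactures long induced paths and tree-like subdivisions very freely, but offers no mechanism to make two such paths meet again at prescribed branch vertices while remaining induced and internally disjoint. Controlling this obstruction uniformly over all $H$ is the genuine difficulty; I would expect an honest attempt to stall at exactly this step, and would not be surprised if the ultimate resolution turned out to be a counterexample rather than a proof.
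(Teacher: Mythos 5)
You have not proved the statement, and neither does the paper: Conjecture~\ref{conj:scott} appears in this document as an \emph{open conjecture}, and what the paper establishes are only special cases --- the paw (immediate from Theorem~\ref{th:paw}), trees (Theorem~\ref{th:scott}), $K_4$ (Theorem~\ref{th:ScottK4}, via Lemma~\ref{l:decK33} and the $K_{s,s}$ theorem of K\"uhn and Osthus). So there is no proof of the full statement to compare yours against, and your proposal, by your own admission in its final paragraph, is not one either: the case carrying all the content of the conjecture --- $H$ connected and containing a cycle --- is left as a program that you yourself predict will stall exactly when the layering argument must close a cycle of $H$ while staying induced. That diagnosis is accurate, and your closing suspicion is the right one: the conjecture is in fact false in general (after this document was written, Pawlik, Kozik, Krawczyk, Laso\'n, Micek, Trotter and Walczak constructed triangle-free segment intersection graphs of arbitrarily large chromatic number, which avoid induced subdivisions of suitable graphs $H$). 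What you wrote should therefore be read as a survey of the known reductions plus a correct identification of the obstruction, not as a proof; no proof can exist.

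Independently of this, the one reduction you do carry out in detail is wrong. For disconnected $H = H_1 \sqcup H_2$ you claim that if $S$ is an induced subdivision of $H_1$ in $G \in {\rm Forb}^*(H)$, then $G \sm S$ contains no induced subdivision of $H_2$, because ``the two would be disjoint, yielding a subdivision of $H$''. Vertex-disjointness is not enough: an induced subdivision of $H_1 \sqcup H_2$ requires the two parts to be \emph{anticomplete}, and an induced subdivision $S_2$ of $H_2$ inside $G \sm S$ may send edges into $S$, in which case $G[V(S) \cup V(S_2)]$ is not a subdivision of $H$ at all. What you may conclude is only that $G \sm N[S]$ contains no induced subdivision of $H_2$, and then your bound $\chi(G) \leq \chi(S) + \chi(G \sm S)$ must be replaced by $\chi(G) \leq \chi(G[N[S]]) + \chi(G \sm N[S])$, in which the term $\chi(G[N[S]])$ is not bounded by anything you have established: unlike the Gy\'arf\'as--Sumner setting, where the forbidden object has a fixed number of vertices and each of the boundedly many closed neighbourhoods can be handled by induction on the clique number, here $S$ is a subdivision and can be arbitrarily large, so $N[S]$ is a union of unboundedly many neighbourhoods. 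The same gap already affects your first sentence, since Theorem~\ref{th:scott} is stated for trees, and passing from trees to forests uses precisely this disconnected reduction.
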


Theorem~\ref{th:paw} implies trivially that Scott's conjecture is true
when $H$ is the paw.  Indeed, from Theorem~\ref{th:paw}, one can
easily check that if $G$ is in ${\rm Forb}^*({\rm paw})$ then either
$\chi(G) =3$ or $\chi(G) = \omega(G)$.  Is there a simpler proof of
this tight bound for $\chi$ that does go through a full description of
the class?  I do not know, but the proof given here is quite simple.
As we will see in the rest of this document, the structural method is
very efficient for giving tight bounds on $\chi$.

Can the method that was successful for the paw prove Scott's
conjecture for other graphs?  Certainly it can for small graphs as
$P_4$ or the square for instance.  Graph in ${\rm Forb}^*({\rm
  square})$ are usually called \emph{chordal graphs}\index{chordal
  graph} and graphs in ${\rm Forb}^*({\rm P_4})$ are simply $P_4$-free
graphs\index{P@$P_4$-free graphs} (because any subdivision of $P_4$
contains $P_4$).  From the following two classical theorems, it
follows by an easy induction that chordal graphs and graphs with no
$P_4$ are perfect.

\begin{theorem}[Dirac, \cite{dirac:chordal}]
  \label{th:chordal}
  Any chordal graph either is a clique or has a clique-cutset. 
\end{theorem}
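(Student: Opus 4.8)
The statement to prove is Dirac's theorem, where ``chordal'' means ${\rm Forb}^*({\rm square})$. The first thing I would record is that a graph lies in ${\rm Forb}^*({\rm square})$ if and only if it has no hole: every subdivision of the square is an induced cycle of length at least $4$, and conversely every such cycle is a subdivision of the square. So throughout I may freely use that a chordal graph contains no hole. The plan is then the classical one: assuming $G$ is not a clique, I would exhibit a \emph{minimal separator} and prove it is a clique, which immediately gives a clique-cutset.

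Concretely, since $G$ is not a clique there are two non-adjacent vertices $a, b$. Let $S$ be a minimal $a$--$b$ separator, and let $A$ and $B$ be the connected components of $G\sm S$ containing $a$ and $b$ respectively. The minimality of $S$ gives the crucial property that every vertex $s\in S$ has a neighbor in $A$ and a neighbor in $B$; otherwise $S\sm \{s\}$ would already separate $a$ from $b$. Since $S$ is visibly a cutset, it only remains to show $S$ induces a clique.

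The key step is to show $uv\in E(G)$ for any two distinct $u, v\in S$. Suppose not. Because $u$ and $v$ each have a neighbor in the connected set $A$, there is a $u$--$v$ path with interior in $A$; choose a shortest one, $P_A$, which is therefore induced. Choose $P_B$ analogously through $B$. The interiors of $P_A$ and $P_B$ lie in the disjoint sets $A$ and $B$, so these two paths meet only at $u$ and $v$ and together form a cycle $C$. Under the assumption $uv\notin E(G)$, each of $P_A, P_B$ has length at least $2$, so $C$ has length at least $4$. I would then check that $C$ is chordless: a chord inside $P_A$ or inside $P_B$ is excluded since each is induced, a chord between the interiors of $P_A$ and $P_B$ is excluded since $A$ and $B$ lie in different components of $G\sm S$, and the remaining candidate $uv$ is absent by hypothesis. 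Thus $C$ is a hole, contradicting chordality; hence $uv\in E(G)$, and $S$ is a clique.

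The main obstacle, such as it is, is precisely verifying that the concatenated cycle has no chord other than the missing edge $uv$; this is what forces the two design choices above, namely taking \emph{shortest} paths through $A$ and $B$ (to kill internal chords) and using that $A$ and $B$ are distinct components of $G\sm S$ (to kill cross-chords). Once that verification is in place the proof closes cleanly: the clique $S$ separates $a$ from $b$, so $G$ has a clique-cutset, completing the dichotomy.
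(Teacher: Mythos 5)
Your proof is correct: it is the classical minimal-separator argument for Dirac's theorem, and every step checks out --- minimality of $S$ gives each vertex of $S$ a neighbor in both $A$ and $B$, the shortest connecting paths through $A$ and $B$ are induced, and the concatenated cycle can have no chord besides the missing edge $uv$ (internal chords are killed by shortestness, cross-chords by $A$ and $B$ lying in distinct components of $G\sm S$), so it would be a hole. Note that the paper states this theorem without proof, citing Dirac and using it only as a black box to deduce by induction that chordal graphs are perfect, so there is no proof in the paper to compare against; yours is the standard argument from the literature and is complete as written.
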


\begin{theorem}[Seinsche \cite{seinsche:P4}]
  Any $P_4$-free graph is either a vertex, or is disconnected or has a
  disconnected complement.  
\end{theorem}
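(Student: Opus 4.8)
The plan is to induct on $n = |V(G)|$, reducing the whole statement to the one nontrivial claim that a connected $P_4$-free graph on at least two vertices has a disconnected complement. First I would dispose of the easy cases: if $G$ is disconnected there is nothing to prove, and the base case $n = 2$ is immediate since then $G$ is either a single edge (complement disconnected) or two isolated vertices (disconnected). So from now on I would assume $G$ is connected with $n \geq 3$ and suppose, for a contradiction, that $\bar G$ is connected as well.

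The key idea I would exploit is to delete a vertex chosen \emph{in the complement} rather than in $G$. Since $\bar G$ is connected with at least two vertices, it has a non-cutvertex $v$ (for instance a leaf of a spanning tree of $\bar G$), so $\bar G \sm v$ is connected. Now $G \sm v$ is $P_4$-free on $n - 1 \geq 2$ vertices, so the induction hypothesis applies: either $G \sm v$ or $\overline{G \sm v} = \bar G \sm v$ is disconnected. As $\bar G \sm v$ is connected by the choice of $v$, this forces $G \sm v$ to split into connected components $C_1, \dots, C_m$ with $m \geq 2$. Because $G$ is connected, $v$ must have at least one neighbour in each $C_i$, for otherwise some $C_i$ would be a connected component of $G$ avoiding $v$.

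Then I would run a short $P_4$-construction to show that $v$ is in fact complete to $G \sm v$. Suppose some component, say $C_1$, contained a non-neighbour of $v$. Since $C_1$ is connected and holds both a neighbour and a non-neighbour of $v$, travelling along a path of $C_1$ produces an edge $ab$ with $va \in E(G)$ and $vb \notin E(G)$. Choosing any neighbour $c$ of $v$ in a different component $C_2$, the vertices $b, a, v, c$ induce a $P_4$: the edges $ba$, $av$, $vc$ are present, while $bv$ is a non-edge by choice and $bc, ac$ are non-edges because $b, a \in C_1$ and $c \in C_2$ lie in distinct components of $G \sm v$. This contradicts $P_4$-freeness, so $v$ has no non-neighbour in any $C_i$; that is, $v$ is adjacent to every other vertex. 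But then $v$ is isolated in $\bar G$, so $\bar G$ is disconnected, contradicting our assumption and closing the induction.

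The step I expect to carry the weight is the reduction itself, namely the decision to delete a non-cutvertex of $\bar G$: this is exactly what lets the induction hypothesis hand us a disconnection of $G \sm v$ into several components. Once those components are available, the cross-component non-edges make the forbidden $P_4$ appear almost for free, and the conclusion that $v$ is dominating is automatic. The remaining ingredients (the base case, the existence of a non-cutvertex in a connected graph, and the verification of the four induced (non-)edges) are routine and I would keep them brief.
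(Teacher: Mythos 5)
Your proof is correct. Note that the paper itself does not prove Seinsche's theorem: it is stated with only a citation, as one of two classical results from which the perfection of $P_4$-free graphs follows, so there is no in-paper argument to compare yours against. Your induction is the standard and complete way to establish it: deleting a non-cutvertex $v$ of $\overline{G}$ (which exists once you assume, for contradiction, that $\overline{G}$ is connected) is exactly the right move, since the induction hypothesis then forces $G \setminus v$ to be disconnected; the edge $ab$ inside a component containing both a neighbour and a non-neighbour of $v$, together with a neighbour $c$ of $v$ in another component, gives the induced path $b\hbox{--}a\hbox{--}v\hbox{--}c$ with all three required non-edges, so $v$ must dominate $G \setminus v$ and is therefore isolated in $\overline{G}$, the desired contradiction. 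All the supporting facts you invoke (existence of a non-cutvertex in a connected graph on at least two vertices, heredity of $P_4$-freeness, the identity $\overline{G \setminus v} = \overline{G} \setminus v$) are sound, so the argument stands as written.
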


But for more complicated classes, it would be surprising that the
structural approach solves Scott's conjecture in such a
straightforward way.  Consider a cycle on $a, b, c, d$ (in this
order), add two pending edges to $a$, two pending edges to $c$, and
call $H$ the resulting graph on eight vertices.  In
Chapter~\ref{chap:reco}, it is proved that deciding whether a graph is
in ${\rm Forb}^*(H)$ is NP-complete.  So, it is unlikely that a very
precise structural description of ${\rm Forb}^*(H)$ exists.

Most successful and general attempts for proving Scott's conjecture
such as K\"uhn and Osthus~\cite{kuhnOsthus:04}, who prove it for
graphs with no $K_{s, s}$ or Scott~\cite{scott:tree}, who proves it
for trees, rely a lot on Ramsey Theory or probabilistic method.
These attempts cover many cases but give bad bounds on $\chi$,
typically mixtures of Ramsey number with towers of exponentials (and
worse).  It seems that the structural method gives better bounds but
applies only to several simple classes.  Section~\ref{sec:ISK4} gives
a partial structural description of ${\rm Forb}^*({\rm K_4})$ that
allow proving Scott's conjecture for $K_4$ (Theorem~\ref{th:ScottK4}).  The bound on $\chi$
follows the bound of K\"uhn and Osthus~\cite{kuhnOsthus:04} for $K_{3,
  3}$.  A more precise description of ${\rm Forb}^*({\rm K_4})$ that
gives a sharp bound for Scott's conjecture is still an open question.
Even the following question seems non-trivial to me.  The \emph{bull}\index{bull}
is the graph obtained from the paw by adding a pending vertex to a
vertex of degree two.

\begin{question}
  Give a structural description of ${\rm Forb}^*({\rm bull})$. 
\end{question}

\section{Triangle and $T_{123}$}
\label{sec:juraj}

Warning: the main result of this section is the 8-colorability of a
class of graphs.  Just before the deadline, I learned from a referee
report that Kierstead and Penrice~\cite{kiersteadPenrice:90} proved
4-colorability for the same class, so the results presented here are
not really new or interesting.  However I keep the section because of
the general purpose material presented.  I have no time to find
something more interesting illustrating this material.

Let $T_{123}$ be the tree on $a, b, c, d, e, f, g$ whose edges are
$ab$, $ac$, $cd$, $ae$, $ef$, $fg$.  We give here a decomposition
theorem for graphs with no triangle and no $T_{123}$ motivated by the
following.

\begin{conjecture}[Gy\'arf\'as \cite{gyarfas:perfect}]
 \label{conj:gyarfas}
 For every tree $T$, ${\rm Forb}(T)$ is $\chi$-bounded. 
\end{conjecture}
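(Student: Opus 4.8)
Since this is Gyárfás' conjecture, which remains open in full generality, the honest proposal is a programme rather than a proof. The plan is to bound $\chi(G)$ by a function of $\omega(G)$ for each fixed tree $T$, by induction on the structure of $T$. I would root $T$ at a vertex $r$, writing it as a root with pendant subtrees $T_1, \dots, T_d$ hanging from the children of $r$, and try to embed $r$ at some vertex $v$ of $G$ and then embed each $T_i$ inside the neighbourhood of $v$. The engine of such arguments is a \emph{levelling} lemma of the following shape: if $\chi(G)$ is large, then $G$ contains a vertex $v$, or a small structured set $S$, whose neighbourhood still carries large chromatic number, so that one can descend one level into the tree while retaining a usable chromatic budget. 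Iterating the lemma grows a copy of $T$ branch by branch.

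First I would treat the base cases where this engine works cleanly. For a star $K_{1,t}$ the class is trivially $\chi$-bounded: forbidding $K_{1,t}$ bounds the independence number of every neighbourhood, so by Ramsey the maximum degree is bounded in terms of $\omega$ and $t$. For a path $P_k$, a breadth-first search has at most $k-1$ nonempty layers, from which Gyárfás' original argument derives a bound. Next I would push the induction through trees of small radius, the regime in which the embedding can be controlled by combining the levelling lemma with a Ramsey argument producing the required fan of leaves at the last level; Kierstead and Penrice~\cite{kiersteadPenrice:90} carry this out for trees covering the $T_{123}$ case treated below, and the decomposition theorem proved in this section is exactly the kind of special-case evidence that this strategy accumulates.

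The hard part, and the reason the conjecture is open, is \emph{depth}. Each descent into a neighbourhood typically retains only a subgraph of large chromatic number, but the control on how fast the budget degrades is poor, and for trees of large radius there is no known way to carry the induction through many levels. A second, genuine complication is that a graph of large chromatic number need not contain any \emph{single} vertex whose neighbourhood has large chromatic number --- triangle-free graphs of arbitrarily large $\chi$, such as Mycielski's, are the standard witness --- so one is forced to work with structured sets $S$ and Ramsey-type extraction, which is precisely where all known bounds explode into towers of exponentials. Controlling this blow-up while descending through an arbitrarily deep tree is the obstacle I would expect to be unable to overcome in general; with present tools the realistic outcome is to settle further fixed trees, for instance by bounding the radius as in the topological variant of Conjecture~\ref{conj:scott}, rather than the full statement.
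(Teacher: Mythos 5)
You are right to treat this statement as what it is: an open conjecture of Gy\'arf\'as, which the paper states without proof and uses only as motivation for the special case handled in Section~\ref{sec:juraj}, so there is no proof of record to compare your proposal against. Your programme accurately reflects the partial results the paper itself relies on --- the path case for triangle-free graphs (Theorem~\ref{th:gSzT}), the Kierstead--Penrice result~\cite{kiersteadPenrice:90} covering $T_{123}$, and Scott's theorem~\cite{scott:tree} for the subdivision (topological) variant --- and your diagnosis of the levelling/depth obstruction, including the fact that large chromatic number does not yield a single vertex whose neighbourhood has large chromatic number, is consistent with why the conjecture remains open; just be careful not to present any of this as progress toward a proof rather than a survey of the known landscape.
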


If the conjecture above is true, then it must be that for all trees
$T$, graphs with no triangle and no $T$ have a chromatic number
bounded by a constant.  In fact, for the particular tree $T_{123}$,
this is known to be true by the following.

\begin{theorem}[Scott \cite{scott:tree}]
  \label{th:scott}
  For every tree $T$, ${\rm Forb}^*(T)$ is $\chi$-bounded. 
\end{theorem}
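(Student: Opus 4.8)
The plan is to follow Scott's line of argument, proceeding by induction on the number of vertices of $T$ and combining two standard devices of the area: a \emph{levelling} lemma for breadth-first layers, and Ramsey's theorem (stated above) to tidy up adjacencies. Since $\chi$ and $\omega$ are the maxima of the corresponding parameters over connected components, I would first reduce to the case where $G$ is connected. It is worth noting at the outset that the bound genuinely has to depend on $\omega$: for any tree $T$ with at least two edges, every clique $K_n$ lies in ${\rm Forb}^*(T)$, so $f(\omega)\ge\omega$ is forced. The crucial structural feature is that a subdivision of $T$ is only a \emph{topological} copy: each edge of $T$ may be realised by an induced path of any length. This flexibility is exactly what makes the statement provable, whereas the analogous assertion for induced copies, Conjecture~\ref{conj:gyarfas}, is still open; the point is that finding a subdivision is easier than finding an exact induced copy, and induced paths of unconstrained length are precisely what breadth-first search supplies in abundance.

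First I would record the levelling lemma. If $r$ is a vertex of a connected graph $G$ and $L_i$ denotes the set of vertices at distance $i$ from $r$, then every edge lies inside one layer or between two consecutive layers; colouring even and odd layers from disjoint palettes gives $\chi(G) \le 2\max_i \chi(G[L_i])$. Hence, if $\chi(G)$ is enormous, some single layer $L_i$ already carries enormous chromatic number, and each vertex of $L_i$ is joined to $r$ by an induced shortest path through the lower layers. These shortest paths are the raw material out of which the subdivided edges of $T$ will be manufactured.

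For the induction, let $\ell$ be a leaf of $T$ with neighbour $p$ and set $T' = T - \ell$; the inductive hypothesis supplies a function $f_{T'}$ bounding $\chi$ throughout ${\rm Forb}^*(T')$. Assuming $\chi(G)$ exceeds a threshold chosen in terms of $f_{T'}$, $\omega(G)$, $|V(T)|$ and the relevant Ramsey numbers $R(\cdot,\cdot)$, the aim is to exhibit an induced subdivision of $T$ in $G$, contradicting $G \in {\rm Forb}^*(T)$. The mechanism is to locate, by iterated levelling, an induced subdivision $H'$ of $T'$ that sits in one high-chromatic region while a \emph{second} high-chromatic region remains available and attached to the image of $p$; an induced path drawn from that second region and pendant at the image of $p$ then completes a subdivision of $T$. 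At a branch vertex of $T$ of degree $d$ one must in the same way keep $d$ pairwise independent high-chromatic reservoirs alive, which is where the levelling lemma is iterated and where Ramsey's theorem is applied to pass to a large, \emph{homogeneous} subfamily of candidate paths — pairwise non-adjacent away from the branch vertex — so that the assembled object is genuinely induced.

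The hard part, and the reason the resulting bounds are towers of exponentials rather than anything polynomial, is controlling \emph{inducedness}. Unlike the bounded-size gadgets one deletes in easy $\chi$-boundedness arguments, a partial subdivision $H'$ of $T'$ may have arbitrarily long subdivided edges, so one cannot simply delete its closed neighbourhood and claim that large chromatic number survives. Instead the subdivision has to be grown \emph{inside} the layered structure, so that its long paths are confined to layers already used, and each new branch must be shown to avoid chords to, and spurious adjacencies with, the long paths already laid down. Managing these cross-adjacencies simultaneously for all $|E(T)|$ subdivided edges, while keeping several independent high-chromatic reservoirs alive for the remaining branch vertices, is the genuine content of the proof, and it is exactly here that the Ramsey cleaning and the careful bookkeeping of thresholds earn their keep.
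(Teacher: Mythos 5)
The paper does not actually prove this statement: it is quoted verbatim as Scott's theorem from~\cite{scott:tree} and used as a black box (to deduce that ${\rm Forb}(T_{123})$ is $\chi$-bounded), so there is no in-paper proof to compare yours against. Your proposal therefore has to stand on its own as a proof of Scott's theorem, and as such it has a genuine gap: it is a description of the strategy of Scott's published argument, not a proof. You correctly identify the levelling lemma, the induction on a leaf $\ell$ of $T$, and the need for Ramsey-type cleaning, but the step that carries the entire theorem --- growing an induced subdivision of $T'$ inside the layered structure while keeping several pairwise anticomplete high-chromatic ``reservoirs'' alive, and preventing chords between each newly added branch and the arbitrarily long subdivided edges already constructed --- is named, declared to be ``the genuine content of the proof,'' and then deferred. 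Acknowledging the hard part is not the same as doing it; without it you have no contradiction with $G \in {\rm Forb}^*(T)$, hence no bound on $\chi$.

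Two more concrete points. First, your Ramsey step is not well-posed as stated: Ramsey's theorem as quoted applies to cliques and stable sets of \emph{vertices}, whereas you need a homogeneous family among \emph{long induced paths}, where ``adjacency'' between two paths is not a $2$-colouring of pairs of bounded objects; making this work requires either bounding the relevant interaction (which is exactly the inducedness control you postponed) or a different combinatorial device. Second, the naive induction hypothesis ``$T' = T - \ell$ satisfies the theorem'' is too weak: knowing that $\chi$ is bounded on ${\rm Forb}^*(T')$ tells you a high-chromatic $G$ contains \emph{some} induced subdivision of $T'$, but gives you no control over \emph{where} it sits relative to the reservoir you want to attach the last branch to. Scott's actual proof strengthens the statement being inducted on (locating subdivisions rooted at prescribed vertices with prescribed attachment behaviour) precisely to make the attachment step possible; some strengthening of this kind has to appear explicitly in your induction for the final pendant-path step to go through.
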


Since $T_{123}$ has a single vertex of degree~3, it is easy to see
that ${\rm Forb}^*(T_{123}) = {\rm Forb}(T_{123})$, so ${\rm
  Forb}(T_{123})$ is $\chi$-bounded by Theorem~\ref{th:scott}.  But
Theorem~\ref{th:scott} gives bad bounds for $\chi$ and here we show
how a decomposition theorem can provide a better bound.  Note that in
the triangle-free case, Gy\'arf\'as' conjecture is known to be true
for paths with a good bound.

\begin{theorem}[Gy{\'a}rf{\'a}s, Szemer{\'e}di and Tuza \cite{gyarfasSzTuza:tree}]
  \label{th:gSzT}
  For all integers $k$, if $\chi(G)\geq k+1$, then $G$ contains a
  triangle or $P_k$.
\end{theorem}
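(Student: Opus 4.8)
The plan is to prove the contrapositive: if $G$ is triangle-free and contains no induced $P_k$, then $\chi(G) \le k$. Equivalently, I will show that every \emph{connected} triangle-free graph $G$ with $\chi(G) \ge k+1$ contains an induced path on $k$ vertices; the disconnected case reduces to this, since a graph of chromatic number $\ge k+1$ has a component of chromatic number $\ge k+1$.

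The one structural fact I would isolate first is a \emph{neighborhood-removal lemma}: in a triangle-free graph $H$, for any vertex $u$ the set $N(u)$ is stable, so deleting $N[u]$ (or, more generally, any stable set) lowers the chromatic number by at most one, i.e.\ $\chi(H \sm N[u]) \ge \chi(H) - 1$. This is exactly where the triangle-free hypothesis is spent, and it is what accounts for the ``triangle \emph{or} $P_k$'' in the statement rather than just $P_k$.

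Then I would build the path greedily, one vertex at a time, maintaining the following invariant after choosing $q_1, \dots, q_j$: there is a vertex set $B_j$ with $G[B_j]$ connected, $\chi(G[B_j]) \ge k+1-j$, such that $q_j$ has a neighbor in $B_j$ and $B_j$ is anticomplete to $\{q_1, \dots, q_{j-1}\}$. To initialize, pick any $q_1$ and let $B_1$ be a component of $G \sm q_1$ with $\chi \ge k$; it exists because $\chi(G \sm q_1) \ge \chi(G)-1$, and $q_1$ has a neighbor in it by connectedness. To pass from $j$ to $j+1$, set $A = N(q_j) \cap B_j$, which is stable by triangle-freeness; by the removal lemma $\chi(G[B_j \sm A]) \ge k-j$, so some component $D$ of $G[B_j \sm A]$ has $\chi(D) \ge k-j = k+1-(j+1)$. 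Since $G[B_j]$ is connected and $A \neq \emptyset$, some $q_{j+1} \in A$ is adjacent to $D$; put $B_{j+1} = D$. The process runs while the chromatic budget $k+1-j$ exceeds $1$, and terminates at $j=k$ with an induced path $q_1 \cdots q_k$, the desired $P_k$.

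The main obstacle — and the whole point of the bookkeeping — is keeping the path \emph{induced}, i.e.\ ruling out chords from earlier vertices to newly added ones. The invariant handles this automatically: since $B_j$ is anticomplete to $q_1, \dots, q_{j-1}$ and $q_{j+1} \in B_j$, the new vertex is adjacent to $q_j$ only among the path vertices; and because $B_{j+1}=D$ is chosen inside $B_j \sm N[q_j]$ (note $N[q_j] \cap B_j = A$ as $q_j \notin B_j$), the next block $D$ is anticomplete to all of $q_1, \dots, q_j$. Thus the invariant is genuinely preserved rather than merely inducedness at the current step, which is what lets the induction continue and guarantees the final path has no chords.
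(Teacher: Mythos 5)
Your proof is correct. Note that the paper itself gives no proof of this statement: Theorem~\ref{th:gSzT} is quoted from the cited reference of Gy\'arf\'as, Szemer\'edi and Tuza, so there is no in-paper argument to compare against. What you have written is the standard Gy\'arf\'as path-growing argument (maintain a connected block of large chromatic number anticomplete to all but the last path vertex, spend one unit of chromatic budget per step), which is essentially the approach of the cited source, and every step checks out: the invariant correctly forces inducedness, the existence of $q_{j+1}\in A$ adjacent to $D$ follows from connectedness of $G[B_j]$, and the budget count terminates at an induced $P_k$. One cosmetic slip: $N[u]$ is not itself a stable set, so the parenthetical ``or, more generally, any stable set'' is misphrased; but the fact you actually use --- removing the stable set $A=N(q_j)\cap B_j$ (with $q_j\notin B_j$) drops $\chi$ by at most one --- is exactly right, so nothing breaks.
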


So, if we want to bound the chromatic number in ${\rm Forb}(T_{123},
{\rm triangle})$, $P_8$-free graphs form a decent basic class.  On the
other hand, the following Lemma shows that long paths (length at
least~8) form a potentially interesting connected basic class for
${\rm Forb}(T_{123}, {\rm triangle})$.

\begin{lemma}[with Stacho \cite{nicolas.juraj:F}]
  \label{l:type}
  Let $G$ be a graph with no triangle and no $T_{123}$, and let
  $P=p_1\tp p_2\tp\cdots\tp p_k$ be a maximal path of $G$.  If $k\geq
  8$ then every vertex $v$ of $G \sm P$ is of one of the following
  types:
  \begin{description}
  \item[type {\sf 0}:] $N(v)\cap P = \emptyset$;
  \item[type {\sf 1}:] $N(v) \cap P= \{p_2\}$ or  $N(v)\cap P = \{p_{k-1}\}$;
  \item[type {\sf 2}:] $N(v) \cap P= \{p_i, p_{i+2}\}$ for some $1\leq i \leq k-2$;
  \item[type {\sf o}:] $N(v) \cap P= \{p_1, p_3, \dots, p_{2\lceil  k/2\rceil-1}\}$;
  \item[type {\sf e}:] $N(v) \cap P= \{p_2, p_4, \dots, p_{2\lfloor  k/2\rfloor}\}$;
  \item[type {\sf c}:]  $N(v) \cap P= \{p_1, p_k\}$.
  \end{description}
\end{lemma}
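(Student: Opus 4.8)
The plan is to encode the attachment of $v$ by the set $A=\{\,i : p_i\in N(v)\,\}\subseteq\{1,\dots,k\}$ and to determine its possible shapes from three facts: $G$ is triangle-free, $P$ is maximal, and a single ``leg-building'' device manufactures an induced $T_{123}$ as soon as $A$ is too spread out. First I would record the cheap constraints. As $P$ is induced and $G$ has no triangle, $v$ has no two consecutive vertices of $P$ as neighbours, so $A$ contains no two consecutive integers. As $P$ is maximal, $v$ cannot meet $P$ only at $p_1$ nor only at $p_k$ (else $v\tp p_1\tp\cdots\tp p_k$ or $p_1\tp\cdots\tp p_k\tp v$ would be a longer path of $G$); hence $A\neq\{1\}$ and $A\neq\{k\}$. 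These already settle type~\textsf{0} (when $A=\emptyset$) and reduce the remaining work to describing nonempty $A$.

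The key tool I would isolate as a claim: suppose one can choose three sub-paths $Q_1,Q_2,Q_3$ of $P$ with $|V(Q_j)|=j$, pairwise non-adjacent in $G$ (equivalently their index intervals are pairwise at distance at least $2$), such that each $Q_j$ has exactly one vertex in $N(v)$ and this vertex is an end of $Q_j$. Then $\{v\}\cup V(Q_1)\cup V(Q_2)\cup V(Q_3)$ induces a $T_{123}$ with centre $v$ and legs $Q_1,Q_2,Q_3$ of lengths $1,2,3$, a contradiction. A symmetric device with centre a path-vertex $p_i$ of degree three (neighbours $p_{i-1},p_{i+1},v$) gives a $T_{123}$ from a length-$1$ leg $\{v\}$ together with a length-$2$ and a length-$3$ sub-path of $P$ emanating from $p_i$ on its two sides. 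Ruling these two configurations out is what forces $A$ into the listed shapes.

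Then I would split on $m=|A|$. For $m=1$, say $A=\{i\}$: maximality gives $i\notin\{1,k\}$, and if $3\le i\le k-2$ the path-centred device at $p_i$ (its two sides have at least $2$ vertices and, since $k\ge 8$, enough room for a leg of length $3$) produces a $T_{123}$; hence $i\in\{2,k-1\}$, type~\textsf{1}. For $m=2$, say $A=\{a,b\}$ with $b\ge a+2$: if $b=a+2$ this is type~\textsf{2}; if $b\ge a+3$ the gap $\{a,a{+}1,a{+}2\}$ (or its mirror at $b$) furnishes a length-$3$ leg, and, combined with a length-$1$ and a length-$2$ leg built on the two sides at a degree-three path vertex, yields a $T_{123}$ unless both attachments sit at the extreme ends, i.e. $\{a,b\}=\{1,k\}$, which is type~\textsf{c} (there $G[P\cup\{v\}]$ is a single cycle, with no vertex of degree $3$, so indeed no $T_{123}$). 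For $m\ge 3$, I would first show every gap equals $2$: a gap $a_{t+1}-a_t\ge 3$ opens room for a length-$3$ leg inside the gap, while the $\ge 2$ remaining neighbours supply the separated roots of a length-$1$ and a length-$2$ leg, giving a centre-$v$ $T_{123}$. With all gaps equal to $2$, $A$ is an arithmetic progression of step $2$, hence of constant parity; finally, if it failed to reach the extreme admissible index at either end (e.g. an odd progression starting at $p_3$ rather than $p_1$), the free room beyond that end provides a length-$3$ leg and the other neighbours the remaining legs, again a contradiction. Thus $A$ is the full odd set (type~\textsf{o}) or the full even set (type~\textsf{e}).

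The main obstacle, and where $k\ge 8$ is genuinely used, is the boundary bookkeeping: whenever a chosen leg would run off an end of $P$ or land back on a vertex of $N(v)$ (two steps from a neighbour of the ``every other vertex'' pattern lands on another neighbour, which is exactly why types~\textsf{o},\textsf{e} survive), one must re-root or reflect the legs. Verifying that enough separated room always exists except in precisely the listed boundary configurations ($\{1,k\}$, the attachments at $p_2,p_{k-1}$, and the extreme ends of the odd/even progressions) is the delicate part; the hypothesis $k\ge 8$ is what guarantees an interior long enough to host a length-$3$ leg together with two further separated legs in all the non-exceptional cases.
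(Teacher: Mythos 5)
There is a genuine gap, and it is in your ``key tool''. Both of your devices use $v$ either as the centre of the spider or as a pendant length-$1$ leg; neither ever uses $v$ as an \emph{interior} vertex of a leg. Several attachments that the lemma must exclude contain no $T_{123}$ of either of your two kinds, so your case analysis cannot dispose of them. The simplest is $N(v)\cap P=\{p_1,p_{k-1}\}$ (your case $m=2$, $b\ge a+3$, $\{a,b\}\neq\{1,k\}$): a centre-$v$ spider needs three neighbours of $v$ on $P$, but $v$ has only two, and your path-centred device is also unavailable, since the unique vertex of degree three in $G[V(P)\cup\{v\}]$ is $p_{k-1}$, whose side towards $p_k$ consists of the single vertex $p_k$, too short to host the length-$2$ and length-$3$ legs. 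Yet this attachment does contain a $T_{123}$, namely $\{p_{k-1},p_k,p_{k-2},p_{k-3},v,p_1,p_2\}$ with centre $p_{k-1}$ and legs $p_k$, then $p_{k-2},p_{k-3}$, then $v,p_1,p_2$: the length-$3$ leg passes \emph{through} $v$ and re-enters $P$ at the other neighbour of $v$. The same phenomenon defeats your $m\ge 3$ arguments. For $k=8$ and $N(v)\cap P=\{p_1,p_4,p_7\}$ (the paper's $k=7$ exception with one more path vertex) there is no centre-$v$ $T_{123}$ at all, because every length-$3$ leg inside a gap ends at distance one from another neighbour of $v$, and your path-centred device fails at $p_4$ and at $p_7$ for the same reason; the only $T_{123}$'s are of the form $\{p_4,p_5,p_3,p_2,v,p_7,p_8\}$, again with a leg $v,p_7,p_8$ traversing $v$. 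The truncated progression $N(v)\cap P=\{p_1,p_3,p_5\}$ with $k=8$ behaves identically (centre $p_5$, leg $v,p_1,p_2$), so your claim that a too-short arithmetic progression is killed by a centre-$v$ spider is also false.

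This is precisely where the paper's proof does something you are missing: its constructions systematically place $v$ in the interior of a leg or of a spine. In the proof that $i\le 2$ and $j\ge k-1$ it uses $\{p_i, p_{i+1}, p_{i-1}, p_{i-2}, v, p_j, p_{j-1}\}$, a spider with centre $p_i$ whose length-$3$ leg is $v, p_j, p_{j-1}$; in the proof that $t-s\ge 4$ it uses the induced path $p_{s-1}, p_{s-2}, v, p_{s+3}, p_{s+2}, p_{s+1}$ in which $v$ is an interior vertex; and only its final step uses your centre-$v$ device. Your skeleton (encode the attachment as $A$, note the triangle-free and maximality constraints, split on $|A|$) is sound, and your treatment of $m\le 1$ and of $b=a+2$ agrees with the paper, but to complete the argument you would have to introduce a third family of configurations whose legs run through $v$ between two of its neighbours on $P$, and redo the $m=2$ and $m\ge 3$ cases with it; once that bookkeeping is done in terms of the extreme neighbours and the break points of the alternating pattern, you essentially arrive at the paper's analysis via $i$, $j$, $s$ and $t$.
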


\begin{proof}
  Note that the assumption $k\geq 8$ is necessary.  Indeed, if $k=7$ and $N_P(v) =
  \{p_1, p_4, p_7\}$ then $v$ fails to be of one of the types.

  Suppose $v$ is not of type~{\sf 0}.  Let $i, j$ be respectively minimal and
  maximal indices such that $vp_i, vp_j \in E(G)$.

  \begin{claim}
    We may assume $j\geq i+3$.
  \end{claim}

  \begin{proofclaim}
    Suppose $j=i$.  If $i=1$ or $i=k$, we have a contradiction with
    the maximality of $P$.  If $i=2$ or $i=k-1$, then $v$ is of
    type~{\sf 1}.  In all other cases, we conclude that $\{p_i, v,
    p_{i-1}, p_{i-2}, p_{i+1}, p_{i+2}, p_{i+3}\}$ or $\{p_i, v,
    p_{i+1}, p_{i+2}, p_{i-1}, p_{i-2}, p_{i-3}\}$ induces a $T_{123}$ in
    $G$.  So, $j>i$.  If $j=i+1$, then there is a triangle.  If
    $j=i+2$ then $v$ is of type~{\sf 2}. Therefore, $j\geq i+3$.
  \end{proofclaim}

  \begin{claim}\label{c:il2}
    $i \leq 2$ and $j\geq k-1$.
  \end{claim}

  \begin{proofclaim}
    Suppose that $i\geq 3$. If $j>i+3$, then $\{p_i, p_{i+1}, p_{i-1},
    p_{i-2}, v, p_j, p_{j-1}\}$ induces a $T_{123}$ in $G$. So,
    $j=i+3$.  If $i>3$, then $\{p_i, p_{i+1}, v, p_j, p_{i-1},
    p_{i-2}, p_{i-3}\}$ induces a $T_{123}$ in $G$.  So, $i=3$ and
    $j=6$.  Now, since $k\geq 8$, it follows that $\{p_i, p_{i+1},
    p_{i-1}, p_{i-2}, v, p_j, p_{j+1}\}$ induces a $T_{123}$ in
    $G$. So, $i\leq 2$ holds and $j\geq k-1$ follows symmetrically.
  \end{proofclaim}

  Suppose now that $v$ is not of type~{\sf o} or~{\sf e}.  This
  implies that there exists an integer $s$ with $i \leq s \leq k-3$
  such that $vp_i, vp_{i+2}, \dots, vp_s \in E(G)$ and $vp_{s+2}
  \notin E(G)$.  Similarly, there exists an integer $t$ with $j \geq t
  \geq 4$ such that $vp_j, vp_{j-2}, \dots, vp_t \in E(G)$ and
  $vp_{t-2} \not\in E(G)$.

  \begin{claim}\mbox{}
    $t-s \geq 4$.
  \end{claim}

  \begin{proofclaim}
    From the definition of $s$ and $t$, we can conclude that $t-s \geq
    3$. Suppose that $t-s=3$.  Since $k\geq 8$, we may assume (up to
    the symmetry between $s$ and~$t$) that $s\geq 3$.  We put $Q =
    p_{s-1} \tp p_{s-2} \tp v \tp p_{s+3} \tp p_{s+2} \tp p_{s+1}$ and
    see that $Q$ is an induced path of $G$.  If $s+3 \neq k$, then
    $Q\cup \{p_{s+4}\}$ induces a $T_{123}$ in $G$.  So, $s+3=k$, which
    implies $s\geq 5$ because $k\geq 8$.  But then $Q\cup \{p_{s-4}\}$
    induces a $T_{123}$ in $G$, a contradiction.
  \end{proofclaim}

  \begin{claim}\mbox{}
    $t-s \geq 5$.
  \end{claim}

  \begin{proofclaim}
    Suppose that $t-s=4$.  Since $k\geq 8$, we may assume (up to the
    symmetry between $s$ and $t$) that $s\geq 3$.  If $s+4 \neq k$,
    then $\{v, p_{s-2}, p_{s+4}, p_{s+5}, p_{s}, p_{s+1}, p_{s+2}\}$
    induces a $T_{123}$ in $G$.  So, $s+4=k$, which implies $s\geq 4$
    because $k\geq 8$.  Hence, $\{v, p_{s+4}, p_{s-2}, p_{s-3}, p_{s},
    p_{s+1}, p_{s+2}\}$ induces a $T_{123}$ in $G$.
  \end{proofclaim}

  If $s\geq 2$, then $\{p_s, p_{s-1}, p_{s+1}, p_{s+2}, v, p_t,
  p_{t-1}\}$ induces a $T_{123}$ in $G$ because $t-s\geq 5$.  So, $s=1$,
  which implies $i=s=1$ and by symmetry, $j=t=k$.  If $v$ has a
  neighbour $p_r$ such that $1<r<k$, then $r\geq 4$ and $r\leq k-3$
  (from $s=1$ and $t=k$).  Also, since $k\geq 8$, we may assume (up to
  symmetry) that $r\geq 5$. Thus $\{v, p_r, p_k, p_{k-1}, p_1, p_2,
  p_3\}$ induces a $T_{123}$ in $G$.  It follows that $v$ has no neighbour
  $p_r$ where $1<r<k$, and hence, $v$ is of type~{\sf c}.  
\end{proof}

The lemma above is what I call an \emph{attachment}\index{attachment!lemma} lemma: it says how
a vertex (or sometimes a subgraph) must attach to a decently connected
substructure in a graph of a given class.  Structural Graph Theory is
full of these attachment lemmas, and some are quite long to prove.
Most of the time (if not all the time) attachment lemmas are proved by
long case by case analysis.

I would like to define an informal notion of
\emph{deepness}\index{deepness of an attachment lemma} of an
attachment lemma (or more generally of a decomposition theorem).  If a
careful reader really checks all the cases in the proof above, it is
likely that (s)he ends up with 10 pictures, each showing a path
together with a vertex that attaches to it in a special way that
entails a $T_{123}$.  Moreover, the proof was designed carefully
enough so that the 10 pictures are pairwise non-isomorphic: on each
picture, the tree appears in a different fashion.  So, I would be very
surprised if one comes up with a significantly shorter proof of
Lemma~\ref{l:type}.  It might be that some unexpected clever argument
solves Gy\'arf\'as conjecture with a short non-structural proof.  This
kind of things sometimes happen in discrete mathematics, for example
Lov\'asz could compute the chromatic number of Kneser graphs thanks to
seemingly unrelated theorems from topology,
see~\cite{matousek:borsuk}.  But it would be strange to prove the
little Lemma~\ref{l:type} without considering the 10 little cases.
So, I suggest to say that Lemma~\ref{l:type} has \emph{deepness 10},
because when proving it, one needs to consider 10 pairwise
non-isomorphic configurations.  Of course, a big deepness may mean a
badly written proof (that considers too many cases), but the fact that
all the cases are pairwise non-isomorphic means that a kind of a shortest
proof has been found.

\begin{question}
  Define formally what is an attachment lemma and the notion of
  deepness.
\end{question}

Another interesting aspect of attachment lemmas is that they are in a
way easy, some would say tedious and not interesting.  In front of
them, one feels that no ``idea'' is needed to prove them, that it is
just a matter of work and techniques to decide whether a given
attachment statement is true or false.  This is good in way: the art
of the mathematician is to convert difficult questions into checkable
computations.  Anyway, this leads to the following question.

\begin{question}
  Once a formal notion of attachment lemma is defined, try to see
  whether there exists an algorithm that decides in finite time whether
  a given attachment statement is true (by giving a proof) or false
  (by outputting a counter-example).
\end{question}

Of course, there are many ways to formalize what is an attachment
lemma.  A model which allows very general statements is likely to
produce some whose truth is undecidable, since from logic (G\"odel's
Theorem\dots), it is known that no algorithm can decide the truth of
too general mathematical statements.  On the other hand, a model that
restricts too much what we mean by attachment lemmas is likely to
produce only trivially decidable statements.  A useful answer to the
question above must be something in between.  The idea behind all this
is to have a theoretical basis for computer assisted proofs in
structural graph theory: give the most tedious part of the job to
computers and keep for us the interesting part.  This would be nice
because the field suffers from too long case analyses.

A partition $(X_1, X_2)$ of the vertex set of a graph $G$ is a
\emph{2-join}, if there exist disjoint subsets $A_1, B_1$ of $X_1$ and
disjoint subsets $A_2, B_2$ of $X_2$ such that every vertex of $A_1$
is adjacent to every vertex of $A_2$, every vertex of $B_1$ is
adjacent to every vertex of $B_2$ and there are no other edges
between $X_1$ and $X_2$.  The 2-join was first defined by Cornu\'ejols
and Cunningham~\cite{cornuejols.cunningham:2join}.

A partition $(X_1,X_2)$ of the vertex set of $G$ is a {\em bipartite
  2-join}\index{2-join!bipartite} of $G$ if $(X_1,X_2)$ is a 2-join
and, in addition, it satisfies the following conditions:
\begin{enumerate}
\item $A_1 \cup B_1 \neq \emptyset$;
\item $A_2 \neq \emptyset$, $B_2 \neq \emptyset$ and $X_2 = A_2 \cup B_2$;
\item $|X_2| \geq 3$;
\item $A_2$ and $B_2$ are independent sets;
\item there exists at least one edge in $G[X_2]$.
\end{enumerate}

Let $x$ be a vertex of $G$ and $H$ a graph vertex-disjoint from~$G$.
The graph obtained from $G$ by {\em replacing $x$ with $H$} has
vertices $V(G\sm x)\cup V(H)$ and edges $E(H)\cup E(G\sm x)\cup
\{uv~|~u\in N(x),~v\in V(H)\}$.  An {\em extension of $G$} is any
graph we obtain from $G$ by replacing each vertex of $G$ with an
independent set (possibly empty).  Here is the decomposition theorem
of this section.

\begin{theorem}[with Stacho \cite{nicolas.juraj:F}]
  \label{th:stacho}
  Every connected graph $G$ with no triangle and no $T_{123}$
  satisfies at least one of the following:
  \begin{enumerate}
  \item $G$ is $P_8$-free;
  \item $G$ is an extension of a cycle or a path;
  \item $G$ admits a bipartite 2-join.
  \end{enumerate}
\end{theorem}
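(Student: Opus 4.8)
The plan is to run the standard ``long induced path'' dichotomy. If $G$ is $P_8$-free we are in case~1 and there is nothing to prove, so assume $G$ contains an induced $P_8$. Extending such a path greedily at both ends for as long as possible produces a maximal induced path $P = p_1 \tp p_2 \tp \cdots \tp p_k$ with $k \geq 8$, and to this path I would apply Lemma~\ref{l:type}: every vertex of $G \sm P$ is then of type {\sf 0}, {\sf 1}, {\sf 2}, {\sf o}, {\sf e} or {\sf c}. The entire proof consists in reading the global structure off this purely local classification, the guiding idea being that the types {\sf 1}, {\sf 2} and {\sf c} are exactly the ways a vertex can sit inside a blow-up of a path or a cycle, while the types {\sf 0}, {\sf o} and {\sf e} are the obstructions that must instead be converted into a bipartite 2-join.

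First I would organise the path-attached vertices into blow-up classes. For $2 \leq j \leq k-1$ set $V_j = \{p_j\} \cup \{v : N(v)\cap P = \{p_{j-1},p_{j+1}\}\}$ (the type-{\sf 2} vertices ``parallel'' to $p_j$), and let $V_1 = \{p_1\}\cup\{v\text{ of type {\sf 1}}: N(v)\cap P=\{p_2\}\}$ and symmetrically $V_k$. The key claims are that each $V_j$ is a stable set, that $V_j$ is complete to $V_{j\pm1}$, and that $V_j$ is anticomplete to every other class. Stability and the completeness to neighbouring classes come from triangle-freeness (two copies of $p_j$ sharing the pair $p_{j-1},p_{j+1}$ cannot be adjacent, and a missing edge to a copy of $p_{j\pm1}$ would contradict the attachment types). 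The anticompleteness to far classes is where I expect the $T_{123}$-freeness to be spent: an edge between a copy of $p_j$ and a copy of $p_{j'}$ with $|j-j'|$ large yields a long induced path carrying a degree-$3$ branch vertex, i.e.\ an induced $T_{123}$. If $\bigcup_j V_j = V(G)$ and no vertex is of type {\sf c}, the classes exhibit $G$ as an extension of the path $P$; a type-{\sf c} vertex $c$ closes $P$ into an induced cycle $p_1 \tp \cdots \tp p_k \tp c \tp p_1$, and folding the classes around it presents $G$ as an extension of a cycle. This is intended to settle case~2.

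It then remains to handle the vertices not captured by the $V_j$: those of type {\sf 0} (no neighbour on $P$) and the ``dominating'' vertices of type {\sf o} or {\sf e} (adjacent to every second vertex of $P$). The plan is to show that the presence of any such vertex forces a bipartite 2-join, located at one end of the path. Concretely I would take $X_2$ to be the first few classes together with the off-path vertices attached only there, and $X_1$ the remainder; using Lemma~\ref{l:type} one checks that across this cut the vertices of $X_2$ reach $X_1$ through at most two uniform neighbourhoods $A_1, B_1$, the two stable colour classes $A_2, B_2$ of the bipartite end-structure satisfying $X_2 = A_2 \cup B_2$, after which conditions~(1)--(5) in the definition of a bipartite 2-join reduce to a finite verification.

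The main obstacle is precisely the treatment of the global types {\sf o} and {\sf e}. Unlike a type-{\sf 1} or type-{\sf 2} vertex, a type-{\sf o} (or {\sf e}) vertex meets a constant fraction of $P$ and hence interacts with almost every class $V_j$ at once, so it can neither be absorbed into a single blow-up class nor separated off in an obvious way; moreover Lemma~\ref{l:type} controls only how vertices attach to $P$, and says nothing about the edges \emph{among} off-path vertices (type-{\sf o} to type-{\sf 2}, type-{\sf 0} to type-{\sf o}, and so on). Pinning down these off-path adjacencies is the delicate heart of the argument and, I expect, will require a secondary attachment analysis carried out away from the path; it is here that the genuine case checking lives, the blow-up bookkeeping of the second paragraph and the end-of-path 2-join of the third being comparatively routine once the dominating vertices are understood.
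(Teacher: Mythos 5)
Your skeleton (maximal induced $P_8$, Lemma~\ref{l:type}, blow-up classes $V_j$, then a bipartite 2-join for the leftover vertices) is the same as the paper's, but the step that carries the whole theorem --- \emph{where} the bipartite 2-join sits --- is wrong as sketched. You propose a transverse cut at one end of the path: $X_2 = V_1 \cup \cdots \cup V_t$ plus vertices attached only there. Such a cut is never a 2-join once a dominating vertex exists. In a 2-join, every vertex of $X_1$ with a neighbour in $X_2$ must have its crossing neighbourhood equal to $A_2$ or to $B_2$, so at most two distinct crossing neighbourhoods can occur, and they must be disjoint. Across your cut (say $t\geq 4$) there are at least three: the class $V_{t+1}\subseteq X_1$ is complete to $V_t$ and nothing else in $X_2$; a type-{\sf o} vertex meets both $V_1$ and $V_3$ inside $X_2$; a type-{\sf e} vertex meets $V_2$ and $V_4$. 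These three sets are pairwise distinct, and $V_t$ moreover overlaps one of the other two, so no choice of $t$ repairs this.

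The correct split is parallel rather than transverse: take $X_2$ to be the \emph{entire} blow-up $V_1 \cup \cdots \cup V_k$ (together with the type-{\sf c} class, if any), with $A_2$ the union of the odd-indexed classes and $B_2$ the union of the even-indexed ones --- both stable, since non-consecutive classes are anticomplete --- and let $X_1 = V(G)\sm X_2$, with $A_1$ the vertices complete to every odd class, $B_1$ those complete to every even class; type-{\sf 0} vertices and everything hanging further away must have no neighbour in $X_2$ at all. Two things remain that your sketch does not supply. First, one must prove that every vertex outside the blow-up attaches in exactly this way; maximality of the path $P$ alone cannot do it (it says nothing about edges among off-path vertices, as you concede), and the paper's device is to take an inclusion-wise maximal \emph{template} (the whole blown-up structure, dominating sets $A$ and $B$ included) and to re-type each new vertex against \emph{snakes}, i.e.\ induced $k$-vertex paths winding through the template, so that Lemma~\ref{l:type} plus template-maximality excludes every bad attachment. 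Second, parity matters: if $k$ is even and a type-{\sf c} vertex exists, the blow-up is an \emph{odd} cycle, hence not bipartite, and cannot serve as $X_2$; in that case one shows $A$ and $B$ are forced to be empty and $G$ itself is an extension of an odd cycle, which is why that situation lands in outcome~2 rather than outcome~3. Your proposal treats the cycle closure and the dominating vertices independently and misses this interaction.
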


\begin{proof}
  we may assume that $G$ contains an induced path $P = p_1 \tp \cdots
  \tp p_k$ where $k\geq 8$ and we choose $P$ to be an inclusion-wise
  maximal induced path of $G$.

  We say that a set $W\subseteq V(G)$ is a {\em $P$-template}, if
  there exist pairwise disjoint sets of vertices $X_1, \dots, X_k$,
  $X_0$, $A$, $B$ whose union is $W$ and such that:

  \begin{enumerate}
  \item $p_i \in X_i$, for each $i=1, \dots, k$;
  \item $X_i$ is complete to $X_{i+1}$, for each $i=1, \dots, k-1$;
  \item $X_0$ is complete to $X_1$ and $X_k$;
  \item if $k$ is even and $X_0 \neq \emptyset$, then the sets $A$ and
    $B$ are empty;
  \item if $k$ is odd or $X_0 = \emptyset$, then $A$ is complete to
    $X_1, X_3, \dots X_{2\lceil  k/2\rceil-1}$ and $B$ is complete
    to $X_0, X_2, X_4, \dots, X_{2\lfloor  k/2\rfloor}$;
  \item each of the sets $X_1,\ldots,X_k,X_0,A,B$ is an independent
    set; 
  \item there are no other edges in $G[W]$.
  \end{enumerate}

  Let $W$ be an inclusion-wise maximal $P$-template of $G$.  We say
  that a path $Q=q_1\tp q_2 \tp \ldots\tp q_k$ is a {\em snake}, if
  the vertices of $Q$ are consecutively in $X_i$, $X_{i+1}$, \dots,
  $X_{i+k-1}$ for some $i$ where the indices are taken modulo
  $k+1$.\bigskip

  \noindent{\bf Case 1:} $k$ is even and $X_0 \neq \emptyset$. Then,
  by the definition of $W$, the sets $A$ and $B$ are empty.  We claim
  that $G=G[W]$.  For otherwise, pick a vertex $v$ in $G\sm W$
  with at least one neighbour in $W$.  So, $v$ has neighbors in at
  least one snake $Q = q_1 \tp \cdots \tp q_k$ of $W$ and we choose
  such a snake containing as many neighbours of $v$ as possible.

  If $v$ is of type~{\sf 1} with respect to $Q$, say $vq_2 \in E(G)$,
  then there is a snake $Q' = q_0 \tp q_1 \tp q_2 \tp \cdots \tp
  q_{k-1}$ where $q_0\in X_0$ such that $v$ and $Q'$ contradict
  Lemma~\ref{l:type} or the choice of $Q$.  If $v$ is of type~{\sf 2}
  with respect to $Q$, say $vq_3, vq_5 \in E(G)$, then up to
  relabeling we may assume $q_3\in X_3$ and $q_5 \in X_5$, and we
  claim that $v$ is complete to all vertices of $X_3\cup X_5$ and that
  there are no other edges between $v$ and $W$. Indeed, this is easy
  to see, since otherwise we again contradict the choice of $Q$ or
  Lemma~\ref{l:type} for some snake $Q'$.  It follows that $W\cup
  \{v\}$ contradicts the maximality of $W$.

  Next, if $v$ is of type~{\sf o} or~{\sf e}, then up to relabeling,
  we may assume $vq_1, vq_3, \dots, vq_{k-1} \in E(G)$.  Since $k$ is
  even, we obtain a contradiction to Lemma~\ref{l:type} applied to the
  path $q_3 \tp \cdots q_k \tp q_0 \tp q_1$ where $q_0\in X_0$.  This
  proves that $G=G[W]$, and hence, $G$ is an extension of an odd
  cycle.\bigskip

  \noindent{\bf Case 2:} $k$ is odd or $X_0=\emptyset$. Then we claim
  that each vertex $v$ of $G\sm W$ is such that $N(v)\cap W \subseteq
  A\cup B$.  Otherwise, there exists a vertex $v$ of $G\sm W$ whose
  neighbourhood contains some vertex of $X_0 \cup X_1 \cup \cdots \cup
  X_k$.  Again, $v$ has neighbours in at least one snake $Q = q_1 \tp
  \cdots \tp q_k$ of $W$, and we choose a snake containing as many
  neighbours of $v$ as possible.  Now, depending on the type of $v$
  with respect to $Q$, we again contradict either the choice of $Q$,
  or Lemma~\ref{l:type}, or the maximality of $W$.

  It remains to observe that if $A\cup B$ is empty, then we have $G =
  G[W]$, and hence, $G$ is an extension of a path or a cycle.
  Otherwise, we conclude that $(V(G) \sm (X_0 \cup \cdots \cup X_k),
  X_0 \cup \cdots \cup X_k)$ is a bipartite 2-join of $G$.
\end{proof}

There is a difference between Theorem~\ref{th:stacho} and
Theorem~\ref{th:paw}.  Theorem~\ref{th:paw} is in a sense canonical:
it really describes fully the class ${\rm Forb}^*({\rm paw})$ and it
would be surprising that a more precise description exists.
Theorem~\ref{th:stacho} was designed on purpose to bound the chromatic
number, and more precise theorems about the structure of ${\rm
  Forb}^*({\rm triangle}, F_{123})$ should exist (maybe $\chi \leq 7$
is provable by some structural description?).  Finally we obtain the
main theorem of this section.

\begin{theorem}[with Stacho \cite{nicolas.juraj:F}]
  \label{th:8colors}
  Every triangle-free graph $G$ with no $T_{123}$ is 8-colourable.
\end{theorem}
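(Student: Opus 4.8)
We want to show every triangle-free graph $G$ with no $T_{123}$ is $8$-colourable.The plan is to apply the decomposition theorem of this section, Theorem~\ref{th:stacho}, and to bound the chromatic number separately in each of the three outcomes. The overall structure will be an induction on $|V(G)|$, so that whenever we split $G$ along a bipartite 2-join we may assume that the (smaller) pieces are already $8$-colourable. The base cases are handled by the first two outcomes of the decomposition, and the inductive step is handled by the third.

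First I would dispose of the $P_8$-free case. By Theorem~\ref{th:gSzT} of Gy\'arf\'as, Szemer\'edi and Tuza, any graph with $\chi(G) \geq k+1$ contains a triangle or an induced $P_k$. Taking $k=8$: since $G$ is triangle-free and $P_8$-free, it cannot have $\chi(G) \geq 9$, so $\chi(G) \leq 8$. This outcome is therefore immediate. Next, the case where $G$ is an extension of a cycle or a path. Replacing each vertex by an independent set does not change the chromatic number beyond that of the underlying graph (a proper colouring of the base lifts directly, giving all vertices of one independent set the colour of the vertex they replaced). A path is bipartite, hence $2$-colourable; a cycle is $2$- or $3$-colourable. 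So in this outcome $\chi(G) \leq 3 \leq 8$.

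The substantive work is the bipartite 2-join case, where $(X_1, X_2)$ is a bipartite 2-join with the structure recorded in the definition: $A_2, B_2$ are independent sets partitioning $X_2$, $A_1$ is complete to $A_2$, $B_1$ is complete to $B_2$, and there are no other edges across the cut. Here I would build two \emph{blocks} $G_1, G_2$, each obtained from one side together with a small gadget encoding how the other side attaches, in the classical 2-join manner. The gadget for $G_1$ should replace $X_2$ by a short structure (such as a path or a single marker edge between representatives of $A_2$ and $B_2$) so that $G_1$ remains triangle-free and $T_{123}$-free, and hence by induction is $8$-colourable; symmetrically for $G_2$. The delicate point, and what I expect to be the main obstacle, is to show that the two colourings can be reconciled across the join: a colouring of $G_1$ assigns ``types'' to the boundary sets $A_1, B_1$, and one must arrange the gadget so that the relevant colour-classes on $A_2, B_2$ on the $G_2$ side can be permuted to agree with the constraints forced by $A_1, B_1$. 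Because $A_2$ and $B_2$ are independent and the cross-edges are complete bipartite between $A_1$--$A_2$ and $B_1$--$B_2$, the only requirement is that the colours used on $A_2$ avoid those appearing on $A_1$ (and similarly for $B$); with $8$ colours available and the local structure controlled by the gadget, a counting/permutation argument should provide the necessary slack. Verifying that the blocks are genuinely smaller and genuinely stay inside the class $\mathrm{Forb}(T_{123}, \mathrm{triangle})$ is the part that will require the most care, since an ill-chosen gadget could reintroduce a triangle or a $T_{123}$ spanning the gadget and the original vertices.

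Assembling these three cases completes the induction: the two non-recursive outcomes give the bound directly, and the 2-join outcome reduces to strictly smaller graphs in the same class, each $8$-colourable by the inductive hypothesis, with the colourings merged along the join as above. Hence $\chi(G) \leq 8$ for every triangle-free $T_{123}$-free graph $G$.
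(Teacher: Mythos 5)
Your handling of the first two outcomes is correct and matches the paper: Theorem~\ref{th:gSzT} with $k=8$ disposes of the $P_8$-free case, and extensions of paths and cycles are $3$-colourable. The gap is in the bipartite 2-join case, which is where all the content lies, and your sketch of it would not survive being made precise. You propose colouring \emph{both} blocks by induction and then reconciling the two colourings by permuting colours so that the colours on $A_2$ avoid those on $A_1$ and the colours on $B_2$ avoid those on $B_1$. This reconciliation can genuinely fail: in an $8$-colouring of the block containing $X_1$, the set $A_1$ may carry as many as $7$ distinct colours (nothing in the inductive hypothesis prevents this), and if the inductive colouring of the other block uses two or more colours on $A_2$, no injective relabelling of its colours can push them all into the single remaining colour. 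So the ``counting/permutation slack'' you appeal to is not there. The merge has to be arranged so that $A_2$ and $B_2$ each receive a \emph{single} colour --- which is possible precisely because they are independent sets --- but once you do that, the second block and its inductive colouring are never used at all; a two-block argument is the wrong shape for this decomposition.

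The paper's proof is exactly this one-block argument, and it also dissolves your worry about a gadget reintroducing a triangle or a $T_{123}$: by condition~5 in the definition of a bipartite 2-join there is an edge $xy$ of $G[X_2]$ with $x\in A_2$, $y\in B_2$, and the block is taken to be the \emph{induced subgraph} $G'=G[X_1\cup\{x,y\}]$. Being induced, $G'$ is automatically triangle-free and $T_{123}$-free; it is strictly smaller than $G$ since $|X_2|\geq 3$; and since $x$ is complete to $A_1$, $y$ is complete to $B_1$, and $xy\in E(G)$, any $8$-colouring of $G'$ extends to $G$ by giving every vertex of $A_2$ the colour of $x$ and every vertex of $B_2$ the colour of $y$. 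All cross-constraints are then satisfied for free: $A_2$ and $B_2$ are independent, the $A_1$--$A_2$ and $B_1$--$B_2$ constraints are inherited from the edges $xA_1$ and $yB_1$ inside $G'$, and the $A_2$--$B_2$ edges are handled by the edge $xy$. Your ``single marker edge between representatives of $A_2$ and $B_2$'' is exactly this block; the missing idea is that the marker edge should be an actual edge of $G[X_2]$, so that the block needs no soundness verification, and that once it is, the other side needs no block and no induction.
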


\begin{proof}
  The proof is by induction on the size of $V(G)$.  Let $G$ be a
  triangle-free graph with no induced $T_{123}$. We may assume that
  $G$ is connected.

  If $G$ is not 8-colourable, then by Theorem \ref{th:stacho} and Theorem
  \ref{th:gSzT}, we conclude that $G$ is either an extension of $C_k$
  or $P_k$, or admits a bipartite 2-join.  In the former case, $G$ is
  clearly 3-colourable since $C_k$ and $P_k$ are.  In the latter case,
  we have a bipartite 2-join $(X_1, X_2)$ with notation like in the
  definition.  In particular, $G[X_2]$ is a bipartite graph with
  partite sets $A_2, B_2$ and there exists an edge $xy\in E(G)$ where
  $x\in A_2$ and $y\in B_2$.

  We now consider the graph $G'=G[X_1\cup\{x,y\}]$. Clearly, $G'$
  contains no triangle or induced $T_{123}$ because $G'$ is an induced
  subgraph of $G$.  Also, $|X_2|\geq 3$, and hence, $V(G')<V(G)$.
  Therefore, by the inductive hypothesis, $G'$ is 8-colourable.  It
  remains to observe that we can extend to $G$ any 8-colouring of $G'$
  by assigning to the vertices of $A_2$ the colour of $x$ and by
  assigning to the vertices of $B_2$ the colour of $y$.
\end{proof}

\section{Chordless graphs}
\label{sec:chordless}

A graph is \emph{chordless}\index{chordless graph} if all its cycles
are chordless.  Chordless graphs are clearly closed under taking
induced subgraphs.  A graph $G$ is \emph{sparse}\index{sparse graph}
if for all pairs $u, v$ of vertices of degree at least~3, $uv\notin
E(G)$.  It is clear that all sparse graphs are chordless, because the
ends of a chord of a cycle must be adjacent and of degree at least~3.

\begin{exercise}
  Find a non-sparse chordless 2-connected graph. 
\end{exercise}

On the one hand, sparse graphs are not very ``basic'': one can obtain a
sparse graph by subdividing all edges of any graph, so sparse graphs
are in a sense as complex as general graphs.  

On the other hand, sparse graphs are very ``basic''.  Because of their
``generality'', I believe that they cannot be decomposed further in
some interesting way.  Indeed, any deep structural statement about
sparse graphs, when applied to subdivisions of general graphs, is
likely to say something about all graphs, and so must be really
extremely deep\dots Also, sparse graphs are ``simple'' in some
respects: they are easy to recognize and are all 3-colorable (3-color
all components isomorphic to a cycle, give color 1 to vertices of
degree at least 3, the remaining vertices induce a collection of
disjoint paths, which is 2-colorable).  From this, it can be seen that
sparse graphs form an essential basic class of chordless graphs,
because they capture both ``simple'' and ``complex'' aspects of them.
Thanks to sparse graphs, we can see for instance that the isomorphism
problem will certainly be difficult for chordless graphs, but that
there is a hope to color them.

For any integer $k\ge 0$, a
\emph{$k$-cutset}\index{cutset!$k$-cutset}\index{K@$k$-cutset} in a
graph is a subset $S\subseteq V(G)$ of size $k$ such that $G\sm S$ is
disconnected.  Note that a graph is disconnected if and only if it
admits a 0-cutset.  A $2$-cutset $\{a, b\}$ of a graph $G$ is
\emph{proper}\index{proper!2-cutset}\index{2-cutset!proper} if $a\neq
b$ and:

\begin{itemize}
\item $ab\notin E(G)$;
\item $V(G)\sm \{a,b \}$ can be partitioned into non-empty sets $X$
  and $Y$ so that there is no edge between $X$ and $Y$;
\item each of $G[X \cup \{ a,b \}]$ and $G[Y \cup \{ a,b \}]$ is not
  an $(a,b)$-path.
\end{itemize}

Most of the proof of the following is implicitly given
in~\cite{nicolas.kristina:one} (proof of Theorem~2.2 and Claims~12
and~13 in the proof of Theorem~2.4).  But the result is not stated
explicitly in~\cite{nicolas.kristina:one} and several details differ.

\begin{theorem}[with L\'ev\^eque and Maffray \cite{nicolas:isk4}]
  \label{th:nochord}
  Let $G$ be a chordless graph.  Then either $G$ is sparse or $G$
  admits a $1$-cutset or a proper $2$-cutset.
\end{theorem}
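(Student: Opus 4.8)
The plan is to reduce to the $2$-connected case and then, assuming $G$ is not sparse, to produce a proper $2$-cutset. If $G$ is disconnected, some connected component is non-sparse (sparseness is a condition on adjacent pairs, and degrees are unchanged in components), and a $1$-cutset or proper $2$-cutset of that component is promoted to one of $G$ by dumping the other components on one side of the partition. If $G$ is connected but has a cutvertex, that vertex is a $1$-cutset and we are done. So I would assume $G$ is $2$-connected (the cases $|V(G)|\le 2$ being trivially sparse) and non-sparse, and fix an edge $uv$ with $d(u)\ge 3$ and $d(v)\ge 3$. Write $A=N(u)\setminus\{v\}$, $B=N(v)\setminus\{u\}$ and $H=G\setminus\{u,v\}$; note $|A|\ge 2$ and $|B|\ge 2$.

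The engine is one observation: whenever a cycle of $G$ runs through both $u$ and $v$ with $u,v$ non-consecutive on it, the edge $uv$ is a chord, contradicting chordlessness. Two consequences follow. First, $u$ and $v$ have no common neighbour: if $w\in N(u)\cap N(v)$, then for any $a\in A\setminus\{w\}$ one checks $a\not\sim w$ and $a\not\sim v$ (else a $4$-cycle has chord $uw$ or $uv$); a path from $a$ to $w$ inside $H$ would then close the cycle $v,u,a,\dots,w,v$ with chord $uw$, while a path inside $H$ from $a$ to a neighbour of $v$ avoiding $w$ would close a cycle with chord $uv$, and the only remaining possibility makes the component of $a$ in $H$ attach to the rest of $G$ through $u$ alone, so $u$ is a cutvertex --- impossible. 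Hence $A\cap B=\emptyset$. Second, applying the same trick to two disjoint paths, one from $a_1\in A$ to $b_1\in B$ and one from $a_2\in A$ to $b_2\in B$, produces a cycle on which $u,v$ are opposite, hence chord $uv$; so $H$ has no two vertex-disjoint $A$--$B$ paths.

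Next I would extract the cut. Using $2$-connectedness, $H$ is in fact connected: every component of $H$ meets $A\cup B$, a component meeting only $A$ (resp.\ only $B$) would make $u$ (resp.\ $v$) a cutvertex, and at most one component meets both. Thus $H$ is connected with $A,B\neq\emptyset$ and no two disjoint $A$--$B$ paths, so Menger's theorem yields a single vertex $z\in V(H)$ meeting every $A$--$B$ path. Let $H_A,H_B$ be the unions of the components of $H\setminus z$ meeting $A$, resp.\ $B$; these are disjoint and, again by $2$-connectedness, cover $V(H)\setminus\{z\}$, with no edges between $H_A$ and $H_B$, none from $u$ to $H_B$, and none from $v$ to $H_A$. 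Since $A\cap B=\emptyset$, $z$ lies in at most one of $A,B$, so at least one of $uz,vz$ is a non-edge; say $uz\notin E(G)$, whence $z\notin A$ and $A\subseteq H_A$. Then $\{u,z\}$ separates the non-empty sets $H_A$ and $\{v\}\cup H_B$ with no crossing edges, and it is proper: $uz\notin E(G)$; $G[H_A\cup\{u,z\}]$ is no $(u,z)$-path since $u$ has its $\ge 2$ neighbours $A\subseteq H_A$ there; and $G[\{v\}\cup H_B\cup\{u,z\}]$ is no $(u,z)$-path since $v$ has the $\ge 3$ neighbours $\{u\}\cup B$ there. The case $uz\in E(G)$, $vz\notin E(G)$ is symmetric, using $\{v,z\}$.

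The step I expect to be the main obstacle is ruling out a common neighbour $w$ of $u$ and $v$: there every natural separating pair $\{u,z\}$, $\{v,z\}$ turns out to be an edge, so properness fails, and the cycle-with-chord argument must be deployed carefully (distinguishing whether a path from $a$ to $w$ exists in $H$) to show that such a triangle simply cannot occur in a $2$-connected chordless graph. Once $A\cap B=\emptyset$ is secured, the degree bounds $|A|,|B|\ge 2$ make the properness verifications of the final step routine.
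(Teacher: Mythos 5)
Your proof is correct, and it reaches the cutset by a genuinely different mechanism from the paper's. What is shared is the starting point and the engine: both proofs fix an edge whose two ends have degree at least~$3$ (exactly what non-sparseness provides) and exploit the fact that a cycle through both ends on which they are non-consecutive acquires that edge as a chord. The difference is in how the second cut vertex is produced. The paper (working with an edge $ab$, in its notation) first proves two reductions --- $G$ may be assumed triangle-free, by a maximal-clique argument, and $G$ may be assumed to have no clique cutset --- and then builds three explicit paths $P$, $Q$, $R$ in $G\setminus\{a,b\}$, chosen to minimize the length of a subpath $uPv$; it shows $\{a,u\}$ is a proper $2$-cutset by contradiction, any crossing path $D$ giving either a chorded cycle or a violation of minimality. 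You instead prove two facts (no common neighbour of the two ends; no two vertex-disjoint $A$--$B$ paths in $H$) and let Menger's theorem hand you the separating vertex $z$, after which properness of $\{u,z\}$ or $\{v,z\}$ is a degree count. Your route is shorter and the case analysis much lighter: Menger replaces the delicate minimality argument, global triangle-freeness is never needed (only the absence of a common neighbour of $u$ and $v$), and the clique-cutset claim is absorbed into the $2$-connectedness reduction. What the paper's construction buys is self-containedness (no appeal to Menger) and intermediate claims and path-attachment techniques that are reused and scaled up later in the chapter, notably in Theorem~\ref{th:OneChord}, where the class is larger and a single separating vertex is no longer the object being sought. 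If you write your version up, do make explicit that Menger returns a separator of size exactly one because at least one $A$--$B$ path exists; this relies on the connectedness of $H$, which is itself a consequence of $2$-connectedness together with your no-two-disjoint-paths observation.
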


\begin{proof}
  Let us assume that $G$ has no $1$-cutset and no proper $2$-cutset.
  Note that $G$ contains no $K_4$, since a $K_4$ is a cycle with two
  chords.  Moreover,
 
 \begin{claim}
    \label{c:triangle}
    We may assume that $G$ is triangle-free. 
  \end{claim}
  
  \begin{proofclaim}
    For suppose that $G$ contains a triangle $T$.  Then $T$ is a
    maximal clique of $G$ since $G$ contains no $K_4$.  We may assume
    that $G \neq T$ because a triangle is sparse, and that $G$ is
    connected, for otherwise every vertex is a $1$-cutset.  So some
    vertex $a$ of $T$ has a neighbor $x$ in $G\setminus T$.  Since $a$
    is not a $1$-cutset of $G$, there exists a shortest path $P$
    between $x$ and a member $b$ of $T\sm a$.  But then $P\cup T$ is a
    cycle with at least one chord (namely $ab$), a contradiction.
  \end{proofclaim}

  \begin{claim}
    \label{c:cliqueCutset}
    We may assume that $G$ has no clique cutset.
  \end{claim}
  
  \begin{proofclaim}
    Suppose that $K$ is a clique cutset in $G$.  Since $G$ has no
    cutset of size one and there is no clique of size at least three
    by~(\ref{c:triangle}), $K$ has exactly two elements $a$ and $b$.
    Let $X$ and $Y$ be two components of $G\sm \{a, b\}$.  Since none
    of $a$ and $b$ is a 1-cutset of $G$, $X\cup\{a, b\}$ contains a
    path $P_X$ with endvertices $a$ and $b$; and a similar path $P_Y$
    exists in $Y\cup\{a,b\}$.  But then $P_X\cup P_Y$ forms a cycle
    with at least one chord (namely $ab$), a contradiction.
  \end{proofclaim}

  We can now prove that $G$ is sparse.  Suppose on the contrary that
  $G$ has an edge $ab$ with $\deg(a) \geq 3$ and $\deg(b) \geq 3$.
  Let $c, e$ be two neighbors of $a$ different from $b$, and let $d,
  f$ be two neighbors of $b$ different from $a$.  Note that $\{c, e\}$
  and $\{d, f\}$ are disjoint by~(\ref{c:triangle}).
  By~(\ref{c:cliqueCutset}), $\{a, b\}$ is not a cutset, so there is
  in $G\setminus\{a, b\}$ a path between $\{c, e\}$ and $\{d, f\}$ and
  consequently a path $P$ that contains exactly one of $c, e$ and one
  of $d, f$.  Let the endvertices of $P$ be $e$ and $f$ say.  Thus $P
  \cup\{a, b\}$ forms a cycle $C$.  Since $G\setminus\{a, b\}$ is
  connected, there exists a path $Q = c \cdots u$, where $u \in P\cup
  \{d\}$ and no interior vertex of $Q$ is in $C \cup \{d\}$.  But
  $u = d$ implies that $Q\cup C$ forms a cycle with at least
  one chord, namely $ab$, so $u\in P$.  Also since $G \setminus \{a,
  b\}$ is connected, there exists a path $R = d \cdots v$ where $v \in
  P \cup Q$ and no interior vertex of $R$ is in $C \cup Q$.

  If $v$ is in $Q \sm u$, then $bdRvQcaePfb$ is a cycle with at least
  one chord, namely $ab$, a contradiction.  So $v$ is in $P$.  If $e,
  v, u, f$ lie in this order on $P$ and $v\neq u$, then
  $bdRvPeacQuPfb$ is a cycle with at least one chord, namely $ab$, a
  contradiction.  So $e, u, v, f$ lie in this order on $P$ (possibly
  $u=v$).  This restores the symmetry between $c$ and $e$ and between
  $d$ and $f$.  We suppose from here on that the paths $P, Q, R$ are
  chosen subject to the minimality of the length of $uPv$.

  Let $P_e=ePu\sm u$, $Q_c=cQu\sm u$, and $P_b = bPu\sm u$.  We now
  show that $\{a, u\}$ is a proper $2$-cutset of $G$.  Suppose not.
  So there is a path $D=x \cdots y$ in $G \setminus \{a, u\}$ such
  that $x$ lies in $P_e \cup Q_c$, $y$ lies in $P_b \cup R$, and no
  interior vertex of $D$ lies in $P\cup \{a\}\cup Q\cup R$.  Up to
  symmetry we may assume that $x$ is in $Q_c$.  If $y$ is in the
  subpath $uPv$, then, considering path $Q'=cQxDy$, we see that the
  three paths $P, Q', R$ contradict the choice of $P, Q, R$ because
  $y$ and $v$ are closer to each other than $u$ and $v$ along $P$.  So
  $y$ is not in $uPv$, and so, up to symmetry, $y$ is in $R \setminus
  \{ v\}$.  But then $x Q c a e P f b d R y D x$ is a cycle with at least
  one chord (namely $ab$), a contradiction.  This proves that we can
  partition $G\sm\{a, u\}$ into a set $X$ that contains $P_e\cup Q_c$
  and a set $Y$ that contains $P_b\cup R$ such that there is no edge
  between $X$ and $Y$, so $\{a, u\}$ is a 2-cutset.  So,
  by~(\ref{c:cliqueCutset}), $a$ and $u$ are not adjacent. This
  implies that $\{a, u\}$ is proper.
\end{proof}

The theorem above will be generalized in Section~\ref{sec:oneChord},
see Theorem~\ref{th:OneChord}.  It has interesting consequences.  It
shows how a non-sparse chordless graph can be broken into pieces, and
this allows to prove theorems by induction (see for example
Theorem~\ref{th:3cChordless} below).  Usually, to have a usable
induction hypothesis, one needs to apply it to more than just the
components of what remains once the cutset is removed.  To see this
let us define the \emph{blocks of decomposition}\index{blocks of
  decomposition!w.r.t.\ 2-cutset} of a graph $G$ with respect to a
proper 2-cutset and two sets $X$ and $Y$ like in the definition.  They
are $G_X = G[X \cup \{a, b, x\}]$ and $G_Y = G[Y \cup \{a, b, y\}]$
where $x$ and $y$ are a new vertices adjacent to $a$ and $b$ only.  To
see that in some situations \emph{we need} $x$ and $y$, one can try to
prove that a graph with no 1-cutset and with a proper 2-cutset is
chordless if and only if both its blocks are.  With our definition of
blocks, this works but it fails if we omit $x$ and $y$ in the
definition.

\begin{exercise}
  Write an algorithm that decides in polynomial time whether a graph is
  chordless. 
\end{exercise}

\begin{question}
  Is there a linear-time algorithm that recognizes chordless graphs?
  Most of the ideas might be in~\cite{hopcroft.tarjan:3con}.
\end{question}

Theorem~\ref{th:nochord} is a \emph{structure theorem}: it tells how
to build all chordless graphs from basic pieces by gluing them
together along three operations.  All the operations below take two
vertex-disjoint graphs $G_1$ and $G_2$ and output a third graph $G$.

\begin{description}
\item[Operation 1:] put $G = G_1 \cup G_2$.
\item[Operation 2:] choose  $v_1 \in V(G_1)$ and $v_2 \in V(G_2)$.
  Put:
  \begin{itemize}
  \item $V(G) = V(G_1) \cup V(G_2) \sm \{v_2\}$.
  \item $E(G) = E(G_1) \cup E(G_2 \sm v_2) \cup \{v_1v \suchthat
    v\in N_{G_2}(v_2)\}$.
  \end{itemize}
\item[Operation 3:] choose $v_1\in V(G_1)$ and $v_2 \in V(G_2)$ both
  of degree~2 and with 2 non-adjacent neighbors.  Let
  $N_{G_1}(v_1) = \{v'_1, v''_1\}$ and $N_{G_2}(v_2) = \{v'_2,
  v''_2\}$.  Put:
  \begin{itemize}
  \item $V(G) = (V(G_1) \sm \{v_1\}) \cup (V(G_2) \sm \{v_2, v'_2,
    v''_2\})$.
  \item $E(G) = E(G_1 \sm \{v_1\}) \cup E(G_2 \sm \{v_2, v'_2,
    v''_2\}) \cup \{v'_1v \suchthat v\in N_{G_2\sm v_2}(v'_2)\} \cup
    \{v''_1v \suchthat v\in N_{G_2\sm v_2}(v''_2)\}$.
  \end{itemize}
\end{description}  

It is a routine matter to check that when applied to chordless graphs,
Operations 1, 2 and 3 output chordless graphs.  Also, from
Theorem~\ref{th:nochord} we see that if a graph is chordless and not
sparse, it can be obtained from smaller chordless graphs by one of the
operations (Operation 1 corresponds to these ``degenerate'' 1-cutset
that occur in disconnected graphs, Operation 2 corresponds to
1-cutsets, and Operation~3 to 2-cutsets).  So, in fact,
Theorem~\ref{th:nochord} is a structure theorem: it gives a recipe to
build all chordless graphs from sparse graphs by gluing them.  We will
present more complex structure theorems: Theorems~\ref{th:OneChord},
\ref{th:ISK4Wheel} and \ref{th.3}.

Also, Theorem~\ref{th:nochord} implies $\chi$-boundness. The following
theorem is a consequence of a more general one,
Theorem~\ref{th:color1chord}, but here we give a direct proof.  It is
not very difficult, but still one needs to be careful.  A direct
attempt to color recursively along 2-cutset decomposition is likely to
fail because when a 2-cutset exists, assuming by induction that the
two blocks are 3-colorable is in general not enough to reconstruct a 3
coloring of the whole graph.  To prove the 3-colorability of chordless
graphs, some fine tuning on the induction hypothesis has to be done at
some step.  These fine tunings are part of the fun of graph theory
(unless they drive you mad\dots).

\begin{theorem}
  \label{th:3cChordless}
  Every chordless graph is 3-colorable. 
\end{theorem}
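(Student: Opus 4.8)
The plan is to prove a statement strictly stronger than $3$-colourability, tailored so that it survives reattachment across the cutsets of Theorem~\ref{th:nochord}; this strengthening is exactly the ``fine tuning'' of the induction hypothesis anticipated in the remarks before the statement. Call a chordless graph $G$ \emph{flexible} if for every pair of non-adjacent vertices $u,v$ there is a proper $3$-colouring with $\phi(u)=\phi(v)$ and another with $\phi(u)\neq\phi(v)$; equivalently, every precolouring of a non-adjacent pair by colours of $\{1,2,3\}$ extends to a proper $3$-colouring (the two formulations are equivalent because permuting the three colours preserves properness). Since a chordless graph with no non-adjacent pair is a complete graph and hence one of $K_1,K_2,K_3$, flexibility immediately gives $3$-colourability. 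I would prove that every chordless graph is flexible by induction on $|V(G)|$, splitting into the three outcomes of Theorem~\ref{th:nochord}.

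\emph{The sparse case.} If $G$ is sparse, its vertices of degree at least~$3$ form a stable set $I$, and the graph induced on the remaining vertices has maximum degree at most two, hence is a disjoint union of paths and cycles. I would first colour $I$, giving each of its vertices a colour avoiding the prescribed colour of any precoloured neighbour among $u,v$ (at most two forbidden colours, so one remains), and honouring $u,v$ directly if they lie in $I$. Then I would list-colour the paths and cycles of $G\sm I$, where each vertex is given the list of colours not already used on its neighbours in $I$: interior vertices keep lists of size $3$, path-ends keep lists of size at least $2$, and $u,v$ keep their forced singletons. A short greedy sweep along each path and around each cycle then finishes. This is routine but a little tedious, as the preceding remark warns.

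\emph{The $1$-cutset case.} If $G$ has a cut vertex $w$, write $G$ as the union of the blocks $G_i=G[D_i\cup\{w\}]$, one per component $D_i$ of $G\sm w$; each is chordless and strictly smaller, hence flexible (so in particular $3$-colourable) by induction. To realise a prescribed pattern on a non-adjacent pair $\{a,b\}$, I would control that pair inside the block(s) containing it by flexibility (or, if one endpoint is $w$, inside the block holding the other endpoint), read off the colour of $w$, and recolour every remaining block by a colour permutation so that all blocks agree on $w$. The only delicate point is $a,b$ in distinct blocks: there I would choose target colours for $a,b$ and for $w$ respecting the at most two adjacencies of $w$ to $\{a,b\}$ (always possible with three colours) and then realise each block separately.

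\emph{The proper $2$-cutset case, the main obstacle.} Let $\{p,q\}$ be a proper $2$-cutset, $pq\notin E(G)$, with sides $X,Y$ and blocks $G_X=G[X\cup\{p,q,x\}]$, $G_Y=G[Y\cup\{p,q,y\}]$, where the markers $x,y$ are adjacent exactly to $p$ and $q$; both blocks are chordless and, since $\{p,q\}$ is proper, strictly smaller, hence flexible by induction. Because $p,q$ are non-adjacent, flexibility guarantees that \emph{both} patterns on $\{p,q\}$ are realisable in $G_X$ and in $G_Y$; choosing a common pattern and matching the two colourings by a colour permutation glues them (the markers being discarded) into a $3$-colouring of $G$, which already yields $3$-colourability. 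The genuinely hard part is prescribing a pattern on the target pair $\{a,b\}$ at the same time. If $\{a,b\}$ lies on a single side I would control it there and simply match the cut pair on the other side; the difficult configuration is $a\in X$, $b\in Y$, where one must fix target colours for $a$ and for $b$ \emph{together with} the shared colours of $p,q$, i.e.\ control three vertices in each block rather than two. I expect this straddling configuration to be the main obstacle. I would resolve it by using that the marker path $p\tp x\tp q$ faithfully simulates the opposite side: both patterns on $\{p,q\}$ pass through a length-two path exactly as they pass through the opposite block (again by flexibility of that block for the cut pair), so a colouring of $G_X$ that controls $a$ can always be matched on $\{p,q\}$ by a colouring of $G_Y$ that controls $b$. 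Making this matching rigorous, with the adjacencies of $a,b$ to $p,q$ accounted for, is the crux of the argument.
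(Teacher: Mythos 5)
Your sparse and 1-cutset cases are sound, but the proper 2-cutset case has a genuine gap, and it sits exactly where you located it: the straddling pair $a\in X$, $b\in Y$. The problem is that your induction hypothesis (pair-flexibility) is too weak to be pushed across the cutset, and the marker-path idea does not repair this. What you must coordinate in the straddling case are \emph{triples}: which color patterns on $(a,p,q)$ occur in 3-colorings of $G_X$, jointly with which patterns on $(b,p,q)$ occur in $G_Y$. Pair-flexibility of the blocks only says that, for each of the pairs $\{p,q\}$, $\{a,p\}$, $\{a,q\}$, $\{a,x\}$, both patterns occur in \emph{some} coloring; it says nothing about which combinations occur. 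Concretely, suppose that in every 3-coloring of $G_X$ the three vertices $a,p,q$ are either all of the same color or of three distinct colors, and likewise for $b,p,q$ in $G_Y$. This behavior is consistent with every instance of flexibility you can invoke on the blocks: the pairs $\{a,p\}$, $\{a,q\}$, $\{p,q\}$ realize both patterns (take the all-equal coloring for ``same'' and the all-distinct one for ``different''), and so does $\{a,x\}$, since in the all-distinct coloring $x$ must avoid the colors of $p$ and $q$ and hence equals $a$, while in the all-equal coloring it differs from $a$. Yet after gluing, every 3-coloring of $G$ satisfies $\phi(a)=\phi(b)$: if $\phi(p)=\phi(q)$ then both $a$ and $b$ get that color, and if $\phi(p)\neq\phi(q)$ then both get the third color. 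So flexibility of $G$ for the pair $\{a,b\}$ simply does not follow from flexibility of $G_X$ and $G_Y$; to exclude such trace behavior you would have to strengthen the induction hypothesis from pairs to triples (or rule out these blocks structurally), and that is precisely the step your sketch leaves open.

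It is worth seeing how the paper avoids this coordination problem entirely: it strengthens the statement structurally rather than chromatically, proving by induction on the same decomposition (Theorem~\ref{th:nochord}, with minimally-sided 2-cutsets) that every chordless graph on at least four vertices other than the square has three vertices of degree at most~2 not inducing a $P_3$. The induction then only has to \emph{locate low-degree vertices} in the blocks, never to match colorings across a cutset, and 3-colorability follows greedily: delete a vertex of degree at most~2, color the rest, put it back. Ironically, your flexibility statement is true and follows in two lines from that claim: it supplies a vertex $w\notin\{u,v\}$ of degree at most~2, so one can delete $w$, extend the precoloring of $\{u,v\}$ to $G\setminus w$ by induction, and then color $w$, which has at most two forbidden colors. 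So if you want to keep the precoloring-extension formulation, the workable route is to prove the degree statement first; carrying the coloring itself through the decomposition is what creates the unresolved matching problem.
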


\begin{proof}
  We first need the following.

  \begin{claim}
    \label{c:42deg2}
    A chordless graph on at least four vertices and that is not a
    square contains three vertices $u, u', u''$ of degree at most~2
    such that $\{u, u', u''\}$ does not induce a $P_3$.
  \end{claim}

  \begin{proofclaim}
    Note that we really need the graph to be on at least four vertices
    because of $P_3$.  Let $G$ be chordless on at least four vertices,
    not a square.  By Theorem~\ref{th:nochord} there are three cases.

    \noindent{\bf Case 1:} $G$ is sparse.  If $G$ has no vertex of
    degree at least~3, then it is a disjoint union of paths and
    cycles.  Since $G$ is not a square, there exist three vertices in
    $G$ that do not induce a $P_3$.  If $G$ has a vertex of degree at
    least~3, then its neighborhood is made of at least three vertices
    of degree at most~2 that do not induce a $P_3$ for else there is a
    cycle with one chord.

    \noindent{\bf Case 2:} $G$ has a 1-cutset $\{v\}$.  So, $V(G)$ can
    be partitioned into three nonempty sets $X, \{v\}, Y$ and there
    are no edges between $X$ and $Y$.  Up to symmetry we assume $|X|
    \geq 2$.  We claim that $X$ contains two vertices $u, u'$ of
    degree at most~2 of $G$.  This is obvious if $|X| = 2$ and if
    $|X|\geq 3$ it follows from the induction hypothesis applied to
    $G[X \cup \{v\}]$.  Similarly, $Y$ contains one vertex $u''$ of
    degree at most~2 in $G$.  Since there are no edges between $X$ and
    $Y$, we see that $\{u, u', u''\}$ does not induce a $P_3$.

    \noindent{\bf Case 3:} $G$ has no $1$-cutset but has a proper
    2-cutset $\{a, b\}$.  Then let $X, Y$ be sets like in the
    definition of a proper 2-cutset.  We suppose that $\{a, b\}$ has
    been chosen subject to the minimality of $X$.  Let $G_X$ and
    $G_Y$ be the block of decomposition of $G$ with respect to $\{a,
    b\}$, $X$ and $Y$.  Vertices $a, b$ have degree at least~2 in
    $G_X$ and $G_Y$ because else $G$ has a 1-cutset.  Graphs $G_X$ and
    $G_Y$ are chordless, on at least four vertices and are not squares
    because $\{a, b\}$ is proper.  They are both smaller than $G$.
    Vertices $a$ and $b$ have degree at least~3 in $G_X$ because else,
    if $a$ say has degree~2, then it has a unique neighbor $a'$ in $X$
    and $\{a', b\}$ is a proper 2-cutset that contradicts the choice
    of $\{a, b\}$ (minimality of $X$).  Hence, from the induction
    hypothesis applied to $G_X$, there are at least two vertices $u,
    u'$ of degree at most~2 in $X$.  From the induction hypothesis
    applied to $G_Y$, there is at least one vertex $u''$ of degree at
    most~2 in $Y$ (because $\{a, y, b\}$ induces a $P_3$).  Since
    there are no edges between $X$ and $Y$, we see that $\{u, u',
    u''\}$ does not induce a $P_3$ in $G$.
  \end{proofclaim}

  Now we see that any chordless graph $G$ contains a vertex $v$ of
  degree at most~2.  This is obvious when $|V(G)| \leq 3$ or when $G$
  is a square, and it follows from~(\ref{c:42deg2}) otherwise.  So
  3-colorability is obtained by induction: if $G\sm v$ is 3-colorable,
  then so is $G$ because at least one color is available for~$v$.
\end{proof}

There is an idea in the proof above that is often helpful:
\emph{minimally sided decompositions}\index{minimally sided}.  The
notion is self-explanatory, we use it when we choose a 2-cutset $\{a,
b\}$ subject to the minimality of $X$.  In some situations, this leads
to an \emph{extreme decomposition}\index{extreme!decomposition}, that
is a decomposition such that one of the block of decomposition is
basic.  It is not always the case, because once we have a
decomposition, we can always choose it minimally sided, while
sometimes extreme decomposition simply do not exist, examples are
given on Figures~\ref{fige2j} and~\ref{f:noextr}.  Extreme
decompositions are useful when doing combinatorial optimization.
Suppose for instance that to solve a problem, one decomposes a graph
into 2 blocks and has to ask 2 questions for one of the blocks.  This
will lead in general to an exponential number of questions.  But if
the decomposition is extreme, one of the block is basic, so it is
likely that the problem can be solved directly for this block.  Then
there is a hope to obtain a polynomial complexity.  In
Section~\ref{sec:Lovasz}, this idea is applied successfully to color
Berge graphs with no balanced skew partition and no homogeneous pair.

\section{Line-graphs of triangle-free graphs}

The \emph{line-graph}\index{line-graph} of a graph $G=(V, E)$ is the
graph $L(G)$ whose vertex-set if $E(G)$, and such that two vertices of
$L(G)$ are adjacent if and only if the corresponding edges of $G$
share a common end.  In many decomposition theorems, one basic class
is a class of line-graphs; the rest of this work provides several
examples.  This phenomenon is difficult to explain at this point, so
let us wait to the end of this section where more data will be
available for an explanation.

A theorem of Beineke~\cite{beineke:linegraphs} characterizes
line-graphs by a list of nine forbidden induced subgraphs.  We give
here a simpler theorem of Harary and
Holzmann~\cite{harary.holzmann:lgbip} on line-graph of triangle-free
graphs.  Proving this theorem is a good exercise to understand what is
a line-graph, see~\cite{nicolas:these} for a solution.  Here, we show
only how the theorem of Harary and Holzmann follows easily from the
more general theorem of Beineke (note that
\cite{harary.holzmann:lgbip} is very difficult to find).

\begin{figure}[ht]
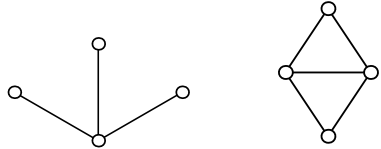

  \center
  {\includegraphics[height=1.5cm]{figHdr.1}}\hspace{1cm}
  \includegraphics[height=2cm]{figHdr.2}
  \caption{Claw and diamond\label{fig:clawDiamond}}
\end{figure}

The \emph{claw}\index{claw} and the \emph{diamond}\index{diamond} are the two graphs represented
Figure~\ref{fig:clawDiamond}.

\begin{theorem}[Harary and Holzmann \cite{harary.holzmann:lgbip}]
  \label{th:HH}
  A graph $G$ is the line-graph of a triangle-free graph if and only
  if $G$ contains no claw and no diamond.
\end{theorem}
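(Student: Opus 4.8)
The plan is to derive both directions from Beineke's theorem~\cite{beineke:linegraphs}, handling the claw and the diamond separately.

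For the forward direction, suppose $G = L(H)$ with $H$ triangle-free. First I would observe that \emph{every} line-graph is claw-free, regardless of triangles: if a vertex $e_0 = uv$ of $L(H)$ had three pairwise non-adjacent neighbours $e_1, e_2, e_3$, then each $e_i$ would be an edge of $H$ meeting $\{u,v\}$, so by pigeonhole two of them would share the endpoint $u$ or $v$ and hence be adjacent in $L(H)$, a contradiction. To obtain diamond-freeness I would bring in the triangle-freeness of $H$ via the standard fact that a triangle of $L(H)$ arises either from three edges of $H$ through a common vertex or from a triangle of $H$; the latter is excluded here, so every triangle of $L(H)$ is a ``star''. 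A diamond on $\{e_a,e_b,e_c,e_d\}$, with $e_ce_d$ the missing edge, contains the two triangles $\{e_a,e_b,e_c\}$ and $\{e_a,e_b,e_d\}$; each is a star, so $e_a,e_b,e_c$ meet in a vertex $v$ and $e_a,e_b,e_d$ meet in a vertex $v'$. Since the distinct edges $e_a,e_b$ cannot share two vertices, $v=v'$, whence $e_c$ and $e_d$ both contain $v$ and are therefore adjacent, contradicting the missing edge. Hence $G$ is both claw-free and diamond-free.

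For the converse, suppose $G$ is claw-free and diamond-free. The first step is to show $G$ is a line-graph at all, using Beineke: one checks, by a finite inspection of the nine forbidden induced subgraphs (of which the claw $K_{1,3}$ is the first one), that each of them contains an induced claw or an induced diamond. Consequently $G$ contains none of the nine, and Beineke's theorem yields $G = L(H)$ for some graph $H$.

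It remains to arrange that $H$ is triangle-free, and this is the step requiring the most care, since a diamond-free line-graph may well admit a preimage with triangles (for instance $K_3 = L(K_3)$). Suppose $H$ contains a triangle $T=xyz$. I would first show that every vertex of $T$ has degree exactly $2$ in $H$: if, say, $x$ had a further neighbour $w\notin\{y,z\}$, then the four edges $xy,yz,zx,xw$ would induce in $L(H)=G$ a diamond (every pair of these edges shares a vertex except $yz$ and $xw$), contradicting diamond-freeness. Hence $T$ is a connected component of $H$ isomorphic to $K_3$, and in particular all triangles of $H$ are pairwise disjoint isolated copies of $K_3$. Replacing each such $K_3$-component by a disjoint copy of the claw $K_{1,3}$ then produces a triangle-free graph $H'$ with $L(H') = L(H) = G$, because $L(K_3)=K_3=L(K_{1,3})$. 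This exhibits $G$ as the line-graph of a triangle-free graph and completes the proof. The main obstacle is precisely this last reconciliation: passing from ``$G$ is a line-graph'' to ``$G$ is a line-graph of a \emph{triangle-free} graph'' cannot be done by inspecting $G$ alone, and forces the component-replacement argument above.
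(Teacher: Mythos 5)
Your proposal is correct and follows essentially the same route as the paper's proof: both directions rest on Beineke's theorem (noting that the eight non-claw forbidden subgraphs all contain a diamond) together with the trick of replacing isolated triangle components of the preimage by claws, justified by the observation that a triangle of $H$ with an attached edge would force an induced diamond in $L(H)$. The only cosmetic difference is ordering — you first prove every triangle of $H$ is an isolated $K_3$ component and then replace, whereas the paper replaces first and then uses the same diamond argument to rule out remaining triangles — and you spell out the forward direction that the paper dismisses as routine.
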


\begin{proof}
  Checking that a line-graph of a triangle-free graph contains no claw
  and no diamond is a routine matter.  Conversely, let $G$ be graph
  with no claw and no diamond.  The theorem of
  Beineke~\cite{beineke:linegraphs} states that there exists a list
  $\cal L$ of nine graphs such any graph that does not contain a graph
  from $\cal L$ is a line-graph.  One of the nine graphs is the claw
  and the eight remaining ones all contain a diamond.  So, $G = L(R)$
  for some graph $R$.  Let $R'$ be the graph obtained from $R$ by
  replacing each connected component of $R$ isomorphic to a triangle
  by a claw.  So, $L(R)=L(R') = G$.  We claim that $R'$ is
  triangle-free.  Else let $T$ be a triangle of $R'$.  From the
  construction of $R'$, there is a vertex $v\notin T$ in the connected
  component of $R'$ that contains $T$.  So we may choose $v$ with a
  neighbor in $T$.  Now the edges of $T$ and one edge from $v$ to $T$
  induce a diamond of $G$, a contradiction.
\end{proof}

The Harary and Holzmann's theorem is in a sense a decomposition
theorem for graphs with no claw and no diamond.  It states that all
these graphs are basic.  It is likely that no much deeper structural
description can be found, because Whitney proved that except for the
claw and the triangle (which have the same line-graph), two different
graphs have different line-graphs.  This yields the following corollary: 

\begin{theorem}[Whitney \cite{whitney:graphs}]
  If $G$ is a line-graph of a triangle-free graph then there exists a
  unique graph $R$ such that $G = L(R)$.
\end{theorem}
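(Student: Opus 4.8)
The plan is to reconstruct the root graph intrinsically from $G$, using only the forbidden subgraphs supplied by Theorem~\ref{th:HH}, and to show that this reconstruction is forced, so that no two non-isomorphic triangle-free graphs (without isolated vertices) can share the line-graph $G$. Existence of a root is immediate from the hypothesis: there is a triangle-free $R$ with $G=L(R)$, and deleting the isolated vertices of $R$ alters neither $L(R)$ nor triangle-freeness, so I may assume $R$ has none. Everything rests on the observation that, since $G=L(R)$ with $R$ triangle-free, $G$ contains no claw and no diamond, and I will invoke only these two facts.

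First I would establish the two local facts that make the maximal cliques of $G$ play the role of the vertices of the root. Because $G$ has no diamond, any two distinct maximal cliques of $G$ meet in at most one vertex; hence every edge of $G$ lies in a unique maximal clique and the maximal cliques partition $E(G)$. Because $G$ has no claw, every vertex of $G$ lies in at most two maximal cliques: a vertex lying in three (pairwise edge-disjoint) maximal cliques would have a private neighbour in each, and those three neighbours would be pairwise non-adjacent (an edge between two of them would force their cliques to coincide), producing a claw. I then build $R'$ whose vertices are the maximal cliques of $G$: a vertex of $G$ lying in two maximal cliques $K,K'$ becomes an edge $KK'$ of $R'$; a vertex lying in a single maximal clique $K$ becomes a pendant edge at $K$ (attaching a fresh degree-one vertex); an isolated vertex of $G$ becomes a $K_2$. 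A direct check gives $L(R')=G$, and $R'$ is triangle-free, since a triangle of $R'$ would produce three pairwise-adjacent vertices of $G$ spread over three distinct maximal cliques, contradicting that each edge of $G$ lies in a unique maximal clique.

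The heart of the matter is uniqueness, and the main obstacle is the classical exception $\{K_3,K_{1,3}\}$: these two graphs have the same line-graph, and $K_3$ is the only graph whose presence could spoil the claim. I would dispose of it precisely through triangle-freeness. For any triangle-free root $R$ of $G$ with no isolated vertices, every clique of $G=L(R)$ is contained in the star $C_v$ of edges incident to some vertex $v$ of $R$ (a clique of $L(R)$ not of this form would arise from a triangle of $R$), so the maximal cliques of $G$ of size at least two are exactly the stars $C_v$ with $\deg_R v\ge 2$; consequently the reconstruction above returns $R$ up to isomorphism. In particular the ambiguous root $K_3$ of $G=K_3$ is excluded because $K_3$ is not triangle-free, and the canonical reconstruction returns $K_{1,3}$. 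For disconnected $G$ one argues componentwise, matching the connected components of $G$ with the components of $R$ carrying at least one edge; the remaining bookkeeping for pendant vertices and $K_2$ components is routine. Alternatively, one may simply quote Whitney's general theorem as stated above, according to which two non-isomorphic graphs with equal line-graph must be $K_3$ and $K_{1,3}$, and then rule out $K_3$ by triangle-freeness to obtain uniqueness at once.
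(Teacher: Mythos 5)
Your proof is correct, but it follows a genuinely different route from the paper's. The paper does not really prove the statement at all: it derives it in one line from Whitney's classical theorem --- apart from the pair $K_3$, $K_{1,3}$, two distinct connected graphs have non-isomorphic line-graphs --- and triangle-freeness of the root then rules out the single exceptional pair; this is precisely the shortcut you offer in your final sentence. Your main argument is instead a self-contained Krausz-style reconstruction that uses only the two forbidden subgraphs supplied by Theorem~\ref{th:HH}: diamond-freeness forces distinct maximal cliques of $G$ to meet in at most one vertex (so each edge lies in a unique maximal clique), claw-freeness forces every vertex into at most two maximal cliques, and from this data the root $R'$ is rebuilt; uniqueness then follows from your observation that when $R$ is triangle-free every clique of $L(R)$ is a star, so the maximal cliques of size at least two are exactly the stars $C_v$ with $\deg_R v \geq 2$ and the reconstruction necessarily returns $R$. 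What your route buys is independence from Whitney's deeper general theorem and an explicitly canonical (indeed algorithmic) root, which fits the paper's surrounding remark that decomposing line-graphs of triangle-free graphs amounts to understanding all triangle-free graphs; what it costs is length and the bookkeeping over pendant edges, $K_2$-components and isolated vertices, which you correctly flag and handle. One caveat applies to both routes: read literally, with $R$ ranging over all graphs, the statement fails for $G=K_3$ (whose roots are $K_3$ and $K_{1,3}$) and for roots padded with isolated vertices; uniqueness must be understood among triangle-free roots without isolated vertices, up to isomorphism, which is how you interpret it and clearly what the paper intends.
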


So, being able to decompose line-graphs of triangle-free graphs would
certainly imply a structural description of all triangle free graphs.
Such a description is a well known difficult open question linked to
difficult Ramsey Theory as suggested in Chapter~\ref{chap:gap}.  More
generally, a statement on all line-graphs is likely to be translatable
into a statement on all graphs.  So, line-graphs behave a bit like
sparse graphs of Section~\ref{sec:chordless}.  Strangely, their
generality make them basic, and once line-graphs are identified as a
substantial subclass of a class, they are likely to form a basic class.
This phenomenon occurs in the perhaps two deepest known decomposition
theorems: the decomposition of Berge graphs by Chudnovsky, Robertson
Seymour and Thomas and the decomposition of claw-free graphs by
Chudnovsky and Seymour, see the next section.

\section{Claw and coclaw}

Chudnovsky and Seymour proved a deep structural description of
claw-free graphs~\cite{DBLP:conf/bcc/ChudnovskyS05}.  The statement is
very long and we will not deal with this result in the rest of this
work.  Here we focus on something much simpler: graphs that are
claw-free and whose complements are claw-free.  As we will see, these
graphs form a very restricted class.

The \emph{coclaw}\index{coclaw} is the complement of the claw.  The line-graph of
$K_{3, 3}$ is represented on Figure~\ref{base.fig.lk33}.  We denote by
$A_6$ the graph obtained from a triangle by adding for each edge of
the triangle, a vertex complete to that edge.  The following theorem
is due to Pouzet and Si Kaddour but the proof is mine.

\begin{theorem}[Pouzet and Si Kaddour]
  The class ${\rm Forb}\{{\rm claw},$ ${\rm co\-claw}\}$ consists of
  $A_6$, of the induced subgraphs of $L(K_{3, 3})$, of graphs whose
  connected components consist of cycles of length at least~4 and
  paths, and of the complements of all these graphs.
\end{theorem}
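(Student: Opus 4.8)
The plan is to exploit the complementation symmetry that makes the statement natural. Since the coclaw is the complement of the claw, a graph $G$ lies in $\mathrm{Forb}\{\mathrm{claw},\mathrm{coclaw}\}$ if and only if $\overline{G}$ does. Moreover $L(K_{3,3})$ is the $3\times 3$ rook's graph, which is self-complementary, so the family of induced subgraphs of $L(K_{3,3})$ is already closed under complementation, while the complement of $A_6$ is the \emph{net} (a triangle with one pendant vertex at each corner). Hence the whole listed family is closed under complementation, and it is enough to argue each direction up to replacing $G$ by $\overline{G}$. For the easy direction I would just check the building blocks: a disjoint union of paths and cycles of length at least $4$ is triangle-free, hence coclaw-free, and has maximum degree at most $2$, hence claw-free; $L(K_{3,3})$ is claw-free because every neighbourhood of the rook's graph splits into two cliques, and coclaw-free because every triangle is a full row or column and is therefore dominating; and $A_6$ lies in the class since its complement, the net, visibly does. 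Heredity and complement-closedness then take care of induced subgraphs of $L(K_{3,3})$ and of all the complements.

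For the hard direction I would take $G\in\mathrm{Forb}\{\mathrm{claw},\mathrm{coclaw}\}$ and split according to whether $G$ and $\overline{G}$ contain triangles. If $G$ is triangle-free, then each neighbourhood is an independent set, so claw-freeness gives $\alpha(N(v))\le 2$, i.e.\ every vertex has degree at most $2$; thus $G$ is a disjoint union of paths and cycles, and the cycles have length at least $4$ because $G$ is triangle-free. If instead $\overline{G}$ is triangle-free, I apply the same argument to $\overline{G}$ and complement the conclusion. The substantial case is when both $G$ and $\overline{G}$ contain a triangle, i.e.\ $G$ possesses both a triangle and an independent set of size $3$.

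In that main case the first step is to establish $\omega(G)=\alpha(G)=3$. The bound $\alpha(G)\le 3$ follows from $\omega(\overline{G})\le 3$ by complementation, so it suffices to rule out a $K_4$ in a graph that also carries an independent triple $\{p,q,r\}$. The essential tool here is coclaw-freeness applied to \emph{all} triangles, not merely to those inside a fixed clique: claw-freeness at the vertices of the $K_4$ already forces each of $p,q,r$ to attach in a very restricted way (no $K_4$-vertex may be complete to $\{p,q,r\}$, and no two of $p,q,r$ may share both a common neighbour and a common non-neighbour in the $K_4$), and one then checks that the remaining admissible patterns always create a triangle \emph{outside} the $K_4$ from which the third vertex of $\{p,q,r\}$ is isolated, producing a coclaw. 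I expect this elimination of $K_4$ to be the first genuinely case-based part of the argument, precisely because the within-clique constraints alone do not suffice.

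Once $\omega=\alpha=3$, the plan is to reconstruct the row--column (rook) structure. Fixing a triangle $T=\{a_1,a_2,a_3\}$, coclaw-freeness says every other vertex has a neighbour in $T$, and claw-freeness at each $a_i$ says $N(a_i)$ has no independent triple; classifying the vertices outside $T$ by their trace on $T$ and analysing, trace by trace, which traces can coexist (using again that a vertex complete to a triangle would create a $K_4$ and that misattached vertices create claws or coclaws) should show that the maximal cliques are triangles meeting pairwise in at most one vertex with each vertex in at most two of them, assigning coordinates in $[3]\times[3]$ and embedding $G$ in $L(K_{3,3})$ -- unless the attachment analysis falls into the single exceptional configuration $A_6$ (equivalently its complement, the net), which must be isolated as the remaining possibility. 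The hard part will be exactly these two finite but fiddly attachment analyses, the $K_4$-elimination and the reconstruction, of the same flavour as the other attachment lemmas in this chapter; the delicate point is to organise them so that the rook's-graph case and the lone graph $A_6$ are separated cleanly rather than buried under subcases.
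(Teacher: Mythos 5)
Your proposal is sound where it is worked out, but it takes a genuinely different route from the paper's proof. The paper never computes $\omega$ or $\alpha$: it proves four successive reduction claims --- $G$ and $\overline{G}$ connected, no induced $P_6$, no $A_6$ or $\overline{A_6}$, and finally no diamond (argued as a co-diamond in $\overline{G}$) --- and then invokes Theorem~\ref{th:HH} to write $G = L(R)$ with $R$ connected and triangle-free; coclaw-freeness then bounds the degrees in $R$ and forces $R \subseteq K_{3,3}$ in a few lines, so the rook's-graph structure is never rebuilt by hand. You instead put the complementation symmetry up front, split on which of $G$, $\overline{G}$ has a triangle, and in the main case prove $\omega = \alpha = 3$ before reconstructing the structure directly. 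Your $K_4$-elimination does go through: each of $p,q,r$ needs at least two neighbours in the $K_4$ (a vertex with at most one neighbour misses the opposite triangle, giving a coclaw), each $K_4$-vertex sees at most two of $p,q,r$ (else a claw), your ``common neighbour plus common non-neighbour'' exclusion is also a claw, and the short case analysis of the remaining trace patterns always ends in a claw or a coclaw. What the paper's route buys is that the reconstruction is outsourced to the known characterization of line-graphs of triangle-free graphs, leaving only degree counting; the cost is the four exclusion claims, whose order is forced (for instance $A_6$ must be removed before diamonds, since $A_6$ contains a diamond). What your route buys is symmetry and self-containedness; the cost is that all the difficulty lands on the final reconstruction, which you leave as a plan and which would be at least as long as the paper's claims.

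Two concrete cautions about that plan. First, the coordinatization ``maximal cliques are triangles, each vertex in at most two of them'' is false even for main-case induced subgraphs of $L(K_{3,3})$: take a row-triangle of the rook's graph together with the cells $(2,1)$ and $(3,2)$; this graph has a triangle and an independent triple, yet two of its maximal cliques are edges. So you should embed via the trace classification itself --- relative to a fixed triangle $T=\{a_1,a_2,a_3\}$, each class of trace $\{a_i\}$ is a clique of size at most $2$ and each class of trace $\{a_i,a_j\}$ has size at most $1$, both by claws at the $a_i$'s and $\omega=3$ --- rather than via maximal cliques. Second, $A_6$ and the net are not interchangeable inside the main case: once $G$ is fixed you can no longer pass to the complement, and the two look entirely different relative to a fixed triangle ($A_6$ has trace-$2$ vertices, while the net has three pairwise non-adjacent trace-$1$ vertices, which is exactly the configuration that cannot occur in the rook's graph). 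Both must therefore be carved out as separate terminal cases of your analysis, not as one exception quoted up to complementation.
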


\begin{proof}
  Let $G$ be in the class. 

  \begin{claim}
    \label{c:conn}
    We may assume that $G$ and $\overline{G}$ are connected.
  \end{claim}

  \begin{proofclaim}
    Else, up to symmetry, $G$ is disconnected.  If $G$ contains a
    vertex $v$ of degree at least $3$, then $N(v)$ contains an edge
    (for otherwise there is a claw), so $G$ contains a triangle.  This
    is a contradiction since by picking a vertex in another component
    we obtain a coclaw.  So all vertices of $G$ are of degree at
    most~2.  It follows that the components of $G$ are cycles (of
    length at least 4, or there is a coclaw) or paths, an outcome of
    the theorem.
  \end{proofclaim}

  \begin{claim}
    \label{c:P6}
    We may assume that $G$ contains no induced $P_6$.
  \end{claim}  

  \begin{proofclaim}
    Else $G$ has an induced subgraph $H$ that is either a path on at
    least 6 vertices or a cycle on at least 7 vertices.  Assume $H$
    maximal with respect to this property.  If $G=H$, an outcome of
    the theorem is satisfied.  Else, by~(\ref{c:conn}), we pick a
    vertex $v$ in $G\sm H$ with at least one neighbor in $H$.  From
    the maximality of $H$, $v$ has a neighbor $p_i$ in the interior of
    some $P_6 = p_1 \tp p_2 \tp p_3 \tp p_4 \tp p_5 \tp p_6$ of $H$.
    Up to symmetry we assume that $v$ has a neighbor $p_i$ where $i\in
    \{2, 3\}$.  So $N(v) \cap \{p_1, p_2, p_3, p_4\}$ contains an edge
    $e$ for otherwise $\{p_i, p_{i-1}, p_{i+1}, v\}$ induces a claw.
    If $e = p_1p_2$ then $v$ must be adjacent to $p_4, p_5, p_6$ for
    otherwise there is a coclaw; so $\{v, p_1, p_4, p_6\}$ induces a
    claw.  If $e = p_2p_3$ then $v$ must be adjacent to $p_5, p_6$ for
    otherwise there is a coclaw, so from the symmetry between $\{p_1,
    p_2\}$ and $\{p_5, p_6\}$ we may rely on the previous case.  If $e
    = p_3p_4$ then $v$ must be adjacent to $p_1, p_6$ for otherwise
    there is a coclaw; so $\{v, p_1, p_3, p_6\}$ induces a claw.  In
    all cases there is a contradiction.
  \end{proofclaim}
 
  \begin{claim}
    \label{c:A6}
    We may assume that $G$ contains no $A_6$ and no $\overline{A_6}$.
 \end{claim}
 
 \begin{proofclaim}
   Else up to a complementation, let $aa'$, $bb'$, $cc'$ be three
   disjoint edges of $G$ such that the only edges between them are
   $ab$, $bc$, $ca$.  If $V(G) = \{a, a', b, b', c, c'\}$, an outcome
   of the theorem is satisfied, so let $v$ be a seventh vertex of $G$.
   We may assume that $av\in E(G)$ (else there is a coclaw).  If
   $a'v\in E(G)$ then $vb', vc'\in E(G)$ (else there is a coclaw) so
   $\{v, a', b', c'\}$ is a claw.  Hence $a'v\notin E(G)$. We have
   $vb\in E(G)$ (or $\{a, a', v, b\}$ is a claw) and similarly $vc\in
   E(G)$.  So $\{a', v, b, c\}$ is a coclaw.
 \end{proofclaim}

  \begin{claim}
    \label{c:diamond}
    We may assume that $G$ contains no diamond.
  \end{claim}
 
  \begin{proofclaim}
    We prefer to think about this in the complement, so suppose for a
    contradiction that $G$ contains a \emph{co-diamond}\index{co-diamond}, that is four
    vertices $a, b, c, d$ that induce only one edge, say $ab$.
    By~(\ref{c:conn}), there is a path $P$ from $\{c, d\}$ to some
    vertex $w$ that has a neighbor in $\{a, b\}$.  We choose such a
    path $P$ minimal and we assume up to symmetry that the path is
    from $c$.

    If $w$ is adjacent to both $a, b$ then $\{a, b, w, d\}$ induces a
    coclaw unless $w$ is adjacent to $d$.  But in this case, $w$ is
    adjacent to $c$ from the minimality of $P$, so $\{w, a, c, d\}$
    induces a claw.  Hence $w$ is adjacent to exactly one of $a, b$,
    say to $a$.  So, $P' = c\tp P \tp w \tp a \tp b$ is a path and for
    convenience we rename its vertices $p_1, \dots, p_k$.  If $d$ has
    a neighbor in $P'$ then, from the minimality of $P$, this neighbor
    must be $p_2$.  So, $\{p_2, p_1, p_3, d\}$ induces a claw.  Hence,
    $d$ has no neighbor in $P'$.

    By~(\ref{c:conn}), there is a path $Q$ from $d$ to some vertex $v$
    that has a neighbor in $P'$.  We choose $Q$ minimal with respect
    to this property.  From the paragraph above, $v\neq d$.  Let $p_i$
    (resp. $p_j$) be the neighbor of $v$ in $P'$ with minimum
    (resp. maximum) index.  If $i=j=1$ then $d \tp Q \tp v \tp p_1 \tp
    P \tp p_k$ is a path on a at least 6 vertices a contradiction
    to~(\ref{c:P6}).  So, if $i=j$ then $i\neq 1$ and symmetrically,
    $i\neq k$, so $\{p_{i-1}, p_i, p_{i+1}, v\}$ is a claw.  Hence
    $i\neq j$.  If $j>i+1$ then $\{v, v', p_i, p_j\}$, where $v'$ is
    the neighbor of $v$ along $Q$, is a claw.  So, $j=i+1$.  So
    $vp_ip_j$ is a triangle.  Hence $P' = p_1 \tp p_2 \tp p_3 \tp
    p_4$, $Q= d \tp v$ and $i=2$, for otherwise there is a coclaw.
    Hence, $P'\cup Q$ form an induced $\overline{A_6}$ of $G$, a
    contradiction to~(\ref{c:A6}).
  \end{proofclaim}  
  
  Now $G$ is connected and contains no claw and no diamond.  So, by
  Theorem~\ref{th:HH}, $G$ is the line-graph of some connected
  triangle-free graph $R$.

  If $R$ contains a vertex $v$ of degree at least~4 then all edges of
  $R$ must be incident to $v$, for else an edge $e$ non-incident to
  $a$ together with three edges of $R$ incident to $a$ and
  non-incident to $e$ form a coclaw in $G$.  So all vertices of $R$
  have degree at most 3.  We may assume that $R$ has a vertex $a$ of
  degree~3 for otherwise $G$ is a path or a cycle.  Let $b, b', b''$
  be the neighbors of $a$.  Since $a$ has degree 3, all edges of $R$
  must be incident to $b, b'$ or $b''$ for otherwise $G$ contains a
  coclaw.

  If one of $b, b', b''$, say $b$, is of degree~3, then $N(b) = \{a,
  a', a''\}$ and all edges of $R$ are incident to one of $a, a', a''$
  (or there is a coclaw).  So $R$ is a subgraph of $K_{3, 3}$.  So,
  $G= L(R)$ is an induced subgraph of $L(K_{3, 3})$, an outcome of the
  theorem.  Hence we assume that $b, b', b''$ are of degree at most 2.
  If $|N(\{b, b', b''\})\sm \{a\}| \geq 3$, then $R$ contains the
  pairwise non-incident edges $bc, b'c', b''c''$ say, and the edges
  $ab, ab', ab'', bc, b'c', b''c''$ are vertices of $G$ that induce an
  $\overline{A_6}$, a contradiction to~(\ref{c:A6}).  So, $|N(\{b, b',
  b''\})\sm \{a\}| \leq 2$ which means again that $R$ is a subgraph of
  $K_{3, 3}$.
\end{proof}

An interesting feature of the theorem above is the presence of
sporadic graphs: basic graphs are almost all in quite regular classes
(here paths and cycles) but there is a finite number of
\emph{sporadic}\index{sporadic graphs} exceptions (here $A_6$ and
$L(K_{3, 3})$).  How sporadic graphs pop up in decomposition theorems
is something quite fascinating.  In graph theory, these exceptions
seem to be member a small club: the five platonic graphs ($K_4$, the
cube, the octahedron, the dodecahedron and the icosahedron), the
Wagner graph, the Petersen graph, the Heawood graph, graphs related to
the Fano plane, $K_{3, 3}$ and its line-graph, and several more.  I
wonder who put them here\dots

\begin{figure}[h]
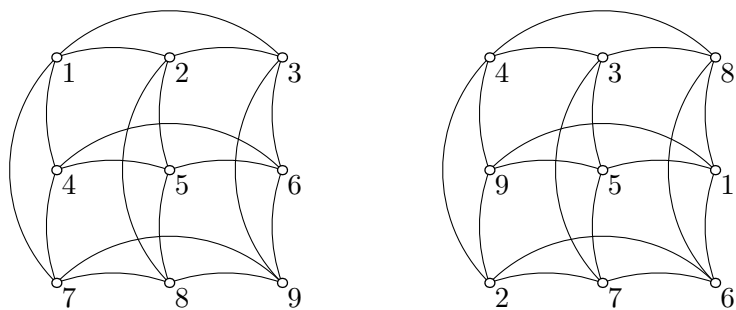

  \center
  \begin{tabular}{ccc}
  \includegraphics{figHdr.8}&\rule{1cm}{0cm}&
  \includegraphics{figHdr.9}
  \end{tabular}
  \caption{$L(K_{3,3})$ and its complement\label{base.fig.lk33}}
\end{figure}

Let me give an entertaining example (from my thesis) of how a sporadic
graph appears in a completely unexpected context.  On
Figure~\ref{base.fig.lk33} is represented $L(K_{3,3})$ with vertices
naturally labeled.  Next to it is represented its complement with the
same labels on the vertices.  Note that the two graphs are
\emph{different}: two vertices are adjacent in the first one if and
only if they are not in the second.  But as one can see, the two
graphs are isomorphic, showing that $L(K_{3, 3})$ is
self-complementary.  The entertaining fact is the following: when one
sums up the numbers from one row, or one column, or one diagonal of
the second square, the result is always~15.  So, the nine numbers on
the second graphs form what is usually called a \emph{magical
  square}\index{magical square},
something very famous in mathematical puzzles.  This magical square
was known by Chinese mathematicians as early as 650 BC (according to
Wikipedia), but they certainly ignored that this was related to the
biggest self-complementary line-graph\dots

Examples of sporadic graphs are present in this document: the Wagner
graph plays an essential role in the proof of Lemma~\ref{threecases},
the cube and $K_4$ with all edges subdivided are sporadic exceptions in
Theorem~\ref{th:main}, Petersen and
Heawood graphs are sporadic in Theorem~\ref{th:OneChord}, the
octahedron is present in Lemmas~\ref{thm:wheel} and
\ref{l:prismdecomp}.

\section{Weakly triangulated graphs}
\label{sec:wt}

A hole or an antihole is said to be \emph{long}\index{long!antihole or hole} if it
contains at least 5 vertices.  A graph is \emph{weakly
  triangulated}\index{weakly triangulated} if it contains no long hole
and no long antihole.  Weakly triangulated graphs were investigated by
Chv\'atal and Hayward in the eighties and the aim of this section is
to convince the reader that they can be considered today as one of the
most interesting classes of perfect graphs.  Because with them, one
can understand several important concepts by just reading six pages:
Roussel-and-Rubio-like Lemmas, how to deal with antiholes, how
uncomfortable decompositions like skew partitions come into the game,
why they are uncomfortable, how we can sometimes get rid of them with
special vertices (namely even pairs) and thus obtain fast and simple
coloring algorithms.

The Roussel and Rubio Lemma~\cite{roussel.rubio:01} is an important
technical tool to prove the Strong Perfect Graph Theorem.  The team
who proved the Strong Perfect Graph Conjecture rediscovered it and
called it at some step the ``wonderful lemma'' because of its many
applications.  It states that in a sense, any anticonnected set of
vertices of a Berge graph behaves like a single vertex (where
\emph{anticonnected}\index{anticonnected} means connected in the
complement).  How a vertex $v$ ``behaves'' in a Berge graph?  If a
path of odd length (at least~3) has both ends adjacent to $v$, then
$v$ must have other neighbors in the path for otherwise there is an
odd hole.  An anticonnected set $T$ of vertices behaves similarly: if
a path of odd length (at least~3) has both ends complete to $T$, then
at least one interior vertex of the path is also complete to $T$.  In
fact, there are two exceptions to this statement, so the Roussel and
Rubio Lemma is slightly more complicated, and we do not state it here
(we do not need it the rest of the document).  For more about Roussel
and Rubio Lemma, in particular equivalent statements and short proofs,
see Chapter~4 of~\cite{nicolas:these}.

Here is a lemma which can be seen as a version of the Roussel and
Rubio Lemma~\cite{roussel.rubio:01} for weakly triangulated graphs.

\begin{lemma}[with Maffray \cite{nicolas:artemis}]
  \label{l:rrwt}
  Let $G$ be a weakly triangulated graph.  Let $P = \bp x \tp \cdots
  \tp y \ep$ be a path of $G$ of length at least 3 and $T \subseteq
  V(G)$ disjoint from $V(P)$ such that $G[T]$ is anticonnected and the
  ends of $P$ are $T$-complete.  Then $P$ has an interior-vertex which
  is $T$-complete.
\end{lemma}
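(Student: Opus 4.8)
The plan is to prove this by a maximality/extremal argument on interior vertices, exactly parallel to how one reasons about a single vertex in a Berge graph. The key structural fact to establish first is a local one: no vertex $t \in T$ can be non-adjacent to two consecutive vertices of $P$, because if $t$ misses two consecutive vertices of $P$, then $V(P) \cup \{t\}$ would contain a long hole (the path $P$ together with $t$ closing it up gives an induced cycle on at least 5 vertices), contradicting that $G$ is weakly triangulated. This single observation will do most of the work and should be stated right at the start.

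Next I would pick an interior vertex $z$ of $P$ that is adjacent to a maximum number of vertices of $T$, and argue for a contradiction assuming $z$ is not $T$-complete, i.e. that there is some $u \in T \setminus N(z)$. Let $x', y'$ be the two neighbors of $z$ along $P$, ordered so that $x, x', z, y', y$ appear in this order. By the first observation applied to $u$, since $u \notin N(z)$, we must have $ux', uy' \in E(G)$ (otherwise $u$ misses two consecutive vertices). After possibly swapping the roles of $x$ and $y$, assume $x' \neq x$. Since $z$ is maximal and $ux' \in E(G)$ but $uz \notin E(G)$, there must exist a compensating vertex $v \in T$ with $vz \in E(G)$ and $vx' \notin E(G)$ — otherwise $N(z) \cap T \supseteq N(x') \cap T$ strictly, contradicting maximality.

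The heart of the argument is then an \emph{antipath} argument inside $G[T]$. Since $G[T]$ is anticonnected, there is an antipath $Q$ from $u$ to $v$ in $G[T]$; choose $u, v$ so that $Q$ is minimal. The plan is to show that all interior vertices of $Q$ are adjacent to both $x'$ and $z$: by the first observation they are each adjacent to $x'$ or $z$ (since consecutive antipath vertices being jointly non-adjacent to a common vertex of $P$ would create a long hole), and by minimality of $Q$ one upgrades ``or'' to ``and.'' Then I would run the endgame on edges of $P$: if $x'x \notin E(G)$ then $V(Q) \cup \{x, x', z\}$ induces a long antihole, forcing $x'x \in E(G)$; similarly if $zy \notin E(G)$ then $V(Q) \cup \{z, x', y\}$ is a long antihole, forcing $zy \in E(G)$ and hence $y = y'$; but then $V(Q) \cup \{x, x', z, y\}$ induces a long antihole, the final contradiction. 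The main obstacle — and the step requiring the most care — is verifying that these vertex sets really induce \emph{long} antiholes (at least 5 vertices and the correct complement-cycle adjacency pattern), which hinges on correctly tracking which of $x', z$ each antipath vertex sees and on the non-edges $ux', \ldots$ established earlier; the rest is bookkeeping.
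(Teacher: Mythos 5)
Your proposal follows the paper's own proof essentially line for line: the same opening observation that no $t\in T$ misses two consecutive vertices of $P$, the same extremal choice of $z$, the same compensating vertex $v$ obtained from the maximality of $z$, the same minimal antipath $Q$ whose interior vertices are adjacent to both $x'$ and $z$, and the same three long-antihole configurations ($V(Q)\cup\{x,x',z\}$, then $V(Q)\cup\{z,x',y\}$, then $V(Q)\cup\{x,x',z,y\}$) closing the argument. One small slip worth fixing: in justifying the existence of $v$, the inclusion should read $N(z)\cap T \subsetneq N(x')\cap T$ (so that $x'$, which is an interior vertex of $P$ because $x'\neq x$, would have strictly more $T$-neighbours than $z$), not the reversed containment you wrote; the conclusion you draw from it is nonetheless the correct one.
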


\begin{proof}
  Note that no vertex $t \in T$ can be non-adjacent to two consecutive
  vertices of $P$, for otherwise $V(P) \cup \{t\}$ contains a long
  hole.  Let $z$ be an interior vertex of $P$ adjacent to a maximum
  number of vertices of $T$.  Suppose for a contradiction that there
  exists a vertex $u \in T \setminus N(z)$.  Let $x'$ and $y'$ be the
  neighbors of $z$ along $P$, so that $x, x', z, y', y$ appear in this
  order along $P$.  Then, from the first sentence of this proof, $ux',
  uy' \in E(G)$.  Up to a relabelling of $x$ and $y$, we assume $x'
  \neq x$.  From the choice of $z$, since $ux' \in E(G)$ and $uz
  \notin E(G)$, there exists a vertex $v \in T$ such that $vz\in E(G)$
  and $vx'\notin E(G)$.  Since $G[T]$ is anticonnected, there exists
  an antipath $Q$ of $G[T]$ from $u$ to $v$, and $u, v$ are chosen
  subject to the minimality of this antipath.  From the first
  sentence, interior vertices of $Q$ are all adjacent to $x'$ or $z$
  and from the minimality of $Q$, interior vertices of $Q$ are all
  adjacent to $x'$ and $z$.  If $x'x \notin E(G)$ then $V(Q) \cup \{
  x, x', z\}$ induces a long antihole. So $x'x \in E(G)$.  If $zy
  \notin E(G)$ then $V(Q) \cup \{z, x', y\}$ induces a long antihole.
  So $zy \in E(G)$ and $y = y'$. But then, $V(Q) \cup \{x, x', z, y\}$
  induces a long antihole, a contradiction.
\end{proof}

When $T$ is a set of vertices, $C(T)$ denotes the set of these
vertices that are complete to $T$.

\begin{lemma}
  \label{l:pathWT}
  Let $G$ be a weakly triangulated graph and $T$ a set of vertices
  such that $G[T]$ is anticonnected and $C(T)$ contains at least two
  non-adjacent vertices.  Suppose that $T$ is inclusion-wise maximal
  with respect to these properties.  Then any path of $G\sm T$ whose
  ends are in $C(T)$ has all its vertices in $C(T)$.
\end{lemma}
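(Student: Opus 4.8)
The plan is to argue by contradiction. Suppose $P$ is a path of $G\setminus T$ with both ends in $C(T)$ but with at least one vertex outside $C(T)$. Walking along $P$ from one end, I would isolate a minimal ``defect'': label the vertices of $P$ consecutively, take the first one $v_i$ not in $C(T)$ (so its predecessor lies in $C(T)$) and the first vertex $v_j$ with $j>i$ that lies again in $C(T)$ (such a $v_j$ exists because the far end of $P$ is in $C(T)$). The segment $P' = v_{i-1}\tp\cdots\tp v_j$ is then a subpath of $P$ whose two ends are in $C(T)$, whose interior vertices all avoid $C(T)$, and whose length is at least $2$. Since $P$ is a path of $G$ (hence induced) and $P'$ is a contiguous segment of it, $P'$ is itself an induced path of $G$. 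The argument then splits on the length of $P'$.

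If $P'$ has length exactly $2$, write $P' = a \tp t \tp b$ with $a, b \in C(T)$ and $t \notin C(T)$. Here I would show that $T \cup \{t\}$ contradicts the maximality of $T$. First, $t \notin T$ since $V(P)$ is disjoint from $T$; and $t$ has a non-neighbour in $T$, precisely because $t \notin C(T)$, so adjoining $t$ to the anticonnected set $T$ keeps $G[T \cup \{t\}]$ anticonnected. Second, $a$ and $b$ are complete to $T$ and both adjacent to $t$, hence both lie in $C(T \cup \{t\})$; and since $P'$ is an induced path of length $2$, $a$ and $b$ are non-adjacent. Thus $T \cup \{t\} \supsetneq T$ still satisfies both defining properties, contradicting the assumed maximality of $T$.

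If $P'$ has length at least $3$, I would apply Lemma~\ref{l:rrwt} directly. Its hypotheses are met: the ends of $P'$ lie in $C(T)$, i.e.\ they are $T$-complete; $G[T]$ is anticonnected; and $T$ is disjoint from $V(P')$ because $P' \subseteq P \subseteq G\setminus T$. The lemma then produces an interior vertex of $P'$ that is $T$-complete, i.e.\ in $C(T)$, contradicting the fact that the interior of $P'$ avoids $C(T)$. Either way we reach a contradiction, so every vertex of $P$ lies in $C(T)$.

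The technical heart of the argument is really Lemma~\ref{l:rrwt}, which is already available; granting it, the present statement is a short bookkeeping step. The only point demanding genuine care is the length-$2$ case, where one must check that enlarging $T$ by the single vertex $t$ preserves \emph{both} the anticonnectedness of $G[T]$ and the presence of two non-adjacent vertices in $C(T)$ --- and it is exactly the induced-ness of $P$ that delivers the needed non-adjacency of $a$ and $b$.
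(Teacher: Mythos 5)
Your proof is correct and follows essentially the same route as the paper's: isolate a minimal subpath $P'$ with ends in $C(T)$ and interior outside it, then handle length~$2$ via the maximality of $T$ and length~$\geq 3$ via Lemma~\ref{l:rrwt}. The only difference is that you spell out the length-$2$ case (anticonnectedness of $G[T\cup\{t\}]$ and the non-adjacency of $a,b$), which the paper leaves implicit; this is a welcome clarification, not a divergence.
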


\begin{proof}
  Let $P$ be a path of $G\sm T$ whose ends are in $C(T)$.  If some
  vertex of $P$ is not in $C(T)$ then $P$ contains a subpath $P'$ of
  length at least 2 whose ends are in $C(T)$ and whose interior is
  disjoint from $C(T)$.  If $P'$ is of length~2, say $P'=a\tp t \tp b$
  then $T\cup \{t\}$ is a set which contradicts the maximality of $T$.
  If $P$ is of length greater than~2, it contradicts
  Lemma~\ref{l:rrwt}.  
\end{proof}

The following theorem is due to Hayward but the proof given here is
mine.  My proof is not really shorter than Hayward's, but it shows how
Roussel-and-Rubio-like lemmas can be used.  A
\emph{cutset}\index{cutset} of a graph is a set $S$ of vertices such
that $G\sm S$ is disconnected.  A \emph{star}\index{star} of a graph
is a set of vertices $S$ that contain a vertex $c$ such that $S
\subseteq N[c]$.  A \emph{star cutset}\index{star cutset} is a star
that is a cutset.

\begin{theorem}[Hayward \cite{hayward:wt}]
  \label{th:wt}
  Let $G$ be a weakly triangulated graph. Then either: 
  \begin{itemize}
  \item $G$ is a clique;
  \item $G$ is the complement of a perfect matching;
  \item $G$ admits a star cutset.
  \end{itemize}
\end{theorem}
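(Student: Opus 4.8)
The plan is to induct on $|V(G)|$, using Lemma~\ref{l:pathWT} as the engine that promotes a cutset of $C(T)$ to a cutset of $G$. The first step is to dispose of the degenerate case: if $G$ is a disjoint union of cliques (in particular whenever $|V(G)|\le 2$), then $G$ is a single clique, or $G=\overline{K_2}$ is the complement of a perfect matching, or some $\{v\}$ or $N[v]$ is a star cutset, so the conclusion already holds. Hence, by Theorem~\ref{th:P3}, I may assume $G$ contains an induced $P_3$. Its middle vertex $m$ witnesses that $T=\{m\}$ satisfies ``$G[T]$ anticonnected and $C(T)$ contains two non-adjacent vertices'' (the two ends of the $P_3$). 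I then fix $T$ inclusion-wise maximal with these two properties, so that Lemma~\ref{l:pathWT} applies to it.

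Since $C(T)$ contains two non-adjacent vertices, $G[C(T)]$ is a weakly triangulated graph that is not a clique, and it is a proper induced subgraph because $T\neq\emptyset$ is disjoint from $C(T)$; so the induction hypothesis gives two possibilities for $G[C(T)]$. If $G[C(T)]$ has a star cutset $S$ centered at some $c\in C(T)$, I claim $T\cup S$ is a star cutset of $G$. It is a star centered at $c$, because $c$ is complete to $T$ and $S\subseteq N[c]$ inside $C(T)$. It is a cutset because any path of $G\sm(T\cup S)$ joining two components of $G[C(T)\sm S]$ would avoid $T$ and have both ends in $C(T)$, hence would lie entirely inside $C(T)\sm S$ by Lemma~\ref{l:pathWT}, contradicting that $S$ separates those components. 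This disposes of the first possibility.

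The remaining possibility is that $G[C(T)]$ is the complement of a perfect matching, so every vertex of $C(T)$ has a unique non-neighbour inside $C(T)$. Here I split on whether $V(G)=T\cup C(T)$. If so, I apply induction to $G[T]$ (weakly triangulated and smaller); since $G[T]$ is anticonnected, its ``clique'' outcome forces $T=\{t\}$, and then $\{t\}\cup(C(T)\sm\{a,b\})$ is a star cutset centered at $t$ for a non-adjacent pair $a,b$ of $C(T)$; its ``complement of a perfect matching'' outcome makes $G$ itself the complement of a perfect matching (all edges between $T$ and $C(T)$ are present); and a star cutset $S$ of $G[T]$ lifts to the star cutset $S\cup C(T)$. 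If instead some $x\in V(G)\sm(T\cup C(T))$ exists, I choose it with a neighbour $y\in C(T)$ — possible, since otherwise $T\cup\{c\}$ is already a star cutset centered at any $c\in C(T)$. Letting $y'$ be the non-neighbour of $y$ in $C(T)$, I first note $xy'\notin E(G)$, for otherwise $y,y'$ would be two non-adjacent vertices of $C(T\cup\{x\})$, contradicting the maximality of $T$. Then $S=T\cup(C(T)\sm\{y'\})$ is a star cutset centered at $y$ separating $x$ from $y'$: a path of $G\sm S$ from $x$ to $y'$, together with $y$, would yield a path of $G\sm T$ between $y,y'\in C(T)$ that leaves $C(T)$ (it passes through $x$), again contradicting Lemma~\ref{l:pathWT}.

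The step I expect to be delicate is not the case analysis itself but the repeated verification that the candidate sets are genuinely cutsets. Each time the recipe is the same — take $T$ together with all, or all but one, of $C(T)$ — and each time the argument that it disconnects $G$ routes a hypothetical connecting path through a vertex outside $C(T)$ and invokes Lemma~\ref{l:pathWT}. Getting the bookkeeping right (which single vertex of $C(T)$ to omit so that the omitted vertex $y'$ is separated from $x$ while $y$ still serves as the star-centre) is the only subtle point, and it is precisely where the Roussel--Rubio-type Lemma~\ref{l:rrwt}, through Lemma~\ref{l:pathWT}, does the real work.
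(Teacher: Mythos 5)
Your proposal is correct and takes essentially the same route as the paper's own proof: the same maximal anticonnected set $T$ built from the middle of a $P_3$, the same induction with the two cases for $G[C(T)]$ promoted to $G$ via Lemma~\ref{l:pathWT}, the same treatment of $V(G)=T\cup C(T)$ by induction on $G[T]$, and the same final star cutset $T\cup C(T)\sm\{y'\}$ centered at $y$. The only difference is that you spell out the cutset verifications that the paper leaves implicit.
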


\begin{proof}
  If $G$ is a disjoint union of cliques (in particular when $|V(G)|
  \leq 2$) then the theorem holds trivially.  So by
  Theorem~\ref{th:P3}, we may assume that $G$ contains a $P_3$.
  Hence, there exists a set $T$ of vertices such that $G[T]$ is
  anticonnected and $C(T)$ contains at least two non-adjacent vertices
  because the middle of a $P_3$ forms such a set.  Let us assume $T$
  maximal like in Lemma~\ref{l:pathWT}.  Since $C(T)$ is not a clique,
  by induction we have two cases to consider:

  \noindent{\bf Case~1:} the graph induced by $C(T)$ has a star
  cutset $S$.  From Lemma~\ref{l:pathWT}, $T\cup S$ is a star cutset
  of $G$.  

  \noindent{\bf Case~2:} the graph induced by $C(T)$ is the complement
  of a perfect matching.  

  Suppose first that $V(G) = T \cup C(T)$.  Then by induction, either
  $T= \{t\}$, of $T$ induces the complement of a perfect matching or
  $T$ has a star cutset $S$.  But in the first case, $\{t\} \cup C(T)
  \sm \{a, b\}$ where $a, b$ are non-adjacent in $C(T)$, is a star
  cutset of $G$.  In the second case, $G$ itself is the complement of
  a perfect matching. In the third case, $S\cup C(T)$ is a star cutset
  of $G$.  

  So, we may assume that there exists a vertex $x \in V(G) \sm (T \cup
  C(T))$.  We choose $x$ with a neighbor $y$ in $C(T)$, this is
  possible otherwise $T$ together with any vertex of $C(T)$ forms a
  star cutset of $G$.  

  Recall that $C(T)$ is the complement of a perfect matching.  So, let
  $y'$ be the non-neighbor of $y$ in $C(T)$.  Note that $xy'\notin
  E(G)$ for otherwise $T\cup \{x\}$ would contradict the maximality of
  $T$.  We claim that $S = T \cup C(T) \sm \{y'\}$ is a star cutset of
  $G$ separating $x$ from $y'$.  First it is a star centered $y$.  And
  it is a cutset because if there a path in $G\sm S$ from $x$ to $y'$,
  then by appending $y$ to that path we see that $G\sm T$ contains a
  path from $y$ to $y'$ that is not included in $C(T)$, a
  contradiction to Lemma~\ref{l:pathWT}.
\end{proof}

From the theorem above and the following one, weakly triangulated
graphs are perfect.  To see this, consider a non-perfect weakly
triangulated graph, and vertex-inclusionwise minimal with respect to
this property.  So, it is a \emph{minimally imperfect
  graph}\index{minimally imperfect graph} (that is
a non-perfect graph whose all induced subgraphs are perfect).  Since
cliques and complement of perfect matchings are perfect, it must have
a a star cutset by Theorem~\ref{th:wt}.  So, it contradicts the
following (proof is omitted here).

\begin{theorem}[Chv\'atal \cite{chvatal:starcutset}]
  A minimally imperfect graph has no star cutset.
\end{theorem}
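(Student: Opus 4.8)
The plan is to derive a contradiction by exhibiting an $\omega(G)$-coloring of a minimally imperfect graph that carries a star cutset. First I would record the basic fact that a minimally imperfect graph $G$ satisfies $\chi(G)=\omega(G)+1$: for any vertex $v$ the graph $G\setminus v$ is perfect with clique number at most $\omega(G)$, hence admits an $\omega(G)$-coloring, and re-inserting $v$ costs at most one extra color, so $\chi(G)\le\omega(G)+1$; since $G$ is imperfect, $\chi(G)>\omega(G)$, forcing equality. Thus producing \emph{any} $\omega(G)$-coloring of $G$ itself yields the contradiction that proves the theorem.

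Next I would set up the decomposition. Write $\omega=\omega(G)$ and let $S$ be a star cutset with center $x$, so $S\subseteq N[x]$ and every vertex of $S\setminus\{x\}$ is adjacent to $x$. Group the components of $G\setminus S$ into two non-empty parts $A$ and $B$ with no edges between them, so that $V(G)=A\cup S\cup B$. Both $G[A\cup S]$ and $G[B\cup S]$ are proper induced subgraphs of $G$ (because $B$, respectively $A$, is non-empty), hence perfect, hence $\omega$-colorable. Because $A$ and $B$ are anticomplete, any $\omega$-coloring of $G[A\cup S]$ together with any $\omega$-coloring of $G[B\cup S]$ that \emph{induce the same coloring on $S$} glue into an $\omega$-coloring of all of $G$ --- exactly the forbidden object.

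Everything therefore reduces to the amalgamation step: arranging that the two side-colorings agree on $S$. This is the crux, and it is where minimal imperfection, and not merely perfection of the two pieces, must be used. The difficulty is that $S$ is a \emph{star} rather than a clique: across a clique cutset every coloring restricts to a bijection onto the colors, so one can permute color names to force agreement (the classical clique-cutset amalgamation behind Dirac's theorem), whereas a star imposes far weaker constraints and the two restrictions to $S$ need not be reconcilable by renaming. To overcome this I would use two features at once. The center $x$ \emph{dominates} $S$, so in every proper coloring the color of $x$ appears on no other vertex of $S$, which already pins down the color class of $x$. I would then invoke the rigid structure of minimally imperfect graphs established by Lovász --- that every vertex lies in a maximum stable set, that $|V(G)|=\alpha(G)\,\omega(G)+1$, and that deleting a maximum stable set leaves the clique number unchanged. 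Choosing a maximum stable set $\Sigma$ through $x$, domination gives $\Sigma\cap S=\{x\}$, so $\Sigma=\{x\}\cup\Sigma_A\cup\Sigma_B$ with $\Sigma_A\subseteq A$ and $\Sigma_B\subseteq B$; I would then use $\Sigma$ as the common skeleton that synchronizes the colorings of the two sides along $S$, building the $\omega$-coloring of $G$ and reaching the contradiction.

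The step I expect to be genuinely hard is precisely this synchronization of the two side-colorings across the non-clique separator $S$. Coloring the individual pieces and reducing to $\omega$-colorability are routine; reconciling the pieces on a star is exactly the point at which the special partition structure of minimally imperfect graphs has to be brought in, and getting that coordination clean is the heart of the argument.
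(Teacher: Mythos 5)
Your setup is fine (the reduction to exhibiting an $\omega$-coloring of $G$, the partition $V(G)=A\cup S\cup B$, perfection of $G[A\cup S]$ and $G[B\cup S]$), but the proof has a genuine gap exactly where you say the heart of the argument lies: the ``synchronization'' of the two side-colorings on $S$ is never carried out. You name a maximum stable set $\Sigma$ through $x$ and assert it can serve as a ``common skeleton'' forcing the two $\omega$-colorings to agree on $S$, but no mechanism is given, and none is apparent: $S\setminus\{x\}$ can induce an arbitrary graph, an $\omega$-coloring of one side can restrict to $S$ in ways that no $\omega$-coloring of the other side matches (even up to renaming colors), and the Lov\'asz-type facts you invoke do not by themselves produce matching colorings. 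Since producing colorings that agree on $S$ is equivalent to producing the forbidden $\omega$-coloring of $G$, this step is the entire content of the theorem; as written, the proposal assumes what it must prove. (For reference, the paper itself states this theorem without proof, so the comparison here is with Chv\'atal's original argument.)

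The classical proof avoids synchronization altogether, and it uses precisely the fact you quoted but did not exploit: in a minimally imperfect graph $G$, for \emph{every} stable set $T$ the graph $G\setminus T$ is perfect and $\chi(G\setminus T)\ge \chi(G)-1=\omega$, hence $\omega(G\setminus T)=\omega$; that is, every stable set misses some $\omega$-clique. Now take any $\omega$-coloring of $G[A\cup S]$ and any $\omega$-coloring of $G[B\cup S]$, and let $T_A$, $T_B$ be the color classes of $x$ in the two colorings. Because $S\subseteq N[x]$, we have $T_A\cap S=T_B\cap S=\{x\}$, so $T=T_A\cup T_B$ is a stable set of $G$ (its vertices other than $x$ lie in $A$ and $B$, which are anticomplete). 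Any $\omega$-clique $K$ of $G$ contains no edge between $A$ and $B$, so $K\subseteq A\cup S$ or $K\subseteq B\cup S$; in either case $K$ meets every color class of the corresponding side-coloring, in particular $K$ meets $T_A$ or $T_B$, hence $T$. So $T$ is a stable set meeting every $\omega$-clique, contradicting the fact above (equivalently: $G\setminus T$ is perfect with $\omega(G\setminus T)\le\omega-1$, so $\chi(G)\le(\omega-1)+1=\omega$, contradicting $\chi(G)=\omega+1$). The point is that one never needs the two colorings to agree on $S$; only the color class of the center $x$ is transferred across the cutset, and domination of $S$ by $x$ is exactly what makes that transfer legal.
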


Theorem~\ref{th:wt} has a disease: it uses the star cutset which is
the simplest example of what Kristina Vu\v skovi\'c calls \emph{strong
  cutsets}\index{strong cutset}: kinds of cutsets that do not give much structure.  To see
this, note that a star cutset can be very big.  For instance, it can
be the whole vertex-set except two vertices.  And since in the cutset
itself, edges are quite unconstrained, knowing that the graph has a
star cutset tells little about its structure.  Another famous example
of a strong cutset that we will meet and define is the skew
partition, see Section~\ref{sec:decompBerge}.

Strong cutsets do not give structure theorems.  To see this, try to
see how to build a graph by gluing two smaller graphs along a star.
This is likely to be unsatisfactory.  Because finding the same star in
two graphs is algorithmically quite difficult.  It supposes that one
can check whether two stars are isomorphic, a problem as difficult as
the well known open isomorphism problem.  Well, there is no formal
notion of what is a structure theorem, so there can be endless
discussions on this.  But one can feel that gluing along star cutsets
is less automatic than gluing along 2-cutsets like in
Section~\ref{sec:chordless}.

Strong cutset are difficult to use in polynomial time algorithms.
Because to build blocks of decomposition with them, in the unlucky
case where the cutset is almost as big as the graph, one has to keep
almost all the graph in both blocks of decomposition.  So recursive
algorithms using strong cutset typically take exponential time.  A
technically involved method invented by Conforti and
Rao~\cite{ConfortiR:92,ConfortiR:93}, the so-called \emph{cleaning}\index{cleaning},
allows to devise fast recognition algorithms for classes decomposed by
strong cutsets.  But for combinatorial optimization problems, it seems
that no one knows how to use strong cutsets.  Yet, when a class of
graphs is complex enough so that strong cutsets seem unavoidable, there
is still a hope.  Indeed, several theorems replace cutsets with the
existence of a vertex or a pair of vertices with very special
properties.  The oldest example is the following, to be compared with
Theorem~\ref{th:chordal}.  A \emph{simplicial}\index{simplicial} vertex is a vertex
whose neighborhood is a clique.

\begin{theorem}[Dirac, \cite{dirac:chordal}]
  \label{th:simplicial}
  Any chordal graph contains a simplicial vertex. 
\end{theorem}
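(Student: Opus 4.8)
The plan is to prove the statement by induction on $|V(G)|$, relying on the clique-cutset decomposition of Theorem~\ref{th:chordal}. Since an induced subgraph of a chordal graph is again chordal (any induced cycle of a subgraph is an induced cycle of $G$), the class is hereditary, so induction is available on every block produced by a cutset. The base case $|V(G)| \leq 1$ is immediate, as a single vertex has empty, hence complete, neighborhood.

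The key observation is that proving the bare existence of \emph{one} simplicial vertex runs directly into an obstacle. If $G$ has a clique cutset $K$ and $A$ is one component of $G\sm K$, then the block $G_A = G[A \cup K]$ is a smaller chordal graph and by induction contains a simplicial vertex $v$; but if $v \in K$, then $v$ may have further neighbors in the other components of $G\sm K$, so $v$ need not be simplicial in $G$. To circumvent this I would strengthen the inductive statement to: \emph{every chordal graph on at least two vertices that is not a clique has two non-adjacent simplicial vertices}. This is the natural invariant, because $K$ is a clique and so can contain at most one of any two non-adjacent vertices.

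First I would dispose of the clique case, where every vertex is simplicial, so I may assume $G$ is not a clique. Then Theorem~\ref{th:chordal} supplies a clique cutset $K$, and I fix two distinct components $A, B$ of $G\sm K$ (at least two exist). For the block $G_A = G[A \cup K]$, which is chordal and strictly smaller than $G$, there are two cases. If $G_A$ is a clique, then any vertex $a \in A$ has $N_G(a) \subseteq A \cup K$ complete, so $a$ is simplicial in $G$. Otherwise the strengthened induction gives two non-adjacent simplicial vertices of $G_A$, of which at most one lies in the clique $K$; hence at least one, say $a$, lies in $A$. Because $K$ separates $A$ from everything else, $N_G(a) = N_{G_A}(a)$, so $a$ is simplicial in $G$ as well. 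Running the same argument on $G_B = G[B \cup K]$ produces a simplicial vertex $b \in B$; since $a$ and $b$ lie in different components of $G\sm K$ they are non-adjacent, completing the induction and in particular proving the theorem.

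The main obstacle, as indicated, is exactly the possibility that an inductively produced simplicial vertex sits inside the cutset $K$; everything else is routine. The strengthening to a pair of non-adjacent simplicial vertices is what neutralizes this, and it costs nothing, since the two vertices extracted from the two sides $A$ and $B$ are automatically non-adjacent.
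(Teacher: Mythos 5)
Your proof is correct. For comparison: the paper does not prove Theorem~\ref{th:simplicial} at all --- it is quoted as a classical theorem of Dirac and stated (like Theorem~\ref{th:chordal}) without proof, so there is no in-paper argument to measure yours against. What you give is the standard derivation: induction on $|V(G)|$ via the clique-cutset decomposition of Theorem~\ref{th:chordal}, with the induction hypothesis strengthened to the existence of \emph{two non-adjacent} simplicial vertices in any non-complete chordal graph. That strengthening is precisely the classical device for the obstacle you identify (a simplicial vertex of a block may land in the cutset $K$, where it can acquire new neighbors), and your verification that a simplicial vertex of $G[A\cup K]$ lying in $A$ remains simplicial in $G$ is sound, since $K$ separates $A$ from the rest of $G$. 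One small remark: when $G$ is disconnected and not a clique, the clique cutset supplied by Theorem~\ref{th:chordal} may be the empty set; your argument goes through verbatim in that case, but it does rely on the convention that the empty set counts as a clique.
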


For perfect graphs, the good notion of special vertices seems to be
the even pair. An \emph{even pair}\index{even!pair} of vertices is a pair of vertices
such that all paths linking them are of even length.  By the following
theorem, even pairs are a good tool for proving perfectness. 

\begin{theorem}[Meyniel \cite{meyniel:87}]
  A minimally imperfect graph has no even pair. 
\end{theorem}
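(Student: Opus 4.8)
The plan is to argue by contradiction, using the operation of contracting an even pair together with the Fonlupt--Uhry theorem, which asserts that contracting an even pair preserves both $\chi$ and $\omega$ \cite{fonlupt.uhry:82}. Suppose $G$ is minimally imperfect and that $\{a,b\}$ is an even pair; note first that $a$ and $b$ must be non-adjacent, since an edge $ab$ would be an odd path of length~$1$ between them. Let $G'$ be the graph obtained from $G$ by contracting $a$ and $b$ into a single vertex $c$ complete to $N(a)\cup N(b)$. Since every proper induced subgraph of $G$ is perfect, taking any $v$ gives $\chi(G)\le\chi(G\sm v)+1=\omega(G\sm v)+1\le\omega(G)+1$, while $\chi(G)>\omega(G)$ because $G$ itself is imperfect; hence $\chi(G)=\omega(G)+1$.

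First I would record the clique behaviour of the contraction, which is exactly where the even-pair hypothesis bites. If $\{c\}\cup L$ is a clique of $G'$, then $L$ is a clique of $G$ each of whose vertices is adjacent to $a$ or to $b$. Were neither $L\cup\{a\}$ nor $L\cup\{b\}$ a clique of $G$, there would exist $v,w\in L$ with $va,wb\notin E(G)$ and $vb,wa\in E(G)$, and then $a\tp w\tp v\tp b$ would be an induced path of odd length~$3$ joining $a$ and $b$, contradicting the even pair. Hence $\omega(G')=\omega(G)$, which together with $\chi(G')=\chi(G)$ (both equalities being the content of Fonlupt--Uhry \cite{fonlupt.uhry:82}) gives $\chi(G')=\omega(G')+1$, so $G'$ is \emph{not} perfect.

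Next I would show that $G'$ is itself minimally imperfect; it suffices to prove $\chi(G'[S])=\omega(G'[S])$ for every proper subset $S\subsetneq V(G')$. If $c\notin S$ then $G'[S]=G[S]$ is a proper induced subgraph of $G$, hence perfect. If $c\in S$, set $S'=(S\sm\{c\})\cup\{a,b\}$; since $|S'|=|S|+1\le|V(G)|-1$, the graph $G[S']$ is a proper induced subgraph of $G$, hence perfect, the pair $\{a,b\}$ is still an even pair of $G[S']$ (an induced path of $G[S']$ is an induced path of $G$), and $G'[S]$ is exactly the contraction $G[S']/ab$. Applying Fonlupt--Uhry to $G[S']$ then yields $\chi(G'[S])=\chi(G[S'])=\omega(G[S'])=\omega(G'[S])$. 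Thus every proper induced subgraph of $G'$ is perfect while $G'$ is not, so $G'$ is minimally imperfect, on $|V(G)|-1$ vertices.

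To finish, I would invoke Lovász's structure theorem for minimally imperfect graphs \cite{lovasz:pgsurvey}: any minimally imperfect graph $H$ satisfies $|V(H)|=\alpha(H)\,\omega(H)+1$. Applying this to $G$ and to $G'$, and using $\omega(G')=\omega(G)=\omega$, we get $\alpha(G)\,\omega+1=|V(G)|=|V(G')|+1=\alpha(G')\,\omega+2$, so $(\alpha(G)-\alpha(G'))\,\omega=1$; but $G$ is imperfect, so $\omega\ge 2$, and this is impossible. The main obstacle is the $\chi$-preservation half of the contraction lemma: if one does not wish to quote Fonlupt--Uhry, one must run the Kempe-chain argument showing that in an optimal colouring the classes of $a$ and $b$ can be merged, precisely because any shortest (hence induced) two-coloured path between them would be an odd path. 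The other genuine external input is Lovász's counting identity, which supplies the final parity contradiction.
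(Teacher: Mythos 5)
Your proof is correct, but there is nothing in the paper to compare it against: the paper states this theorem with a citation to Meyniel and omits any proof, using it only as a black box (together with Theorem~\ref{th:wt} and Chv\'atal's star-cutset theorem) to deduce perfection of weakly triangulated graphs. So you have supplied an argument where the paper supplies none. Your route — contract the even pair $\{a,b\}$ into $c$, use Fonlupt--Uhry (which the paper itself states just below Meyniel's theorem) to get $\chi(G')=\chi(G)=\omega(G)+1=\omega(G')+1$, then show $G'$ is not merely imperfect but \emph{minimally} imperfect by observing that every proper induced subgraph $G'[S]$ with $c\in S$ is the even-pair contraction of the perfect graph $G[(S\setminus\{c\})\cup\{a,b\}]$, and finally apply Lov\'asz's identity $|V(H)|=\alpha(H)\,\omega(H)+1$ to both $G$ and $G'$ to get $(\alpha(G)-\alpha(G'))\,\omega(G)=1$ with $\omega(G)\geq 2$ — is essentially the classical argument, and each step checks out: the non-adjacency of $a,b$, the induced odd $P_4$ in the clique analysis, the commutation of contraction with taking induced subgraphs containing $c$, and the preservation of the even-pair property in induced subgraphs are all handled correctly. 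The two external inputs are legitimate and non-circular (Lov\'asz 1972 and Fonlupt--Uhry 1982 both predate and do not rely on Meyniel 1987); the only cosmetic remark is that your clique argument proves $\omega(G')\leq\omega(G)$ and the reverse inequality (replace $a$ or $b$ by $c$ in a maximum clique of $G$) is left implicit, but since you formally invoke Fonlupt--Uhry for both equalities anyway, this is harmless.
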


\emph{Contracting}\index{contracting!an even pair} an even pair $a, b$ means replacing $a, b$ by a single
vertex complete to $N(a) \cup N(b)$.  By the following theorem, even
pairs are also a good tool for graph coloring.

\begin{theorem}[Fonlupt and Uhry \cite{fonlupt.uhry:82}]
  Contracting an even pair of a graph preserves its chromatic number
  and the size of a largest clique.
\end{theorem}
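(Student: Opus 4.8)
The plan is to write $G'$ for the graph obtained from $G$ by contracting the even pair $a, b$ into a single vertex $c$, so that $N_{G'}(c) = N(a) \cup N(b)$ and $G' \sm c = G \sm \{a, b\}$. I would first record that $a \not\sim b$, since an edge $ab$ would be a path of odd length~$1$ between $a$ and $b$. I would then prove the two equalities separately, expecting $\chi(G') \leq \chi(G)$ to be the only genuinely delicate point.

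For the clique number, the key structural fact is that any clique $K'$ of $G$ with $K' \subseteq N(a) \cup N(b)$ satisfies $K' \subseteq N(a)$ or $K' \subseteq N(b)$. Indeed, if some $u \in K'$ missed $a$ (hence $u \in N(b)$) and some $v \in K'$ missed $b$ (hence $v \in N(a)$), then $u \neq v$ and, since $K'$ is a clique, $u \sim v$; but then $a \tp v \tp u \tp b$ is an induced path of length~$3$, contradicting that $a, b$ is an even pair. With this in hand both inequalities are routine. For $\omega(G) \leq \omega(G')$: a maximum clique of $G$ contains at most one of the non-adjacent vertices $a, b$, and if it is $\{a\} \cup K'$ then $\{c\} \cup K'$ is a clique of $G'$ of the same size. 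For $\omega(G') \leq \omega(G)$: a maximum clique $\{c\} \cup K'$ of $G'$ has $K' \subseteq N(a) \cup N(b)$, so by the fact above $\{a\} \cup K'$ or $\{b\} \cup K'$ is a clique of $G$ of the same size.

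For the chromatic number, one inequality is immediate and the other uses the even-pair hypothesis. For $\chi(G) \leq \chi(G')$: from a proper $\chi(G')$-colouring, colour $G$ by giving both $a$ and $b$ the colour of $c$ and every other vertex its own colour; this is proper because $a \not\sim b$ and every $G$-neighbour of $a$ or $b$ is a $G'$-neighbour of $c$. The hard direction $\chi(G') \leq \chi(G)$ I would get by a Kempe-chain recolouring. Starting from a proper colouring $\phi$ of $G$ with $k = \chi(G)$ colours, it suffices to recolour, keeping $k$ colours, so that $a$ and $b$ receive the same colour, since assigning that colour to $c$ then yields a proper $k$-colouring of $G'$. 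If $\phi(a) = \phi(b)$ we are done; otherwise say $\phi(a) = 1$, $\phi(b) = 2$, and let $H$ be the subgraph of $G$ induced by the vertices coloured $1$ or~$2$. The crucial claim is that $a$ and $b$ lie in different connected components of $H$: a shortest $a$--$b$ path inside $H$ is chordless, hence induced in $G$ because $H$ is an induced subgraph of $G$, and being properly $2$-coloured with endpoints of distinct colours it has odd length, which is impossible for an even pair. I would then swap colours $1$ and~$2$ on the component of $H$ containing $a$; this remains proper, since any neighbour of that component lying outside it is coloured neither $1$ nor~$2$, and it makes $a$ take the colour~$2$ of $b$, so that $a$ and $b$ now agree and contraction finishes the argument. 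The main obstacle is precisely this last claim: identifying a shortest bichromatic path as an \emph{induced} path of $G$ is exactly the step that converts the parity forced by a $2$-colouring into the even-pair contradiction, and it is the one point that must be argued carefully.
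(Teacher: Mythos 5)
Your proof is correct. Note that the paper itself gives no proof of this statement: it is quoted as a known theorem of Fonlupt and Uhry with a citation to the original article, so there is nothing internal to compare against. Your argument --- the observation that a clique in $N(a)\cup N(b)$ must lie entirely in $N(a)$ or in $N(b)$ (else an induced $a\tp v\tp u\tp b$ of length~3 appears), plus the Kempe-chain swap on the bichromatic subgraph, justified by the fact that a shortest $a$--$b$ path in an induced subgraph is an induced path of $G$ and hence of even length --- is precisely the classical proof of this result, and you have handled the one delicate step (parity of the bichromatic path versus the even-pair hypothesis) correctly.
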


The following theorem was first proved by Hayward, Ho\`ang and Maffray
but the proof given here was obtained jointly with Maffray
in~\cite{nicolas:artemis}.  A \emph{2-pair}\index{2-pair} $a, b$ is a special kind
of even pair: all paths from $a$ to $b$ are of length~2.

\begin{theorem}[Hayward, Ho\`ang and Maffray \cite{hayward.hoang.m:90}]
  \label{th:2pair}
  A weakly triangulated graph either is a clique or admits a 2-pair.  
\end{theorem}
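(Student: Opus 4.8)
The plan is to argue by induction on $|V(G)|$, pushing the genuinely hard work onto Lemma~\ref{l:pathWT}, which is exactly the Roussel--Rubio-type tool prepared for this purpose.

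First I would dispose of the degenerate case. If $G$ is a disjoint union of cliques then the conclusion is immediate: either $G$ is a single clique, or it has at least two components, and then any two vertices $a,b$ lying in distinct components form a (vacuous) 2-pair, since no path of $G$ joins them. By Theorem~\ref{th:P3} the only remaining possibility is that $G$ contains an induced $P_3$, and this is where the real argument begins.

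Next I would manufacture the right anticonnected set. Taking the middle vertex $m$ of an induced $P_3$ with non-adjacent ends, the singleton $T_0=\{m\}$ is anticonnected and $C(T_0)$ already contains two non-adjacent vertices. I would then enlarge $T_0$ to a set $T$ that is inclusion-wise maximal with the two properties ``$G[T]$ is anticonnected'' and ``$C(T)$ contains two non-adjacent vertices'', which is precisely the hypothesis of Lemma~\ref{l:pathWT}. Two observations let the induction run: $C(T)$ is not a clique (it contains two non-adjacent vertices), and $C(T)$ is a \emph{proper} subset of $V(G)$, because $T$ is non-empty and, by the convention that a vertex complete to $T$ must lie outside $T$, we have $T\cap C(T)=\emptyset$. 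Since ``no long hole and no long antihole'' is hereditary, $G[C(T)]$ is again weakly triangulated, so the induction hypothesis applies to it; as it is not a clique it must admit a 2-pair $\{a,b\}$ of $G[C(T)]$.

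The main obstacle --- and the only point needing real care --- is to promote $\{a,b\}$ from a 2-pair of $G[C(T)]$ to a 2-pair of the whole of $G$. Here I would take an arbitrary path $P$ of $G$ from $a$ to $b$ and show it has length $2$. Since $\{a,b\}$ is a 2-pair of $G[C(T)]$, $a$ and $b$ are non-adjacent, so $P$ has length at least $2$; suppose for contradiction that $P$ has length at least $3$. First, $P$ cannot meet $T$: as $a,b\in C(T)$ are complete to $T$, any vertex of $T$ lying on the induced path $P$ would have to be adjacent to both $a$ and $b$, forcing $P$ to have length $2$. Hence $P$ lies in $G\sm T$, and Lemma~\ref{l:pathWT} then forces every vertex of $P$ into $C(T)$. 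But then $P$ is a path of $G[C(T)]$ from $a$ to $b$ of length at least $3$, contradicting that $\{a,b\}$ is a 2-pair there. Therefore every $a$--$b$ path of $G$ has length $2$, i.e.\ $\{a,b\}$ is a 2-pair of $G$, which completes the induction.
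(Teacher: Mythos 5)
Your proof is correct and takes essentially the same route as the paper's: dismiss disjoint unions of cliques, grow a maximal anticonnected set $T$ from the middle of a $P_3$, obtain a 2-pair of $G[C(T)]$ by induction, and lift it to $G$ via Lemma~\ref{l:pathWT}. The only difference is that you carefully spell out the lifting step (a long $a$--$b$ path avoids $T$, hence lies in $C(T)$ by the lemma, contradicting the inductive 2-pair), which the paper compresses into the single sentence ``From Lemma~\ref{l:pathWT} it is a 2-pair of $G$.''
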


\begin{proof}
  If $G$ is a disjoint union of cliques (in particular when $|V(G)|
  \leq 2$) then the theorem holds trivially.  So we may assume that
  $G$ contains a $P_3$.  So, there exists a set $T$ like in
  Lemma~\ref{l:pathWT} (start with the middle of a $P_3$ to build $T$).
  Since $C(T)$ is not a clique, by induction we know that $C(T)$
  admits a 2-pair of $G[C(T)]$.  From Lemma~\ref{l:pathWT} it is a
  2-pair of $G$.
\end{proof}

The technique above to find an even pair can be backtracked to the
seminal paper of Henri Meyniel~\cite{meyniel:87}, see
Exercise~\ref{ex:meyniel} below.  Using ideas of
Cl\'audia Linhares Sales and
Maffray~\cite{linhares.maffray:evenpairsansc4}, this technique can be
extended to Artemis graphs~\cite{nicolas:artemis}, which are a
generalization of several classes of perfect graphs known to have even
pairs (weakly triangulated graphs, Meyniel graphs and perfectly
orderable graphs etc; we shall not define all the classes, a reader who
want to visit the zoo can read Chapter~3 of~\cite{nicolas:these}).
The technicalities in~\cite{nicolas:artemis} together with new kinds
of special vertices and complicated variations on the Roussel and
Rubio Lemma are used by Chudnovsky and
Seymour~\cite{chudnovsky.seymour:even} to shorten significantly the
proof of the Strong Perfect Graph Theorem.

From the Theorem~\ref{th:2pair}, it is easy to deduce a polynomial
time algorithm for coloring weakly triangulated graphs (by contracting
2-pairs as long as there are some).  Hayward, Spinrad and Sritharan
\cite{hayward.S.S:fastWT} could speed this algorithm up to $O(n^3)$.
Since the contraction of a 2-pair preserves both $\chi$ and $\omega$,
the algorithm will transform any weakly triangulated graph $G$ into a
clique $K$ of size $\omega(G) = \omega(K) = \chi(K) = \chi(G)$, thus
proving the perfectness of $G$ (because all this can be done for all
induced subgraphs of~$G$).  So, Theorem~\ref{th:2pair} gives in fact a
much shorter proof of the perfectness of weakly triangulated graphs.

Berge graphs behave in a way like weakly triangulated graphs.  The
Roussel and Rubio Lemma is an important tool to prove their
perfectness, a strong cutset is needed to decompose them (the
so-called balanced skew partition), but by using even pairs, a shorter
proof of perfectness can be found (this was done by Chudnovsky and
Seymour~\cite{chudnovsky.seymour:even}).  A big difference of course
is that for general Berge graphs all the proofs are much longer and
highly technical.  And up to now, no fast combinatorial coloring
algorithm for Berge graphs is known.  But it might be that even pairs
become an ingredient of such an algorithm.  A collection of theorems
and conjectures supports this idea, but a heavy machinery is needed
just to state all them, so we postpone this to Section~\ref{sec:SF}.

Another important open question about Berge graphs is the existence of
a structure theorem for them.  The following is seemingly easier but
remains open.

\begin{question}
  Find a structure theorem for weakly triangulated graphs.
\end{question}

In fact, I would not be surprised that \emph{no structure theorem
  exists for Berge graphs}.  It would be great to have a tool like
NP-completeness to get people convinced of such negative statements,
or better to prove them.  The non-existence of a polynomial time
recognition algorithm could be an argument, but it does not work for
Berge graphs that can be recognized in $O(n ^9)$ as mentioned in the
introduction.

Another well known class was seminal to many ideas in the theory of
perfect graphs: Meyniel graphs.  A graph is
\emph{Meyniel}\index{Meyniel graph} if all its
odd cycles of length at least~5 admit at least two chords.  Meyniel
graphs were proved to be perfect very early by
Meyniel~\cite{meyniel:76}, and were the first class after chordal and
$P_4$-free graphs for which a decomposition theorem could be proved
(by Burlet and Fonlupt~\cite{burlet.fonlupt:meyniel}).  With respect
to even pairs, Meyniel graphs have behavior similar to weakly
triangulated graphs.  We leave this as an exercise.

\begin{exercise}
  \label{ex:meyniel}
  Give a version of the Roussel and Rubio lemma for Meyniel graphs.
  Deduce that a Meyniel graph either is a clique or has an even pair.
  For a solution, see \cite{meyniel:87}.
\end{exercise}

\chapter{Detecting induced subgraphs}
\label{chap:reco}

\emph{Recognizing}\index{recognition of a class of graph} a class of
graph means giving an algorithm that decides whether an input graph is
in the class.  This can be of practical interest (does a network
satisfy this or that condition, maybe an application of what follows
can be cooked \ldots).  But for us, recognizing classes of graphs is
mainly of a theoretical interest.  It is a tool to classify classes:
those that are NP-complete to recognize are unlikely to have a very
precise decomposition theorem.  This criterion is not to be taken too
religiously.  Some classes are trivial to recognize in polynomial time
while their structure is unknown or highly non-trivial, for instance
triangle-free graphs and claw-free graphs.  Some classes have a
decomposition theorem not precise enough to give a polynomial-time
recognition algorithm (for instance odd hole-free graphs, see
Theorem~\ref{th.ccv}).  But it is nice to have a general criterion
about what class is ``understood''.  Together with $\chi$-boundness,
the existence of a fast recognition algorithm is the only criterion
applying to any class that we are aware of.

The classes we are working on are defined by forbidding a list of
induced subgraphs.  So recognizing one of them is equivalent to being
able to decide whether a given graph contains one graph from the list.
In its full generality, this problem is known to be difficult from the
very beginning of Complexity Theory, even for lists of size one.

\begin{theorem}[Cook \cite{cook:np}] The following problem is
  NP-complete.
  \begin{description}
  \item[Instance] Two graphs $G$ and $H$.
  \item[Question] Does $G$ contain an induced subgraph isomorphic to
    $H$?
  \end{description}
\end{theorem}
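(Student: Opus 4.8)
The plan is to prove the two halves of NP-completeness separately: membership in NP, which is routine, and NP-hardness, which I would obtain by a one-line reduction from a classical NP-complete problem.

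First I would check that the problem lies in NP. A certificate is an injective map $\phi \colon V(H) \to V(G)$; given such a map, one verifies in time $O(|V(H)|^2)$ that for every pair $u, v$ of distinct vertices of $H$ one has $uv \in E(H)$ if and only if $\phi(u)\phi(v) \in E(G)$. This checks exactly that $\phi$ realizes $H$ as an induced subgraph of $G$, so a valid certificate exists precisely when $G$ contains $H$.

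For the hardness, I would reduce from the CLIQUE problem (decide whether a graph $G'$ has a clique on $k$ vertices), which is one of Karp's classical NP-complete problems. Given an instance $(G', k)$ of CLIQUE, output the instance $G = G'$ and $H = K_k$, the complete graph on $k$ vertices; this is clearly computable in polynomial time. The correctness rests on the single observation that for complete graphs, being a subgraph and being an induced subgraph coincide: a set $S$ of $k$ vertices of $G'$ is pairwise adjacent if and only if $G'[S]$ is isomorphic to $K_k$. Hence $G'$ has a clique of size $k$ if and only if $G$ contains the induced subgraph $H = K_k$, which gives a polynomial-time many-one reduction and, together with the previous paragraph, completes the proof.

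There is essentially no hard step here; the only point that deserves care is the role of the word \emph{induced}. For a general pattern $H$, detecting $H$ as a subgraph and detecting it as an induced subgraph are genuinely different problems, and the reduction above works only because the pattern $K_k$ is its own unique completion --- no non-edge of $H$ can be violated. An equally clean alternative would be to reduce from INDEPENDENT SET, taking $H = \overline{K_k}$, the edgeless graph on $k$ vertices; then $G$ contains an induced $\overline{K_k}$ exactly when $G'$ has an independent set of size $k$. Either reduction confirms that the induced-subgraph detection problem is NP-complete already for inputs in which $H$ is as simple as possible.
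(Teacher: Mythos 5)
Your proof is correct. The paper itself gives no proof of this statement --- it is quoted as a known result with a citation to Cook's 1971 paper --- so there is no argument of the paper to compare against; your two-part argument (NP membership via an injective embedding certificate, NP-hardness by the reduction $(G',k)\mapsto (G,H)=(G',K_k)$ from CLIQUE) is the standard, textbook justification and it is sound, including your remark on why the reduction survives the ``induced'' constraint, namely that $K_k$ has no non-edges to violate. The only technicality worth a parenthesis is that the reduction should first check $k\leq |V(G')|$ (outputting a fixed trivial no-instance otherwise), so that writing down $K_k$ is guaranteed to take time polynomial in the length of the CLIQUE instance; with the usual convention $k\leq |V(G')|$ this is automatic.
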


On the other hand, the following is trivial by a brute force
algorithm.

\begin{theorem} Let $H$ be a graph.  The following problem can be
  solved in time $O(n^{|V(H)|})$.
  \begin{description}
  \item[Instance] A graph $G$.
  \item[Question] Does $G$ contain an induced subgraph isomorphic to
    $H$?
  \end{description}
\end{theorem}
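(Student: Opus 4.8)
The plan is to prove this by a direct brute-force enumeration, exactly as the statement's phrasing (``trivial by a brute force algorithm'') suggests. Write $h = |V(H)|$ and note that $h$ is a fixed constant, independent of the input $G$. The algorithm I would describe simply enumerates every subset $S \subseteq V(G)$ with $|S| = h$, and for each one tests whether the induced subgraph $G[S]$ is isomorphic to $H$; the algorithm answers ``yes'' as soon as some $S$ passes the test, and ``no'' if none does. Correctness is immediate: $G$ contains an induced subgraph isomorphic to $H$ if and only if some $h$-element vertex set induces such a subgraph, and we examine all of them.

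First I would count the subsets. The number of $h$-element subsets of $V(G)$ is $\binom{n}{h}$, and since $h$ is fixed we have $\binom{n}{h} \leq n^h$, so there are $O(n^{h})$ subsets to process. Next I would bound the cost of a single isomorphism test. Because both $G[S]$ and $H$ have exactly $h$ vertices, one can enumerate all $h!$ bijections between $V(G[S])$ and $V(H)$ and, for each, check in $O(h^2)$ time whether it preserves adjacency and non-adjacency. This inner test therefore runs in time $O(h! \cdot h^2)$, which is a constant since $h$ does not depend on $n$. Multiplying the number of subsets by the per-subset cost gives a total running time of $O(n^{h}) = O(n^{|V(H)|})$, as required. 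Extracting each subset and building $G[S]$ also costs only $O(h^2)$ per subset, which does not affect the bound.

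There is honestly no real obstacle here; the content of the statement is precisely that fixing $H$ turns the (NP-complete, by Cook's theorem above) general problem into a polynomial one. The only point worth stating explicitly is \emph{why} the isomorphism test is constant-time rather than merely polynomial: it is because $h$ is a fixed constant, so even the naive ``try all bijections'' procedure finishes in bounded time. This is the crucial distinction from the preceding theorem, where $H$ is part of the input and this constant blows up. I would keep the write-up to a few sentences, emphasising that the exponent $|V(H)|$ in the running time comes from the number of candidate subsets and that treating $H$ as fixed is what makes the verification step free.
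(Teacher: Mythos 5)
Your proof is correct and is exactly the brute-force argument the paper has in mind (the paper merely asserts the theorem is ``trivial by a brute force algorithm'' without spelling it out). Enumerating the $O(n^{|V(H)|})$ vertex subsets and doing a constant-time isomorphism test on each, with the constant depending only on the fixed graph $H$, is precisely the intended argument.
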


The brute-force method above is the source of many algorithms with
strange running times (the worst I will mention runs in time
$O(n^{18})$, see the end of Section~\ref{sec:shortPD}).  These
complexities are bad, but not as bad as one might expect.  Suppose for
instance that an algorithm uses as a subroutine an enumeration of all
subsets on 18 vertices of an input graph.  So, the complexity is very
bad.  It is sometimes said\footnote{In fact, in 2004, I gave a talk in
  Haifa and someone interrupted my talk to put forward this
  statement.} that $O(n^{18})$-algorithms are useless in practice and
that an algorithm of complexity $O(2^n)$ is better for any reasonable
$n$ (because $2^{30} < 30^{18}$ say).  This can be true for some
particular algorithms, but is completely false for some others.  To
see this, consider the brute-force algorithm to enumerate all subsets
of size 18 from a set on $n$ elements (complexity $O(n^{18})$). Is it
worse than enumerating all the subsets (complexity $O(2^n)$)?
Certainly not: there are many more subsets than subsets on 18
elements!  Because the constant hidden in the $O$ matters, and it is a
mistake to think that this constant can only bring trouble.  In some
situations, the constant is much smaller than 1, and enumerating
subsets on $k$ elements is an example (because ${n\choose k} <
n^{k}$).  Anyway, comparing blindly big-$O$'s for practical reasons is
not very wise.  For practical applications, only the actual running
time matters and complexity theory just gives a possibly misleading
indication.

To produce something between trivial polynomiality and
NP-completeness, we need to consider the problem of detecting an
induced subgraph from a fixed infinite list.  But this is far much too
general.  To see this, consider the following question that should not
be too difficult.  It should suffice to encode any Turing machine as a
graph and to define $\cal L$ as something related to the halting
problem.

\begin{question}
  Define a list $\cal L$ of graphs such that deciding whether a graph
  contains an element of $\cal L$ as an induced subgraph is an
  undecidable problem.
\end{question}

So, to produce something interesting, we need to give some structure
to the list of forbidden induced subgraphs.  In many interesting
problems, the list of forbidden induced subgraphs is obtained by
subdividing prescribed edges of a prescribed graph.  This lead
L\'ev\^eque, Maffray and myself~\cite{leveque.lmt:detect} to the
following definitions.

A \emph{subdivisible graph}\index{subdivisible graph}
(\emph{s-graph}\index{s-graph} for short) is a triple $B = (V, D, F)$
such that $(V, D \cup F)$ is a graph and $D \cap F = \emptyset$.  The
edges in $D$ are said to be \emph{real edges of $B$}\index{real edge
  of an s-graph} while the edges in $F$ are said to be
\emph{subdivisible edges of $B$}\index{subdivisible edge of an
  s-graph}.  A \emph{realisation}\index{realisation of an s-graph} of
$B$ is a graph obtained from $B$ by subdividing edges of $F$ into
paths of arbitrary length (at least one).  The problem $\Pi_B$ is the
decision problem whose input is a graph $G$ and whose question is
"Does $G$ contain a realisation of $B$ as an induced subgraph?''.  On
figures, we depict real edges of an s-graph with straight lines, and
subdivisible edges with dashed lines.

\begin{figure}
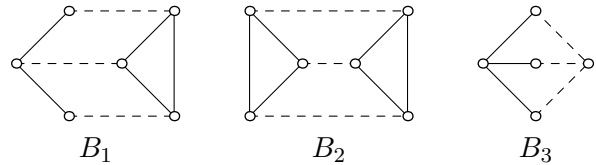

\center
\begin{tabular}{ccccc}
\includegraphics{bigraphs.17}&
\hspace{10em}&
\includegraphics{bigraphs.16}&
\hspace{10em}&
\includegraphics{bigraphs.18}\\
$B_1$&&
$B_2$&&
$B_3$
\end{tabular}
\caption{Pyramids, prisms and thetas\label{fig:ppt}}
\end{figure}

We shall study several instances of $\Pi_B$ and survey the efficient
methods to tackle them.  Some $\Pi_B$'s are NP-complete, some are
polynomial, and many remain open.  As an appetizer, let us give now
several examples.  A \emph{pyramid}\index{pyramid} (resp.  \emph{prism}\index{prism},
\emph{theta}\index{theta}) is any realisation of the s-graph $B_1$ (resp.  $B_2$,
$B_3$) represented on Figure~\ref{fig:ppt}.  Chudnovsky and
Seymour~\cite{chudnovsky.c.l.s.v:reco} gave an $O(n^9)$-time algorithm
for $\Pi_{B_1}$ (or equivalently, for detecting a pyramid).  As far as
we know, that is the first example of a solution to a $\Pi_B$ whose
complexity is non-trivial to settle.  The method used is what we call
\emph{shortest path detector}, see Section~\ref{sec:shortPD}.  In
contrast, we proved jointly with Maffray~\cite{maffray.t:reco} that
$\Pi_{B_2}$ (or detecting a prism) is NP-complete.  The method relies
on a construction due to Bienstock that has many applications, see
Section~\ref{sec:bienstock}.  Chudnovsky and
Seymour~\cite{chudnovsky.seymour:theta} gave an $O(n^{11})$-time
algorithm for $P_{B_3}$ (or detecting a theta).  Their algorithm
relies on the solution of a problem called ``three-in-a-tree'', that
has many applications, see Section~\ref{sec:3-in-tree}.  In
Section~\ref{sec:gen3} we present several variations on
three-in-a-tree.  

Note that pyramids, prisms and thetas are sometimes called
respectively 3PC($\Delta$, $\cdot$), 3PC($\Delta$, $\Delta$),
3PC($\cdot$, $\cdot$) where ``3PC'' means \emph{three paths
  configurations}\index{3PC}.  This is sometimes a very convenient
notation.

Often, forbidding 3PC's yields interesting classes.  Note for instance
that Berge graphs contain no pyramid and even-hole-free graphs contain
no prisms and no thetas.
 
From the examples above one can feel that strangely close s-graphs
lead to drastically different complexities (as far as P$\neq$NP).
Before going into the details, let us give a last example may be more
striking than pyramid/prism/theta~: $\Pi_{B_4}, \Pi_{B_6}$ are
polynomial and $\Pi_{B_5}, \Pi_{B_7}$ are NP-complete, where $B_4,
\dots, B_7$ are the s-graphs represented on Figure~\ref{fig:antenna},
see Theorem~\ref{th:antena}.

\begin{figure}
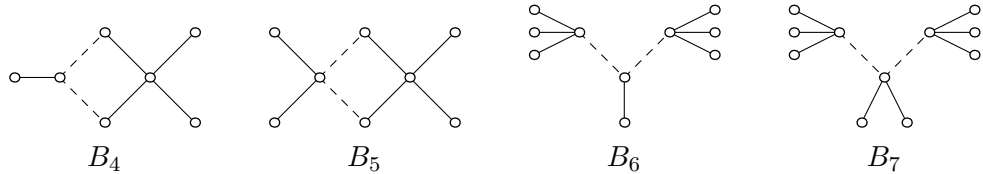

\center
\begin{tabular}{ccccccc}
\includegraphics{bigraphs.15}&
\hspace{10em}&
\includegraphics{bigraphs.14}&
\hspace{10em}&
\includegraphics{bigraphs.22}&
\hspace{10em}&
\includegraphics{bigraphs.21}\\
$B_4$&&
$B_5$&&
$B_6$&&
$B_7$
\end{tabular}
\caption{Some s-graphs with pending edges\label{fig:antenna}}
\end{figure}

\section{A tool for NP-completeness: Bienstock's Construction}
\label{sec:bienstock}

In 1991, Dan Bienstock published a paper~\cite{bienstock:evenpair}
with an NP-completeness proof that can be applied in many situations
(with slight variations).  I am grateful to Bienstock because out of
the ten accepted/published papers I have today, six contain results
relying on Bienstock's construction, and three more mention
NP-completeness results relying on it.  

Let $\cal S$ be a set of graphs and $d$ an integer.  Let $\Gamma_{\cal
  S}^d$ be the problem whose instance is $(G, x, y)$ where $G$ is a
graph such that $\Delta(G) \leq d$, with no induced subgraph in $\cal
S$ and $x, y \in V(G)$ are two non-adjacent vertices of degree~$2$.
The question is ``Does $G$ contain a hole passing through $x, y$?''.
For simplicity, we write $\Gamma_{\cal S}$ instead of $\Gamma_{\cal
  S}^{+\infty}$ (so, the graph in the instance of $\Gamma_{\cal S}$
has unbounded degree).  Also we write $\Gamma^d$ instead of
$\Gamma^d_{\emptyset}$ (so the graph in the instance of $\Gamma^d$ has
no restriction on its induced subgraphs).
Bienstock~\cite{bienstock:evenpair} proved that $\Gamma =
\Gamma_{\emptyset}$ is NP-complete.  Here we give the simplest proof
that we are aware of using Bienstock's construction.  This is from a
paper with Maffray~\cite{maffray.t:reco} and is slightly simpler than
the original construction.

\begin{theorem}
  \label{thm:pinpc}
  Problem $\Gamma_{\{K_3\}}$ is NP-complete
\end{theorem}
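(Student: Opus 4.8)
The plan is to prove NP-completeness by the usual two steps. First, membership in NP is immediate: a hole through $x$ and $y$ has at most $|V(G)|$ vertices, and checking that a given vertex sequence is an induced cycle containing both $x$ and $y$ takes polynomial time, so such a cycle is a valid polynomial certificate. The substance is the hardness proof, and here I would reduce from 3-SAT. Given a CNF formula $\phi$ with variables $X_1,\dots,X_n$ and clauses $C_1,\dots,C_m$, each a disjunction of three literals, the goal is to build in polynomial time a \emph{triangle-free} graph $G_\phi$ together with two non-adjacent vertices $x,y$ of degree exactly $2$, such that $\phi$ is satisfiable if and only if $G_\phi$ contains a hole through $x$ and $y$. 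Because $x$ and $y$ have degree $2$, any such hole must use both edges at $x$ and both edges at $y$, so it splits into two induced $x$--$y$ paths that together close up chordlessly; this is the feature the gadgets will exploit.

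For the construction I would use \emph{variable gadgets} and \emph{clause gadgets}. Each variable $X_i$ is represented by a ``switch'': a small subgraph offering exactly two internally disjoint chordless ways for the sought hole to traverse it, one route labelled \emph{true} and one \emph{false}. These switches are chained in series along the segment of the hole running from $x$ to $y$, so that any hole through both endpoints is forced to pass through every switch and thereby selects a complete truth assignment. Each clause $C=\ell_1\vee\ell_2\vee\ell_3$ is then represented by a gadget attached to the three literal-routes of its variables in such a way that, if the hole chooses the falsifying branch for all three literals of $C$ simultaneously, a chord is unavoidable (or the cycle cannot be completed chordlessly), whereas if at least one literal is set true the hole can be routed past the clause gadget without creating any chord. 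In this way a hole through $x,y$ exists precisely when some assignment satisfies every clause.

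To keep $G_\phi$ a legitimate instance of $\Gamma_{\{K_3\}}$, I would ensure triangle-freeness by subdividing every connection that could otherwise close a triangle, replacing short links by longer induced paths; subdivision of non-switch edges changes neither the existence of holes nor the logical behaviour of the gadgets, and it also lets me force $x$ and $y$ to have degree exactly $2$ by attaching them only at the two ends of the variable chain. The hard part will be verifying correctness of the clause gadget in \emph{both} directions: soundness, that every satisfying assignment yields a genuine chordless cycle through $x$ and $y$, is the easier half, while completeness, that \emph{every} hole through $x,y$ must induce a single consistent truth assignment satisfying all clauses, is delicate, since one has to rule out ``illegitimate'' holes that shortcut through a variable or clause gadget by some unintended chordless route. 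Carrying out this case analysis while simultaneously certifying that no triangle is ever created is, as the surrounding discussion suggests, exactly the technical heart of Bienstock's construction.
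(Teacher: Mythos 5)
Your proposal correctly identifies the reduction source (3-SAT) and the broad gadget strategy, and this is indeed the paper's approach at the level of a plan. But it is a plan, not a proof: the variable ``switch'' and the clause gadget are specified only by the properties you would like them to have, and you yourself defer the two-directional verification, calling it ``the technical heart.'' That heart is exactly what the theorem requires, and the design choices that make it work are absent from your sketch. Concretely, the paper's variable gadget is not a single switch traversed once: it consists of \emph{two} linked squares (vertices $a_i, b_i, t_i, f_i$ and $a'_i, b'_i, t'_i, f'_i$, with cross edges $t_if'_i$ and $t'_if_i$), and the sought hole passes through the variable region \emph{twice} --- outbound through the unprimed copies, back through the primed ones. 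This doubling is what defeats the ``illegitimate holes'' you worry about: each clause's literal vertex is joined to \emph{both} copies of the opposite-polarity vertices ($f_i$ and $f'_i$, or $t_i$ and $t'_i$), so for instance a literal vertex adjacent to $t_1$ and $t'_1$ cannot lie on the hole together with $t_1, t'_1$, since $a, a_1, a'_1, t_1, t'_1$ and that vertex would close a hole avoiding $b$. Moreover the clause gadget is not, as you describe it, a passive attachment that produces a chord when all three literals are falsified; it is \emph{traversed} by the hole, which must pass through the two hubs $c_j, d_j$ and exactly one of the three literal vertices $v_j^1, v_j^2, v_j^3$, and the chord constraints then force the chosen literal's variable gadget onto its satisfying route. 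Without gadgets realizing these mechanisms, the equivalence ``hole through $x,y$ iff satisfiable'' cannot be checked, so the proposal has a genuine gap.

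A secondary problem is your plan to restore triangle-freeness by subdividing edges afterwards. Subdividing an edge $uv$ deletes the edge $uv$, so a cycle through $u$ and $v$ whose only chord was $uv$ becomes chordless in the subdivided graph: subdivision can \emph{create} holes, and your claim that it ``changes neither the existence of holes nor the logical behaviour of the gadgets'' is false in general and would itself need a careful argument in your setting. The paper sidesteps this entirely: its gadgets (squares, the $K_{2,3}$-shaped clause graphs, and the connecting edges) are triangle-free by construction, so no post-processing is needed and the degree-2 condition on the two specialized vertices $a, b$ holds immediately.
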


\begin{proof} 
  Let us give a polynomial reduction from the problem {\sc
    $3$-Satisfiability} of Boolean functions to problem $\Gamma_{\{K_3\}}$.  Recall
  that a Boolean function with $n$ variables is a mapping $f$ from
  $\{0, 1\}^n$ to $\{0, 1\}$.  A Boolean vector $\xi\in\{0, 1\}^n$ is
  a \emph{truth assignment} for $f$ if $f(\xi)=1$.  For any Boolean
  variable $x$ on $\{0, 1\}$, we write $\overline{x}:=1-x$, and each
  of $x, \overline{x}$ is called a \emph{literal}.  An instance of
  {\sc $3$-Satisfiability} is a Boolean function $f$ given as a
  product of clauses, each clause being the Boolean sum $\vee$ of
  three literals; the question is whether $f$ admits a truth
  assignment.

Let $f$ be an instance  of {\sc $3$-Satisfiability}, consisting of $m$
clauses $C_1, \ldots,  C_m$ on $n$ variables $x_1,  \ldots, x_n$.  Let
us build a graph $G_f$  with two specialized vertices $a,b$, such that
there will be a hole containing both $a,b$ in $G$ if and only if there
exists a truth assignment for $f$.

For each variable $x_i$ ($i=1, \ldots, n$), make a graph $G(x_i)$ with
eight vertices $a_i,  b_i, t_i, f_i, a'_i, b'_i,  t'_i, f'_i,$ and ten
edges  $a_it_i, a_if_i,  b_it_i, b_if_i$  (so that  $\{a_i,  b_i, t_i,
f_i\}$ induces  a hole), $a'_it'_i, a'_if'_i,  b'_it'_i, b'_if'_i$ (so
that  $\{a'_i,  b'_i, t'_i,  f'_i\}$  induces  a  hole) and  $t_if'_i,
t'_if_i$.  See Figure~\ref{fig:gxi}.

\begin{figure}[p]
\begin{center}
\includegraphics{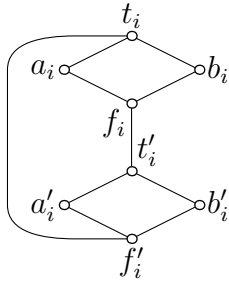}
\end{center}
\caption{Graph $G(x_i)$}\label{fig:gxi}
\end{figure}

\begin{figure}[p]
\begin{center}
\includegraphics{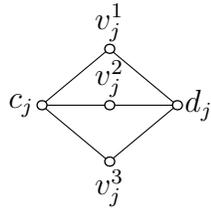}
\end{center}
\caption{Graph $G(C_j)$}\label{fig:gcj}
\end{figure}

\begin{figure}[p]
\begin{center}
\includegraphics{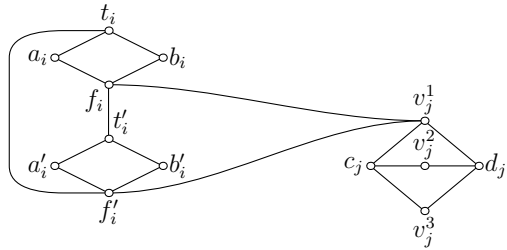}
\end{center}
\caption{The two edges added to $G_f$ in the case $u_j^p=x_i$}\label{fig:gf}
\end{figure}

\begin{figure}[p]
\begin{center}
\includegraphics{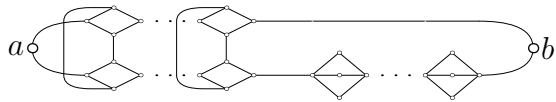}
\end{center}
\caption{Graph $G_f$}\label{fig:gf2}
\end{figure}

For  each  clause  $C_j$   ($j=1,  \ldots,  m$),  with  $C_j=u_j^1\vee
u_j^2\vee u_j^3$, where  each $u_j^p$ ($p=1, 2, 3$)  is a literal from
$\{x_1, \ldots, x_n, \overline{x}_1, \ldots, \overline{x}_n\}$, make a
graph $G(C_j)$ with five vertices  $c_j, d_j, v_j^1, v_j^2, v_j^3$ and
six edges  so that each of $c_j,  d_j$ is adjacent to  each of $v_j^1,
v_j^2,  v_j^3$.   See  Figure~\ref{fig:gcj}.   For  $p=1,  2,  3$,  if
$u_j^p=x_i$  then  add  two  edges  $u_j^pf_i,  u_j^pf'_i$,  while  if
$u_j^p=\overline{x}_i$ then add two edges $u_j^pt_i, u_j^pt'_i$.  See
Figure~\ref{fig:gf}. 

The graph $G_f$ is obtained from the disjoint union of the $G(x_i)$'s
and the $G(C_j)$'s as follows.  For $i=1, \ldots, n-1$, add edges
$b_ia_{i+1}$ and $b'_ia'_{i+1}$.  Add an edge $b'_nc_1$.  For $j=1,
\ldots, m-1$, add an edge $d_jc_{j+1}$.  Introduce the two specialized
vertices $a,b$ and add edges $aa_1, aa'_1$ and $bd_m, bb_n$.  See
Figure~\ref{fig:gf2}.  Clearly the size of $G_f$ is polynomial
(actually linear) in the size $n+m$ of $f$.  Moreover, it is easy to
see that $G_f$ contains no triangle, and that $a,b$ are non-adjacent
and both have degree $2$.

Suppose  that $f$ admits  a truth  assignment $\xi\in\{0,  1\}^n$.  We
build a hole  in $G$ by selecting vertices  as follows.  Select $a,b$.
For  $i=1, \ldots,  n$, select  $a_i, b_i,  a'_i, b'_i$;  moreover, if
$\xi_i=1$ select  $t_i, t'_i$, while if $\xi_i=0$  select $f_i, f'_i$.
For $j=1,  \ldots, m$, since $\xi$  is a truth assignment  for $f$, at
least  one  of the  three  literals  of $C_j$  is  equal  to $1$,  say
$u_j^p=1$  for some  $p\in\{1, 2,  3\}$.  Then  select $c_j,  d_j$ and
$v_j^p$.   Now it  is  a routine  matter  to check  that the  selected
vertices  induce a  cycle $Z$  that contains  $a,b$, and  that  $Z$ is
chordless, so it is a hole.  The  main point is that there is no chord
in $Z$ between some subgraph  $G(C_j)$ and some subgraph $G(x_i)$, for
that  would  be  either  an  edge  $t_iv_j^p$  (or  $t'_iv_j^p$)  with
$u_j^p=x_i$ and  $\xi_i=1$, or, symmetrically, an  edge $f_iv_j^p$ (or
$f'_iv_j^p$) with $u_j^p=\overline{x}_i$ and $\xi_i=0$, in either case
a contradiction to the way the vertices of $Z$ were selected.

Conversely, suppose that $G_f$ admits  a hole $Z$ that contains $a,b$.
Clearly $Z$ contains  $a_1, a'_1$ since these are  the only neighbours
of $a$ in $G_f$.

\begin{claim}\label{clm:zgxi}
For $i=1, \ldots, n$, $Z$ contains exactly six vertices of $G_i$: four
of them are $a_i, a'_i, b_i, b'_i$, and the other two are either $t_i,
t'_i$ or $f_i, f'_i$.
\end{claim}

\begin{proofclaim} 
  First we prove the claim for $i=1$.  Since $a, a_1$ are in $Z$ and
  $a_1$ has only three neighbours $a, t_1, f_1$, exactly one of $t_1,
  f_1$ is in $Z$.  Likewise exactly one of $t'_1, f'_1$ is in $Z$.  If
  $t_1, f'_1$ are in $Z$ then the vertices $a, a_1, a'_1, t_1, f'_1$
  are all in $Z$ and they induce a hole that does not contain $b$, a
  contradiction.  Likewise we do not have both $t'_1, f_1$ in $Z$.
  Therefore, up to symmetry we may assume that $t_1, t'_1$ are in $Z$
  and $f_1, f'_1$ are not.  If a vertex $u_j^p$ of some $G(C_j)$
  ($1\le j\le m$, $1\le p\le 3$) is in $Z$ and is adjacent to $t_1$
  then, since this $u_j^p$ is also adjacent to $t'_1$, we see that the
  vertices $a, a_1, a'_1, t_1, t'_1, u_j^p$ are all in $Z$ and induce
  a hole that does not contain $b$, a contradiction.  Thus the
  neighbour of $t_1$ in $Z\setminus a_1$ is not in any $G(C_j)$ ($1\le
  j\le m$), so that neighbour is $b_1$.  Likewise $b'_1$ is in $Z$.
  So the claim holds for $i=1$.  Since $b_1$ is in $Z$ and exactly one
  of $t_1, f_1$ is in $Z$, and $b_1$ has degree $3$ in $G_f$, we
  obtain that $a_2$ is in $Z$, and similarly $b_2$ is in $Z$.  Now the
  proof of the claim for $i=2$ is essentially the same as for $i=1$,
  and by induction the claim holds up to $i=n$.
\end{proofclaim}

\begin{claim}\label{clm:zgcj}
  For $j=1, \ldots, m$, $Z$ contains $c_j, d_j$ and exactly one of
  $v_j^1, v_j^2, v_j^3$.
\end{claim}

\begin{proofclaim}
  First we prove this claim for $j=1$.  By Claim~\ref{clm:zgxi},
  $b'_n$ is in $Z$ and exactly one of $t'_n, f'_n$ is in $Z$, so
  (since $b'_n$ has degree $3$ in $G_f$) $c_1$ is in $Z$.
  Consequently exactly one of $u_1^1, u_1^2, u_1^3$ is in $Z$, say
  $u_1^1$.  The neighbour of $u_1^1$ in $Z\setminus c_1$ cannot be a
  vertex of some $G(x_i)$ ($1\le i\le n$), for that would be either
  $t_i$ (or $f_i$) and thus, by Claim~\ref{clm:zgxi}, $t'_i$ (or
  $f'_i$) would be a third neighbour of $u_1^1$ in $Z$, a
  contradiction.  Thus the other neighbour of $u_1^1$ in $Z$ is $d_1$,
  and the claim holds for $j=1$.  Since $d_1$ has degree $4$ in $G_f$
  and exactly one of $v_1^1, v_1^2, v_1^3$ is in $Z$, it follows that
  its fourth neighbour $c_2$ is in $Z$.  Now the proof of the claim
  for $j=2$ is the same as for $j=1$, and by induction the claim holds
  up to $j=m$.
\end{proofclaim}

We can now make a Boolean  vector $\xi$ as follows.  For $i=1, \ldots,
n$, if $Z$ contains $t_i, t'_i$ set $\xi_i = 1$; if $Z$ contains $f_i,
f'_i$ set  $\xi_i = 0$.   By~(\ref{clm:zgxi}) this  is consistent.
Consider any  clause $C_j$  ($1\le j\le m$).   By~(\ref{clm:zgcj})
and up to symmetry we may assume  that $v_j^1$ is in $Z$.  If $u_j^1 =
x_i$  for some  $i\in\{1, ..,  n\}$,  then the  construction of  $G_f$
implies that $f_i, f'_i$ are not in $Z$, so $t_i, t'_i$ are in $Z$, so
$\xi_i=1$,  so  clause $C_j$  is  satisfied  by  $x_i$.  If  $u_j^1  =
\overline{x}_i$ for some $i\in\{1,  .., n\}$, then the construction of
$G_f$ implies that  $t_i, t'_i$ are not in $Z$, so  $f_i, f'_i$ are in
$Z$, so  $\xi_i=0$, so clause $C_j$ is  satisfied by $\overline{x}_i$.
Thus $\xi$ is a truth assignment for $f$.  This completes the proof of
the theorem.
\end{proof}

Now we can see how to use the Theorem above to prove the
NP-completeness of a detection problem. 
\begin{theorem}[with Maffray \cite{maffray.t:reco}]
  \label{thm:prismsnpc}
  Problem $\Pi_{B_2}$ (deciding whether a graph contains a prism) is
  NP-complete.
\end{theorem}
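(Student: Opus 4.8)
The plan is to reduce from $\Gamma_{\{K_3\}}$, which is NP-complete by Theorem~\ref{thm:pinpc}. Membership of $\Pi_{B_2}$ in NP is immediate: a realisation of $B_2$ is specified by two vertex-disjoint triangles together with three induced paths joining them, so one checks in polynomial time that a guessed vertex set induces a prism. Hence all the work is in the reduction.

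Given an instance $(G,x,y)$ of $\Gamma_{\{K_3\}}$ --- so $G$ is triangle-free, $xy\notin E(G)$, and $x,y$ both have degree~$2$ --- write $N(x)=\{x_1,x_2\}$ and $N(y)=\{y_1,y_2\}$. I would build $G'$ from $G\sm\{x,y\}$ by adding two new triangles $a_1a_2a_3$ and $b_1b_2b_3$, the four \emph{attaching} edges $a_1x_1,\ a_2x_2,\ b_1y_1,\ b_2y_2$, and a single \emph{linking} edge $a_3b_3$. This $G'$ has size linear in that of $G$. The triangle-free hypothesis is used twice: first, since $x$ is a common neighbour of $x_1,x_2$ we get $x_1x_2\notin E(G)$ (and likewise $y_1y_2\notin E(G)$), so the attaching edges create no new triangle; the linking edge creates none either, since $a_3$ and $b_3$ have disjoint and mutually non-adjacent other-neighbourhoods. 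Consequently $a_1a_2a_3$ and $b_1b_2b_3$ are the \emph{only} two triangles of $G'$.

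For the easy direction, suppose $G$ has a hole through $x$ and $y$. As $\deg x=\deg y=2$, this hole uses both of $x_1,x_2$ and both of $y_1,y_2$, so deleting $x,y$ leaves two vertex-disjoint, mutually anticomplete induced paths of $G\sm\{x,y\}$: one from $x_1$ to $y_1$ and one from $x_2$ to $y_2$ (up to relabelling $y_1,y_2$). Prefixing $a_1,a_2$ and suffixing $b_1,b_2$ turns these into two connecting paths which, with the edge $a_3b_3$ and the triangles $a_1a_2a_3$, $b_1b_2b_3$, form a prism in $G'$; the induced-ness is precisely the anticompleteness of the two paths together with the fact that each new vertex has exactly its prescribed neighbours.

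The converse is where the real argument lies, and I expect it to be the main obstacle, though it is bookkeeping rather than ideas. Suppose $G'$ contains a prism; its two triangles must be $\{a_1,a_2,a_3\}$ and $\{b_1,b_2,b_3\}$, as these are the only triangles of $G'$. I would then trace the three connecting paths by following forced degrees: the only non-triangle neighbour of $a_3$ is $b_3$, so one connecting path is exactly the edge $a_3b_3$, and the other two join $\{a_1,a_2\}$ to $\{b_1,b_2\}$. Since the unique non-triangle neighbour of $a_1,a_2,b_1,b_2$ is respectively $x_1,x_2,y_1,y_2$, each of these two paths enters $G\sm\{x,y\}$ through some $x_i$ and leaves through some $y_j$, reaching different $b$'s; their interiors are vertex-disjoint and mutually anticomplete. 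This yields two disjoint, mutually anticomplete induced paths of $G\sm\{x,y\}$ from $x_1,x_2$ to $y_1,y_2$, and re-inserting $x,y$ (whose only neighbours are $x_1,x_2$ and $y_1,y_2$) closes them into a hole of $G$ through $x$ and $y$. The delicate point to verify is simply that no prism can ``cheat'' --- by routing a connecting path back through the gadget or exploiting a spurious triangle --- and this is fully controlled by $G$ being triangle-free and by every added vertex having a tightly prescribed neighbourhood.
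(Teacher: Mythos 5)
Your proposal is correct and takes essentially the same approach as the paper: both reduce from $\Gamma_{\{K_3\}}$ (Theorem~\ref{thm:pinpc}) by attaching two new triangles at the neighbourhoods of the degree-2 vertices and joining the triangles by a single edge, so that the only triangles of $G'$ are the two new ones and $G'$ contains a prism if and only if $G$ has a hole through the two specified vertices. The only difference is cosmetic: the paper keeps two buffer vertices between each triangle and the old neighbourhood ($a_4, a_5$ and $b_4, b_5$) and links the triangles via $a_1b_1$, whereas you attach the triangles directly to $x_1,x_2,y_1,y_2$ and link via $a_3b_3$; both gadgets are sound.
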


\begin{proof}
  Starting from from an instance ($G$, a, b) of $\Gamma_{K_3}$, build
  a graph $G'$ as follows (see Figure~\ref{fig:reco.8}): replace
  vertex $a$ by five vertices $a_1, a_2, a_3, a_4, a_5$ with five
  edges $a_1 a_2$, $a_1 a_3$, $a_2 a_3$, $a_2 a_4$, $a_3 a_5$, and put
  edges $a_4 a'$ and $a_5 a''$.  Do the same with $b$, with five
  vertices named $b_1, \ldots, b_5$ instead of $a_1, \ldots, a_5$ and
  with the analogous edges.  Add an edge $a_1 b_1$.  Since $G$ has no
  triangle, $G'$ has exactly two triangles $\{a_1, a_2, a_3\}$ and
  $\{b_1, b_2, b_3\}$.  Moreover we see that $G'$ contains a prism if
  and only if $G$ contains a hole that contains $a$ and $b$.  So every
  instance of $\Gamma_{\{K_3\}}$ can be reduced polynomially to an
  instance of $\Pi_{B_2}$, which proves that $\Pi_{B_2}$ is
  NP-complete.
\end{proof}

\begin{figure}[!htb]
\begin{center}
\includegraphics{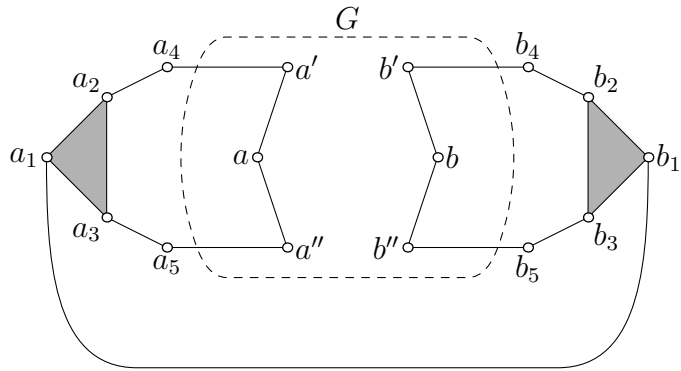}
\end{center}
\caption{Detecting a prism: $G$ and $G'$}
\label{fig:reco.8}
\end{figure}

\begin{exercise}
  Deciding whether a graph contains the line-graph of a subdivision of
  $K_4$ is NP-complete.
\end{exercise}

Adapting the construction of Bienstock, the following can be proved
(it is not so easy to eliminate simultaneously triangles and square,
the proof is very tedious so we omit it).  We denote by $I_l$ ($l\geq
1$) the tree on $l+5$ vertices obtained by taking a path of length $l$
with ends $a, b$, and adding four vertices, two of them adjacent to
$a$, the other two to $b$; see Figure~\ref{fig:I1}.

\begin{figure}
  \center
  \includegraphics{smallgraphs.7}

  \caption{$I_1$\label{fig:I1}}  
\end{figure}

\begin{theorem}[with L\'ev\^eque and Maffray
  \cite{leveque.lmt:detect}]
  \label{th:bienstock}
  Let $k\geq 5$ be an integer.  Then $\Gamma_{\{C_3, \dots, C_k, K_{1,
      6}\}}$ and $\Gamma_{\{I_1, \dots, I_k, C_5, \dots, C_k, K_{1,
      4}\}}$ are NP-complete.
\end{theorem}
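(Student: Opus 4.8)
The plan is to reduce from a bounded-occurrence version of {\sc $3$-Satisfiability}, say the (NP-complete) restriction in which every literal occurs at most twice, by mimicking and then \emph{stretching} the construction of $G_f$ used in Theorem~\ref{thm:pinpc}. Recall that $G_f$ is triangle-free and that $a,b$ are non-adjacent of degree~$2$. Under the bounded-occurrence hypothesis one first checks that moreover $\Delta(G_f)\le 5$: each of $t_i,f_i,t'_i,f'_i$ has three ``internal'' neighbours plus one edge per occurrence of the relevant literal ($\le 2$), while each $v_j^p,c_j,d_j,a_i,b_i$ has degree at most~$4$. Since $G_f$ is triangle-free, every neighbourhood is a stable set, so a vertex of degree $d$ carries an induced $K_{1,d}$; hence ``$\Delta\le 5$'' is \emph{exactly} ``no induced $K_{1,6}$''. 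Thus the $K_{1,6}$ requirement is free, and the entire difficulty is to simultaneously forbid the short cycles $C_3,\dots,C_k$ while keeping the reduction correct.

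The naive idea of uniformly subdividing every edge fails, and it is important to see why. Subdivision multiplies all cycle lengths, so it does raise the girth above $k$; but it also replaces every \emph{chord} by a parallel path, so that a non-induced cycle of $G_f$ becomes an \emph{induced} cycle (a hole) of the subdivided graph. As the chords $t_iu_j^p,t'_iu_j^p$ and the crossing edges $t_if'_i,t'_if_i$ are precisely what forbids the ``cheating'' holes, subdividing them would produce a hole through $a,b$ unconditionally. So the short cycles must be eliminated \emph{without} dissolving the blocking role of these edges; in particular, since girth $>k$ forbids \emph{all} cycles of length $\le k$, no blocking may be done by creating a short cycle.

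I would therefore replace each gadget by a stretched analogue. The variable switch $\{a_i,t_i,b_i,f_i\}$ becomes two internally disjoint induced $s_i$--$e_i$ paths $P_i^{T},P_i^{F}$ (true/false), each of length $>k$, so that the only cycle inside the gadget has length $>k$; these are chained in series, $e_i{=}s_{i+1}$. The clause gadgets are lengthened similarly, and the literal/clause incidences are realised by connectors attached at chosen interior vertices of the arcs $P_i^{T},P_i^{F}$, arranged so that a hole can thread a clause gadget only when some satisfied literal makes the corresponding arc available. With every arc and connector longer than $k$ the whole graph has girth $>k$, hence contains none of $C_3,\dots,C_k$, and the bounded-occurrence hypothesis keeps every vertex of degree at most~$5$, hence no induced $K_{1,6}$. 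One then argues, as in Theorem~\ref{thm:pinpc}, that a hole through the two degree-$2$ terminals $x,y$ selects exactly one arc per variable gadget (a truth assignment) and one literal per clause gadget, and conversely that a satisfying assignment yields such a hole.

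The hard part will be the verification that \emph{no spurious hole} survives the stretching: one must show that every induced cycle through $x,y$ traverses the gadgets ``straight'', using one arc per switch, and that every inconsistent routing is destroyed by a chord that spans only long arcs (so it adds no cycle of length $\le k$) yet still breaks inducedness of the offending cycle and opens no new hole through $x,y$. Balancing these two requirements at every incidence is exactly the long case-by-case argument the text calls ``very tedious''. For the second statement, $\Gamma_{\{I_1,\dots,I_k,C_5,\dots,C_k,K_{1,4}\}}$, I would reuse the same stretched construction, but now $C_3,C_4$ are \emph{allowed}: I would pad the degree-$\ge 3$ junctions along the long paths with small triangles and squares so that no vertex sees four pairwise non-adjacent neighbours (killing $K_{1,4}$) and so that no I-beam $I_l$ with $l\le k$ occurs as an induced subgraph, while still keeping every connector longer than $k$ to forbid $C_5,\dots,C_k$. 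Checking that this local padding removes all induced $I_l$ and $K_{1,4}$ without ever creating a spurious hole through $x,y$ is, once more, the tedious core of the proof.
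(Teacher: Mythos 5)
Your opening observations are sound, and they do match the strategy the paper declares: the text states only that the theorem follows by ``adapting the construction of Bienstock,'' explicitly omits the argument as ``very tedious,'' and defers to~\cite{leveque.lmt:detect}. You are right that in a triangle-free graph forbidding an induced $K_{1,6}$ is exactly the degree bound $\Delta\leq 5$, right that a bounded-occurrence version of {\sc $3$-Satisfiability} is the correct source problem for keeping degrees small, and right that uniformly subdividing $G_f$ fails because subdivision turns the blocking chords into paths, so that every (chorded) cycle of $G_f$ through $a,b$ becomes a hole of the subdivided graph. But everything after these observations --- the actual gadgets and the verification --- is precisely what you defer, and that deferred part is the entire content of the theorem: what you have written is a research plan, not a proof.

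Moreover, the fragments of construction you do commit to would not work as stated. In Theorem~\ref{thm:pinpc} the spurious routings are killed by two devices acting together: each variable has \emph{two} tied copies (the crossing edges $t_if'_i$, $t'_if_i$) and each clause vertex is attached to \emph{both} copies (edges $v_j^pf_i$ and $v_j^pf'_i$), so that any cheating hole acquires a chord or gives some vertex three neighbours on the hole. Your variable gadget --- a single pair of long parallel arcs with clause connectors attached at single interior vertices --- has neither device: nothing prevents a hole from entering a variable arc through one connector and leaving through another, or from threading a clause branch whose literal is false, because with all attachments at single vertices and all connectors longer than $k$ there are no chords left to forbid anything. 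Writing that ``every inconsistent routing is destroyed by a chord that spans only long arcs'' assumes exactly the object that must be constructed: every blocking adjacency closes a cycle, and arranging \emph{all} such cycles to have length more than $k$ while preserving their blocking role is the tedious core the paper alludes to. The tension is concrete: {\sc $3$-Satisfiability} in which every literal occurs at most once is polynomial, so some literal must occur twice, and then two clause vertices attached to the same contact pair (as in Bienstock's scheme, whose correctness argument you invoke) already form an induced $C_4$; avoiding this forces one tied pair of copies per occurrence, chained consistently, which your gadget does not provide. For the second statement the situation is the same: padding junctions with triangles and squares is neither shown to destroy every induced $I_l$ (any two branch vertices with independent pendant-type neighbours joined by a short induced path yield one) nor shown to avoid creating a $C_5,\dots,C_k$; the useful structural fact --- every $I_l$ and every $K_{1,4}$ contains an induced claw, so it suffices to make the construction claw-free, for instance by a line-graph-style thickening --- is the kind of concrete mechanism your padding gestures at but never establishes.
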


The theorem above will be proved in a sense best possible in
Section~\ref{sec:3-in-tree} where a dichotomy criterion will be given
for the problems $\Gamma_{\{H\}}$ where $H$ is a connected graph, see
Theorem~\ref{th:class}.  It has several easy corollaries as the
NP-completeness of $\Pi_{B_5}, \Pi_{B_7}$ mentioned above (see
Theorem~\ref{th:antena} below for a proof).  Also, it can be used to
prove the NP-completeness of the detections of structures analogous to
pyramids, prisms and theta but with four paths instead of three (this
NP-completeness was in fact obtained jointly with Maffray and Vu\v
skovi\'c without Theorem~\ref{th:bienstock}
in~\cite{maffray.t.v:3pcsquare}).  In Section~\ref{sec:oneChord}, an
$O(nm)$ algorithm will be given to detect cycles with a unique chord.
The Theorem above will then be used to prove the NP-completeness of
quite similar detection problems.

The following was proved by David Lin during his Master Thesis, a very
nice and surprising result.  Since publishing NP-completeness results
alone is no more fashionable, we invited Lin in our
paper~\cite{leveque.lmt:detect}\footnote{As a result of this strange
  kind of joint work, Lin is the only coauthor I never met.}.  The
Bienstock-like construction that Lin uses is so modified that we can
say that Lin found a new construction.

\begin{theorem}[Lin \cite{leveque.lmt:detect}]
  \label{th:delta3}
  $\Gamma^3$ is NP-complete.
\end{theorem}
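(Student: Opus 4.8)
The plan is to give a polynomial reduction from \textsc{3-Satisfiability}, in the spirit of Bienstock's construction and of the proof of Theorem~\ref{thm:pinpc}, but redesigning every gadget so that all degrees stay at most~$3$. The global shape is unchanged: from a formula $f$ I build a graph $G_f$ with two distinguished non-adjacent degree-$2$ vertices $x,y$, so that a hole through $x$ and $y$ exists if and only if $f$ is satisfiable. Such a hole is an induced cycle, forced through $x$ and $y$ since both have degree~$2$, and the aim is that its two possible ways of traversing each variable gadget encode the truth value of that variable.

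The structural fact I would exploit throughout is an exclusion principle peculiar to the cubic setting: if a vertex $v$ of degree~$3$ with neighbours $a,b,c$ lies on a hole, then exactly two of $a,b,c$ are its neighbours on the hole, and the third one cannot belong to the hole at all, for otherwise the corresponding edge would be a chord. Thus every degree-$3$ vertex the hole visits simultaneously makes a binary choice (which two of its neighbours to use) and forbids a vertex from the hole. This is precisely the local switch that lets one encode literals: a variable gadget would be a short cycle of degree-$3$ vertices admitting exactly two induced traversals, a \emph{true} one and a \emph{false} one, with a pendant output vertex that is pushed off the hole in one case and kept available in the other.

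First I would build the variable gadgets and chain them, as in the proof of Theorem~\ref{thm:pinpc}, by identifying the exit of one with the entry of the next along the mandatory part of the hole, using only degree-$2$ and degree-$3$ junctions. Next I would design a clause gadget that the hole can cross only if at least one of its three literals is true, again realised purely through the exclusion principle so that no vertex exceeds degree~$3$. The delicate new ingredient, absent from the earlier constructions, is \emph{fan-out}: a single variable may occur in many clauses, yet in a cubic graph no vertex can be wired to arbitrarily many clause gadgets. I would therefore introduce a copy gadget, a binary tree of degree-$3$ vertices whose root carries the truth value fixed by the variable gadget and each of whose leaves presents a faithful copy of that value to one clause occurrence, the tree being arranged so that the hole's passage through it is forced to agree on every leaf.

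The main obstacle, and what makes this a genuinely new construction rather than a mere degree-reduction of the old one, will be controlling \emph{spurious} holes: with the degree dropped to~$3$ there is very little room, so two failures must be avoided at once. The encoding holes must stay chordless, with no accidental chord between a clause gadget and a variable gadget (exactly the point checked at the end of the proof of Theorem~\ref{thm:pinpc}); and, conversely, the fan-out trees and wires must not create short-circuit holes that pass through $x$ and $y$ while ignoring the truth-value constraints. I would handle this by subdividing the connecting edges enough that any hole is forced to traverse the gadgets through their designated ports, together with a case analysis à la Claims~\ref{clm:zgxi} and~\ref{clm:zgcj} showing that any hole through $x,y$ reads off a genuine satisfying assignment. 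Conversely, a satisfying assignment yields a hole by selecting the intended traversal in each gadget, chordlessness being checked as before. Since $|V(G_f)|$ is linear in $n+m$, the reduction is polynomial, and membership in NP is clear, so $\Gamma^3$ is NP-complete.
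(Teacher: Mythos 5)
You should know at the outset that the paper itself does not prove Theorem~\ref{th:delta3}: it only states the result, attributing it to Lin's construction in \cite{leveque.lmt:detect}, and explicitly describes that construction as a Bienstock-like one ``so modified'' that it counts as new. So there is no in-paper argument to compare against line by line. Judged on its own, your proposal is in the right family (a reduction from {\sc 3-Satisfiability} in the style of the proof of Theorem~\ref{thm:pinpc}, with all degrees forced down to~$3$), and your ``exclusion principle'' for degree-$3$ vertices on a hole is correct. But as written it is a plan, not a proof: every object that carries the mathematical content is asserted to exist rather than constructed.

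Three gaps, in increasing order of seriousness. First, in this kind of NP-completeness argument the gadgets \emph{are} the proof; you never exhibit the variable gadget, the clause gadget, or the copy gadget, you only list properties they should have. Second, the clause gadget is exactly where the degree bound bites: in the proof of Theorem~\ref{thm:pinpc} the OR-behaviour comes from $c_j, d_j$ being adjacent to all three literal vertices, and from each literal vertex $v_j^p$ being adjacent to \emph{both} copies $f_i, f'_i$ (resp.\ $t_i, t'_i$); those vertices have degree at least~$4$, and obtaining the same forcing behaviour with maximum degree~$3$ is precisely the difficulty that made Lin's result ``nice and surprising'' --- saying it can be ``realised purely through the exclusion principle'' does not discharge it. Third, and most seriously, your fan-out mechanism is structurally doubtful: a hole meets any induced subgraph in a disjoint union of induced paths, so a hole cannot ``branch'' through a binary tree and visit, or even constrain, all of its leaves; a single root-to-leaf passage leaves the remaining leaves untouched, and nothing forces agreement there. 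Recall that in these constructions a truth value is transmitted not by the hole passing through a vertex but by vertices being forced \emph{off} the hole, so consistency across many occurrences of a variable needs a different mechanism (for instance a chain of duplicated variable gadgets, one per occurrence, each traversed by the hole and forced consistent with the next); designing such a mechanism and then verifying the analogues of Claims~\ref{clm:zgxi} and~\ref{clm:zgcj} --- in both directions, including the absence of spurious holes --- is where essentially all of the work of the theorem lies, and none of it is present in the proposal.
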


From Theorem~\ref{th:delta3} it is easy to see that detecting whether
a graph contains a subdivision of $K_5$ is NP-complete,
see~\cite{leveque.lmt:detect}.  This result is of interest because a
major breakthrough of Robertson and Seymour in their Graph Minor
Project is that detecting a subdivision of any graph as a possibly
non-induced subgraph is a polynomial problem.  So, the ``induced''
constraint really matters and causes trouble.

\section{A tool for polynomiality (1): shortest path detectors}
\label{sec:shortPD}

The \emph{shortest path detector}\index{shortest path detector} is a method designed by Chudnovsky
and Seymour~\cite{chudnovsky.c.l.s.v:reco} to decide whether a graph
contains a pyramid.  The motivation for detecting pyramids is that
every pyramid contains an odd holes.  So, when trying to recognize
Berge graphs, it is interesting to first look for pyramids.

\begin{theorem}[Chudnovsky and Seymour \cite{chudnovsky.c.l.s.v:reco}]
  \label{thm:pyramid}
  There exists an $O(n^9)$ algorithm for Problem $\Pi_{B_1}$ (deciding
  whether a graph contains a pyramid).
\end{theorem}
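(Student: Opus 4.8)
The plan is to use the \emph{shortest-path detector} paradigm: fix, by exhaustive enumeration, a bounded ``frame'' of the hypothetical pyramid consisting of its easily-describable vertices, and then reconstruct the three long paths by shortest-path computations. Recall that a pyramid consists of an apex $a$, a triangle $\{b_1,b_2,b_3\}$, and three paths $P_1,P_2,P_3$, with $P_i$ from $a$ to $b_i$, pairwise disjoint except at $a$, such that for $i\neq j$ the only edges between $P_i\sm a$ and $P_j\sm a$ are the triangle edges $b_ib_j$, and with at most one $P_i$ of length~$1$. First I would enumerate the frame: the apex $a$, the triangle $b_1,b_2,b_3$, and for each $i$ the neighbour $a_i$ of $a$ on $P_i$ (allowing $a_i=b_i$ when $P_i$ has length~$1$). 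This is seven vertices, hence $O(n^7)$ frames, and for each frame I would run three shortest-path searches costing $O(n^2)$ each; the product $O(n^7)\cdot O(n^2)=O(n^9)$ accounts for the announced running time.

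For a fixed frame, the task is to find, for each $i$, an induced path $Q_i$ from $a_i$ to $b_i$ so that the union of the frame and the $Q_i$ is a pyramid. I would build for each $i$ a graph $G_i$ obtained from $G$ by deleting the apex $a$, the other two apex-neighbours $a_j$ and triangle vertices $b_j$ ($j\neq i$), together with every vertex outside the frame having a neighbour in $N(a)\sm\{a_i\}$ or in $\{b_j : j\neq i\}$; these are exactly the vertices a legitimate interior vertex of $P_i$ is forbidden to touch. In $G_i$ I would then compute a shortest induced path $Q_i$ from $a_i$ to $b_i$, and finally test in $O(n^2)$ whether the three paths so produced, together with $a$ and the triangle, form a pyramid. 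If they do for some frame, output \textbf{yes}, otherwise \textbf{no}. Soundness is immediate from the final test. The substance is \emph{completeness}: I must show that whenever $G$ contains a pyramid, some enumerated frame admits a choice of shortest paths passing the test. The standard device is to take a vertex-minimal pyramid in $G$, read off its frame, and argue that replacing each $P_i$ by the shortest path $Q_i$ computed above still yields a pyramid.

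The hard part will be proving that the three independently computed shortest paths $Q_1,Q_2,Q_3$ are pairwise \emph{anticomplete} off the triangle, i.e.\ that no edge other than $b_ib_j$ joins $Q_i$ to $Q_j$; controlling adjacencies \emph{within} each path, and between each path and the frame, is already handled by the deletions defining $G_i$ and by the fact that a shortest path is induced. The key leverage is the rigidity of shortest paths: a vertex outside a shortest induced path has only a tightly restricted neighbourhood on it (a short block of consecutive vertices), so a putative chord from some $v\in Q_j$ into $Q_i$ can be used either to shortcut $Q_i$, contradicting its minimality, or to exhibit a strictly smaller pyramid with the same apex and triangle, contradicting the minimal choice. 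Turning this intuition into a clean case analysis --- choosing the deletion sets so that every offending adjacency produces one of these two contradictions, while never deleting a vertex that the minimal pyramid genuinely uses --- is precisely the delicate point of the Chudnovsky--Seymour argument, and is where essentially all of the work lies.
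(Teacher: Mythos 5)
First, a point of calibration: this document does not prove Theorem~\ref{thm:pyramid} at all; it quotes it from \cite{chudnovsky.c.l.s.v:reco} and only \emph{illustrates} the shortest-path-detector method on a strictly easier problem, namely detecting a prism in a graph already known to be pyramid-free (Lemma~\ref{l:kpri}). Measured against the cited proof, your proposal has a genuine gap, and you name it yourself: the pairwise anticompleteness of the three independently reconstructed paths is ``where essentially all of the work lies'', and you do not carry it out. This is not a routine verification that ``minimality plus shortest-path rigidity'' discharges. Look at how the analogous step is closed in Lemma~\ref{l:kpri}: an interfering vertex leads to three configurations, and two of them are \emph{pyramids} --- a contradiction only because the graph there is assumed pyramid-free. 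In pyramid detection proper there is no excluded structure to absorb the bad cases: an edge $xy$ with $x$ in the interior of $Q_i$ and $y$ in the interior of $Q_j$ gives neither a shortcut of either path (the shortcutting vertex need not even belong to the deleted graph $G_i$ in which $Q_i$ is shortest) nor, in any evident way, a smaller pyramid. This is precisely why the algorithm of \cite{chudnovsky.c.l.s.v:reco} enumerates more than your seven-vertex frame: for each $i$ it also guesses a \emph{middle} vertex $m_i$ of $P_i$, reconstructs $P_i$ as the union of two shortest half-paths anchored at $m_i$ (with different forbidden sets for the two halves), and then proves a delicate compatibility lemma stating that, for a pyramid chosen with the minimum total number of vertices and with the midpoints correctly guessed, the reconstructed halves cannot interfere pairwise. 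Without the midpoint idea, or some substitute for it, what you have is a plausible outline of the method, not a proof of the theorem.

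There is also a concrete error in the construction itself: your deletion rule for $G_i$ removes ``every vertex outside the frame having a neighbour in $N(a)\setminus\{a_i\}$''. That is the \emph{second} neighbourhood of $a$, and it may contain legitimate interior vertices of $P_i$: such a vertex is forbidden to be adjacent to $a$, $a_j$, $b_j$ ($j\neq i$), but it is perfectly free to be adjacent to off-pyramid neighbours of $a$. With your rule, the true path $P_i$ of a minimal pyramid need not survive in $G_i$, so even the easy half of completeness (that $G_i$ contains \emph{some} candidate path) fails. What is wanted is to delete $\bigl(N[a]\cup N(a_j)\cup N(a_k)\cup N[b_j]\cup N[b_k]\bigr)\setminus\{a_i,b_i\}$, where $\{i,j,k\}=\{1,2,3\}$ --- compare the prism routine following Lemma~\ref{l:kpri}, which deletes $(N[a_{i+1}]\cup N[a_{i+2}]\cup N[b_{i+1}]\cup N[b_{i+2}])\setminus\{a_i,b_i\}$. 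Your final $O(n^2)$ test keeps the algorithm sound, so this slip only produces false negatives; but completeness is exactly the question at issue.
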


To explain the method, we give here an algorithm that detects prisms
in graphs with no pyramids (recall that in general graphs, the problem
is NP-complete).  This algorithm is very simple but is not published
because a faster one exists, see~\cite{maffray.t:reco}.  A shortest
path detector always relies on a Lemma stating in the smallest
substructure of the kind that we are looking for, a path linking two
particular vertices of the substructure can be replaced by any shortest
path.

\begin{lemma}
  \label{l:kpri}
  Let $G$ be a graph with no pyramid.  Let $K$ be a smallest prism in
  $G$.  Suppose that $K$ is a prism, formed by paths $P_1, P_2, P_3$,
  with triangles $\{a_1, a_2, a_3\}$ and $\{b_1, b_2, b_3\}$, so that,
  for $i=1, 2, 3$, path $P_i$ is from $a_i$ to $b_i$.  Then:
  
  If $R_i$ is any shortest path from $a_i$ to $b_i$ whose interior
  vertices are not adjacent to $a_{i+1}$, $a_{i+2}$, $b_{i+1}$ or
  $b_{i+2}$, then $R_{i}, P_{i+1}, P_{i+2}$ form a prism on $|V(K)|$
  vertices in $G$, with triangles $\{a_1, a_2, a_3\}$ and $\{b_1, b_2,
  b_3\}$ (the addition of subscripts is taken modulo 3).
\end{lemma}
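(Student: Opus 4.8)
The plan is to fix $i=1$ (the general case is identical up to relabelling) and to prove the two assertions separately: that $R_1,P_2,P_3$ induce a prism, and that it has $|V(K)|$ vertices. The count is the easy part and I would dispatch it first, \emph{assuming} the prism claim. Indeed, such a prism would have $|V(R_1)|+|V(P_2)|+|V(P_3)|$ vertices, so minimality of $K$ gives $|V(R_1)|\ge |V(P_1)|$; conversely $P_1$ itself is, inside $K$, a path from $a_1$ to $b_1$ whose interior misses $a_2,a_3,b_2,b_3$, hence a competitor for $R_1$, so $|V(R_1)|\le |V(P_1)|$. Thus $|V(R_1)|=|V(P_1)|$ and the new prism has exactly $|V(K)|$ vertices. (If $P_1$ has length $1$ then $R_1=P_1$ and everything is trivial, so I assume the paths have length $\ge 2$ where convenient.)

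For the prism claim I would first package $P_2$ and $P_3$ into a single hole. Because $K$ is a prism, $P_2$ and $P_3$ are induced, disjoint, and joined only by the triangle edges $a_2a_3$ and $b_2b_3$; hence $\Lambda = a_2\,P_2\,b_2\,b_3\,P_3\,a_3\,a_2$ is a hole, $a_1$ has exactly the two consecutive neighbours $a_2,a_3$ on $\Lambda$, and $b_1$ has exactly $b_2,b_3$. The whole content of the lemma then reduces to the single \emph{attachment claim}: no interior vertex of $R_1$ lies on $\Lambda$ or has a neighbour on $\Lambda$. Granting this, the interior of $R_1$ is disjoint from and anticomplete to $V(P_2)\cup V(P_3)$, the only edges among $R_1,P_2,P_3$ are the six triangle edges, and $R_1\cup P_2\cup P_3$ is an induced prism, as required.

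To prove the attachment claim I would argue by contradiction, taking $r_p$ to be the \emph{first} interior vertex of $R_1$ (counting from $a_1$) that meets $\Lambda$, so that the subpath $A=a_1\cdots r_p$ has its interior disjoint from and anticomplete to $\Lambda$. The constraint on $R_1$ already forbids $r_p$ from touching $a_2,a_3,b_2,b_3$, so $r_p$ only meets the interiors of $P_2,P_3$. The key observation is that in almost every configuration one can read off a \emph{pyramid} with base triangle $\{a_1,a_2,a_3\}$, contradicting the hypothesis. In the cleanest case, where $r_p$ has a unique neighbour $w$ on $\Lambda$, say $w\in P_2$, the three paths $a_1\,A\,r_p\,w$, $\ a_2\,P_2\,w$ and $a_3\,P_3\,b_3\,b_2\,P_2\,w$ meet only at the apex $w$ and carry no extra edges (this uses that $A$ is anticomplete to $\Lambda$, that $r_p$ sees only $w$, and that $P_2,P_3$ are joined only at the triangles), so they form a pyramid. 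If instead $r_p$ sees both $P_2$ and $P_3$, or lies on $\Lambda$, the apex can be taken to be $r_p$ itself, with paths $A$, $\ a_2\,P_2\,w_2\,r_p$ and $a_3\,P_3\,w_3\,r_p$, choosing $w_2,w_3$ closest to $a_2,a_3$. The one situation that does not yield a pyramid is when $r_p$ sees two \emph{adjacent} vertices $w,w'$ of $P_2$: there $\{r_p,w,w'\}$ is a triangle and $A$, $\ a_2\,P_2\,w$, $\ a_3\,P_3\,b_3\,b_2\,P_2\,w'$ form a \emph{prism} on at most $|V(K)|-1$ vertices (as $r_p$ is interior, $A$ has strictly fewer vertices than $P_1$), contradicting minimality of $K$ instead.

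The main obstacle is precisely this case analysis on how $r_p$ attaches to $\Lambda$: one must choose the correct apex (either the attachment point, or $r_p$) and the correct extremal neighbours so that the three constructed paths are pairwise disjoint, induced, and chordless, and one must separate the pyramid-producing cases from the single prism-producing case resolved by minimality. All the verifications are routine once $r_p$ and the extremal neighbours are fixed, and genuineness of the pyramid (at least two paths of length $\ge 2$) is immediate in each case. Once the attachment claim is established, the prism claim and the vertex count follow exactly as above.
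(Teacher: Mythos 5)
Your skeleton matches the paper's proof — first attachment vertex $r_p$, extremal neighbours, a pyramid-or-smaller-prism dichotomy, and the vertex count via shortest path plus minimality of $K$ — but the case analysis, which you yourself identify as the entire content of the lemma, is not exhaustive, and the prism case is wrongly specified. Concretely, consider the configuration where $r_p\notin\Lambda$, all neighbours of $r_p$ on $\Lambda$ lie in the interior of $P_2$, there are at least two of them, and the two extremal ones (closest to $a_2$, respectively closest to $b_2$, along $P_2$) are non-adjacent. This is covered by none of your three cases: not (a) since the neighbour is not unique, not (b) since $r_p$ neither sees $P_3$ nor lies on $\Lambda$, and not (c) since no two of the neighbours are adjacent. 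The construction needed here is a pyramid with apex $r_p$ whose third path wraps around through $b_2b_3$, namely $r_p \tp v \tp P_2 \tp b_2 \tp b_3 \tp P_3 \tp a_3$ with $v$ the neighbour closest to $b_2$ — a shape none of your recipes ever produces with apex $r_p$. Two further defects of the same origin: in case (b), when $r_p$ lies on the interior of $P_2$ it has no neighbour on $P_3$ at all (the prism is induced), so your path $a_3\tp P_3\tp w_3\tp r_p$ does not exist; and in case (c) the condition that $r_p$ sees two adjacent vertices of $P_2$ is neither necessary nor sufficient for the prism outcome — if $r_p$ has a further neighbour beyond $w'$ on $P_2$, your claimed prism carries an extra edge from $r_p$ into its third path, while if the two extremal neighbours are non-adjacent a pyramid still exists even though some non-extremal pair is adjacent.

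The paper avoids all of this with one uniform device that your split by "which of $P_2,P_3$ is hit, and where the apex sits" loses: it works with the single induced path $Q = a'_2 \tp P_2 \tp b_2 \tp b_3 \tp P_3 \tp a'_3$ (where $a'_i$ is the neighbour of $a_i$ on $P_i$) and takes $y$, $z$ to be the neighbours of $x=r_p$ on $Q$ closest to $a'_2$ and to $a'_3$ respectively. These are well defined however the neighbours of $x$ are distributed (including when $x$ lies on $Q$, since $x$ is not adjacent to $b_2,b_3$ and so its $Q$-neighbours are interior vertices of $P_2$ or $P_3$), and the trichotomy is exhaustive: $y=z$ gives a pyramid with apex $y$; $y\neq z$ with $yz\notin E(G)$ gives a pyramid with apex $x$ — this is exactly where your missing configuration lands, the path through $z$ wrapping around to $a_3$; and $yz\in E(G)$ gives a prism on at most $|V(K)|-1$ vertices, contradicting minimality. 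Extremality of $y$ and $z$ automatically keeps every other neighbour of $x$ off the constructed paths. Recasting your analysis in terms of these two extremal neighbours on $Q$, rather than on which of the two paths they lie, repairs the argument.
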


\begin{proof}
  Suppose that the lemma fails for say $i=1$.  So, some interior
  vertex of $R$ has neighbors in the interior of $P_2$ or $P_3$.  Let
  $x$ be such a vertex, closest to $a_1$ along $R$.  Let $a'_2$
  (resp.\ $a'_3$) be the neighbor of $a_2$ (resp.\ $a_3$) along $P_2$
  (resp.\ $P_3$).  Let $Q = a'_2 \tp P_2 \tp b_2 \tp b_3 \tp P_3 \tp
  a'_3$.  Let $y$ (resp.\ $z$) be the neighbor of $x$ closest to
  $a'_2$ (resp.\ $a'_3$) along $Q$.

  If $y=z$ then $y\tp x \tp R \tp a_1$, $y \tp Q \tp a'_2 \tp a_2$ and
  $y \tp Q \tp a'_3 \tp a_3$ form a pyramid, a contradiction.  If
  $y\neq z$ and $yz\notin E(G)$ then $x \tp R \tp a_1$, $x\tp y \tp Q
  \tp a'_2 \tp a_2$ and $x \tp z \tp Q \tp a'_3 \tp a_3$ form a
  pyramid, a contradiction.  If $yz\in E(G)$ then $x \tp R \tp a_1$, $y \tp Q
  \tp a'_2 \tp a_2$ and $z \tp Q \tp a'_3 \tp a_3$ form a prism on
  less vertices  than $K$, a contradiction. 
\end{proof}

Now detecting a prism in a graph with no pyramid can be performed as
follows.  For all 6-tuples $(a_1, a_2, a_3, b_1, b_2, b_3)$ compute
three shortest paths $R_i$ in $G\sm ((N[a_{i+1}] \cup N[a_{i+2}] \cup
N[b_{i+1}] \cup N[b_{i+2}]) \sm \{a_i, b_i\})$ from $a_i$ to $b_i$.
Check whether $R_1, R_2, R_3$ form a prism, and if so output it.  If
no triple of paths forms a prism, output that the graph contains no
prism.  If the algorithm outputs a prism, this is obviously a correct
answer: the graph contains a prism.  Suppose conversely that the graph
contains a prism.  Then it contains a smallest prism with triangles
$\{a_1, a_2, a_3\}$ and $\{b_1, b_2, b_3\}$.  At some step, the
algorithm will check this 6-tuple (unless a prism is discovered before,
but then the correctness is proved anyway).  By three applications of
Lemma~\ref{l:kpri}, we see that the three paths $R_1, R_2, R_3$ form a
prism that is output.  All this take time $O(n^8)$.

The method is very beautiful and simple; detecting pyramids in
$O(n^9)$ is more involved.  With several tricks, the detection of
prisms in graphs with no pyramids can be performed in $O(n^5)$, see~\cite{maffray.t:reco}.  Shortest path detectors solve several
other problems for graphs with no odd holes, see~\cite{maffray.t:reco}, such as the detection of line-graphs of
bipartite subdivisions of $K_4$ in $O(n^{18})$, even prisms in
$O(n^9)$ and odd prisms in $O(n^{18})$ (where a prism is \emph{even}
(resp.\ \emph{odd}) when the three path forming it are of even (resp.\
even) length).  These problems are all of interest because these three
substructures are used in the proof of the Strong Perfect Graph
Conjecture, so detecting them could help to read the proof as an
algorithm.  Such an algorithm would take any non-basic graph as input
and would output either and odd hole or and odd antihole or some
decomposition (here \emph{basic} and \emph{decomposition} is to be
understood with respect to some decomposition theorem for Berge
graphs, see Chapter~\ref{chap:Berge}).  Also, classes of graphs of
interest exclude prism or odd prisms, such as Artemis graphs,
see~\cite{maffray.t:reco,nicolas:these}.

\section{A tool for polynomiality (2): three-in-a-tree}
\label{sec:3-in-tree}

Chudnovsky and Seymour proved the following, referred as
\emph{three-in-a-tree}\index{three-in-a-tree}.

\begin{theorem}[Chudnovsky and Seymour
  \cite{chudnovsky.seymour:theta}]
  Let $G$ be a graph and $a, b, c$ three vertices.  Deciding whether
  $G$ contains a tree that goes through $a, b, c$ can be performed in
  time $O(n^4)$.
\end{theorem}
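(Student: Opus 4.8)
The plan is to reduce the problem to a purely combinatorial search for a restricted configuration, and then to solve that search by a sequence of structure-preserving reductions. First I would reformulate: if $G$ contains an induced tree through $a,b,c$, then it contains a vertex-minimal one, $T$, and a minimal induced tree on three prescribed vertices is easily seen to be a \emph{tripod}. That is, there is a \emph{branch vertex} $d$ (possibly equal to one of $a,b,c$) together with three induced paths from $d$ to $a$, to $b$ and to $c$ that pairwise meet only in $d$ and are pairwise anticomplete apart from $d$; when $d$ is a terminal the tripod degenerates into a single induced path through the three vertices. Since the minimal spanning subtree of an induced tree is itself an induced subgraph, this is automatic. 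So the task becomes: decide whether $G$ contains such a tripod. The whole difficulty is concentrated in the anticomplete condition between the three legs: without it one could simply run a shortest-path computation for each choice of branch vertex, but with it a brute-force search over branch vertices and leg-endpoints is not polynomial, exactly as for the prism (Theorem~\ref{thm:prismsnpc}).

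Next I would set up an algorithm maintaining the instance $(G,a,b,c)$ and repeatedly applying \emph{answer-preserving reductions}, each of which strictly shrinks $G$ while leaving the answer to the tree question unchanged. The natural reductions are: delete any vertex that provably lies in no tripod through the terminals; collapse a \emph{homogeneous set} --- a set $M$ with $|M|\geq 2$ all of whose vertices have the same neighbours outside $M$ --- to a bounded gadget, since the vertices of $M$ are interchangeable as far as the rest of $G$ is concerned; and, when the graph has a cutset of a controlled type (a clique cutset as in Theorem~\ref{th:chordal}, a star cutset, or a small cutset separating the terminals), split $(G,a,b,c)$ into smaller instances on the blocks of decomposition and recombine their answers. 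Running these until none applies leaves a \emph{reduced} instance.

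The heart of the proof, and the step I expect to be by far the hardest, is a \emph{structural dichotomy} for reduced instances: one must show that a reduced instance either visibly contains a tripod, or has so rigid a shape --- a \emph{basic} configuration that can be listed --- that the absence of a tripod can be certified directly. Establishing this requires a long and delicate case analysis, organised according to the adjacencies among $a,b,c$, the structure of their neighbourhoods, and how the connected components of $G\sm(N[a]\cup N[b]\cup N[c])$ attach to them. This is where the genuine content of Chudnovsky and Seymour's argument lies, and it does not appear to yield to any short trick.

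Finally, for the complexity, each reduction can be detected and performed in polynomial time, the number of successive reductions applied is $O(n)$, and the decomposition recursion must be arranged so that every vertex participates in only a bounded number of subproblems; a careful accounting of these costs, together with suitable data structures, is what brings the total running time down to $O(n^4)$. Reaching exactly this bound, rather than merely ``polynomial'', is itself a nontrivial optimisation.
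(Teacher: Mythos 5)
Your reformulation of a minimal induced tree through $a,b,c$ as a tripod (an induced path or a subdivided claw) is correct and standard, but everything after that point is a plan rather than a proof, and the plan has a hole exactly where the theorem lives. Your third step posits a ``structural dichotomy'' for reduced instances --- either a tripod is visible or the instance has a listable basic shape certifying that none exists --- and you explicitly decline to prove it. That dichotomy \emph{is} the theorem: Chudnovsky and Seymour's argument consists precisely of a structure theorem describing all graphs in which no induced tree covers the three terminals (the description is built around line-graph-like pieces, since in a claw-free graph every induced tree is a path, which is why gluing paths onto a triangle gives NO-instances), together with an algorithm that finds either the tree or that structure. Asserting that such a characterization exists, without saying what the basic configurations are or why the list is complete, leaves the entire mathematical content unsupplied; this is not a gap one can patch locally.

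The preliminary reductions you propose also do not stand on their own. ``Delete any vertex that provably lies in no tripod through the terminals'' is circular: deciding whether a given vertex lies in some tripod is at least as hard as the problem itself, so you would need to specify a checkable sufficient condition, which you do not. Likewise, collapsing a homogeneous set and recursing on clique or star cutsets are not shown to be answer-preserving for \emph{induced} tree detection (an induced tree can meet a homogeneous set in two vertices, and a tripod can cross a star cutset), and the complexity accounting --- $O(n)$ reductions, bounded reuse of vertices across subproblems, total $O(n^4)$ --- is asserted rather than derived. For contrast, the incremental method used in this document for the related $k$-in-a-tree results (Lemma~\ref{l:linkTree} and Theorem~\ref{th:main}) shows what a complete argument of this kind requires: one grows a tree terminal by terminal and, whenever the growth is blocked, one exhibits an explicit structure ($k$-structure, cubic structure, \dots) and proves it certifies that no tree exists. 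Your proposal names no such certificates, so nothing in it could be turned into a proof without first doing the work of Chudnovsky and Seymour's paper.
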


This might seem trivial to people familiar with basics in Graph Theory,
because of a famous theorem stating that any connected graph admits a
spanning tree.  But this theorem has no ``induced constraint'' and
instances where the answer to three-in-tree is ``no'' can be obtained
by gluing three paths to a triangle.  To see that all this is not
trivial, the reader can try the following exercises (of course the
solution relies on Bienstock's construction).

\begin{exercise}
  Three-in-a-path is NP-complete.
\end{exercise}

\begin{exercise}
  Three-in-the-line-graph-of-a-tree is NP-complete. 
\end{exercise}

To solve three-in-a-tree, Chudnovsky and Seymour describe these graphs
where no tree covers three vertices $a, b, c$.  The main ingredient of
their quite complicated description is line-graphs. Because since
line-graphs contain no claws, any induced tree of a line-graph must be
a path.  So, by adding three pending vertices to a line-graph, we
obtain easily many instance of three-in-a-tree for which the answer is
no.  Three-in-a-tree can be used to solve many problems: detecting a
theta in time $O(n^{11})$ (this was the original motivation of
Chudnovsky and Seymour, related to the recognition of even-hole-free
graphs), detecting a pyramid (this is slower than with shorter path
detectors, $O(n^{10})$) and also $\Pi_{B_4}$ and $\Pi_{B_6}$.  Here
below we give an example of how three-in-a-tree can be applied,
together with an NP-completeness proof.

\begin{theorem}[with L\'ev\^eque and Maffray~\cite{leveque.lmt:detect}]
  \label{th:antena}
  There is an $O(n^{13})$-time algorithm for $\Pi_{B_4}$ but
  $\Pi_{B_5}$ is NP-complete.
\end{theorem}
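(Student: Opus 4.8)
The statement splits into two independent claims, and I would attack them with the two techniques highlighted in this chapter. For the polynomial bound on $\Pi_{B_4}$ the plan is to reduce to the three-in-a-tree theorem of Chudnovsky and Seymour, following the very scheme that settles theta-detection ($\Pi_{B_3}$) in $O(n^{11})$. For the NP-completeness of $\Pi_{B_5}$ the plan is to derive it as a corollary of Theorem~\ref{th:bienstock}, via a gadget reduction in the style of the prism reduction of Theorem~\ref{thm:prismsnpc}. Neither half should require a genuinely new idea; the point is the careful bookkeeping.

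For $\Pi_{B_4}$: a realisation of $B_4$ is a theta-like core together with the prescribed pending (real, hence unsubdivided) edges. First I would brute-force over all images in $G$ of the finitely many non-subdivisible vertices of $B_4$ — the two branch vertices, the endpoints of the pending edges, and the neighbours of the branch vertices along each subdivisible path that are needed to pin down adjacencies — an enumeration ranging over $O(n^9)$ tuples. Each guess would be discarded unless all prescribed real edges and all prescribed non-edges hold in $G$. It then remains to realise the subdivisible edges by induced paths that are internally disjoint and pairwise anticomplete except at the branch vertices, with interiors avoiding the already-placed vertices and their neighbourhoods. I would encode exactly this last question as an instance of three-in-a-tree in an auxiliary graph obtained from $G$ by deleting the relevant closed neighbourhoods: a tree through three suitably marked vertices then corresponds to the desired system of internally disjoint, pairwise anticomplete induced paths. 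Each three-in-a-tree call costs $O(n^4)$, so combined with the $O(n^9)$ enumeration this yields the claimed $O(n^{13})$.

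For $\Pi_{B_5}$: membership in NP is clear, a realisation being a polynomial-size certificate checkable in polynomial time. For hardness I would reduce from one of the problems shown NP-complete in Theorem~\ref{th:bienstock} (equivalently from $\Gamma_{\{K_3\}}$, Theorem~\ref{thm:pinpc}). Starting from an instance $(G,x,y)$ with the prescribed girth and degree restrictions and with $x,y$ non-adjacent of degree $2$, I would attach a small fixed gadget at each of $x$ and $y$ reproducing the pending part of $B_5$, obtaining a graph $G'$. The gadgets would be designed so that any realisation of $B_5$ in $G'$ forces its subdivisible part to be a hole of $G$ through $x$ and $y$, and conversely any such hole extends through the two gadgets to a realisation of $B_5$; hence $G'$ contains $B_5$ if and only if $G$ has a hole through $x$ and $y$, which is the required reduction.

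In both halves the main obstacle is the \emph{soundness} of the correspondence, not its completeness. In the $\Pi_{B_4}$ reduction the delicate point is to build the auxiliary graph so that an induced tree really does yield \emph{mutually non-adjacent} induced paths meeting correctly at the branch vertices — the standard trap being spurious adjacencies between two of the paths — which is handled by deleting exactly the right closed neighbourhoods and arguing that any induced tree in the auxiliary graph must decompose as a union of the required paths. In the $\Pi_{B_5}$ reduction the delicate point is to rule out every \emph{unintended} realisation of $B_5$ in $G'$, for instance one that threads through the gadgets without corresponding to a hole; it is precisely here that the excluded short cycles and the bounded degree inherited from the source $\Gamma$-problem of Theorem~\ref{th:bienstock} must be exploited to eliminate such configurations.
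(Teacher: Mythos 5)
Both halves of your proposal contain a genuine gap, and in both cases it is the step you yourself flag as ``delicate'' that actually fails.

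For $\Pi_{B_4}$: you enumerate \emph{both} branch vertices of the realisation together with their neighbours along each subdivisible path, and then claim the residual problem --- three pairwise anticomplete induced paths joining three \emph{specified pairs} of terminals --- can be encoded as three-in-a-tree. It cannot: an induced tree through three marked vertices neither enforces the prescribed pairing of endpoints nor can it be forced to pass through the already-fixed degree-3 branch vertex (that extra requirement is a four-in-a-tree-type constraint, and this chapter shows four-in-a-centered-tree is NP-complete); worse, the multi-pair induced anticomplete-paths problem is exactly the kind of question that Bienstock's construction makes NP-hard, so no correct encoding of your residual problem into three-in-a-tree should be expected. The paper's algorithm hinges on \emph{not} guessing the degree-3 vertex: the nine enumerated vertices all sit on the degree-4 side (the degree-4 vertex, its four neighbours, the second vertices of the two paths towards the other branch vertex, the far end of the pendant subdivisible path and its neighbour), the three marked vertices $a,b,c$ are the far ends of the three subdivisible paths, and the unique branch vertex of the induced tree returned by three-in-a-tree \emph{is} the degree-3 vertex of the realisation --- this is precisely why the method applies ``when the three subdivisible edges have a common end''. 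You also omit a case the paper must treat separately: ``short'' realisations, in which the two branch vertices are at distance at most $3$, are first detected by a direct $O(n^9)$ brute force, because the $9$-tuple pattern of the main algorithm cannot be instantiated for them.

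For $\Pi_{B_5}$: your reduction starts from $\Gamma_{\{K_3\}}$ (Theorem~\ref{thm:pinpc}) or the problems of Theorem~\ref{th:bienstock}, but none of these bounds the maximum degree by $3$ --- the triangle-free Bienstock graph has vertices of arbitrarily large degree. Since both branch vertices of a realisation of $B_5$ have degree $4$ \emph{inside the realisation} but may have any larger degree in the host graph, the instance $G$ itself can contain realisations of $B_5$ having nothing to do with $x$ and $y$ (a hole of $G$ together with two suitable off-hole neighbours at each of two high-degree vertices), and girth restrictions do not exclude such configurations. No gadget attached at $x$ and $y$ can repair this, because the unintended realisations avoid the gadgets entirely; soundness can only come from restricting $G$ itself. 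This is exactly why the paper reduces from $\Gamma^3$ (Theorem~\ref{th:delta3}, Lin's theorem): when $\Delta(G)\leq 3$, after adding the four pendant vertices the only vertices of $G'$ of degree at least $4$ are $x$ and $y$, so they are the only possible branch vertices of any realisation, and the equivalence with ``$G$ has a hole through $x,y$'' is immediate. Note that the hardness of $\Pi_{B_5}$ therefore rests on Lin's separate and nontrivial NP-completeness result for $\Gamma^3$, not on Theorem~\ref{thm:pinpc} or Theorem~\ref{th:bienstock}.
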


\begin{proof}
  A realisation of $B_4$ has exactly one vertex of degree 3 and one
  vertex of degree 4.  Let us say that the realisation $H$ is
  \emph{short} if the distance between these two vertices in $H$ is at
  most~3.  Detecting short realisations of $B_4$ can be done in time
  $n^9$ as follows: for every 6-tuple $F = (a, b, x_1, x_2, x_3, x_4)$
  such that $G[F]$ has edge-set $\{x_1a, ax_2, x_2b, bx_3, bx_4\}$ and
  for every 7-tuple $F = (a, b, x_1, x_2, x_3, x_4, x_5)$ such that
  $G[F]$ has edge-set $\{x_1a, ax_2, x_2x_3, x_3b, bx_4, bx_5\}$,
  delete $x_1, \dots, x_5$ and their neighbors except $a,b$.  In the
  resulting graph, check whether $a$ and $b$ are in the same
  component.  The answer is YES for at least one 7-or-6-tuple if and
  only if $G$ contains at least one short realisation of ${B_4}$.

  Here is an algorithm for $\Pi_{B_4}$, assuming that the entry graph
  $G$ has no short realisation of $B_4$.  For every 9-tuple $F = (a,
  b, c, x_1, \dots, x_6)$ such that $G[F]$ has edge-set $\{x_1a, bx_2,
  x_2x_3, x_3x_4, cx_5, x_5x_3, x_3x_6\}$ delete $x_1, \dots, x_6$ and
  their neighbors except $a, b, c$.  In the resulting graph, run
  three-in-a-tree for $a, b, c$.  It is easily checked that the answer
  is YES for some 9-tuple if and only if $G$ contains a realisation of
  $B_4$.

  Let us prove that $\Pi_{B_5}$ is NP-complete by a reduction of
  $\Gamma^3$ to $\Pi_{B_5}$.  Since by Theorem~\ref{th:delta3},
  $\Gamma^3$ is NP-complete, this will complete the proof.  Let $(G,
  x, y)$ be an instance of $\Gamma^3$.  Prepare a new graph $G'$: add
  four vertices $x', x'', y', y''$ to $G$ and add four edges $xx',
  xx'', yy', yy''$.  Since $\Delta(G) \leq 3$, it is easily seen that
  $G$ contains a hole passing through $x, y$ if and only if $G'$
  contains a realisation of $B_5$.
\end{proof}

From the proof above, it is easy to see that three-in-tree can be
applied to any situation where the three subdivisible edges of an
s-graph have a common end.  Here is a dichotomy theorem which is a
further evidence of the generality of three-in-a-tree.

\begin{theorem}[with L\'ev\^eque and Maffray~\cite{leveque.lmt:detect}]
  \label{th:class}
  Let $H$ be a connected graph.  Then one of the following holds:
  \begin{itemize}
  \item $H$ is a path or a subdivision of a claw and $\Gamma_{\{H\}}$
    is polynomial.
    \item $H$ contains one of $K_{1, 4}$, $I_k$ for some $k\geq 1$, or
      $C_l$ for some $l\geq 3$ as an induced subgraph and
      $\Gamma_{\{H\}}$ is NP-complete.
  \end{itemize}
\end{theorem}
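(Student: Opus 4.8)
First I would check that every connected $H$ falls into exactly one of the two cases. If $H$ is not a tree it has an induced cycle $C_l$, $l\geq 3$, and is in the second case. If $H$ is a tree with a vertex of degree at least $4$, it contains $K_{1,4}$. In the remaining case $H$ is a tree of maximum degree $3$ that is neither a path nor a subdivision of a claw, hence has at least two vertices of degree~$3$; choosing two of them, $a$ and $b$, at minimum distance $\ell\geq 1$, the unique $a$--$b$ path together with the two neighbours of $a$ and the two neighbours of $b$ that lie off this path induces, since $H$ is acyclic, precisely a copy of $I_\ell$ (the interior path vertices keep degree $2$ by minimality of the distance, and the four extra vertices are leaves because a tree has no cycles). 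So $H$ contains $K_{1,4}$, some $I_k$ or some $C_l$ whenever it is neither a path nor a subdivision of a claw; conversely none of the paths and claw-subdivisions contains any of these, since they are acyclic, of maximum degree $\leq 3$, and have at most one vertex of degree $3$.

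\textbf{The polynomial cases.} When $H=P_m$ is a path the argument is elementary: any arc of a hole is a chordless path, hence an induced path of $G$, and so has at most $m-1$ vertices (otherwise its first $m$ vertices induce $P_m$). A hole through $x,y$ splits into two arcs sharing only $x,y$ and therefore has at most $2m-4$ vertices, so one detects it by brute force over all vertex-subsets of this bounded size in time $O(n^{2m-4})$. When $H$ is a subdivision of a claw this fails, because a long hole is itself an induced subgraph of maximum degree $2$ and so contains no spider at all. Here I would instead use a shortest-path detector in the spirit of Lemma~\ref{l:kpri}. The structural fact driving it is: if $H$ has legs of lengths $a\leq b\leq c$, then any induced spider all of whose three legs have length at least $c$ contains $H$ as an induced subgraph (truncate the legs to lengths $a,b,c$), so an $H$-free graph contains no induced spider with three long legs.

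\textbf{The detector.} Given $(G,x,y)$ with $N(x)=\{x_1,x_2\}$ and $N(y)=\{y_1,y_2\}$, a smallest hole through $x,y$ has two arcs, one from $x_1$ to $y_1$ and one from $y_2$ to $x_2$, up to the two possible labellings. I would guess the bounded-length end-segments of these arcs lying within distance $L:=|V(H)|$ of $x_1,y_1,y_2,x_2$ (there are only $n^{O(L)}$ choices, polynomial for fixed $H$), delete the forbidden neighbourhoods, and replace each arc-middle by a shortest path. If the resulting object is not a hole, then some interior vertex $v$ of a shortest arc is adjacent to an interior vertex of the other arc; then $v$, the two sub-arcs of the shortest arc on either side of it, and a sub-arc of the other arc form three induced paths meeting only at $v$ and pairwise non-adjacent (using that a shortest path has no chords and that the two arcs come from a genuine hole). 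Away from the guessed end-segments these three legs all have length at least $c$, giving an induced spider with long legs and hence an induced $H$, a contradiction. Thus for the correct guess the shortest paths do assemble into a hole, so the algorithm is correct and runs in polynomial time.

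\textbf{NP-completeness and the main obstacle.} For the hard side, $\Gamma_{\{H\}}$ is always in NP since a hole through $x,y$ is a certificate, so it suffices to transfer NP-hardness from Theorem~\ref{th:bienstock} by the following monotonicity principle: if every graph in a family $\mathcal S$ has an induced subgraph in a family $\mathcal T$, then every instance of $\Gamma_{\mathcal T}$ is an instance of $\Gamma_{\mathcal S}$ with the same answer, so the identity map reduces $\Gamma_{\mathcal T}$ to $\Gamma_{\mathcal S}$. If $H$ contains an induced $C_l$, apply this with $\mathcal S=\{H\}$ and $\mathcal T=\{C_3,\dots,C_k,K_{1,6}\}$ for $k=\max(l,5)$; if $H$ contains $K_{1,4}$ or some $I_k$, apply it with $\mathcal T=\{I_1,\dots,I_m,C_5,\dots,C_m,K_{1,4}\}$ for $m=\max(k,5)$. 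In each case $H$ contains a member of $\mathcal T$ and $\Gamma_{\mathcal T}$ is NP-complete by Theorem~\ref{th:bienstock}, so $\Gamma_{\{H\}}$ is NP-complete. The genuinely delicate part of the whole proof is the attachment lemma behind the shortest-path detector: one must verify, through a case analysis like that of Lemma~\ref{l:kpri}, that a failed shortest-path replacement really produces an \emph{induced} spider, with its three legs pairwise non-adjacent and all provably long enough to contain the fixed graph $H$, the bounded end-segment guessing being exactly what rules out the short-leg exceptions.
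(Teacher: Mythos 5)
Your treatment of the dichotomy's exhaustiveness, of the path case, and of the NP-complete side (the monotonicity reduction from Theorem~\ref{th:bienstock}) is correct and essentially matches the paper. The gap is in the only genuinely hard part: the polynomial algorithm when $H$ is a subdivision of a claw. There you replace both unguessed arc-middles by shortest paths and assert, as an ``attachment lemma'', that a failed assembly yields an induced spider with three long legs, hence an induced $H$. That lemma is exactly what you would need, and it is not proved; worse, there are concrete reasons to doubt it can be proved along these lines. The two shortest paths are computed with no knowledge of each other (the arcs are unguessed and unboundedly long), so they may share vertices or have many edges between them; if, say, two consecutive vertices $v,v'$ of $R_A$ are both adjacent to a vertex $w$ of $R_B$, then every candidate spider center ($v$, $v'$ or $w$) has an edge between two of its legs, and since $H$ is a tree, $H$-freeness forbids neither triangles nor any of these configurations, so no contradiction is available without a much deeper analysis. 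Note also the disanalogy with Lemma~\ref{l:kpri}, which you invoke as your model: there the entire frame (the six corners of the prism) is guessed, only one path is replaced at a time while the other two \emph{original} paths are kept, and the contradictions come from pyramids or smaller prisms on the same corners. Here the property ``hole through both $x$ and $y$'' is not preserved by local surgery: an edge from the interior of $R_A$ to the interior of the other arc typically yields only a shorter cycle that avoids $y$ altogether, which is useless, and this is precisely why detecting a hole through two prescribed vertices is NP-complete in general.

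The paper resolves this difficulty with a different tool. After brute-forcing holes in which $x$ and $y$ are at bounded distance (by guessing the short arc entirely and completing it with one shortest path), it guesses a bounded portion of the hole around $x$ (a path of length $k_1+1$) and around $y$ (a path of length $k_2+k_3$), deletes the appropriate neighborhoods, and then runs the three-in-a-tree algorithm of Chudnovsky and Seymour on three suitably chosen vertices; correctness is proved by analyzing a vertex-minimal induced tree through those three vertices (it is a path, in one of three orders, or a subdivision of a claw) and showing that each case yields either the desired hole or an induced copy of $H$, contradicting $H$-freeness. The hard connectivity-under-induced-constraints work is thus delegated to three-in-a-tree rather than to a shortest-path lemma. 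Indeed, immediately after this theorem the paper remarks that shortest path detectors ``seem to be useless \dots\ to prove the dichotomy Theorem~\ref{th:class}'', so the route you propose is the one the authors explicitly set aside; to complete your proof you would have to supply the missing extraction lemma in full, and nothing in your sketch indicates how to overcome the obstructions above.
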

\begin{proof}
  Let us first prove the following. 

  \begin{claim}
    \label{cl:clawpoly}
    Let $H$ be a graph on $k$ vertices that is either a path or a
    subdivision of a claw.  There is an $O(n^{k})$-time algorithm for
    $\Gamma_{\{H\}}$.
  \end{claim}
  
  \begin{proofclaim}
    Here is an algorithm for $\Gamma_{\{H\}}$.  Let $(G, x, y)$ be an
    instance of $\Gamma_H$.  If $H$ is a path on $k$ vertices then
    every hole in $G$ is on at most $k$ vertices.  Hence, by a
    brute-force search on every $k$-tuple, we will find a hole through
    $x, y$ if there is any.  Now we suppose that $H$ is a subdivision
    of a claw.  So $k\ge 4$.  For convenience, we put $x_1=x$,
    $y_1=y$.  Let $x_0, x_2$ (resp.  $y_0, y_2$) be the two neighbors
    of $x_1$ (resp.  $y_1$).

    First check whether there is in $G$ a hole $C$ through $x_1, y_1$
    such that the distance between $x_1$ and $y_1$ in $C$ is at most
    $k-2$.  If $k=4$ or $k=5$ then $\{x_0, x_1, x_2, y_0, y_1, y_2\}$
    either induces a hole (that we output) or a path $P$ that is
    contained in every hole through $x,y$.  In this last case, the
    existence of a hole through $x, y$ can be decided in linear time
    by deleting the interior of $P$, deleting the neighbors in
    $G\setminus P$ of the interior vertices of $P$ and by checking the
    connectivity of the resulting graph.  Now suppose $k\ge 6$.  For
    every $l$-tuple $(x_3, \dots, x_{l+2})$ of vertices of $G$, with
    $l\leq k-5$, test whether $P = x_0 \tp x_1 \tp \cdots \tp x_{l+2}
    \tp y_2 \tp y_1 \tp y_0$ is an induced path, and if so delete the
    interior vertices of $P$ and their neighbors except $x_0, y_0$,
    and look for a shortest path from $x_0$ to $y_0$.  This will find
    the desired hole if there is one, after possibly swapping $x_0,
    x_2$ and doing the work again.  This takes time $O(n^{k-3})$.

    Now we may assume that in every hole through $x_1,y_1$, the
    distance between $x_1, y_1$ is at least $k-1$.

    Let $k_i$ be the length of the unique path of $H$ from $u$ to
    $v_i$, $i=1, 2, 3$.  Note that $k = k_1 + k_2 + k_3 + 1$.  Let us
    check every $(k-4)$-tuple $z = (x_3, \dots, x_{k_1+1}, y_3, \dots,
    y_{k_2 + k_3})$ of vertices of $G$.  For such a $(k-4)$-tuple,
    test whether $x_0 \tp x_1 \tp \cdots \tp x_{k_1+1}$ and $P = y_0
    \tp y_1 \tp \cdots \tp y_{k_2 + k_3}$ are induced paths of $G$
    with no edge between them except possibly $x_{k_1 + 1}y_{k_2 +
      k_3}$.  If not, go to the next $(k-4)$-tuple, but if yes, delete
    the interior vertices of $P$ and their neighbors except $y_0,
    y_{k_2+k_3}$.  Also delete the neighbors of $x_2, \dots, x_{k_1}$,
    except $x_1, x_2, \dots, x_{k_1}, x_{k_1+1}$.  Call $G_z$ the
    resulting graph and run three-in-a-tree in $G_z$ for the vertices
    $x_1, y_{k_2 + k_3}, y_0$.  We claim that the answer to
    three-in-a-tree is YES for some $(k-4)$-tuple if and only if $G$
    contains a hole through $x_1, y_1$ (after possibly swapping $x_0,
    x_2$ and doing the work again).

    To prove this, first assume that $G$ contains a hole $C$ through
    $x_1, y_1$ then up to a symmetry this hole visits $x_0, x_1, x_2,
    y_2, y_1, y_0$ in this order.  Let us name $x_3, \dots, x_{k_1+1}$
    the vertices of $C$ that follow after $x_1, x_2$ (in this order),
    and let us name $y_3, \dots, y_{k_2+k_3}$ those that follow after
    $y_1, y_2$ (in reverse order).  Note that all these vertices exist
    and are pairwise distinct since in every hole through $x_1, y_1$
    the distance between $x_1, y_1$ is at least $k-1$.  So the path
    from $y_0$ to $y_{k_2+k_3}$ in $C\setminus y_1$ is a tree of $G_z$
    passing through $x_1, y_{k_2 + k_3}, y_0$, where $z$ is the
    $(k-4)$-tuple $(x_3, \dots, x_{k_1+1}, y_3, \dots, y_{k_2 +
      k_3})$.

    Conversely, suppose that $G_z$ contains a tree $T$ passing through
    $x_1, y_{k_2 + k_3}, y_0$, for some $(k-4)$-tuple $z$.  We suppose
    that $T$ is vertex-inclusion-wise minimal.  If $T$ is a path
    visiting $y_0, x_1, y_{k_2 + k_3}$ in this order, then we obtain
    the desired hole of $G$ by adding $y_1, y_2, \dots, y_{k_2+k_3-1}$
    to $T$.  If $T$ is a path visiting $x_1, y_0, y_{k_2 + k_3}$ in
    this order, then we denote by $y_{k_2+k_3+1}$ the neighbor of
    $y_{k_2 + k_3}$ along $T$.  Note that $T$ contains either $x_0$ or
    $x_2$.  If $T$ contains $x_0$, then there are three paths in $G$:
    $y_0 \tp T \tp x_0 \tp x_1 \tp \cdots \tp x_{k_1}$, $y_0 \tp T \tp
    y_{k_2+k_3+1} \tp \cdots \tp y_{k_3+2}$ and $y_0 \tp y_1 \tp
    \cdots \tp y_{k_3}$.  These three paths form a subdivision of a
    claw centered at $y_0$ that is long enough to contain an induced
    subgraph isomorphic to $H$, a contradiction.  If $T$ contains
    $x_{2}$ then the proof works similarly with $y_0 \tp T \tp
    x_{k_1+1} \tp x_{k_1} \tp \cdots \tp x_{1}$ instead of $y_0 \tp T
    \tp x_0 \tp x_1 \tp \cdots \tp x_{k_1}$.  If $T$ is a path
    visiting $x_1, y_{k_2 + k_3}, y_0$ in this order, the proof is
    similar, except that we find a subdivision of a claw centered at
    $y_{k_2+k_3}$.  If $T$ is not a path, then it is a subdivision of
    a claw centered at a vertex $u$ of $G$.  We obtain again an
    induced subgraph of $G$ isomorphic to $H$ by adding to $T$
    sufficiently many vertices of $\{x_0, \dots x_{k_1+1}, y_0, \dots,
    y_{k_2+k_3}\}$.
  \end{proofclaim}

  If $H$ contains one of $K_{1, 4}$, $I_k$ for some $k\geq 1$, or
  $C_l$ for some $l\geq 3$ as an induced subgraph then
  $\Gamma_{\{H\}}$ is NP-complete by Theorem~\ref{th:bienstock}.
  Else, $H$ is a tree since it contains no $C_l$, $l\geq 3$.  If $H$
  has no vertex of degree at least~$3$, then $H$ is a path and
  $\Gamma_{\{H\}}$ is polynomial by~(\ref{cl:clawpoly}).  If $H$ has a
  single vertex of degree at least~$3$, then this vertex has degree 3
  because $H$ contains no $K_{1, 4}$.  So, $H$ is a subdivision of a
  claw and $\Gamma_{\{H\}}$ is polynomial by~(\ref{cl:clawpoly}).  If
  $H$ has at least two vertices of degree at least~3 then $H$ contains
  an $I_l$, where $l$ is the minimum length of a path of $H$ joining
  two such vertices.  This is a contradiction.
\end{proof}

Interestingly, the following analogous result for finding maximum
stable sets in $H$-free graphs was proved by Alekseev:

\begin{theorem}[Alekseev, \cite{alekseev:83}]
  \label{th:alekseev}
  Let $H$ be a connected graph that is not a path nor a subdivision of a
  claw.  Then the problem of finding a maximum stable set in $H$-free
  graphs is NP-hard.
\end{theorem}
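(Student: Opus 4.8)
The plan is to reduce from the classical \textsc{Maximum Independent Set} problem on cubic graphs, whose NP-hardness is well known, and to exploit the same structural trichotomy that underlies Theorems~\ref{th:class} and~\ref{th:bienstock}. First I would record the following dichotomy for a connected graph $H$ that is neither a path nor a subdivision of a claw: either (i) $H$ contains a cycle, or (ii) $H$ has a vertex of degree at least~$4$ (hence contains $K_{1,4}$), or (iii) $H$ is a tree with at least two vertices of degree~$3$ (hence contains some $I_k$). Indeed, a connected $H$ with no cycle is a tree; a tree with all degrees at most~$2$ is a path; and a tree with a single vertex of degree at least~$3$ and all degrees at most~$3$ is exactly a subdivision of a claw. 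So outside cases (i)--(iii) one lands on a path or a subdivided claw, which are excluded by hypothesis.

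Next I would describe the reduction. Given a cubic graph $G$, form $G'$ by subdividing every edge of $G$ exactly $2p$ times, where $p$ is a constant depending only on $H$, chosen so that $2p \geq |V(H)|$. This is the Poljak-style subdivision: each edge becomes an induced path of length $2p+1$, the original vertices of $G$ (the \emph{branch vertices}) keep their degree, and all new vertices have degree~$2$. A routine gadget analysis, carried out edge by edge, gives the clean identity $\alpha(G') = \alpha(G) + p\,|E(G)|$, so that $G$ has a stable set of size at least $t$ if and only if $G'$ has one of size at least $t + p\,|E(G)|$. Since $p$ is constant, $|V(G')| = |V(G)| + 2p\,|E(G)|$ is polynomial in the input, so this is a polynomial reduction; it remains only to check that $G'$ lies in the class ${\rm Forb}(H)$ of $H$-free graphs.

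The heart of the argument is that heavy subdivision simultaneously destroys every feature that cases (i)--(iii) force $H$ to carry. Because $G$ is cubic, $G'$ has maximum degree~$3$, hence is $K_{1,4}$-free and so cannot contain an $H$ of type~(ii), since a vertex of degree at least~$4$ in $H$ would have to be realised by a vertex of degree at least~$4$ in $G'$. Because every edge is subdivided $2p \geq |V(H)|$ times, the girth of $G'$ is at least $3(2p+1) > |V(H)|$; as an induced copy of $H$ would force a cycle of length $\mathrm{girth}(H) \leq |V(H)|$ into $G'$, no $H$ of type~(i) can occur. Finally, for type~(iii) I would use distances: in $G'$ any two branch vertices lie at distance at least $2p+1 > |V(H)|$, and in any induced copy of $H$ each vertex of degree at least~$3$ in $H$ must map to a branch vertex, since all other vertices of $G'$ have degree~$2$. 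Hence the two degree-$3$ vertices of an $H$ of type~(iii) would map to distinct branch vertices, putting them at distance greater than $|V(H)|$ in the induced subgraph and therefore in $H$ itself---impossible, as $\mathrm{diam}(H) < |V(H)|$. Thus $G'$ is $H$-free in all three cases, completing the reduction.

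The main obstacle, and the step needing the most care, is type~(iii): cutting cycles by girth and capping degrees by cubicity are immediate, but ruling out the two-branch-vertex trees $I_k$ is exactly what fails for paths and subdivided claws, and it is recovered here only through the metric observation that subdivision pushes branch vertices apart faster than the bounded diameter of $H$ permits. It is worth noting that this is precisely the optimization analogue of the detection dichotomy of Theorem~\ref{th:class}: the very configurations $C_l$, $K_{1,4}$ and $I_k$ that make $\Gamma_{\{H\}}$ NP-complete are those that make \textsc{Maximum Independent Set} hard on ${\rm Forb}(H)$, while paths and subdivisions of claws remain the tractable boundary.
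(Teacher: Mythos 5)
Your proof is correct, but note that the paper itself never proves this statement: Theorem~\ref{th:alekseev} is quoted as a known result of Alekseev~\cite{alekseev:83}, so there is no internal proof to compare against. What you have written is essentially the standard argument, and it is sound in every step I checked: the trichotomy (cycle / vertex of degree at least~4 / tree with two branch vertices) is exhaustive for connected $H$ that is neither a path nor a subdivision of a claw; the Poljak-style identity $\alpha(G')=\alpha(G)+p\,|E(G)|$ follows by applying the double-subdivision lemma $p$ times per edge; and each of the three exclusion arguments (maximum degree~3 kills $K_{1,4}$, girth $3(2p+1)>|V(H)|$ kills cycles, and the distance bound $2p+1>\mathrm{diam}(H)$ between branch vertices kills trees with two vertices of degree at least~3) is valid. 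Your reduction is in fact the same construction the paper itself recalls in Section~\ref{sec:oneChord}, where Poljak's 2-subdivision~\cite{poljak:74} is used to show NP-hardness of maximum stable set in sparse (hence chordless) graphs; your contribution is to iterate the subdivision a constant number of times, depending on $|V(H)|$, so that the output avoids the fixed graph $H$ rather than just containing no short cycles or adjacent branch vertices.

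One cosmetic point: in case~(iii) you say "at least two vertices of degree~3" where you mean "at least two vertices of degree at least~3"; as written, a tree with one vertex of degree~5 and one of degree~3 formally falls in both (ii) and (iii), and a tree with two vertices of degree~4 and none of degree exactly~3 falls only in (ii). Since your case~(ii) argument handles any vertex of degree at least~4 and your case~(iii) argument only uses that the two special vertices have degree at least~3 (so they must map to branch vertices of $G'$), the overlap is harmless and the union of the cases is exhaustive, but the wording should be fixed.
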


But the complexity of the maximum stable set problem is not known in
general for $H$-free graphs when $H$ is a path or a subdivision of a
claw. See~\cite{hertz.lozin:survey} for a survey.

Three-in-a-tree seems to be more general than shortest path detectors.
For instance, shortest path detectors seem to be useless for
detecting a theta or to prove the dichotomy Theorem~\ref{th:class}.
But when there are parity constraints, shortest path detectors are
often the only known method to solve a problem (like these mentioned
at the end of Section~\ref{sec:shortPD}).  This leads us to the
following question.

\begin{question}
  Is there a variant on three-in-a-tree with parity constraints?
\end{question}

A natural question now is whether three-in-a-tree and Bienstock's
construction can decide the complexity of any problem $\Pi_B$ where
$B$ is an s-graph.  With L\'ev\^eque and
Maffray~\cite{leveque.lmt:detect}, we tried to check all instances on
four vertices.  After eliminating symmetries and trivial instances, we
found twelve interesting s-graphs.

For the following two s-graphs, there is a polynomial algorithm using
three-in-a-tree.  The two algorithms are essentially similar to those
for thetas and pyramids (see Figure~\ref{fig:ppt}).

\noindent
\begin{center} 
  \includegraphics{bigraphs.3} 
  \rule{3em}{0ex}
  \includegraphics{bigraphs.8} 
\end{center}

\noindent The next two s-graphs yield NP-complete problems:

\noindent
\begin{center} 
  \parbox[c]{1cm}{\includegraphics{bigraphs.4}} (by $\Gamma_{\{C_4\}}$)
  \rule{1em}{0ex}
  \parbox[c]{1cm}{\includegraphics{bigraphs.5}} (by $\Gamma_{\{K_3\}}$)
\end{center}

\noindent For the next seven s-graphs on four vertices, we could not get
an answer, so they are open problems.  We tried quite hard for the
last one, but we could obtain only partial results, see
Section~\ref{sec:ISK4}.

\noindent
\begin{center}
\includegraphics{bigraphs.1}\rule{.5em}{0ex}
\includegraphics{bigraphs.6}\rule{1em}{0ex}
\includegraphics{bigraphs.7}\rule{1em}{0ex}
\includegraphics{bigraphs.9}\rule{1em}{0ex}
\includegraphics{bigraphs.10}\rule{1em}{0ex}
\includegraphics{bigraphs.11}\rule{1em}{0ex}
\includegraphics{bigraphs.12}
\end{center}

\noindent For the last graph represented below, we could not obtain an
answer by three-in-a-tree or Bienstock's construction.  But with Vu\v
skovi\'c~\cite{nicolas.kristina:one}, we could prove that the problem
can be solved in time $O(nm)$, using a method based on
decomposition.  See Section~\ref{sec:oneChord}.  

\noindent
\begin{center}
\includegraphics{bigraphs.2}\rule{1em}{0ex}
\end{center}

\section{Generalizing three-in-a-tree}
\label{sec:gen3}

Because of the power and deepness of three-in-a-tree, it would be
interesting to generalise it.  The obvious extension would be
\emph{four-in-a-tree}, or better, $k$-in-a-tree.  But this seems quite
involved.  Most of the trouble comes from small cycles as suggested by
the following exercise.

\begin{exercise}
  Let $G$ be graph of girth at least $k+1$ and let $x_1, \dots, x_k$
  be vertices of $G$.  Let $H$ be an induced subgraph of $G$ that
  contains $x_1, \dots, x_k$ and that is inclusion-wise minimal with
  respect to this property.  Show that $H$ is a tree.
\end{exercise}

So, jointly with Dehry and Picouleau, and later jointly with Liu Wei,
we tried to see whether we could get results by excluding small
cycles.  Finally, we investigated something slightly more general than
the exercise above, $k$-in-a-tree in graphs of girth at least $k$.
For $k=3$, this is a rephrasing of three-in-a-tree, so from here on,
we assume $k\geq 4$.  Our approach is similar to that of Chudnovsky
and Seymour for three-in-a-tree.  We give a structural answer to the
following question: what does a graph of girth at least $k$ look like
if no induced tree covers $k$ given vertices $x_1, \dots, x_k$?  We
were expecting very regular and easy structures, but interestingly,
for $k=4$ and $k=6$, sporadic structures popped up from the proofs.
On Figures~\ref{fig:cubeEx}, \ref{fig:squareEx}, \ref{f:k4s}
and~\ref{f:cycle} five examples of graphs with no tree covering the
``$x$'' vertices are represented.  The sequel will show that in a
sense, any graph of girth at least $k$ where no induced tree covers
$k$ given vertices $x_1, \dots, x_k$ looks like one of these five
examples.

\begin{figure}[p]
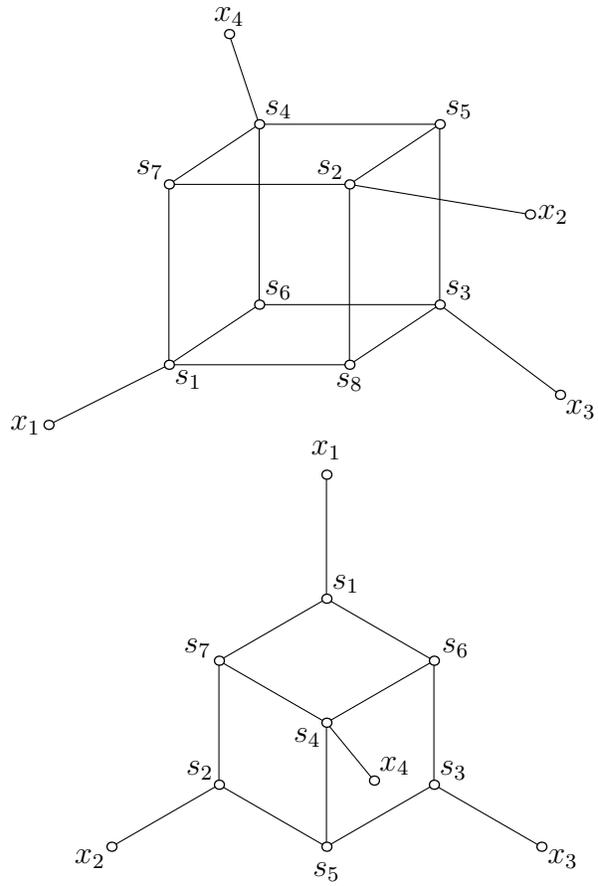

  \center
  \includegraphics{figFtree.1}
  \rule{0.5cm}{0cm}
 \includegraphics{figFtree.3}
 \caption{Two examples of cubic structure\label{fig:cubic}\label{fig:cubeEx}}
\end{figure}

\begin{figure}[p]
  \center
  \includegraphics{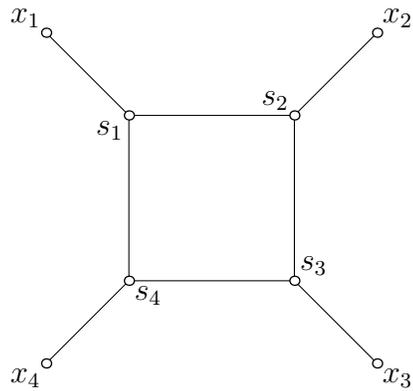}
  \caption{The smallest square structure\label{fig:square}\label{fig:squareEx}}
\end{figure}

\begin{figure}[p]
  \begin{center}
    \includegraphics{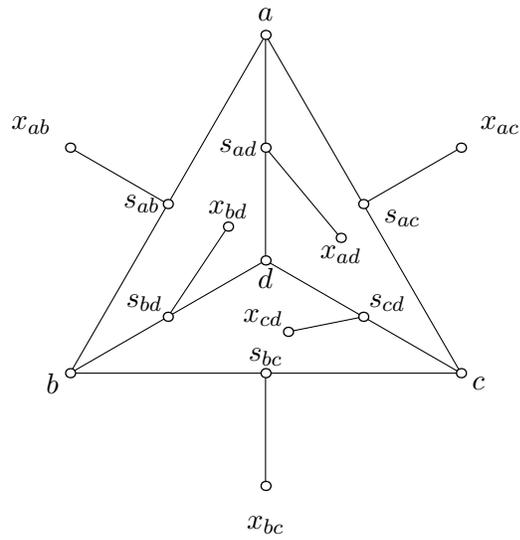}
    \caption{a $K_4$-structure \label{f:k4s}}
  \end{center}
\end{figure}

\begin{figure}[p]
\begin{center}
\includegraphics{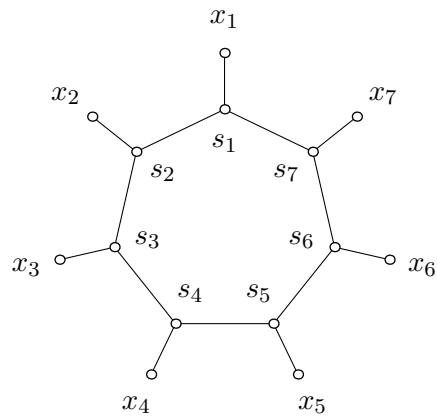}
\caption{a $k$-structure ($k=7)$\label{f:cycle}}
\end{center}
\end{figure}

We call \emph{terminal}\index{terminal} of a graph any vertex of degree one.  Solving
$k$-vertices-in-a-tree or $k$-terminals-in-a-tree are equivalent
problems.  Because if $k$ vertices $x_1, \dots, x_k$ of graph $G$ are
given, we build the graph $G'$ obtained from $G$ by adding a pending
neighbor $y_i$ to $x_i$, $i=1, \dots, k$.  An induced tree of $G$
covers $x_1, \dots, x_k$ if and only an induced tree of $G'$ covers
$y_1, \dots, y_k$.  So, in the rest of the section we assume for
convenience that the vertices to be covered are all terminals.  A
\emph{branch-vertex}\index{branch-vertex} is a vertex of degree at least~3.  The following
is a basic fact whose proof is omitted.

\begin{lemma}
  \label{l:descTree}
  A tree $T$ with $k$ terminals contains at most $k-2$
  branch-vertices.  Moreover if $T$ contains exactly $k-2$
  branch-vertices then every branch-vertex is of degree 3.
\end{lemma}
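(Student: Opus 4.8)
The plan is to prove both parts at once by a single degree-counting argument, using only the fact that a tree on $n$ vertices has exactly $n-1$ edges together with the handshake lemma $\sum_{v} \deg(v) = 2|E(T)|$.

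First I would partition $V(T)$ according to degree: the $k$ terminals (degree $1$), the vertices of degree exactly $2$ (say $m$ of them), and the branch-vertices, say $v_1, \dots, v_b$ with degrees $d_1, \dots, d_b \geq 3$. Then $|V(T)| = k + m + b$, so since $T$ is a tree $|E(T)| = k + m + b - 1$. Feeding this into the handshake lemma gives
$$ k + 2m + \sum_{i=1}^{b} d_i = 2(k + m + b - 1). $$
The $2m$ contributed by the degree-$2$ vertices cancels, leaving the clean identity $\sum_{i=1}^{b} d_i = k + 2b - 2$, which no longer involves $m$.

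Now the first statement falls out immediately: since every branch-vertex has $d_i \geq 3$, we get $3b \leq \sum_i d_i = k + 2b - 2$, hence $b \leq k - 2$. For the \emph{moreover} part I would note that if $b = k-2$ then $\sum_i d_i = k + 2(k-2) - 2 = 3(k-2) = 3b$; combined with $d_i \geq 3$ for each $i$, this forces $d_i = 3$ for every $i$, i.e.\ every branch-vertex has degree exactly $3$.

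There is essentially no obstacle here — the whole thing is a routine application of the handshake lemma. The only point worth a moment's care is to carry the degree-$2$ vertices through the accounting so that their count $m$ cancels, which is what makes the bound depend only on $k$ and lets the equality case be read off directly.
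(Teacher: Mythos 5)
Your proof is correct: the identity $\sum_i d_i = k + 2b - 2$ obtained from the handshake lemma immediately gives both the bound $b \leq k-2$ and the equality case, and the accounting (including the cancellation of the degree-$2$ vertices) is carried out properly. There is in fact nothing to compare against: the paper states this lemma as ``a basic fact whose proof is omitted,'' and your degree-counting argument is precisely the standard proof one would supply, so it fills that gap correctly. The only (harmless) implicit assumption is that $T$ has at least one edge, so that every vertex falls into one of your three degree classes; in the paper's applications $k \geq 3$, so this never matters.
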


The key lemma to our results is the following. 

\begin{lemma}[with Liu Wei \cite{nicolas.wei:kTree}]
  \label{l:linkTree}
  Let $k, l$ be integers such that $k\geq 3$ and $2 \leq l \leq k$.
  Let $G$ be a graph of girth at least $k$ and $x_1, \dots, x_l$ be
  $l$ distinct terminals of $G$.  Let $T$ be an induced tree of $G$
  whose terminals are $x_1, \dots, x_{l-1}$.  Let $Q$ be a path from
  $x_l$ to $w$ such that $w$ has at least one neighbor in $T$ and no
  vertex of $Q\sm w$ has neighbors in $T$. Then one and only one of
  the following outcomes holds:

  \begin{itemize}
  \item $T \cup Q$ contains a tree of $G$ that covers $x_1, \dots,
    x_l$.
  \item $k=l$. Moreover, $T$ and $Q$ can be described as follows (up
    to a relabelling of $x_1, \dots, x_{k-1}$):
    \begin{enumerate}
    \item $T$ is the union of $k-1$ vertex-disjoint paths $s_1 \tp
      \cdots \tp x_1$, $s_2 \tp \cdots \tp x_2$, \dots, $s_{k-1} \tp
      \cdots \tp x_{k-1}$;
    \item the only edges between these paths are such that $s_1\tp s_2
      \tp \cdots \tp s_{k-1}$ is a path;
    \item $N_T(w) = \{s_1, s_{k-1}\}$.
    \end{enumerate}
  \end{itemize}

  This is algorithmic in the sense that when $T$ and $Q$ are given,
  the tree of the first outcome or the relabelling of the second can
  be computed in time $O(n^3)$.
\end{lemma}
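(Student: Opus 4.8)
The plan is to reduce the statement to a purely combinatorial question about the \emph{apex tree} $\hat T = G[V(T)\cup\{w\}]$, i.e.\ the tree $T$ together with the single extra vertex $w$ joined to $N_T(w)$. First I would observe that since $x_l$ is a terminal and no vertex of $Q\sm w$ has a neighbour in $T$, every induced tree of $G$ inside $T\cup Q$ covering $x_1,\dots,x_l$ must contain all of $Q$, reach $T$ only through $w$, and attach $Q$ as a pendant path at $w$ (which creates no cycle and preserves inducedness). Hence the first outcome holds if and only if $\hat T$ has an induced subtree through $w,x_1,\dots,x_{l-1}$. Such a subtree meets $V(T)$ in a forest $F$ each of whose terminal-bearing components carries exactly one vertex of $N_T(w)$; conversely any such forest containing $x_1,\dots,x_{l-1}$ yields the tree (components of $F$ with no terminal may simply be discarded). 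So the task becomes: delete non-terminal vertices of $T$ to split it into pieces that distribute the neighbours of $w$, one per terminal-bearing component, or else exhibit the exceptional structure.

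The single arithmetic input from the hypotheses is the girth bound: for any $t_i,t_j\in N_T(w)$ the vertex $w$ together with the unique $t_i$--$t_j$ path of $T$ is a cycle of $G$, so $d_T(t_i,t_j)\ge k-2$. I would first dispose of $|N_T(w)|=1$, where $\hat T$ is already a tree and the first outcome holds at once. For $|N_T(w)|\ge 2$ I would consider the Steiner tree $T_0$ of $N_T(w)$ in $T$ and, for each pair of neighbours consecutive along $T_0$, try to delete an interior vertex of the connecting path having degree $2$ in $T$. Deleting such degree-$2$ vertices separates components without orphaning any pendant subtree, so if enough of them can be found to pull the neighbours of $w$ apart (sending each terminal-free piece to the discard) the resulting forest gives the tree. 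The example of an apex over a claw (delete the centre) shows that even a branch vertex of $T_0$ may be deleted harmlessly when every resulting piece still carries a neighbour of $w$.

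The crux, and the \emph{main obstacle}, is to show that the only way this deletion can fail is the stated exceptional configuration, and that there no tree exists. I would argue by contradiction: if no admissible $F$ exists, then along some connecting path between consecutive neighbours of $w$ every interior vertex must be a branch vertex of $T$ carrying a pendant subtree that contains a terminal but no neighbour of $w$ (otherwise that vertex, or a degree-$2$ vertex, could be cut), so each such interior vertex contributes a distinct terminal. Combining the girth bound (the path has length $\ge k-2$, hence $\ge k-3$ interior vertices) with the global bound $l-1\le k-1$ on the number of terminals (the shape of $T$ being controlled by Lemma~\ref{l:descTree}) forces everything tight: there is exactly one such path, $|N_T(w)|=2$, the path has length exactly $k-2$, every interior vertex has a single pendant path ending in a distinct terminal, $l=k$, and $N_T(w)=\{s_1,s_{k-1}\}$ are its ends. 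This is precisely the second outcome, and the delicate point is verifying that the counting leaves no slack (ruling out $|N_T(w)|\ge 3$ and any stray terminal) and pins down the ends and pendant lengths. Mutual exclusivity is then immediate: in this configuration keeping all of $T$ creates the unique length-$k$ cycle $w\tp s_1\tp\cdots\tp s_{k-1}\tp w$, while deleting any $s_i$ orphans the terminal hanging from it, so no induced tree covers all terminals and the first outcome genuinely fails.

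For the algorithmic claim I would realise the above constructively on $\hat T$. Computing $N_T(w)$, the Steiner tree, the connecting paths, and the terminal-bearing components are all tree computations costing $O(n)$ or $O(n^2)$; testing, for each of the $O(n)$ candidate interior vertices, whether its deletion yields an admissible split, and then assembling $F$, stays within $O(n^3)$. If the procedure succeeds it outputs the induced tree; if it fails, the same traversals read off the labelling $s_1,\dots,s_{k-1}$ of the second outcome. Thus in $O(n^3)$ time one either produces the covering tree or certifies the exceptional structure.
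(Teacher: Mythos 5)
Your overall strategy is the same as the paper's: work inside $G[V(T)\cup\{w\}]$, use the girth bound to force any path of $T$ between two neighbours of $w$ to have length at least $k-2$, kill cycles by deleting interior vertices of degree~2, and use the branch-vertex count of Lemma~\ref{l:descTree} to force a rigid spine when that fails. The reduction to an admissible forest and the mutual-exclusivity argument are fine. But there is a genuine gap at your crux, namely the claim that the counting ``forces everything tight: \dots $|N_T(w)|=2$ \dots and $N_T(w)=\{s_1,s_{k-1}\}$''. The girth bound plus Lemma~\ref{l:descTree} do force $l=k$, a spine $s_1\tp\cdots\tp s_{k-1}$ of length exactly $k-2$ carrying every branch vertex of $T$, and one pendant path per $s_i$ ending in a distinct terminal. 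They do \emph{not} exclude additional neighbours of $w$ sitting on the pendant paths, in particular on the pendant paths attached to the ends $s_1$ and $s_{k-1}$: such a neighbour creates no new branch vertex, adds no terminal, and violates no girth constraint provided it lies at distance at least $k-2$ from $s_1$ in $T$. In that configuration outcome~2 fails (its third condition requires $N_T(w)=\{s_1,s_{k-1}\}$), so to close your contradiction argument you must show that a covering induced tree then exists. This is exactly the step the paper supplies and your proposal omits: letting $w_i$ be the neighbour of $w$ on $P_i$ closest to $x_i$ (and $w_i=s_i$ if there is none), it treats separately the case $w_i\neq s'_i$ for all $i$ (take the truncated pendant paths $x_i\tp P_i\tp w_i$, the spine, and $Q$, then delete one $s_j$ with $w_j\neq s_j$) and the case $w_i=s'_i$ for some $i$, where the girth is invoked a \emph{second} time to force $k\leq 4$ and a different tree is exhibited. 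Without this construction, counting alone does not pin down $N_T(w)$, and the proof does not go through.

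A secondary, smaller gap: your positive direction (``if enough of them can be found to pull the neighbours of $w$ apart, the resulting forest gives the tree'') needs a global argument that the local cuts can be made simultaneously. The paper gets this from a clean dichotomy on basic paths (paths of $T$ between two neighbours of $w$ with no interior neighbour of $w$): either every basic path has an interior vertex of degree~2 in $T$ --- and then deleting one such vertex per surviving basic path provably yields a connected acyclic graph, because each deleted vertex is a degree-2 vertex lying on a cycle and every cycle of $T\cup\{w\}$ contains a basic path --- or some basic path has all interior vertices of degree at least~3, which launches the counting. Your mixed deletions (degree-2 vertices together with branch vertices whose pendant subtrees carry neighbours of $w$) interact, and you give no invariant guaranteeing that every component of the resulting forest keeps exactly one neighbour of $w$ and loses no terminal. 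This part can be repaired along the paper's lines, but as written it is an assertion, not a proof.
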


\begin{proof}
  Clearly, at most one of the outcomes holds (because if the second
  holds then no tree of $T\cup Q$ can cover $x_1, \dots, x_l$).  Let
  us prove that at least one of the outcomes holds.
  
  Let $W = \{w_1, \dots, w_i\}$ be the set of the neighbors of $w$ in
  $T$.  If $i=1$ then $T\cup Q$ is a tree that covers $x_1, \dots,
  x_{l}$ so let us suppose that $i\geq 2$.  Let us call \emph{basic
    path} any subpath of $T$ linking two distinct vertices of $W$ and
  with no interior vertices in $W$.  All the basic paths are on at
  least $k-1$ vertices because the girth of $G$ is at least $k$.  Now
  we consider two cases:

  \noindent{\bf Case 1:} for all basic paths $R$ of $T$ there exists
  an interior vertex $v_R$ of $R$ that has degree two in $T$.  Then,
  let $S \leftarrow T\cup Q$.  For all basic paths $R$, if $R\subseteq
  S$, then let $v_R$ be a vertex of degree two (in $T$) of $R$, let $S
  \leftarrow S\sm \{v_R\}$ and go the next path $R$.  At the end of
  this loop, one vertex of degree two is deleted from all basic paths.
  So, $S$ contains no more cycle, but is still connected because the
  deleted vertices have all degree~2 and exactly one is deleted in
  each basic path.  Hence, we obtain a tree $S$ that covers $x_1,
  \dots, x_{l}$.  This takes time $O(n^3)$ because we enumerate all
  the pairs $w_i, w_j$ to find the basic paths.

  \noindent{\bf Case 2:} we are not in Case~1, so there exists a basic
  path $R$ whose interior vertices are all of degree at least~3 in
  $T$.  Then, since $T$ has $l-1$ terminals, Lemma~\ref{l:descTree}
  says that it has at most $l-3$ branch-vertices.  On the other hand,
  since a basic path is on at least $k-1$ vertices (because the girth
  is at least~$k$), $R$ contains at least $k-3$ branch-vertices of
  $T$.  So in fact, because $l\leq k$, we have $k=l$ and $R$ contains
  all the $k-3$ branch-vertices of $T$.  Since $R$ has no interior
  vertex of degree~2, in fact $R$ contains $k-1$ vertices.  We name
  $s_1,\cdots ,s_{k-1}$ the vertices of $R$.  Note that $w$ is
  adjacent to $s_1$ and $s_{k-1}$ because $R$ is a basic path.  In
  particular, $s_1$ and $s_{k-1}$ are not terminals of~$G$.

  For all $1\leq i \leq k-1$, $s_i$ is a cutvertex of $T$ that
  isolates one terminal among $x_1, \dots, x_{k-1}$ from all the other
  terminals.  Up to a relabelling, we suppose that this terminal is
  $x_i$.  We name $P_i$ the unique path of $T$ between $x_i$ and
  $s_i$.

  Note that $w$ is not adjacent to $s_2, \dots, s_{k-2}$ (because $R$
  is a basic path).  So the second outcome of our lemma holds, unless
  $w$ has at least one neighbor in some $P_i \sm s_i$.  For $i=1,
  \dots, k-1$ , we let $s'_i$ be the neighbor of $s_i$ along $P_i$, if
  $w$ has a neighbor in $P_i$ then we name $w_i$ the neighbor of $w$
  closest to $x_i$ along $P_i$ and if no such neighbor exists, we put
  $w_i = s_i$.

  Suppose that for all $i=1, \dots, k-1$ we have $w_i\neq s'_i$.
  Then, the paths $x_i \tp P_i \tp w_i$, $i=1, \dots, k-1$ together
  with $Q$ and $s_1, \dots, s_{k-1}$ form a graph with a unique cycle:
  $w s_1 \dots s_{k-1}w$.  By deleting a vertex $s_j$ such that
  $w_j\neq s_j$, we obtain a tree that covers $x_1, \dots, x_k$.

  So, we may assume that for some $i$, $w_i=s'_i$ and up to symmetry
  we suppose $i\leq k/2$.  Then $ws_1\dots s_i s'_iw$ is a cycle on
  $i+2$ vertices, so $i+2\geq k$ because of the girth.  So, $k-2\leq
  k/2$, so $k\leq 4$.  Then the paths $x_i \tp P_i \tp w_i$, $i=1,
  \dots, k-1$, together with $Q$ form a tree that covers $x_1, \dots,
  x_k$.
\end{proof}

A graph is a \emph{$k$-structure}\index{K@$k$-structure} with respect to $k$ distinct
terminals $x_1, \dots, x_k$ if it is made of $k$ vertex-disjoints paths
of length at least one $P_1 = x_1 \tp \cdots \tp s_1$, \dots, $P_k =
x_k\tp \cdots \tp s_k$ such that the only edges between them are
$s_1s_2$, $s_2s_3$, \dots, $s_{k-1}s_k$, $s_ks_1$.

\begin{lemma}[with Liu Wei \cite{nicolas.wei:kTree}]
  \label{l:firststep}
  Let $k\geq 3$ be an integer.  Let $G$ be a connected graph of girth
  at least $k$ and $x_1, \dots, x_l$ be $l$ terminals where $1\leq l \leq
  k$.  Then either $G$ contains a tree that covers the $l$ terminals
  or $l=k$ and $G$ contains a $k$-structure with respect to $x_1,
  \dots, x_k$.

  This is algorithmic in the sense that we provide an $O(n^4)$
  algorithm that finds the tree or the $k$-structure. 
\end{lemma}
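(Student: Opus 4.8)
The plan is to prove the statement by induction on $l$, building an induced tree one terminal at a time and invoking Lemma~\ref{l:linkTree} at each step. For $l=1$ the single vertex $x_1$ is already a tree covering it, and since $k\geq 3$ we have $l<k$, so no $k$-structure is needed. For $l=2$, connectedness of $G$ gives a shortest, hence induced, path from $x_1$ to $x_2$; its two ends are terminals of $G$ and therefore exactly the terminals of the path, so this induced path is a tree covering $x_1,x_2$, and again $l=2<k$.

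For the inductive step with $l\geq 3$ I would first apply the lemma to the $l-1$ terminals $x_1,\dots,x_{l-1}$. Since $l\leq k$ we have $l-1\leq k-1<k$, so the $k$-structure alternative cannot occur for this sub-instance and we obtain a tree covering $x_1,\dots,x_{l-1}$. I take such a tree $T$ inclusion-wise minimal. As each $x_i$ has degree $1$ in $G$ and $T$ has at least two vertices, every $x_i$ is a leaf of $T$; conversely minimality forbids any leaf outside $\{x_1,\dots,x_{l-1}\}$, so the terminals of $T$ are exactly $x_1,\dots,x_{l-1}$ (in particular $x_l\notin V(T)$). Using that $G$ is connected, I then pick a path $Q=x_l\tp\cdots\tp w$ that is an inclusion-minimal initial segment of a path from $x_l$ to $T$ reaching a vertex $w$ with a neighbor in $T$; then $w$ has a neighbor in $T$ and no vertex of $Q\sm w$ does, so $T$ and $Q$ satisfy the hypotheses of Lemma~\ref{l:linkTree}.

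Lemma~\ref{l:linkTree} now yields one of two outcomes. If $T\cup Q$ contains a tree covering $x_1,\dots,x_l$, the first conclusion of the present lemma holds and we are done. Otherwise $l=k$ and we are in the structured case: $T$ splits into vertex-disjoint paths $P_i=x_i\tp\cdots\tp s_i$ ($1\leq i\leq k-1$) whose only mutual edges form the path $s_1\tp\cdots\tp s_{k-1}$, and $N_T(w)=\{s_1,s_{k-1}\}$. I would then set $P_k=Q$ and $s_k=w$ and check that $P_1,\dots,P_k$ form a $k$-structure: they are pairwise vertex-disjoint (the $P_i\subseteq T$ are disjoint by hypothesis and $Q$ meets $T$ in no vertex), the only edges between distinct paths are the $T$-edges $s_1s_2,\dots,s_{k-2}s_{k-1}$ together with $s_ks_1$ and $s_ks_{k-1}$ coming from $w$, which form exactly the cycle $s_1s_2\cdots s_{k-1}s_ks_1$, and each $P_i$ has length at least one (if some $x_i=s_i$ then $x_i$ would have degree $\geq 2$ in $G$, contradicting that it is a terminal; recall $s_1,s_{k-1}$ are non-terminals, and $w\neq x_k$ since $w$ has two neighbors $s_1\neq s_{k-1}$ in $T$). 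Since $T$ is induced and $Q$ can be chosen induced, the subgraph on $\bigcup_i V(P_i)$ is precisely a $k$-structure, giving the second conclusion.

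For the algorithm I would run this induction as a loop adding terminals $x_2,\dots,x_l$ one at a time: finding $Q$ and re-minimizing the tree each cost $O(n^2)$, while the dominant cost per step is the $O(n^3)$ invocation of the algorithmic part of Lemma~\ref{l:linkTree}. The loop runs $l-1<n$ times, for a total of $O(n^4)$, as claimed. Since Lemma~\ref{l:linkTree}, the genuinely delicate attachment argument, is already available, the only real care needed here is bookkeeping: keeping the current tree's terminals exactly equal to the already-covered $x_i$'s (so that Lemma~\ref{l:linkTree} applies), and correctly reading off the structured outcome as a $k$-structure by verifying vertex-disjointness, the length-at-least-one condition on the $P_i$, and that the cross-edges are exactly the prescribed cycle.
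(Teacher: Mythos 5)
Your proof is correct and takes essentially the same route as the paper's: induction on $l$ with trivial base cases, a minimal connecting path $Q$ from $x_l$ to the current tree, and an application of Lemma~\ref{l:linkTree}, with the same per-step $O(n^3)$ cost summing to $O(n^4)$. The extra care you take --- pruning $T$ so that its terminals are exactly $x_1,\dots,x_{l-1}$, and verifying that the second outcome of Lemma~\ref{l:linkTree} together with $Q$ really satisfies the definition of a $k$-structure (paths of length at least one, cross-edges forming exactly the cycle) --- only makes explicit details that the paper's proof leaves implicit.
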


\begin{proof}
  We suppose that $k$ is fixed and we prove the statement by induction
  on $l$.  For $l=1$ and $l=2$, the lemma is clear: a tree exists (for
  instance, a shortest path linking the two terminals).  Suppose the
  lemma holds for some $l-1<k$ and let us prove it for~$l$.  By the
  induction hypothesis there exists an induced tree $T$ of $G$ that
  covers $x_1, \dots, x_{l-1}$.  Let $Q$ be a path from $x_l$ to some
  vertex $w$ that has neighbors in $T$, and suppose that $Q$ is
  minimal with respect to this property.  Then, no vertex of $Q\sm w$
  has a neighbor in $T$.

  We apply Lemma~\ref{l:linkTree}.  If the first outcome holds, we
  have our tree.  Else, $T\cup Q$ is a $k$-structure.  All this can be
  implemented in time $O(n^4)$ because terminals are taken one by one,
  there are at most $n$ of them and for each of them we rely on basic
  subroutines like BFS (Breadth First Search, see~\cite{gibbons:agt})
  to find $Q$ and on the $O(n^3)$ algorithm of Lemma~\ref{l:linkTree}.
\end{proof}

Now, the idea to solve $k$-in-a-tree is the following.  Run the
algorithm described in the Lemma above.  This gives either the desired
tree or a $k$-structure.  In this last case, take all the remaining
vertices, and add them one by one.  In a series of four lemmas, we
show that these vertices attach in way that grows a kind of structure
certifying that no tree exists, or gives a tree.  We omit the proofs
of the three first lemmas because they are of the same kind: classical
attachment lemmas (we leave the fourth one which the shortest).

The four structures that we need correspond to the four
figures~\ref{fig:cubeEx}, \ref{fig:squareEx}, \ref{f:k4s}
and~\ref{f:cycle}.  They are respectively \emph{cubic structure},
\emph{square structure}, \emph{$K_4$-structure that decomposes the
  graph} and \emph{$k$-structure that decomposes the graph}.

\subsection*{Cubic structure}

A graph that has the same structure as the graph represented on
Fig~\ref{fig:cubeEx} is what we call a cubic structure: a graph $G$ is
said to be a \emph{cubic structure}\index{cubic structure} with respect to a 4-tuple of
distinct terminals $(x_1, x_2, x_3, x_4)$ if there exist sets $A_1,
\dots A_4, $ $B_1, \dots B_4$, $S_1, \dots, S_8$ and $R$ such that:

\begin{enumerate}
  \item
    $A \cup B \cup S \cup R = V(G)$;
  \item
    $A_1, \dots, A_4, B_1, \dots, B_4, S_1, \dots, S_8, R$ are
    pairwise disjoint;
  \item
    $x_i \in A_i$, $i= 1, \dots, 4$;
  \item
    $S_i$ is a stable set, $i= 1, \dots, 8$;
  \item
    $S_i$ is non-empty, $i= 1, \dots, 4$;
  \item
    \label{i:cubeNe}
    at most one of $S_5, S_6, S_7, S_8$ is empty;
  \item
    $S_i$ is complete to $(S_5 \cup S_6 \cup S_7 \cup S_8) \sm
    S_{i+4}$, $i= 1, 2, 3, 4$;
  \item
    $S_i$ is anticomplete to $S_{i+4}$, $i= 1, 2, 3, 4$;
  \item
    $S_i$ is anticomplete to $S_j$, $1 \leq i < j \leq 4$;
  \item
    $S_i$ is anticomplete to $S_j$, $5 \leq i < j \leq 8$;
  \item
    \label{i:cubeNAi}
    $N(A_i) = S_i$, $i= 1, 2, 3, 4$;
  \item
    \label{i:cubeNBi}
    $N(B_i) \subseteq S_i \cup N_S(S_i)$,  $i= 1, 2, 3, 4$;
  \item
    \label{i:cubeNR}
    $N(R) \subseteq S_5 \cup S_6 \cup S_7 \cup S_8$;
  \item
    \label{i:cubeACon}
     $G[A_i]$ is connected, $i= 1, 2, 3, 4$.
\end{enumerate}

A 17-tuple $(A_1, \dots A_4, $ $B_1, \dots B_4$, $S_1, \dots, S_8, R)$
of sets like in the definition above is a \emph{split}\index{split!of
  a cubic structure} of the cubic
structure.  On Figure~\ref{fig:cubic}, two cubic structures are
represented.  It is a routine matter to check that a cubic structure
contains no tree covering $x_1$, $x_2$, $x_3$ and $x_4$.  A \emph{cubic
  structure of a graph $G$} is a subset $Z$ of $V(G)$ such that $G[Z]$
is a cubic structure.  The following lemma whose proof is omitted shows
that if a cubic structure is discovered in a triangle-free graph, then
one can repeatedly add vertices to it, unless at some step a tree
covering $x_1, x_2, x_3, x_4$ is found.

\begin{lemma}[with Dehry and Picouleau \cite{nicolas.d.p:fourTree}]
  \label{l:cube}
  There is an algorithm with the following specification:

  \begin{description}
  \item
    {\sc Input:} a triangle-free graph $G$, four terminals $x_1, x_2,
    x_3, x_4$, a split of a cubic structure $Z$ of $G$, and a vertex
    $v\notin Z$.
  \item
    {\sc Output:} a tree of $G[Z \cup \{v\}]$ that covers $x_1, x_2,
    x_3, x_4$ or a split of the cubic structure $G[Z \cup \{v\}]$.
  \item
    {\sc Complexity: } $O(m)$.
  \end{description}
\end{lemma}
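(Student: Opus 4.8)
The plan is to treat this as an \emph{attachment lemma}: read off the adjacencies of $v$ to the parts of the given split, and show that they either force $v$ into exactly one part (so that $G[Z\cup\{v\}]$ is again a cubic structure) or else provide the extra connection needed to exhibit an induced tree through $x_1,x_2,x_3,x_4$. Write $S=S_1\cup\cdots\cup S_8$, call $S_1\cup\cdots\cup S_4$ the \emph{top} and $S_5\cup\cdots\cup S_8$ the \emph{bottom}; the bipartite graph between top and bottom is $K_{4,4}$ minus the matching $\{S_iS_{i+4}\}$. First I would scan $N(v)$ once and, using the vertex-labels of the split, record $N(v)\cap A_i$, $N(v)\cap B_i$, $N(v)\cap S_j$ and $N(v)\cap R$; this costs $O(m)$. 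Throughout I would exploit the symmetry group of the structure (the cube symmetries permuting the indices $1,\dots,4$ together with the swap of top and bottom), which collapses the number of genuinely different attachment patterns.

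The easy direction is to try each possible home for $v$ and verify the fourteen defining conditions, almost all of which are inherited unchanged. Concretely: if $N(v)\subseteq$ bottom, put $v\in R$ (only condition~\ref{i:cubeNR} is at stake, and it holds). If $v$ has a neighbour in some $A_i$ and $N(v)\cap Z\subseteq A_i\cup S_i$, put $v\in A_i$: connectivity of $G[A_i\cup\{v\}]$ is immediate and $N(A_i)$ is unchanged, so conditions~\ref{i:cubeNAi} and~\ref{i:cubeACon} survive. If $v$ is non-adjacent to $S_i$, complete to $(S_5\cup\cdots\cup S_8)\sm S_{i+4}$, anticomplete to the remaining top sets, to $S_{i+4}$, and to every $A_j$ and $B_j$ with $j\neq i$, then $v$ can join $S_i$ (and symmetrically for a bottom set). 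Finally, if $N(v)\cap Z\subseteq B_i\cup S_i\cup((S_5\cup\cdots\cup S_8)\sm S_{i+4})$ then $v$ joins $B_i$. In each case the only conditions to recheck are the few that mention the enlarged part, and these are exactly the hypotheses of the case, so the verification is routine.

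The substance is the complementary situation, where none of these placements is legal; this always happens because $N(v)$ \emph{links} two parts that the cubic structure keeps apart. The model case is $v$ adjacent to $a_1\in S_1$ and $a_2\in S_2$: then I route $x_1\leadsto a_1$ inside $A_1$ (a shortest, hence induced, path, using that every vertex of $S_i$ has a neighbour in $A_i$ by condition~\ref{i:cubeNAi} and that $G[A_i]$ is connected), close up $a_1\,v\,a_2$, continue $a_2\leadsto x_2$ inside $A_2$, and then hang $x_3,x_4$ onto $a_1$ (or $a_2$) through a single bottom vertex $c$ chosen in one of the at least three non-empty sets $S_5,\dots,S_8$ (condition~\ref{i:cubeNe}) that is complete to the two relevant top sets. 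Since distinct top sets are mutually anticomplete, distinct bottom sets are mutually anticomplete, and each $A_i$ meets the core only in $S_i$, the only chords that can spoil inducedness are edges from $v$ to the chosen connectors; but if $v$ is adjacent to a connector then $v$ already has a bottom neighbour, which either supplies an alternative connector avoiding $v$ or places us in another (symmetric) linking case. The remaining bad patterns --- $v$ linking a top set to its forbidden bottom set, or meeting an $A_i$ together with something outside $A_i\cup S_i$, or meeting two different $B_i$, and so on --- are handled the same way: each stray adjacency is converted into the chord-free connection that completes an induced tree covering the four terminals.

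The main obstacle is exactly this last paragraph: the attachment patterns of a single vertex against seventeen sets are numerous, and in every linking case one must produce an \emph{induced} tree, which forces a careful choice of the connecting core-vertices to avoid chords, leaning on the stability of the $S_j$, the anticompleteness conditions, and the triangle-freeness of $G$. Organising these cases under the symmetry group is what keeps the analysis finite and, with luck, short. Complexity is not an issue once the split is stored with vertex labels: classifying $N(v)$, the size comparisons needed to test completeness to a core set, the constant number of BFS calls used to build the terminal-to-core paths, and the search for a chord-free connector are each $O(m)$, so the whole procedure runs in $O(m)$.
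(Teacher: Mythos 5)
Your overall strategy (classify the attachment of $v$, then either re-certify the structure or build a tree) is the right flavour --- the paper itself omits this proof precisely because it is a ``classical attachment lemma'' of this kind --- but your case analysis rests on a dichotomy that is false: you assume that if $G[Z\cup\{v\}]$ is a cubic structure, then a valid split is obtained from the \emph{given} split by inserting $v$ into one of its seventeen parts. The sets $B_1,\dots,B_4$ and $R$ are slack: conditions~\ref{i:cubeNBi} and~\ref{i:cubeNR} are only upper bounds on their neighbourhoods, they may be empty, and adding $v$ can force vertices \emph{other than $v$} to migrate between them. Concretely, take a cubic structure in which $B_1=\{b_1\}$, $B_2=\{b_2\}$ and $b_1,b_2$ have no neighbours in $Z$ (this satisfies condition~\ref{i:cubeNBi} vacuously), and let $v$ be adjacent to exactly $b_1$ and $b_2$. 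No placement of $v$ is legal: $v\in R$ fails by condition~\ref{i:cubeNR} because of $b_1$, $v\in B_1$ fails by condition~\ref{i:cubeNBi} because of $b_2$, and so on. Yet there is no tree through $x_1,\dots,x_4$ in $G[Z\cup\{v\}]$, since $\{b_1,v,b_2\}$ is a connected component disjoint from the terminals and the rest of the graph is the old cubic structure. The correct output is a split of $G[Z\cup\{v\}]$ obtained by moving $b_1$ and $b_2$ \emph{together with} $v$ into $R$ (leaving $B_1,B_2$ empty). A second example of the same kind: $B_1=\{b\}$ with $N(b)=\{s\}$ for some $s\in S_1$, $R=\{r\}$ with $N(r)=\emptyset$, and $v$ adjacent to exactly $b$ and $r$; again no placement of $v$ alone works and no tree exists, but moving $r$ into $B_1$ along with $v$ gives a valid split. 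On such inputs your algorithm produces neither required output, and your claim that every non-placeable $v$ ``links two parts that the cubic structure keeps apart'' in a way that yields a tree is exactly what fails: linking two slack parts proves nothing. A correct proof must re-assign (at least) whole components of $G[B_1\cup\dots\cup B_4\cup R\cup\{v\}]$ according to how they attach to $S_1\cup\dots\cup S_8$, and that is where the real case analysis lies; this can still be organised in $O(m)$, but it is a different and larger analysis than the one you describe.

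Two smaller soundness slips in the placement rules: your condition for putting $v$ into a top set $S_i$ omits that $v$ must have a neighbour in $A_i$ (condition~\ref{i:cubeNAi} is an \emph{equality} $N(A_i)=S_i$, so every vertex of $S_i$ needs a neighbour in $A_i$) and omits that $v$ must have no neighbour in $R$ (otherwise condition~\ref{i:cubeNR} breaks). These are repairable, unlike the gap above. Your tree constructions in the genuine linking cases (e.g.\ $v$ adjacent to $S_1$ and $S_2$) are essentially fine, and your use of triangle-freeness to kill chords between $v$ and the chosen bottom connector is the right observation.
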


\subsection*{Square structure}

A graph that has the same structure as the graph represented on
Figure~\ref{fig:squareEx} is what we call a square structure: a graph
$G$ is said to be a \emph{square structure}\index{square structure} with respect to a 4-tuple
$(x_1, x_2, x_3, x_4)$ of distinct terminals if there are sets $A_1,
A_2, A_3, A_4, S_1, S_2, S_3, S_4, R$ such that:

\begin{enumerate}
  \item
    $A \cup S \cup R = V(G)$;
  \item
    $A_1, A_2, A_3, A_4, S_1, S_2, S_3, S_4, R$ are pairwise disjoint;
  \item
    $x_i \in A_i$, $i= 1, \dots, 4$;
  \item
    $S_i$ is a stable set, $i= 1, \dots, 4$;
  \item
    $S_1, S_2, S_3, S_4 \neq \emptyset$;
  \item
    $S_i$ is complete to $S_{i+1}$, where the addition of subscripts is
    taken modulo 4, $i= 1, 2, 3, 4$;
  \item
    $S_i$ is anticomplete to $S_{i+2}$, $i= 1, 2$;
  \item
    \label{i:squareNAi}
    $N(A_i) = S_i$ , $i= 1, 2, 3, 4$;
  \item
    $N(R) \subseteq S_1 \cup S_2 \cup S_3 \cup S_4$;
  \item
    \label{i:squareACon}
    $G[A_i]$ is connected, $i= 1, \dots, 4$.
 \end{enumerate}

 A 9-tuple $(A_1, \dots A_4, S_1, \dots, S_4, R)$ of sets like in the
 definition above is a \emph{split}\index{split!of a square structure} of the square structure.  On
 Figure~\ref{fig:square}, the smallest square structure is
 represented.  A square structure contains no tree covering $x_1$,
 $x_2$, $x_3$ and $x_4$.  A \emph{square structure of a graph $G$} is
 a subset $Z$ of $V(G)$ such that $G[Z]$ is a square structure. The
 following lemma, whose proof is omitted, shows that if a square
 structure is discovered in a triangle-free graph, then one can
 repeatedly add vertices to it, unless at some step a cubic structure
 or a tree covering $x_1, x_2, x_3, x_4$ is found:

 \begin{lemma}[with Dehry and Picouleau \cite{nicolas.d.p:fourTree}]
   \label{l:square}
  There is an algorithm with the following specification:

  \begin{description}
  \item
    {\sc Input:} a triangle-free graph $G$, four terminals $x_1, x_2,
    x_3, x_4$, a split of a square structure $Z$ of $G$, and a vertex
    $v \notin Z$.
  \item
    {\sc Output:} a tree of $G[Z \cup \{v\}]$ that covers $x_1, x_2,
    x_3, x_4$ or a split of some cubic structure of $G$ or a split of
    the square structure $G[Z \cup \{v\}]$.
  \item
    {\sc Complexity: } $O(m)$.
  \end{description}
\end{lemma}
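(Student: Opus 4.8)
The plan is to argue exactly as in the attachment lemmas Lemma~\ref{l:linkTree} and Lemma~\ref{l:cube}: fix a split $(A_1,\dots,A_4,S_1,\dots,S_4,R)$ of the given square structure $G[Z]$ and analyse the single new vertex $v$ through its neighbourhood $N(v)\cap Z$, distributed over the nine parts of the split. In every configuration I want to do one of three things: absorb $v$ into the split so that $G[Z\cup\{v\}]$ is again a square structure; splice $v$ into an induced tree covering $x_1,\dots,x_4$; or recognise that $v$ can be absorbed in no way and no covering tree exists, and in that case rebuild the certificate as a cubic structure. Two features of the split make the constructions routine: since $G[A_i]$ is connected, $N(A_i)=S_i$ and $S_i\neq\emptyset$, for each $i$ and each $s\in S_i$ there is an induced path (an \emph{arm}) from $x_i$ to $s$ with interior in $A_i$; and distinct bags are anticomplete and meet the rest of the graph only through their sets $S_i$, so arms interact only at the $S$-level and can be spliced onto any backbone built inside $S\cup\{v\}$ without creating chords.

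First I would extract the restrictions forced by triangle-freeness. Because $S_i$ is complete to $S_{i+1}$, the vertex $v$ cannot have a neighbour in two cyclically consecutive sets, so $N(v)\cap S$ is contained in one opposite pair; by the rotational symmetry $i\mapsto i+1$ of the structure I may assume $N(v)\cap S\subseteq S_1\cup S_3$. The same argument applied to a reservoir vertex shows each $r\in R$ also meets at most one opposite pair, and if $v$ has neighbours $a\in A_i$ and $s\in S_i$ then $a\not\sim s$. These constraints reduce the problem to bookkeeping over which indices $v$ ``reaches''. The two easy outcomes are then immediate. If $N(v)\cap Z\subseteq S$ (in particular whenever $v$ touches only the opposite sets $S_1,S_3$ and nothing else), then $v$ meets the defining requirement of $R$, and $(A_1,\dots,A_4,S_1,\dots,S_4,R\cup\{v\})$ is a split of a square structure of $G[Z\cup\{v\}]$. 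If $v$ has a neighbour in exactly one bag $A_i$ and $N(v)\cap Z\subseteq A_i\cup S_i$, then $G[A_i\cup\{v\}]$ is connected and the equality $N(A_i\cup\{v\})=S_i$ is preserved, so $v$ joins $A_i$ and we again output a square structure.

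The ``connecting'' configurations are the ones producing a tree. Whenever $v$ gives an alternative route to some terminal $x_i$ that avoids the set $S_i$ — for instance $v$ adjacent to $A_i$ and to a set $S_j$ with $j\neq i$, or $v$ adjacent to two different bags — I would use that route to reach $x_i$ and then pick representatives $s_\ell\in S_\ell$ of the three remaining indices. Since one of $s_1,s_2,s_3,s_4$ is now omitted, the three chosen ones no longer close up the obstructing $4$-cycle on $s_1,s_2,s_3,s_4$: they induce a path, and together with $v$ and the four arms they form an induced spider covering all four terminals. Inducedness is checked as above, choosing the bag-neighbour of $v$ nearest to the terminal along its arm so that several neighbours of $v$ inside one bag do not create spurious chords.

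The main obstacle is the remaining case, where $v$ can be put neither into $R$ (because it has a neighbour outside $S$, typically in the reservoir or in a bag) nor into a single $A_i$, and yet every candidate tree is still forced to use all of $s_1,s_2,s_3,s_4$, hence the $4$-cycle. Here the plan is to show that the precise pattern of forced edges is the ``$K_{4,4}$ minus a perfect matching'' adjacency of the cube, and then to produce a cubic split. The delicate point is the relabelling: the terminals sit on the $4$-cycle $S_1,\dots,S_4$ of the square, whereas in a cubic structure they must sit on a pairwise anticomplete family $S_1,\dots,S_4$, so the conversion has to absorb the old attachment vertices into the bags $A_i$, redistribute the reservoir together with $v$ among the eight sets $S_1,\dots,S_8$ and the secondary bags $B_1,\dots,B_4$, and then verify all fourteen defining conditions of a cubic structure — in particular that at most one of $S_5,\dots,S_8$ is empty, that $N(B_i)\subseteq S_i\cup N_S(S_i)$ and $N(R)\subseteq S_5\cup\dots\cup S_8$ hold, and that the two groups of sets carry exactly the cube adjacencies. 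This verification, rather than any single clever idea, is where the real work lies; it is the same flavour of long but mechanical checking that makes Lemma~\ref{l:cube} tedious. As there, every branch only inspects the edges incident to $v$ and performs a bounded number of reachability computations among the already-classified parts, so the whole procedure runs in time $O(m)$.
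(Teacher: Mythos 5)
First, a point of reference: the paper does not actually prove Lemma~\ref{l:square} --- it is one of the attachment lemmas whose proofs are explicitly omitted and deferred to \cite{nicolas.d.p:fourTree} --- so your attempt can only be measured against what such a proof must accomplish. Your overall architecture (absorb $v$, or build a tree, or convert to a cubic split), and your triangle-freeness observations, are the right starting point, but your central dichotomy is false, in both directions. The simplest counterexample: let $Z$ be the smallest square structure ($A_i=\{x_i\}$, $S_i=\{s_i\}$, $R=\emptyset$) and let $N(v)\cap Z=\{x_1,s_2,s_4\}$. This falls under your ``connecting'' case ($v$ adjacent to $A_1$ and to $S_2$), yet your spider is not induced --- $\{v,s_2,s_3,s_4\}$ induces a $4$-cycle --- and in fact no covering tree exists: any connected induced subgraph through the four terminals must contain $s_2,s_3,s_4$, and $x_1$ can be hooked on only through $s_1$ or through $v$, each of which closes a $4$-cycle. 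The correct output here is again a \emph{square} structure, obtained by the absorption $S_1\leftarrow S_1\cup\{v\}$ (legitimate because $v$ sees $A_1$, is complete to $S_2\cup S_4$, and misses $S_1\cup S_3$ by triangle-freeness). This absorption type --- $v$ into an attachment set $S_i$ --- is entirely missing from your list, so on this input your algorithm would go hunting for a tree that does not exist.

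Second, and more damaging, the cubic outcome arises exactly inside your ``connecting'' case, not in the residual case you describe. Take the square structure with $A_1=\{x_1\}$, $A_2=\{x_2,a_2\}$, $A_3=\{x_3\}$, $A_4=\{x_4,a_4\}$, $S_i=\{u_i\}$, $R=\emptyset$, and let $N(v)\cap Z=\{a_2,a_4,u_3\}$: this is precisely your ``$v$ adjacent to two different bags'' case. But $G[Z\cup\{v\}]$ is the cube ($K_{4,4}$ minus a perfect matching) minus one vertex, with a pendant terminal on each vertex of one side; it is a cubic structure --- take $A'_i=\{x_i\}$, $S'_1=\{u_1\}$, $S'_2=\{a_2\}$, $S'_3=\{u_3\}$, $S'_4=\{a_4\}$, $S'_5=\{v\}$, $S'_6=\{u_4\}$, $S'_7=\emptyset$, $S'_8=\{u_2\}$, $B'_i=\emptyset$, $R'=\emptyset$ --- and hence admits no covering tree. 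Note that here the candidate trees are \emph{not} ``forced to use all of $s_1,\dots,s_4$'': they may bypass $S_2$ and $S_4$ through $v$, and they die instead on the new $4$-cycles $v\,u_3\,u_2\,a_2\,v$ and $v\,u_3\,u_4\,a_4\,v$. So your stated trigger for the cubic case never fires on the very instances that require it, while your tree-building case swallows them and outputs a non-induced ``tree''. The real content of the lemma is exactly the case analysis that separates these configurations and performs the non-obvious relabelling in which old bag vertices ($a_2,a_4$) become attachment sets of the cube while the old attachment vertices ($u_2,u_4$) migrate to its far side; your proposal contains no test that distinguishes these from the genuine tree cases.
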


\subsection*{$K_4$-structure that decomposes a graph}

A graph is a \emph{$K_4$-structure}\index{K@$K_4$-structure} with
respect to 6 distinct terminals $x_{ab}, x_{ac}, x_{ad}, x_{bc},
x_{bd}, x_{cd}$ if it is made of 6 vertex-disjoints path of length at
least one $P_{ab} = x_{ab}\tp \cdots \tp s_{ab}$, $P_{ac} = x_{ac}\tp
\cdots \tp s_{ac}$, $P_{ad} = x_{ad}\tp \cdots \tp s_{ad}$, $P_{bc} =
x_{bc}\tp \cdots \tp s_{bc}$, $P_{bd} = x_{bd}\tp \cdots \tp s_{bd}$,
$P_{cd} = x_{cd}\tp \cdots \tp s_{cd}$ and four vertices $a, b, c, d$
such that the only edges between them are $as_{ab}$, $as_{ac}$,
$as_{ad}$, $bs_{ab}$, $bs_{bc}$, $bs_{bd}$, $cs_{ac}$, $cs_{bc}$,
$cs_{cd}$, $ds_{ad}$, $ds_{bd}$, $ds_{cd}$.  See Figure~\ref{f:k4s}.
We put $X= \{x_{ab}, x_{ac}, x_{ad}, x_{bc}, x_{bd}, x_{cd}\}$.

We use the following ordering of the letters $a$, $b$, $c$, $d$: $a <
b < c < d$.  We say that a $K_4$-structure $K$ in a graph $G$
\emph{decomposes}\index{decompose!$K_4$-structure that} $G$ if the two following conditions hold:
\begin{enumerate}
\item\label{i:d1} for all $i, j$ such that $a\leq i < j \leq d$, $\{i, j\}$ is a
 cutset of $G$ that separates $x_{ij}$ from $X\sm \{x_{ij}\}$;
\item\label{i:d2} for all $i, j$ such that $a\leq i< j \leq d$,
  $\{s_{ij}\}$ is a cutset of $G$ that separates $x_{ij}$ from $X\sm
  \{x_{ij}\}$.
\end{enumerate}

When a $K_4$-structure decomposes a graph, then the graph contains no
tree covering $X$.  Because suppose that $H$ is a connected induced
subgraph of $G$ covering $X$. Then $H$ must contain at least three
vertices among $a, b, c, d$, because if it fails to contain two of
them, say $a, b$, then $x_{ab}$ is isolated from the rest of the graph
because of Condition~\ref{i:d1}.  So, we may assume that $H$ contains
$a, b, c$.  Also, because of Condition~\ref{i:d2}, $H$ must contain
$s_{ab}$, $s_{bc}$ and $s_{ac}$.  So, $H$ contains the cycle
$as_{ab}bs_{bc}cs_{ac}a$.  Hence, $H$ cannot be a tree.

\begin{lemma}[with Liu Wei \cite{nicolas.wei:kTree}]
  \label{l:k4}
  If a graph $G$ of girth 6 contains a $K_4$-structure $K$ with
  respect to 6 terminals $x_{ab}, x_{ac}, x_{ad}, x_{bc}, x_{bd},
  x_{cd}$ then one and only one of the following outcomes holds:
  \begin{itemize}
  \item $K$ decomposes $G$;
  \item $G$ contains a tree that covers $x_{ab}, x_{ac}, x_{ad},
    x_{bc}, x_{bd}, x_{cd}$.
  \end{itemize}
  This is algorithmic in the sense that if $K$ is given, testing
  whether $K$ decomposes $G$ or outputting the tree can be performed in
  time $O(n^4)$.
\end{lemma}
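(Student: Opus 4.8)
The two outcomes are mutually exclusive: the paragraph preceding the lemma already shows that if a $K_4$-structure decomposes $G$ then $G$ has no tree covering $X$. So it suffices to prove that at least one outcome holds, and I would do this by contraposition: assuming $G$ has no induced tree covering $X$, I would show that all twelve separation requirements defining ``$K$ decomposes $G$'' (Conditions~1 and~2 for each of the six pairs) are satisfied. Deciding these conditions is the purely algorithmic part: for a fixed pair $ij$ one deletes $\{i,j\}$ (resp.\ $s_{ij}$) and runs a breadth-first search from $x_{ij}$, checking it reaches no other terminal; this costs $O(n^2)$ per condition, and since there are only twelve conditions, when they all hold one simply outputs $K$ within the claimed time.

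The backbone of the argument is a reduction to Lemma~\ref{l:linkTree}. First I would record the rigidity of the core of $K$, which is a subdivision of $K_4$ on the hubs $a,b,c,d$ and the spokes $s_{ij}$: for any set $H\subseteq\{a,b,c,d\}$ of hubs, the induced subgraph on $H$ together with all six spokes is disconnected when $|H|\le 2$ and contains a hexagon $a\,s_{ab}\,b\,s_{bc}\,c\,s_{ac}\,a$ when $|H|\ge 3$, so no induced tree lying entirely in the core can cover all six terminals. Deleting one spoke from such a hexagon, however, turns it into the spine of a caterpillar: removing $s_{ac}$ leaves the induced tree $T_0$ with spine $a\,s_{ab}\,b\,s_{bc}\,c$, whose five vertices carry pendant paths to the five distinct terminals $x_{ad},x_{ab},x_{bd},x_{bc},x_{cd}$. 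This $T_0$ is exactly of the shape appearing in the second outcome of Lemma~\ref{l:linkTree} with $k=l=6$: a union of five pendant paths whose roots $a,s_{ab},b,s_{bc},c$ form a path with ends $a$ and $c$.

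Now fix the omitted terminal $x_{ac}$ and a minimal path $Q$ from $x_{ac}$ to a vertex $w$ having a neighbour in $T_0$. By Lemma~\ref{l:linkTree} either $T_0\cup Q$ contains a tree covering all six terminals --- contradicting our assumption --- or the second outcome holds, which forces $N_{T_0}(w)=\{a,c\}$. Here the girth hypothesis does the decisive work: any common neighbour $w\neq s_{ac}$ of $a$ and $c$ would close the $4$-cycle $a\,w\,c\,s_{ac}\,a$, which is forbidden, so $s_{ac}$ is the unique common neighbour of $a$ and $c$ and hence $w=s_{ac}$ and $Q$ runs through $s_{ac}$. Choosing $Q$ to avoid $s_{ac}$ (possible exactly when Condition~2 for $ij=ac$ fails) therefore contradicts the second outcome and produces a tree; by symmetry Condition~2 holds for every pair.

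The main obstacle is Condition~1. When $\{a,c\}$ fails to separate $x_{ac}$ while $s_{ac}$ still does, every escaping path from $x_{ac}$ is forced through $s_{ac}$ and then leaves it via an external neighbour $y\notin\{a,c\}$ along a path $Q^{*}$ to another terminal; the minimal first-hitting $Q$ may still stop at $w=s_{ac}$, so the second outcome is not broken automatically. Here I would use the external exit $s_{ac}\,y\cdots$ together with a re-choice of the base tree (covering $x_{ac}$ through the path $P_{ac}$ and rerouting one hub-connection through $Q^{*}$) to graft an induced tree covering all six terminals. As in the proof of Lemma~\ref{l:linkTree}, the girth-$6$ hypothesis is what keeps this tractable: any chord between the grafted path and the core, or any alternative attachment, would create a cycle of length at most five and is excluded, so the rerouted object is automatically chordless. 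The whole construction is local --- finding the witnessing path and $Q$ by breadth-first search, then applying the $O(n^3)$ subroutine of Lemma~\ref{l:linkTree} over the constant number of conditions --- and stays within $O(n^4)$.
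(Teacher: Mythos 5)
First, a point of reference: the paper never proves this lemma. It is one of the ``three first lemmas'' whose proofs are explicitly omitted (only the cited paper \cite{nicolas.wei:kTree} contains them); the only attachment proof given in the text is that of the analogous Lemma~\ref{l:k}. So your attempt can only be checked against that analogue, and doing so exposes a genuine gap. Your skeleton is sensible (mutual exclusivity from the paragraph preceding the lemma, BFS for the testing part, reduction to Lemma~\ref{l:linkTree} applied to the five-terminal tree $T_0$ with spine $a \tp s_{ab} \tp b \tp s_{bc} \tp c$), but the pivotal deduction is false. You claim the second outcome of Lemma~\ref{l:linkTree} forces $N_{T_0}(w)=\{a,c\}$, whence $w=s_{ac}$ by girth. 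However, the decomposition of $T_0$ into five pendant paths joined by a spine is \emph{not} unique: the spine must contain the three branch vertices $s_{ab},b,s_{bc}$, but its ends may be taken to be either $a$ or the neighbour $p_{ab}$ of $s_{ab}$ on $P_{ab}$, and either $c$ or the neighbour $p_{bc}$ of $s_{bc}$ on $P_{bc}$. (With spine $p_{ab}\tp s_{ab}\tp b\tp s_{bc}\tp c$, the pendant path at $s_{ab}$ becomes $s_{ab}\tp a\tp s_{ad}\tp\cdots\tp x_{ad}$, and all conditions of the second outcome hold.) So the outcome only yields $N_{T_0}(w)\in\bigl\{\{a,c\},\{a,p_{bc}\},\{p_{ab},c\},\{p_{ab},p_{bc}\}\bigr\}$ --- exactly the four-case situation that the paper's proof of Lemma~\ref{l:k} makes explicit when it writes that $N_{K'}(w)$ is one of $\{s_2,s_k\}$, $\{s_2,s'_{k-1}\}$, $\{s'_3,s_k\}$, $\{s'_3,s'_{k-1}\}$. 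Girth~$6$ kills only the first case (the $4$-cycle $a\,w\,c\,s_{ac}\,a$); in the other three, $w$ creates hexagons, which are permitted, $w\neq s_{ac}$ in general, and your contradiction ``$Q$ avoids $s_{ac}$'' evaporates. These cases cannot be absorbed silently: for $N_{T_0}(w)=\{p_{ab},p_{bc}\}$ the graph $T_0\cup Q$ contains \emph{no} tree covering the six terminals (deleting any vertex of its unique cycle disconnects a terminal), so one must explicitly build a covering tree using the hub $d$ and its three pendant paths, with sub-cases according to whether $Q\cup\{w\}$ attaches to $d$. That construction is the actual content of the attachment lemma, and it is missing from your proposal.

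The second gap is Condition~1, which you yourself flag as ``the main obstacle'' but then only sketch: ``re-choice of the base tree \dots\ graft an induced tree'' is a declared intention, not an argument, and its sole concrete justification --- that any chord between the grafted path and the core would create a cycle of length at most five and is therefore excluded --- is simply wrong: the structure abounds in hexagons (e.g.\ $a\,s_{ab}\,b\,s_{bc}\,c\,s_{ac}\,a$), so girth~$6$ excludes no such chord, and the grafted object is not ``automatically chordless.'' Here an honest proof must either redo the escape-path analysis with a different base tree (for instance $T_0$ with the branch at $c$ rerouted through $P_{ac}$ and $s_{ac}$, so that Lemma~\ref{l:linkTree} can be invoked from the terminal $x_{cd}$), or follow the inductive scheme of the proof of Lemma~\ref{l:k}: maintain an induced subgraph $H\supseteq K$ decomposed by $K$, add vertices one at a time, and in each failure case produce the tree. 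Until the four attachment cases and Condition~1 are handled by explicit constructions, the proposal does not prove the lemma, even though its overall strategy points in the right direction.
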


\subsection*{$k$-structure that decomposes the graph}

For $k$-structures, we assume that notation like in the definition is
used.  We put $X= \{x_1, \dots, x_k\}$.  We say that a $k$-structure
$K$ in a graph $G$ \emph{decomposes}\index{decompose!$k$-structure
  that} $G$ if for all $i$ such that $1\leq i \leq k$, $\{s_i\}$ is a
cutset of $G$ that separates $x_{i}$ from $X\sm \{x_{i}\}$.  When a
$k$-structure decomposes a graph, it is clear that the graph contains
no tree covering $X$.

\begin{lemma}[with Liu Wei \cite{nicolas.wei:kTree}]
  \label{l:k}
  Let $k\geq 5$ be an integer. If a graph $G$ of girth at least $k$
  contains a $k$-structure $K$ with respect to $k$ terminals $x_1,
  \dots, x_k$ then one of the following outcomes holds:
  \begin{itemize}
  \item $K$ decomposes $G$;
  \item $k=6$ and there exists a vertex $v$ of $G\sm K$ such that
    $K\cup\{v\}$ is a $K_4$-structure with respect to $x_1, \dots,
    x_6$;
  \item $G$ contains a tree that covers $X$.
  \end{itemize}
  This is algorithmic in the sense that testing whether $K$ decomposes
  $G$ or outputting the tree or outputting a relabelling showing that
  $K\cup\{v\}$ is a $K_4$-structure can be performed in time $O(n^4)$.
 \end{lemma}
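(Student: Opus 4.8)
The plan is to dispose of the decomposing case by a connectivity test and then reduce the non-decomposing case to the key Lemma~\ref{l:linkTree}. First I would test, for each $i$, whether $\{s_i\}$ separates $x_i$ from $X\sm\{x_i\}$ by deleting $s_i$ and running a breadth-first search from $x_i$. If every $s_i$ separates, then $K$ decomposes $G$ and we are in the first outcome; this costs $O(nm)$. Otherwise fix an index $i$ for which $\{s_i\}$ fails to separate, and set $T:=K\sm V(P_i)$. Since $K$ is a $k$-structure, $T$ is an induced tree whose $k-1$ leaves are exactly the terminals $x_j$ ($j\neq i$); its natural spine is the broken cycle $s_{i+1}\tp\cdots\tp s_{i-1}$ and its legs are the $P_j$, so by Lemma~\ref{l:descTree} it has exactly $k-3$ branch-vertices, all of degree~$3$. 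Let $Q$ be a shortest path from $x_i$ to $T$ in $G\sm s_i$, truncated just before it reaches $T$, so its last vertex $w$ is adjacent to $T$ while no earlier vertex of $Q$ has a neighbour in $T$. Now apply Lemma~\ref{l:linkTree} with $l=k$.

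If the first outcome of Lemma~\ref{l:linkTree} holds we obtain an induced tree of $T\cup Q\subseteq G\sm s_i$ covering $X$, which is the third conclusion. So assume the second outcome: $T$ decomposes as $k-1$ vertex-disjoint legs hanging off a spine $t_1\tp\cdots\tp t_{k-1}$, with $N_T(w)=\{t_1,t_{k-1}\}$. The clean observation I would record here is that this output is itself a $k$-structure $K'$ on the whole of $X$: the cycle $w\tp t_1\tp\cdots\tp t_{k-1}\tp w$ has length $k$ and is induced (the spine has no chord, and $w$ meets the $t_j$ only at $t_1,t_{k-1}$), while $Q$ and the $k-1$ legs are pendant paths attached only at this cycle.

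The heart of the matter is to compare $K'$ with $K$. Because $T=(C\sm s_i)$ plus its legs, the spine $t_1\tp\cdots\tp t_{k-1}$ must pass through all $k-3$ branch-vertices $s_{i+2},\ldots,s_{i-2}$, so it agrees with the broken cycle except possibly for a \emph{turn} at each of the two end branch-vertices (either the end is $s_{i+1}$, resp.\ $s_{i-1}$, or the spine turns into the pendant $P_{i+2}$, resp.\ $P_{i-2}$). Since $N_T(s_i)=\{s_{i+1},s_{i-1}\}$, the vertices $s_i$ and $w$ both close up the path through $s_{i+2},\ldots,s_{i-2}$, so $C\cup C'$ is a theta graph whose two non-shared sides are equal, of length $2$, $3$ or $4$ according to whether $0$, $1$ or $2$ ends turned. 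The cycle these two sides form has length $4$, $6$ or $8$; as the girth is $k$ it must be at least $k$, which already kills the no-turn case ($4\ge k$ is impossible for $k\ge5$) and forces $k\le 6$ when exactly one end turns and $k\le 8$ when both turn.

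It remains, in each surviving case, either to splice $C$ and $C'$ into an induced tree covering $X$ or to recognise a $K_4$-structure. I would build the tree by keeping the common path $s_{i+2}\tp\cdots\tp s_{i-2}$ with its legs, routing $x_i$ through exactly one of $P_i$ or $Q$, and using the remaining hub-to-hub detour of $C$ or $C'$ to re-attach the one or two legs stranded by the turn(s) (as in the small case $k=5$, where an end-leg is recovered through $s_{i+1}\tp s_i\tp\ldots$); the girth bound is what guarantees that the vertices kept induce no chord and that the discarded detour carries no essential terminal. This construction succeeds in every surviving case except the perfectly symmetric one, $k=6$ with both ends turned and $Q$ reduced to $P_i$: there the three pendant roots that $w$ sees are the neighbours of $s_i,s_{i+2},s_{i+4}$ on $P_i,P_{i+2},P_{i+4}$, sitting at the three alternate positions of the $6$-cycle, so $w$ is exactly the fourth branch-vertex of a subdivided $K_4$ and $K\cup\{w\}$ is a $K_4$-structure, the second conclusion. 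The main obstacle is precisely this final step: checking, case by case under the girth constraint, that the splice yields an \emph{induced} tree and isolating the unique residual configuration that produces the $K_4$-structure rather than a tree; everything else (the decomposition test, the reduction to Lemma~\ref{l:linkTree}, and the theta/girth bound) is routine, and each piece runs in $O(n^4)$, dominated by the call to Lemma~\ref{l:linkTree}.
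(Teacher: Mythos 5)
Your skeleton is the same as the paper's: test whether $K$ decomposes $G$; if some $s_i$ fails to separate, apply Lemma~\ref{l:linkTree} to the tree $T=K\sm P_i$ and a path $Q$ ending at a vertex $w$ attached to $T$; the relabelling in the second outcome forces $N_T(w)$ to be one of $\{s_{i+1},s_{i-1}\}$, $\{s_{i+1},s'_{i-2}\}$, $\{s'_{i+2},s_{i-1}\}$, $\{s'_{i+2},s'_{i-2}\}$, the first of which dies by a square, and the doubly-turned configuration at $k=6$ is where the $K_4$-structure appears. (The paper wraps this in an induction growing a decomposed subgraph vertex by vertex, but that is cosmetic; your direct version is fine.) Your theta observation --- one turn forces a $6$-cycle hence $k\le 6$, two turns an $8$-cycle hence $k\le 8$ --- is correct and not in the paper, but it only restricts the values of $k$ for which work remains; it does none of that work.

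The genuine gap is the step you defer as ``the main obstacle'': building the induced trees in the surviving cases. That step is the bulk of the paper's proof, and it is organized around a case split your proposal never sets up, namely the attachment $N_{P_i}(w)$ of $w$ to the deleted leg: $w$ has no neighbour in $P_i$; $w$ has a neighbour in $P_i\sm\{s_i,s'_i\}$ (split again on whether $ws_i\in E(G)$); $N_{P_i}(w)=\{s_i\}$, which forces $k=5$ via the $C_5$ given by $s_i\tp w\tp s'_{i+2}\tp s_{i+2}\tp s_{i+1}\tp s_i$ and needs its own tree $\{w\}\cup K\sm\{s_{i+2},s_{i+3}\}$; and $N_{P_i}(w)=\{s'_i\}$, the only case producing the $K_4$-structure (at $k=6$). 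Moreover, one of the two branches of your proposed splice is unsound: if you route $x_i$ through $Q$, you must re-attach the stranded legs through $s_i$ (e.g.\ the leg of $x_{i-1}$ via $s_{i-1}\tp s_i\tp s_{i+1}$), but the interior of $Q$ is only known to have no neighbours in $T$ --- it may have arbitrary edges to $s_i$ and to $P_i$ --- so the spliced subgraph need not be an induced tree, and girth does not exclude such edges (a single edge from $Q$ to $s_i$ creates no short cycle in general). This is exactly why every tree the paper constructs lives inside $K\cup\{w\}$ and never uses the interior of $Q$: the only safe routing for $x_i$ is along $P_i$, possibly short-cut through $w$ by taking a shortest path from $w$ to $x_i$ inside $P_i\cup\{w\}$. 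Without the case analysis on $N_{P_i}(w)$, and with the $Q$-routing option on the table, your claim that the construction ``succeeds in every surviving case except'' the symmetric one is an assertion, not a proof.
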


\begin{proof}
  Let $H$ be an induced subgraph of $G$ that contains $K$ and such
  that $K$ decomposes $H$ ($H$ exists since $K$ decomposes $K$). We
  show that for any vertex $v$ of $G\sm H$, $H\cup \{v\}$ either is
  decomposed by $K$ or is a $K_4$-structure or contains a tree
  covering $X$.  This will prove the theorem by induction and be the
  description of an $O(n^4)$ algorithm since for each~$v$, the proof
  gives the way to actually build the tree or the relabelling by
  calling the algorithm of Lemma~\ref{l:linkTree} and searching the
  graph (with BFS for instance).  Note also that testing whether $K$
  decomposes some graph can be performed in time $O(km)$, or $O(nm)$
  since $k\leq n$, by $k$ checks of connectivity.

  Suppose that $H\cup \{v\}$ is not decomposed by $K$.  Let $Y$
  (resp. $Z$) be the connected component of $H\sm \{s_1\}$ that
  contains $x_{1}$ (resp. that contains $K' = K\sm P_{1}$).  Up to
  symmetry, we may assume that $v$ has a neighbor in $Y$ and a
  neighbor in $Z$.  Let $Q$ be a shortest path in $Y\cup Z \cup \{v\}$
  from $x_1$ to some vertex $w$ that has a neighbor in $K'$.  Note
  that $Q$ must go through~$v$.  Because $K'$ is a tree that covers
  $X\sm \{x_{1}\}$, we may apply Lemma~\ref{l:linkTree} to $K'$ and
  $Q$ in $Q \cup K'$.  So, either we find the tree or $w$ has exactly
  two neighbors in $K'$ and $N_{K'}(w)$ must be one of the following:
  $\{s_{2}, s_{k}\}$, $\{s_{2}, s'_{k-1}\}$, $\{s'_{3}, s_{k}\}$,
  $\{s'_{3}, s'_{k-1}\}$ where $s'_i$ denotes the neighbor of $s_i$
  along $P_i$.

  When $N_{K'}(w) = \{s_{2}, s_{k}\}$, we observe that $s_2s_1s_kw$ is
  a square, contradicting our assumption on the girth. 

  When $N_{K'}(w) = \{s_{2}, s'_{k-1}\}$ (or symmetrically $\{s'_{3},
  s_{k}\}$) then $w$ is not adjacent to $s'_1$ (else $s'_1s_1s_2w$ is
  a square).  If $w$ has a neighbor in $P_{1}$, we let $P$ be a
  shortest path from $w$ to $x_{1}$ in $P_{1} \cup \{w\}$.  Else, we
  let $P = P_{1}$.  We observe that $\{w\} \cup P \cup (K' \sm
  \{s_{k-1}\})$ is a tree that covers $X$.

  We are left with the case when $N_{K'}(w) = \{s'_{3}, s'_{k-1}\}$.
  Suppose first that $w$ has no neighbor in~$P_1$.  Then $\{w\} \cup K
  \sm \{s_3\}$ is a tree that covers $X$.  Suppose now that $w$ has a
  neighbor in $P_1\sm \{s_1, s'_1\}$.  We let $P$ be a shortest path
  from $w$ to $x_{1}$ in $\{w\} \cup (P_{1} \sm \{s_1, s'_1\})$.  If
  $ws_1 \notin E(G)$ then $P \cup \{s_1\} \cup (K\sm (P_1 \cup
  \{s_3\}))$ induces a tree that covers~$X$.  If $ws_1 \in E(G)$ then
  we observe that $P \cup \{s_1\} \cup (K\sm (P_1 \cup \{s_3,
  s_{k-1}\}))$ induces a tree that covers~$X$.  

  So we may assume that $N_{P_1}(w)$ is one of $\{s_1\}$,
  $\{s'_1\}$. If $N_{P_1}(w) = \{s_1\}$ then $s_1ws'_3s_3s_2$ is a
  $C_5$ so $k=5$ because of the girth assumption.  Hence $\{w\} \cup K
  \sm \{s_3, s_4\}$ is a tree that covers $X$.  So we are left with
  the case when $N_{P_1}(w) = \{s'_1\}$.  Then $ws'_1s_1s_2s_3s'_3$ is
  a $C_6$, so $k=5$ or $6$ because of the girth.  If $k=5$ then $\{w\}
  \cup K \sm \{s_3, s_4\}$ is a tree that covers $X$.  If $k=6$ then
  $K\cup \{w\}$ is a $K_4$-structure as shown by the following
  relabelling: $x_{ab} \leftarrow x_1$, $x_{ac} \leftarrow x_3$,
  $x_{ad} \leftarrow x_5$, $x_{bc} \leftarrow x_2$, $x_{bd} \leftarrow
  x_6$, $x_{cd} \leftarrow x_4$, $a \leftarrow w$, $b \leftarrow s_1$,
  $c \leftarrow s_3$, $d \leftarrow s_5$, $s_{ab} \leftarrow s'_1$,
  $s_{ac} \leftarrow s'_3$, $s_{ad} \leftarrow s'_5$, $s_{bc}
  \leftarrow s_2$, $s_{bd} \leftarrow s_6$, $s_{cd} \leftarrow s_4$.
\end{proof}

\subsection*{The main theorem}

\begin{theorem}[with Dehry and Picouleau \cite{nicolas.d.p:fourTree}
  and Liu Wei \cite{nicolas.wei:kTree}]
  \label{th:main}
  Let $k\geq 4$ be an integer.  Let $G$ be a connected graph of girth
  at least $k$ and $x_1, \dots, x_k$ be terminals of $G$.  Then one
  and only one of the following outcomes holds:

  \begin{itemize}
  \item $G$ contains $k$-structure $K$ with respect to $x_1, \dots,
    x_k$ and $K$ decomposes~$G$;
  \item $k=4$ and $G$ is a square structure or a cubic structure with
    respect to $x_1, x_2, x_3, x_4$;
  \item $k=6$, $G$ contains a $K_4$-structure $K$ with respect to
    $x_1, \dots, x_6$ and $K$ decomposes $G$;
  \item $G$ contains a tree covering $x_1, \dots, x_k$.
  \end{itemize}

  This is algorithmic in the sense that we provide an algorithm that
  output the tree or the structure certifying that no such tree exists
  in time $O(n^4)$.
\end{theorem}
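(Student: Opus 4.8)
The plan is to assemble the theorem from the attachment lemmas already proved, using Lemma~\ref{l:firststep} as a starting point and then splitting on whether $k=4$ or $k\geq 5$. Before that, I would dispose of the \emph{uniqueness} clause (``one and only one''). Each of the three structural outcomes precludes a tree covering $X$: this is exactly the observation recorded just before Lemmas~\ref{l:k4} and~\ref{l:k} for the decomposing $K_4$- and $k$-structures, together with the routine remark that a square or cubic structure has no covering tree. Hence the tree outcome is incompatible with every structural outcome. Incompatibility among the structural outcomes themselves is forced by the parameter $k$ (the $K_4$-structure outcome occurs only for $k=6$, the square/cubic outcome only for $k=4$) together with the global defining conditions, either $A\cup S\cup R=V(G)$ or the cutset conditions \ref{i:d1}--\ref{i:d2}, which pin the relevant structure down.

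For existence, I would first run the algorithm of Lemma~\ref{l:firststep} on $G$ with the $k$ terminals (here $l=k$). Its output is either a tree covering $x_1,\dots,x_k$, in which case the last outcome holds and we stop, or a $k$-structure $K$ of $G$ with respect to $x_1,\dots,x_k$. So from now on assume we are handed such a $K$, and split on $k$.

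If $k\geq 5$, then $G$ has girth $\geq k\geq 5$ and Lemma~\ref{l:k} applies directly to $K$. It returns one of three things: that $K$ decomposes $G$ (first outcome); or that $k=6$ and some vertex $v$ makes $K\cup\{v\}$ a $K_4$-structure, in which case I would feed this $K_4$-structure into Lemma~\ref{l:k4} (girth $\geq 6$) to obtain either that it decomposes $G$ (third outcome) or a covering tree (last outcome); or a covering tree directly (last outcome). If instead $k=4$, then $G$ is triangle-free, and the $4$-structure $K$ is already a square structure: take $A_i=P_i\sm s_i$, $S_i=\{s_i\}$, $R=\emptyset$, and all the square-structure axioms follow at once from the definition of a $k$-structure, while girth~$\geq 4$ supplies the triangle-freeness hypothesis needed below. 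I would then absorb the remaining vertices of $G$ one at a time, invoking Lemma~\ref{l:square} while we have a square structure (each step yields a covering tree, enlarges the square structure, or converts it to a cubic structure of $G$) and switching to Lemma~\ref{l:cube} once the cubic case appears (each step then yields a covering tree or enlarges the cubic structure, never reverting). When every vertex has been absorbed, $Z=V(G)$, so $G$ itself is a square or cubic structure with respect to $x_1,x_2,x_3,x_4$ (second outcome), unless a covering tree appeared en route (last outcome). For the complexity, Lemma~\ref{l:firststep} and Lemmas~\ref{l:k},~\ref{l:k4} each run in $O(n^4)$, while in the $k=4$ case we perform at most $n$ insertions of cost $O(m)$ via Lemmas~\ref{l:cube}/\ref{l:square}, i.e.\ $O(nm)$; the $O(n^4)$ term dominates.

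The bulk of the difficulty lives inside the attachment lemmas already established, so at the level of the theorem the only real care is in the $k=4$ branch. The hard part will be checking that the square-to-cubic transition is handled cleanly, namely that once Lemma~\ref{l:square} reports a cubic structure of $G$ we may pass the corresponding split to Lemma~\ref{l:cube} without breaking the invariant, and that the process terminates with $Z=V(G)$. The $k=6$ bookkeeping also needs attention, since there a $K_4$-structure can emerge from a $k$-structure and must be re-processed by Lemma~\ref{l:k4} under the correct girth hypothesis. Finally, the uniqueness clause for $k=4$ deserves a short explicit argument: one must verify that a graph equal to a square or cubic structure cannot simultaneously carry a decomposing $4$-structure, so that the first and second outcomes do not overlap.
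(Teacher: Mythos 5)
Your proposal is correct and follows essentially the same route as the paper: Lemma~\ref{l:firststep} first, then the square-to-cubic absorption loop via Lemmas~\ref{l:square} and~\ref{l:cube} when $k=4$, and Lemma~\ref{l:k} followed by Lemma~\ref{l:k4} when $k\geq 5$. The only differences are cosmetic: you spell out the identification of a $4$-structure as a square structure (taking $A_i=P_i\sm s_i$, $S_i=\{s_i\}$, $R=\emptyset$) and the mutual exclusivity of the outcomes, both of which the paper dismisses as easy to check.
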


\begin{proof}
  It is easy to check that at most one of the outcomes holds.  The
  description of the algorithm will show that at least one of the
  outcomes holds.  By Lemma~\ref{l:firststep}, we can output a tree
  covering $X$ or a $k$-structure of $G$ in time $O(n^4)$.

  When $k=4$, the 4-structure is in fact a square structure $Z$ of
  $G$.  While there exists a vertex $v$ not in $Z$, we use the
  algorithm of Lemma~\ref{l:square} to add $v$ to $Z$, keeping a
  square structure.  If we manage to put every vertex of $G$ in $Z$
  then we have found that $G$ is a square structure that we output.
  Else, Lemma~\ref{l:square} says that at some step we have found
  either a tree covering $x_1, x_2, x_3, x_4$ that we output, or a
  cubic structure $Z'$, together with a split for it. In this last
  case, we go to the next step: while there exists a vertex $v$ not in
  $Z'$, we use the algorithm of Lemma~\ref{l:cube} to add $v$ to $Z'$,
  keeping a cubic structure.  If we manage to put every vertex of $G$
  in $Z'$ then we have found that $G$ is a cubic structure that we
  output.  Else, Lemma~\ref{l:cube} says that at some step we have
  found a tree covering $x_1, x_2, x_3, x_4$ that we output.

  When $k\geq 5$, if a $k$-structure $K$ is output then by
  Lemma~\ref{l:k}, we can check whether $K$ decomposes $G$ (in which
  case no tree exists) or find a tree, or find a $K_4$-structure $K'$.
  In this last case, by Lemma~\ref{l:k4}, we can check whether $K'$
  decomposes $G$ or find a tree.
\end{proof}

\begin{theorem}
\label{th:algo}
Let $k\geq 3$ be an integer.  Let $G$ be a connected graph of girth at
least $k$ and $x_1, \dots, x_k$ be vertices of $G$.  Deciding whether
$G$ contains an induced tree covering $x_1, \dots, x_k$ can be
performed in time $O(n^4)$. 
\end{theorem}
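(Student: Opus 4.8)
The plan is to obtain this statement essentially as a corollary of the structural machinery already assembled, namely Theorem~\ref{th:main} together with the three-in-a-tree algorithm of Chudnovsky and Seymour (Section~\ref{sec:3-in-tree}). The first move is to split on the value of $k$. When $k=3$ the girth hypothesis is vacuous (every simple graph has girth at least~$3$), and the desired decision is precisely an instance of three-in-a-tree on $G$ with the three prescribed vertices; that algorithm already runs in time $O(n^4)$, so nothing further is needed. Hence from here on I would assume $k\geq 4$, which is exactly the regime covered by Theorem~\ref{th:main}.

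For $k\geq 4$ I would first pass from covering vertices to covering terminals. Following the reduction sketched just before Lemma~\ref{l:descTree}, I would form $G'$ from $G$ by attaching, for each $i$, a new pending vertex $y_i$ adjacent only to $x_i$. Two routine verifications are required here: that $G'$ remains connected (it does, since $G$ is connected and we only attach leaves) and that $G'$ still has girth at least $k$ (it does, since a degree-one vertex lies on no cycle). Then I would check the equivalence ``$G$ has an induced tree covering $x_1,\dots,x_k$ if and only if $G'$ has an induced tree covering the terminals $y_1,\dots,y_k$'': the forward direction attaches the $y_i$ as leaves to a covering tree, and the backward direction notes that any induced tree covering $y_i$ must contain $x_i$ (the sole neighbour of $y_i$) with $y_i$ as a leaf, so deleting the leaves $y_1,\dots,y_k$ returns an induced covering tree in $G$.

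With the reduction in place I would simply run the $O(n^4)$ algorithm of Theorem~\ref{th:main} on $G'$ with terminals $y_1,\dots,y_k$ and read off the answer: output YES if the algorithm returns a covering tree, and NO if it instead returns a square structure, a cubic structure, a $k$-structure decomposing $G'$, or a $K_4$-structure decomposing $G'$, each of which was already shown to contain no tree covering its terminals. Since $G'$ has $n+k=O(n)$ vertices for fixed $k$, the running time stays $O(n^4)$, and correctness is immediate from the ``one and only one'' dichotomy of Theorem~\ref{th:main}. I do not expect a genuine obstacle in this argument: all the depth lives in the attachment lemmas and in Theorem~\ref{th:main}, and the only points demanding care are the bookkeeping items above, namely preservation of connectivity and girth under the leaf-attachment and the two directions of the tree/terminal equivalence.
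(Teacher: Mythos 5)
Your proposal is correct and follows essentially the same route as the paper: three-in-a-tree handles $k=3$, and Theorem~\ref{th:main} handles $k\geq 4$ after the pendant-vertex reduction from prescribed vertices to terminals, which the paper itself records as a general remark at the start of Section~\ref{sec:gen3}. The bookkeeping you verify (connectivity and girth preserved by attaching leaves, and the two directions of the tree/terminal equivalence) is exactly what the paper leaves implicit.
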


\begin{proof}
  Follows from three-in-a-tree for $k=3$, from and from
  Theorem~\ref{th:main} for $k\geq 4$.
\end{proof}

The condition on the girth in this section is artificial.  It is here
only to help the proof to go smoothly.  It seems that the nastiest
cycles are these on 3, 4 and 6 vertices.  Indeed, these cycles can be
glued in ways that create no tree: triangles into line-graphs, squares
into a cube and hexagons into a $K_4$-structure.  Interestingly,
triangles, squares and hexagons are precisely these polygons that
yield regular tessellations of the plane (where a \emph{regular
  tessellation}\index{tessallation} is made up of congruent regular
polygons).  Is this coincidental?  Unfortunately, I know little about
combinatorial geometry, so I have no idea.  Anyway, a step toward an
understanding of $k$-in-a-tree could be the following.

\begin{question}
  Give a polynomial time algorithm for $k$-in-a-tree in graphs of
  girth at least~7. 
\end{question}

\subsection*{An NP-complete generalization of three-in-a-tree}

Another generalisation of three-in-a-tree would be interesting.  Let
us call \emph{centered tree}\index{centered tree} any tree that
contains at most one vertex of degree greater than two.  Note that any
minimal tree covering three vertices of a graph is centered.  Hence,
three-in-a-tree and three-in-a-centered-tree are in fact the same
problem.  So four-in-a-centered-tree is also an interesting
generalisation of three-in-a-tree.  But with Derhy and Picouleau, we
proved that it is NP-complete, even when restricted to several classes
of graphs, including triangle-free graphs.  The proof (slightly
simplified by Seymour) is by a simple reduction from $\Gamma_{K_3}$,
see~\cite{nicolas.d.p:fourTree}.

\chapter{Two decomposition theorems}
\label{chap:2dec}

At the end of Section~\ref{sec:3-in-tree}, twelve detection problems
are given, each coming from an s-graph on 4 vertices.  Four of these
problems are easily proved to be polynomial by three-in-a-tree or
NP-complete by Bienstock's construction.  I was hoping that by
studying the eight remaining ones, some general method would appear to
approach detection problems, but it turned out instead that as shown
by this chapter, each problem has its own flavour.  This was bad in a
sense, but the good news is that on two s-graphs,
interesting results were obtained:

\noindent
\begin{center}
\includegraphics{bigraphs.2}\rule{2.5em}{0ex}
\includegraphics{bigraphs.12}.
\end{center}

The first one yields a detection problem that can be rephrased as
\emph{the detection of cycles with a unique chord}.  After discussing
the eight problems with Maffray, we identified it as the potentially
easiest.  So, at the Oberwolfach meeting of march 2007, we discussed
it further with Chudnovsky, Seymour and Vu\v skovi\'c.  After a few
minutes, we thought we have solved it using a theorem of Conforti,
Cornu{\'e}jols, Kapoor and Vu{\v s}kovi{\'c} on a related class,
\emph{cap-free graphs}\index{cap-free graphs}
\cite{conforti.c.k.v:capfree}.  I was a bit disappointed that a
problem I was thinking of for a while was so easy, but several weeks
later I started to write the easy proof anyway, just to see whether
this could be the start of something.  It turned out that we were
completely mistaken: we were assuming that the graph contains a
triangle, a very easy subcase that can be done directly (see
Lemma~\ref{c:Ctriangle} below).  So, I emailed Vu\v skovi\'c to see
whether she could work on that, since we were looking for a good
subject to work together.  Six months later, we had a structure
theorem for graphs with no cycle with a unique chord, a recognition
algorithm, and results on coloring.  All this is explained in
Section~\ref{sec:oneChord}.

The second s-graph yields a detection problem that can be rephrased as
\emph{the detection of induced subdivision of $K_4$}.  We call
\emph{ISK4-free graphs}\index{ISK4} these graphs that do not contain a subdivision
of $K_4$.  In the discussion with Maffray on the eight s-graphs, this
one was identified as the most interesting one for many reasons.
First, it looks more natural than the others because of its
symmetries.  Moreover, detecting subdivisions of $K_3$ is trivial, and
detecting induced subdivisions of $K_5$ is NP-complete as mentioned in
Section~\ref{sec:bienstock}, so $K_4$ is in the middle of something.
Also, ISK4-free graphs generalize series-parallel graphs, so bounding
their chromatic number would be a kind of generalisation of Hajos and
Hadwiger conjectures which are the same easy statements for
$K_4$-minors.  But my main interest for ISK4-free graphs was something
less obvious.  They contain no pyramid just like Berge graphs, so some
ideas from the decomposition of Berge graphs can be applied to them.
Moreover, line-graphs of graph of maximum degree 3 form a basic class.
So, ideas from the decomposition of claw-free graphs can be applied
also.  These remarks was a real motivation for me because the
decomposition of Berge graphs and claw-free graph are very difficult
results.  Instead of spending months reading the papers from the first
line to the last one, it is sometimes better to work on easier
problems of the same flavour, trying to apply the ideas by your own.
ISK4-free graphs seemed to be a nice playground for that.

So, with L\'ev\^eque and Maffray, we started to work on ISK4-free
graphs.  We have been less lucky than I was with Vu\v skovi\'c for
cycles with a unique chord: we did find a decomposition theorem but
because of strong cutsets, we could not find a recognition algorithm
or a nice bound for $\chi$.  Yet, if we exclude \emph{wheels}, that
are holes plus a vertex adjacent to at least three vertices of the
hole, we obtain a very precise structure theorem.  Also, Alex Scott
noticed that our work implies a proof of Conjecture~\ref{conj:scott}
for $K_4$.  All this is explained Section~\ref{sec:ISK4}.

The last section is devoted to the wheel-free graphs, a class on which
we have open problems that could generalise results of the two first
sections.

\section{Cycles with a unique chord}
\label{sec:oneChord}

In this section we call $\cal C$ the class of graphs that do not
contain cycles with a unique chord.  The decomposition theorem that we
give now for $\cal C$ is a generalisation of Theorem~\ref{th:nochord}
since all chordless graphs are clearly in $\cal C$.  It has two
sporadic basic graphs: the Petersen and Heawood graphs.  Both these
graphs were discovered at the end of the XIXth century in the research
on the four color conjecture, see~\cite{petersen:98}
and~\cite{heawood:90}.  It was very nice and surprising to discover
these as basic graphs.  They share several properties (as being
\emph{cages}\index{cage} that are smallest cubic graphs of a given girth), but in
a sense we still do not \emph{understand} why they are in our theorem
(maybe there is nothing to understand).

The \emph{Petersen graph}\index{Petersen graph} is the graph on  $\{a_1, \dots a_5,
b_1, \dots, b_5\}$ so that $\{a_1, \dots, a_5\}$ and $\{b_1, \dots,
b_5\}$ both induce a $C_5$ with vertices in their natural order, and such
that the only edges between the $a_i$'s and the $b_i$'s are $a_1b_1$,
$a_2b_4$, $a_3b_2$, $a_4b_5$, $a_5b_3$. See Fig.~\ref{f:petersen}.

\begin{figure}[h]
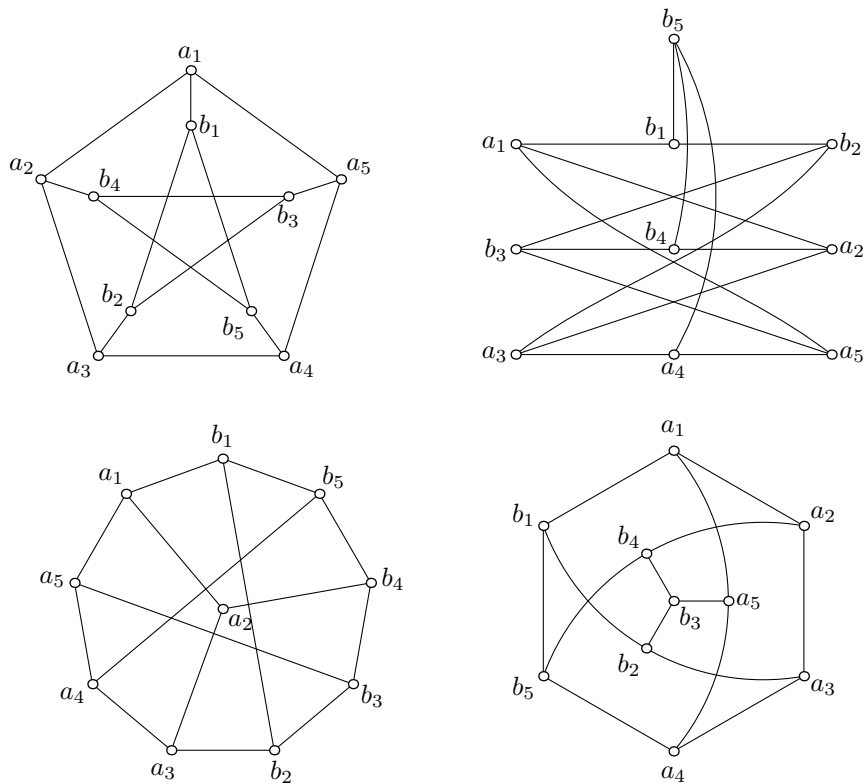

\begin{center}
  \includegraphics{figChord.4} \rule{3em}{0ex}
  \includegraphics{figChord.5}

 \rule{0em}{1ex}

  \includegraphics{figChord.6}  \rule{3em}{0ex}
  \includegraphics{figChord.7}
\end{center}
\caption{Four ways to draw the Petersen graph\label{f:petersen}}
\end{figure}

The \emph{Heawood graph}\index{Heawood graph} is the graph on $\{a_1, \dots , a_{14}\}$ so
that $\{a_1, \dots , a_{14}\}$ is a Hamiltonian cycle with vertices in
their natural order, and such that the only other edges are
$a_{1}a_{10}$, $a_{2}a_{7}$, $a_{3}a_{12}$, $a_{4}a_{9}$,
$a_{5}a_{14}$, $a_{6}a_{11}$, $a_{8}a_{13}$. See Fig.~\ref{f:heawood}.

It can be checked that both Petersen and Heawood graph are in~$\cal
C$.  Note that since the Petersen graph and the Heawood graph are both
vertex-transitive, and are not themselves a cycle with a unique chord,
to check that they are in ${\cal C}$, it suffices to delete one vertex,
and then check that there is no cycle with a unique chord.  Also the
Petersen graph has girth~5 so a cycle with a unique chord in it must
contain at least 8 vertices.  The Heawood graph has girth~6 so a cycle
with a unique chord in it must contain at least 10 vertices.
For the Petersen graph, deleting a vertex yields an Hamiltonian graph,
and it is easy to check that it does not contain a cycle with a unique
chord.  For the Heawood graph, it is useful to notice that deleting
one vertex yields the Petersen graph with edges $a_1b_1,b_3b_4,a_3a_4$
subdivided.

\begin{figure}[h]
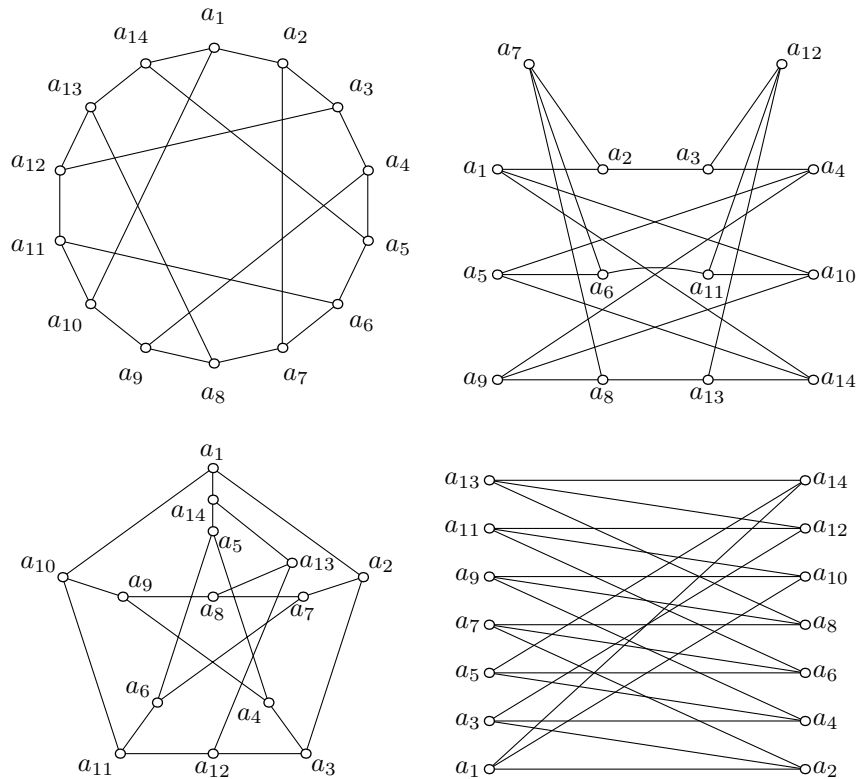

\begin{center}
  \includegraphics{figChord.1}  \rule{1em}{0ex}
  \includegraphics{figChord.2}

 \rule{0em}{1ex}

  \includegraphics{figChord.8} \rule{1em}{0ex}
  \includegraphics{figChord.10}
\end{center}
\caption{Four ways to draw the Heawood graph\label{f:heawood}}
\end{figure}

Our last basic graphs are sparse graphs, already defined
Section~\ref{sec:chordless}.  Recall that a graph is sparse if for all
pairs $u, v$ of vertices of degree at least~3, $uv\notin E(G)$.  A
sparse graph is clearly in $\cal C$ because any chord of a cycle is an
edge linking two vertices of degree at least three.

We  now define cutsets used in our decomposition theorem:

\begin{itemize}
\item A \emph{1-cutset}\index{1-cutset} of a connected graph $G$ is a
  vertex $v$ such that $V(G)$ can be partitioned into non-empty sets
  $X$, $Y$ and $\{ v \}$, so that there is no edge between $X$ and
  $Y$.  We say that $(X, Y,v)$ is a \emph{split}\index{split!of a
    1-cutset} of this 1-cutset.

\item A \emph {special 2-cutset}\index{2-cutset!special}\index{special
    2-cutset} of a connected graph $G$ is a pair of non-adjacent
  vertices $a, b$, both of degree at least three, such that $V(G)$ can
  be partitioned into non-empty sets $X$, $Y$ and $\{ a,b \}$ so that:
  $|X|\geq 2$, $|Y| \geq 2$; there are no edges between $X$ and $Y$;
  and both $G[X \cup \{ a,b \}]$ and $G[Y \cup \{ a,b \}]$ contain an
  $ab$-path.  We say that $(X, Y, a, b)$ is a
  \emph{split}\index{split!of a special 2-cutset} of this special
  2-cutset.
  
\item
  A \emph{1-join}\index{1-join} of a graph $G$ is a partition of $V(G)$ into sets
  $X$ and $Y$ such that there exist sets $A, B$ satisfying:

  \begin{itemize}
  \item
    $\emptyset \neq A \subseteq X$, $\emptyset \neq B \subseteq Y$;
  \item
  $|X| \geq 2$ and $|Y| \geq 2$;
  \item 
    there are all possible edges between $A$ and $B$;
  \item
    there are no other edges between $X$ and $Y$.
  \end{itemize}
  
  We say that $(X, Y, A, B)$ is a \emph{split}\index{split!of a
    1-join} of this 1-join.  The sets $A, B$ are \emph{special
    sets}\index{special sets!w.r.t.\ 1-join}
  with respect to this 1-join.

  1-Joins were first introduced by Cunningham~\cite{cunningham:1join}.
  Here we will use a special type of a 1-join called a {\em
    proper 1-join}: a 1-join such that $A$ and $B$ are stable sets of
  $G$ of size at least two. Note that a square admits a proper 1-join.
\end{itemize}

The main result of this section is the following decomposition
theorem:

\begin{theorem}[with Vu\v skovi\'c \cite{nicolas.kristina:one}]
  \label{th:OneChord}
  Let $G$ be a connected graph that does not contain a cycle with a
  unique chord. Then either $G$ is sparse, or $G$ is a clique, or $G$
  is an induced subgraph of the Petersen or the Heawood graph, or $G$
  has a 1-cutset, a special 2-cutset, or a proper 1-join.
\end{theorem}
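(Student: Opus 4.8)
The plan is to run the usual structural induction: assume $G$ is connected, lies in ${\cal C}$, and admits none of the three allowed cutsets (no $1$-cutset, no special $2$-cutset, no proper $1$-join), and prove that $G$ must be a clique, a sparse graph, or (isomorphic to) the Petersen or the Heawood graph. Since every chordless graph is in ${\cal C}$, this is a genuine generalisation of Theorem~\ref{th:nochord}, and the two new basic outcomes match the two ways ${\cal C}$ is larger than the chordless class: a cycle may now carry several chords (this is what lets triangles, hence cliques, survive) and the sporadic graphs appear precisely because two chords can coexist without creating a forbidden configuration.

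The first observation is that ${\cal C}$ is diamond-free, since the diamond $K_4\sm e$ is a $4$-cycle with exactly one chord. Hence every edge lies in a unique maximal clique, and if $K$ is a maximal clique with $|K|\ge 3$ then every vertex $v\notin K$ has at most one neighbour in $K$: two neighbours $x,y\in K$ together with any third vertex of $K$ would induce a diamond. This attachment fact is the only tool needed for the triangle case, which I would dispatch first and which is the ``easy subcase'' referred to before Lemma~\ref{c:Ctriangle}: if $G$ contains a triangle, I claim $G$ is a clique. Take a maximal clique $K$, $|K|\ge 3$, and suppose $G\ne K$. Because $G$ has no $1$-cutset, an external neighbour of a vertex $a\in K$ can be joined by a path to $K\sm a$ avoiding $a$; closing such a path through two or three well-chosen vertices of $K$ yields a cycle whose only chord is an edge of $K$, unless the outside of $K$ is attached to $K$ through a single vertex, a clean pair, or a complete bipartite link --- i.e.\ through a $1$-cutset, a special $2$-cutset, or a proper $1$-join. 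All three being excluded, no vertex lies outside $K$, so $G=K$.

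From here on $G$ is triangle-free. If $G$ is sparse we are done, so assume it is not, and fix an edge $ab$ with $\deg(a),\deg(b)\ge 3$; by triangle-freeness the vertices $a,b$ together with two neighbours $c,e\in N(a)\sm b$ and two neighbours $d,f\in N(b)\sm a$ are six distinct vertices. I would first record, exactly as in the proof of Theorem~\ref{th:nochord}, that $G$ has no clique cutset: a $1$-cutset is excluded by hypothesis, and an edge-cutset $\{a,b\}$ would let us build two internally disjoint $ab$-paths through the two sides, whose union is a cycle with the \emph{unique} chord $ab$ (no other chord is possible, the two sides being anticomplete) --- a contradiction. With this in hand, the absence of cutsets forces the $\{c,e\}$-side and the $\{d,f\}$-side to be joined by paths, producing cycles through the edge $ab$; and now the decisive difference with the chordless case enters: such a cycle would have $ab$ as its unique chord unless it carries a second chord, and triangle-freeness makes that second chord join two vertices at cycle-distance at least two. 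Thus every attempted separation is converted into a \emph{forced new edge} rather than into a cutset, and the configuration around $ab$ grows rigidly.

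The hard part is precisely this growth argument, and it is where essentially all the case analysis lives. I would organise it as a chain of attachment lemmas: first pinning down how $c,e,d,f$ and their mutual paths interconnect, then showing that every further vertex attaches to the current partial structure in one of a few ways, each alternative being either a path/tree extension that contradicts the cycle constraints, or one of the three forbidden cutsets, or a specific new edge. Since the cutsets are barred and the unique-chord constraint eliminates almost every edge pattern, the surviving configurations are essentially unique; in particular one is driven to conclude that the relevant vertices all have degree exactly $3$ and that $G$ is cubic with large girth. The final bookkeeping then matches $G$ to a smallest cubic graph of its girth --- girth $5$ giving the Petersen graph, girth $6$ the Heawood graph --- using the vertex-transitivity and girth facts recorded after the statement to confirm both are indeed in ${\cal C}$ and that nothing else survives. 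Identifying these two cages as the only indecomposable non-sparse triangle-free members of ${\cal C}$ is the real obstacle; by contrast the triangle case and the sparse/non-sparse split are routine.
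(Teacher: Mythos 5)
Your triangle case is essentially the paper's Lemma~\ref{c:Ctriangle} (where in fact only a 1-cutset is needed), and the overall shape of your induction --- exclude the three cutsets and classify what remains --- is the right frame. But the proposal has a genuine gap: everything after ``the hard part is precisely this growth argument'' is assertion rather than proof, and the asserted endgame is not even a correct target. A first concrete omission is the square step. The paper's Lemma~\ref{c:noSquare0} shows that a square forces a proper 1-join (or a 1-cutset), by growing a maximal complete bipartite subgraph and proving that every outside component attaches to one side only; under your hypotheses this becomes the statement ``a triangle-free graph in ${\cal C}$ with none of the three cutsets contains no square'', and that argument appears nowhere in your sketch, although your later girth claims (girth 5 gives Petersen, girth 6 gives Heawood) silently require it.

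Second, the endgame ``$G$ is cubic with large girth, hence the smallest cubic graph of its girth'' is both unjustified and incompatible with the theorem you are proving. Nothing in your growth argument forces 3-regularity, and the conclusion of the theorem must include non-cubic graphs: the Petersen graph minus one vertex lies in ${\cal C}$ (the class is hereditary), is triangle-free, is not sparse, is 2-connected, has no proper 1-join (two vertices of the Petersen graph have at most one common neighbour, so no complete pair of stable sets of size two exists), and has no special 2-cutset (every 3-cutset of the Petersen graph is the neighbourhood of a vertex, so every 2-cutset of Petersen-minus-a-vertex cuts off a single vertex, while the definition requires both sides to have at least two vertices). So it is an indecomposable, non-sparse, triangle-free member of the class that is neither cubic nor a cage --- exactly why the statement says ``induced subgraph of the Petersen or the Heawood graph''. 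Even for the full graphs, ``cubic of girth 5'' identifies the Petersen graph only together with a proof that $|V(G)|=10$, i.e., that no larger cubic girth-5 graph survives in ${\cal C}$ without cutsets; that is the whole difficulty, not final bookkeeping. The paper's route is essentially the reverse of yours and avoids all of this: it eliminates structures top-down --- first the Petersen and Heawood graphs as induced subgraphs (any outside component attaches to a single vertex, giving a 1-cutset), then the five types of 2-connected graphs that arise from them by deleting degree-3 vertices and subdividing (each attachment analysis yielding a 1-cutset, a special 2-cutset, or a previously treated configuration), then cycles with exactly three chords, then cycles with at least one chord --- after which the remaining graph is chordless and Theorem~\ref{th:nochord} finishes the proof.
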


Note that in \cite{nicolas.kristina:one}, the so-called \emph{strongly
  2-bipartite graphs}\index{strongly 2-bipartite graphs} are used
instead of sparse graphs.  This is because sparse graphs can have
special 2-cutsets, so they can be decomposed further into these
strongly 2-bipartite graphs (not worth defining here) and cycles (that
do not appear in this version of the theorem because they are sparse).
But let us forget these technicalities.  Let us sketch the proof of
the theorem.  We first need a lemma:

\begin{lemma}
  \label{c:neighHole}
  Let $G$ be a graph in $\cal C$, $H$ a hole of $G$ and $v$ a
  vertex of $G\sm H$. Then $v$ has at most two neighbors in $H$, and
  these two neighbors are not adjacent.
\end{lemma}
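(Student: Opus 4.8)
The plan is to fix the hole $H=h_1h_2\cdots h_kh_1$ with $k\geq 4$ (so that $H$ is chordless by definition), set $N=N(v)\cap V(H)$, and list the vertices of $N$ in the cyclic order in which they appear around $H$. The whole proof reduces to exhibiting, whenever $v$ violates the conclusion, an explicit cycle of $G$ that has \emph{exactly one} chord, contradicting $G\in\mathcal C$. I would isolate two separate bad situations and dispose of each by one such construction: (a) $|N|\geq 3$, and (b) $N$ contains two adjacent vertices. Showing that these are the only two things that can go wrong, together, gives precisely ``at most two neighbours, and non-adjacent''.

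For situation (a), I would pick three cyclically consecutive members $n_1,n_2,n_3$ of $N$ (possible since $|N|\geq 3$) and take the sub-path $R$ of $H$ running from $n_1$ to $n_3$ through $n_2$; by the cyclic choice, $n_1$ and $n_3$ are the first neighbours of $v$ met on either side, so $v$ has no neighbour in the interior of $R$ other than $n_2$ itself. Then $C=v\tp R\tp v$ (i.e.\ $v,n_1,\dots,n_2,\dots,n_3,v$) is a cycle of length at least $4$, and I claim its unique chord is $vn_2$: the sub-path $R$ is induced because $H$ is a hole, so there are no chords among the hole-vertices of $C$; and the only $v$-edge that is not an edge of $C$ is $vn_2$, since $v$ meets $R$ only in $n_1,n_2,n_3$. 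In the smallest case (the three arcs all of length $1$) this is just a square $v\tp n_1\tp n_2\tp n_3\tp v$ whose two diagonals are $vn_2\in E(G)$ and $n_1n_3\notin E(G)$. Either way $C$ is a cycle with a unique chord, a contradiction, so $|N|\leq 2$.

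For situation (b), with $|N|\leq 2$ already in hand, I would suppose $N=\{a,b\}$ with $ab\in E(G)$, so $a,b$ are consecutive on $H$. Taking $R'$ to be $H$ minus the edge $ab$ (the long $a$--$b$ path of length $k-1$), the cycle $C'=v\tp a\tp R'\tp b\tp v$ has length $k+1\geq 5$, and in $C'$ the vertices $a,b$ are both adjacent to $v$, hence non-consecutive, so $ab$ is a chord of $C'$. Again $R'$ is induced (chordless $H$) and $v$ has only the neighbours $a,b$, which are its two neighbours on $C'$, so $ab$ is the \emph{only} chord; contradiction. Therefore the two neighbours of $v$, if both present, are non-adjacent, completing the lemma.

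The only delicate point—and the step I expect to be the real (if modest) obstacle—is verifying that the cycle produced in each case has \emph{exactly} one chord rather than several. This is precisely why the neighbours must be chosen \emph{cyclically consecutive} in situation (a): that choice guarantees $v$ has no stray neighbour in the interior of the chosen arc, and the chordlessness of $H$ guarantees the hole contributes nothing. I would also double-check the degenerate short-arc configurations (arcs of length $1$, giving $4$-cycles) separately, since those are where ``cycle with a unique chord'' is tightest, but as indicated above they go through unchanged.
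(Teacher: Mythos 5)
Your proof is correct and takes essentially the same approach as the paper's: for three or more neighbours, the paper likewise extracts a subpath of $H$ containing exactly three neighbours of $v$ (your ``cyclically consecutive'' choice is just the explicit way to find it), so that this subpath together with $v$ induces a cycle whose unique chord joins $v$ to the middle neighbour. For the case of two adjacent neighbours, the paper also observes that $V(H)\cup\{v\}$ induces a cycle whose unique chord is the edge between those two neighbours.
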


\begin{proof}
  If $v$ has at least three neighbors in $H$, then $H$ contains a
  subpath $P$ with exactly three neighbors of $v$ and $V(P) \cup
  \{v\}$ induces a cycle of $G$ with a unique chord, a
  contradiction. If $v$ has two neighbors in $H$, they must be
  non-adjacent for otherwise $H\cup \{v\}$ is a cycle with a unique
  chord.
\end{proof}

If $H$ is any induced subgraph of $G$ and $D$ is a subset of vertices of
$G\setminus H$, the \emph{attachment of $D$ over $H$}\index{attachment!of $D$ over $H$} is the set of
all vertices of $H$ that have at least one neighbor in $D$.  The first
step of the proof is to get rid of triangles.

\begin{lemma}
  \label{c:Ctriangle}
  If $G$ is a graph of $\cal C$ that contains a triangle then either
 $G$ is a clique or $G$ admits a 1-cutset. 
\end{lemma}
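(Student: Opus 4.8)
The plan is to prove the statement not for the triangle itself but for a maximal clique containing it. We may assume $G$ is connected: if not, then since $G$ has a triangle it has a component with at least two vertices, and deleting any vertex of such a component exhibits the partition required by a $1$-cutset. So suppose $G$ is connected and not a clique, and let $K$ be a maximal clique of $G$ containing the given triangle, so $|K|\geq 3$. The key preliminary step is an attachment observation playing the role of Lemma~\ref{c:neighHole} for cliques: \emph{every vertex $v\notin K$ has at most one neighbour in $K$}. Indeed, if $v$ were complete to $K$ then $K\cup\{v\}$ would be a larger clique, contradicting maximality; and if $v$ had two neighbours $x,y\in K$ and a non-neighbour $z\in K$ (such a $z$ exists as soon as $v$ is not complete to $K$), then the $4$-cycle $x \tp v \tp y \tp z \tp x$ has $ab$-type unique chord $xy$, since $vz\notin E(G)$ while $yz,zx,xy\in E(G)$ by the clique; this contradicts $G\in\mathcal{C}$.

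Since $G$ is connected and $K\neq V(G)$, some $a\in K$ has a neighbour $x\notin K$, and by the observation $a$ is the \emph{only} neighbour of $x$ in $K$. I claim that $a$ is a $1$-cutset, which finishes the proof. For this it suffices to show that $x$ and $K\sm\{a\}$ lie in different components of $G\sm a$: then the component of $x$ and its complement (which contains the non-empty set $K\sm\{a\}$, as $|K|\geq 3$) form a valid split. Suppose not, and let $P = x = p_0 \tp p_1 \tp \cdots \tp p_m \tp b$ be a \emph{shortest} path in $G\sm a$ from $x$ to the set $K\sm\{a\}$, where $b\in K\sm\{a\}$ is its only vertex in $K\sm\{a\}$. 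Being shortest, $P$ is induced; moreover each interior vertex $p_1,\dots,p_{m-1}$ has no neighbour in $K\sm\{a\}$ (else a strictly shorter path to the set would exist), and by the attachment observation $p_m$ has $b$ as its only neighbour in $K$ (in particular $p_m$ is not adjacent to $a$). Let $j$ be the largest index with $p_j$ adjacent to $a$; it exists since $p_0=x$ is, and $j<m$ since $p_m$ is not. By the observation $a$ is the unique neighbour of $p_j$ in $K$, and by maximality of $j$ none of $p_{j+1},\dots,p_{m-1}$ is adjacent to $a$, so these interior vertices have \emph{no} neighbour in $K$ at all.

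Now the contradiction comes from one carefully built cycle. Choose any $c\in K\sm\{a,b\}$ (possible as $|K|\geq 3$) and consider $E : c \tp a \tp p_j \tp p_{j+1}\tp\cdots\tp p_m \tp b \tp c$. Every listed edge is present ($ca,ab,bc$ are clique edges, $a p_j\in E(G)$, and $p_j\cdots p_m b$ is a subpath of $P$), so $E$ is a cycle, and $a,b$ are non-consecutive on it. I would check its chords as follows: the segment $p_j,\dots,p_m,b$ is chordless, being a subpath of the induced path $P$; $a$ is adjacent to no $p_i$ with $i>j$, by maximality of $j$; and $c$ is adjacent on $E$ only to $a$ and $b$, since none of $p_j,\dots,p_m$ has $c$ as a neighbour (the interior ones have no neighbour in $K$, while $p_j$ and $p_m$ meet $K$ only in $a$ and $b$ respectively). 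Hence the unique chord of $E$ is $ab$, contradicting $G\in\mathcal{C}$ and proving that $a$ separates $x$ from $K\sm\{a\}$.

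The main obstacle, around which the whole argument is designed, is controlling how the interior of the connecting path attaches to $K$: a careless path could create additional chords (from interior vertices to $a$, $b$, or $c$) and spoil the uniqueness of $ab$. Taking $P$ to be a shortest path to the \emph{entire} set $K\sm\{a\}$ eliminates all interior attachments to $K\sm\{a\}$, and truncating at the last neighbour $p_j$ of $a$ eliminates the interior attachments to $a$; the attachment observation then pins down exactly where $p_j$, $p_m$ and $c$ may meet, leaving $ab$ as the only chord. I expect the verification that $E$ has no other chord to be the delicate part to write out fully, but it reduces entirely to the two constraints just described.
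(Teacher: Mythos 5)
Your proof is correct and follows essentially the same route as the paper's: both work from a maximal clique $K$ containing the triangle, produce an induced outside path joining two clique vertices $a,b$ whose interior has no attachment to $K$, and derive the contradiction from the cycle formed with a third clique vertex $c$, whose unique chord is $ab$. The only cosmetic differences are that you fix the candidate cutset vertex $a$ in advance and build the contradicting cycle directly, whereas the paper takes a minimal attaching subgraph $D$ and routes the contradiction through Lemma~\ref{c:neighHole}, with the single-vertex (diamond) case of $D$ playing the role of your attachment observation.
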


\begin{proof}
  Suppose that $G$ contains a triangle, and let $C$ be a maximal
  clique of $G$ that contains this triangle.  If $G \neq C$ and if no
  vertex of $C$ is a 1-cutset of $G$ then let $D$ be a connected
  induced subgraph of $G\sm C$, whose attachment over $C$ contains at
  least two vertices, and that is minimal with respect to this
  property. So, $D$ is a path with one end adjacent to $a\in C$, the
  other end adjacent to $b \in C\sm \{a\}$ and $D \cup \{a, b\}$
  induces a chordless cycle.  If $D$ has length zero, then its unique
  vertex (say $u$) must have a non-neighbor $c\in C$ since $C$ is
  maximal. Hence, $\{u, a, b, c\}$ induces a diamond, a contradiction.
  If $D$ has length at least one then let $c \neq a, b$ be any vertex
  of $C$.  Then the hole induced by $D\cup \{a, b\}$ and vertex $c$
  contradict Lemma~\ref{c:neighHole}.
\end{proof}

The second step is to get rid of squares. 

\begin{lemma}
  \label{c:noSquare0}
  If a graph $G$ of $\cal C$ that contains a square then either $G$ is
  clique or $G$ admits a 1-cutset or $G$ admits a proper 1-join.
\end{lemma}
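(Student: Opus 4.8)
The plan is to dispose of triangles first and then to produce a proper $1$-join whose two special sets are the sides of a maximal complete bipartite subgraph. If $G$ contains a triangle we are done by Lemma~\ref{c:Ctriangle}, and if $G$ has a $1$-cutset we are also done; so I would assume that $G$ is connected, triangle-free, has no $1$-cutset, and contains a square, and then build a proper $1$-join. Since $G$ contains a square, it has an induced complete bipartite subgraph with both sides of size at least two; let $K$, with sides $X_1$ and $X_2$, be such a subgraph, chosen inclusion-wise maximal. As $G$ is triangle-free, $X_1$ and $X_2$ are stable, and no vertex of $G\sm K$ can have a neighbour in both $X_1$ and $X_2$, since any vertex of $X_1$ is complete to $X_2$ and such a common neighbour would close a triangle.

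Next I would identify the special sets. Put $B=X_2$ and let $A$ be the set of all vertices of $G$ that are complete to $B$. Then $A$ is stable (two adjacent vertices of $A$ together with any vertex of $B$ would form a triangle) and $A\supseteq X_1$. In fact $A=X_1$: for $u\in A\sm X_1$, being anticomplete to $X_1$ would make $(X_1\cup\{u\},X_2)$ a strictly larger complete bipartite subgraph, contradicting the maximality of $K$, whereas a neighbour of $u$ in $X_1$ would close a triangle with a vertex of $B$. So $A=X_1$ and $B=X_2$ are stable sets of size at least two.

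To assemble the join, delete $A=X_1$, let $Y$ be the union of the connected components of $G\sm X_1$ that meet $B$, and set $X=V(G)\sm Y\supseteq A$. All $A$--$B$ edges are present by the definition of $A$, and there is no edge between $X\sm A$ and $Y$ because those two sets lie in distinct components of $G\sm A$. Since $|X|\ge|X_1|\ge2$ and $|Y|\ge|X_2|\ge2$, the quadruple $(X,Y,A,B)$ will be a proper $1$-join as soon as one establishes the remaining point: \emph{no vertex of $A=X_1$ has a neighbour in $Y\sm B$}.

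This last point is the crux and the main obstacle. Suppose some $u\in X_1$ has a neighbour $t\in Y\sm B$. Then $t$ is joined to $X_2$ inside $G\sm X_1$, so among all such pairs I would choose $u$, $t$ and a shortest path $P=t=t_0\tp t_1\tp\cdots\tp t_m$ of $G\sm X_1$ with $t_m\in X_2$ so as to minimise $m$; minimality forces $u$ to be anticomplete to $\{t_1,\dots,t_{m-1}\}$, and since $u$ is adjacent to $t_0$ and $t_m$ while $G$ is triangle-free one gets $m\ge2$, so $C_0=u\tp t_0\tp\cdots\tp t_m\tp u$ is a hole whose only edge inside $K$ is $ut_m$. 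The idea is then to replace this edge by the path $u\tp b'\tp u'\tp t_m$ through $K$, where $u'\in X_1\sm\{u\}$ and $b'\in X_2\sm\{t_m\}$, turning $C_0$ into a longer cycle on which $ut_m$ becomes a chord; if this is the unique chord we contradict $G\in\cal C$. Ensuring uniqueness — that is, that $u'$, $b'$ and the vertices $t_i$ create no second chord — is exactly where Lemma~\ref{c:neighHole} and the maximality of $K$ must be brought in to bound the neighbourhoods of the $t_i$ in $X_1\cup X_2$, and it is the part that calls for a careful but short case analysis.
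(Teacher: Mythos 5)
Your reduction to the triangle-free case via Lemma~\ref{c:Ctriangle} and your choice of a maximal induced complete bipartite subgraph $K=(X_1,X_2)$ are exactly the paper's setup (the paper takes maximal disjoint stable sets $A,B$, complete to each other, with $|A|,|B|\geq 2$), and your assembly of the candidate $1$-join is sound. But the proof stops precisely at the step you yourself call the crux: you never establish that vertices $u'\in X_1\sm\{u\}$ and $b'\in X_2\sm\{t_m\}$ exist that create no second chord, and deferring this to ``a careful but short case analysis'' with Lemma~\ref{c:neighHole} is not a proof. Worse, Lemma~\ref{c:neighHole} alone cannot close it: applied to the hole $C_0$ it only says that each vertex of $X_1\sm\{u\}$ (which already sees $t_m$) has \emph{at most one} neighbour among $u,t_0,\dots,t_{m-2}$, and each vertex of $X_2\sm\{t_m\}$ (which already sees $u$) has at most one neighbour among $t_1,\dots,t_{m-1}$; ``at most one'' does not let you pick $u'$ and $b'$ avoiding all of them, since a priori every candidate could have its one bad neighbour.

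The gap is fillable, but with different tools than the ones you name. Since your minimization of $m$ ranges over \emph{all} pairs, no vertex of $X_1$ whatsoever (not just $u$) can be adjacent to $t_i$ for $1\leq i\leq m-1$ (such an edge would give a pair with a strictly shorter path); and since $P$ is a shortest path to $X_2$ in $G\sm X_1$, the vertices $t_0,\dots,t_{m-2}$ have no neighbours in $X_2$ at all. So only two adjacencies remain to kill: $u'$ must miss $t_0$, which exists because $t_0$ cannot be complete to $X_1$ (it has no neighbour in $X_2$, so $K\cup\{t_0\}$ would contradict the maximality of $K$); and $b'$ must miss $t_{m-1}$, which exists because $t_{m-1}$ cannot be complete to $X_2$ (by your own fact $A=X_1$), and any such $b'$ is automatically distinct from $t_m$. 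With these choices $ut_m$ is indeed the unique chord. Note that the paper sidesteps this entire analysis by minimizing a different object: a connected induced subgraph $D$ of $G\sm(X_1\cup X_2)$ whose attachment meets \emph{both} $X_1$ and $X_2$, chosen minimal. Minimality forces $D$ to be a path $u\tp\cdots\tp v$ whose interior is anticomplete to all of $X_1\cup X_2$ and whose ends each see only one side, maximality of $K$ hands over the two non-neighbours $a',b'$ directly, and the cycle $D\cup\{a,b,a',b'\}$ has the unique chord $ab$ with nothing left to check.
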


\begin{proof}
  By Lemma~\ref{c:Ctriangle}, we may assume that $G$ contains no
  triangle.  Assume $G$ contains a square. Then $G$ contains disjoint
  sets of vertices $A$ and $B$ such that $G[A]$ and $G[B]$ are both
  stable graphs, $|A|, |B| \geq 2$ and every vertex of $A$ is adjacent
  to every vertex of $B$. Let us suppose that $A \cup B$ is chosen to be
  maximal with respect to this property. If $V(G) = A\cup B$ then $(A,
  B)$ is a proper 1-join of $G$, so we may assume that there are vertices
  in $G\sm (A\cup B)$.

  \begin{claim}
    \label{c:noSquare1}
    Every component of $G \sm (A \cup B)$ has neighbors only in
    $A$ or only in $B$.
  \end{claim}
    
  \begin{proofclaim}
    Else, let us take a connected induced subgraph $D$ of $G \sm
    (A\cup B)$, whose attachment over $A \cup B$ contains vertices of
    both $A$ and $B$, and that is minimal with respect to this
    property.  So $D = u \tp \cdots \tp v$ is a path, no interior
    vertex of which has a neighbor in $A\cup B$ and there exists $a\in
    A$, $b\in B$ such that $ua, vb \in E(G)$. Since $G$ is
    triangle-free, $u \neq v$, $u$ has no neighbor in $B$ and $v$ has
    no neighbor in $A$. By maximality of $A \cup B$, $u$ has a
    non-neighbor $a' \in A$ and $v$ has a non-neighbor $b' \in
    B$. Now, $D \cup \{a, b, a', b'\}$ is a cycle with a unique chord
    (namely $ab$), a contradiction.
  \end{proofclaim}
  
  From~(\ref{c:noSquare1}), it follows that $G$ has a proper 1-join with
  special sets $A, B$.
\end{proof}

The next step is to get rid of the Petersen graph, but the reader
could get tired of these steps since there are about ten more of them.
So let us sum them up.  It is easy to see after some checking that if
a graph of $\cal C$ with no triangle and no square contains the
Petersen or the Heawood graph as an induced subgraph $K$ then any
component of $G\sm K$ must attach to a single vertex of $K$.  So,
containing $K$ implies being basic or having a 1-cutset.

The next steps all follow from the Petersen-or-Heawood graph.  To see
how, let us say that a graph $H$ \emph{arises}\index{arise!$H$ arises
  from $G$} from a graph $G$ if $H$ is obtained from $G$ by repeatedly
applying the following: deleting a vertex of degree~3 and subdividing
any number of times edges incident to at least one vertex of degree 2.
It is easy to see that if $G$ is in $\cal C$ then so is $H$.  A graph
that arises from the Petersen-or-Heawood graph is made of vertices of
degree~3 linked either by edges or by \emph{flat
  paths}\index{flat!path}, that are paths of length at least~2 whose
ends are non-adjacent and whose interior vertices are all of degree~2.
Up to symmetries, there are 5 types of 2-connected graphs that arise
from the Petersen-or-Heawood graphs.  For each of them, (taken in the
order in which they appear when eliminating vertices of degree~3), it
can be proved that a connected component of what remains attaches
either to single vertex (so there is a 1-cutset), or to a flat path
(so the ends of the flat path form a special 2-cutset), or in a way
that gives a previously studied configuration.  All this is a bit
tedious because each of the five cases needs about 2 pages of
checking.

The next steps are less tedious: eliminating cycles with exactly three
chords, and finally all cycles with at least one chord.  After this,
the graph is chordless, so we may rely on Theorem~\ref{th:nochord} (be
careful, special and proper 2-cutsets are different, but this is a
technical detail).  In \cite{nicolas.kristina:one}, things are
presented slightly differently, because we did not identify
Theorem~\ref{th:nochord} as a separate result.

\subsection*{Applications}

Theorem~\ref{th:OneChord} has several applications.  First, it is
easily seen to be a structure theorem.  We do not give the list of the
reverse operations, see~\cite{nicolas.kristina:one} for all details.

From Theorem~\ref{th:OneChord}, it is easy to derive a recognition
algorithm for Class~$\cal C$.  An ${O}(n^5)$ or a slightly more
involved ${O}(n^4)$ algorithm could be obtained from first
principles, but in~\cite{nicolas.kristina:one}, sophisticated
algorithms from other authors, namely Dahlhaus~\cite{dahlhaus:split},
Hopcroft and
Tarjan~\cite{hopcroft.tarjan:447,tarjan:dfs,hopcroft.tarjan:3con} get
our algorithm to run in ${O}(nm)$-time.  In view of
Chapter~\ref{chap:reco}, it would be nice to have a recognition
algorithm that relies on something like three-in-a-tree.  Here are
NP-completeness results supporting the idea that no such approach
exists.

Let $H_{1|1}$ be the s-graph on vertices $a, b, c, d$ with real edges
$ab$, $ac$, $ad$ and subdivisible edges $bd$, $cd$. We also define for
$k, l \geq 1$ the s-graph $H_{k|l}$ obtained from $H_{1|1}$ by
subdividing the edge $ab$ into a path of length $k$ and the edge $ac$
into a path of length $l$.  See Fig.~\ref{f:sgraph}.  Problem
$\Pi_{H_{1|1}}$ can be rephrased as ``Does $G$ contain a cycle with a
unique chord?''  or ``Is $G$ not in~$\cal C$?''.

\begin{figure}
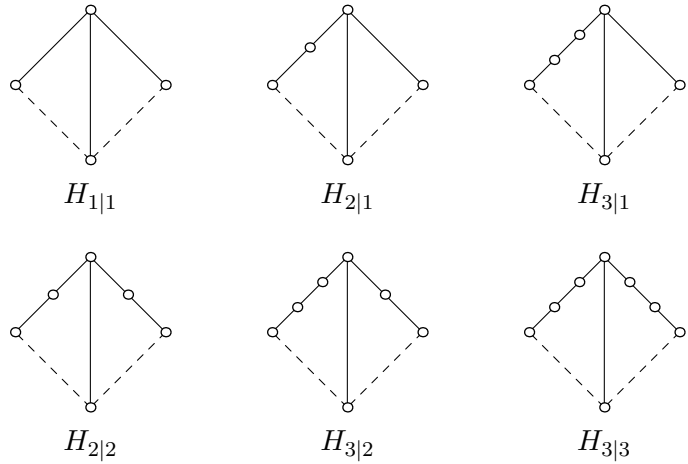

\center
\begin{tabular}{ccccc}
\includegraphics{sgraphs.1}&
\rule{1em}{0ex}&
\includegraphics{sgraphs.2}&
\rule{1em}{0ex}&
\includegraphics{sgraphs.4}\\
$H_{1|1}$&&
$H_{2|1}$&&
$H_{3|1}$
\\
\rule{0em}{1ex}&&&&\\
\includegraphics{sgraphs.3}&
\rule{1em}{0ex}&
\includegraphics{sgraphs.5}&
\rule{1em}{0ex}&
\includegraphics{sgraphs.6}\\
$H_{2|2}$&&
$H_{3|2}$&&
$H_{3|3}$
\end{tabular}
\caption{Some s-graphs\label{f:sgraph}}
\end{figure}

We claim that our polynomial time algorithm for $\Pi_{H_{1|1}}$ exists
not because of some hidden general method but thanks to what we call
\emph{degeneracy}\index{degenaracy}.  Let us explain this.  Degeneracy has to deal with
the following question: does putting bounds on the lengths of the
paths in realisations of an s-graph affects the complexity of the
related detection problem?  First, putting upper bounds may turn the
complexity from NP-complete to polynomial. This follows from a simple
observation: let $B$ be any s-graph. A realisation of $B$, where the
lengths of the paths arising from the subdivisions of subdivisible
edges are bounded by an integer $N$, has a number of vertices bounded
by a fixed integer $N'$ (that depends only on $N$ and the size of
$B$). So, such a realisation can be detected in time ${O}(n^{N'})$ by
a brute force enumeration.  But surprisingly, putting upper bounds in
another way may also turn the complexity from polynomial to
NP-complete: in~\cite{chudnovsky.c.l.s.v:reco}, a polynomial time
algorithm for $\Pi_{K}$ is given, while in
\cite{maffray.t.v:3pcsquare} it is proved that $\Pi_{K'}$ is
NP-complete, where $K, K'$ are the s-graphs represented in
Figure~\ref{f:pp}.

\begin{figure}
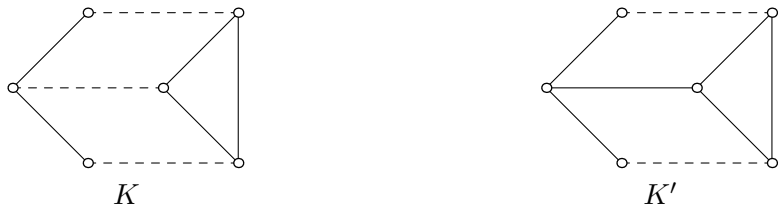

\center
\begin{tabular}{ccc}
\includegraphics{sgraphs.7}&
\rule{8em}{0ex}&
\includegraphics{sgraphs.8}\\
$K$&&
$K'$
\end{tabular}
\caption{Some s-graphs\label{f:pp}}
\end{figure}

Can putting lower bounds turn the complexity from polynomial to
NP-complete? Our recognition algorithm for $\cal C$ shows that the
answer is yes since we also prove that the problem $\Pi_{H_{3|3}}$ is
NP-complete (this relies on Bienstock's construction,
see~\cite{nicolas.kristina:one} for a proof).  A realisation of
$H_{3|3}$ is simply a realisation of $H_{1|1}$ where every
subdivisible edge is subdivided into a path of length at least three.
A ``general'' approach (like three-in-a-tree) that would recognize
class $\cal C$ would perhaps solve also $\Pi_{H_{3|3}}$ and therefore
should not exist.

We also believe that a satisfactory structural description of the
class~$\cal C'$ of graphs that do not contain a realisation of
$H_{3|3}$ is hopeless because $\Pi_{H_{3|3}}$ is NP-complete.  So why
is there a decomposition theorem for~$\cal C$?  Simply because
degenerate small graphs like the \emph{diamond}\index{diamond} (that
is the cycle on four vertices with exactly one chord) are forbidden
in~$\cal C$, not in~$\cal C'$, and this helps a lot in our proof of
Theorem~\ref{th:OneChord}.  This is what we call the \emph{degeneracy}
of the class~$\cal C$. It is clear that degeneracy can help in solving
detection problems, and our results give a first example of this
phenomenon.

Note that we do not know the answer to the following: can putting
lower bounds turn the complexity from NP-complete to polynomial?  Also
$\Pi_{H_{2|1}}$, $\Pi_{H_{3|1}}$, $\Pi_{H_{2|2}}$ and~$\Pi_{H_{3|2}}$
are open problems.  The related classes of graphs are not degenerate
enough to allow us to decompose, and they are too degenerate to allow
us to find an NP-completeness proof.

Theorem~\ref{th:OneChord} has consequences on combinatorial
optimization problems.  The fact that triangles entail 1-cutsets gives
easily a linear-time algorithm for the maximum clique problem.  The
presence of sparse graphs in $\cal C$ shows that computing maximum
stable sets is NP-hard by a very useful construction of
Poljak~\cite{poljak:74} that is worth recalling.  A
\emph{2-subdivision}\index{2-subdivision} is a graph obtained from any
graph by subdividing twice every edge. More precisely, every edge $uv$
of a graph $G$ is replaced by an induced path $uabv$ where $a$ and $b$
are of degree two. Let $F$ be the resulting graph. It is easy to see
that $\alpha(F) = \alpha(G) + |E(G)|$.  So, computing maximum stable
sets in 2-subdivisions is NP-hard.  Since a 2-subdivision is sparse,
our NP-hardness result follows.

Coloring graphs from $\cal C$ is more difficult.  If the graph to be
colored contains a triangle, there is a 1-cutset so coloring can be
performed recursively on blocks.  The difficult case is when there is
no triangle.  Because coloring recursively along 1-joins and 2-cutsets
is not direct (as already mentioned in Section~\ref{sec:chordless}).
The first idea we had is to use an extreme decomposition: a
decomposition such that one of the block is a basic graph.  But we soon
realised that some graphs in $\cal C$ do not admit an extreme
decomposition.  The graph in Fig.~\ref{f:noextr} has no special
2-cutset and a unique 1-join.  No block with respect to this proper
1-join is basic, but both blocks have a special 2-cutset (in a block
with respect to a 1-join, one side is replaced by a vertex).

\begin{figure}[h]
\center
\includegraphics{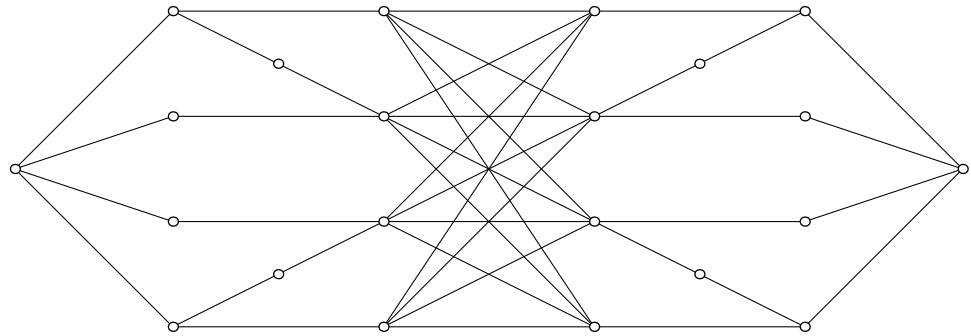}
\caption{A graph in $\cal C$ with no extreme decomposition\label{f:noextr}}
\end{figure}

So, we tried to prove that every triangle-free graph from $\cal C$ is
3-colorable directly by induction.  To do this, we had to prove
something stronger, and finding the right induction hypothesis had
been quite painful.  Let us call \emph{third color}\index{third color}
of a graph any stable set that contains at least one vertex of every
odd cycle.  Finding a third color is equivalent to 3-color a graph:
give color~3 to the third color; since $G\sm S$ contains no odd cycle,
it is bipartite: color it with colors~1, 2.

Let us call \emph{strong third color}\index{strong third color} of a
graph any stable set that contains at least one vertex of every cycle
(odd or even).  Recall that $N[v]$ denotes $\{v\} \cup N(v)$. When $v$
is a vertex of a graph $G$, a pair of disjoint subsets $(R, T)$ of
$V(G)$ is \emph{admissible with respect to $G$ and
  $v$}\index{admissible} if one of the following holds (see
Fig.~\ref{f:admis}):

\begin{itemize}
\item
  $T = N(v)$ and $R = \{v\}$;
\item
  $T = \emptyset$ and $R = N[v]$;
\item
  $v$ is of degree two, $N(v) = \{u, w\}$, $T = \{u\}$, $R = \{v,
  w\}$;
\item
  $v$ is of degree two, $N(v) = \{u, w\}$, $T = \{u\}$, $R = N[w]$;
\item
  $v$ is of degree two, $N(v) = \{u, w\}$, $T = \emptyset$, $R = \{u\}
  \cup N[w]$.
\end{itemize}

\begin{figure}
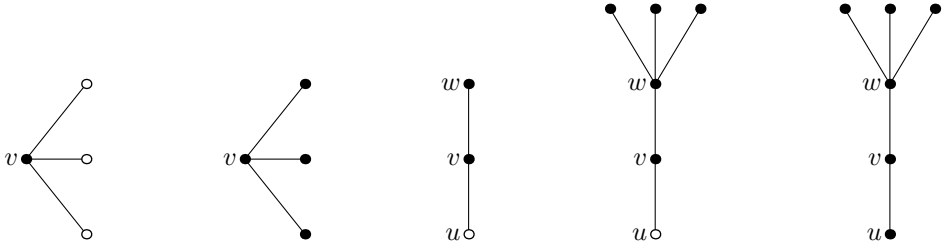

\center
\includegraphics{figChord.11}\rule{4.1em}{0ex}
\includegraphics{figChord.12}\rule{4.1em}{0ex}
\includegraphics{figChord.13}\rule{4.1em}{0ex}
\includegraphics{figChord.14}\rule{4.1em}{0ex}
\includegraphics{figChord.15}
\caption{Five examples of admissible pairs (vertices of $T$ are white,
  vertices of $R$ are black)\label{f:admis}}
\end{figure}

We say that a pair of disjoint subsets $(R,T)$ is an {\em admissible
  pair} of $G$ if for some $v \in V(G)$, $(R,T)$ is admissible with
respect to $G,v$.  An admissible pair $(R, T)$ should be seen as a
constraint for coloring: we will look for third colors (sometimes
strong, sometimes not, we have to do this) that must contain every
vertex of $T$ and no vertex of $R$.  We do this first in basic graphs,
and then by induction in all triangle-free graphs of $\cal C$, thus
proving that they are 3-colorable.  We give here the main statements
but we omit the proofs. 

\begin{lemma}
  \label{th:tcSP}
  Let $G$ be a non-basic, connected, triangle-free, square-free and
  Petersen-free graph in $\cal C$ that has no 1-cutset and no proper
  1-join.  Let $(R, T)$ be an admissible pair of~$G$.  Then $G$ admits
  a strong third color $S$ such that $T \subseteq S$ and $S \cap R =
  \emptyset$.
\end{lemma}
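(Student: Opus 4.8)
The plan is to induct on $|V(G)|$ and to exploit the fact that, under the stated hypotheses, $G$ must be decomposed by a special $2$-cutset. Since $G$ is connected, lies in $\cal C$, and is non-basic with neither a $1$-cutset nor a proper $1$-join, Theorem~\ref{th:OneChord} leaves exactly one possibility: $G$ has a special $2$-cutset $\{a,b\}$ with split $(X,Y,a,b)$. First I would record the structural facts we will use. The vertices $a,b$ are non-adjacent of degree at least $3$; since $G$ has no $1$-cutset, every component of $G[X]$ (and of $G[Y]$) sees both $a$ and $b$, so each of $G[X\cup\{a,b\}]$ and $G[Y\cup\{a,b\}]$ is connected; and since $G$ is square-free, $a$ and $b$ have at most one common neighbour in all of $G$ (two would give a $4$-cycle). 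Recall that a strong third color is a stable set $S$ with $G\sm S$ a forest, and that every cycle of $G$ either lies inside $X\cup\{a,b\}$, inside $Y\cup\{a,b\}$, or \emph{crosses} the cutset, in which case it is an $ab$-path in $X\cup\{a,b\}$ glued to an $ab$-path in $Y\cup\{a,b\}$. Hence it suffices to make $S$ meet every internal cycle of each side and every $ab$-path on \emph{one} side.

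Next I would set up the two blocks and the admissible pairs fed into the induction. The pair $(R,T)$ is attached to a vertex $v$; up to symmetry assume $v\in X\cup\{a,b\}$. When $v\in X$ I would keep $G_X:=G[X\cup\{a,b\}]$ (an induced subgraph of $G$, hence smaller, connected, triangle-free, square-free, and in $\cal C$) together with the same pair $(R,T)$, which remains admissible there because $N_{G_X}(v)=N_G(v)$. On the other side I would form $G_Y$ from $G[Y\cup\{a,b\}]$ by adding a single marker vertex $x$ with $N(x)=\{a,b\}$; this block is smaller, connected, triangle-free, and stays square-free and in $\cal C$ precisely because $a,b$ have at most one common neighbour, which generically does not lie in $Y$. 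Crucially $x$ has degree $2$, so the degree-two shapes in the definition of an admissible pair let me choose, about $x$, a pair that forces $x\notin S$ and pins both $a$ and $b$ at once: $T=\{a,b\},\,R=\{x\}$ forces $a,b\in S$; $T=\emptyset,\,R=N[x]=\{x,a,b\}$ forces $a,b\notin S$; and $T=\{a\},\,R=\{x,b\}$ (or its mirror) forces exactly one in. I would first apply induction to $G_X$, read off from $S_X$ the membership pattern of $a,b$, then apply induction to $G_Y$ with the matching marker-pair to obtain $S_Y$ agreeing with $S_X$ on $\{a,b\}$ and with $x\notin S_Y$. The case $v\in\{a,b\}$ is absorbed by letting the marker vertex carry the straddling constraint on $v$.

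Finally I would glue. Set $S=(S_X\cap(X\cup\{a,b\}))\cup(S_Y\cap Y)$, the two parts agreeing on $\{a,b\}$ and the marker discarded. Stability is immediate since there are no $X$–$Y$ edges and each piece is stable in its block, and $T\subseteq S$, $R\cap S=\emptyset$ hold because the constraints were imposed on the blocks containing the relevant vertices. For the forest property I would argue cycle by cycle: an internal cycle of $X\cup\{a,b\}$ is a cycle of $G_X$, hence met by $S_X$ at one of its own vertices, and likewise inside $Y\cup\{a,b\}$; for a crossing cycle, its $ab$-path $P_Y$ in $Y\cup\{a,b\}$ together with the marker path $a\,x\,b$ is a cycle of $G_Y$, so $S_Y$ meets it, and as $x\notin S_Y$ the hit lands on $P_Y$, so $S$ meets the crossing cycle. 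Thus $G\sm S$ is acyclic and $S$ is the required strong third color.

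The hard part will be the bookkeeping at the cutset rather than any single computation. Concretely: (i) keeping $G_Y$ square-free and in $\cal C$ when the unique common neighbour of $a,b$ happens to lie on the marker side, which forces either a longer flat-path marker or a re-choice of the $2$-cutset, and a corresponding adaptation of the marker-pair so that it can still fix $a$ and $b$ simultaneously; (ii) checking, in each of the five shapes of $(R,T)$ and for each position of $v$ relative to the cutset (interior, a cut-vertex, or adjacent to one), that the induced block-constraints are genuine admissible pairs and are mutually consistent on $\{a,b\}$; and (iii) supplying the base of the induction, where a block is basic (sparse, a clique, or an induced subgraph of Petersen/Heawood), which is exactly where the companion results for basic graphs are invoked. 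Once this case analysis is organised, the gluing and the forest verification above go through uniformly.
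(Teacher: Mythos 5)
This document never actually proves Lemma~\ref{th:tcSP} (the text explicitly omits the proofs of the colouring lemmas and defers to \cite{nicolas.kristina:one}), so your proposal has to be judged on its own merits. Its skeleton is certainly the intended one: under the stated hypotheses Theorem~\ref{th:OneChord} leaves only a special 2-cutset $\{a,b\}$; one builds blocks in which a degree-2 marker vertex $x$ adjacent to $a$ and $b$ lets the degree-two shapes of admissible pairs prescribe all four possible traces of the solution on $\{a,b\}$ while keeping $x$ out of it; crossing cycles are hit because their $Y$-half closes into a cycle of the marked block through $a\tp x\tp b$; and the glueing and forest verification you describe are correct granted matching strong third colours on the two sides.

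The genuine gap is that your induction is not well-founded, and this is the crux of the lemma, not bookkeeping. You apply the lemma to blocks that need not satisfy its hypotheses, and the dichotomy ``each block is basic or again satisfies the hypotheses'' is exactly what has to be proved. Concretely: (i) your unmarked block $G[X\cup\{a,b\}]$ may have a 1-cutset, namely any vertex of $X$ separating $a$ from $b$ inside it; the hypotheses on $G$ exclude cutsets of $G$ only, such a block is neither basic nor within the lemma's scope, and your induction has nothing to say about it. It is precisely the marker that rules this out (a 1-cutset of a marked block lifts to a 1-cutset of $G$, since $a$, $b$, $x$ stay in one component), so markers are needed on both sides; but then your assertion that $(R,T)$ ``remains admissible'' in the $X$-block fails in the straddling cases: if $v$ has degree 2 and is adjacent to $a$, the fourth and fifth shapes give $R\supseteq N_G[a]$, which meets $Y$, so the original pair must be split into one admissible pair per block (this is what shapes 4 and 5 are for), an argument you never make. (ii) When $a$ and $b$ have a common neighbour $c\in Y$, your repair by a longer marker path $a\tp x_1\tp x_2\tp b$ cannot do what you need: no admissible pair centred on that path forces $a,b\in S$ simultaneously (shape 1 at $x_1$ forces $x_2\in T$, hence $b\notin S$; every other shape puts at most one of $a,b$ in $T$), so the agreement on $\{a,b\}$ that your glueing requires is unreachable this way; the only pair in that block which works is shape 1 centred at $c$ itself, a case you do not consider, and the alternative ``re-choice of the 2-cutset'' is asserted without proof. (iii) The base case is absent: strong third colours subject to admissible pairs for sparse graphs, cliques and induced subgraphs of the Petersen and Heawood graphs are invoked as ``companion results'' but are neither stated nor proved, and they are separate, nontrivial lemmas of \cite{nicolas.kristina:one}. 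Until the inheritance statement for blocks and these base lemmas are supplied, what you have is the correct plan, not a proof.
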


In the proof of the lemma above we really need to find \emph{strong}
third colors for the sake of induction.  But we cannot do this for all
graphs in $\cal C$ because it is false in general that triangle-free
graphs from $\cal C$ admit strong third colors.  A counter-example is
the graph $G$ obtained as follows: take four disjoint copies $\Pi_1,
\dots, \Pi_4$ of the Petersen graph minus one vertex. So $\Pi_i$
contains a set~$X_i$ of three vertices of degree two ($i= 1, \dots,
4$). We add all edges between $X_1, X_2$, between $X_2, X_3$, between
$X_3, X_4$ and between $X_4, X_1$.  Note that $G$ can be obtained by
gluing one square $S=s_1s_2s_3s_4$ and four disjoint copies of the
Petersen graph along 1-joins, so it is a graph from $\cal C$.

We claim now that $G$ does not contain a stable set that intersects
every cycle. Indeed, if $S$ is such a stable set then $S$ must contain
all vertices in one of the $X_i$'s for otherwise we build a $C_4$ of
$G\sm S$ by choosing a vertex in every~$X_i$. So $X_1 \subseteq S$
say. We suppose that $\Pi_1$ has vertices $\{a_2, \dots, a_5, b_1,
\dots, b_5\}$ with our usual notation. So $X_1 = \{b_1, a_2, a_5\}
\subseteq S$ and we observe that every vertex in $C = V(\Pi_1\sm S)$
has a neighbor in $X_1$. Hence $C \cap S = \emptyset$ while $G[C]$ is
a cycle on six vertices, a contradiction.  

Yet, we can prove that all graphs from $\cal C$ admit a third color.

\begin{lemma}
  \label{th:thirdcolor}
  Let $G$ be a non-basic connected triangle-free graph in $\cal C$ and
  $(R, T)$ be an admissible pair of~$G$.  Then $G$ admits a third
  color $S$ such that $T \subseteq S$ and $S \cap R = \emptyset$.
\end{lemma}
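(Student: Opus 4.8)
The plan is to argue by induction on $|V(G)|$, with the decomposition Theorem~\ref{th:OneChord} as the engine and Lemma~\ref{th:tcSP} as the heart of the ``irreducible'' case. Since $G$ is triangle-free it is not a clique (apart from trivial small graphs), and since it is non-basic it is neither sparse nor an induced subgraph of the Petersen or Heawood graph, so Theorem~\ref{th:OneChord} forces $G$ to admit a $1$-cutset, a special $2$-cutset, or a proper $1$-join. The first observation I would make is that a special $2$-cutset need not be handled separately here: a square-free, Petersen-free graph with no $1$-cutset and no proper $1$-join falls directly under Lemma~\ref{th:tcSP}, which already produces a \emph{strong} third color respecting $T\subseteq S$ and $S\cap R=\emptyset$, and a strong third color is in particular a third color. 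Moreover, by Lemma~\ref{c:noSquare0} a triangle-free graph of $\cal C$ that is not a clique and contains a square must have a $1$-cutset or a proper $1$-join, and (as recorded just after Lemma~\ref{c:noSquare0}) a square-free triangle-free graph containing an induced Petersen or Heawood graph must have a $1$-cutset. Hence every situation outside the scope of Lemma~\ref{th:tcSP} is reached through a $1$-cutset or a proper $1$-join, and those two weak decompositions carry all the new work.

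The genuine base of the induction is to verify the admissible-pair statement \emph{directly} for the basic classes, i.e.\ for sparse graphs and for induced subgraphs of the Petersen and Heawood graphs, since blocks obtained by decomposition may themselves be basic. For sparse graphs this is tractable because their structure is rigid (a disjoint union of cycles together with pairwise non-adjacent vertices of degree at least three, whose deletion leaves a union of paths), so a stable transversal of the odd cycles honoring any of the five admissible shapes around a vertex $v$ can be produced by hand; for the two sporadic graphs it is a finite check.

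For the inductive step across a $1$-cutset $v$ with split $(X,Y,v)$, every cycle of $G$ lies inside one of the blocks $G[X\cup\{v\}]$ and $G[Y\cup\{v\}]$, because the two neighbors a cycle uses at $v$ must lie on the same side of the cutset (otherwise $G\sm v$ would not be disconnected). It therefore suffices to third-color each block by the induction hypothesis and to make the two colorings agree at $v$. I would encode this agreement, together with the prescribed pair $(R,T)$ (whose defining vertex lies on one side of $v$, or is $v$ itself), as admissible pairs in the two blocks: one pair transports the original constraint, the other is tailored to force $v$ into or out of the color consistently, which is exactly what the degree-two variants of the admissible pair centered at $v$ are designed to permit. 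A proper $1$-join with split $(X,Y,A,B)$ is handled similarly but with marker vertices: one replaces the far side by a single vertex adjacent to the special set, recurses on each enlarged block, and verifies that every odd cycle crossing the join (which must traverse two edges of the complete bipartite graph between $A$ and $B$) is met; the admissible pair imposed on the marker side is chosen so that the recombined set is simultaneously stable, a transversal of all odd cycles, and compatible with $(R,T)$.

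The hard part will be precisely this bookkeeping of admissible pairs during the gluing. Because only \emph{ordinary}, not strong, third colors can be demanded in general (the glued-Petersen example in the text exhibits triangle-free graphs of $\cal C$ with no stable set meeting every cycle), each recombination forces one to re-examine the odd cycles threading the cutset and to confirm they remain covered; this is where one must case-split on whether the cutset or marker vertex ends up in the color. Making the induction hypothesis strong enough, that is, designing the five admissible shapes so that every boundary behaviour capable of leaving an odd cycle uncovered is preempted, is the delicate point and the reason the admissible pair is defined with exactly these five patterns rather than the naive constraint ``$v\in S$'' or ``$v\notin S$''.
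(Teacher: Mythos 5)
Your plan coincides with the paper's own approach: the document omits the proof of this lemma (deferring to~\cite{nicolas.kristina:one}), but the induction it sketches is exactly yours --- handle the basic blocks directly, reduce squares via proper 1-joins (Lemma~\ref{c:noSquare0}) and the sporadic Petersen/Heawood subgraphs via 1-cutsets, invoke Lemma~\ref{th:tcSP} for the square-free, Petersen-free case with no 1-cutset and no proper 1-join (i.e.\ the special 2-cutset case, where the strong third color is available), and glue third colors across 1-cutsets and proper 1-joins by transporting the constraints as admissible pairs in the blocks. Your diagnosis that the delicate point is the admissible-pair bookkeeping at the cutset or marker vertices, and that only ordinary (not strong) third colors can survive the gluing, matches the paper's own commentary on why the induction hypothesis was ``painful'' to find.
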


All this shows that graphs in $\cal C$ have the best $\chi$-bounding
function that one may expect for a class not included in perfect
graphs: $f(2)=3$ and $f(x) = x$ for $x\geq 3$. 

\begin{theorem}[with Vu\v skovi\'c \cite{nicolas.kristina:one}]
  \label{th:color1chord}
  Let $G$ be a graph that does not contain a cycle with a unique
  chord.  Then either $\chi(G) = 3$ or $\chi(G) = \omega(G)$.
\end{theorem}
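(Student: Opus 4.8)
The plan is to prove the stronger, unified inequality $\chi(G) \le \max(3, \omega(G))$ for every $G \in {\cal C}$, and then read off the dichotomy. Since both $\chi$ and $\omega$ are the maxima of their values over the connected components of $G$, and ${\cal C}$ is closed under taking induced subgraphs, it suffices to establish this inequality for connected $G$, which I would do by induction on $|V(G)|$. Once it is proved, the theorem follows: if $\omega(G) \ge 3$ then $\max(3,\omega(G)) = \omega(G)$, so $\omega(G) \le \chi(G) \le \omega(G)$ gives $\chi(G) = \omega(G)$; while if $\omega(G) \le 2$ then $\chi(G) \le 3$, and $G$ is either bipartite, in which case $\chi(G) = \omega(G)$, or contains an odd cycle, forcing $\chi(G) = 3$.

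First I would dispose of the case where $G$ contains a triangle, i.e.\ $\omega(G) \ge 3$; here the goal is the clean equality $\chi(G) = \omega(G)$. By Lemma~\ref{c:Ctriangle}, a connected graph of ${\cal C}$ with a triangle is either a clique, for which $\chi = \omega$ is immediate, or has a $1$-cutset $v$ with split $(X,Y,v)$. In the latter case the two blocks $G[X \cup \{v\}]$ and $G[Y \cup \{v\}]$ are proper induced subgraphs of $G$, hence lie in ${\cal C}$ and satisfy the induction hypothesis, with clique numbers at most $\omega(G)$. A colouring of $G$ using the larger of the two block chromatic numbers is obtained in the standard way for a cut-vertex: colour each block, then permute colours in one block so that the two colourings agree on $v$. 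This yields $\chi(G) \le \max(3, \omega(G))$.

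It remains to treat the triangle-free case, $\omega(G) \le 2$, where the target is $\chi(G) \le 3$, i.e.\ $3$-colourability. If $G$ is one of the basic graphs of Theorem~\ref{th:OneChord} this is direct: a triangle-free clique is a vertex or an edge, sparse graphs are $3$-colourable (colour the cyclic components, then the vertices of degree at least~$3$, then the remaining disjoint paths), and the Petersen and Heawood graphs and their induced subgraphs are $3$-colourable (the Heawood graph is even bipartite). If $G$ is non-basic, I would invoke Lemma~\ref{th:thirdcolor}: applying it to the admissible pair $(R,T) = (N[v], \emptyset)$ for an arbitrary vertex $v$ produces a third colour $S$, that is, a stable set meeting every odd cycle. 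Then $G \sm S$ has no odd cycle and is therefore bipartite, so it is $2$-colourable, and assigning a third colour to the stable set $S$ completes a $3$-colouring of $G$.

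The genuinely hard content does not lie in the argument above but inside Lemma~\ref{th:thirdcolor}, whose proof is omitted in the excerpt: producing a third colour must be done by induction through the special $2$-cutsets and proper $1$-joins of Theorem~\ref{th:OneChord}, and, as the text stresses, these decompositions do not recombine colourings directly, which is exactly why the constrained ``admissible pair'' formulation (and, for the Petersen-free subcase, the \emph{strong} third colour of Lemma~\ref{th:tcSP}) is needed. Assuming that lemma, the present proof is a short case split (triangle versus triangle-free) plus the elementary cut-vertex recombination; the only points demanding a little care are checking that every basic triangle-free graph is $3$-colourable and that the componentwise reduction preserves the $\chi = 3$ versus $\chi = \omega$ alternative.
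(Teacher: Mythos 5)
Your proposal is correct and follows essentially the same route as the paper: triangles are disposed of via the 1-cutset of Lemma~\ref{c:Ctriangle} and recursion on blocks, while the triangle-free case is reduced to the third-colour Lemma~\ref{th:thirdcolor} (applied with an admissible pair $(N[v],\emptyset)$) together with direct 3-colourings of the basic graphs of Theorem~\ref{th:OneChord}. The only differences are expository — you package both cases as the single bound $\chi(G)\le\max(3,\omega(G))$ before reading off the dichotomy — and, as you correctly note, the real content lies inside Lemma~\ref{th:thirdcolor}, which the paper likewise states without proof and defers to \cite{nicolas.kristina:one}.
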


The question of characterizing these graphs that are $\chi$-bounded
with this function $f$ is widely open and Section~\ref{sec:noWheel}
gives a conjecture about them.

\section{Induced subdivision of $K_4$}
\label{sec:ISK4}

Graphs with no subdivision of $K_4$ as a possibly \emph{non-induced}
subgraph are called \emph{series-parallel
  graphs}\index{series-parallel}.  The structure of series-parallel
graphs is well-known.

\begin{theorem}[Duffin~\cite{duffin:SP}, Dirac~\cite{dirac:SP}]
  Series-parallel graphs are graphs that arise from a forest by
  repeatedly applying the following operations:

  \begin{itemize}
  \item
    add a parallel edge to an existing edge and subdivide this new edge
    (this is the \emph{parallel extension});
  \item  
    subdivide an edge (this is the \emph{series extension}).
  \end{itemize}
\end{theorem}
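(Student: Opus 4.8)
The statement is an equivalence: a graph is \emph{series-parallel} (contains no subdivision of $K_4$, write $TK_4$) if and only if it arises from a forest by series and parallel extensions. The plan is to prove the two inclusions separately. For the easy inclusion, that every graph built from a forest is series-parallel, I would note that a forest has no cycle and hence no $TK_4$, and then check that each operation preserves the absence of a $TK_4$, so that induction on the number of operations finishes the job. A series extension (subdividing an edge) neither creates nor destroys a $TK_4$, since its inverse is suppression of the new degree-$2$ vertex and a $TK_4$ survives suppression of any internal degree-$2$ vertex. For a parallel extension, which adds a new vertex $w$ adjacent to exactly the two ends $u,v$ of an existing edge $uv$, I would argue that any $TK_4$ in the new graph either avoids $w$, and so already lives in the old graph, or uses $w$; in the latter case $w$ has degree $2$ and is therefore internal to a branch path running $u$--$w$--$v$, and since two distinct branch vertices of a $TK_4$ are joined by a \emph{unique} branch path, the edge $uv$ is not used elsewhere, so replacing the segment $u$--$w$--$v$ by $uv$ yields a $TK_4$ in the old graph. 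Thus the operations stay inside the class.

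For the converse I would induct on $|V(G)|$. If $G$ is a forest there is nothing to do. Otherwise $G$ has a cycle and the whole point is to locate a vertex $w$ of degree exactly $2$. Given such a $w$ with neighbours $u,v$, I undo one extension: if $uv\in E(G)$ I delete $w$ (reverse parallel extension), and if $uv\notin E(G)$ I suppress $w$, deleting it and adding the edge $uv$ (reverse series extension). In both cases the resulting graph $G'$ is simple, smaller, and still series-parallel, because neither deleting a vertex nor suppressing a degree-$2$ vertex can create a $TK_4$. By induction $G'$ arises from a forest, and re-applying the matching extension to the edge $uv$ of $G'$ recovers $G$.

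It remains to produce the degree-$2$ vertex of $G$, and here I would pass to a leaf block $B$ of the block--cut tree: $B$ is $2$-connected, series-parallel, has at least $3$ vertices, and contains at most one cut vertex $c$ of $G$. The lemma I would invoke is the classical fact (Dirac) that every $2$-connected graph of minimum degree at least $3$ contains a $TK_4$; I would sketch it via an ear decomposition, starting from a cycle plus one ear to get a theta with branch vertices $x,y$ joined by three internally disjoint paths, and observing that the first later ear whose two ends lie on \emph{two different} of these paths immediately supplies the six internally disjoint paths of a $TK_4$. Since $B$ is series-parallel it thus has a degree-$2$ vertex; but to guarantee one distinct from $c$ (hence of degree $2$ in all of $G$) I need the strengthened claim that a $2$-connected series-parallel graph on at least $3$ vertices has \emph{at least two} vertices of degree $2$, which I would prove by induction using suppression together with the basic lemma.

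The hard part will be exactly this strengthened claim, and within it the case where the chosen degree-$2$ vertex lies on a triangle, i.e. its two neighbours are adjacent. There suppression would create a parallel edge, so one must delete instead, and the two neighbours may fall to degree $2$; tracking these ``parallel classes'' is the only delicate bookkeeping. I expect the cleanest route is to phrase the reduction in a multigraph model, alternating series reductions (suppressing degree-$2$ vertices) and parallel reductions (merging multi-edges) until either the minimum degree reaches $3$, where the basic lemma yields a $TK_4$ and contradicts series-parallelism, or the graph collapses to a forest; the final routine step is then to translate this multigraph descent back into the simple-graph series and parallel extensions of the statement.
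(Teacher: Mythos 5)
First, a point of comparison: the paper contains no proof of this theorem at all --- it is quoted as a classical result of Dirac and Duffin --- so your attempt can only be judged on its own merits and against the literature. Your first direction (anything built from a forest by the two extensions contains no subdivision of $K_4$) is essentially correct: suppression of the new degree-$2$ vertex, together with your observation that the $TK_4$ cannot also use the edge $uv$ (a triangle through a degree-$2$ vertex would need three branch vertices), does the job.

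The converse, however, has a genuine gap, and it is located exactly where the statement itself breaks. Your reduction needs a vertex of degree exactly $2$ in $G$, and your route to one --- ``a leaf block is $2$-connected with at least $3$ vertices'' --- is false: a leaf block may be a single edge $K_2$ (a pendant edge). Worse, a series-parallel graph containing a cycle may have \emph{no} vertex of degree $2$ whatsoever. Consider the net: a triangle $uvw$ with one pendant vertex attached to each of $u,v,w$. Its only cycle is the triangle, so it contains no subdivision of $K_4$; every leaf block is a $K_2$; and its degree sequence is $3,3,3,1,1,1$. In fact the net shows that the statement, read literally, is false, so no proof of it can be completed. Indeed, under the two operations every created vertex is born with degree $2$, no degree ever decreases, the only operation raising degrees is a parallel extension (which raises the degrees of the two ends of the doubled edge, never of the new vertex), and the cyclomatic number $|E|-|V|+c$ increases by $1$ at each parallel extension and by $0$ at each series extension. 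Since the net has cyclomatic number $1$, building it would use exactly one parallel extension; its new vertex $p$ then keeps degree $2$ forever and lies on a cycle forever, so $p$ would have to be one of $u,v,w$, which all have degree $3$ --- a contradiction. What Dirac and Duffin actually prove is the $2$-connected (block-wise) version: a $2$-connected graph on at least three vertices has no subdivision of $K_4$ if and only if it arises from $K_2$ by series and parallel extensions, and a general graph is series-parallel if and only if all its blocks are. For that version your plan is precisely the standard argument and is sound: minimum degree at least $3$ forces a $TK_4$, a $2$-connected series-parallel graph on at least three vertices has at least two vertices of degree $2$, and one undoes a series or parallel extension at such a vertex. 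So the fix is to restate the theorem block-wise (or add a gluing operation for cut vertices) and run your induction inside a block; as written, the step ``locate a degree-$2$ vertex of $G$'' cannot be carried out.
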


\begin{exercise}  
  Let $G$ be a 2-connected triangle-free series-parallel graph. Then
  either $G$ is isomorphic to $C_4$, to $C_5$ or to $K_{2, 3}$, or $G$
  contains at least 4 vertices of degree~2.
\end{exercise}

Of course, series-parallel graphs form an obvious basic class for
ISK4-free graphs.  So, an ISK4-free graph that is not basic should
contain a subdivision $H$ of $K_4$ that has chords.  Assuming that
$H$ has a minimum number of vertex, we obtain the following.  Recall
that a \emph{wheel}\index{wheel} is obtained by taking a hole and adding a vertex
with at least three neighbors in the hole.

\begin{lemma}[with L\'ev\^eque and Maffray \cite{nicolas:isk4}]
  \label{l:begin}
  Let $G$ be an ISK4-free graph. Then either $G$ is a series-parallel
  graph,  or $G$ contains a prism, or $G$ contains a wheel or $G$
  contains $K_{3, 3}$. 
\end{lemma}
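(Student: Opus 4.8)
The plan is to start from a \emph{minimum} subdivision of $K_4$ and analyse a chord. Since $G$ is not series-parallel, by the definition recalled above it contains a subdivision of $K_4$ as a (not necessarily induced) subgraph; choose one, $H$, with $|V(H)|$ minimum, and write $a_1,a_2,a_3,a_4$ for its branch vertices and $P_{ij}$ ($1\le i<j\le 4$) for the six paths joining them. Because $G$ is ISK4-free it has no induced subdivision of $K_4$ (in particular no $K_4$), so $H$ is not induced: there is an edge $uv\in E(G)$ with $u,v\in V(H)$ and $uv\notin E(H)$, which we call a \emph{chord}. The whole proof consists in locating, inside $H$ together with its chords, one of the three announced configurations, using the minimality of $H$ to discard everything else.

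First I would record the reduction principle coming from minimality: whenever a chord, or a pair of old subpaths, lets one re-route $H$ through strictly fewer vertices while keeping a subdivision of $K_4$, we reach a contradiction. Applied to a chord with both ends on one path $P_{ij}$, replacing the subpath between its ends by the chord gives a smaller subdivision; hence no chord lies inside a single branch. So every chord joins two distinct branches, and there are two cases: the branches share a branch vertex (say $u\in P_{12}$, $v\in P_{13}$), or they are opposite (say $u\in P_{12}$, $v\in P_{34}$); the sub-analysis also distinguishes whether $u,v$ are internal or branch vertices.

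In the opposite case $u\in P_{12}$, $v\in P_{34}$, for each of the four choices of one endpoint of $P_{12}$ and one endpoint of $P_{34}$ one builds, via the chord $uv$, a new subdivision of $K_4$ that avoids the interior of exactly one of the connecting paths $P_{13},P_{14},P_{23},P_{24}$; minimality then forces all four to be single edges, and two more re-routings force $u$ adjacent to both $a_1,a_2$ and $v$ adjacent to both $a_3,a_4$. What survives is exactly a $K_{3,3}$ with parts $\{a_1,a_2,v\}$ and $\{a_3,a_4,u\}$, and it is induced: the configuration has only these six vertices, and adding any further edge (necessarily inside a part) already produces an induced subdivision of $K_4$, which is impossible in $G$. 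In the shared case $u\in P_{12}$, $v\in P_{13}$, the analogous re-routings (taking $a_2,a_3,a_4$ in turn as fourth branch vertex) force $P_{23},P_{24},P_{34}$ to be edges, so $\{a_2,a_3,a_4\}$ is a triangle; using $v$ (resp.\ $u$) as apex of that triangle then forces $u$ (resp.\ $v$) adjacent to $a_1$. Thus $\{a_1,u,v\}$ and $\{a_2,a_3,a_4\}$ are two triangles joined by the three paths $u\!-\!a_2$, $v\!-\!a_3$, $a_1\!-\!a_4$: a prism. Any chord of this prism either yields a smaller subdivision of $K_4$ (contradicting minimality) or makes an endpoint see three vertices of an induced cycle of the prism, i.e.\ a wheel; so either the prism is induced, or we output a wheel.

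The routine but genuinely long part, which is the main obstacle, is the exhaustive re-routing bookkeeping: one must treat all positions of $u$ and $v$ (internal or branch vertices, including degenerate chords incident to a branch vertex such as $u=a_1$ with $v$ on an opposite branch), verify that each re-routed family of paths is internally disjoint, and check that every configuration surviving all reductions is a prism, a wheel, or $K_{3,3}$. The two conceptual levers that make this terminate cleanly are that ISK4-freeness forbids every induced subdivision of $K_4$ (so the extracted $K_{3,3}$ and prism cannot carry spurious edges without immediately exhibiting a forbidden configuration or a wheel) and that the minimality of $H$ forbids every strictly smaller subdivision produced by a re-routing.
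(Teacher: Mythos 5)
Your overall strategy is exactly the one the paper indicates for this lemma (and the one used in the cited paper with L\'ev\^eque and Maffray): take a subdivision $H$ of $K_4$ with fewest vertices, use ISK4-freeness to produce a chord, and use minimality of $H$ to force the surviving configuration to be a prism, a wheel or $K_{3,3}$. The single-chord forcing arguments you sketch are correct re-routings: no chord lies inside one branch; in the opposite case the four connecting paths collapse to edges and then $u,v$ become adjacent to the ends of their own branches; in the shared case the triangle $a_2a_3a_4$ and the edges $ua_1$, $va_1$ are forced.

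There is, however, a genuine gap where you conclude that the $K_{3,3}$ is induced. You argue that ``adding any further edge (necessarily inside a part) already produces an induced subdivision of $K_4$''. That claim is false as soon as two extra edges are present: the graph obtained from $K_{3,3}$ with parts $\{x,y,z\}$, $\{p,q,r\}$ by adding the two edges $xy$ and $xz$ contains \emph{no} induced subdivision of $K_4$. (It has no $K_4$; every $5$-vertex induced subgraph either has a vertex of degree $4$ or has only $6$ edges, while a one-edge-subdivided $K_4$ has $7$; and on all $6$ vertices there are $11$ edges, while a subdivision of $K_4$ on $6$ vertices has $8$.) This graph is ISK4-free --- it contains a wheel (the four-hole $y\tp p\tp z\tp q\tp y$ with center $x$), not an ISK4 --- so your contradiction never materializes in that case. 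The reason your argument cannot be iterated is that the ISK4 exhibited by one extra edge is induced only when that edge is the \emph{unique} extra edge; any second extra edge is a chord of it. The correct tool here is minimality of $H$, not ISK4-freeness: each candidate within-part edge ($a_1a_2$, $a_3a_4$, $a_1v$, $a_2v$, $a_3u$, $a_4u$), together with edges of $H$ alone, yields a subdivision of $K_4$ on five vertices (for instance $a_1v$ gives branch vertices $a_1,v,a_3,a_4$ with the path $a_3\tp a_2\tp a_4$), strictly smaller than the six-vertex $H$; and since subdivisions taken as \emph{subgraphs} are insensitive to additional edges, these contradictions survive the presence of any other chords. The same caution applies to your prism case, where ``any chord yields a smaller subdivision or a wheel'' is asserted rather than proved: that multi-chord bookkeeping must likewise be carried out with subgraph (not induced) re-routings, and it is precisely the long part of the actual proof.
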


This lemma gives an obvious way to find a decomposition theorem for
ISK4-free graphs: by proving attachment lemmas for prisms, wheels and
$K_{3, 3}$, and hoping that containing one of these entails a
decomposition.  The key question is the order in which the subgraphs
should be considered.  Hopefully, $K_{3, 3}$ is quite easy.  The
following is not very difficult to prove.

\begin{lemma}[with L\'ev\^eque and Maffray \cite{nicolas:isk4}]
  \label{l:decK33}
  Let $G$ be an ISK4-free graph that contains $K_{3, 3}$. Then either
  $G$ is a bipartite complete graph, or $G$ is a tripartite complete
  graph, or $G$ has a clique-cutset of size at most~3.
\end{lemma}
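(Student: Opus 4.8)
The plan is to take a maximal complete multipartite induced subgraph and analyse how the rest of $G$ attaches to it. Write $H$ for an inclusion-wise maximal induced subgraph of $G$ that is complete multipartite, with parts $V_1,\dots,V_k$; we may take $H$ to contain the given $K_{3,3}$, so $k\geq 2$ and at least two parts have size $\geq 3$. The first observation is that $k\leq 3$: a complete multipartite graph with four parts contains $K_4$, and $K_4$ is (a trivial subdivision of) itself, contradicting that $G$ is ISK4-free. Hence if $G=H$ we are already done: $G$ is complete bipartite ($k=2$) or complete tripartite ($k=3$). From now on assume $G$ is connected (otherwise $\emptyset$ is a clique-cutset of size $0$) and $G\neq H$.

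The heart of the argument is an attachment analysis, carried out by exhibiting induced subdivisions of $K_4$. First I would prove a \emph{single-vertex lemma}: for every $v\in V(G)\setminus V(H)$, the set $N(v)\cap V(H)$ is a clique of $H$, i.e. $v$ has at most one neighbour in each part. Suppose instead $v$ has two neighbours $a_1,a_2$ in one part $V_i$. Taking a third vertex $a_3\in V_i$ non-adjacent to $v$ together with two non-neighbours $b_1,b_2$ of $v$ in a part of size $\geq 3$, the set $\{v,a_1,a_2,a_3,b_1,b_2\}$ induces a subdivision of $K_4$ with branch vertices $a_1,a_2,b_1,b_2$ (the edge $a_1a_2$ subdivided by $v$, the edge $b_1b_2$ subdivided by $a_3$); similar five- and six-vertex configurations handle the remaining cases (when $v$ is complete to $V_i$, or has many neighbours in another part), always forcing either an induced $K_4$-subdivision, or that $H\cup\{v\}$ is again complete multipartite -- the latter contradicting maximality of $H$ (or producing a $K_4$ when it would create a fourth part).

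With the single-vertex lemma I would then show that each connected component $D$ of $G\setminus H$ attaches to a clique of $H$: the set $N(D)\cap V(H)$ contains no two vertices of the same part. If it did, say $D$ reaches $a_1,a_2\in V_i$, pick a shortest path $P$ from $a_1$ to $a_2$ with interior in $D$, minimised over all same-part pairs reachable from $D$. Using the single-vertex lemma and the minimality of $P$ to control how the interior of $P$ meets $V(H)$, together with the spare vertices available in the two large parts, one exhibits an induced subdivision of $K_4$ whose branch vertices are $a_1,a_2$ and two further vertices of other parts, with $a_1a_2$ realised by $P$ -- exactly as in the single-vertex case but with a path in place of one subdivision vertex. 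This is the step I expect to be the main obstacle: when the interior of $P$ has neighbours in the other parts one must choose the auxiliary branch and subdivision vertices carefully (or promote an interfering interior vertex to a branch vertex) to keep the configuration induced, and this requires a short but genuine case analysis.

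Finally, granting that $N(D)\cap V(H)$ is a clique, it has size at most $k\leq 3$; since $H$ contains $K_{3,3}$ and so has at least six vertices, $V(H)\setminus N(D)$ is non-empty and lies in a different component of $G\setminus(N(D)\cap V(H))$ from $D$. Hence $N(D)\cap V(H)$ is a clique-cutset of $G$ of size at most $3$, which is the third outcome and completes the proof.
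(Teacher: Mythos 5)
The first thing to say is that this document contains no proof of Lemma~\ref{l:decK33} to compare against: the lemma is imported from the cited reference with only the remark that it ``is not very difficult to prove''. So your proposal can only be judged on its own merits and against the method of the source paper, which is in fact the one you follow: a maximal complete multipartite induced subgraph $H$ containing the $K_{3,3}$ (forcing $2\leq k\leq 3$ parts, two of them of size at least~$3$), a single-vertex attachment lemma, a component attachment lemma via a minimal connecting path, and the observation that $N(D)\cap V(H)$ is then a clique cutset of size at most~$3$. On those merits the plan is correct: your six-vertex configuration is a genuine induced subdivision of $K_4$, the maximality-versus-ISK4 dichotomy you describe for the single-vertex lemma does close (the missing cases are the five-vertex configuration $K_4$ minus an edge plus one subdivision vertex, and the maximality contradictions when $v$ is complete or anticomplete to whole parts), and the final cutset step is sound.

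The step you flag as the main obstacle deserves a concrete confirmation, because your main-text description of it is not quite achievable and your parenthetical remedy is not optional --- it is exactly the needed idea. Take $P=a_1\tp p_1\tp\cdots\tp p_t\tp a_2$ minimal over all same-part pairs. Minimality forces: $t\geq 2$; the interior of $P$ has no neighbour in $V_i$ beyond the two path edges; and for every other part $V_j$, either all interior neighbours in $V_j$ coincide in a single vertex, or exactly two vertices $b,b'\in V_j$ are hit, $b$ only by $p_1$ and $b'$ only by $p_t$. In this last case with $|V_j|=3$, the configuration you announce (branch vertices $a_1,a_2$ plus two vertices of other parts) may simply not exist, and the set $\{a_1,p_1,\dots,p_t,a_2,b,b'\}$ induces only a prism, which contains no ISK4 --- so no contradiction yet. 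One must promote $p_1$ to a branch vertex: the vertices $a_1,p_1,b,a_2$, with the triangle $a_1p_1b$, the edge $ba_2$, the path $p_1\tp\cdots\tp p_t\tp a_2$, and the path $a_1\tp b''\tp a_2$ through the third vertex $b''$ of $V_j$, form an induced subdivision of $K_4$, inducedness being precisely the minimality statements above ($b$ and $b''$ have no neighbour in the interior of $P$ other than the edge $p_1b$). Symmetrically, when $|V_i|=2$ (possible only if $k=3$) the subdividing vertex $x\in V_i\setminus\{a_1,a_2\}$ is unavailable, and one instead takes branch vertices $a_1,a_2,b_1,c_1$ with $b_1,c_1$ chosen in the two other (large) parts with no neighbour in the interior of $P$; such vertices exist because each part meets that neighbourhood in at most two vertices, and $b_1c_1$ is an edge. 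With these two cases written out, your sketch becomes a complete proof.
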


The Lemma above already gives us something.  Since Scott's
Conjecture~\ref{conj:scott} is known to be true for $K_{3, 3}$-free
graphs by a theorem of K\"uhn and Osthus~\cite{kuhnOsthus:04}, we
easily prove the following. 

\begin{theorem}[Scott \cite{scott:pc}]
  \label{th:ScottK4}
  ${\rm Forb}^*(K_4)$ is $\chi$-bounded.
\end{theorem}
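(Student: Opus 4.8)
The plan is to induct on $|V(G)|$, peeling off induced copies of $K_{3,3}$ through clique-cutsets by Lemma~\ref{l:decK33} and handling the leftover $K_{3,3}$-free pieces with the theorem of K\"uhn and Osthus~\cite{kuhnOsthus:04}. Since that theorem establishes Conjecture~\ref{conj:scott} for graphs with no $K_{s,s}$, there is a $\chi$-bounding function $g$ for the class of graphs in ${\rm Forb}^*(K_4)$ containing no induced $K_{3,3}$; after replacing $g$ by $x\mapsto\max_{y\le x}g(y)$ I may assume $g$ is non-decreasing, and I set $f(x)=\max(x,g(x))$. The claim is then that every $G\in{\rm Forb}^*(K_4)$ satisfies $\chi(G)\le f(\omega(G))$.

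First I would treat a $G$ containing no induced $K_{3,3}$: then $\chi(G)\le g(\omega(G))\le f(\omega(G))$ is exactly the content of K\"uhn and Osthus. Otherwise $G$ contains an induced $K_{3,3}$, and Lemma~\ref{l:decK33} applies. If $G$ is complete bipartite or complete tripartite it is perfect, so $\chi(G)=\omega(G)\le f(\omega(G))$. In the remaining case $G$ has a clique-cutset $K$ with $|K|\le 3$: partition $V(G)\setminus K$ into non-empty sets $Y_1,Y_2$ with no edge between them and put $G_i=G[Y_i\cup K]$. Here I would invoke the standard clique-cutset colouring identity $\chi(G)=\max(\chi(G_1),\chi(G_2))$, valid because proper colourings of $G_1$ and $G_2$ can be renamed to coincide on the clique $K$ and then merged across the edgeless separation. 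Both $G_i$ are proper induced subgraphs, hence lie in ${\rm Forb}^*(K_4)$, so the induction hypothesis gives $\chi(G_i)\le f(\omega(G_i))\le f(\omega(G))$ using $\omega(G_i)\le\omega(G)$ and the monotonicity of $f$; thus $\chi(G)\le f(\omega(G))$, closing the induction.

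Essentially all the depth sits in the two ingredients I am assuming, so the only place demanding care is the interface between ``induced $K_{3,3}$'' (the output of Lemma~\ref{l:decK33}) and ``$K_{s,s}$ subgraph'' (the hypothesis of K\"uhn and Osthus). If their theorem is read with induced subgraphs the two notions match and there is nothing to do; if it is read with ordinary subgraphs I would bridge the gap by a short Ramsey argument showing that $g$ still exists. Indeed, a graph in ${\rm Forb}^*(K_4)$ with no induced $K_{3,3}$ cannot contain a $K_{6,6}$ subgraph: in such a $K_{6,6}$, either both sides carry an independent triple, forcing an induced $K_{3,3}$, or one side has independence number at most $2$ and, having $6=R(3,3)$ vertices, therefore contains a triangle, which together with any vertex of the opposite side forms a $K_4$. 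Hence these graphs lie in the $K_{6,6}$-subgraph-free class covered by K\"uhn and Osthus. The rest---monotonising the bounding function and the clique-cutset identity---is routine, so I expect no serious obstacle once Lemma~\ref{l:decK33} and the K\"uhn--Osthus bound are in hand.
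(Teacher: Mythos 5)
Your proof is correct and follows essentially the same route as the paper, which likewise obtains Theorem~\ref{th:ScottK4} by combining Lemma~\ref{l:decK33} (the perfect complete multipartite cases, plus induction through the clique-cutset of size at most~3) with the K\"uhn--Osthus theorem for graphs with no $K_{s,s}$. The paper leaves the verification implicit (``we easily prove the following''), so your Ramsey bridge showing that an ISK4-free graph with no \emph{induced} $K_{3,3}$ has no $K_{6,6}$ \emph{subgraph} is a welcome point of care on an interface the paper glosses over.
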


The next question is to decide whether we should consider prisms or
wheels first.  Here, knowing a bit about Berge and claw-free graphs
helps a lot to decide something (foolish or wise, but we have to
decide).  Because from them, we know that prisms are line-graphs that
are likely to grow into bigger line-graphs while wheels are more
messy.  Indeed, if one attaches a vertex to a prism, it is very likely
that a kind of pyramid (which is an ISK4) is created, there are only
few exceptions to this and most of them actually give some bigger
line-graphs.  So, considering an inclusion-wise maximal induced
subgraph isomorphic to a line-graph should give something.  Note that
the idea of considering a maximal line-graph in such a context is one
of the key steps that lead to the proof of the Strong Perfect Graph
Conjecture, but it was used before by Conforti and
Cornu\'ejols~\cite{conforti.c:wp}.

Precise statements are postponed because we need long definitions.
Let us say now that one attachment to a prism is pathological.
Suppose that a prism contains a square and add a vertex complete to
this square.  This gives an ISK4-free graph, which is in fact a
line-graph.  But, add again another vertex complete to the same
square.  Then, no ISK4 is created, and the graph obtained is not a
line-graph.  So, some prisms are likely to grow in other directions
than just line-graphs.  It has been quite long to figure out that in
fact, a single basic class arises from these pathological attachments,
the so-called rich-squares.  The smallest example of a rich-square is
$L(K_4)$, more famous as the octahedron.  Even if it does not appear
too much in the statements below, it behaves a bit as a sporadic
ISK4-free graph.

\subsection*{Definitions}

A \emph{separation}\index{separation} of a graph $H$ is a pair $(A,
B)$ of subsets of $V(H)$ such that $A \cup B = V(H)$ and there are no
edges between $A \setminus B$ and $B \setminus A$. It is
\emph{proper}\index{proper!separation} if both $A \setminus B$ and
$B\setminus A$ are non-empty. The \emph{order}\index{order of a
  separation} of the separation is $|A\cap B|$.  A
\emph{$k$-separation}\index{K@$k$-separation} is a separation $(A, B)$
such that $|A \cap B| \leq k$. A separation $(A, B)$ is
\emph{cyclic}\index{cyclic} if both $H[A]$ and $H[B]$ has cycles.  A
graph $H$ is \emph{cyclically 3-connected}\index{cyclically
  3-connected} if it is 2-connected, not a cycle, and there is no
cyclic 2-separation.  Note that a cyclic 2-separation of any graph is
proper.  The reader might prefer to think about cyclically 3-connected
graphs as follows.

\begin{exercise}
  \label{l:descc3c}
  A graph $H$ is a cyclically 3-connected graph if and only if it is
  either a theta or a subdivision of a 3-connected graph.
\end{exercise}

Recall that a \emph{branch-vertex}\index{branch-vertex} is a vertex of
degree at least~3.  A \emph{branch}\index{branch} is a path whose ends
are branch-vertices and whose interior vertices are not.  A
\emph{basic branch}\index{basic!branch} in a graph is branch such that
no two vertices in it are members of a triangle.  Note that a branch
in a graph is either basic, or is an edge $uv$ and there is a common
neighbor of $u, v$.

A \emph{triangular}\index{triangular subdivision} subdivision of $K_4$
is a subdivision of $K_4$ that contains a triangle.  Note that a
pyramid is a triangular ISK4.  A \emph{square}\index{square!theta}
theta is a theta that contains a square.  Rephrased: a theta with two
branches of length two. A \emph{square}\index{square!prism} prism is a
prism that contains a square. Rephrased, a prism with two basic
branches of length one. Rephrased again, the line-graph of a square
theta.  A \emph{square}\index{square!subdivision of $K_4$} subdivision
of $K_4$ is a subdivision of $K_4$ such that the four vertices of
degree three in it lie in a possibly non-induced square.  Rephrased: a
subdivision of $K_4$ where only two edges with no common ends are
possibly subdivided. An induced square in a graph is
\emph{basic}\index{basic!square} if
an even number of edges in it lie in a triangle of the graph. It
easily checked that the line-graph of a subdivision $H$ of $K_4$
contains a basic square if and only if $H$ is a square subdivision of
$K_4$, and that the vertices in any basic square of $L(H)$ arise from
the edges of a square on the branch-vertices of $H$. It is easily
checked that a prism contains only basic squares.

If $X, Y$ are two basic branches of a graph $G$, a \emph{connection
  between $X$, $Y$} \index{connection!between $X$, $Y$} is a path $P =
p \tp \cdots \tp p'$ such that $p$ has neighbors in $X$, $p'$ has
neighbors in $Y$, no interior vertex of $P$ has neighbors in $X\cup
Y$, and if $p\neq p'$ then $p$ has no neighbor in $Y$ and $p'$ has no
neighbor in $X$.

When $S$ = $\{u_1, u_2, u_3, u_4\}$ induces a square in a graph $G$
with $u_1, u_2, u_3, u_4$ in this order along the square, a
\emph{connection}\index{connection!of a square} of $S$ is an induced
path of $G$, no interior vertex of which has a neighbor in $S$, with
ends $p, p'$ such that either $p=p'$ and $N_S(p) = S$; or $N_S(p) =
\{u_1, u_2\}$ and $N_S(p') = \{u_3, u_4 \}$; or $N_S(p) = \{u_1,
u_4\}$ and $N_S(p') = \{u_2, u_3\}$.

The line-graph of $K_4$ is isomorphic to $K_{2, 2, 2}$. It is usually
called the \emph{octahedron}\index{octahedron}. It contains three basic squares. For
every basic square $S$ of an octahedron $G$, the two vertices of $G\sm
S$ are both connexions of $S$.  Note also that when $K$ is a square
prism with a square $S$, then $V(K) \setminus S$ is a connection of
$S$.

When $G$ is a graph, $H$ is a graph such that $L(H)$ is an induced
subgraph of $G$, and $C$ is a connected induced subgraph of $V(G)
\setminus L(H)$, we define several types that $C$ can have, according
to its attachment over $L(H)$:

\begin{itemize}
\item
  $C$ is of type \emph{branch}\index{branch!type} if the attachment of $C$ over $L(H)$
  is included in a basic branch of $L(H)$;

\item
  $C$ is of type \emph{triangle}\index{triangle!type} if the attachment of $C$ over $K$ is
  included in a triangle of $L(H)$;

\item
  $C$ is of type \emph{augmenting}\index{augmenting!type} if $C$ contains a connection $p \tp
  \cdots \tp p'$ of two distinct basic branches $X, Y$ of $L(H)$,
  $N_X(p)$ is an edge of $X$ and $N_Y(p')$ is an edge of~$Y$.
  Moreover, there are no edges between $L(H)\setminus (X\cup Y)$ and
  $P$.
\item
  $C$ is of type \emph{square}\index{square!type} if $L(H)$ contains a basic square $S$,
  and $C$ contains a connection $P$ of $S$. Moreover, there are no
  edges between $L(H)\setminus S$ and $P$.
\end{itemize}

Note that the types may overlap: a subgraph $C$ can be of more than
one type. Since we view a vertex of $G$ as a connected induced
subgraph of $G$, we may speak about the type of vertex with respect to
$L(H)$.

\subsection*{Getting rid of prisms}

Now the idea to decompose ISK4-free graphs further is the following:
suppose that an ISK4-free graph $G$ contains a prism.  So, $G$
contains a maximal line-graph $L(H)$.  If there is nothing else in the
graph, then $G$ itself is a line graph, so it is basic.  Else, we are
going to prove that any component $C$ of $G\sm H$ is of one of the
type defined above.  Type branch gives a proper 2-cutset, type
triangle gives a clique cutset and type augmenting gives a bigger
line-graph, so a contradiction to the maximality of $L(H)$.

Type square is slightly more tricky.  Sometimes, it gives a bigger
line-graphs (when it is the only type square attachment to a given
square).  When used a second time, it often gives an ISK4, with two
degenerate exceptions: when $L(H)$ is a square prisms or the
line-graphs of a square subdivision of $K_4$.  These two exceptions
give the basic class of rich-squares: a \emph{rich square}\index{rich square} is a graph
$K$ that contains a square $S$ as an induced subgraph, and such that
$K\setminus S$ has at least two components and every component of
$K\setminus S$ is a connection of~$S$.

We give now the sequence of attachments lemmas that we need to show
that when an ISK4-free graphs contains a prism then it is either basic
or admits a decomposition.  We give all the statements without proofs
so that the readers feels the fun of trying to decompose a class,
which is a bit like getting out of a maze.  The slight problem is that
trying a direction or another costs a lot of time, for instance, the
proof of the Lemma~\ref{l:compsk4} is 6 pages long despite efforts
towards avoiding symmetric cases.  But the next one is not so long and
is a good exercise.

\begin{lemma}
  \label{l:compprism}
  Let $G$ be a graph that contains no triangular ISK4. Let $K$ be a
  prism that is an induced subgraph of $G$ and let $C$ be a connected
  induced subgraph of $G\setminus K$. Then $C$ is either of type
  branch, triangle, augmenting or square with respect to $K$.
\end{lemma}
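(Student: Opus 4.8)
The plan is to reduce everything to understanding how a \emph{single} vertex attaches to $K$, and then to propagate this to a connected $C$ using connectivity. Throughout I use that $G$ has no triangular ISK4; in particular $G$ contains no $K_4$ and no pyramid, since both are triangular ISK4's. Write the prism $K$ with triangles $A=\{a_1,a_2,a_3\}$ and $B=\{b_1,b_2,b_3\}$ and basic branches $P_1,P_2,P_3$, where $P_i$ runs from $a_i$ to $b_i$ and the only edges between distinct $P_i,P_j$ are the two triangle edges $a_ia_j$ and $b_ib_j$. Recall that the basic branches of $K$ are exactly $P_1,P_2,P_3$, its triangles are $A$ and $B$, and every square of $K$ is basic and of the form $\{a_i,a_j,b_j,b_i\}$ arising from two branches $P_i,P_j$ of length one.

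First I would prove a single-vertex claim: every $v\in G\setminus K$ with a neighbour in $K$ is, with respect to $K$, of type branch, triangle, augmenting, or square. This is a case analysis on $N_K(v)$, organised by which basic branches and triangles $v$ meets. The permitted patterns are: $N_K(v)\subseteq V(P_i)$ (type branch); $N_K(v)$ contained in a triangle (type triangle); $N_K(v)$ consisting of an edge of $P_i$ together with an edge of $P_j$ and nothing more (type augmenting); and $N_K(v)$ equal to all four corners of a basic square (type square). Every other pattern must be shown to create a triangular ISK4. The workhorse is the following: if $v$ has two neighbours in the interiors of two distinct branches, say $u\in P_1$ and $w\in P_2$, then the hole $D = u \tp v \tp w \tp P_2 \tp a_2 \tp a_1 \tp P_1 \tp u$ (closing the path $u\tp v\tp w$ through the triangle edge $a_1a_2$), together with $a_3$ (adjacent to the consecutive vertices $a_1,a_2$ of $D$) and the return path $R = a_3 \tp P_3 \tp b_3 \tp b_1 \tp P_1 \tp u$, forms an induced subdivision of $K_4$ with branch-vertices $a_1,a_2,a_3,u$ and triangle $a_1a_2a_3$ — a triangular ISK4, a contradiction. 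Variants (closing through $B$ instead of $A$, replacing an interior neighbour by a non-edge pair of neighbours on a single branch, or adding a neighbour on the third branch) dispose of the remaining forbidden patterns; in each case one verifies the union of the six paths is induced, using that only the triangle edges join distinct $P_i$'s.

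Next I would lift this to a connected $C$. If the attachment of $C$ over $K$ lies in a single basic branch, then $C$ is of type branch; if it lies in a single triangle, then $C$ is of type triangle. Otherwise some vertex of $C$ attaches outside a fixed branch or triangle, and since $C$ is connected I can extract a minimal connected subgraph of $C$ joining two distinct regions of $K$; minimality forces it to be a path $P = p \tp \cdots \tp p'$ that is a connection, with $p$ attaching to a basic branch $X$ and $p'$ to another basic branch $Y$ (candidate type augmenting), or with $P$ attaching to the two sides of a basic square $S$ (candidate type square). It then remains to verify the precise endpoint conditions (that $N_X(p)$ and $N_Y(p')$ are edges in the augmenting case, and the prescribed corner-pairs in the square case) and the global side condition that $P$ has no edges to $K\setminus(X\cup Y)$, respectively $K\setminus S$. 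Each violation is converted into a triangular ISK4 by the same hole-plus-triangle-plus-return-path mechanism.

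The main obstacle is the length and bookkeeping of the single-vertex case analysis, compounded by the degenerate cases where some $P_i$ has length one: there a basic branch is a single edge, branch-vertices coincide with triangle-vertices, and squares appear, so the four type definitions start to overlap and must be separated by hand. In particular the square type is genuinely needed precisely because a vertex adjacent to all four corners of a basic square creates a $4$-wheel, and — because the hub is adjacent to \emph{every} rim vertex — this wheel contains no induced $K_4$-subdivision and so slips past the ISK4 argument that kills every other cross-attachment. Getting the ``no other edges'' side conditions exactly right, so that the extracted connection matches one of the four definitions rather than a near-miss, is the delicate part of the write-up.
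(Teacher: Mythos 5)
You should know at the outset that the memoir itself contains no proof of Lemma~\ref{l:compprism}: the attachment lemmas of that section are deliberately stated without proofs (the text calls this one ``a good exercise''), the complete argument being in~\cite{nicolas:isk4}. So your proposal can only be judged on its own merits. Its architecture is the intended one: classify how a single vertex can attach to the prism, then extract a minimal connecting path inside $C$ and convert every violation of the augmenting/square conditions into a triangular ISK4. Your workhorse construction is also correct as stated in its clean case: if $N_K(v)=\{u,w\}$ with $u$ interior to $P_1$ and $w$ interior to $P_2$, then the hole $D$, the vertex $a_3$ and the return path $R$ do yield a pyramid with triangle $a_1a_2a_3$ and fourth branch-vertex $u$.

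The genuine gap is your claim that all remaining forbidden patterns fall to ``the same hole-plus-triangle-plus-return-path mechanism'', with induced-ness guaranteed ``using that only the triangle edges join distinct $P_i$'s''. That justification addresses the wrong chords: the prism contributes no bad edges, the dangerous chords come from $v$ itself (or from the connecting path). Your return path $R=a_3\tp P_3\tp b_3\tp b_1\tp P_1\tp u$ re-enters $P_1$ on the $b_1$-side, so it fails whenever $v$ has a second neighbour there; and for some patterns no choice of hole and return path works at all. Concretely, suppose $v$ has two non-adjacent neighbours in the interior of $P_1$ and two non-adjacent neighbours in the interior of $P_2$ (a pattern your single-vertex claim must exclude). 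Whichever triangle ($A$ or $B$) you close the hole through and whichever neighbour you make the apex, the third path out of the apex must run down the remainder of that branch, which contains another neighbour of $v$, and $v$ is an interior vertex of the hole; so every construction of your shape has a chord. The contradiction there requires a structurally different ISK4, namely one with $v$ as a branch-vertex: triangle $a_1a_2a_3$ and the three paths $v\tp u_1\tp P_1\tp a_1$, $v\tp w_1\tp P_2\tp a_2$, $v\tp u_2\tp P_1\tp b_1\tp b_3\tp P_3\tp a_3$. Exhibiting the right construction for each attachment pattern (and for each pattern of a connection whose ends attach badly or whose interior meets the third branch --- your ``minimality'' only controls neighbours in $X\cup Y$, not in $K\sm(X\cup Y)$) is precisely the content of the proof; as written, your text asserts this case analysis rather than performing it, and the one uniform mechanism it offers provably does not cover all cases.
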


\begin{lemma}
  \label{l:compsk4}
  Let $G$ be a graph that contains no triangular ISK4. Let $H$ be a
  subdivision of $K_4$ such that $L(H)$ is an induced subgraph of $G$.
  Let $C$ be a connected induced subgraph of $G\setminus L(H)$. Then
  $C$ is either of type branch, triangle, augmenting or square with
  respect to $L(H)$.
\end{lemma}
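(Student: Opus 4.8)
The plan is to reduce the statement to the prism case, Lemma~\ref{l:compprism}, by exploiting the fact that $L(H)$ is covered by induced sub-prisms, and then to patch together the local information these sub-prisms provide.

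First I would record the precise shape of $L(H)$. Writing $1,2,3,4$ for the branch-vertices of $H$ and $P_{ij}$ for the branch of $H$ joining $i$ and $j$, the line-graph $L(H)$ consists of four triangles $T_1,\dots,T_4$ (one per branch-vertex, formed by the first edges of the three branches leaving that vertex), joined in the pattern of $K_4$ by six paths $Q_{ij}$, where $Q_{ij}$ is the image in $L(H)$ of $P_{ij}$. The basic branches of $L(H)$ are exactly the $Q_{ij}$ coming from branches $P_{ij}$ of length at least two (their interior vertices have degree two and lie in no triangle), while a branch of $H$ of length one becomes a single vertex lying in two of the triangles. I would also recall, from the discussion preceding the lemma, that $L(H)$ has a basic square if and only if $H$ is a square subdivision of $K_4$, in which case the basic square arises from four length-one branches forming a square on the branch-vertices.

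The reduction step is the heart of the argument. For each edge $ij$ of $K_4$, the graph $H\setminus P_{ij}$ obtained by deleting the interior and the edges of $P_{ij}$ is a subdivision of the theta $K_4\setminus ij$, so its line-graph $K_{ij}:=L(H\setminus P_{ij})$ is a prism; moreover $K_{ij}=L(H)[E(H)\setminus E(P_{ij})]$ is an induced sub-prism of $L(H)$. Since $C\subseteq G\setminus L(H)\subseteq G\setminus K_{ij}$ and $G$ contains no triangular ISK4, Lemma~\ref{l:compprism} applies: with respect to each of the six prisms $K_{ij}$, the subgraph $C$ is of type branch, triangle, augmenting or square. I would then translate each of these six verdicts into a constraint on the attachment of $C$ over $L(H)$. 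A type-branch or type-triangle verdict for $K_{ij}$ confines the attachment of $C$ to a small region, whereas an augmenting or square verdict produces a connection inside $C$ across two basic branches or a basic square of $K_{ij}$. Translating these into data about basic branches and basic squares of $L(H)$ requires care, since $L(H)$ carries the two extra triangles $T_i,T_j$ that $K_{ij}$ lacks, so a basic branch of $K_{ij}$ may be interrupted by a triangle of $L(H)$ once $P_{ij}$ is reinserted.

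Combining the six constraints is where the real work, and the length, of the proof lies. The crux is that a connection or attachment that looks harmless inside one prism $K_{ij}$ must be re-examined once the deleted branch $P_{ij}$ is put back: reinserting $P_{ij}$ can turn a path of $K_{ij}$ together with $C$ into a genuine subdivision of $K_4$, and here the absence of a triangular ISK4 (in particular of a pyramid) is what forbids the bad configurations and forces the attachment of $C$ to collapse to a single basic branch, a single triangle, one augmenting connection, or one square connection of $L(H)$ itself. The main obstacle I expect is the \emph{square} type: basic squares exist only when $H$ is a square subdivision of $K_4$, so I would treat square subdivisions as a separate regime, in which the octahedron $L(K_4)=K_{2,2,2}$ and the square-prism subconfigurations behave as sporadic exceptions to be checked by hand, using that two distinct type-square attachments to the same basic square would either create a triangular ISK4 or force the rich-square configuration. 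Throughout I would use connectivity of $C$ to upgrade statements about individual vertices of $C$ to the global statement about $C$, following the standard pattern for attachment lemmas.
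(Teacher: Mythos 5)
A point of reference first: the document you are working against does not actually contain a proof of Lemma~\ref{l:compsk4}; it states the lemma without proof and records only that the proof in the underlying paper is six pages long ``despite efforts towards avoiding symmetric cases''. So your proposal must stand on its own, and as it stands it has a genuine gap. Your set-up is correct: for each edge $ij$ of $K_4$, the graph $H\setminus P_{ij}$ is a theta, so $K_{ij}=L(H\setminus P_{ij})$, i.e.\ the subgraph of $L(H)$ induced by the edges of $H$ not in $P_{ij}$, is an induced prism of $G$, and Lemma~\ref{l:compprism} gives a verdict for $C$ with respect to each of the six prisms $K_{ij}$. But everything after this observation --- the step you yourself describe as ``where the real work, and the length, of the proof lies'' --- is never carried out. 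Showing that the six local verdicts force one of the four global types with respect to $L(H)$ \emph{is} the lemma; what you have written is a plan for a proof, with the entire case analysis replaced by the word ``care''.

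Moreover, the difficulties you anticipate are mis-calibrated, which indicates the plan would meet unplanned obstructions if executed. You locate the main obstacle in basic squares of $L(H)$, which exist only when $H$ is a square subdivision of $K_4$. But the type-square verdicts delivered by Lemma~\ref{l:compprism} concern squares of the prisms $K_{ij}$, and these arise far more generally: if $P_{ik}$ and $P_{il}$ have length one and $P_{kl}$ has length two, then $K_{ij}$ is a square prism no matter what the remaining branches look like (in particular even when $H$ is not a square subdivision of $K_4$). Worse, that square, read inside $L(H)$, has exactly three of its four edges in triangles (one each in $T_i$, $T_k$, $T_l$), so it is \emph{not} a basic square of $L(H)$; hence a square verdict with respect to $K_{ij}$ cannot simply be translated into a type-square attachment over $L(H)$, and one must instead prove that such an attachment either produces a triangular ISK4 or collapses into the branch/triangle/augmenting types --- case analysis you have not begun. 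Two further interactions your outline does not control: a vertex of $C$ all of whose neighbors lie in the deleted set $Q_{ij}$ is invisible to $K_{ij}$ (vacuously of type branch there), so the six verdicts must be confronted with one another rather than read off one prism at a time; and a connection certified as augmenting or square inside $K_{ij}$ may have arbitrary edges to $Q_{ij}$, about which the prism lemma says nothing. These are exactly the interactions that account for the six pages of the omitted proof.
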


A graph $H$ is \emph{substantial}\index{substantial} if it is cyclically 3-connected and
neither a square theta nor a square subdivision of $K_4$.  Note that
in the next lemma, the type square is not needed. 

\begin{lemma}
  \label{l:compsubstan}
  Let $G$ be a graph that contains no triangular ISK4. Let $H$ be a
  substantial graph such that $L(H)$ is an induced subgraph of $G$.
  Let $C$ be a component of $G\setminus L(H)$. Then $C$ is either of
  type branch, triangle or augmenting with respect to $L(H)$.
\end{lemma}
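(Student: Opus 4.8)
The plan is to reduce to the two attachment lemmas already in hand---Lemma~\ref{l:compprism} for prisms and Lemma~\ref{l:compsk4} for line-graphs of subdivisions of $K_4$---and then to kill the single extra outcome they permit, namely type \emph{square}. The key observation is that among the four types, only type square requires $L(H)$ to contain a basic square; so the whole content of this lemma, beyond the two previous ones, is (i) to obtain a four-type dichotomy for a general substantial $H$ and (ii) to show that type square never occurs. By Exercise~\ref{l:descc3c}, since $H$ is substantial it is cyclically $3$-connected, hence either a theta or a subdivision of a $3$-connected graph $F$, and I would organise the argument along this split.

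The two base cases are immediate. If $H$ is a theta, then $L(H)$ is a prism and Lemma~\ref{l:compprism} already gives that $C$ is of type branch, triangle, augmenting or square. A prism contains a (necessarily basic) square exactly when it is a square prism, that is, exactly when $H$ is a square theta; since $H$ is substantial this is excluded, so $L(H)$ has no basic square and type square is impossible. Likewise, if $F=K_4$ then $H$ is a subdivision of $K_4$ and Lemma~\ref{l:compsk4} gives the same four-type dichotomy; here $L(H)$ contains a basic square if and only if $H$ is a square subdivision of $K_4$, again forbidden by substantiality. In both base cases $C$ is therefore of type branch, triangle or augmenting.

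The remaining case, $H$ a subdivision of a $3$-connected $F$ with at least five vertices, is the heart of the matter. For the dichotomy itself I would localise: every induced subgraph of $L(H)$ on which $C$ has a neighbour lies inside a prism or inside the line-graph of a subdivision of $K_4$ contained in $L(H)$ (these sub-configurations exist because $F$, being $3$-connected, carries thetas and $K_4$-subdivisions through any prescribed branches), and Lemmas~\ref{l:compprism} and~\ref{l:compsk4} then constrain the attachment locally; the bookkeeping needed to glue these local verdicts into one global type for $C$, and to check their mutual consistency, is routine but lengthy. The genuinely hard step is excluding type square. Suppose $C$ were of type square with basic square $S$, arising from a $4$-cycle $x_1\tp x_2\tp x_3\tp x_4\tp x_1$ of length-one branches of $H$; since each $x_i$ is a branch-vertex of degree at least three, it carries an extra branch whose first edge yields a vertex of $L(H)\sm S$ forming a triangle with two consecutive corners of $S$, while the connection $P\subseteq C$ has, by the definition of type square, no neighbour in $L(H)\sm S$. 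Using the $3$-connectivity of $F$ I would route, inside $H$, a path joining two opposite corners of the square and internally avoiding the rest of the configuration; transported into $L(H)$ and combined with $P$ and the triangles just described, this produces a pyramid, i.e.\ a triangular ISK4, contradicting the hypothesis on $G$. The only obstructions to such a routing are precisely the degenerate shapes in which $L(H)$ is a square prism or the line-graph of a square subdivision of $K_4$ --- exactly the configurations ruled out by substantiality --- so the contradiction always goes through and type square cannot happen. Verifying that the routing is available except in those two excluded cases, and that the subdivision of $K_4$ it yields is genuinely induced, is where I expect essentially all of the work to lie.
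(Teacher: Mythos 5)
Your two base cases (theta, subdivision of $K_4$) are sound, and so is your reduction of the problem to ruling out type square for a square $S=\{a_1,a_2,a_3,a_4\}$ coming from a $4$-cycle $x_1x_2x_3x_4$ of branch vertices of $H$ (the paper itself states this lemma without proof, so I judge your argument on its own terms). But the central construction in your last paragraph cannot work. Say the connection $P\subseteq C$ attaches to $\{a_1,a_2\}$ and $\{a_3,a_4\}$, and let $R$ be the path of $L(H)$ obtained by transporting a path of $H$ from $x_1$ to $x_3$ that internally avoids the square; then $R$ is a connection attaching to $\{a_1,a_4\}$ and $\{a_2,a_3\}$, and by the type-square hypothesis there is no edge between $P$ and $R$. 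The graph $G[S\cup V(P)\cup V(R)]$ is then exactly the line graph of the $4$-cycle $x_1x_2x_3x_4$ together with two internally disjoint crossing paths ($x_2$ to $x_4$, and $x_1$ to $x_3$) --- that is, the line graph of a subdivision of $K_4$, a graph of maximum degree three. Such line graphs contain \emph{no} ISK4 whatsoever; this is precisely why they appear as a basic class in Theorem~\ref{l:mainlinegraphdecomp} and Theorem~\ref{th:ISK4Wheel}. The corner triangles you invoke cannot rescue this: since $P$ has no neighbour in $L(H)\setminus S$, a vertex $b_i$ of $L(H)\setminus S$ adjacent to two consecutive corners is anticomplete to $P$, and every candidate pyramid one tries to assemble acquires a chord from one of the square's own edges ($a_1a_2$, $a_2a_3$, $a_3a_4$, $a_4a_1$). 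So the obstruction is not confined to the ``two degenerate shapes'' excluded by substantiality: one crossing path plus $P$ \emph{never} produces a pyramid, whatever $H$ is.

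What substantiality actually buys is a richer sub-configuration, and the clean way to exploit it is through Lemma~\ref{l:compsk4} rather than by exhibiting a pyramid by hand. In the remaining case $H$ is a subdivision of a $3$-connected graph $F$ with at least five vertices (the case $F=K_4$ is your second base case), so pick $y\in V(F)$ outside the square; $3$-connectivity gives a fan of three paths from $y$ to three distinct corners, pairwise meeting only at $y$ and internally avoiding the square. Lifting to $H$, the square together with this fan is a subdivision $H'$ of $K_4$ in which the fourth corner, say $x_4$, has degree $2$. Now $L(H')$ is an induced subgraph of $G$, $S$ is an induced square of $L(H')$ that is \emph{not} basic (exactly three of its edges lie in triangles of $L(H')$, an odd number), and $P$ is a connected induced subgraph of $G\setminus L(H')$ whose attachment over $L(H')$ is all of $S$. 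One checks $P$ fits none of the four types with respect to $L(H')$: not branch or triangle, the attachment being a square; not augmenting, because at least one of $x_2,x_4$ is a fan corner, so the corresponding end-neighbourhood $\{a_1,a_2\}$ or $\{a_3,a_4\}$ lies in a triangle of $L(H')$ and hence in no basic branch; not square, because $S$ is the only candidate and it is not basic in $L(H')$. This contradicts Lemma~\ref{l:compsk4} and kills type square. Finally, be aware that your step for the four-type dichotomy itself (gluing ``local verdicts'' into one global type for $C$) is asserted rather than proved; that bookkeeping is where most of the length of such arguments lives (the paper reports that the analogous Lemma~\ref{l:compsk4} takes six pages), so it cannot be dismissed as routine.
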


The next lemmas are not attachment lemmas, but decomposition
results that use the lemmas above.

\begin{lemma}
  \label{l:substdecomp}
  Let $G$ be a graph that contains no triangular ISK4.  Let $H$ be a
  substantial graph such that $L(H)$ is an induced subgraph of
  $G$. Suppose $L(H)$ inclusion-wise maximum with respect to that
  property.

  Then either $G = L(H)$ or $G$ has a clique-cutset of size at most
  three, or $G$ has a proper 2-cutset.
\end{lemma}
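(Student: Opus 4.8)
The plan is to use the attachment classification of Lemma~\ref{l:compsubstan} as the engine and to translate each of the three possible attachment types into one of the desired conclusions. If $G = L(H)$ there is nothing to prove, so assume $G \neq L(H)$ and fix a connected component $C$ of $G \setminus L(H)$ together with its attachment $N(C) \cap V(L(H))$. By Lemma~\ref{l:compsubstan}, $C$ is of type branch, triangle, or augmenting with respect to $L(H)$, and it suffices to show that the first two types each force a clique-cutset of size at most three or a proper $2$-cutset, while the third type is incompatible with the maximality of $L(H)$. Since we only need one cutset, it is enough to exhibit the decomposition from a single component, once the augmenting type has been excluded.

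The triangle and branch cases I expect to be routine. If $C$ is of type triangle, then $N(C)$ lies in a triangle $T$ of $L(H)$, so $K := N(C)$ is a clique of size at most three; as $H$ is substantial, $L(H)$ has more than three vertices, and since $C$ is a full component of $G \setminus L(H)$ it attaches nowhere else, so $K$ is a clique-cutset separating $C$ from $L(H) \setminus K$. If $C$ is of type branch, its attachment lies inside a \emph{basic} branch $B = b_0 \tp \cdots \tp b_t$ of $L(H)$; the key structural fact is that every interior vertex of $B$ has degree two in $L(H)$, its two neighbours lying on $B$. Let $b_i, b_j$ with $i \leq j$ be the extreme vertices of $B$ adjacent to $C$. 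If $i = j$ or $j = i+1$, then $N(C) \subseteq \{b_i, b_j\}$ is a clique of size at most two, giving a clique-cutset; and if $j > i+1$, then $b_i, b_j$ are non-adjacent and $\{b_i, b_j\}$ is a $2$-cutset, whose side carrying $C \cup \{b_{i+1}, \dots, b_{j-1}\}$ contains a cycle (the branch segment closed up through $C$) while the other side contains the branch-vertex triangles of $L(H)$, so neither block is a $(b_i,b_j)$-path and the $2$-cutset is proper.

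The augmenting case is where I expect the real difficulty. Here $C$ contains a connection $P = p \tp \cdots \tp p'$ of two distinct basic branches $X, Y$ with $N_X(p)$ an edge of $X$ and $N_Y(p')$ an edge of $Y$, and no edges between $P$ and $L(H) \setminus (X \cup Y)$. Reading this back in $H$, the edge $N_X(p)$ marks an interior vertex $u$ of the branch of $H$ carried by $X$, similarly $p'$ marks an interior vertex $w$ of the branch carried by $Y$, and $P$ realises a new branch of $H$ from $u$ to $w$; thus there is a graph $H'$, obtained from $H$ by adding this branch, with $L(H') = L(H) \cup V(P)$ an induced subgraph of $G$ strictly larger than $L(H)$. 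The plan is to conclude that $H'$ is again substantial, contradicting the maximality of $L(H)$.

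The hard part will be exactly this substantiality check. Adding a branch between interior vertices of two distinct branches preserves $2$-connectedness and should not create a cyclic $2$-separation, so $H'$ stays cyclically $3$-connected; the genuine obstacle is the degenerate possibility that $H$ is a theta and $H'$ is a \emph{subdivision of $K_4$} --- indeed, adding a branch across two of the three paths of a theta produces exactly the six branches of a subdivided $K_4$ on $\{a, b, u, w\}$. One must then verify that $H'$ is neither a square theta nor a \emph{square} subdivision of $K_4$, since such $H'$ are not substantial and so do \emph{not} contradict maximality. Checking a few length patterns shows that a square subdivision of $K_4$ \emph{can} arise this way from a substantial theta, so the hypothesis that $H$ is substantial does not by itself rule the configuration out; at this point the argument must either forbid the offending connections by a closer analysis of the attachment, or be supplemented by treating the resulting rich-square structure separately and extracting the cutset from it directly. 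I would therefore spend most of my effort pinning down this last degeneracy, the branch and triangle cases being essentially mechanical.
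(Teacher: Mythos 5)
Your skeleton --- classify every component of $G \setminus L(H)$ via Lemma~\ref{l:compsubstan}, turn triangle-type and branch-type attachments into cutsets, and kill augmenting attachments by maximality --- is the intended route (this document states the lemma without proof, deferring to~\cite{nicolas:isk4}, but the surrounding lemmas make the plan unambiguous), and your triangle case is correct. The proof is nevertheless unfinished, and the point where you stop rests on a false claim: you assert that $H'=H$ plus the new branch can be a square subdivision of $K_4$ with $H$ a substantial theta, so that maximality would not be contradicted. This cannot happen. First, a theta with a branch of length~$1$ is not cyclically $3$-connected: if $P_3$ is the edge $ab$, then $(V(P_1),V(P_2))$ is a cyclic $2$-separation, each side inducing a cycle through the edge $ab$. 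So a substantial theta has all three branches of length at least~$2$, and, not being a square theta, not two branches of length~$2$. Now the six branches of $H'$ are the four half-branches $au$, $ub$, $aw$, $wb$, the untouched branch $P_3$, and the new branch $uw$; in a square subdivision of $K_4$ the (at most two) subdivided branches form an opposite pair. Since $P_3$ has length at least~$2$ it is one of them, so its opposite, the new branch, is the only other one; then $au$, $ub$, $aw$, $wb$ all have length~$1$, i.e.\ $P_1$ and $P_2$ both have length~$2$, contradicting that $H$ is not a square theta. (When $H$ is not a theta, $H'$ has at least six branch-vertices, hence is neither a theta nor a subdivision of $K_4$.) So every augmenting component contradicts maximality outright, and no rich-square detour is available --- nor could one be: if your degeneracy were really possible, the lemma itself would be false, since $G$ could then be taken to be $L(H')$, and the line graph of a square subdivision of $K_4$ has no clique cutset of size at most three and no proper $2$-cutset (check the eight-vertex one by hand; larger ones only acquire $2$-cutsets whose path side violates properness). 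Your miscalculation almost certainly comes from testing a theta with a branch of length~$1$; the document's loose rephrasing of ``square theta'' invites this, but such thetas are already excluded by cyclic $3$-connectivity.

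There is a second, smaller, gap in the branch case: you take the extreme attachments $b_i, b_j$ of a \emph{single} component $C$ and declare $\{b_i,b_j\}$ a cutset of $G$. Another component $C'$ of $G\setminus L(H)$ may attach both to some $b_k$ with $i<k<j$ and to a vertex of $B$ outside $\{b_i,\dots,b_j\}$, reconnecting your two sides. The repair is to argue globally: once augmenting components are excluded, every component of $G\setminus L(H)$ meeting the interior of the basic branch $B$ has its whole attachment inside $B$ (interior vertices of a basic branch lie in no triangle, so triangle-type components avoid them, and each degree-two vertex lies on only one branch), so one can cut at the two ends of $B$, or at the extreme attachments taken over \emph{all} components meeting the interior of $B$, and place all those components on the same side; if the two chosen vertices happen to be adjacent one gets a clique cutset of size two instead, and properness is then checked as you do.
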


The next two lemmas sort out which prisms give line-graphs and which
prisms give rich-squares.

\begin{lemma}
  \label{l:richsquaredecomp}
  Let $G$ be an ISK4-free graph that contains no line-graph of a
  substantial graph. Let $K$ be a rich square that is an induced
  subgraph of $G$, and maximal with respect to this property. Then
  either $G=K$ or $G$ has a clique-cutset of size at most three or $G$
  has a proper 2-cutset.
\end{lemma}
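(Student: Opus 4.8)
The plan is to follow the template of the proof of Lemma~\ref{l:substdecomp}, with the rich square $K$ playing the role of the line-graph of a substantial graph. We may assume $G$ is connected (otherwise $\emptyset$ is a clique-cutset of size~$0$) and that $G\neq K$, so there is a component $C$ of $G\sm K$ having a neighbour in $K$. Write $S=u_1u_2u_3u_4$ for the square of $K$ and $P_1,\dots,P_m$ ($m\geq 2$) for its connections; recall that distinct connections lie in distinct components of $K\sm S$, hence are pairwise anticomplete. First I would record two structural facts about $K$: every clique of $K$ has at most three vertices (so $K$ contains no $K_4$, as it must since $G$ is ISK4-free), and $K$ contains induced line-graphs to which the earlier attachment lemmas apply. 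Indeed, for every connection $P_i$ that is a path, $S\cup P_i$ is an induced prism, with triangles $\{u_1,u_2,p_i\}$ and $\{u_3,u_4,p_i'\}$ and basic branches $u_2u_3$, $u_4u_1$ and $P_i$; while two full (single-vertex) connections together with $S$ induce the octahedron $L(K_4)$. Since $G$ is ISK4-free it contains no triangular ISK4, so Lemmas~\ref{l:compprism} and~\ref{l:compsk4} apply to these induced subgraphs.

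The heart of the argument is to pin down $N(C)\cap V(K)$. I would apply Lemma~\ref{l:compprism} (resp.\ Lemma~\ref{l:compsk4}) to $C$ and each prism $S\cup P_i$ (resp.\ each octahedron) contained in $K$: with respect to each such subgraph, $C$ is of type branch, triangle, augmenting or square. The plan is then to intersect these local constraints into the global statement that $N(C)\cap V(K)$ is contained either in a clique of $K$, or in a single connection together with its neighbourhood in $S$. The key claim, to be established by case checking, is that $C$ cannot have neighbours spread over two distinct connections, nor over two opposite edges of $S$: any such spread would let one route an induced path through $C$ that, together with $S$ and the relevant connections, closes an induced subdivision of $K_4$, contradicting ISK4-freeness. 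The pairwise anticompleteness of the connections is exactly what makes these forbidden configurations genuinely induced.

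Once the attachment is localised, each type gives the conclusion. If $C$ attaches inside a triangle $T$ of $K$, then $T$ is a clique-cutset of size at most three separating $C$ from the non-empty set $V(K)\sm T$. If $C$ attaches inside an edge of $S$ or an edge-branch, that edge is a clique-cutset of size~$2$; if it attaches inside a connection path, the two branch-vertices bounding that basic branch form a proper $2$-cutset (their non-adjacency is the non-adjacency of the ends of the branch). The augmenting and square types are excluded: a square-type attachment means $C$ is itself a connection of $S$, so $K\cup C$ is a rich square strictly larger than $K$, contradicting maximality; an augmenting-type attachment enlarges a prism $S\cup P_i$ into $L(H')$ for a larger theta or $K_4$-subdivision $H'$, and either $H'$ is a square theta or a square subdivision of $K_4$ (so $C$ is again a new connection and $K$ was not maximal), or $H'$ is substantial, so $G$ contains the line-graph of a substantial graph, against the hypothesis.

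The main obstacle will be the globalisation step of the second paragraph. Lemmas~\ref{l:compprism} and~\ref{l:compsk4} only control the attachment of $C$ to one line-graph at a time, whereas a rich square typically contains several prisms and octahedra sharing the square $S$, and a single component may a priori touch several connections at once. Ruling out such shared attachments, by exhibiting the resulting ISK4 or forbidden substantial line-graph, and then verifying that the cutsets produced are genuine and, in the $2$-cutset case, proper (non-adjacent ends, with each block more than an $(a,b)$-path), is where the real work lies. This is the same kind of careful bookkeeping as in the proof of Lemma~\ref{l:substdecomp}, which is why the two decomposition lemmas are naturally proved in tandem.
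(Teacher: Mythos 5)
Your strategy is the intended one: this habilitation deliberately omits the proofs of Lemmas~\ref{l:substdecomp}--\ref{l:prismdecomp} (they are presented as consequences of the attachment Lemmas~\ref{l:compprism} and~\ref{l:compsk4}), and your setup is correct as far as it goes --- $S\cup P_i$ is indeed an induced prism for each path connection $P_i$, pairs of one-vertex connections give the octahedron, ISK4-freeness gives ``no triangular ISK4'', and the endgame (triangle $\to$ clique cutset, branch $\to$ 2-cutset, square/augmenting $\to$ contradiction with maximality or with the no-substantial-line-graph hypothesis) is the right one. The genuine gap is exactly the step you defer: the ``globalisation'' cannot be obtained by \emph{intersecting} the local constraints, because the configurations that must be excluded violate no local constraint at all. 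Concretely, let $P_1,P_2$ be two connections whose ends $p_1,p_2$ are adjacent to $u_1,u_2$, and let $C=\{v\}$ with $v$ adjacent to one interior vertex $a$ of $P_1$ and one interior vertex $b$ of $P_2$. With respect to the prism $S\cup P_1$, the component of $G\sm(S\cup P_1)$ containing $v$ also contains $P_2$, hence it \emph{is} of type square (it contains the connection $P_2$, which is anticomplete to $P_1$); symmetrically for $S\cup P_2$. So Lemmas~\ref{l:compprism} and~\ref{l:compsk4} are satisfied, yet the configuration must be killed by hand: here $\{p_1,u_1,u_2,p_2\}$ induces a diamond, and adding the path $p_1 \tp \cdots \tp a \tp v \tp b \tp \cdots \tp p_2$ (through the interiors of $P_1,P_2$, whose vertices have no neighbours in $S$) yields an induced subdivision of $K_4$. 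Finding such ISK4s in all cases (attachments at ends rather than interiors, neighbours in $S$, connections of both orientations) is the case analysis that \emph{is} the proof. Note also that two connections of the same orientation together with $S$ contain a claw (e.g.\ $\{u_1;u_4,p_1,p_2\}$), hence do not lie in any line-graph inside $K$, so no single application of the quoted attachment lemmas ever sees two connections at once; cross-connection interactions are genuinely outside their scope.

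Three statements in your sketch need repair for the same reason. First, ``$C$ cannot have neighbours spread over two opposite edges of $S$'' is false as written: a square-type attachment does exactly that, and must be allowed (it is the source of your maximality contradiction). Second, the maximality contradictions in the square and augmenting cases presuppose that the connection contained in $C$ is anticomplete to \emph{all} other connections of $K$, whereas the attachment lemmas give anticompleteness only to the one prism under consideration; without the globalisation this step is circular, since $K\cup V(P)$ is a rich square only if $P$ becomes a full component of $(K\cup V(P))\sm S$. Third, properness of the produced 2-cutset requires choosing the partition carefully: if $C$ is a path whose two ends attach exactly to the ends $p_i,p_i'$ of a connection $P_i$ of length at least~2, no ISK4 arises, $\{p_i,p_i'\}$ is the cutset, but the side of $C$ must also contain the interior of $P_i$ (so that it induces a cycle rather than a $(p_i,p_i')$-path). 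In short, what you have is a correct plan whose decisive combinatorial content --- confining the attachment of each component of $G\sm K$ over the \emph{whole} of $K$ --- remains to be carried out.
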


\begin{lemma}
  \label{l:prismdecomp}
  Let $G$ be an ISK4-free graph that contains no line-graph of a
  substantial graph and no rich square as an induced subgraph. Let $K$
  be a prism that is an induced subgraph of $G$. Then either $G=K$ or
  $G$ has a clique-cutset of size at most three or $G$ has a proper
  2-cutset.
\end{lemma}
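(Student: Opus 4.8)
The plan is to feed the attachment Lemma~\ref{l:compprism} with the two global hypotheses (no line-graph of a substantial graph, no rich square). Since an ISK4-free graph contains no triangular ISK4, Lemma~\ref{l:compprism} applies to every component of $G\setminus K$, and the whole proof reduces to forbidding two of the four attachment types and reading off a cutset from the other two.

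First I would pin down $K$ itself. A prism is exactly the line-graph of a theta $T$; a theta has only two branch-vertices, so it is neither a subdivision of $K_4$ nor a square subdivision of $K_4$, and it is cyclically $3$-connected by Exercise~\ref{l:descc3c}. Hence $T$ is substantial unless it is a square theta. As $K=L(T)$ and $G$ contains no line-graph of a substantial graph, $T$ must be a square theta, so $K$ is a \emph{square} prism. I fix its basic square $S$ and recall that $V(K)\setminus S$ is a connection of $S$. Now assume $G\neq K$ and pick a component $C$ of $G\setminus K$; by Lemma~\ref{l:compprism}, $C$ is of type branch, triangle, augmenting, or square.

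Next I would eliminate the last two types. If $C$ is of type square, it carries a connection $P$ of $S$ with no edges between $P$ and $V(K)\setminus S$; then $S\cup(V(K)\setminus S)\cup P$ induces a graph whose non-square part splits into two components, each a connection of $S$, i.e.\ a rich square, contradicting the hypothesis. If $C$ is of type augmenting, the connection it carries attaches to two basic branches of $K$ in the prescribed way, so $G[V(K)\cup V(P)]$ is precisely the line-graph $L(T')$ of the graph $T'$ obtained from the square theta $T$ by adding one internal path; $T'$ is a subdivision of $K_4$, hence cyclically $3$-connected and not a theta, so it is substantial unless it is a square subdivision of $K_4$. In the first case $G$ contains a line-graph of a substantial graph; in the second, $L(T')$ is itself a rich square (its basic square, together with the two subdivided diagonals realized as two connections). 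Both outcomes contradict the hypotheses, so $C$ is of type branch or triangle. Type triangle yields a clique-cutset of size at most three, the triangle separating $C$ from the three remaining vertices of $K$. Type branch yields a cutset formed by the two ends of the basic branch: for the two length-one branches these ends are adjacent, giving a clique-cutset of size two, and for the long branch they are non-adjacent, giving a proper $2$-cutset separating the branch interior and $C$ from the two triangles. This gives the stated trichotomy.

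The hard part will be the augmenting case: one must verify that $G[V(K)\cup V(P)]$ is \emph{exactly} $L(T')$ (using the ``no edges to $L(H)\setminus(X\cup Y)$'' clause), that $T'$ is genuinely a $K_4$-subdivision for every choice of the two basic branches, and that the line-graph of a square subdivision of $K_4$ is a rich square. A secondary nuisance will be the degenerate sub-cases of the branch type, where I must check properness of the $2$-cutset, i.e.\ rule out that the side containing $C$ collapses to a single induced $(a,b)$-path; in that situation $C$ merely lengthens the third path of the prism, and the clean way out is to have chosen $K$ (or that branch) extremal beforehand so that this degeneracy cannot arise.
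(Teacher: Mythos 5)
Your proof is correct and follows exactly the route the paper intends for this lemma (the paper states it without proof, but the surrounding discussion spells out the strategy): reduce to a square prism, apply Lemma~\ref{l:compprism} to each component of $G\setminus K$, kill the square and augmenting types using the no-rich-square and no-line-graph-of-a-substantial-graph hypotheses, and read a clique cutset off the triangle type and a proper 2-cutset off the branch type. The ``secondary nuisance'' you flag is in fact vacuous, so no extremal choice of $K$ is needed: the side of the branch cutset containing $C$ always consists of the interior of the branch $X$ together with the nonempty $C$ (and possibly other components attached inside $X$), and since $X$ is already an induced $(a,b)$-path spanning all $K$-vertices of that side, a graph on strictly more vertices containing it can never itself be an $(a,b)$-path.
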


The next theorem was our goal.  It is easy to prove with all the
lemmas above.  Thanks to it, we ``get rid of prisms''.

\begin{theorem}[with L\'ev\^eque and Maffray \cite{nicolas:isk4}]
  \label{l:mainlinegraphdecomp}
  Let $G$ be an ISK4-free graph that contains a prism or an
  octahedron. Then either $G$ is the line-graph of a graph with
  maximum degree three, or $G$ is a rich square, or $G$ has a
  clique-cutset of size at most three or $G$ has a proper 2-cutset.
\end{theorem}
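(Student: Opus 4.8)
The plan is to derive this theorem purely as a bookkeeping synthesis of the three decomposition results Lemma~\ref{l:substdecomp}, Lemma~\ref{l:richsquaredecomp} and Lemma~\ref{l:prismdecomp}, organised as a case analysis on the ``richest'' line-graph-like structure that $G$ contains. The first observation to record is that since $G$ is ISK4-free it contains no triangular ISK4 either (a triangular subdivision of $K_4$ is an ISK4), so the standing hypothesis ``$G$ contains no triangular ISK4'' of all three lemmas is automatically met. I would also isolate once and for all the elementary fact that an ISK4-free line-graph $L(H)$ forces $\Delta(H)\le 3$: a vertex of degree at least $4$ in $H$ produces four pairwise-adjacent vertices in $L(H)$, that is an induced $K_4$, which is a (trivial) subdivision of $K_4$, a contradiction.

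With this in hand, I would split into three exhaustive cases. \textbf{Case~1:} $G$ contains $L(H)$ for some substantial graph $H$. Choose such an $H$ with $L(H)$ inclusion-wise maximum and apply Lemma~\ref{l:substdecomp}: either $G=L(H)$, or $G$ has a clique-cutset of size at most three, or $G$ has a proper $2$-cutset. The last two are outcomes of the theorem verbatim, and in the case $G=L(H)$ the preliminary observation gives $\Delta(H)\le 3$, so $G$ is the line-graph of a graph of maximum degree three. \textbf{Case~2:} $G$ contains no line-graph of a substantial graph but does contain a rich square. Take a maximal rich square $K$ and apply Lemma~\ref{l:richsquaredecomp}: $G=K$ (so $G$ is a rich square), or a clique-cutset of size at most three, or a proper $2$-cutset. \textbf{Case~3:} $G$ contains neither a line-graph of a substantial graph nor a rich square. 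Here I would use that the octahedron equals $L(K_4)$, that $K_4$ is \emph{not} substantial (it is a square subdivision of $K_4$, the case of zero subdivided edges), and that the octahedron \emph{is} a rich square; hence the hypothesis ``$G$ contains a prism or an octahedron'' forces $G$ to contain a prism $K$. Lemma~\ref{l:prismdecomp} then yields $G=K$, a clique-cutset of size at most three, or a proper $2$-cutset; and a prism is $L(\Theta)$ for a theta $\Theta$ with $\Delta(\Theta)=3$, so $G=K$ again falls under ``line-graph of a graph of maximum degree three''.

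The conceptual work is entirely contained in the attachment lemmas already stated, so the proof proper is short; the only genuinely delicate step is checking that the case split is exhaustive and that the degenerate small structures are classified correctly. Concretely, the main obstacle I anticipate is justifying the claims about the octahedron and the square prism: that the octahedron is a rich square but not the line-graph of a substantial graph, and that a square prism is $L$ of a square theta (hence \emph{not} substantial) yet is handled cleanly by Lemma~\ref{l:prismdecomp}. I would settle these using the definitions of \emph{rich square}, \emph{substantial}, and \emph{square theta}, together with Whitney's theorem (the line-graph determines its root graph uniquely outside the $K_3$/$K_{1,3}$ exception, Theorem~\ref{th:HH} and the Whitney statement following it), which guarantees that these small graphs are not secretly line-graphs of some substantial graph. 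Everything else is a direct pass-through of the three lemmas' conclusions onto the four listed outcomes.
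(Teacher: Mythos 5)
Your proof is correct and follows essentially the same route as the paper, which states that Theorem~\ref{l:mainlinegraphdecomp} ``is easy to prove with all the lemmas above'' --- namely the cascade through Lemma~\ref{l:substdecomp} (maximal line-graph of a substantial graph), Lemma~\ref{l:richsquaredecomp} (maximal rich square), and Lemma~\ref{l:prismdecomp} (prism), exactly as you organise it. Your auxiliary observations (ISK4-freeness forces $\Delta(H)\leq 3$ for any induced $L(H)$, and the octahedron $=L(K_4)$ is a rich square, so the ``prism or octahedron'' hypothesis reduces to a prism in your third case) are precisely the glue the paper leaves implicit.
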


\subsection*{Getting rid of wheels}

In view of Lemma~\ref{l:begin}, our next to goal is to produce
something like Theorem~\ref{l:mainlinegraphdecomp} with ``wheel''
instead of ``prism''.  The only result we could prove is the
following.

A \emph{double star cutset}\index{double star cutset} of a graph is a set $S$ of vertices such
that $G\sm S$ is disconnected and such that $S$ contains two vertices
$u, v$ and every vertex of $G$ is adjacent at least one of $u, v$. In
particular $u$ and $v$  are adjacent. Note that a star-cutset is
either a double star cutset or consists of one vertex.

\begin{lemma}[with L\'ev\^eque and Maffray \cite{nicolas:isk4}]
  \label{thm:wheel}
  Let $G$ be an ISK4-free graph that does not contain a prism or an
  octahedron.  If $G$ contains a wheel, then $G$ has a star-cutset or
  a double star cutset.
\end{lemma}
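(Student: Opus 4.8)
The plan is to fix a well-chosen wheel, control how the rest of the graph attaches to it, and read off the cutset from the closed neighbourhood of the hub. Throughout I may assume $G$ is connected, since otherwise the empty set is already a star cutset. \textbf{Choosing the wheel.} First I would note that every wheel of $G$ has a hub adjacent to at least four rim vertices: if a hole $H$ had a vertex $v$ with exactly three neighbours $a,b,c$ on it, then $G[V(H)\cup\{v\}]$ would be an induced subdivision of $K_4$, since the three spokes $va,vb,vc$ and the three arcs of $H$ between $a,b,c$ form six paths joining the branch-vertices $v,a,b,c$, and the subgraph is induced because $v$ has no further neighbour in $H$ (this holds even when an arc has length one, giving a triangular ISK4). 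This contradicts ISK4-freeness. So I fix a wheel $(H,v)$, set $Y=N(v)\cap V(H)=\{x_1,\dots,x_n\}$ in cyclic order with $n\ge 4$, and call the arcs $S_1,\dots,S_n$ (where $S_i$ joins $x_i$ to $x_{i+1}$) the \emph{sectors}; a sector is \emph{long} if it has an interior vertex, which is necessarily a non-neighbour of $v$. It is convenient to choose $(H,v)$ with $n$ maximum, so that no competing wheel with more spokes can be manufactured during the analysis.

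\textbf{Attachment lemma.} The core step is to show that the interiors of distinct sectors cannot communicate outside $N[v]$; precisely, that for every connected subgraph $C$ of $G\sm N[v]$ the set $N(C)\cap V(H)$ is contained in a single sector. I would prove this by contradiction. Assuming $C$ reaches two sectors $S_i,S_j$, take $C$ minimal and a shortest path inside $C$ realising the two attachments; then the spokes at $v$, the arcs of $H$, and this path combine into one of a few configurations, each of which is forbidden: either an induced subdivision of $K_4$ (with branch-vertices taken among $v$, the attachment points, and sector endpoints), or, when short sectors create triangles at $v$, a prism or an octahedron. This is exactly where the hypotheses \emph{no prism} and \emph{no octahedron} are spent, since they kill the handful of two-sector attachments that do not already yield an ISK4. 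Propagating the statement from a single external vertex to a connected subgraph $C$ is then routine.

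\textbf{Producing the cutset.} Granting the attachment lemma, each long sector's interior (which lies in $G\sm N[v]$) sits in its own component of $G\sm N[v]$, with no external bridge to another sector. Hence, if at least two sectors are long, $N[v]$ separates their interiors and is a star cutset centred at $v$, and we are done. The remaining, degenerate situation is that at most one sector is long. There $N[v]$ need not disconnect $G$, and I expect the correct move to be to exhibit a \emph{dominating edge}: a rim neighbour $x_i$ — typically an endpoint of the unique long sector, or a vertex of a triangle $vx_ix_{i+1}$ arising from a short sector — such that $v$ and $x_i$ together are adjacent to every vertex of $G$, so that removing an appropriate subset of $N[v]\cup N[x_i]$ still breaks $G$ apart. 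This produces the double star cutset of the statement, and the domination is checked via the attachment lemma, which confines every vertex either into $N[v]$ or into the single surviving sector controlled by $x_i$.

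\textbf{Main obstacle.} The hard part is the attachment lemma: enumerating, up to symmetry, the ways a connecting path can meet two sectors and certifying that each produces an ISK4, a prism, or an octahedron. The short sectors, which create triangles at $v$, are what make prisms and octahedra appear instead of ISK4s, so the bookkeeping of which excluded subgraph defeats which attachment is delicate and is where the bulk of the work lives. The secondary obstacle is the degenerate ``at most one long sector'' case, where a plain star cutset fails and one must correctly locate the dominating edge powering the double star cutset.
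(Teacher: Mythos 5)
First, a caveat on the comparison itself: this document states Lemma~\ref{thm:wheel} with a citation to~\cite{nicolas:isk4} and reproduces no proof of it, so your attempt can only be judged on its own merits, not against an in-document argument.

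Parts of your skeleton are sound. The opening observation is correct: a hub with exactly three neighbours on the rim yields an induced subdivision of $K_4$ (a triangular one when an arc is an edge), so every wheel of $G$ has at least four spokes. And \emph{granting} your attachment lemma, the first half of the conclusion follows: if every connected subgraph of $G \setminus N[v]$ attaches to $H$ inside a single sector, then the interiors of two long sectors lie in distinct components of $G \setminus N[v]$, so $N[v]$ is a star cutset. Spot-checking small configurations (a vertex outside $N[v]$ adjacent to the interiors of two sectors, or to three spoke vertices) does produce an ISK4 or, in the short-sector cases, a prism, so the attachment statement is plausible. But that lemma \emph{is} the theorem: the entire case analysis over the ways a connecting path can meet two sectors is deferred, and nothing in the proposal certifies that no configuration escapes all three forbidden structures. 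A proposal whose only technical step is "this is where the bulk of the work lives" has not proved anything.

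The decisive gap, however, is the degenerate case, for which you offer no argument and a plan that is wrong as stated. The case "at most one long sector" is not a fringe situation: take $C_4 = x_1x_2x_3x_4$, a vertex $v$ adjacent to $x_1,\dots,x_4$, and a pendant path $u_1u_2$ with $u_1$ adjacent only to $x_1$. This graph contains a wheel, has no ISK4 (the only vertices of degree at least~3 are $v,x_1,\dots,x_4$, and every tentative subdivision of $K_4$ on them has a chord), no prism and no octahedron (every triangle contains $v$); yet $N[v]$ cuts nothing, since $G \setminus N[v] = \{u_1,u_2\}$ is connected — here the star cutset is $\{x_1\}$, centred at a rim vertex, not at the hub. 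Your recipe for this case, "exhibit $x_i$ such that $v$ and $x_i$ together are adjacent to every vertex of $G$", misreads what a double star cutset is: one needs a cutset $S$ containing adjacent vertices $u,w$ with $S \subseteq N(u) \cup N(w)$, not a pair dominating $V(G)$. Indeed, in the example no pair of adjacent vertices dominates $G$ (neither $v$ nor $x_1$ sees $u_2$), yet $\{v,x_1,x_2,x_4\}$ is a valid double star cutset around the edge $vx_1$. So the degenerate case requires its own cutset construction — and it is exactly where short sectors, i.e.\ triangles at the hub, force the prism and octahedron hypotheses into play — and your proposal contains neither its statement nor its proof. A smaller remark in the same direction: choosing the wheel to maximize the number of spokes is probably not the useful extremal choice; in arguments of this type one typically needs a minimality condition on the wheel so that the configurations arising in the attachment analysis contradict the choice rather than merely produce another wheel.
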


Thanks to the lemma above, we can easily devise a decomposition
theorem for ISK4-free graphs that uses double-star cutsets, but I do
not want even to state this here because it is useless.  These double
star cutsets are strong cutsets.  In some situations, they are very
useful: they are the best known cutsets for odd hole-free graphs
(Theorem~\ref{th.ccv}) and they are used by Cornu\'ejols, Liu and Vu\v
skovi\'c~\cite{chudnovsky.c.l.s.v:reco} to give an interesting
recognition algorithm of Berge graphs based on decomposition.  But
from a bunch examples, we feel that ISK4-free graphs should be
decomposable with better cutsets, as star cutset or perhaps 3-cutsets.
This research is still in progress.  We are not very optimistic for
the recognition algorithm, but we hope to prove the following.

\begin{conjecture}
  Any ISK4-free graph is 4-colorable. 
\end{conjecture}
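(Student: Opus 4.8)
The plan is to argue by induction on $|V(G)|$, using the decomposition results of this section to reduce $G$ either to a basic class or to a cutset across which a 4-colouring can be glued. We may assume $G$ is connected. By Lemma~\ref{l:begin}, $G$ is series-parallel, or contains $K_{3,3}$, or contains a prism (or octahedron), or contains a wheel, and I would treat these possibilities in this order, always decomposing first whenever a manageable cutset is present and invoking the induction hypothesis on the pieces.

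First, the basic classes. Series-parallel graphs are $K_4$-minor-free, hence $2$-degenerate and $3$-colourable. If $G$ contains $K_{3,3}$ then Lemma~\ref{l:decK33} makes $G$ a complete bipartite graph ($2$-colourable), a complete tripartite graph ($3$-colourable), or a graph with a clique-cutset of size at most three. If $G$ contains a prism or an octahedron, then by Theorem~\ref{l:mainlinegraphdecomp} either $G = L(H)$ for a graph $H$ of maximum degree three, or $G$ is a rich square, or $G$ has a clique-cutset of size at most three, or $G$ has a proper $2$-cutset. The case $G=L(H)$ is exactly where the bound $4$ is needed and where it comes from: $\chi(L(H))$ equals the edge-chromatic number $\chi'(H)$, and by Vizing's theorem $\chi'(H)\le \Delta(H)+1=4$. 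Rich squares I would check directly to be $4$-colourable, the point being that the square $S$ is bipartite and each component of $K\setminus S$ attaches only to $S$, so a colouring of $S$ extends component by component.

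This leaves the cutsets. Clique-cutsets of size at most three are routine: if $K$ is such a cutset with blocks $G_1,G_2$, one colours each block by induction and permutes the colours on $G_2$ so that the two colourings agree on $K$, which is always possible since $|K|\le 3\le 4$. Proper $2$-cutsets and proper $1$-joins are the delicate ones, and here I would import the fine-tuning philosophy of the proofs of Theorem~\ref{th:3cChordless} and Theorem~\ref{th:color1chord}: rather than merely asserting that each block of decomposition is $4$-colourable, one tracks, for the marker vertices of a $2$-cutset $\{a,b\}$, whether a block admits a $4$-colouring giving $a,b$ the same colour and whether it admits one giving them different colours, and similarly controls which colour classes can meet the special sets of a $1$-join. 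The extra slack of a fourth colour (compared with the $3$-colourings of Section~\ref{sec:chordless}) should make recombination easier, but the correct strengthened induction hypothesis still has to be set up carefully.

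The main obstacle is the wheel case. When $G$ contains a wheel but no prism and no octahedron, the only decomposition available, Lemma~\ref{thm:wheel}, produces a \emph{star-cutset} or a \emph{double star cutset} --- precisely the \emph{strong cutsets} that, as explained in Section~\ref{sec:wt}, resist recursive colouring: a block of decomposition may have to retain almost all of $G$, so a naive induction does not terminate usefully. To make the plan go through I would try to replace Lemma~\ref{thm:wheel} by a sharper decomposition of wheel-containing ISK4-free graphs that uses only clique-cutsets, proper $2$-cutsets, or $3$-cutsets (the improvement the authors conjecture should exist), after which the wheel case would fold into the inductive scheme above. Failing that, one would need a direct colouring argument for this subclass exploiting the very special structure forced around a wheel in an ISK4-free, prism-free graph. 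I expect essentially all of the difficulty of the conjecture to be concentrated here.
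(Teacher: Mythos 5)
You have not proved the statement, and indeed you cannot be faulted for the particular place where your argument stops: this statement appears in the paper as a \emph{conjecture}, with no proof offered, and the gap in your proposal is exactly the gap that keeps it open. The parts of your plan that work are the parts the paper already establishes: series-parallel graphs are $3$-colourable; Lemma~\ref{l:decK33} handles $K_{3,3}$; Theorem~\ref{l:mainlinegraphdecomp} handles prisms and octahedra, with Vizing's theorem giving $\chi(L(H))\le\Delta(H)+1=4$ for line-graphs of subcubic graphs (this is indeed where the bound $4$ comes from), and rich squares are easily $4$-coloured since the square is a $C_4$ and each component of the rest is a path attached only to the square. The fatal step is the wheel case. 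When $G$ contains a wheel but no prism and no octahedron, the only decomposition the paper provides is Lemma~\ref{thm:wheel}, whose outcomes are star cutsets and double star cutsets --- precisely the \emph{strong cutsets} that, as Section~\ref{sec:wt} explains, do not support recursive colouring: a block of decomposition may contain almost all of $G$, the edges inside the cutset are essentially unconstrained, and no one knows how to recombine colourings (or solve any combinatorial optimization problem) across them. The ``sharper decomposition'' by clique-cutsets, proper $2$-cutsets or $3$-cutsets that you propose to substitute is not available anywhere in the paper; it is itself the authors' stated research-in-progress, and your induction has no valid inductive step without it. A proof plan whose key case is resolved by an unproved decomposition theorem is not a proof.

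Two secondary points, subordinate to the above. First, even in the cases you do treat, the recombination across proper $2$-cutsets is only gestured at; the paper's experience with $3$-colourability (Theorem~\ref{th:3cChordless}, and Lemmas~\ref{th:tcSP} and~\ref{th:thirdcolor} for Theorem~\ref{th:color1chord}) shows that ``setting up the strengthened induction hypothesis carefully'' is where most of the real work lies, and it is not automatic that a fourth colour makes it easy. Second, your appeal to a ``proper $1$-join'' does not belong here: that decomposition arises in Section~\ref{sec:oneChord} for cycles with a unique chord, not in the ISK4-free decomposition results of Section~\ref{sec:ISK4}. But both of these are repairable in principle; the missing decomposition for wheel-containing ISK4-free graphs is not, at least not by anything in this paper.
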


\section{Graphs with no wheels}
\label{sec:noWheel}

The previous section suggests that wheels are annoying.  Also in the
original proof of the Strong Perfect Graph Theorem, the most difficult
part was devoted to \emph{wheel systems}\index{wheel system}, a very complicated
construction to decompose a Berge graph containing a wheel.  So,
perhaps nice things happen if we exclude wheels?  Indeed, from the lemmas in
the section above, we obtain easily a very precise decomposition
theorem. 

\begin{theorem}[with L\'ev\^eque and Maffray \cite{nicolas:isk4}]
  \label{th:ISK4Wheel}
  Let $G$ be an ISK4-free graph with no wheel. Then either:
  \begin{itemize}
  \item $G$ is series-parallel;
  \item $G$ is the line-graph of a graph with maximum degree three;
  \item  $G$ is a rich square; 
  \item $G$ is a complete bipartite graph; 
  \item $G$ is a complete tripartite graph; 
  \item $G$ has a clique-cutset of size at most three;
  \item $G$ has a proper 2-cutset.
  \end{itemize}
\end{theorem}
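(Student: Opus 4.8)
The plan is to run a short case analysis driven by Lemma~\ref{l:begin}, which already enumerates the only obstructions an ISK4-free graph can contain. First I would apply Lemma~\ref{l:begin} to $G$: since $G$ is ISK4-free, it is either series-parallel, or it contains a prism, or it contains a wheel, or it contains $K_{3,3}$. The hypothesis that $G$ has no wheel discards the third alternative outright---this is essentially the only place the wheel-freeness of $G$ enters the argument---so we are left with exactly three cases to dispatch.

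If $G$ is series-parallel we are in the first outcome and there is nothing more to prove. If $G$ contains $K_{3,3}$, I would invoke Lemma~\ref{l:decK33}, which asserts that such a $G$ is complete bipartite, complete tripartite, or has a clique-cutset of size at most three; these are precisely the fourth, fifth, and sixth outcomes listed in the theorem. The remaining case is that $G$ contains a prism. Here I would appeal to Theorem~\ref{l:mainlinegraphdecomp}, whose hypothesis (``$G$ contains a prism or an octahedron'') is satisfied. Its conclusion gives that $G$ is the line-graph of a graph of maximum degree three, or a rich square, or has a clique-cutset of size at most three, or has a proper $2$-cutset---matching the second, third, sixth, and seventh outcomes. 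Since every alternative produced across the three cases lands in the stated list, the proof is complete.

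It is worth noting that there is no genuine obstacle at this level of the argument: all the real difficulty is buried in the attachment lemmas (Lemmas~\ref{l:compprism}--\ref{l:compsubstan}) and the decomposition lemmas (Lemmas~\ref{l:substdecomp}--\ref{l:prismdecomp}) feeding into Theorem~\ref{l:mainlinegraphdecomp}, together with the structural analysis behind Lemma~\ref{l:decK33}. What makes the wheel-free theorem so clean, by contrast, is exactly that forbidding wheels removes the one case (Lemma~\ref{thm:wheel}) that would otherwise force the use of strong cutsets; consequently Lemma~\ref{thm:wheel} is never invoked here, and every cutset appearing in the conclusion is one of the ``good'' kinds (a clique-cutset of bounded size or a proper $2$-cutset).

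As a consistency check one may observe that the octahedron, being $K_{2,2,2}=L(K_4)$, itself contains a wheel (a $4$-hole plus a vertex complete to it), so a wheel-free $G$ cannot contain an octahedron; thus in the prism case only the prism half of the hypothesis of Theorem~\ref{l:mainlinegraphdecomp} is ever used, which is exactly what Lemma~\ref{l:begin} supplies. This remark is not needed for the proof but explains why no octahedral outcome surfaces in the wheel-free statement.
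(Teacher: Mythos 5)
Your proof is correct and is exactly the derivation the paper intends: it states that Theorem~\ref{th:ISK4Wheel} follows ``easily from the lemmas in the section above,'' namely the case split of Lemma~\ref{l:begin} with the wheel case killed by hypothesis, the $K_{3,3}$ case handled by Lemma~\ref{l:decK33}, and the prism case handled by Theorem~\ref{l:mainlinegraphdecomp}. Your closing observation that a wheel-free graph contains no octahedron (and that Lemma~\ref{thm:wheel} is never needed) is a nice sanity check consistent with the paper's discussion.
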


In several simple classes, we also obtain results.  

\begin{lemma}[with L\'ev\^eque and Maffray \cite{nicolas:isk4}]
  \label{l:WF3c-rs}
  Let $G$ be a rich square that contains no wheel.  Then $G$ is
  $3$-colorable.
\end{lemma}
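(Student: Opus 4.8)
The plan is to read off the structure that wheel-freeness forces on a rich square, and then to exhibit one well-chosen $3$-coloring of the central square that propagates over all the connections. First I would fix the square $S = u_1u_2u_3u_4$ (vertices listed in cyclic order along $S$) witnessing that $G$ is a rich square, so that $V(G) = S \cup (G\sm S)$ and every component of $G\sm S$ is a connection of $S$. The key structural observation is that $S$, being an induced $C_4$, is a hole. Hence a connection of the first kind in the definition, i.e.\ a single vertex $p$ with $N_S(p)=S$, would have four neighbors on the hole $S$, so $S\cup\{p\}$ would be a wheel; since $G$ contains no wheel this case is excluded. Therefore every component of $G\sm S$ is a genuine induced path $P = p \tp \cdots \tp p'$ whose ends attach to a pair of \emph{opposite} edges of $S$ (either $N_S(p)=\{u_1,u_2\},\,N_S(p')=\{u_3,u_4\}$ or $N_S(p)=\{u_1,u_4\},\,N_S(p')=\{u_2,u_3\}$), with no interior vertex adjacent to $S$.

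Next I would color $S$ by $c(u_1)=1,\ c(u_2)=2,\ c(u_3)=1,\ c(u_4)=3$. This is a proper $3$-coloring of $S$, and it has the property I really want: for each edge of $S$ the two endpoints receive two distinct colors, so any vertex complete to that edge is \emph{forced} to the unique remaining color. Concretely, a vertex complete to $u_1u_2$ or to $u_2u_3$ is forced to color $3$, while a vertex complete to $u_3u_4$ or to $u_4u_1$ is forced to color $2$. The point is that for each of the two pairs of opposite edges the two forced colors are distinct (namely $\{3,2\}$ in both cases), so the two ends of any connection are forced to distinct colors.

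Finally I would extend this to a coloring of all of $G$ connection by connection. For a connection $P = p\tp\cdots\tp p'$ the ends $p,p'$ are forced to two distinct colors by the previous paragraph, and a path with prescribed distinct endpoint colors is always properly $3$-colorable (color greedily along the path, using the third color only where needed). Because distinct components of $G\sm S$ have no edges between them and no interior vertex of a connection sees $S$, these independently chosen colorings are mutually consistent and agree with $c$ on $S$, yielding a proper $3$-coloring of $G$.

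The step I expect to be the only delicate one is the treatment of a length-one connection, i.e.\ a single edge $pp'$ directly joining two opposite edges of $S$; this is precisely why I use the asymmetric coloring $(1,2,1,3)$ rather than the obvious bipartite $(1,2,1,2)$. With $(1,2,1,2)$ both ends of a type-$\{u_1u_2,u_3u_4\}$ connection would be forced to color $3$, which fails on a direct edge $pp'$; the asymmetric choice is what guarantees the two forced end-colors differ. The main thing to verify, then, is that the \emph{same} coloring of $S$ simultaneously forces distinct end-colors for \emph{both} types of opposite-edge attachment, which the computation above confirms. Everything else is routine path-coloring and the bookkeeping that the connections partition $G\sm S$.
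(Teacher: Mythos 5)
Your proof is correct: wheel-freeness rules out the one-vertex connections complete to $S$ (such a vertex together with the hole $S$ is a wheel), so every component of $G\setminus S$ is an induced path joining two opposite edges of $S$, and your coloring $(1,2,1,3)$ of $S$ forces distinct colors on the two ends of every connection, after which each path extends greedily with three colors since its interior vertices see nothing outside the path. This is essentially the expected argument (the memoir states the lemma without proof, deferring to the cited paper), and your observation that the length-one connection is what breaks the symmetric coloring $(1,2,1,2)$ is exactly the one delicate point, handled correctly.
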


It is easy to check that a line-graph $G=L(R)$ contains a wheels if
and only if $R$ contains a cycle with at least one chord.  So, a
line-graph with no wheel is the line-graph of a chordless graph.  In
fact, with L\'ev\^eque and Maffray this was our motivation to study
chordless graphs, and it was lucky that a few months before they were
studied implicitly in the paper~\cite{nicolas.kristina:one} with Vu\v
skovi\'c on cycles with a unique chord.  From this remark and
Theorem~\ref{th:nochord} we get the following.

\begin{lemma}[with L\'ev\^eque and Maffray \cite{nicolas:isk4}]
  \label{l:WF3c-lg}
  Let $G$ be a graph that contains no ISK4, no wheel and such that $G$
  is a line-graph. Then $G$ is $3$-colorable.
\end{lemma}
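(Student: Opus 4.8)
The plan is to translate the statement about vertex-colourings of $G$ into a statement about edge-colourings of a root graph, and then run an induction driven by the structure theorem for chordless graphs (Theorem~\ref{th:nochord}). First I would use the remark preceding the lemma: since $G$ is a line-graph with no wheel, we have $G = L(R)$ for some chordless graph $R$. Because proper vertex-colourings of $L(R)$ are exactly proper edge-colourings of $R$, we have $\chi(G) = \chi'(R)$, so it suffices to prove that $R$ is $3$-edge-colourable. The hypothesis that $G$ contains no ISK4 enters through a degree bound: four edges of $R$ meeting at a common vertex induce a $K_4$ in $L(R)$, and $K_4$ is the trivial subdivision of $K_4$, hence an ISK4; therefore $\Delta(R) \le 3$.

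Next I would prove, by induction on $|V(R)|$, that a chordless graph $R$ with $\Delta(R) \le 3$ satisfies $\chi'(R) \le 3$. Theorem~\ref{th:nochord} gives three cases. If $R$ is sparse, its vertices of degree $3$ form a stable set, so consider the auxiliary graph $H$ whose vertices are the half-edges at degree-$3$ vertices, with a triangle on the three half-edges at each such vertex and an additional edge joining the two half-edges of each length-two link between degree-$3$ vertices. Then $H$ has maximum degree $3$ and contains no $K_4$, so Brooks' theorem $3$-colours $H$; this colouring, extended greedily along the remaining degree-$\le 2$ paths and cycles, is the desired $3$-edge-colouring of $R$. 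If $R$ has a $1$-cutset $v$, I would $3$-edge-colour the two sides (each chordless of maximum degree $\le 3$) by induction and permute the colours on one side so that the at most three edges at $v$ receive distinct colours; since $\deg(v) \le 3$ this is always achievable. The proper $2$-cutset case is where the real content lies.

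For a proper $2$-cutset $\{a,b\}$ I would split $R$ into its two blocks of decomposition $G_X$ and $G_Y$, adding the degree-$2$ marker vertices adjacent to $a,b$ exactly as in Section~\ref{sec:chordless} so that the blocks remain chordless and of maximum degree $\le 3$, colour them by induction, and glue. The obstacle is that gluing must reconcile the colours at \emph{both} $a$ and $b$ simultaneously, whereas recolouring a block only allows one global permutation of the three colours. The fix is the usual one for this class: strengthen the induction hypothesis so that, for each admissible prescription of the colours on the two marker edges at $a$ and $b$ (a pair of distinct colours encoding how the opposite side attaches), the block admits a compatible $3$-edge-colouring. With this strengthening the marker in $G_X$ faithfully simulates the $ab$-connection living in $Y$, and the two block-colourings can be made to agree on $\{a,b\}$. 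I expect the bulk of the work, and the point at which one must verify that no parity obstruction (a surviving class-$2$ configuration) can be produced by the decomposition, to be precisely this bookkeeping across the $2$-cutset, mirroring the ``fine tuning on the induction hypothesis'' already needed for the $3$-colourability of chordless graphs (Theorem~\ref{th:3cChordless}) and for Lemma~\ref{th:thirdcolor}.
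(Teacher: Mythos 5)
Your overall route is exactly the paper's: the remark preceding the lemma gives $G=L(R)$ with $R$ chordless, the absence of $K_4$ (the trivial ISK4) forces $\Delta(R)\leq 3$, and since $\chi(G)=\chi'(R)$ everything reduces to $3$-edge-colouring chordless subcubic graphs by induction along Theorem~\ref{th:nochord}. Your sparse and $1$-cutset cases are correct (the sparse case can even be shortened: if $R$ is sparse and subcubic, every edge of $R$ has an end of degree at most $2$, so $L(R)$ has maximum degree at most $3$ and no $K_4$, and Brooks' theorem applies directly to $L(R)$). Note also that your claim that the blocks of a proper $2$-cutset have maximum degree at most $3$ needs the prior elimination of $1$-cutsets: only then do $a$ and $b$ each have a neighbour on both sides, so that adding the marker edge does not push their degree to $4$.

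The genuine gap is in the proper $2$-cutset case, which is the whole difficulty. The strengthening you propose is vacuous: from any single $3$-edge-colouring of a block, every prescription of distinct colours on the two marker edges is obtained by composing with a global permutation of $\{1,2,3\}$, since any ordered pair of distinct colours can be sent to any other. So your ``strengthened'' hypothesis is equivalent to the plain one and buys nothing beyond the global permutation you already identified as insufficient. The information that must cross the cutset concerns the \emph{real} edges at $a$ and $b$, not the marker edges. Suppose $a$ has one neighbour $p$ in $X$ and two in $Y$, and $b$ has one neighbour $q$ in $X$ and two in $Y$. In the glued colouring, $c_X(ap)$ is forced to equal the unique colour missing at $a$ on the $Y$-side, namely $\pi(c_Y(ay))$ for the permutation $\pi$ applied to the $Y$-block, and likewise $c_X(bq)=\pi(c_Y(by))$. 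Since $c_Y(ay)\neq c_Y(by)$ (both meet $y$), such a $\pi$ exists if and only if $c_X(ap)\neq c_X(bq)$ --- a condition on two real edges of $G_X$ that prescribing $c_X(ax)$ and $c_X(xb)$ does not control: the assignment $c_X(ax)=1$, $c_X(xb)=2$, $c_X(ap)=c_X(bq)=3$ satisfies every prescription-type constraint yet cannot be glued. So the induction hypothesis actually needed must let one prescribe, at each of $a$ and $b$, either the marker-edge colour (when that side has two real edges there) or the real-edge colour itself (when it has one), and one must prove that chordless blocks admit colourings realizing all required patterns simultaneously. That is precisely the content of the argument in \cite{nicolas:isk4}; your proposal correctly locates this bookkeeping but, as written, the tool offered to perform it provably cannot do so.
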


\begin{exercise}
  Prove or disprove that for all line-graphs $G$ with no wheel
  $\chi(G) = 3$ or $\chi(G) = \omega(G)$.
\end{exercise}

Finally, by combining all the results above, we obtain the following.

\begin{theorem}[with L\'ev\^eque and Maffray \cite{nicolas:isk4}]
  \label{l:WF3c}
  Let $G$ be a graph that contains no ISK4 and no wheel.  Then $G$ is
  $3$-colorable.
\end{theorem}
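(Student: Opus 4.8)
The plan is to argue by induction on $|V(G)|$, using the decomposition theorem \ref{th:ISK4Wheel} as the engine. We may assume $G$ is connected. Theorem \ref{th:ISK4Wheel} offers seven outcomes, and I would dispatch the five \emph{basic} ones first. A series-parallel graph is $2$-degenerate: the last vertex produced by a series or a parallel extension has degree $2$ (both operations subdivide), and a forest is $1$-degenerate, so every series-parallel graph has a vertex of degree at most $2$ and is $3$-colorable by a one-line induction. When $G$ is the line-graph of a graph of maximum degree three it is $3$-colorable by Lemma \ref{l:WF3c-lg}, and when $G$ is a rich square it is $3$-colorable by Lemma \ref{l:WF3c-rs}. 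Finally a complete bipartite graph is $2$-colorable and a complete tripartite graph is $3$-colorable by definition. So in each basic case the conclusion is immediate or quoted.

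For the two cutset outcomes I would use that being ISK4-free and wheel-free is preserved under taking induced subgraphs, so every block of a decomposition lies again in the class and the induction hypothesis applies (for the proper $2$-cutset one first checks, exactly as for chordless graphs after Theorem~\ref{th:nochord}, that inserting the degree-$2$ marker vertices keeps the blocks inside the class). If $G$ has a clique-cutset $K$ with $|K|\le 3$, write $G$ as the union of two smaller induced subgraphs overlapping in $K$; each is $3$-colorable by induction. Since $K$ is a clique on at most three vertices its vertices receive distinct colors in each block, and we may permute the three colors in one block so that the two colorings agree on $K$; gluing them gives a $3$-coloring of $G$.

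The delicate case is the proper $2$-cutset $\{a,b\}$ (recall $ab\notin E(G)$), with sides $X,Y$ and blocks $G_X=G[X\cup\{a,b,x\}]$, $G_Y=G[Y\cup\{a,b,y\}]$. By induction each block is $3$-colorable, and since the markers have degree $2$ their colors extend freely, so only the colors of $a$ and $b$ matter. Because $a,b$ are shared and non-adjacent, two block colorings glue into a $3$-coloring of $G$ exactly when they agree on the \emph{type} of the pair, i.e.\ both assign $a,b$ the same color or both assign them distinct colors (up to permuting the three colors). Hence $G$ is $3$-colorable as soon as the two blocks admit colorings of a common type, and the only possible obstruction is a mismatch in which one block forces $c(a)=c(b)$ while the other forces $c(a)\ne c(b)$.

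This mismatch is the heart of the argument and the step I expect to fight with. The naive strengthening ``every block realizes both types'' is simply false: a complete tripartite block -- already $K_{1,1,2}$, whose size-two part is a non-adjacent pair -- forces $a,b$ to be \emph{equal} in every $3$-coloring. So the plan is to prove, by a finer inductive statement that carries the set of realizable types of the marked pair and checks it against each basic class and each cutset, that across a genuine proper $2$-cutset of an ISK4-free, wheel-free graph a block forcing the pair \emph{distinct} can never be paired with a block forcing it \emph{equal}. This is precisely the ``fine tuning of the induction hypothesis'' flagged in the discussion of Theorem~\ref{th:3cChordless}, and it is the analogue, for this class, of the admissible-pair and third-color bookkeeping used in Lemmas~\ref{th:tcSP} and~\ref{th:thirdcolor}. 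Making this bookkeeping close -- in particular ruling out the forced-equal versus forced-distinct clash -- is where I expect essentially all of the work to lie; the basic cases and the clique-cutset case are routine.
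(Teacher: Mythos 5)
Your overall architecture---induction on $|V(G)|$ driven by Theorem~\ref{th:ISK4Wheel}, with the basic outcomes dispatched by degeneracy (series-parallel), by Lemmas~\ref{l:WF3c-lg} and~\ref{l:WF3c-rs}, by the trivial colorings of complete bipartite and of wheel-free complete tripartite graphs, and with the standard clique-cutset gluing---is exactly the combination the paper intends when it says the theorem follows ``from all the results above''. But your proposal stops short precisely at the step you yourself flag as carrying ``essentially all of the work'': you never prove that the two blocks of a proper 2-cutset can be glued. You only announce a plan (a strengthened induction tracking which of the two types, $c(a)=c(b)$ or $c(a)\neq c(b)$, each block can realize) and do not carry out any of it, nor rule out the forced-equal versus forced-distinct clash. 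As written this is an outline with a hole at its crux, not a proof.

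The missing idea is that no type bookkeeping is needed: one changes the \emph{blocks}, not the induction hypothesis. Treat clique cutsets first, so you may assume $G$ has none; then every component of $G[Y]$ has neighbors of both $a$ and $b$ (otherwise $a$ or $b$ alone would be a clique cutset), so there is an induced path $P$ from $a$ to $b$ with interior in $Y$, and symmetrically one through $X$. Now define the blocks as $G[X\cup\{a,b\}]+ab$ and $G[Y\cup\{a,b\}]+ab$, i.e.\ \emph{add the edge} $ab$. These blocks stay in the class. Indeed, since $G[X\cup\{a,b\}]$ is an induced subgraph of $G$, any ISK4 or wheel in $G[X\cup\{a,b\}]+ab$ must use the new edge $ab$; substituting the path $P$ for $ab$ turns an ISK4 into an ISK4 of $G$, and turns a wheel whose rim uses $ab$ into a wheel of $G$ (the interior of $P$ has no neighbors in $X$, so the enlarged rim is still a hole and the center keeps its neighbors). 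The only subtle case is a wheel using $ab$ as a spoke, with center $a$ say and rim $C\subseteq X\cup\{b\}$: then $a$ has exactly two $G$-neighbors $c_1,c_2$ on $C$ (three would be a wheel of $G$), and $V(C)\cup\{a\}\cup \mathrm{int}(P)$ induces a subdivision of $K_4$ in $G$ with branch vertices $a,b,c_1,c_2$---again a contradiction. Both blocks are smaller than $G$ (each omits the nonempty other side), so by induction they are 3-colorable, and every 3-coloring of either block gives $a$ and $b$ \emph{distinct} colors; after permuting colors in one block, the two colorings agree on $\{a,b\}$ and their union is a 3-coloring of $G$. Incidentally, this shows that the clash you feared never occurs: neither side of a proper 2-cutset of a clique-cutset-free graph in this class can force $c(a)=c(b)$, and in particular your $K_{1,1,2}$ example cannot arise as such a side---together with the path through the other side it would already form an ISK4 in $G$.
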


This suggests that some bounds on $\chi$ could be obtained for graphs
with no wheels.  We have no counter-example to the following.

\begin{conjecture}[with L\'ev\^eque and Maffray \cite{nicolas:isk4}]
  A graph $G$ with no wheel satisfies $\chi(G) = 3$ or $\chi(G) =
  \omega(G)$. 
\end{conjecture}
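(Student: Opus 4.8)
The plan is to prove the statement by induction on $|V(G)|$, in the spirit of Theorem~\ref{th:color1chord}, which it generalises: indeed every wheel contains a cycle with a unique chord (if $v$ is the hub and $a,m,b$ are three cyclically consecutive neighbours of $v$ on the rim, then $vm$ is the unique chord of the cycle formed by $v$ together with the rim-arc from $a$ to $b$ through $m$), so the class of wheel-free graphs contains the class $\cal C$ of Section~\ref{sec:oneChord}. First I would dispose of clique cutsets: if $K$ is a clique cutset of $G$ with blocks $G_1,G_2$, then $\chi(G)=\max(\chi(G_1),\chi(G_2))$, and the dichotomy is preserved, because if the larger of the two colourings equals $3$ then $\chi(G)=3$, while if it equals some $\omega(G_i)>3$ then $\omega(G)\ge\omega(G_i)=\chi(G)\ge\omega(G)$ forces $\chi(G)=\omega(G)$. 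Since induced subgraphs of wheel-free graphs are wheel-free, induction applies to each block, so I may assume $G$ is connected, is not a clique, and has no clique cutset. The statement then splits into two regimes: when $\omega(G)\le 2$ (triangle-free) I must show $\chi(G)\le 3$, and when $\omega(G)\ge 3$ I must show the perfect-like bound $\chi(G)=\omega(G)$.

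The engine I would build is a decomposition theorem for wheel-free graphs, extending Theorem~\ref{th:OneChord} and Theorem~\ref{th:ISK4Wheel}. The starting point is the attachment lemma that, exactly as in Lemma~\ref{c:neighHole}, any single vertex of a wheel-free graph has at most two neighbours on any hole. From this I would try to prove that a wheel-free graph with no clique cutset either lies in a short list of basic classes --- cliques, series-parallel graphs, line-graphs of chordless graphs, rich squares, complete bipartite and complete tripartite graphs, and sparse or triangle-free pieces that are visibly $3$-colourable --- or admits a \emph{weak} cutset, namely a proper $2$-cutset or a proper $1$-join. The reason to insist on $2$-cutsets and $1$-joins rather than the star and double-star cutsets that appeared in Lemma~\ref{thm:wheel} is that weak cutsets allow colourings of the blocks to be recombined, whereas strong cutsets do not; and each basic class above is either perfect (so $\chi=\omega$) or $3$-colourable, matching the two regimes of the dichotomy.

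Given such a decomposition, colouring along it is the step requiring care, just as in Theorem~\ref{th:3cChordless} and Theorem~\ref{th:color1chord}: a naive induction on $2$-cutsets and $1$-joins fails, because $3$-colourings of the two blocks need not agree on the shared vertices. I would therefore strengthen the induction hypothesis to a ``third colour'' statement constrained by admissible pairs, exactly as in Lemma~\ref{th:thirdcolor}, establishing it first for the basic classes (with \emph{strong} third colours where the induction demands it) and then propagating it across the weak cutsets. In the regime $\omega(G)\ge 3$ the analogous recombination must preserve $\chi=\omega$ rather than $3$-colourability; this is automatic for clique cutsets and needs the usual edge-by-edge bookkeeping for $1$-joins and $2$-cutsets.

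The main obstacle is the decomposition theorem itself. Theorem~\ref{th:ISK4Wheel} handles only the ISK$4$-free case, and dropping that hypothesis readmits all induced subdivisions of $K_4$ and, with them, dense configurations for which no structure theorem is presently known. Two points are genuinely hard. In the triangle-free regime one must rule out wheel-free graphs of chromatic number at least $4$, that is, show that Mycielski-type and other high-$\chi$ triangle-free constructions necessarily contain a wheel; controlling $\chi$ here is precisely what keeps the statement a conjecture. In the dense regime one must show that every wheel-free graph with $\omega\ge 3$ and no weak cutset is perfect, and it is unclear that the available attachment lemmas force this. For these reasons I expect a proof to hinge entirely on first obtaining a usable structural description of \emph{all} wheel-free graphs, which at present we do not have.
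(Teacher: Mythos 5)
The statement you were asked to prove is stated in the paper as a \emph{conjecture}: the paper offers no proof of it, and in fact neither do you --- your text is a proof strategy, not a proof, as you acknowledge yourself in the final paragraph. The parts you do carry out are correct and coincide with the paper's own remarks: every wheel contains a cycle with a unique chord (your hub-plus-rim-arc argument is exactly right), so Theorem~\ref{th:color1chord} is a weaker form of the conjecture --- the paper makes this very observation immediately after stating it --- and your reduction over clique cutsets correctly preserves the dichotomy $\chi=3$ or $\chi=\omega$.

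The genuine gap is the engine your whole plan rests on: a decomposition theorem for wheel-free graphs using only ``weak'' cutsets (proper 2-cutsets, proper 1-joins) into basic classes that are perfect or 3-colourable. No such theorem is known. The paper poses precisely this --- ``Find a decomposition theorem for wheel-free graphs'' --- as an open question it calls certainly very difficult, and even in the ISK4-free case the only tool available for handling wheels is Lemma~\ref{thm:wheel}, which produces star and double star cutsets, i.e.\ exactly the strong cutsets that your recombination step cannot use. Your plan also leaves open both regimes of the dichotomy: in the triangle-free regime one must show that every triangle-free graph with $\chi\geq 4$ contains a wheel, and in the dense regime that every wheel-free graph with $\omega\geq 3$ and no weak cutset is perfect; these are not technical details to be filled in later but the actual substance of the conjecture. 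In short, your assessment of where the difficulty lies agrees with the paper's (Theorems~\ref{th:color1chord} and~\ref{th:ISK4Wheel} are presented there as the best known weaker forms), but since the statement is open, there is no proof here to validate and none in the paper to compare against.
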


In fact, a wheel always contains a cycle with a unique chord.  So,
Theorem~\ref{th:color1chord} is a weaker form of the conjecture.  The
following also could be a not so difficult weaker form.

\begin{question}
  A planar graph with no wheel as a subgraph (possibly
  non-induced) admits a vertex of degree at most~2.
\end{question}

The following is certainly very difficult and
Theorems~\ref{th:OneChord} and~\ref{th:ISK4Wheel} can be seen as
weaker forms of an answer to it.

\begin{question}
  Find a decomposition theorem for wheel-free graphs.
\end{question} 

\chapter{Berge graphs}
\label{chap:Berge}
\newcommand{\even}{balanced}
\newcommand{\Even}{Balanced}
\newcommand{\an}{a}
\newcommand{\An}{A}
\newcommand{\ESPD}{BSP}

Berge graphs is perhaps the most studied class of graphs closed under
taking induced subgraphs, especially if one takes into account that
many works on other classes are motivated by their inclusion in Berge
graphs.  Writing a survey about Berge graphs is uncomfortable because
a long list of definitions is needed just to state the results.  If
these definitions are skipped, the survey becomes fuzzy.  Let us try
both approaches by keeping the fuzziness for this short introduction
and postponing the formal definitions to the next section.

In the eighties, the idea of proving the Strong Perfect Graph
Conjecture by decomposing Berge graphs started to be advocated by
researchers around Va\v sek Chv\'atal.  During a talk given in July
2004 at the conference held in Paris honoring the memory of Claude
Berge, Chv\'atal said that as far as he remembered, Sue Whitesides was
the first researcher who told him about that.  Guessing the right
decomposition statement has been a long process.  Inventing operations
that preserve perfectness started from the very beginning of the study
of perfect graphs with the clique cutset, continued with Lov\'asz'
replication of vertices, Fonlupt and Urhy's contraction of even pairs,
Burlet and Fonlupt's amalgam and finally Cornu\'ejols and Cunningham's
2-join and Chv\'atal and Sbihi's homogeneous pair.  But keeping the
focus on these operations means in fact looking for a \emph{structure
  theorem}.  So, a key step toward a more realistic
\emph{decomposition} statement applying to all Berge graphs was the
introduction of strong cutsets by Chv\'atal: star cutsets and skew
partitions.  So, at the end of the eighties, most of the ingredients to
state a decomposition theorem for Berge graphs were present.  But as
any cooker knows, having the ingredients does not mean having your
dinner ready.  

Interesting attempts to state reasonable decomposition theorems were
done by Bruce Reed and later disproved by Irena Rusu.  All this is
explained in the introduction of~\cite{chudnovsky.r.s.t:spgt}.
Finally, what seems today to be the good statement was guessed around
2000 by Michele Conforti, G\'erard Cornu\'ejols and Kristina Vu\v
skovi\'c.  Thanks to their experience on the decomposition of
balanced matrices (arising from integer programming), they had a good
intuition of what was going on.  But more than just guessing the
statement, they provided realistic plans toward a proof: a thorough
study of a similar class (even-hole-free graphs, that they studied
with Ajai Kapoor) and a serie of papers proving special cases of the
Strong Perfect Graph Conjecture.  These papers give an extremely
useful tool-box: attachment to prisms, how to deal with line-graphs,
attempts to use the cleaning which lead naturally to
Roussel-and-Rubio-like lemmas\dots\ Also, they point out a theorem of
Truemper on signing the edges of a graph and the related key-role
played by the \emph{Truemper configurations}\index{Truemper configurations} which are 3PC's (a.k.a.\
prisms, pyramids and thetas) and wheels.  All this culminated with a
decomposition theorem for square-free Berge graphs and a decomposition
theorem for odd-hole-free graphs, that does not imply the Strong
Perfect Graph Conjecture but is interesting in its own right.

The methods used in the group of G\'erard Cornu\'ejols were precisely
the kind of things where the group around Paul Seymour was very good
at.  So, it was natural for him to try to decompose Berge graphs.
Robertson and Seymour got a grant for that (this story is interesting,
see~\cite{seymour:how}), and they were joined by Robin Thomas and
later Maria Chudnovsky.  Finally, thanks to an impressive list of
brilliant technicalities (found in only 2 years) and after a very long
proof, they could settle the Strong Perfect Graph Conjecture in May
2002.  The decomposition theorem of Chudnovsky et al. states that
every Berge graph is either basic or decomposable.  Here basic means
bipartite, line-graph of bipartite, or complement of these.  There is
another class, double split graphs, whose definition is postponed to
the next section.  Decomposable means having a 2-join, a homogeneous
pair or a balanced skew partition.  A
\emph{balanced}\index{balanced!skew partition} skew partition is a
skew partition with parity constraints on the paths and antipaths
linking the different boxes of the partition.  Note that all basic
classes are really basic for many questions.  In particular, they are
easy to recognize and fast algorithms exist to optimize them.  The
2-join and the homogeneous pairs are known to be good decompositions,
in the sense that they are reversible and possibly useful for
optimization algorithms.  But as the sequel will show, very few
researches are devoted to optimize with them.  In contrast, so far,
no one seems to know how skew partitions (balanced or not) could be used
in optimization algorithms.

Let me now explain how the work I did takes place in the story summed
up above.  I did my thesis in 2001--2004, during the exciting time
where all these results were presented in several conferences that I
was lucky to attend to.  I was lucky also to be supervised by
Fr\'ed\'eric Maffray who had a broad knowledge of perfect graphs and
spent hours passing it on to me.  He was wise enough to be confident
that structural methods could finally settle the Conjecture; he
experienced this himself by proving a structural description of
claw-free perfect graphs~\cite{maffray.reed:claw}.  So, he was paying
a very careful attention to the work done in the two leading teams.
In the same time, he was trying to use even pairs and in November
2002, during the conference on perfect graphs of Palo Alto, we could
prove together the existence of even pairs in Artemis graphs,
something conjectured by Hazel Everett and Bruce Reed.  Our method was
something Maffray already used with Cl\'audia Linhares Sales for
square-free Artemis graphs, and what we did was using
Roussel-and-Rubio-like lemmas to handle squares.  The same method has
been used recently by Chudnovsky and Seymour to shorten the proof of
the Strong Perfect Graph Theorem.

This was nice, but at the end of my thesis, I had a little problem:
how to find a good research direction in such a field and after all
this?  Giving up perfect graphs and doing something else was an
option, but after spending so many hours reading difficult papers, I
felt it would be a kind of waste.  Pushing further the method that
worked for Artemis graphs was something we already tried quite hard
with Maffray.  The last important open question on perfect graphs,
coloring them by a combinatorial algorithm, was too difficult.  Among
the list of outcomes of the decomposition theorem for Berge graphs,
all were testable in polynomial time except one: the presence of a
balanced skew partition.  But there was a polynomial time algorithm
for skew partition (balanced or not) due to de Figueiredo, Klein,
Kohayakawa and Reed.  Also, algorithms for partitioning graphs into
boxes with prescribed adjacencies in various ways was a subject with
interesting recent progress, see for
instance~\cite{feder.h.k.m:partition}.  So, the detection of balanced
skew partitions seemed to be a problem of reasonable interest and
difficulty.

The obvious way to attack the problem was to see how the people
working on the partition of vertices into boxes solve their problems,
and then see how to handle the ``balanced'' constraint on the parity
of paths.  But this was likely to be a dead end (or to be too
difficult), because the main result of the paper of
Bienstock~\cite{bienstock:evenpair} is that deciding whether there
exists a path of odd length between two given vertices of a graph is
NP-complete.  So, it was clear to me that detecting balanced skew
partitions should be NP-hard, and indeed, it is easy to prove this
with Bienstock's construction, see Theorem~\ref{th:ESPDHard}.

After attending a talk of Celina de Figueiredo in Grenoble (in June
2004), I realised that the problem could be solved by decomposition
methods for \emph{Berge} graphs.  If the graph is basic, there is a
hope to detect balanced skew partitions directly.  If the graph has a
2-join, there is a hope to decompose and to solve the question
recursively.  If a graph is not basic and has no 2-join, just answer
``there is a  balanced skew partition'', the correct answer because of a
theorem of Chudnovsky stating that homogeneous pairs are in fact
useless to decompose Berge graphs.  This blind use of decomposition is
a kind of cheating, but is formally correct.  All this could have been
easy, and I was planning to write something short about that.  But it
was more difficult than expected.  Building the blocks of a 2-join in
the usual way does not preserve balanced skew partitions because of the
so-called path 2-joins; this issue and how to bypass it is explained
Section~\ref{algomotiv}.  Finally, a structure theorem for graphs with
no balanced skew partitions was needed and resulted into 50 pages of
proof sketched in Section~\ref{decth}.  The detection of
balanced skew partition in Berge graphs was solved but the
intermediate structure theorem to solve it was more interesting than
the result itself.

A natural next step was trying to use the structure theorem to
optimize Berge graphs with no balanced skew partitions.  I tried
alone, with Gautier Stauffer, with Sulamita Klein and discussed this a
bit with Maffray and Vu\v skovi\'c.  But all these attempts failed.  I
discussed this again in November 2008 in Rio, with Simone Dantas,
Sulamita Klein and Celina de Figueiredo.  I explained them that
homogeneous pairs were in fact a problem despite the theorem of
Chudnovsky.  Then, Celina suggested to work on Berge graphs with no
balanced skew partitions and no homogeneous pairs.  This was a very
good hint, since with Vu\v skovi\'c we could in fact color these
graphs, see Sections~\ref{sec:structBergenoSP} devoted to the
structure of the class, \ref{sec:colorBergenoSP} devoted to the
computation of maximum cliques and stable set, and~\ref{sec:Lovasz}
devoted to coloring the class.

Before explaining all this, let me say that I am grateful to Celina de
Figueiredo because she gave me good hints for the two papers I wrote
on decomposing Berge graphs.

\section{Decomposing Berge graphs: main results}
\label{sec:decompBerge}

Skew partitions were first introduced by
Chv\'atal~\cite{chvatal:starcutset}.  A \emph{complete
  pair}\index{complete pair} in a graph is a pair $(A, B)$ made of two
disjoint sets of vertices with all possible edges between them. An
\emph{anticomplete pair}\index{anticomplete pair} is a pair that is
complete in the complement of the graph.  A \emph{skew
  partition}\index{skew partition} of a graph $G = (V,E)$ is a
partition of $V$ into two sets $A$ and $B$ such that $A$ induces a
graph that is not connected, and $B$ induces a graph that is not
anticonnected. When $A_1, A_2, B_1, B_2$ are non-empty sets such that
$(A_1, A_2)$ partitions $A$, $(A_1, A_2)$ is an anticomplete pair,
$(B_1, B_2)$ partitions $B$, and ($B_1, B_2$) is a complete pair, we
say that $(A_1, A_2, B_1, B_2)$ is a \emph{split}\index{split!of a
  balanced skew partition} of the skew partition $(A, B)$. \An\
\emph{\even\ skew partition}\index{balanced!skew partition} (first
defined in~\cite{chudnovsky.r.s.t:spgt}) is a skew partition $(A, B)$
with the additional property that every induced path of length at
least~2 with ends in $B$, interior in $A$ has even length, and every
antipath of length at least~2 with ends in $A$, interior in $B$ has
even length. If $(A, B)$ is a skew partition, we say that $B$ is a
\emph{skew cutset}\index{skew cutset}. If $(A, B)$ is \even\ we say
that the skew cutset $B$ is \emph{\even}\index{balanced!skew
  cutset}. Note that Chudnovsky et al.~\cite{chudnovsky.r.s.t:spgt}
proved that no minimum counter-example to the strong perfect graph
conjecture has \an\ \even\ skew partition.

Call \emph{double split graph}\index{double split graph} (first defined
in~\cite{chudnovsky.r.s.t:spgt}) any graph $G$ that can be constructed
as follows.  Let $m,n \geq 2$ be integers. Let $A = \{a_1, \dots,
a_m\}$, $B= \{b_1, \dots, b_m\}$, $C= \{c_1, \dots, c_n\}$, $D= \{d_1,
\dots, d_n\}$ be four disjoint sets. Let $G$ have vertex set $A\cup B
\cup C \cup D$ and edges in such a way that:

\begin{itemize}
\item 
  $a_i$ is adjacent to $b_i$ for $1 \leq i \leq m$.  There are no
  edges between $\{a_i, b_i\}$ and $\{a_{i'}, b_{i'}\}$ for $1\leq i <
  i' \leq m$;
\item 
  $c_j$ is non-adjacent to $d_j$ for $1 \leq j \leq n$. There are all
  four edges between $\{c_j, d_j\}$ and $\{c_{j'}, b_{j'}\}$ for
  $1\leq j < j' \leq n$;
\item
  there are exactly two edges between $\{a_i, b_i\}$ and $\{c_j,
  d_j\}$ for $1\leq i \leq m$, $1 \leq j \leq n$ and these two
  edges are disjoint.
\end{itemize}

Note that $C\cup D$ is a non-\even\ skew cutset of $G$ and that
$\overline{G}$ is a double split graph. Note that in a double split
graph, vertices in $A \cup B$ all have degree $n+1$ and vertices in
$C\cup D$ all have degree $2n + m - 2$. Since $n \geq 2, m \geq 2$
implies $2n -2 + m > 1 + n$, it is clear that given a double split
graph the partition $(A\cup B, C \cup D)$ is unique. 

A graph is said to be \emph{basic}\index{basic!graph} if one of $G, \overline{G}$ is
either a bipartite graph, the line-graph of a bipartite graph or a
double split graph.

The 2-join was first defined by Cornu\'ejols and
Cunningham~\cite{cornuejols.cunningham:2join}.  A partition $(X_1,
X_2)$ of the vertex set is a \emph{2-join}\index{2-join} when there exist disjoint
non-empty $A_i, B_i \subseteq X_i$ ($i=1, 2$) satisfying:

\begin{itemize} 
\item
  every vertex of $A_1$ is adjacent to every vertex of $A_2$ and every
  vertex of $B_1$ is adjacent to every vertex of $B_2$;
\item
  there are no other edges between $X_1$ and $X_2$.
\end{itemize}

The sets $X_1, X_2$ are the two \emph{sides}\index{side!of a 2-join}
of the 2-join.  When sets $A_i$'s $B_i$'s are like in the definition
we say that $(X_1, X_2, A_1, B_1, A_2, B_2)$ is a
\emph{split}\index{split!of a 2-join} of $(X_1, X_2)$.  Implicitly,
for $i= 1, 2$, we will denote by $C_i$ the set $X_i \setminus (A_i
\cup B_i)$.

A 2-join $(X_1, X_2)$ in a graph $G$ is said to be \emph{connected}\index{connected!2-join}\index{2-join!connected}
when for $i= 1, 2$, every component of $G[X_i]$ meets both $A_i$ and
$B_i$.  A 2-join $(X_1, X_2)$ is said to be \emph{substantial}\index{substantial!2-join}\index{2-join!substantial} when
for $i= 1, 2$, $|X_i| \geq 3$ and $X_i$ is not a path of length~2 with
an end in $A_i$, an end in $B_i$ and its unique interior vertex in
$C_i$.  A 2-join $(X_1, X_2)$ in a graph $G$ is said to be
\emph{proper}\index{proper!2-join}\index{2-join!proper} when it is connected and substantial.

A 2-join is said to be a \emph{path 2-join}\index{path
  2-join}\index{2-join!path} if it has a split $(X_1, X_2, A_1, B_1,
A_2, B_2)$ such that $G[X_1]$ is a path with an end in $A_1$, an end
in $B_1$ and interior in $C_1$. Implicitly we will then denote by
$a_1$ the unique vertex in $A_1$ and by $b_1$ the unique vertex in
$B_1$. We say that $X_1$ is the \emph{path-side}\index{path-side of a
  2-join} of the 2-join. Note that when $G$ is not a hole then only
one of $X_1, X_2$ is a path side of $(X_1, X_2)$. A \emph{non-path
  2-join}\index{non-path 2-join}\index{2-join!non-path} is a 2-join
that is not a path 2-join.

The homogeneous pair was first defined by Chv\'atal and
Sbihi~\cite{chvatal.sbihi:bullfree}. The definition that we give here
is a slight variation used in~\cite{chudnovsky.r.s.t:spgt}. A
\emph{homogeneous pair}\index{homogeneous pair} is a partition of
$V(G)$ into six non-empty sets $(A, B, C, D, E, F)$ such that:

\begin{itemize} 
\item 
  every vertex in $A$ has a neighbor in $B$ and a non-neighbor in $B$,
  and vice versa; 
\item the pairs $(C,A)$, $(A,F)$, $(F,B)$, $(B,D)$ are complete; 
\item the pairs $(D,A)$, $(A,E)$, $(E,B)$, $(B,C)$ are anticomplete. 
\end{itemize}

A graph $G$ is path-cobipartite if it is a Berge graph obtained by
subdividing an edge between the two cliques that partitions a
cobipartite graph.  More precisely, a graph is
\emph{path-cobipartite}\index{path-cobipartite} if its vertex set can
be partitioned into three sets $A, B, P$ where $A$ and $B$ are
non-empty cliques and $P$ consists of vertices of degree~2, each of
which belongs to the interior of a unique path of odd length with one
end $a$ in $A$, the other one $b$ in $B$. Moreover, $a$ has neighbors
only in $A \cup P$ and $b$ has neighbors only in $B \cup P$. Note that
a path-cobipartite graph such that $P$ is empty is the complement of
bipartite graph. Note that our path-cobipartite graphs are simply the
complement of the \emph{path-bipartite}\index{path-bipartite} graphs
defined by Chudnovsky in~\cite{chudnovsky:these}. For convenience, we
prefer to think about them in the complement as we do.

 A \emph{double star}\index{double star cutset} in a graph is a subset $D$ of the
vertices such that there is an edge $ab$ in $G[D]$ satisfying: $D
\subset N(a) \cup N(b)$.

Now we can state the known decomposition theorems of Berge graphs. The
first decomposition theorem for Berge graph ever proved is the
following:

\begin{theorem}[Conforti, Cornu\'ejols and Vu\v skovi\'c, 
    2001, \cite{conforti.c.v:dstrarcut}]
  \label{th.ccv}
  Every graph with no odd hole is either basic or has a proper 2-join
  or has a double star cutset.
\end{theorem}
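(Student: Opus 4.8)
The plan is to organize the proof around Truemper configurations, since $G$ has no odd hole and the only obstructions to a consistent parity signing of such a graph are the 3PC's (prisms, pyramids and thetas) and the wheels. First I would treat the configuration-free case: suppose $G$ contains no 3PC and no wheel. Then $G$ is \emph{universally signable}, and such graphs are known to decompose by clique cutsets into cliques and holes. As $G$ has no odd hole, every hole is an even cycle, hence bipartite and therefore basic, and every clique is the complement of a stable set, hence basic too. Moreover a clique cutset is a double star cutset: for any edge $ab$ of a clique cutset $K$, every other vertex of $K$ lies in $N(a)$, so $K\subseteq N(a)\cup N(b)$. Hence in this case $G$ is basic or has a double star cutset, and we are done.

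Next I would handle the presence of a 3PC. Here the right object to fix is an inclusion-wise maximal induced subgraph that is the line-graph of a bipartite graph (prisms being the smallest such configurations). For every component $C$ of the rest of the graph I would prove an attachment lemma, in the exact spirit of Lemmas~\ref{l:compprism}--\ref{l:prismdecomp}, showing that the no-odd-hole hypothesis forces $C$ to attach to the configuration in only a few ways: along a single branch, along a triangle, in an \emph{augmenting} way that would enlarge the line-graph (contradicting maximality), or in a way that produces a proper 2-join. Branch and triangle attachments give $2$-cutsets and clique cutsets, which are double star cutsets; the augmenting case is excluded by maximality; and what survives is a proper $2$-join. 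Thetas and pyramids are absorbed into the same analysis since both contain, or force, the relevant path/branch structure.

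The hard part will be the wheels. A wheel is a hole $H$ with a center $v$ having at least three neighbors on $H$; the absence of odd holes constrains the parities of the arcs of $H$ between consecutive spokes, but many wheel types survive and they resist clean decomposition --- this is precisely the \emph{wheel systems} difficulty that made the Strong Perfect Graph Theorem so long. I expect most of the work to live here. The plan is a case analysis on the number and cyclic arrangement of the spokes, together with how further vertices attach to the wheel, proving that a suitably chosen wheel forces a double star cutset: typically the double star centered at the wheel center together with a carefully chosen neighbor separates two arcs of the hole. This is exactly where the \emph{strong} cutset is unavoidable, and where neither a $2$-join nor a clique cutset would suffice.

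Finally I would assemble the cases by induction on $|V(G)|$. If $G$ is not basic, has no proper $2$-join and no double star cutset, then the 3PC and wheel analyses show that $G$ contains no 3PC and no wheel, so the configuration-free step forces $G$ to be basic or to have a double star cutset --- a contradiction. The one genuinely delicate point to get right, beyond the wheel analysis, is the bookkeeping that guarantees the $2$-joins produced are \emph{proper} (connected and substantial) rather than degenerate path $2$-joins, and that the double stars produced are genuine cutsets rather than trivially dominating sets.
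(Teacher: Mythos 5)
First, a point of reference: the paper does not prove this theorem at all --- it is quoted, in a survey section, from Conforti, Cornu\'ejols and Vu\v skovi\'c \cite{conforti.c.v:dstrarcut}, whose proof occupies a full-length research paper. So the only question is whether your text stands as a proof on its own, and it does not: it is a roadmap. Your skeleton does track the genuine strategy of the original work --- Truemper configurations as the organizing principle; the configuration-free case handled by the decomposition of universally signable graphs into cliques and holes via clique cutsets; line-graph-like structure producing 2-joins; wheels forcing double star cutsets --- so as an outline it is reasonable. But the step you yourself flag as carrying ``most of the work,'' the wheel analysis, is not an argument, it is a restatement of the problem: ``a case analysis on the number and cyclic arrangement of the spokes'' is exactly what fills most of the original paper, and nothing in your sketch indicates which wheel to choose, how it yields a double star cutset, or why the analysis terminates. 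The same criticism applies to your 3PC case: the attachment lemma for a maximal induced line-graph of a bipartite graph is asserted by analogy with Lemmas~\ref{l:compprism}--\ref{l:prismdecomp}, but those are proved under the hypothesis ``no triangular ISK4,'' not ``no odd hole,'' and do not transfer; proving the correct analogue under the odd-hole-free hypothesis is again a substantial piece of the real proof, not a citation.

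There is also a concrete error in the one case you do close. Under the paper's definition, a double star must contain an edge $ab$ with $D \subset N(a) \cup N(b)$, so your claim ``a clique cutset is a double star cutset'' is valid only for clique cutsets of size at least two (then any edge $ab$ of the clique works, since $a \in N(b)$, $b \in N(a)$, and all other vertices of the clique lie in $N(a)$). The decomposition of universally signable graphs may, however, produce a cut vertex, or a disconnected graph; a single vertex contains no edge and is not a double star, so in that situation your reduction fails and a separate argument is needed. This is fixable, but as written even the configuration-free case is incomplete.
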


It could be thought that this theorem is useless to prove the Strong
Perfect Graph Theorem since there are minimal imperfect graphs that
have double star cutsets: the odd antiholes of length at
least~7. However, by the Strong Perfect Graph Theorem, we know that
the following fact is true: for any minimal non-perfect graph $G$, one
of $G, \overline{G}$ has no double star cutset. A direct proof of this
--- of which we have no idea --- would yield together with
Theorem~\ref{th.ccv} a new proof of the Strong Perfect Graph Theorem.

The following theorem was first conjectured in a slightly different
form by Conforti, Cornu\'ejols and Vu\v skovi\'c, who proved it in the
particular case of square-free graphs~\cite{conforti.c.v:square}.  A
corollary of it is the Strong Perfect Graph Theorem.

\begin{theorem}[Chudnovsky, Robertson, Seymour and Thomas, 2002, \cite{chudnovsky.r.s.t:spgt}]
  \label{th.0}
  Let $G$ be a Berge graph. Then either $G$ is basic or $G$ has a
  homogeneous pair, or $G$ has \an\  \even\  skew partition or one of $G,
  \overline{G}$ has a proper 2-join.
\end{theorem}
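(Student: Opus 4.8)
The plan is to prove the dichotomy by a structural analysis organized around the Truemper configurations, in the same spirit that worked for the weakly triangulated case (Theorem~\ref{th:wt}) and for ISK4-free graphs in Chapter~\ref{chap:2dec}, but with the crude ``no long hole and no long antihole'' argument replaced everywhere by the finer parity control of the Roussel and Rubio Lemma (whose weakly triangulated avatar is Lemma~\ref{l:rrwt}). The structural foothold is that $G$ is Berge exactly when it has no odd hole and no odd antihole, so in particular $G$ contains no pyramid, since every pyramid carries an odd hole. Since the conclusion of the theorem is invariant under complementation, I would prove everything only up to replacing $G$ by $\overline{G}$, which lets me fix a convenient orientation of each configuration. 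First I would record the four basic outcomes --- bipartite, line-graph of bipartite, double split, and their complements --- and reformulate the goal: a Berge graph that avoids all the decompositions (a proper $2$-join in $G$ or $\overline{G}$, a homogeneous pair, a balanced skew partition) must be basic.

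Second, following the ``get rid of line-graph structure first'' strategy used for prisms in Chapter~\ref{chap:2dec}, I would treat the case where $G$ contains a sufficiently rich line-graph-of-bipartite substructure (a long prism, or more generally an \emph{appearance} of an $L(K_4)$-type structure with the correct parities). One takes such a structure inclusion-wise maximal and proves a battery of attachment lemmas describing, for each component $C$ of the remainder, exactly how $C$ attaches. Here the Roussel and Rubio Lemma is the workhorse: applied to the rim-paths and to anticonnected pieces, it forces interior vertices to be complete to the relevant sets and thereby pins attachments down up to parity. Each attachment type should then produce one outcome --- a larger line-graph structure (contradicting maximality), a $2$-join between the two natural sides, or a balanced skew partition --- where the parity clauses in the definition of a balanced skew partition are precisely what the Roussel and Rubio Lemma certifies.

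The hard part will be wheels, exactly the \emph{wheel systems} that dominate the original proof. When $G$ contains a proper wheel (a hole $H$ together with a vertex having at least three neighbours in $H$) and none of the easier configurations, the hub and rim together with the Berge condition constrain the neighbourhoods of the remaining vertices very tightly; the plan is to grow, around the wheel, a partition of $V(G)$ into boxes with prescribed complete and anticomplete adjacencies, and then to verify --- again through Roussel and Rubio and a long parity bookkeeping --- that the boundary is a balanced skew cutset, a homogeneous pair, or a $2$-join. This is where the genuinely new technicalities of Chudnovsky, Robertson, Seymour and Thomas live; no short argument is to be expected, and I would budget the overwhelming majority of the effort here, isolating the many pairwise non-isomorphic wheel configurations in the spirit of the deepness discussion around Lemma~\ref{l:type}.

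Finally, I would assemble the pieces by induction. If $G$ contains no prism, no long-prism or line-graph structure, and no wheel, then $G$ (or $\overline{G}$) is so impoverished in Truemper configurations that a direct argument --- of the flavour of Theorem~\ref{th.ccv}, but carrying the balanced constraint --- shows $G$ is basic. The delicate point to watch throughout is that the blocks obtained from a $2$-join must be constructed so as to preserve both the Berge property and the absence of the forbidden decompositions, which is exactly the path $2$-join subtlety flagged elsewhere in this chapter; getting the block construction right is what makes the induction close, and I expect it to be the second most painful ingredient after the wheel analysis.
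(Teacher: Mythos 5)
There is a genuine gap here, and it is essentially the whole theorem. First, note that the paper does not prove this statement at all: Theorem~\ref{th.0} is quoted as the decomposition theorem of Chudnovsky, Robertson, Seymour and Thomas, whose published proof occupies roughly one hundred and fifty pages of the \emph{Annals of Mathematics}; the surrounding text of the chapter only surveys it and builds on it. So the only thing to compare your proposal against is that original proof, and what you have written is a table of contents of that proof rather than a proof. You correctly name its architecture --- self-complementarity of the outcomes, pyramid-freeness of Berge graphs, ``get rid of line-graph appearances first'' via maximal substructures and attachment lemmas driven by the Roussel--Rubio Lemma, then the wheel analysis --- but every step that carries actual mathematical weight is deferred (``a battery of attachment lemmas,'' ``a long parity bookkeeping,'' ``I would budget the overwhelming majority of the effort here''). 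None of the attachment lemmas is stated, let alone proved; none of the wheel configurations is analysed; the endgame showing that a Berge graph with no such configurations is bipartite, a line-graph of bipartite, double split, or a complement of these is asserted by analogy with Theorem~\ref{th.ccv} rather than argued. A plan whose hard parts are all flagged as hard and left open is not a proof of this theorem.

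Beyond the deferral, one concrete piece of the plan is wrong in kind. Your final paragraph proposes to ``assemble the pieces by induction'' and worries that blocks of a $2$-join must preserve Bergeness and the absence of forbidden decompositions, invoking the path $2$-join subtlety. But Theorem~\ref{th.0} is a statement about a single graph: every Berge graph is basic or admits one of the listed decompositions. Its proof involves no recursion on blocks of decomposition and no need to preserve anything under block construction; once a $2$-join, homogeneous pair or balanced skew partition is exhibited, the proof of that case is finished. The block-preservation issue you describe belongs to a different layer of the paper: it is exactly what makes the author's own Theorem~\ref{th.th} (where a minimum counter-example is replaced by a smaller one built from a path $2$-join) and the algorithmic applications of Sections~\ref{algomotiv} and~\ref{sec:colorBergenoSP} delicate. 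Importing it here suggests a confusion about what the statement requires, on top of the fact that the substantive content of the Chudnovsky--Robertson--Seymour--Thomas argument is absent.
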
 

 The two theorems that we state now are due to Chudnovsky who proved
 them from scratch, that is without assuming Theorem~\ref{th.0}. Her
 proof uses the notion of \emph{trigraph}\index{trigraph}.  The first theorem shows
 that homogeneous pairs are not necessary to decompose Berge
 graphs. Thus it is a result stronger than Theorem~\ref{th.0}. The
 second one shows that path 2-joins are not necessary to decompose
 Berge graphs, but at the expense of extending \even\ skew partitions
 to general skew partitions and introducing a new basic class. Note
 that a third theorem can be obtained by viewing the second one in the
 complement of $G$.

\begin{theorem}[Chudnovsky, 2003, \cite{chudnovsky:trigraphs,chudnovsky:these}]
\label{th.1}
  Let $G$ be a Berge graph. Then either $G$ is basic, or one of $G,
  \overline{G}$ has a proper 2-join or $G$ has \an\  \even\  skew partition.
\end{theorem}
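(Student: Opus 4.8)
The plan is to deduce Theorem~\ref{th.1} from Theorem~\ref{th.0}, which is already available. The two theorems share the outcomes ``basic'', ``proper $2$-join in $G$ or $\overline{G}$'' and ``balanced skew partition'', and differ only in that Theorem~\ref{th.0} permits the extra outcome ``homogeneous pair''. Hence it suffices to prove the reduction: \emph{every Berge graph $G$ that admits a homogeneous pair is basic, or has a proper $2$-join in $G$ or $\overline{G}$, or has a balanced skew partition.} Granting this, one applies Theorem~\ref{th.0} to an arbitrary Berge graph; every outcome but the homogeneous-pair one finishes directly, and in the homogeneous-pair case the reduction supplies one of the three desired conclusions. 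I would prove the reduction by induction on $|V(G)|$, fixing among all homogeneous pairs $(A,B,C,D,E,F)$ one for which $|A\cup B|$ is minimum.

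The first observation, which I would exploit throughout, is that a homogeneous pair always yields a cutset. Since all six sets are non-empty by definition, and since $A\cup B$ has no edge to $E$ (as $E$ is anticomplete to both $A$ and $B$) while all the remaining neighbours of $A\cup B$ lie in $C\cup D\cup F$, the set $C\cup D\cup F$ separates the non-empty set $A\cup B$ from the non-empty set $E$. Thus $\mathcal A:=A\cup B\cup E$ induces a disconnected graph and $\mathcal B:=C\cup D\cup F$ is a cutset. Passing to $\overline{G}$ interchanges the roles of $E$ and $F$, so symmetrically $C\cup D\cup E$ is an ``anticutset'', giving me the freedom to work in $G$ or in $\overline{G}$ and to use the self-complementarity of both the skew-partition condition and its balanced refinement (paths and antipaths exchange under complementation).

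The real work, and the step I expect to be the main obstacle, is to promote $C\cup D\cup F$ to a genuine \emph{balanced skew cutset}. Two things must be arranged. First, the cutset must be made anti-disconnected, i.e.\ partitioned into two non-empty parts complete to each other; the homogeneous-pair definition constrains the edges \emph{between} $A\cup B$ and $C\cup D\cup F$ but says nothing about edges \emph{inside} $C\cup D\cup F$, so here I would use the minimality of $|A\cup B|$ to either adjoin a carefully chosen vertex of $A\cup B$ to the cutset (each vertex of $A$ is complete to $C\cup F$, each vertex of $B$ to $D\cup F$) or to read off a proper $2$-join when the cutset resists being split. Second, the parity constraints defining a \emph{balanced} skew partition must be verified: every induced path of length $\geq 2$ with ends in $\mathcal B$ and interior in $\mathcal A$ must be even, and dually for antipaths. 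This is exactly where the Berge-graph parity machinery enters — absence of odd holes forces the lengths of the relevant paths through the cutset, and the Roussel and Rubio Lemma controls antipaths across the anticonnected side. Keeping these parities intact while reducing $A$ and $B$ (whose vertices are not literally twins in a graph, so their differing adjacencies inside $A\cup B$ must be tracked without creating an odd hole or odd antihole) is the delicate bookkeeping at the heart of the argument.

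For precisely this reason the robust route, and the one Chudnovsky in fact took, is to prove the statement \emph{from scratch} in the language of \emph{trigraphs}: one replaces the homogeneous pair by a single \emph{semi-adjacent} pair, so the obstruction disappears at the level of the object itself, reproves the full decomposition for Berge trigraphs with outcomes basic, proper $2$-join, or balanced skew partition and \emph{no} homogeneous pair, and then specialises back to ordinary graphs. I would fall back on this approach should the direct elimination above collide with the parity difficulties just described; in either case the verification of the balanced condition is the genuine technical core, while the existence of the underlying cutset is essentially automatic.
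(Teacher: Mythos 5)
The paper never proves Theorem~\ref{th.1}: it is quoted from Chudnovsky's thesis, and the sentence introducing it stresses that she proved it \emph{from scratch}, ``that is without assuming Theorem~\ref{th.0}'', via trigraphs. So your proposal must stand on its own, and its primary route --- deducing Theorem~\ref{th.1} from Theorem~\ref{th.0} by eliminating the homogeneous-pair outcome --- has a genuine gap exactly where you yourself locate the ``real work''. The only thing you actually establish is the easy fact that $C\cup D\cup F$ is a cutset separating $A\cup B$ from $E$. To obtain a skew partition you must split this cutset (or some modification of it) into two non-empty parts complete to each other, and the definition of a homogeneous pair places no constraint at all on the edges inside $C\cup D\cup F$; your suggestion of adjoining a vertex does not help, since a vertex $a\in A$ is complete to $C\cup F$ but anticomplete to $D$, so adding it creates no anti-disconnection, and moving $D$ out of the cutset destroys the separation (edges between $B$ and $D$, and possibly between $D$ and $E$, survive). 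Neither the ``minimality of $|A\cup B|$'' device, nor the appeal to Roussel--Rubio for balancedness, nor the claim that a proper 2-join can be ``read off'' in the recalcitrant cases is developed into an argument. This is not a routine omission one could expect a referee to fill in: no such local absorption of homogeneous pairs into the other outcomes is known, and its apparent unavailability is precisely why Chudnovsky had to redo the entire decomposition proof rather than post-process Theorem~\ref{th.0}. Your plan thus attempts, without any new idea, exactly the step that the experts could not carry out.

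Your fallback --- reprove the statement for Berge trigraphs, where the homogeneous pair collapses to a semi-adjacent pair --- does correctly name the method Chudnovsky actually used, and it is the route the paper itself points to. But as written it is a one-sentence description of a programme, not a proof: it amounts to citing the full trigraph redevelopment of the Chudnovsky--Robertson--Seymour--Thomas case analysis, a book-length argument none of which appears in your text. As a self-contained justification of Theorem~\ref{th.1}, your proposal therefore establishes nothing beyond the (insufficient) cutset observation.
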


\begin{theorem}[Chudnovsky, 2003, \cite{chudnovsky:these}]
\label{th.2}
  Let $G$ be a Berge graph. Then either $G$ is basic, or one of $G,
  \overline{G}$ is path-bipartite, or $G$ has a proper non-path
  2-join, or $\overline{G}$ has a proper 2-join, or $G$ has a
  homogeneous pair or $G$ has a skew partition.
\end{theorem}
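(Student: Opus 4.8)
The plan is to derive Theorem~\ref{th.2} from Theorem~\ref{th.1} by eliminating path 2-joins, at the cost of introducing the three compensating outcomes (path-bipartiteness, homogeneous pairs, and \emph{general} rather than \emph{balanced} skew partitions). I would argue by induction on $|V(G)|$. Given a Berge graph $G$, apply Theorem~\ref{th.1}. If $G$ is basic, or $\overline{G}$ has a proper 2-join, or $G$ has a balanced skew partition, then an outcome of Theorem~\ref{th.2} holds at once --- the last case because every balanced skew partition is in particular a skew partition. Hence we may assume $G$ has a proper 2-join $(X_1,X_2)$, and if some proper 2-join of $G$ is a non-path 2-join we are again done. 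The entire difficulty is therefore concentrated in the single case where $G$ admits a proper 2-join but \emph{every} proper 2-join of $G$ is a path 2-join; I will show that such a $G$ is basic, or path-bipartite, or has a homogeneous pair or a skew partition.

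To handle a path 2-join $(X_1,X_2)$ with split $(X_1,X_2,\{a_1\},\{b_1\},A_2,B_2)$, where $G[X_1]$ is a path $P$ from $a_1$ to $b_1$, I would pass to the block $G_2$ on the non-path side: delete the interior of $P$ and replace it by a marker path joining $a_1$ to $b_1$ of length two or three, chosen to match the parity of $P$. This parity choice is forced by Berge-ness. Since $a_1$ is complete to $A_2$ and $b_1$ to $B_2$, the parity of $P$ controls the parity of every hole of $G$ meeting both sides, and a direct parity argument (of Truemper type) shows that a marker of matching parity keeps $G_2$ Berge and lets holes of $G$ and of $G_2$ be recovered from one another. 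After first contracting $P$ by deleting consecutive degree-two interior vertices in pairs --- which alters neither Berge-ness nor the relevant parities nor the other potential decompositions --- substantiality leaves $P$ of bounded length, so $G_2$ has strictly fewer vertices than $G$ except for finitely many bounded base configurations, and the inductive hypothesis applies to $G_2$.

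It then remains to lift an outcome of Theorem~\ref{th.2} for $G_2$ back to $G$. A non-path 2-join, a 2-join of $\overline{G_2}$, a homogeneous pair, or a skew partition of $G_2$ that is essentially disjoint from the marker pulls back to the corresponding structure of $G$, the marker being replaceable by $P$; one must check only that a 2-join of $G_2$ \emph{using} the marker does not simply reproduce a path 2-join of $G$, and it is precisely here that the standing hypothesis ``$G$ has no non-path 2-join'' is spent to discard the degenerate configurations. The genuinely new phenomenon arises when $G_2$ is basic or path-bipartite with the marker playing the role of a connecting path: the two cliques (respectively stable sets) of the (co)bipartite block, joined through $A_2$, $B_2$ and the reinstated path $P$, reassemble $G$ exactly into a path-cobipartite graph, that is, $\overline{G}$ becomes path-bipartite --- this is the reason the new basic class must be added. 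The main obstacle, and the part needing the most care, is this lifting step: controlling the marker parity so that Berge-ness and hole structure are preserved in both directions, and carrying out the case analysis that turns a basic or path-bipartite block together with its attached path into path-bipartiteness of $G$ or of $\overline{G}$, while verifying that every remaining block outcome yields a non-path 2-join, a homogeneous pair, or a skew partition and never merely another path 2-join. As an alternative to this reduction, one may follow Chudnovsky's original route and reprove the statement from scratch in the trigraph framework, tracking path 2-joins throughout the decomposition argument of Theorem~\ref{th.0}; the complementary statement is then obtained, as the text notes, by reading the whole argument in $\overline{G}$.
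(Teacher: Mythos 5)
There is a genuine gap, and it sits exactly in the step you dismiss with ``essentially disjoint from the marker.'' First, for context: this document does not prove Theorem~\ref{th.2} at all; it is quoted from Chudnovsky's thesis, where it is proved from scratch in the trigraph framework rather than deduced from Theorem~\ref{th.1}. So your reduction is a genuinely different route --- but as written it fails at the lifting step. The fatal case is a skew partition of the block $G_2$ that meets the marker path. Contracting the path-side of a path 2-join can \emph{create} skew partitions that do not exist in $G$: this is exactly the phenomenon illustrated in Figure~\ref{figP3loose} and analysed in Sections~\ref{algomotiv} and~\ref{decth} under the name \emph{cutting 2-joins}. Concretely, after contraction a set such as $\{a_1,b_1\}\cup A_2$ can be a skew cutset of $G_2$ (in $\overline{G_2}$ the vertex $a_1$ is isolated inside it), while in $G$ the same set is anticonnected, because $a_1b_1$ is no longer an edge and $b_1$ is anticomplete to $A_2$; so it is not a skew cutset of $G$. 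When your induction returns ``$G_2$ has a skew partition'' of this kind, you have no argument that $G$ satisfies \emph{any} outcome of Theorem~\ref{th.2}. Classifying exactly when contraction creates such structures is the content of the cutting 2-joins of types 1 and 2 and of the roughly fifty pages behind Theorem~\ref{th.th}, and there the escape routes are not ``skew partition of $G$'' but the new outcomes (path-cobipartite, path-double split, homogeneous 2-join). The same difficulty recurs for 2-joins of $\overline{G_2}$: as noted in Section~\ref{decth}, contraction may create a 2-join in the complement, or turn the block into the complement of a line-graph of a bipartite graph, where no such structure existed in $G$. None of this is addressed by your plan, and the author's remark that he could not prove even Theorem~\ref{th.case} with ``skew partition'' in place of ``balanced skew partition'' (the parity conditions are needed to keep the auxiliary graphs Berge) is a warning that the general-skew-partition bookkeeping you need is not routine.

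There is also a more elementary defect: your induction need not make progress. A proper path 2-join has path-side of length at least $3$ (length $2$ is excluded by substantiality), and when the path-side has length exactly $3$ your parity-matching marker also has length $3$, so $G_2 = G$. These are not ``finitely many bounded base configurations'': only the path side is bounded, the other side $X_2$ is arbitrary, so the entire case where every proper 2-join of $G$ is a path 2-join with path-side of length $3$ is left untouched. (The proof of Theorem~\ref{th.th} avoids this trap by inducting on the number of path 2-joins and by building \emph{larger} auxiliary graphs $G'$ rather than contracting; some such change of measure, plus the case analysis above, would be needed to make your reduction work.)
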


\section{Detection of balanced skew partitions}
\label{algomotiv}

Before explaining Theorem~\ref{th.th}, a new decomposition for Berge
graphs that is a generalization of Theorems~\ref{th.0},~\ref{th.1}
and~\ref{th.2}, let us explain its initial motivation: the detection
of balanced skew partitions.  De Figueiredo, Klein, Kohayakawa and
Reed devised an algorithm that given a graph $G$ computes in
polynomial time a skew partition if $G$ has
one~\cite{figuereido.k.k.r:sp}.  See also a recent work by Kennedy and
Reed~\cite{kennedyreed:skew}.  Let us call \ESPD\ the decision problem
whose input is a graph and whose answer is YES if the graph has \an\
\even\ skew partition and NO otherwise. Using a construction due to
Bienstock~\cite{bienstock:evenpair}, we can prove the following.

\begin{theorem}[\cite{nicolas:bsp}]
  \label{th:ESPDHard}
  \ESPD\ is NP-hard.
\end{theorem}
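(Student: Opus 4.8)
The plan is to reduce from the problem, shown to be NP-hard by Bienstock~\cite{bienstock:evenpair}, of deciding whether two given non-adjacent vertices $x,y$ of a graph $G$ form an \emph{even pair}, i.e.\ whether every induced $x$--$y$ path has even length (this is the complement of the NP-complete problem of deciding whether $x,y$ are joined by an odd induced path). From an instance $(G,x,y)$ I would build in polynomial time a graph $G'$ whose balanced skew partitions encode exactly this parity condition. The construction has three ingredients. First, subdivide once every edge incident to $x$ or to $y$; this increases the length of every induced $x$--$y$ path by exactly two (its first and last edges, which are distinct), hence preserves parity, while making the new neighbourhoods of $x$ and of $y$ consist of degree-two vertices that are pairwise non-adjacent. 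Second, add a vertex $p$ adjacent to $x$ and $y$ and to nothing else, so that $B=\{x,y,p\}$ is \emph{not anticonnected} (in the complement, $B$ induces the single edge $xy$ plus the isolated vertex $p$), as witnessed by the complete pair $(\{p\},\{x,y\})$. Third, add vertices $c_1,c_2$ with $c_1$ adjacent only to $x$ and $c_2$ adjacent only to $y$, so that $\{x,y\}$ is a cutset and $A=V(G')\sm B$ is disconnected. The intended skew partition is $(A,B)$.

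Next I would check that $(A,B)$ is balanced precisely when $(x,y)$ is an even pair. Because $p$, $c_1$, $c_2$ have all their neighbours in $B\cup\{x,y\}$, the only induced paths of length at least $2$ with both ends in $B$ and interior contained in $A$ are the induced $x$--$y$ paths of the subdivided graph; by the first ingredient these are all even if and only if $(x,y)$ is an even pair in $G$. The antipath condition is automatic: in the complement $G'[B]$ is the edge $xy$ plus an isolated vertex, so the interior of any antipath with ends in $A$ is either a single vertex (length $2$) or the pair $\{x,y\}$, giving an antipath $s\tp x\tp y\tp t$ of length $3$. Such an antipath would require $s,t\in A$ with $s$ adjacent to $y$ but not $x$, $t$ adjacent to $x$ but not $y$, and $s$ adjacent to $t$; but after the subdivision every neighbour of $x$ and of $y$ is a degree-two subdivision vertex, and no two such vertices are adjacent, so no such $s,t$ exist. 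Hence the only possible obstruction to balancedness is an odd $x$--$y$ path, and the easy direction (even pair $\Rightarrow$ $(A,B)$ is a balanced skew partition) follows.

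The main obstacle is the converse: I must rule out \emph{spurious} balanced skew partitions, i.e.\ show that if $G'$ has any balanced skew partition at all then $(x,y)$ is an even pair. This is delicate for two reasons. First, any \emph{clique} cutset of size at least two automatically gives a balanced skew partition: two ends of an induced path of length $\geq 2$ lying in a clique would be adjacent, so the path condition is vacuous, and a clique makes the antipath condition vacuous as well. Thus I am forced to start from a Bienstock instance $(G,x,y)$ chosen to have no clique cutset and enough internal connectivity, and to add the gadget without creating one. Second, I must analyse \emph{all} skew cutsets of $G'$ and prove that, up to the harmless pendant vertices $c_1,c_2$, the only one that can be balanced is $B=\{x,y,p\}$, so that $G'$ admits a balanced skew partition exactly when the intended partition is balanced. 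This rigidity argument — certifying that the designed cutset is essentially the only balanced skew cutset — is where the real work lies; the parity bookkeeping of the first two steps is routine.

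Putting the two directions together, $G'$ has a balanced skew partition if and only if $(x,y)$ is an even pair in $G$, so the polynomial map $G\mapsto G'$ reduces even-pair recognition to BSP, and Bienstock's theorem yields the NP-hardness of BSP.
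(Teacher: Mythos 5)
Your construction defeats itself at its third ingredient. The moment you add the pendant vertex $c_1$, whose only neighbour is $x$, \emph{every} edge of $G'$ incident to $x$ becomes a clique cutset of size two: deleting $\{x,p\}$ (or $\{x,s\}$ for any subdivision vertex $s$ adjacent to $x$) isolates $c_1$ from the rest of $G'$. By your own correct observation, a clique cutset on at least two vertices is a balanced skew partition in \emph{any} graph: the path condition is vacuous because the two ends of the cutset are adjacent, and the antipath condition is vacuous because two consecutive interior vertices of an antipath would have to be non-adjacent inside a clique. So $G'$ admits a balanced skew partition for every instance $(G,x,y)$, the reduction always answers YES, and the claimed equivalence is false -- the hypothesis ``no clique cutset'' that you impose on the Bienstock instance is violated by your own gadget, not by the instance. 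A repair would have to disconnect $A$ without giving any component of $A$ an attachment contained in a single closed neighbourhood (for example, attach a private induced $x$--$y$ path of even length at least $4$, so that every component of $A$ meets neighbours of both $x$ and $y$ and the only small separator $\{x,y\}$ is a non-edge); you did not do this, and it would still need to be checked against the rest of the argument.

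Second, even granting a sound gadget, what you set aside as ``where the real work lies'' is not a final verification step -- it is the theorem. The converse requires proving that whenever $G$ has an odd induced $x$--$y$ path, \emph{no} partition of $V(G')$ whatsoever is a balanced skew partition, and a black-box even-pair instance gives you no structural control over the candidate skew cutsets of $G'$. (A warning sign you passed over: your intended cutset $\{x,y,p\}$ is itself a star cutset centred at $p$, the archetypal strong cutset; as recalled in Section~\ref{sec:structBergenoSP}, for Berge graphs a star cutset always yields a balanced skew partition, so any rigidity argument must exploit the odd holes created in the NO case and cannot be generic.) This is precisely why the proof in~\cite{nicolas:bsp} does not reduce from even-pair recognition as an abstract problem, but attaches the gadget to Bienstock's explicit 3-SAT construction~\cite{bienstock:evenpair}, whose triangle-freeness, degree bounds and rigid local structure make it possible to account for all potential balanced skew cutsets. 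As written, your proposal has a correct easy direction, a broken construction, and no proof of the direction that carries all the content.
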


Note that we are not able to prove that \ESPD\ is in NP or in
coNP. But using Theorem~\ref{th.th}, we give an $O(n^9)$-time algorithm
for \ESPD\ restricted to Berge graphs.  Let us explain how.

In 2002, Chudnovsky, Cornu\'ejols, Liu, Seymour and Vu\v
skovi\'c~\cite{chudnovsky.c.l.s.v:reco} gave an algorithm that
recognizes Berge graphs in time $O(n^9)$.  This algorithm may be used
to prove that, when restricted to Berge graphs, \ESPD\ is in NP.
Indeed, \an\  \even\  skew partition is a good certificate for \ESPD:
given a Berge graph and a partition $(A,B)$ of its vertices, one can
easily check that $(A,B)$ is a skew partition; to check that it is
\even, it suffices to add a vertex adjacent to every vertex of $B$, to
no vertex of $A$, and to check that this new graph is still Berge.

Proving that \ESPD\ is actually in P by a decomposition theorem uses a
classical idea, used for instance in~\cite{conforti.c.k.v:eh2} to
check whether a given graph has or not an even hole. First, solve
\ESPD\ for each class of basic graph. This can be done in time
$O(n^5)$. Note that bipartite graphs are the most difficult to handle
efficiently.  For them, we use an algorithm due to
Reed~\cite{reed:skewhist} for general skew partitions; and since
balanced and general skew partitions are the same thing in bipartite
graphs (see below), an algorithm follows.

\begin{lemma}[\cite{nicolas:bsp}]
  Let $G$ be a bipartite graph. Then $(A,B)$ is a skew partition of
  $G$ if and only if it is \an\ \even\ skew partition of $G$.
\end{lemma}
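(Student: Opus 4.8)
The plan is to establish the nontrivial implication only, since the converse is immediate: a \even\ skew partition is by definition a skew partition. So I would start from a skew partition $(A,B)$ of a bipartite graph $G$ with bipartition sides $X$ and $Y$, and fix a split $(A_1,A_2,B_1,B_2)$, which exists because $G[A]$ is disconnected and $G[B]$ is not anticonnected. The single observation that drives everything is that, since $G$ is triangle-free, the complete pair $(B_1,B_2)$ must straddle the bipartition: picking any $v\in B_2$, every vertex of $B_1$ is adjacent to $v$ and hence lies on the side opposite to $v$, and symmetrically; so, up to exchanging names, $B_1\subseteq X$ and $B_2\subseteq Y$.

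For the first balance condition I would take an induced path $P$ of length at least $2$ with both ends in $B$ and interior in $A$. Its two ends are non-adjacent, because the ends of an induced path of length at least $2$ are always non-adjacent; hence they cannot lie one in $B_1$ and one in $B_2$, which are complete to each other. So both ends lie in a common $B_i$, and therefore on the same side of the bipartition. A path joining two vertices on the same side of a bipartite graph has even length, which is exactly what is required.

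The antipath condition is the main obstacle, because the complement of a bipartite graph need not be bipartite, so the previous argument does not transpose directly; here one must pass to $\overline{G}$. Let $Q$ be an antipath with vertices $q_0,\dots,q_m$ in order, $m\geq 2$, with $q_0,q_m\in A$ and $q_1,\dots,q_{m-1}\in B$. Consecutive interior vertices are non-adjacent in $G$, so by the complete pair $(B_1,B_2)$ each such consecutive pair lies in a common $B_i$; chaining along $Q$, all interior vertices lie in one part, say $B_1\subseteq X$. Since $X$ is a stable set of $G$, these interior vertices are pairwise adjacent in $\overline{G}$; but $Q$ is an induced path of $\overline{G}$ and so is triangle-free, so it contains at most two pairwise adjacent vertices, forcing at most two interior vertices, that is $m\leq 3$. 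The value $m=2$ is even and harmless.

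To finish I would rule out $m=3$. In that case the interior vertices $q_1,q_2$ both lie in $X$, while $q_0q_2,q_1q_3\in E(G)$ (non-consecutive vertices of an antipath are adjacent in $G$), which forces both $q_0$ and $q_3$ into $Y$; but $q_0q_3\in E(G)$, since the ends of an antipath of length at least $2$ are adjacent in $G$, contradicting that $Y$ is stable. Hence $m=2$, the antipath has even length, and both balance conditions hold, so $(A,B)$ is \even.
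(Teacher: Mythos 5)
Your proof is correct and complete: the observation that the complete pair $(B_1,B_2)$ must lie in opposite sides of the bipartition, the parity argument for paths with both ends in a common $B_i$, and the reduction of antipaths to length at most $3$ (with length $3$ excluded because both ends would be forced into one stable side yet must be adjacent) together establish both balance conditions, and the converse is definitional. The document states this lemma without proof, deferring to the cited paper \cite{nicolas:bsp}, and your argument is the natural one along exactly those lines, so there is nothing to flag.
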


The following two lemmas show how to handle the other basic classes.
Note that the balanced skew partition is a self-complementary notion,
so the complements of the classes need no special treatment.

\begin{lemma}[\cite{nicolas:bsp}] 
  \label{l.dsg}
  A double split graph $G$ has exactly one skew partition and
  this skew partition is not \even. 
\end{lemma}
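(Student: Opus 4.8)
Write $\mathcal P=A\cup B$ and $\mathcal Q=C\cup D$. First I would record that $(\mathcal P,\mathcal Q)$ is indeed a skew partition: $G[\mathcal P]$ is the perfect matching $\{a_ib_i\}$, hence disconnected since $m\ge 2$, while $G[\mathcal Q]$ is the complement of the perfect matching $\{c_jd_j\}$ on $2n\ge 4$ vertices, hence not anticonnected since $n\ge 2$ (this is the partition singled out by the degree argument preceding the lemma). To see that it is not balanced, after possibly exchanging $c_1$ and $d_1$ I may assume the two edges between $\{a_1,b_1\}$ and $\{c_1,d_1\}$ are $a_1c_1$ and $b_1d_1$; then $c_1\tp a_1\tp b_1\tp d_1$ is an induced path of length $3$ (odd) with both ends in $\mathcal Q$ and interior in $\mathcal P$, which violates the parity requirement in the definition of a balanced skew partition.

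\textbf{Uniqueness, set-up and classification.} Let $(P,Q)$ be an arbitrary skew partition, with $(P_1,P_2)$ an anticomplete split of $P$ and $(Q_1,Q_2)$ a complete split of $Q$ (all four sets non-empty). The two guiding facts are: an edge of $G$ cannot have one end in $P_1$ and the other in $P_2$, and a non-edge cannot have one end in $Q_1$ and the other in $Q_2$. I would first locate $C\cup D$. Since $G[C\cup D]$ is a co-matching, its only non-adjacent pairs are the $\{c_j,d_j\}$, so $G[(C\cup D)\cap P]$ is connected unless it is a single such pair; as a connected set cannot meet both sides of an anticomplete split, this gives exactly three possibilities for $(C\cup D)\cap P$: it is empty (the canonical case), or it lies in one of $P_1,P_2$, or it is one split pair $c_j\in P_1,\ d_j\in P_2$.

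\textbf{Eliminating the two bad cases and concluding.} For the split-pair case I would use the disjoint matching edges: for each $i$ the edge $a_ib_i$ cannot cross $(P_1,P_2)$, and the adjacencies to $c_j\in P_1$, $d_j\in P_2$ then force at least one of $a_i,b_i$ into $Q$; these $\ge m\ge 2$ vertices of $(A\cup B)\cap Q$ are pairwise non-adjacent across distinct indices, hence all lie on one side, say $Q_1$, so that $Q_2\subseteq (C\cup D)\setminus\{c_j,d_j\}$. Picking $c_{j'}\in Q_2$ (with $j'\ne j$), its partner $d_{j'}$ also lies in $Q_2$, and every $Q_1$-representative is non-adjacent to exactly one of $c_{j'},d_{j'}$, producing a non-edge across $(Q_1,Q_2)$, a contradiction. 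For the one-sided case I would use that a vertex $c\in (C\cup D)\cap P_1$ is adjacent to one of $a_i,b_i$ for \emph{every} $i$; these neighbours must avoid $P_2$, the matching edges tie partners together, and combining this with the complete split $(Q_1,Q_2)$ of the cutset again yields a non-edge across $Q_1,Q_2$ (equivalently, it would make $G[P]$ connected). Hence $C\cup D\subseteq Q$. Since $\overline G$ is again a double split graph, in which $A\cup B$ now plays the role of the same-index non-edge pairs and $P$ the role of the non-anticonnected side, the identical statement applied to $\overline G$ gives $A\cup B\subseteq P$. As $V=(A\cup B)\cup(C\cup D)$, this forces $P=A\cup B$ and $Q=C\cup D$, so the skew partition is unique.

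\textbf{Where the work is.} The verification that $(\mathcal P,\mathcal Q)$ is a non-balanced skew partition and the classification of $(C\cup D)\cap P$ are routine. The real difficulty is eliminating the two non-canonical distributions, where a single $C\cup D$-pair straddles $(P_1,P_2)$ or where $C\cup D$ touches only one side of $P$: in neither case is the contradiction visible from the anticomplete side alone, and one must exploit the complete split $(Q_1,Q_2)$ of the cutset simultaneously with the exact ``two disjoint edges'' between each $\{a_i,b_i\}$ and $\{c_j,d_j\}$ and the hypotheses $m,n\ge 2$.
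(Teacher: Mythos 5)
Your proof is correct. Note that the document itself contains no proof of this lemma: it is stated with a citation to the original article, and the only related in-document material is the remark accompanying the definition of double split graphs (that $C\cup D$ is a non-balanced skew cutset, and that the partition $(A\cup B,C\cup D)$ is unique by a degree count). That remark establishes uniqueness of the double-split \emph{structure}, not uniqueness of \emph{skew partitions}, so it is strictly weaker than what you prove; hence there is no in-paper argument to compare against, and I checked yours on its own merits.

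All the steps hold up. The classification of $(C\cup D)\cap P$ is right (the complement of a matching is disconnected exactly when it is a single non-adjacent pair), the split-pair case is eliminated correctly, and the closing self-complementarity trick is legitimate: $(Q,P)$ is a skew partition of $\overline G$, which is again a double split graph with the two sides' roles exchanged, and your argument for $C\cup D\subseteq Q$ used only the double-split axioms, so it transfers verbatim. The one step you compress heavily is the one-sided case, but the ingredients you name do suffice. For the record, the chain is: $P_2\subseteq A\cup B$; picking $a_k\in P_2$, the vertex $c\in (C\cup D)\cap P_1$ cannot see $a_k$, so $cb_k\in E(G)$, whence $b_k\notin P_1\cup P_2$, i.e.\ $b_k\in Q$, say $b_k\in Q_1$; completeness of $(Q_1,Q_2)$ gives $Q_2\subseteq N(b_k)$, so $Q_2\subseteq C\cup D$ and $Q_2$ meets each pair $\{c_{j'},d_{j'}\}$ at most once; now for $x\in Q_2$ with partner $y$, we have $y\notin Q_1$ (non-adjacent to $x$), $y\notin Q_2$ ($b_k$ has only one neighbour per pair), $y\notin P_2$ (the case hypothesis), so $y\in P_1$; but then $b_kx\in E(G)$ forces $b_ky\notin E(G)$, hence $a_ky\in E(G)$ by the two-disjoint-edges axiom, an edge between $P_2$ and $P_1$ --- a contradiction. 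So in this case the final contradiction is a crossing edge in $P$ (or, in another branch, a non-edge across $(Q_1,Q_2)$); your parenthetical attributes it only to the cutset side, which is slightly imprecise but cosmetic.
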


\begin{lemma}[\cite{nicolas:bsp}]
  \label{butterequiv}
  Let $G$ be the line-graph of a bipartite graph. Suppose that $G$ has
  at least one edge and at least~5 vertices.  Then $G$ has \an\  \even\  skew
  partition if and only if $G$ has a star cutset.
\end{lemma}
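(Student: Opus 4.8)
The plan is to transfer everything to the bipartite graph $H$ and let its bipartition carry the parity bookkeeping. I would first set up two ingredients. By Theorem~\ref{th:HH}, since $H$ is triangle-free, $G=L(H)$ is both claw-free and diamond-free, so every maximal clique of $G$ is the set $K_v$ of edges of $H$ incident to a fixed vertex $v$. Second, there is the bijection between induced paths: an induced path $e_0\tp\cdots\tp e_m$ of $G$ corresponds to an induced path $v_0v_1\cdots v_{m+1}$ of $H$ with $e_i=v_iv_{i+1}$, so the $G$-path has \emph{even} length exactly when the $H$-path has \emph{odd} length. Because $H$ is bipartite, the parity of an $H$-path is decided solely by whether its two endpoints lie on the same side of the bipartition. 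This is the mechanism that converts the ``balanced'' condition into a side-of-the-bipartition statement.

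For the implication from a star cutset to a balanced skew partition, I would start from a star cutset and assume its centre $c$ lies in it; writing $c$ as an edge $pq$ of $H$, I would take $B$ to be the whole closed neighbourhood $N[c]$ whenever it is still a cutset, so that the interior of every path below avoids both $p$ and $q$. With $B_1=\{c\}$ and $B_2=B\sm\{c\}$ (a complete pair since $B\subseteq N[c]$) and $A_1,A_2$ two non-empty anticomplete unions of components of $G\sm B$, the skew partition is immediate and only balancedness remains. For an induced path with ends in $B$ and interior in $A$, the two end-edges are forced to be incident one to $p$ and one to $q$, so the endpoints of the associated $H$-path are exactly $p$ and $q$; since $pq\in E(H)$ these lie on opposite sides, the $H$-path is odd and the $G$-path even. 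For an antipath with ends in $A$ and interior in $B$, the same translation shows that an odd antipath would require an edge of $H$ joining two vertices on the same side, which is impossible. The residual case, where $N[c]$ is not a cutset, I would reduce to a clique cutset $B=K_v$ coming from a cut-vertex of $H$, where balancedness is vacuous because any two vertices of $K_v$ are adjacent and hence admit no induced path of length at least $2$ between them.

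For the converse, I would take a balanced skew partition with split $(A_1,A_2,B_1,B_2)$ and analyse the complete pair $(B_1,B_2)$. Claw-freeness forbids an independent triple inside either side, and diamond-freeness forces that if one side contains a non-edge then the other side is stable; together these leave essentially two shapes. In the first, one side is dominated by a single vertex $c$, whence $B\subseteq N[c]$ is a star; since $A_1$ is anticomplete to $A_2$ the set $B$ is automatically a cutset, giving the desired star cutset. In the second, $B$ induces a $C_4$ (an induced $4$-cycle, corresponding to a $4$-cycle $w_1w_2w_3w_4$ of $H$) with $B_1,B_2$ its two diagonals. Here balancedness must eliminate the shape: a vertex of $A$ attaching to the two opposite corners $w_1,w_3$ of the $H$-square produces, together with the two opposite edges, an induced antipath of length $3$ through $B$, which is odd. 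So a balanced skew partition cannot carry a genuine $C_4$ skew cutset, and the first shape is forced.

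The main obstacle I anticipate is exactly this $C_4$/non-star case in the converse: one must show, using only that $B$ separates $A_1$ from $A_2$ and that $H$ is bipartite, that the attachments of $A$ to an induced $C_4$ of $G$ always create an odd antipath (or odd path), so that balancedness is incompatible with a non-star skew cutset. The parity bijection together with the bipartition of $H$ is what makes each such configuration computable, while claw- and diamond-freeness keep the list of configurations finite; carrying out this finite but careful case check, together with the symmetric bookkeeping needed to select the right star cutset (and to handle the case $N[c]$ is not a cutset) in the forward direction, is where the real work lies.
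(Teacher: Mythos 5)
First, a remark on the comparison itself: this document never proves Lemma~\ref{butterequiv}; it is quoted from \cite{nicolas:bsp} without proof (only the general fact behind the forward direction is mentioned, in Section~\ref{sec:structBergenoSP}: any graph with an edge, at least five vertices and a star cutset has a balanced skew partition). So your attempt has to be judged on its own merits, and unfortunately its converse direction rests on a claim that is false. Your shape analysis is fine: since $G$ is claw-free and diamond-free, each of $B_1,B_2$ is a clique or a stable set of size at most two, so the skew cutset $B$ is a star or an induced $C_4$ with $B_1,B_2$ its diagonals. But your key assertion, that ``a balanced skew partition cannot carry a genuine $C_4$ skew cutset'', has a six-vertex counterexample. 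Let $H$ be the four-cycle $w_1w_2w_3w_4$ with pendant edges $w_1x$ and $w_3y$ at two opposite corners, and $G=L(H)$; write $e_i$ for the four cycle edges, $f=w_1x$, $g=w_3y$. Take $B=\{e_1,e_2,e_3,e_4\}$ with the diagonal split and $A_1=\{f\}$, $A_2=\{g\}$. This is a skew partition, and it is balanced \emph{vacuously}: $N(f)\cap B=\{e_1,e_4\}$ and $N(g)\cap B=\{e_2,e_3\}$ are edges of $G$, so no induced path of length at least~2 has both ends in $B$ and interior in $A$; and $f,g$ are non-adjacent, so no antipath of length at least~2 has both ends in $A$. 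Hence balancedness does not eliminate the $C_4$ shape, and no case-checking will establish the claim you formulated. What the converse actually requires is to show that whenever a $C_4$-shaped balanced skew cutset occurs, $G$ has a star cutset \emph{elsewhere}; in the example, $\{e_1,e_2,e_4\}\subseteq N[e_1]$ separates $f$ from $\{e_3,g\}$. Producing such a star cutset in general, from the way the components of $A$ attach to the four cycle edges, is the real content of this direction and is absent from your proposal.

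The forward direction also contains an unjustified step. Your main case ($B=N[c]$ is a cutset) is correct, and the parity translation through $H$ is done properly: the $H$-path associated to a path with ends in $N[c]$ and interior outside $N[c]$ must run from $p$ to $q$, hence is odd, hence the $G$-path is even; and bipartiteness kills odd antipaths through $N[c]$. But the residual case is not handled by ``a clique cutset $K_v$ coming from a cut-vertex of $H$''. If $H$ is a double star (adjacent centers $p,q$, each with two pendant neighbours), then $G=L(H)$ consists of two triangles glued at $c=pq$: it has the star cutset $\{c\}$, yet no closed neighbourhood $N[\cdot]$ is a cutset and no star $K_v$ of $H$ is a clique cutset of $G$. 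Here a balanced skew partition exists but must be built from a proper sub-star, e.g.\ $B=\{c,u\}$ with $u\in K_p$, $A=(K_p\setminus\{c,u\})\cup(K_q\setminus\{c\})$. This gap is fixable (indeed the whole forward direction is subsumed by the Zambelli-type lemma quoted in Section~\ref{sec:structBergenoSP}), but as written your argument does not cover it.
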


For a graph $G$ such that one of $G, \overline{G}$ has a 2-join, we
try to break $G$ into smaller blocks in such a way that $G$ has \an\
\even\ skew partition if and only if one of the blocks has one,
allowing us to run recursively the algorithm. And when a graph is not
basic and has no 2-join, we simply answer ``the graph has \an\ \even\
skew partition'', which is the correct answer because of the
Decomposition Theorem~\ref{th.1}. This blind use of decomposition is
not safe from criticism, but this will be discussed later.

Let us now explain why without further work, this idea fails to solve
\ESPD.  Building the blocks of a 2-join preserves existing \even\ skew
partitions, but some 2-joins can create \even\ skew partitions when
building the blocks carelessly.  For non-path 2-join, there is no
problem: sides of the 2-join can be replaced safely by sufficiently
long paths.  But for path 2-joins, there is a problem.  In the graph
represented in Fig.~\ref{figP3loose} on the left, we have to simplify
somehow the left part of the obvious path 2-join to build one of the
blocks.  The most reasonable way to do so seems to be replacing $X_1$
by a path of length~1.  But this creates a skew cutset: the black
vertices on the right. Of course, this graph is bipartite but one can
find more complicated examples based on the same template, and another
template exists.  These bad 2-joins will be described in more details
in Section~\ref{decth} and called~\emph{cutting 2-joins}\index{cutting
  2-join}\index{2-join!cutting}. All of them are path 2-joins.

\begin{figure}[h]
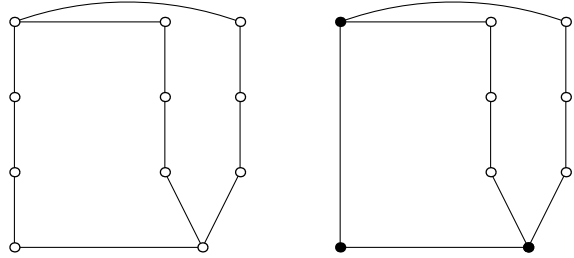
  
  \center
  \includegraphics{evenskew.13}\hspace{3em}\includegraphics{evenskew.14}
  \caption{Contracting a path creates a skew cutset\label{figP3loose}}
\end{figure}

Theorem~\ref{th.th} shows that cutting 2-joins are not necessary to
decompose Berge graphs.  A more general statement is proved, that
makes use of a new basic class and of a new kind of decomposition that
are quite long to describe. But an interesting corollary can be stated
with no new notions. By \emph{contracting a path
  $P$}\index{contracting!a path} that is the side of a proper path
2-join of a graph we mean delete the interior vertices of $P$, and
link the ends of $P$ with a path of length~1 or~2 according to the
original parity of the length of $P$.

\begin{theorem}[\cite{nicolas:bsp}]
  \label{th.case}
  Let $G$ be a Berge graph. Then either:
  \begin{itemize}
    \item $G$ is basic;
    \item one of $G, \overline{G}$ has a non-path proper 2-join;
    \item $G$ has no \even\ skew partition and exactly one of $G,
      \overline{G}$ (say $G$) has a proper path 2-join. Moreover, for
      every proper path 2-join of $G$, the graph obtained by
      contracting its path-side has no \even\ skew partition;
    \item $G$ has \an\  \even\  skew partition.
  \end{itemize}
\end{theorem}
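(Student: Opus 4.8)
The plan is to take the decomposition Theorem~\ref{th.1} as the skeleton and to refine its 2-join cases by controlling precisely how path 2-joins behave under contraction. Since a balanced skew partition is self-complementary, I would first settle the trivial alternatives: if $G$ has a balanced skew partition we are in the fourth outcome, and if $G$ is basic we are in the first. So assume from now on that $G$ is Berge, is not basic, and has no balanced skew partition. Under these hypotheses Theorem~\ref{th.1} gives that one of $G,\overline{G}$ has a proper 2-join, and the whole task reduces to analysing this 2-join and showing it can be chosen so as to yield either the second or the third outcome.

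Next I would split on the dichotomy path versus non-path. If one of $G,\overline{G}$ has a proper \emph{non-path} 2-join we are directly in the second outcome. Otherwise every proper 2-join of $G$ and of $\overline{G}$ is a path 2-join, and to reach the third outcome I must first establish that exactly one of $G,\overline{G}$ carries such a 2-join. Since $G$ is not basic it is not a hole, and it is noted above that then at most one side of any fixed 2-join is a path-side; a short lemma, whose verification I would reduce to the explicit description of path-sides, then excludes $G$ and $\overline{G}$ from simultaneously admitting proper path 2-joins. This fixes the graph -- say $G$ -- carrying the proper path 2-joins.

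The core of the argument is the remaining assertion of the third outcome: for \emph{every} proper path 2-join of $G$, contracting its path-side (replacing it by a path of length $1$ or $2$ matching the original parity) yields a graph with no balanced skew partition. Since a \emph{cutting} 2-join is exactly a path 2-join whose contraction manufactures a balanced skew partition, this is equivalent to showing that $G$ has no cutting path 2-join. The easy half is a lifting argument showing that any path 2-join avoiding the bad templates of Section~\ref{decth} is non-cutting: given a balanced skew partition of the contracted graph with split $(A_1', A_2', B_1', B_2')$, I would reinsert the full-length path-side and assign its interior vertices consistently. Because the path-side attaches to the rest of $G$ only through its two ends $a_1,b_1$, the interior vertices are free to be placed, so the transplant always gives a genuine skew partition of $G$; and the parity-preserving contraction guarantees that every induced path and antipath running through the short path keeps its length-parity when the path-side is restored, so balancedness survives -- contradicting our hypothesis on $G$. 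The bookkeeping amounts to enumerating the ways a skew cutset can meet the path-side together with the special sets $A_1,B_1$ and the attachments to the other side.

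The hard half, and the main obstacle, is that this lifting genuinely fails for cutting path 2-joins -- indeed it must, since cutting 2-joins exist, as illustrated by the templates around Figure~\ref{figP3loose} -- so the lifting alone cannot rule out a cutting 2-join and I must import the structural input that none is present here. The fact I would take from Theorem~\ref{th.th} and its analysis in Section~\ref{decth} is that a cutting path 2-join always forces one of the competing conclusions: a cutting path 2-join in a non-basic Berge graph with no balanced skew partition entails a proper non-path 2-join in $G$ or $\overline{G}$ (after absorbing the extended basic class and the new decomposition of Theorem~\ref{th.th} into the classical notions, which is legitimate once a balanced skew partition is known to be absent). Hence, in the situation of the third outcome, where non-path 2-joins have been excluded, no cutting path 2-join can occur, every proper path 2-join is non-cutting, and the easy half applies to all of them. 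This establishes that the four outcomes are exhaustive and that the finer claims of the third hold, completing the corollary; the real difficulty is thus not the bookkeeping of the lifting but importing from the $50$-page structure theorem the unavoidability of avoiding cutting 2-joins.
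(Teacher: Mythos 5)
Your skeleton does match the paper's: Theorem~\ref{th.case} is indeed obtained there as a corollary of Theorem~\ref{th.th} combined with the overlap study showing that contracting a non-cutting path 2-join creates no balanced skew partition. But the step you yourself single out as the crux is precisely where your plan breaks. The statement you ``import'' from Theorem~\ref{th.th} --- that a cutting path 2-join in a non-basic Berge graph with no balanced skew partition entails a proper non-path 2-join in $G$ or $\overline{G}$ --- is not what Theorem~\ref{th.th} says, and it does not follow from it by ``absorbing'' the new classes into the classical notions. Under your standing hypotheses (Berge, not basic, no balanced skew partition, no non-path proper 2-join), Theorem~\ref{th.th} leaves three live alternatives: one of $G, \overline{G}$ is path-cobipartite, is path-double split, or has a homogeneous 2-join. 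The absorption discussed after Theorem~\ref{th.th} converts each of these into ``has a proper path 2-join'', which gives the existence part of the third outcome but says nothing about contractions. To get the ``moreover'' clause you must verify, class by class, that \emph{every} proper path 2-join of such a graph contracts to a graph with no balanced skew partition. This is real work: contracting a path-cobipartite graph yields the complement of a bipartite graph, and since in bipartite graphs (hence, by self-complementarity of the notion, in their complements) every skew partition is balanced, you must in fact rule out \emph{all} skew partitions in the contraction; for homogeneous 2-joins, the definition only promises that the flat paths from $C$ to $D$ are sides of non-cutting 2-joins, not that every proper path 2-join of the graph is non-cutting. None of this appears in your proposal, and it is exactly the content of deriving Theorem~\ref{th.case} from Theorem~\ref{th.th}. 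Note also that ``cutting'' is a structural definition (types~1 and~2), so your working definition ``cutting $=$ contraction manufactures a balanced skew partition'' silently assumes the two implications whose proofs constitute the overlap study; the direction you dismiss as bookkeeping is described in Section~\ref{decth} as the main result of that study, and your transplant argument glosses over exactly the configurations (how a skew cutset of the contracted graph meets $a_1$, $b_1$, $A_2$, $B_2$) that make cutting 2-joins possible.

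A second genuine gap is the claim that exactly one of $G, \overline{G}$ has a proper path 2-join. You derive it from the remark that at most one side of a \emph{fixed} 2-join is a path-side, but that remark concerns the two sides of a single 2-join in a single graph; it says nothing about $G$ and $\overline{G}$ each admitting their own, unrelated, proper path 2-joins, and nothing in the mere shape of path-sides prevents a graph and its complement from both containing long flat paths. Ruling this out requires the standing hypotheses and an argument in the spirit of Lemma~\ref{l:recurseBerge} (where complements of blocks are shown to have no proper 2-join); as a bare statement about path-sides it is false, so it cannot be disposed of by the ``short lemma'' you describe.
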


The algorithm for detecting \even\ skew partitions is now easy to
sketch. Since the \even\ skew partition is a self-complementary
notion, we may switch from the graph to its complement as often as
needed. First check whether the input graph is basic, and if so look
directly for \an\ \even\ skew partition. Else, try to decompose along
non-path 2-joins (they preserve the existence of \even\ skew
partitions).  If there are none of them, try to decompose along path
2-joins (possibly, this creates \even\ skew partitions but does not
destroy them). At the end of this process, one of the leaves of the
decomposition tree has \an\ \even\ skew partition if and only if the
root has one. Note that \an\ \even\ skew partition in a leaf may have
been created by the contraction of a cutting 2-join since such 2-joins
do exist (we are not able to recognize all of them, it seems to be a
difficult task). But Theorem~\ref{th.case} shows that when such a bad
contraction occurs, the graph has anyway \an\ \even\ skew cutset
somewhere.  

All this runs in time $O(n^9)$.  The complexity is the same as the
recognition of Berge graphs but this is purely coincidental.  The
complexity bottleneck of our algorithm is the routine to search for a
non-path 2-join.  This takes $O(n^8)$, and speed it up to $O(n^5)$
would speed up the algorithm.  Speed it up further would be useless
because of the second bottleneck which is detecting a skew partition
in a bipartite graph in time $O(n^5)$.

Let us come back to the weak point of our recognition algorithm, which
is when it answers ``the graph has \an\ \even\ skew-partition'' using
blindly some decomposition theorem. This weakness is the reason why we
are not able to find explicitly \an\ \even\ skew partition when there
is one.  However, our result suggests that an explicit algorithm might
exist. The proof of Theorem~\ref{th.0} or Theorem~\ref{th.1} might
contain its main steps. However, we would like to point out that if
someone manage to read algorithmically the proof of Theorem~\ref{th.0}
or of Theorem~\ref{th.1}, (s)he will probably end up with an algorithm
that given a graph, either finds an odd hole/antihole, or certifies
that the graph is basic, or finds some decomposition. If the
decomposition found is not \an\ \even\ skew partition, the algorithm
will probably not certify that there is no \even\ skew partition in
the graph, and thus \ESPD\ will not be solved entirely. To solve it,
one will have to think about the detection of \even\ skew partitions
in basic graphs, and in graphs having a 2-join: this is what we are
doing here.  Thus an effective algorithm might have to use much of the
present work.

\section{A new decomposition theorem for Berge graphs}
\label{decth}

As stated in Section~\ref{algomotiv}, our main problem for the
detection of \even\ skew partitions is the possibility of path 2-joins
in Berge graphs. One could hope that these 2-joins are actually not
necessary to decompose Berge graphs. Theorem~\ref{th.2} indicates that
such a hope is realistic, but this theorem allows non-\even\ skew
partitions, so it is useless for our purpose. What we would like is to
prove something like Theorem~\ref{th.1} with ``non-path 2-join''
instead of ``2-join''. Let us call this statement our
\emph{conjecture}. A simple idea to prove the conjecture would be to
consider a minimum counter-example $G$, that is: a Berge graph,
non-basic, with no \even\ skew partition, and no non-path 2-join. Such
a graph must have a path 2-join by Theorem~\ref{th.1} (possibly after
taking the complement).  Here is why we need Theorem~\ref{th.1} in our
proof.  The idea is now to use this path 2-join to build a smaller
graph $G'$ that is also a counter-example, and this is a contradiction
which proves the conjecture.

So, given $G$ with its path 2-join, how can we build a smaller graph
that will have ``almost'' the same structure as $G$ ?  Obviously,
this can be done by contracting the path-side of the 2-join. Let us
call $G_c$ the graph that we obtain. It has to be proved that $G_c$ is
still a counter-example to the conjecture. But we know that this can
be false. Indeed, if the path 2-join of $G$ is cutting,
\an\ \even\ skew partition can be created in $G_c$, so $G_c$ is not a
counter-example. We need now to be more specific and to define cutting
2-joins.

\begin{figure}[ht]
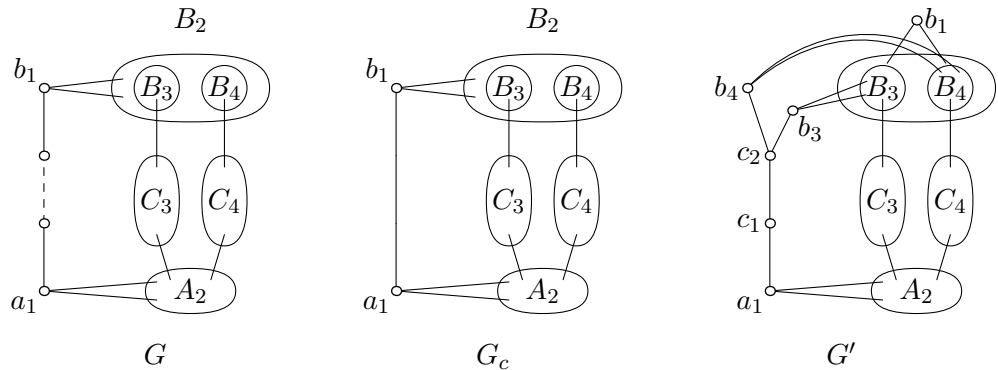

  \begin{center}
    \begin{tabular}{ccc}
      \includegraphics{evenskew.15}\rule{1em}{0ex}&
      \rule{1em}{0ex}\includegraphics{evenskew.19}\rule{1em}{0ex}&
      \rule{1em}{0ex}\includegraphics{evenskew.16}\\
      \rule{0em}{3ex}$G$&\rule{0em}{3ex}$G_c$&\rule{0em}{3ex}$G'$
    \end{tabular}
  \end{center}
  \caption{A graph $G$ with a cutting 2-join of type~1 and the
    associated graph $G'$\label{fig:cut1}}
\end{figure}

A 2-join is said to be \emph{cutting of type~1}\index{cutting
  2-join!type 1}\index{2-join!cutting of type 1} if it has a split
$(X_1,$ $X_2,$ $A_1,$ $B_1,$ $A_2,$ $B_2)$ such that:

  \begin{enumerate}
  \item
    $(X_1, X_2)$ is a path 2-join with path-side $X_1$; 

  \item
    $G[X_2 \setminus A_2]$ is disconnected.
  \end{enumerate}

  In Fig.~\ref{fig:cut1} the structure of a graph $G$ with a cutting
  2-join of type~1 is represented. Obviously, after contracting the
  path-side into an edge $a_1b_1$, we obtain a graph $G_c$ with a
  potentially-\even\ skew cutset $\{a_1, b_1\} \cup A_2$ that
  separates $C_3\cup B_3$ from $C_4 \cup B_4$. So, how can we find a
  graph smaller than $G$ that is still a counter-example to the
  conjecture? Our idea is to build the graph $G'$, also represented in
  Fig.~\ref{fig:cut1}.  If we count vertices, $G'$ is not ``smaller''
  than $G$, but in fact, by ``minimum counter-example'' we mean
  counter-example with a minimum number of path 2-joins. We can prove
  that $G'$ is smaller in this sense (this is not trivial because we
  have to prove that path 2-joins cannot be created in $G'$, but
  clearly, one path 2-join is destroyed in $G'$). We can also prove
  that $G'$ is a counter-example which gives the desired
  contradiction. This is the first case of the proof of
  Theorem~\ref{th.th}.  Note that finding the right graph $G'$ has
  been long and painful (in particular the strange little hat $b_1$
  that is so useful).  But once $G'$ is found, the proof is not really
  difficult.  Yet, it is quite long (about 10 pages) and relies on
  many easy lemmas on parities of various paths and antipaths.
  Proving that $G'$ has no non-path 2-join is the main difficulty and
  is very tedious.  A good exercise is to prove that $G'$ is Berge.

\begin{figure}
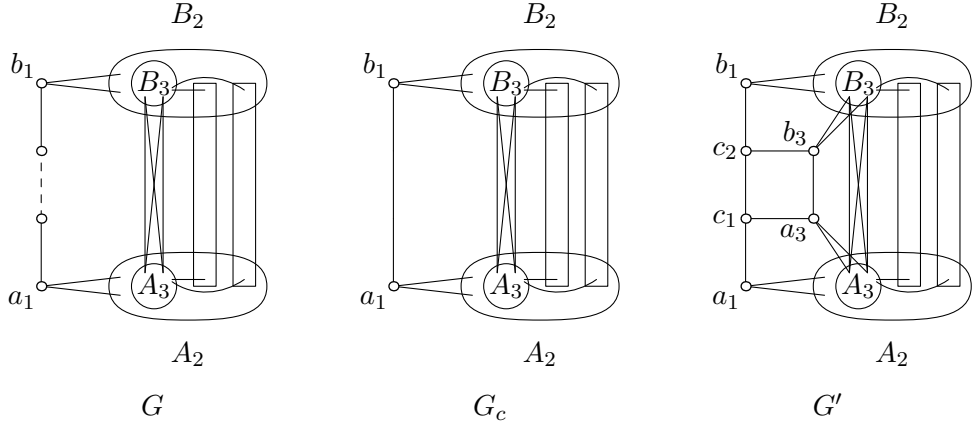

  \begin{center}
    \begin{tabular}{ccc}
      \includegraphics{evenskew.17}\rule{1em}{0ex}&
      \rule{1em}{0ex}\includegraphics{evenskew.20}\rule{1em}{0ex}&
      \rule{1em}{0ex}\includegraphics{evenskew.18}\\
      \rule{0em}{3ex}$G$&\rule{0em}{3ex}$G_c$&\rule{0em}{3ex}$G'$
    \end{tabular}
  \end{center}
  \caption{A graph $G$ with a cutting 2-join of type~2 and the
    associated graph $G'$\label{fig:cut2}}
\end{figure}

Unfortunately, there is another kind of path 2-join that can create
\even\ skew partitions when contracting the path-side.  A 2-join is
said to be \emph{cutting of type~2}\index{cutting 2-join!type
  2}\index{2-join!cutting of type 2} if it has a split $(X_1,$ $X_2,$
$A_1,$ $B_1,$ $A_2,$ $B_2)$ such that there exist sets $A_3$, $B_3$
satisfying:

  \begin{enumerate}
  \item \label{cond.first}
    $(X_1, X_2)$ is a path 2-join with path-side $X_1$; 
  \item
    $A_3 \neq \emptyset$, $B_3 \neq \emptyset$, $A_3 \subset A_2$,
    $B_3 \subset B_2$;
  \item
    $A_3$ is complete to $B_3$;
  \item
    every outgoing path from $B_3\cup \{a_1\}$ to $B_3 \cup\{a_1\}$
    (resp. from $A_3\cup \{b_1\}$ to $A_3 \cup\{b_1\}$) has even
    length;
  \item \label{cond.penul} every antipath of length at least~2 with
    its ends outside  $B_3\cup \{a_1\}$ (resp. $A_3 \cup\{b_1\}$)
    and its interior in $B_3 \cup\{a_1\}$ (resp. $A_3\cup \{b_1\}$)
    has even length;
  \item \label{cond.disconnect}
    $G \setminus (X_1 \cup A_3 \cup B_3)$ is disconnected.
  \end{enumerate}

In Fig.~\ref{fig:cut2}, the structure of a graph $G$ with a cutting
2-join of type~2 is represented. After contracting the path-side into an
edge $a_1b_1$, we obtain a graph $G_c$ with \an\ \even\ skew cutset $\{a_1,
b_1\} \cup A_3 \cup B_3$. It is ``skew'' because $a_1\cup B_3$ is
complete to $b_1 \cup A_3$, and it is \even\ by the parity constraints
in the definition.  How can we find a graph smaller than $G$ that is
still a counter-example to the conjecture ?  Again, we find a graph
$G'$, also represented in Fig.~\ref{fig:cut2}.  Again, we prove that $G'$ is a
smaller counter-example, a contradiction. This is the second case of
the proof of Theorem~\ref{th.th}. 

\label{why}

Note that we are not able to prove something like
Theorem~\ref{th.case} with ``skew partition'' instead of ``\even\ skew
partition''. Following our frame, we would have to give up the
conditions on the parity of paths in the definition of cutting 2-joins
of type~2. But then we would not be able to prove that $G'$ is Berge,
making the whole proof collapse. Also we would like to explain a
little twist in our proof. In fact Case~2 is not ``the 2-join is
cutting of type 2'', but something slightly more general: ``the 2-join
is such that there are sets $A_3$, $B_3$ satisfying the
items~\ref{cond.first}--\ref{cond.penul} of the definition of cutting
2-joins of type~2''. Indeed, in Case~2, we do not need to use the last
item. And this has to be done, since in Case~3, at some place where we
need a contradiction, we find a 2-join that is almost of type~2, that
satisfies items~\ref{cond.first}--\ref{cond.penul}, and not the last
one.

A 2-join is said to be \emph{cutting}\index{cutting
  2-join}\index{2-join!cutting} if it is either cutting of type~1 or
cutting of type~2. So, in our main proof we can get rid of cutting
2-joins as explained above.  In \cite{nicolas:bsp}, we also study how
a 2-join and \an\ \even\ skew partition can overlap in a Berge
graph. The main result of this study  says that when contracting the
path side of a non-cutting 2-join, no \even\ skew partition is
created. So if we come back to our main proof, we can at last build
$G'$ ``naturally'', that is by contracting the path-side of the 2-join
in $G$. This is the third case of the proof. Transforming $G$ into
$G'$ will not create a \even\ skew partition as just mentioned.  We
need to prove also that no 2-join is created.  This might happen but
then, an analysis of the adjacencies in $G$ shows that $G$ has a
2-join that is almost cutting of type~2 (``almost'' because the last
item of the definition of cutting 2-joins of type~2 does not
hold). This is a contradiction since we are in Case~3.  But the
contraction may create other nasty things.

For instance suppose that $G$ is obtained by subdividing an edge of
the complement of a bipartite graph. Then, contracting the path-side
of the path 2-join of $G$ yields the complement of a bipartite
graph. This is why we have to view path-cobipartite graphs as basic in
our main theorem. Note that Chudnovsky also has to consider these
graphs as basic in her Theorem~\ref{th.2}.
 
Suppose now that $G$ is obtained from a double split graph $H$ by
subdividing matching edges of $H$ into paths of odd length.  Such a
graph has a path 2-join whose contraction may yield a basic graph,
namely a double split graph. Let us define this more precisely.

We call \emph{flat path of a graph $H$}\index{flat!path} any path
whose interior vertices all have degree~2 in $H$ and whose ends have
no common neighbors outside the path.  A \emph{path-double split
  graph}\index{path-double split graph} is any graph $H$ that may be constructed as follows.  Let
$m,n \geq 2$ be integers. Let $A = \{a_1, \dots, a_m\}$, $B= \{b_1,
\dots, b_m\}$, $C= \{c_1, \dots, c_n\}$, $D= \{d_1, \dots, d_n\}$ be
four disjoint sets. Let $E$ be another possibly empty set disjoint
from $A$, $B$, $C$, $D$. Let $H$ have vertex set $A\cup B \cup C \cup
D \cup E$ and edges in such a way that:

\begin{itemize}
\item for every  vertex $v$ in $E$, $v$ has degree~2 and there exists
  $i \in \{1, \dots m\}$ such that $v$ lies on a
  path of odd length from $a_i$ to $b_i$; 
\item 
  for $1 \leq i \leq m$, there is a unique path of odd length
  (possibly~1) between $a_i$ and $b_i$ whose interior is in $E$.
  There are no edges between $\{a_i, b_i\}$ and $\{a_{i'}, b_{i'}\}$
  for $1\leq i < i' \leq m$;
\item 
  $c_j$ is non-adjacent to $d_j$ for $1 \leq j \leq n$. There are all
  four edges between $\{c_j, d_j\}$ and $\{c_{j'}, b_{j'}\}$ for
  $1\leq j < j' \leq n$;
\item
  there are exactly two edges between $\{a_i, b_i\}$ and $\{c_j,
  d_j\}$ for $1\leq i \leq m$, $1 \leq j \leq n$ and these two
  edges are disjoint.
\end{itemize}

Let us come back to our main proof. Adding path-cobipartite graphs and
path-double split graphs as basic graphs in our conjecture is not
enough.  Because we need to prove that when contracting a path 2-join,
no 2-join in the complement is created, and that the counter-example
is not transformed into the complement of the line-graph of a
bipartite graph. And, unfortunately, both things may happen. But a
careful analysis of these phenomenons shows that such graphs have a
special structure that we must add to our conjecture: a
\emph{homogeneous 2-join}\index{homogeneous
  2-join}\index{2-join!homogeneous} is a partition of $V(G)$ into six
non-empty sets $(A,$ $B,$ $C,$ $D,$ $E,$ $F)$ such that:

\begin{itemize} 
\item 
  $(A, B, C, D, E, F)$ is a homogeneous pair;
\item 
  every vertex in $E$ has degree~2 and belongs to a flat path of odd
  length with an end in $C$, an end in $D$ and whose interior is in
  $E$;
 \item 
   every flat path outgoing from $C$ to $D$ and whose interior is in
   $E$ is the path-side of a non-cutting proper 2-join of $G$.
\end{itemize}

\noindent Now, we have defined all the new basic classes and
decompositions that we need.  Our main result is the following.

\begin{theorem}[\cite{nicolas:bsp}]
  \label{th.th}
  Let $G$ be a Berge graph. Then either $G$ is basic, or one of $G,
  \overline{G}$ is a path-cobipartite graph,  or one of $G,
  \overline{G}$ is a path-double split graph, or one of $G,
  \overline{G}$ has a homogeneous 2-join, or one of $G, \overline{G}$
  has a non-path proper 2-join, or $G$ has \an\  \even\  skew partition.
\end{theorem}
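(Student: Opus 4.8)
The goal is to prove Theorem~\ref{th.th}, which refines the Chudnovsky decomposition (Theorem~\ref{th.1}) by replacing arbitrary path 2-joins with non-path 2-joins, at the cost of adding three new basic/decomposable outcomes: path-cobipartite graphs, path-double split graphs, and homogeneous 2-joins. My plan is to argue by contradiction on a minimal counter-example, where ``minimal'' is measured not by the number of vertices but by the number of proper path 2-joins. So I would let $G$ be a Berge graph that is not basic, not path-cobipartite or the complement of such, not a path-double split graph or its complement, has no homogeneous 2-join in $G$ or $\overline{G}$, has no non-path proper 2-join in $G$ or $\overline{G}$, and has no balanced skew partition; and among all such graphs I would choose $G$ with the fewest proper path 2-joins. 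The first key step is to invoke Theorem~\ref{th.1}: since $G$ is Berge, non-basic, and has no balanced skew partition, one of $G,\overline{G}$ has a proper 2-join. As $G$ has no non-path proper 2-join (in either $G$ or $\overline{G}$), this 2-join must be a \emph{path} 2-join; up to complementation we may assume $G$ itself has a proper path 2-join with path-side $X_1$.

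The heart of the argument is a careful case analysis on the nature of this path 2-join, exactly along the lines already described in Section~\ref{decth}. The plan is to split into three cases according to whether the 2-join is cutting of type~1, cutting of type~2 (or more precisely satisfies conditions~\ref{cond.first}--\ref{cond.penul} of that definition), or neither. In Case~1 (cutting of type~1) and Case~2 (cutting of type~2), I would construct the auxiliary graph $G'$ pictured in Figures~\ref{fig:cut1} and~\ref{fig:cut2}: rather than simply contracting the path-side, $G'$ is obtained by a subtler surgery (including the auxiliary vertex $b_1$ in type~1), designed so that one path 2-join is genuinely destroyed while no new one is created. For each such $G'$ I would need to verify four things: that $G'$ is Berge, that $G'$ inherits all the forbidden properties (no balanced skew partition, not basic, not path-cobipartite, etc.), that $G'$ has strictly fewer path 2-joins than $G$, and hence that $G'$ is itself a smaller counter-example, contradicting minimality. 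In Case~3 (the 2-join is not cutting), I would use the overlap analysis of 2-joins and balanced skew partitions from~\cite{nicolas:bsp} to show that contracting the path-side ``naturally'' to form $G'$ creates neither a balanced skew partition nor a new 2-join in $G$ or $\overline{G}$; if a 2-join \emph{were} created, the adjacency analysis forces $G$ to have a 2-join satisfying conditions~\ref{cond.first}--\ref{cond.penul}, i.e.\ we are really in Case~2, a contradiction.

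The two delicate points, which I expect to be the main obstacles, are the following. First, in every case one must show that $G'$ is Berge — this is precisely where the parity conditions in the definition of cutting 2-joins of type~2 are indispensable, and it is the reason the theorem cannot be stated with ``skew partition'' in place of ``balanced skew partition.'' Tracking odd holes and odd antiholes through the surgery requires the collection of easy-but-numerous parity lemmas on paths and antipaths alluded to in the text. Second, and most laborious, is proving that $G'$ has \emph{no} non-path proper 2-join (and none in $\overline{G'}$): this is the step flagged as ``the main difficulty and very tedious,'' and it is where the bulk of the roughly fifty pages of~\cite{nicolas:bsp} is spent.

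Finally, the new basic classes appear not in the main induction itself but as necessary sinks for degenerate contractions: when $G$ is a subdivision of the complement of a bipartite graph, contracting the path-side yields the complement of a bipartite graph, forcing path-cobipartite graphs to be admitted as basic; when $G$ arises from a double split graph by subdividing matching edges into odd paths, contraction yields a double split graph, forcing path-double split graphs; and when contraction in $G$ would create a 2-join in $\overline{G}$ or turn $G$ into the complement of a line-graph of a bipartite graph, the adjacency structure is exactly that of a homogeneous 2-join. Each of these exceptional configurations must be detected before the contraction step and peeled off as a terminal outcome of the theorem, so that in the remaining generic situation the minimality argument goes through and yields the desired contradiction. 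Collecting the four cases, the contradiction in each establishes that no counter-example exists, proving the theorem.
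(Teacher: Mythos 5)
Your proposal takes essentially the same route as the paper's proof: a counter-example minimal in the number of proper path 2-joins, Theorem~\ref{th.1} to extract a path 2-join after possible complementation, the three-case analysis on cutting 2-joins of types~1 and~2 (including the refinement of Case~2 to conditions~\ref{cond.first}--\ref{cond.penul} only, so that Case~3 can reach its contradiction), and the new basic classes arising as the degenerate outcomes of contraction. You also correctly identify the two main technical burdens --- proving $G'$ is Berge via the parity constraints, and proving no non-path proper 2-join is created --- so nothing essential is missing from the plan.
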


Of course, in the proof sketched above, the graph $G$ is a
counter-example to Theorem~\ref{th.th}, not to the original
conjecture: ``Theorem~\ref{th.1} where path 2-joins are not
allowed''. So we need to be careful that our construction of graphs
$G'$ in cases~1, 2, 3 does not create a homogeneous 2-join and does not
yield a path-double split graph or a path-cobipartite graph. This
might have happened, and we would then have had to classify the
exceptions by defining new basic classes and decompositions, and this
would have lead us to a perhaps endless process. Luckily this process
ends up after just one step.

\begin{figure}[h]
  \begin{center}
    \includegraphics{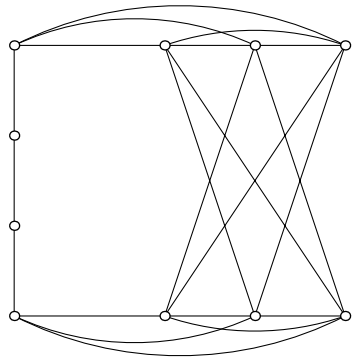}
    \caption{A path-cobipartite graph\label{fig:contrex1}}
  \end{center}
\end{figure}

\begin{figure}[h]
  \begin{center}
    \includegraphics{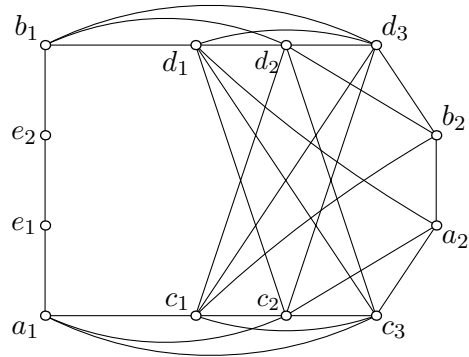}
    \caption{A path-double split graph\label{fig:contrex2}}
  \end{center}
\end{figure}

\begin{figure}[h]
  \begin{center}
    \includegraphics{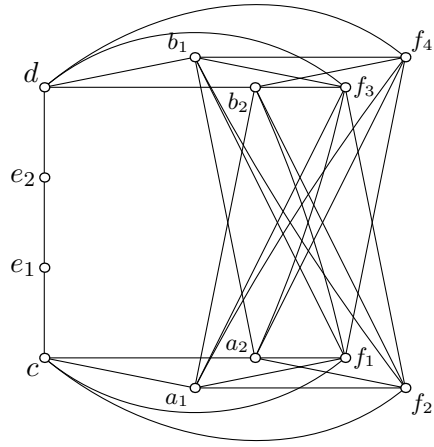}
    \caption{A graph that has a homogeneous 2-join $(\{a_1, a_2\},$
      $\{b_1, b_2\},$ $\{c\}, \{d\},$ $\{e_1, e_2\},$ $\{f_1, f_2,$ $f_3,
      f_4\})$\label{fig:contrex3}}
  \end{center}
\end{figure}

Theorem~\ref{th.th} generalizes Theorems~\ref{th.0},~\ref{th.1}
and~\ref{th.2}: path-cobipartite graphs may be seen either as graphs
having a proper path 2-join (Theorems~\ref{th.0} and~\ref{th.1}) or as
a new basic class (Theorem~\ref{th.2}). Path-double split graphs may
be seen as graphs having a proper path 2-join (Theorems~\ref{th.0}
and~\ref{th.1}) or as graphs having a non-\even\ skew partition
(Theorem~\ref{th.2}). And graphs having a homogeneous 2-join may be
seen as graphs having a homogeneous pair (Theorems~\ref{th.2} and
perhaps~\ref{th.0}) or as graphs having a proper path 2-join
(Theorems~\ref{th.1} and perhaps~\ref{th.0}). Formally all these
remarks are not always true: it may happen in special cases that
path-cobipartite graphs and path-double split graphs have no proper
2-join because the ``proper'' condition fails. But such graphs are
easily established to be basic or to have \an\ \even\ skew partition.

Note also that our new basic classes and decomposition yield
counter-examples to reckless extensions of Theorems~\ref{th.1}
and~\ref{th.2}.  The three graphs represented in
Fig.~\ref{fig:contrex1}, \ref{fig:contrex2}, \ref{fig:contrex3} are
counter-examples to our original conjecture, that is the extension of
Theorem~\ref{th.1} where path 2-joins are not allowed. Path-double
split graphs yield counter-examples to Theorem~\ref{th.2} with
``\even\ skew partition'' instead of ``skew partition'' (see
Fig.~\ref{fig:contrex2}). Graphs with a homogeneous 2-join yield
counter-examples to Theorem~\ref{th.2} where homogeneous pairs are not
allowed (see Fig.~\ref{fig:contrex3}). This shows that
Theorems~\ref{th.1},~\ref{th.2} are in a sense best possible, and that
to improve them, we need to do what we have done: add more basic
classes and decomposition. The three graphs represented in
Fig.~\ref{fig:contrex1}, \ref{fig:contrex2}, \ref{fig:contrex3} also
show that path cobipartite graphs, path-double split graphs and
homogeneous 2-join must somehow appear in our theorem, that is also in
a sense best possible.

This work suggests an algorithm for \ESPD\ with no reference to a new
decomposition theorem. Indeed, the graph $G'$ represented in
Fig.~\ref{fig:cut1} (resp. in Fig.~\ref{fig:cut2}) is a good candidate
to serve as a block of a cutting 2-joins of type~1 (resp. of
type~2). The fact that $G'$ is bigger than $G$ is not really a
problem, since the number of path 2-joins in a graph can be an
ingredient of a good notion of size. So, an algorithm might try to
deal with path 2-joins by constructing the appropriate block when the
2-join is recognized to be cutting. In fact this was our original idea
but it fails: we are not able to recognize cutting 2-joins of type~2.
To do this, we would have to guess somehow the sets $A_3, B_3$. But
this seems to be exactly the problem of detecting \even\ skew
partitions, so we are sent back to our original question. Perhaps an
astute recursive call to the algorithm would finally bypass this
difficulty, at the possible expense of a worse running time. Anyway,
we prefer to proceed as we have done, since a new decomposition for
Berge graphs is valuable in itself.

Theorem~\ref{th.case} gives a structural description of Berge graphs
that have no \even\ skew partitions: these graphs can be decomposed
along 2-joins (and complements of 2-join) till reaching basic graphs.
This could be used to solve algorithmic problems for the class of
Berge graphs with no \even\ skew partitions (together with the Berge
graphs recognition algorithm~\cite{chudnovsky.c.l.s.v:reco}, our work
solves the recognition in $O(n^9)$).  This class has an unusual
feature in the field of perfect graphs: it is not closed under taking
induced subgraphs. Theorem~\ref{th.case} also gives a structural
information on every Berge graph: it can be decomposed in a first step
by using only \even\ skew partitions, and in a second step by using
only 2-joins, possibly in the complement.

A strange feature of the algorithm is that it does not work for
general skew partitions in Berge graphs.  Because as explained above,
we are not able to prove Theorem~\ref{th.case} with ``skew partition''
instead of ``\even\ skew partition''.  Rather than a failure, we
consider this as a further indication that \even\ skew partition is a
relevant decomposition for Berge graphs.

\section{Decomposing Berge graphs with no skew partitions and no
  homogeneous pairs}
\label{sec:structBergenoSP}

\newcommand{\Cl}[2]{{\cal C}^{\text{\scriptsize\sc #1}}_{\text{\scriptsize\sc #2}}}

Theorem~\ref{th.th} could be used to devise optimization algorithms
for Berge graphs with no balanced skew partitions.  But these graphs
may have homogeneous 2-joins; recall that they are a mixture of path
2-joins and homogeneous pairs, both decompositions that we do not know
how to handle.  Note that the graph represented in
Figure~\ref{fig:contrex3} is uniquely decomposable by a path 2-join or
a homogeneous pair.  The aim of this section is to show that excluding
homogeneous pairs gives a class with a very useful structure theorem.
So, call $\Cl{Berge}{no cutset}$ the class of Berge graphs with no
balanced skew partition and no homogeneous pair.

The following theorem is an obvious corollary of Theorem~\ref{th.th}.
This theorem uses non-path 2-joins and, thanks to them, allow to solve
the four classical optimization problems: maximum clique, maximum
stable set, coloring and coloring of the complement.  Note that using
2-joins for optimization seems to be something new, despite folklore
rumors saying that it should be possible.  The sequel shows that to
use 2-joins, many technicalities have to be dealt with.

Call $\Cl{Berge}{basic}$ the class of bipartite, line-graphs of
bipartite, path-cobipartite, path-double split graphs and the
complements of all these graphs.

\begin{theorem}
  \label{th.3}
  If $G$ is in $\Cl{Berge}{no cutset}$, then either $G$ is in
  $\Cl{Berge}{basic}$ or one of $G, \overline{G}$ has a proper
  non-path 2-join.
\end{theorem}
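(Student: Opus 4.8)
=== PROOF PROPOSAL ===

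The plan is to derive Theorem~\ref{th.3} as a direct corollary of the heavy machinery already built, namely Theorem~\ref{th.th}. Let $G$ be a graph in $\Cl{Berge}{no cutset}$, so $G$ is Berge, has no balanced skew partition, and has no homogeneous pair. Theorem~\ref{th.th} hands us six mutually exclusive outcomes: either $G$ is basic, or one of $G, \overline{G}$ is path-cobipartite, or one of $G, \overline{G}$ is a path-double split graph, or one of $G, \overline{G}$ has a homogeneous 2-join, or one of $G, \overline{G}$ has a non-path proper 2-join, or $G$ has a balanced skew partition. The last outcome is immediately ruled out by hypothesis, so the work reduces to massaging the remaining five into the two-way alternative of the statement. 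The basic outcome and the path-cobipartite and path-double split outcomes all feed directly into membership in $\Cl{Berge}{basic}$, which is defined precisely as the class of bipartite, line-graphs of bipartite, path-cobipartite, path-double split graphs together with all their complements. The non-path proper 2-join outcome is already exactly the second alternative of the statement.

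The one outcome that does not fit verbatim is the homogeneous 2-join. Here is where the no-homogeneous-pair hypothesis must do its job. Recall that a homogeneous 2-join is by definition a partition $(A, B, C, D, E, F)$ of $V(G)$ that is in particular a homogeneous pair, augmented with the degree-2 flat-path structure on $E$. So the first step is simply to observe that if $G$ (or $\overline{G}$) has a homogeneous 2-join, then $G$ (or $\overline{G}$) has a homogeneous pair. Since the homogeneous pair is a self-complementary notion in the sense relevant here — a homogeneous pair of $G$ corresponds to one of $\overline{G}$ — the assumption that $G$ has no homogeneous pair excludes this outcome for both $G$ and $\overline{G}$. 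Thus the homogeneous-2-join branch cannot occur at all for graphs in $\Cl{Berge}{no cutset}$, and we are left with only the branches that map cleanly into the desired dichotomy.

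Concretely I would organize the proof as a short case analysis keyed to the outcomes of Theorem~\ref{th.th}. First I would state that by Theorem~\ref{th.th} one of the six outcomes holds. Then I would eliminate the balanced-skew-partition outcome by the defining hypothesis of $\Cl{Berge}{no cutset}$, and eliminate the homogeneous-2-join outcome by noting it would produce a homogeneous pair, again contradicting the hypothesis. For the surviving outcomes I would observe that ``$G$ basic'', ``$G$ or $\overline{G}$ path-cobipartite'', and ``$G$ or $\overline{G}$ path-double split'' each place $G$ in $\Cl{Berge}{basic}$ by the very definition of that class, while ``$G$ or $\overline{G}$ has a non-path proper 2-join'' is the second alternative. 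This exhausts the cases and proves the theorem.

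I do not expect any genuine obstacle: all the difficulty was absorbed into the proof of Theorem~\ref{th.th}, and the remaining content is bookkeeping. The only point requiring a moment of care is making the implication ``homogeneous 2-join $\Rightarrow$ homogeneous pair'' explicit, together with checking that excluding homogeneous pairs in $G$ suffices to exclude them in $\overline{G}$ as well; this is where I would be slightly more verbose than elsewhere, since it is the single place where the hypotheses are actually consumed rather than merely matched against a definition.
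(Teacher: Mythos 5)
Your proposal is correct and is exactly the paper's argument: the paper derives Theorem~\ref{th.3} as an ``obvious corollary'' of Theorem~\ref{th.th}, discarding the balanced-skew-partition outcome by hypothesis, the homogeneous-2-join outcome because it contains a homogeneous pair (a self-complementary notion), and folding the remaining outcomes into $\Cl{Berge}{basic}$ (note that double split graphs are path-double split graphs with $E=\emptyset$, so the ``basic'' outcome of Theorem~\ref{th.th} does land in $\Cl{Berge}{basic}$). The only nitpick is that the outcomes of Theorem~\ref{th.th} are not claimed to be mutually exclusive, but your argument never uses that, so nothing breaks.
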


Let $\Cl{parity}{}$ the class of these graphs such that all holes have
same parity.  We need to define the blocks of decomposition with
respect to a 2-join.  They are built by replacing each side of the
2-join by a path and the lemma below shows that for graphs in
$\Cl{parity}{}$ there exists a unique way to choose the parity of that
path.

\begin{lemma}
  \label{l.2jAiBi}
  Let $G$ be a graph in $\Cl{parity}{}$ and $(X_1,X_2,A_1,B_1,A_2,B_2)$
  be a split of a proper 2-join of $G$.  Then for $i=1,2$, all the
  paths with an end in $A_i$, an end in $B_i$ and interior in $C_i$
  have the same parity.
\end{lemma}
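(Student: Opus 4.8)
The plan is to prove the statement for $i=1$; the case $i=2$ follows by exchanging the roles of $X_1$ and $X_2$. The key idea is to glue an arbitrary path on the side $X_1$ to a single fixed reference path on the side $X_2$ so as to produce a hole, and then to exploit the hypothesis that in a graph of $\Cl{parity}{}$ all holes have the same parity.

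First I would produce the reference path. Since the $2$-join is proper, hence connected, every component of $G[X_2]$ meets both $A_2$ and $B_2$; so $G[X_2]$ contains a path from $A_2$ to $B_2$, and choosing one, call it $Q$, of minimum length guarantees that $Q$ is induced and that its interior lies in $C_2$. Write $a_2\in A_2$ and $b_2\in B_2$ for its ends and $q$ for its length.

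The heart of the argument is the following claim: if $P$ is any path of $G$ with an end $a_1\in A_1$, an end $b_1\in B_1$, interior in $C_1$, and length $p$, then $V(P)\cup V(Q)$, together with the two edges $a_1a_2$ and $b_1b_2$ (which exist because $A_1$ is complete to $A_2$ and $B_1$ to $B_2$), induces a hole of length $p+q+2$. To verify that there is no chord, I would use the defining property of a $2$-join that the only edges between $X_1$ and $X_2$ join $A_1$ to $A_2$ or $B_1$ to $B_2$. In particular $C_1$ has no neighbour in $X_2$ and $C_2$ has no neighbour in $X_1$, so no interior vertex of $P$ sees anything on the $Q$-side and conversely; moreover $a_1b_2\notin E(G)$ and $a_2b_1\notin E(G)$, since the pairs $A_1,B_2$ and $A_2,B_1$ carry no edges. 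The only within-side candidate chords are $a_1b_1$ and $a_2b_2$, and each is excluded whenever the corresponding path has length at least $2$, because $P$ and $Q$ are induced; when a path has length exactly $1$ the corresponding pair is a genuine cycle edge. As $A_1\cap B_1=\emptyset$ and $A_2\cap B_2=\emptyset$ we have $a_1\neq b_1$ and $a_2\neq b_2$, so both paths have length at least $1$, whence the cycle has length $p+q+2\geq 4$ and is indeed a hole.

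Finally I would conclude. Let $P$ and $P'$ be two paths as in the statement for $i=1$, of lengths $p$ and $p'$. Applying the claim with the \emph{same} fixed $Q$ yields two holes of lengths $p+q+2$ and $p'+q+2$. Because $G\in\Cl{parity}{}$, these holes have the same parity, so $p+q+2\equiv p'+q+2\pmod 2$, hence $p\equiv p'\pmod 2$; that is, $P$ and $P'$ have the same parity. The main (and essentially only) subtlety is the chord verification showing that the glued cycle is induced, and this is immediate from the rigid edge pattern of a $2$-join; the other point to watch is the existence of the reference path $Q$, which is exactly what connectedness of a proper $2$-join guarantees.
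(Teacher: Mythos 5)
Your proof is correct, and it is essentially the standard argument for this fact: the paper states Lemma~\ref{l.2jAiBi} without proof (it is imported from the companion paper \cite{nicolas.kristina:2-join}), where the proof proceeds exactly as you do. Fixing one induced $A_2$-to-$B_2$ path $Q$ with interior in $C_2$ (which exists and is chordless by connectedness of the proper 2-join plus minimality), gluing any $A_1$-to-$B_1$ path $P$ to $Q$ via the edges $a_1a_2$ and $b_1b_2$ to obtain a hole of length $|P|+|Q|+2$, and then invoking that all holes of $G$ have the same parity is precisely the intended reasoning, and your chord verification covers all the necessary cases.
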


\label{sec.defpiece}
Let $G$ be a graph and $(X_1, X_2, A_1, B_1, A_2, B_2)$ be a split of
a proper 2-join of $G$.  Let $k_1, k_2 \geq 1$ be integers.  The
\emph{blocks of decomposition}\index{blocks of
  decomposition!w.r.t. 2-join} of $G$ with respect to $(X_1, X_2)$ are
the two graphs $G^{k_1}_1, G^{k_2}_2$ that we describe now.  We obtain
$G^{k_1}_1$ by replacing $X_2$ by a \emph{marker path}\index{marker path} $P_2$, of
length $k_1$, from a vertex $a_2$ complete to $A_1$, to a vertex $b_2$
complete to $B_1$ (the interior of $P_2$ has no neighbor in $X_1$).
The block $G_2^{k_2}$ is obtained similarly by replacing $X_1$ by a
marker path $P_1$ of length $k_2$.  We say that $G^{k_1}_1$ and
$G^{k_2}_2$ are {\em parity-preserving} if $G$ is in $\Cl{parity}{}$,
for $i=1,2$ and for a path $Q_i$ from $A_i$ to $B_i$ whose
intermediate vertices are in $C_i$ (and such a path exists since
$(X_1,X_2)$ is connected), the marker path $P_i$ has the same parity
as $Q_i$.  Note that by Lemma~\ref{l.2jAiBi}, our definition does not
depend on the choice of a particular $Q_i$.

The following lemma shows that when decomposing a graph in
$\Cl{Berge}{no cutset}$, a unique complementation is possibly needed
at the beginning of the process.

\begin{lemma}[with Vu\v skovi\'c \cite{nicolas.kristina:2-join}]
  \label{l:recurseBerge}
  Let $G\in \Cl{Berge}{no cutset}$ and let $(X_1, X_2)$ be a proper
  non-path 2-join of $G$.  Let $G^{k_1}_1$ and $G^{k_2}_2$ be
  parity-preserving blocks of decomposition of $G$ w.r.t.\ $(X_1,
  X_2)$ where $3\leq k_1, k_2 \leq 4$.  Then $G^{k_1}_1$ and
  $G^{k_2}_2$ are in $\Cl{Berge}{no cutset}$.  Moreover,
  $\overline{G^{k_1}_1}$ and $\overline{G^{k_2}_2}$ have no proper
  2-join.
\end{lemma}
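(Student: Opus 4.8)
The statement to prove is Lemma~\ref{l:recurseBerge}: if $G\in \Cl{Berge}{no cutset}$ and $(X_1,X_2)$ is a proper non-path 2-join, then the parity-preserving blocks $G^{k_1}_1, G^{k_2}_2$ (with $3\le k_1,k_2\le 4$) are again in $\Cl{Berge}{no cutset}$, and moreover their complements have no proper 2-join.

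The plan is to verify, one by one, that each defining property of the class $\Cl{Berge}{no cutset}$ is inherited by the blocks: being Berge, having no balanced skew partition, and having no homogeneous pair. First I would show each block is Berge. This is a parity argument: any hole or antihole of $G^{k_1}_1$ either avoids the marker path $P_2$ entirely (hence already lives in $G$) or traverses it, in which case it enters through $A_1$-$a_2$ and leaves through $b_2$-$B_1$; because the blocks are parity-preserving, $P_2$ has the same parity as a genuine $A_2$-$B_2$ path $Q_2$ of $G$ through $C_2$, so replacing $P_2$ by $Q_2$ yields a hole/antihole of $G$ of the same length and parity. Since $G$ is Berge this forces the block to be Berge. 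The freedom $3\le k_1,k_2\le 4$ (allowing both a length-3 and a length-4 marker) is exactly what guarantees a marker of the correct parity is available while keeping $|P_i|\ge 3$ so that no short configuration is accidentally created; I would use $\Cl{parity}{}$ and Lemma~\ref{l.2jAiBi} to justify that the chosen parity is well defined.

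Next I would handle the two cutsets. For homogeneous pairs and balanced skew partitions the strategy is the contrapositive: assume a block, say $G^{k_1}_1$, has a balanced skew partition (resp.\ a homogeneous pair), and manufacture one in $G$, contradicting $G\in\Cl{Berge}{no cutset}$. The key observation is that the marker path $P_2$ consists of low-degree vertices with a very rigid attachment ($a_2$ complete to $A_1$, $b_2$ complete to $B_1$, interior anticomplete to $X_1$), so any cutset or box structure of the block interacts with $P_2$ in only a few controlled ways. When $P_2$ meets the cutset/boxes, I would trace how the partition restricts to $X_1\cup\{a_2,b_2\}$ and then re-expand $X_2$ in place of $P_2$, checking that the relevant paths and antipaths keep the same parity (again via the parity-preserving choice) so that a balanced skew partition stays balanced, and that the homogeneity conditions are preserved when a whole side of the 2-join is substituted for the marker. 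The non-path and proper hypotheses on $(X_1,X_2)$ are what prevent the degenerate situations (a pure path side) where this transfer could fail.

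The last claim, that $\overline{G^{k_1}_1}$ and $\overline{G^{k_2}_2}$ have no proper 2-join, I expect to be the main obstacle, and it is genuinely the point of fixing $k_i\ge 3$. Here I would argue directly about the structure of a block: in $G^{k_1}_1$ the marker $P_2$ is a long induced path of degree-$2$ vertices, and a 2-join in the complement corresponds to a very constrained bipartition of the edges; the presence of this long flat marker, together with the fact that $X_1$ is \emph{not} itself a path side (since $(X_1,X_2)$ is a non-path proper 2-join, so the ``real'' content sits on the $X_1$ side), rules out a second proper 2-join of $\overline{G^{k_1}_1}$. The delicate part will be enumerating how the two sides of a hypothetical complement-2-join can split the marker vertices $a_2,P_2^{\circ},b_2$ versus $X_1$, and showing each case either contradicts properness, forces a skew partition or homogeneous pair back in $G$, or reproduces the original 2-join $(X_1,X_2)$ rather than a new one. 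I would lean on the rigidity of the marker's neighborhood and on the already-established Berge-ness and cutset-freeness of the block to close these cases.
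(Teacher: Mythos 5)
Your overall skeleton---Berge-ness by a parity argument, then transferring any balanced skew partition or homogeneous pair of a block back to $G$, then the complement claim---has the right shape, and the Berge part is essentially the standard argument (one small slip: a hole or antihole can also meet the marker only in $a_2$ or $b_2$ without traversing it, in which case one substitutes a vertex of $A_2$ or $B_2$ rather than a whole path; odd antiholes of length at least $7$ cannot meet the marker interior at all, for degree reasons). For context: this document does not prove the lemma at all; it is imported from \cite{nicolas.kristina:2-join}, where it is in turn assembled largely from the machinery of \cite{nicolas:bsp}.

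The first genuine gap is that you treat the preservation of ``no balanced skew partition'' as a controlled local case analysis around the rigid marker path. It is not local. To transfer a balanced skew partition of $G^{k_1}_1$ to $G$ you must re-expand the marker into all of $X_2$ and then verify that the resulting skew partition of $G$ is \emph{balanced}. Balancedness quantifies over \emph{all} induced paths with ends in the cutset and interior in the other side, and over all antipaths the other way around, including those whose interiors run through $X_2$; the parity-preserving choice of the marker only records the parity of $A_2$-to-$B_2$ paths through $C_2$ and says nothing about these other paths and antipaths. Controlling them needs global parity lemmas for Berge graphs (Roussel--Rubio-type arguments) and a full analysis of how 2-joins and skew partitions overlap---precisely the long technical core of \cite{nicolas:bsp}, which the present document describes as tens of pages, not a routine check. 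No shortcut through induced subgraphs is available either, since the class is explicitly not closed under taking induced subgraphs; and the place where ``non-path'' is truly needed is exactly here (the cutting 2-joins, which do create balanced skew partitions in blocks, are all path 2-joins), a point your plan only gestures at.

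The second gap is the ``moreover'' claim. Your plan---enumerate how a proper 2-join of $\overline{G^{k_1}_1}$ could split the marker vertices---heads into another hard interaction analysis (2-joins of a graph versus 2-joins of its complement, in the complement of which the marker is an antipath whose rigidity buys little), and it misses the simple handle that settles the claim. The block $G^{k_1}_1$ manifestly \emph{has} a proper path 2-join, namely $(X_1, V(P_2))$ with the marker as path-side; this is where $3\leq k_1$ and the properness of $(X_1,X_2)$ are used. Since the block is Berge with no balanced skew partition (by the earlier parts), one then invokes the result of the type proved in \cite{nicolas:bsp}: a Berge graph in which both the graph and its complement have proper 2-joins has a balanced skew partition. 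Hence $\overline{G^{k_1}_1}$ can have no proper 2-join. Your plan contains no equivalent of this step, and your remark that ``$X_1$ is not itself a path side'' points in the wrong direction: what matters is that the marker side of the block \emph{is} one.
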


But we need more.  Indeed, when trying to solve an optimization
problem with 2-joins, the idea is to build the 2 blocks and to ask
questions recursively to the blocks.  Usually, for at least one block,
two questions have to be asked (see for instance how we handle maximum
cliques in the next section).  So, even with a linear number of
steps, the number of questions to ask becomes exponential.  A
classical idea to bypass this problem is the notion of
\emph{extreme}\index{extreme!2-join}
2-join.   

We say that $(X_1,X_2)$ is an {\em extreme 2-join} if for some $i\in
\{ 1,2 \}$ and all $k \geq 3$ the block of decomposition $G_i^{k}$ has
no proper non-path 2-join.  We say that $X_i$ is an \emph{extreme
  side}\index{extreme!side} of such a 2-join.  Figure~\ref{fige2j} shows that graphs in
general do not have an extreme 2-join, but as we now show, graphs with
no star cutset do.  Having a star-cutset implies having a
balanced skew partition (there are a few trivial exceptions: graphs
with no edges and graphs on less than 5 vertices).  This was noticed
by Zambelli~\cite{zambelli:these}, a proof can be found
in~\cite{nicolas:bsp}.  So, what follows applies to $\Cl{Berge}{no
  cutset}$ since graphs from this class have no star cutste.

 \begin{figure}[h!]
  \begin{center}
    \psset{xunit=6.0mm,yunit=6.0mm,radius=0.1,labelsep=0.1}
    \def\uputnode(#1,#2)#3#4{\Cnode(#1,#2){#3}\nput{ 90}{#3}{\small$#4$}}
    \def\dputnode(#1,#2)#3#4{\Cnode(#1,#2){#3}\nput{270}{#3}{\small$#4$}}
    \def\lputnode(#1,#2)#3#4{\Cnode(#1,#2){#3}\nput{180}{#3}{\small$#4$}}
    \def\rputnode(#1,#2)#3#4{\Cnode(#1,#2){#3}\nput{0}{#3}{\small$#4$}}
    \def\sputnode(#1,#2)#3#4{\Cnode(#1,#2){#3}\nput{45}{#3}{\small$#4$}}
    \begin{pspicture}(15,9)

      \dputnode(3,1){b1'}{}
      \uputnode(5,1){b2'}{}
      \uputnode(2,2){w}{}
      \lputnode(6,2){z}{}
      \sputnode(1,3){w1}{}
      \rputnode(3,3){b1}{}
      \dputnode(5,3){b2}{}
      \dputnode(7,3){z1}{}
      \dputnode(1,5){x1}{}

      \dputnode(3,5){a1'}{}
      \uputnode(5,5){a2'}{}
      \uputnode(7,5){y1}{}
      \lputnode(2,6){x}{}
      \sputnode(6,6){y}{}
      \rputnode(3,7){a1}{}
      \dputnode(5,7){a2}{}

      \rput(4,8){\rnode{g}{$G$}}

      \dputnode(11,1){B1'}{}
      \uputnode(10,2){W}{}
      \sputnode(9,3){W1}{}
      \rputnode(11,3){B1}{}
      \dputnode(13,3){B2}{}
      \dputnode(9,5){X1}{}

      \dputnode(11,5){A1'}{}
      \uputnode(15,5){Y1}{}
      \lputnode(10,6){X}{}
      \sputnode(14,6){Y}{}
      \rputnode(11,7){A1}{}
      \dputnode(13,7){A2}{}

      \rput(12,8){\rnode{h}{$H$}}

      \ncline{x}{x1}
      \ncline{x}{a1}
      \ncline{x1}{w1}
      \ncline{w}{w1}
      \ncline{w}{b1'}
      \ncline{w1}{a1'}
      \ncline{x1}{b1}
      \ncline{a1'}{b1}

      \ncline{y}{a2}
      \ncline{y}{y1}
      \ncline{y1}{z1}
      \ncline{z}{z1}
      \ncline{z}{b2'}
      \ncline{b2}{y1}
      \ncline{a2'}{z1}
      \ncline{a2'}{b2}

      \ncline{X}{X1}
      \ncline{X}{A1}
      \ncline{W}{W1}
      \ncline{W}{B1'}
      \ncline{Y}{A2}
      \ncline{B2}{Y1}
      \ncline{A2}{A1}
      \ncline{A2}{A1'}
      \ncline{B2}{B1}
      \ncline{B2}{B1'}

      \psset{linewidth=0.8mm}

      \ncline{a2}{a1}
      \ncline{a2'}{a1}
      \ncline{a2}{a1'}
      \ncline{a2'}{a1'}
      \ncline{b2}{b1}
      \ncline{b2'}{b1}
      \ncline{b2}{b1'}
      \ncline{b1'}{b2'}

      \ncline{Y}{Y1}
      \ncline{A1'}{B1}
      \ncline{W1}{A1'}
      \ncline{X1}{W1}
      \ncline{X1}{B1}     

    \end{pspicture}
\end{center}
\caption{\label{fige2j}Graph $G$ that has a star cutset, but does not
  have an extreme proper non-path 2-join. $G$ has a proper
  non-path 2-join represented with bold lines, and all proper
  non-path 2-joins are equivalent to this one. Both of the blocks of
  decomposition are isomorphic to graph $H$, and $H$ has a proper
  non-path 2-join whose edges are represented with bold lines. 
}
\end{figure}

Let $(X_1,X_2)$ be a proper non-path 2-join of a graph $G$.  We say
that $(X_1,X_2)$ is a {\em minimally-sided proper non-path 2-join}
if for some $i\in \{ 1,2 \}$, the following holds: for every proper
non-path 2-join $(X_1',X_2')$ of $G$, neither $X_1' \subsetneq X_i$
nor $X'_2 \subsetneq X_i$ holds.  We call $X_i$ a {\em minimal side}
of this minimally-sided 2-join.  Note that minimally-sided proper
non-path 2-joins exist in any graph that admits a proper non-path
2-join.

By $\Cl{}{no sc}$ we mean the class of graphs with no star cutset. 

\begin{lemma}[with Vu\v skovi\'c \cite{nicolas.kristina:2-join}]
  \label{k4}
  Let $(X_1, X_2, A_1, B_1, A_2, B_2)$ be a split of a minimally-sided
  proper non-path 2-join of a graph $G$, with $X_1$ being a minimal
  side.  Assume that $G$ and all the blocks of decomposition of $G$
  w.r.t.\ $(X_1,X_2)$ whose marker paths are of length at least 3, all
  belong to $\Cl{}{no sc}$.  Then $(X_1,X_2)$ is an extreme 2-join and
  $X_1$ is an extreme side.
\end{lemma}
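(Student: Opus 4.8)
We need to show that a minimally-sided proper non-path 2-join $(X_1,X_2)$ (with $X_1$ the minimal side) is in fact an extreme 2-join with $X_1$ an extreme side, under the hypothesis that $G$ and all blocks of decomposition with marker paths of length $\geq 3$ have no star cutset. By definition, being extreme with $X_1$ an extreme side means that for all $k\geq 3$, the block $G_1^k$ (obtained by replacing $X_2$ with a marker path of length $k$) has no proper non-path 2-join. So the goal reduces to proving the following implication: if $G_1^k$ had a proper non-path 2-join, we could manufacture from it a proper non-path 2-join of $G$ whose minimal side is strictly contained in $X_1$, contradicting minimality.

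**The plan.** The plan is to argue by contradiction. Suppose $(Y_1,Y_2,A_1',B_1',A_2',B_2')$ is a split of a proper non-path 2-join of some block $G_1^k$, $k\geq 3$. First I would locate the marker path $P_2$ of $G_1^k$ (the path of length $k$ from $a_2$ to $b_2$ that replaced $X_2$) relative to this 2-join. The key structural fact to establish is that the marker path, being a long flat path whose interior vertices all have degree $2$, must lie essentially on one side of $(Y_1,Y_2)$ — more precisely, its interior cannot be split across the two sides in a way compatible with the 2-join structure, because a 2-join forces the two sides to interact only through the complete bipartite pieces $A_i'$ to $A_j'$ and $B_i'$ to $B_j'$. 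The use of the no-star-cutset hypothesis on $G_1^k$ is exactly what rules out the degenerate possibility that the 2-join of $G_1^k$ ``hides'' inside the marker path (a short path would otherwise create spurious 2-joins; this is why the hypothesis is imposed on all blocks with marker length $\geq 3$, and why $k\geq 3$ is needed). I expect that after this localization one shows $P_2$ is contained in one side, say $Y_2$, together with the attachment vertices $a_2,b_2$.

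**Transferring the 2-join back to $G$.** Once $P_2\subseteq Y_2$ is established, I would undo the marker substitution: replace $P_2$ by the original side $X_2$ of the 2-join of $G$, distributing $A_2,B_2$ according to where $a_2,b_2$ sat. The set $Y_1$ (which is disjoint from the interior of $P_2$, hence is a genuine subset of $X_1$) becomes one side of a 2-join of $G$, and $(Y_2\setminus P_2)\cup X_2$ becomes the other. I would check that the new partition of $V(G)$ really is a proper non-path 2-join: connectedness and substantiality are inherited from those of the 2-join of $G_1^k$ together with the properness of $(X_1,X_2)$, and the non-path condition survives because the side containing the full $X_2$ cannot be a single path (as $(X_1,X_2)$ was a non-path 2-join of $G$). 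The decisive point is that $Y_1\subseteq X_1$, and in fact $Y_1\subsetneq X_1$ because $X_1$ was replaced only by the shorter marker, so $|Y_1| < |X_1|$ or at least $Y_1$ is a proper subset — this strict containment contradicts the assumed minimality of $X_1$ as a minimal side.

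**The main obstacle.** The hard part will be the localization step: proving rigorously that a proper non-path 2-join of $G_1^k$ cannot cut through the interior of the marker path $P_2$, and cannot use a short initial segment of $P_2$ to play the role of a degenerate path-side. This is where the no-star-cutset hypothesis and the bound $k\geq 3$ must be invoked carefully, presumably through a case analysis on which of the sets $A_2',B_2',C_2'$ the vertices $a_2,b_2$ and the interior of $P_2$ fall into, ruling out each splitting configuration by exhibiting either a star cutset in $G_1^k$ or a violation of the 2-join edge constraints. I would handle the boundary cases (where $a_2$ or $b_2$ is an attachment vertex of the new 2-join) separately, since these are the configurations most likely to produce a spurious short path-side and are exactly the ones the hypotheses are designed to eliminate. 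The remaining verifications — that the reconstructed partition of $G$ satisfies all clauses of ``proper non-path 2-join'' — are routine bookkeeping once the localization is in hand.
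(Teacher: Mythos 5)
Your skeleton --- assume some block $G_1^k$ has a proper non-path 2-join $(Y_1,Y_2)$, push it back into $G$, and contradict the minimality of $X_1$ --- is the right one (it is also the strategy of the proof in the cited paper; the present document only states the lemma). But the two steps you actually commit to are wrong as stated, and they sit exactly where the content of the lemma lies. The localization claim is not true in the form you need it: a proper non-path 2-join of $G_1^k$ need not have $V(P_2)$ inside one side. What dies cheaply (the degree-2 interior of $P_2$ forces singleton attachment sets, isolated components, or a path-side) is any 2-join crossing an \emph{interior} edge of $P_2$ --- and the no-star-cutset hypothesis plays no role there, contrary to your assertion. What survives are the straddling configurations in which the interior of $P_2$ lies in $Y_2$ while $a_2$ (and possibly $b_2$) lies in $Y_1$ as a \emph{singleton} attachment set, with $A_1$ and $B_1$ split across both sides; these satisfy every clause of ``proper non-path 2-join'' and cannot be eliminated. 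For them your transfer recipe breaks: ``distributing $A_2,B_2$ according to where $a_2,b_2$ sat'' would move $A_2$ (or $B_2$) into the $Y_1$-side, which in general is not a 2-join of $G$ at all (vertices of $A_2$ have uncontrolled edges into the rest of $X_2$), and it also destroys $Y_1\subseteq X_1$, hence the minimality contradiction. The correct move in these cases is the opposite one: keep all of $X_2$ on the big side and take $S=Y_1\cap X_1$ as the small side, with attachment sets $S\cap A_1$ and $S\cap B_1$ facing $A_2$ and $B_2$.

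That redrawn 2-join is where the no-star-cutset hypothesis is genuinely needed, at a point your proposal files under ``routine bookkeeping'': $(S,V(G)\setminus S)$ is a 2-join, but properness can fail, because a component of $G[S]$ might meet $A_1$ and miss $B_1$. This is exactly what the hypothesis forbids: such a component would make $N[a_2]$ (or the single vertex $a_2$, when the component lies inside $A_1$) a star cutset of the block $G_1^k$. Likewise, substantiality and non-path-ness of the redrawn 2-join are not ``inherited'' formally; they hold because their failure would make $G_1^k[Y_1]$ an induced path from $a_2$ to $b_2$ whose interior avoids the attachment sets, i.e.\ $(Y_1,Y_2)$ would be a path 2-join. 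The same observation repairs your strictness argument, which as written is incoherent: $Y_1\subsetneq X_1$ has nothing to do with cardinalities (it is $X_2$, not $X_1$, that is replaced by the marker); it holds because $Y_1=X_1$ forces $Y_2=V(P_2)$, making $(Y_1,Y_2)$ a path 2-join of the block. Finally, keep Figure~\ref{fige2j} in mind: it shows a graph with a star cutset whose (essentially unique, hence minimally-sided) proper non-path 2-join is not extreme. So any correct proof must exhibit precisely where the star-cutset hypothesis bites; a proof that invokes it only to kill 2-joins hiding inside the marker path --- cases that die anyway --- cannot be completed.
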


Since the classical algorithm to find a
2-join~\cite{conforti.c.k.v:eh2} actually finds a minimally sided
2-join, we have an algorithm to detect extreme 2-joins.  But again,
this is not enough.  Because in the sequel, we need to use special
blocks with respect to a 2-join (not defined yet).  These blocks are
not class preserving.  So, to use them, we use the following trick: we
first decompose with the classical parity-preserving block.  We keep
track of all marker paths. After that, we reprocess the decomposition
tree: we replace each marker path with the right block.  But to do
this, we need marker paths to keep disjoint.  This motivates the
following definition and lemma.

When ${\cal M}$ is a collection of vertex-disjoint flat paths, a
2-join $(X_1, X_2)$ is \emph{{$\cal
    M$}-independent}\index{independent!{$\cal M$}-independent} if for
every path $P$ from $\cal M$ we have either $V(P) \subseteq X_1$ or
$V(P) \subseteq X_2$.

\begin{lemma}[with Vu\v skovi\'c \cite{nicolas.kristina:2-join}]
  \label{k23}
  Let $(X_1,X_2,A_1,B_1,A_2,B_2)$ be a split of a minimally-sided
  proper non-path 2-join of a graph $G$, with $X_1$ being a minimal
  side.  Assume that $G$ and all the blocks of decomposition of $G$
  w.r.t.\ $(X_1,X_2)$ whose marker paths are of length at least 3, all
  belong to $\Cl{}{no sc}$.  Let ${\cal M}$ be a set of vertex-disjoint
  flat paths of length at least~3 of $G$.  If there exists a path $P
  \in {\cal M}$ such that $P \cap A_1 \neq \emptyset$ and $P \cap A_2
  \neq \emptyset$, then let $A_1'=A_2$, and otherwise let $A_1'=A_1$.
  If there exists a path $P \in {\cal M}$ such that $P \cap B_1 \neq
  \emptyset$ and $P \cap B_2 \neq \emptyset$, then let $B_1'=B_2$, and
  otherwise let $B_1'=B_1$.  Let $X_1'=X_1 \cup A_1' \cup B_1'$ and
  $X_2'=V(G) \setminus X_1'$.  Then the following hold:
  \begin{enumerate}
  \item\label{i:k23con} $(X_1',X_2')$ is a proper non-path 2-join
    of $G$.
  \item\label{i:k23indep} $(X'_1, X'_2)$ is ${\cal M}$-independent.
  \item\label{i:k23extr} $(X_1',X_2')$ is an extreme 2-join of $G$ and
    $X'_1$ is an extreme side of this 2-join.
  \end{enumerate}
\end{lemma}

\section{Cliques and stable sets in Berge graphs with no skew
  partitions and no homogeneous
  pairs}
\label{sec:colorBergenoSP}

Before seeing how the machinery above is used to optimize, let us give
an NP-hardness result showing that the 2-join is usually not a good
tool to compute maximum stable sets.  This is just to show that all
the longcuts\footnote{A \emph{longcut} is the opposite of a shortcut}
taken below have to be taken.

\subsection*{NP-hardness}

We define a class $\cal C$ of graph for which computing a maximum
stable set is NP-hard.  The interesting feature of class $\cal C$ is
that all graphs in $\cal C$ are decomposable along extreme 2-joins
into one bipartite graph and several gem-wheels where a
\emph{gem-wheel}\index{gem-wheel} is any graph made of an induced
cycle of length at least 5 together with a vertex adjacent to exactly
four consecutive vertices of the cycle.  Note that a gem-wheel is a
line-graph (of a cycle with one chord).  Our NP-completeness result
(proved jointly with Guyslain Naves) shows that being able to
decompose along extreme 2-joins is not enough in general to compute
stables sets. This suggests that being Berge is essential for
computing stable sets along 2-joins and that the inequalities used
below capture some features of Berge graphs.

Here, \emph{extending} a flat path $P = p_1\tp \cdots \tp p_k$ of a
graph means deleting the interior vertices of $P$ and adding three
vertices $x, y, z$ and the following edges: $p_1x$, $xy$, $yp_k$,
$zp_1$, $zx$, $zy$, $zp_k$.  By extending a graph $G$ we mean
extending all paths of $\cal M$ where $\cal M$ is a set of disjoint
flat paths of length at least 3 of $G$.  Class $\cal C$ is the class
of all graphs obtained by extending 2-connected bipartite graphs.
From the definition, it is clear that all graphs of $\cal C$ are
decomposable along extreme proper non-path 2-joins.  One leaf of the
decomposition tree will be the underlying bipartite graph.  All the
others leaves will be gem-wheels.

We call \emph{4-subdivision}\index{4-subdivision} any graph $G$ obtained from a graph $H$
by subdividing four times every edge.  More precisely, every edge $uv$
of $H$ is replaced by an induced path $u\tp a \tp b\tp c\tp d \tp v$
where $a, b, c, d$ are of degree two.  It is easy to see that
$\alpha(G) = \alpha(H) + 2|E(H)|$. This construction, essentially due
to Poljak~\cite{poljak:74}, yields as observed by Guyslain Naves:

\begin{theorem}[Naves, \cite{naves:pc}]
  \label{th:npHard}
  The problem whose instance is a graph $G$ from $\cal C$ and an
  integer $k$, and whose question is ``Does $G$ contain a stable set
  of size at least~$k$'' is NP-complete.
\end{theorem}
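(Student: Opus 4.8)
The plan is to prove NP-completeness by treating the two halves separately. Membership in NP is immediate: a stable set of size at least $k$ is a polynomially checkable certificate. For hardness I would reduce from the decision version of \textsc{Maximum Stable Set}, which is NP-complete and remains so on $2$-connected graphs, so that I may assume the input graph $H$ is $2$-connected. The whole construction is a variant of Poljak's subdivision trick underlying the identity $\alpha(F)=\alpha(H)+2|E(H)|$ for the $4$-subdivision $F$, adapted so that the output lands in $\cal C$.

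First I would build the bipartite graph to be extended. Replace every edge $uv$ of $H$ by an induced path of length $4$ (three new degree-$2$ vertices), obtaining a graph $B$. Since $H$ is $2$-connected, $B$ is $2$-connected; since every cycle of $H$ of length $\ell$ becomes a cycle of $B$ of length $4\ell$, the graph $B$ is bipartite; and each subdivided edge is a flat path of length $4\geq 3$ whose ends (the branch vertices inherited from $V(H)$) have no common neighbour off the path. Extending all of these flat paths then yields a graph $G$, and by construction $G\in\cal C$: its underlying bipartite graph is $B$, and since every edge of a $2$-connected graph lies on a cycle, each extended path becomes a gem-wheel. Note that the extended paths meet only at their common endpoints in $V(H)$, so the extension, which deletes only path interiors, is well defined on the whole family simultaneously.

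The heart of the argument is the stable-set count. For an edge $e=uv$ the extension installs three new vertices $x_e,y_e,z_e$ forming a triangle, with $u$ adjacent to $x_e,z_e$ and $v$ adjacent to $y_e,z_e$; moreover $V(H)$ is a stable set of $G$. The key local computation is the contribution profile of one gadget: a stable set containing both $u$ and $v$ can use none of $x_e,y_e,z_e$, whereas one containing at most one of $u,v$ can use exactly one gadget vertex. Hence for any $S_H\subseteq V(H)$ the largest stable set of $G$ meeting $V(H)$ in exactly $S_H$ has size $|S_H|+|E(H)|-e_H(S_H)$, where $e_H(S_H)$ counts the edges of $H$ inside $S_H$. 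Maximizing, and using that deleting one endpoint of each internal edge shows $\max_{S_H}\bigl(|S_H|-e_H(S_H)\bigr)=\alpha(H)$, I would conclude
$$\alpha(G)=\alpha(H)+|E(H)|.$$
Thus $H$ has a stable set of size at least $k$ if and only if $G$ has one of size at least $k+|E(H)|$, which completes the reduction, the construction being plainly polynomial.

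The steps needing the most care, rather than genuine difficulty, are the two structural checks that make $G$ a legitimate instance: that $B$ is at once $2$-connected \emph{and} bipartite (this is precisely why the edges must be subdivided to an \emph{even} length, not to the odd length $5$ of the literal $4$-subdivision, which would fail to be bipartite whenever $H$ has an odd cycle), and that the family of flat paths is admissible for extension despite sharing branch vertices. The arithmetic engine, the gadget profile and the identity $\max_{S_H}(|S_H|-e_H(S_H))=\alpha(H)$, is routine once set up; the only conceptual point is that extending destroys bipartiteness by creating gem-wheels while preserving the exact linear dependence of $\alpha(G)$ on $\alpha(H)$, and it is exactly this that keeps stable-set computation hard inside $\cal C$ even though each member of $\cal C$ decomposes along extreme $2$-joins into a bipartite graph and gem-wheels.
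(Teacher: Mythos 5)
Your reduction does not prove the theorem as stated, because the graph $G$ you construct is not in ${\cal C}$. The class ${\cal C}$ consists of graphs obtained from a $2$-connected bipartite graph by extending a set of \emph{vertex-disjoint} flat paths of length at least~$3$ (this is what ``disjoint'' means throughout the paper; vertex-disjointness is also what makes the extensions order-independent and the gadget $2$-joins independent). In your construction the paths you extend are the whole subdivided edges $u\tp a\tp b\tp c\tp v$, and any two of them coming from edges of $H$ with a common endpoint share that endpoint; since $H$ is $2$-connected this always happens. You flag this and argue the operation is still ``well defined'', but well-definedness is not the issue --- membership in ${\cal C}$ is. Nor can your $G$ be reinterpreted as a legal extension of some other bipartite graph: in an extension, every added triple consists of two adjacent vertices of degree~$3$ together with their common neighbour, and in your $G$ the only adjacent degree-$3$ pairs are $\{x_e,y_e\}$, whose unique common neighbour is $z_e$; so any representation of $G$ as an extension must use exactly the triples $\{x_e,y_e,z_e\}$ as added vertices (a gadget left unextended would be a triangle inside the bipartite base graph, which is impossible), forcing the underlying flat paths to run from $u$ to $v$ and hence to overlap. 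So your graphs provably lie outside ${\cal C}$, and the reduction lands in the wrong class.

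The paper avoids exactly this trap by leaving buffer vertices: every edge $ab$ of $H$ is subdivided \emph{five} times, giving $a\tp p_1\tp\cdots\tp p_5\tp b$ (even length, so the result is bipartite), and only the interior subpaths $p_1\tp\cdots\tp p_5$, which are pairwise vertex-disjoint and avoid the branch vertices, are extended. The resulting graph is then genuinely in ${\cal C}$, and the count is also organized differently: an apex vertex in a maximum stable set can always be swapped for one of its neighbours, so deleting all apexes does not change $\alpha$; what remains is precisely the $4$-subdivision of $H$, and Poljak's identity gives $\alpha(G)=\alpha(H)+2|E(H)|$. Your local gadget-profile computation ($\alpha(G)=\alpha(H)+|E(H)|$) is correct \emph{for your graph}, but the construction itself must be modified as above before any such count yields the statement about ${\cal C}$.
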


\begin{proof}
  Let $H$ be any graph.  First we subdivide 5 times every edge of~$H$.
  So each edge $ab$ is replaced by $P_7 = a \tp p_1 \tp \cdots \tp p_5
  \tp b$.  The graph $H'$ obtained is bipartite.  Now we build an
  extension $G$ of $H'$ by replacing all the $P_5$'s $p_1 \tp \cdots
  \tp p_5$ arising from the subdivisions in the previous step by
  $P_4$'s.  And for each $P_4$ we add a new vertex complete to it and
  we call \emph{apex vertices}\index{apex-vertex} all these new
  vertices.  The graph $G$ that we obtain is in $\cal C$.  It is easy
  to see that there exists a maximum stable set of $G$ that contain no
  app-ex vertex because an apex vertex of a maximum stable set can be
  replaced by one vertex of its neighborhood.  So, we call $G'$ the
  graph obtained from $G$ by deleting all the apex vertices and see
  that $\alpha(G') = \alpha(G)$.  Also, $G'$ is the 4-subdivision
  arising from $H$.  So from the remark above, maximum stable sets in
  $H$ and $G$ have sizes that differ by $2|E(H)|$.
\end{proof}

\subsection*{Keeping track of cliques}
\label{sec:clique}

Here we show how to find a maximum clique in a graph using 2-joins.
For the sake of induction we have to solve the weighted version of the
problem.

From here on, by graph we mean a graph with weights on the vertices.
Weights are numbers from $K$ where $K$ means either the set $\R_+$ of
non-negative real numbers or the set $\N_+$ of non negative integers.
The statements of the theorems will be true for $K= \R_+$ but the
algorithms are to be implemented with $K= \N_+$.  Let $G$ be a
weighted graph with a weight function $w$ on $V(G)$.  When $H$ is an
induced subgraph of $G$ or a subset of $V(G)$, $w(H)$ denotes the sum
of the weights of vertices in $H$.  Note that we view a graph where no
weight is assigned to the vertices as a weighted graph whose vertices
have all weight~1.  Here, $\omega (G)$ denotes the weight of a maximum
weighted clique of $G$.

Let $(X_1, X_2, A_1, B_1, A_2, B_2)$ be a split of a proper 2-join
of $G$.  We define for $k\geq 3$ the \emph{clique-block}\index{clique-block} $G_2^k$ of
$G$ with respect to $(X_1, X_2)$.  It is obtained from the block
$G^k_2$ by giving weights to the vertices.  Let $P_1= a_1 \tp x_1 \tp
\cdots \tp x_{k-1} \tp b_1$ be the marker path of $G^k_2$.  We assign
the following weights to the vertices of $G^k_2$:

\begin{itemize}
\item for every $u \in X_2$, $w_{G_2^k}(u) = w_{G}(u)$;
\item $w_{G^k_2}(a_1)=\omega (G[A_1])$;
\item $w_{G^k_2}(b_1)=\omega (G[B_1])$;
\item $w_{G^k_2}(x_1)=\omega (G[X_1]) - \omega (G[A_1])$;
\item $w_{G^k_2}(x_i)=0$, for $i=2, \ldots, k-1$.
\end{itemize}

\begin{lemma}[with Vu\v skovi\'c \cite{nicolas.kristina:2-join}]
  \label{komega}
  $\omega (G)=\omega(G_2^k)$.
\end{lemma}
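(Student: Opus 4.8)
The plan is to characterise maximum weighted cliques with respect to the $2$-join and then to observe that $G$ and the clique-block $G_2^k$ carry essentially the \emph{same} $2$-join, so that both clique numbers expand into the same expression. First I would establish the basic structural fact about cliques and a $2$-join: if $K$ is a clique of $G$ meeting both $X_1$ and $X_2$, then $K\subseteq A_1\cup A_2$ or $K\subseteq B_1\cup B_2$. Indeed, writing $K_i=K\cap X_i$, a vertex of $C_i=X_i\setminus(A_i\cup B_i)$ has no neighbour on the other side, so $K_i\subseteq A_i\cup B_i$; and since the only edges across the join run between $A_1$ and $A_2$ and between $B_1$ and $B_2$ while $A_2\cap B_2=\emptyset$, a vertex of $K_1$ lying in $A_1$ forces $K_2\subseteq A_2$ and then $K_1\subseteq A_1$ (and symmetrically for $B$). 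As $A_1$ is complete to $A_2$ and $B_1$ to $B_2$, cliques of each type can be maximised independently on the two sides.

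From this I would deduce
$$\omega(G)=\max\{\,\omega(G[X_1]),\ \omega(G[X_2]),\ \omega(G[A_1])+\omega(G[A_2]),\ \omega(G[B_1])+\omega(G[B_2])\,\},$$
the four terms accounting respectively for cliques inside $X_1$, inside $X_2$, of type $A$, and of type $B$ (an empty $K_1$ or $K_2$ falling into one of the first two terms).

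Next I would apply the very same dichotomy to $G_2^k$. The pair $(V(P_1),X_2)$ is a $2$-join of $G_2^k$ with special sets $\{a_1\},A_2$ and $\{b_1\},B_2$, so the displayed formula gives
$$\omega(G_2^k)=\max\{\,\omega(G_2^k[V(P_1)]),\ \omega(G[X_2]),\ w_{G_2^k}(a_1)+\omega(G[A_2]),\ w_{G_2^k}(b_1)+\omega(G[B_2])\,\},$$
using that the weights on $X_2$ are unchanged. Since $w_{G_2^k}(a_1)=\omega(G[A_1])$ and $w_{G_2^k}(b_1)=\omega(G[B_1])$, the second, third and fourth terms already coincide with those for $G$. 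The one computation requiring care is the first term, the maximum weighted clique of the weighted marker path $P_1=a_1\tp x_1\tp\cdots\tp x_{k-1}\tp b_1$. Because $k\geq 3$, the vertices $a_1$ and $b_1$ are non-adjacent, so $P_1$ is an induced path whose cliques are single vertices and single edges. The edge $a_1x_1$ has weight $\omega(G[A_1])+(\omega(G[X_1])-\omega(G[A_1]))=\omega(G[X_1])$, and one checks that every vertex weight and every other edge weight of $P_1$ is at most $\omega(G[X_1])$; here I would use $\omega(G[A_1])\leq\omega(G[X_1])$ and $\omega(G[B_1])\leq\omega(G[X_1])$ (which also guarantees $w_{G_2^k}(x_1)\geq 0$). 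Hence $\omega(G_2^k[V(P_1)])=\omega(G[X_1])$, the first terms match, and the two maxima coincide, giving $\omega(G)=\omega(G_2^k)$.

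The main obstacle, such as it is, is bookkeeping rather than ideas: pinning down the clique/$2$-join dichotomy exactly (in particular ruling out ``mixed'' cliques via $A_2\cap B_2=\emptyset$) and verifying that the chosen weights on the marker path make $a_1x_1$ the clique of $P_1$ realising $\omega(G[X_1])$ while keeping all weights non-negative. Neither step is deep, and I note that properness of the $2$-join is not actually used for the clique number; it is the weight assignment that does all the work.
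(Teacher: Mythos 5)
Your proof is correct: the dichotomy for cliques crossing a 2-join (using that $A_2\cap B_2=\emptyset$), the resulting four-term formula for the weighted clique number, and the verification that the weighted marker path $P_1$ has maximum clique weight exactly $\omega(G[X_1])$ (achieved by the edge $a_1x_1$, with all other cliques bounded via $\omega(G[A_1]),\omega(G[B_1])\leq\omega(G[X_1])$, which also gives non-negativity of $w_{G^k_2}(x_1)$) are all sound, and your observation that properness of the 2-join is not needed here is accurate. This document states Lemma~\ref{komega} without proof, deferring to the cited paper with Vu\v skovi\'c, and your argument is essentially the same direct case analysis used there, so there is nothing to flag.
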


\subsection*{Keeping track of stable sets}
\label{sec:alphaTrack}

Here we show how to use 2-joins to compute maximum stable sets.  This
is more difficult than cliques mainly because stable sets may
completely overlap both sides of a 2-join.  For the sake of induction
we need to put weights on the vertices.  But even with weights, there
is an issue: we are not able to compute maximum weighted stable set of
a graph assuming that some computations are done on its blocks.  So we
need to enlarge slightly our blocks to encode information, and this
causes some trouble.  First, the extended blocks may fail to be in the
class we are working on.  This problem will be solved by building the
decomposition tree in two steps.  Also in a decomposition tree built
with our unusual blocks, the leaves may fail to be basic graphs, so
computing something in the leaves of the tree is a problem solved
later.

Throughout this subsection, $G$ is a fixed graph with a weight function
$w$ on the vertices and $(X_1,X_2,A_1,B_1,A_2,B_2)$ is a split of a
2-join of~$G$.  For $i=1,2$, $C_i=X_i \setminus (A_i \cup B_i)$.  For
any graph $H$, $\alpha (H)$ denotes the weight of a maximum weighted
stable set of $H$.  We define $a = \alpha(G[{A_1 \cup C_1}])$, $b=
\alpha(G[{B_1 \cup C_1}])$, $c = \alpha(G[{C_1}])$ and $d =
\alpha(G[{X_1}])$.  The following follows easily from the definitions.

\begin{lemma}[with Vu\v skovi\'c \cite{nicolas.kristina:2-join}]
  \label{l:4cases}
  Let $S$ be a stable set of $G$ of maximum weight.  Then one of the
  following holds:

  \begin{enumerate}
  \item\label{i:4c1} $S \cap A_1 \neq \emptyset$, $S \cap B_1 =
    \emptyset$, $S\cap X_1$ is a maximum weighted stable set of $G[A_1
    \cup C_1]$ and $w(S \cap X_1) = a$;
  \item\label{i:4c2} $S \cap A_1 = \emptyset$, $S \cap B_1 \neq
    \emptyset$, $S\cap X_1$ is a maximum weighted stable set of $G[B_1
    \cup C_1]$ and $w(S \cap X_1) = b$;
  \item\label{i:4c3} $S \cap A_1 = \emptyset$, $S \cap B_1 =
    \emptyset$, $S\cap X_1$ is a maximum weighted stable set of
    $G[C_1]$ and $w(S \cap X_1) = c$;
  \item\label{i:4c4} $S \cap A_1 \neq \emptyset$, $S \cap B_1 \neq
    \emptyset$, $S\cap X_1$ is a maximum weighted stable set of
    $G[X_1]$ and $w(S \cap X_1) = d$.
  \end{enumerate}
\end{lemma}

We need kinds of blocks that preserve being in $\Cl{Berge}{}$.  To
define them we need several inequalities that tell more about how
stable sets and 2-joins overlap.

\begin{lemma}[with Vu\v skovi\'c \cite{nicolas.kristina:2-join}]
  \label{l:ineqbasic}
  $0 \leq c \leq a, b \leq d \leq a+b$.
\end{lemma}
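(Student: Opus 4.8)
The plan is to derive every inequality in the chain from a single elementary fact: for non-negative weights, $\alpha$ is monotone under taking induced subgraphs, i.e.\ if $H'$ is an induced subgraph of $H$ then $\alpha(H') \le \alpha(H)$. This holds because any stable set of $H'$ is also a stable set of $H$, so the maximum over the smaller collection cannot exceed the maximum over the larger one; and the empty set, of weight $0$, is always an available stable set. Recall that by definition $X_1$ is the disjoint union $A_1 \cup B_1 \cup C_1$, where $A_1 \cap B_1 = \emptyset$ comes from the 2-join split and $C_1 = X_1 \setminus (A_1 \cup B_1)$.

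Four of the inequalities are then immediate. Since $C_1 \subseteq A_1 \cup C_1 \subseteq X_1$ and $C_1 \subseteq B_1 \cup C_1 \subseteq X_1$, monotonicity gives $c \le a \le d$ and $c \le b \le d$. Non-negativity of the weights gives $0 \le c$, the empty stable set witnessing $\alpha(G[C_1]) \ge 0$. So it only remains to prove the last inequality $d \le a+b$.

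For this I would take a maximum weighted stable set $S$ of $G[X_1]$, so that $w(S) = d$. The sets $S \cap (A_1 \cup C_1)$ and $S \cap (B_1 \cup C_1)$ are stable sets of $G[A_1 \cup C_1]$ and $G[B_1 \cup C_1]$ respectively, hence have weight at most $a$ and at most $b$. Adding these two bounds and using that $S$ is partitioned by the three pieces $S \cap A_1$, $S\cap B_1$, $S \cap C_1$, I obtain
\[
a + b \;\ge\; w\bigl(S \cap (A_1 \cup C_1)\bigr) + w\bigl(S \cap (B_1 \cup C_1)\bigr) \;=\; w(S) + w(S \cap C_1) \;\ge\; w(S) \;=\; d,
\]
where the middle equality counts $S \cap C_1$ twice and the final inequality uses $w(S \cap C_1) \ge 0$. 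This yields $d \le a+b$ and completes the chain.

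Since the whole argument rests only on monotonicity of $\alpha$ and a double-counting of the overlap at $C_1$, there is no serious obstacle. The only point requiring a moment's care is that the two projected stable sets $S\cap(A_1\cup C_1)$ and $S\cap(B_1\cup C_1)$ overlap exactly on $S \cap C_1$; it is precisely this overlap that produces the extra non-negative term $w(S\cap C_1)$ and hence the bound $a+b$ rather than something weaker. Note that the argument never uses that $G$ is Berge or that $(X_1,X_2)$ is proper or non-path, so the lemma holds for an arbitrary split of an arbitrary 2-join.
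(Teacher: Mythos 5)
Your proof is correct and takes essentially the same approach as the paper's: the paper likewise dismisses $0 \leq c \leq a, b \leq d$ as trivial and proves $d \leq a+b$ by splitting a maximum weighted stable set $D$ of $G[X_1]$, writing $d = w(D) = w(D\cap A_1) + w(D\cap (C_1 \cup B_1)) \leq a+b$. The only (cosmetic) difference is that the paper's split is a genuine partition, so no double-counting of $w(D \cap C_1)$ is needed.
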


\begin{proof}
  The inequalities $0 \leq c \leq a, b \leq d$ are trivially true. Let
  $D$ be a maximum weighted stable set of $G[X_1]$.  We have:
  $$
  d = w(D) = w(D\cap A_1) + w(D\cap (C_1 \cup B_1)) \leq a + b.
  $$
\end{proof}

A 2-join with split $(X_1, X_2,A_1,B_1,A_2,B_2)$ is said to be
\emph{$X_1$-even} (resp.\ \emph{$X_1$-odd})\index{even!$X_1$-even
  2-join}\index{odd!$X_1$-odd 2-join}\index{2-join!$X_1$ even or odd}
if all paths from $A_1$ to $B_1$ with interior in $C_1$ are of even
length (resp.\ odd length).  Note that from Lemma~\ref{l.2jAiBi}, if
$G$ is in $\Cl{parity}{}$ and $(X_1,X_2)$ is proper, then $(X_1, X_2)$
must be either $X_1$-even or $X_1$-odd.  The following two lemmas
present important inequalities.  The proofs are included.

\begin{lemma}[with Vu\v skovi\'c \cite{nicolas.kristina:2-join}]
  \label{l:ineqEven}
  If $(X_1, X_2)$ is an $X_1$-even 2-join of $G$, then $a+b \leq c+d$.
\end{lemma}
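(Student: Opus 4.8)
The plan is to extract, from optimal stable sets realizing $a$ and $b$, a stable set of $G[X_1]$ and a stable set of $G[C_1]$ whose total weight is exactly $a+b$; since these weights are bounded by $d=\alpha(G[X_1])$ and $c=\alpha(G[C_1])$ respectively, the inequality $a+b\le c+d$ follows immediately. So first I would fix a maximum weighted stable set $S_A$ of $G[A_1\cup C_1]$ (with $w(S_A)=a$) and a maximum weighted stable set $S_B$ of $G[B_1\cup C_1]$ (with $w(S_B)=b$), and study the induced subgraph $H=G[S_A\cup S_B]$. Since $S_A$ and $S_B$ are each stable, every edge of $H$ joins a vertex of $S_A\sm S_B$ to a vertex of $S_B\sm S_A$, so $H$ is bipartite with sides $S_A\sm S_B$ and $S_B\sm S_A$, while the vertices of $S_A\cap S_B$ (which all lie in $C_1$, as $S_A\subseteq A_1\cup C_1$ and $S_B\subseteq B_1\cup C_1$) are isolated in $H$.

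The crucial step, and the only place the hypothesis is used, is the claim that \emph{no connected component of $H$ meets both $A_1$ and $B_1$}. To prove it I would suppose a component $R$ contains $u\in A_1$ and $v\in B_1$ and take a shortest, hence induced, path $P$ of $R$ from $u$ to $v$. Reading off the vertices of $P$ lying in $A_1\cup B_1$, this list starts at $u\in A_1$ and ends at $v\in B_1$, so it has two consecutive entries $u'\in A_1$, $v'\in B_1$; the subpath $P'$ of $P$ between them is induced, runs from $A_1$ to $B_1$, and has all its interior vertices in $C_1$. By the $X_1$-even hypothesis $P'$ has even length. On the other hand $u'\in S_A\sm S_B$ and $v'\in S_B\sm S_A$ lie on opposite sides of the bipartition of $H$, and every vertex of $P'$ lies in $(S_A\sm S_B)\cup(S_B\sm S_A)$ — no interior vertex can be an isolated vertex of $S_A\cap S_B$ — so $P'$ alternates sides of the bipartition and therefore has odd length, a contradiction.

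With the claim established, each component $R$ of $H$ is contained in $A_1\cup C_1$, in $B_1\cup C_1$, or in $C_1$, and I would build $T\subseteq X_1$ and $U\subseteq C_1$ component by component: for $R\subseteq A_1\cup C_1$ put $S_A\cap R$ into $T$ and $S_B\cap R$ (which is then forced to lie in $C_1$) into $U$; symmetrically for $R\subseteq B_1\cup C_1$; for $R\subseteq C_1$ split the two stable pieces, one into $T$ and one into $U$; and each isolated vertex of $S_A\cap S_B\subseteq C_1$ is placed into \emph{both} $T$ and $U$. Because distinct components of $H$ are pairwise anticomplete in $G$ and each contributed piece is stable, $T$ is a stable set of $G[X_1]$ and $U$ a stable set of $G[C_1]$. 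A short bookkeeping check — the doubly counted vertices $S_A\cap S_B$ contribute $w(v)$ to each of $T,U$, matching their multiplicity $2$ in $w(S_A)+w(S_B)$, while all other vertices are counted once — yields $w(T)+w(U)=a+b$, whence $a+b=w(T)+w(U)\le d+c$.

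I expect the parity claim of the second paragraph to be the main obstacle: the delicate point is to extract an \emph{induced} $A_1$-to-$B_1$ subpath whose interior lies in $C_1$, so that the $X_1$-even hypothesis genuinely applies, together with the observation that no interior vertex of that subpath can be one of the isolated $S_A\cap S_B$ vertices, which is exactly what legitimizes the side-alternation argument. Once this is in place, the redistribution into $T$ and $U$ and the weight count are routine.
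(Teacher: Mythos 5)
Your proof is correct and is essentially the paper's own argument: the paper also fixes optimal stable sets realizing $a$ and $b$, works in the bipartite graph they induce, and uses the $X_1$-even hypothesis against bipartite alternation (via a minimal $A_1$-to-$B_1$ path with interior in $C_1$) to show that the part reachable from $A_1$ is disjoint from and anticomplete to the part reachable from $B_1$, before redistributing into a stable set of $G[X_1]$ and one of $G[C_1]$ of total weight $a+b$. Your connected components play exactly the role of the paper's reachability sets $Y_A, Y_B$, and your $T, U$ coincide with its sets $Z_D, Z_C$, with the vertices of $S_A \cap S_B$ likewise placed in both and hence counted twice.
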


\begin{proof}
  Let $A$ be a stable set of $G[A_1 \cup C_1]$ of weight $a$ and $B$ a
  stable set of $G[B_1 \cup C_1]$ of weight $b$.  In the bipartite
  graph $G[A\cup B]$, we denote by $Y_A$ (resp.\ $Y_B$) the set of
  those vertices of $A\cup B$ such that there exists a path in $G[A
  \cup B]$ joining them to some vertex of $A \cap A_1$ (resp.\ $B \cap
  B_1$).  Note that from the definition, $A \cap A_1 \subseteq Y_A$,
  $B \cap B_1 \subseteq Y_B$ and no edges exist between $Y_A\cup Y_B$
  and $(A\cup B)\sm (Y_A \cup Y_B)$.  Also, $Y_A$ and $Y_B$ are
  disjoint with no edges between them because else, there is some path
  in $G[A\cup B]$ from some vertex of $A \cap A_1$ to some vertex of
  $B \cap B_1$.  If such a path is minimal with respect to this
  property, its interior is in $C_1$ and it is of odd length because
  $G[A\cup B]$ is bipartite.  This contradicts the assumption that
  $(X_1, X_2)$ is $X_1$-even.  Now we put:

  \begin{itemize}
  \item $Z_D = (A \cap Y_A) \cup (B \cap Y_B) \cup (A \sm (Y_A \cup Y_B))$;
  \item $Z_C = (A \cap Y_B) \cup (B \cap Y_A) \cup (B \sm (Y_A \cup Y_B))$.
  \end{itemize}

  From all the definitions and properties above, $Z_D$ and $Z_C$ are
  stable sets and $Z_D \subseteq X_1$ and $Z_C \subseteq C_1$.  So,
  $a+b = w(Z_C) + w(Z_D) \leq c+d$.
\end{proof}

\begin{lemma}[with Vu\v skovi\'c \cite{nicolas.kristina:2-join}]
  \label{l:ineqOdd}
  If $(X_1, X_2)$ is an $X_1$-odd 2-join of $G$, then $c+d \leq a+b$.
\end{lemma}

\begin{proof}
  Let $D$ be a stable set of $G[X_1]$ of weight $d$ and $C$ a stable
  set of $G[C_1]$ of weight $c$.  In the bipartite graph $G[C\cup D]$,
  we denote by $Y_A$ (resp.\ $Y_B$) the set of those vertices of
  $C\cup D$ such that there exists a path in $G[C \cup D]$ joining
  them to some vertex of $D\cap A_1$ (resp.\ $D \cap B_1$).  Note that
  from the definition, $D \cap A_1 \subseteq Y_A$, $D \cap B_1
  \subseteq Y_B$ and no edges exist between $Y_A \cup Y_B$ and $(C\cup
  D)\sm (Y_A \cup Y_B)$.  Also, $Y_A$ and $Y_B$ are disjoint with no
  edges between them because else, there is some path in $G[C\cup D]$
  from some vertex of $D \cap A_1$ to some vertex of $D \cap B_1$.  If
  such a path is minimal with respect to this property, its interior
  is in $C_1$ and it is of even length because $G[C\cup D]$ is
  bipartite.  This contradicts the assumption that $(X_1, X_2)$ is
  $X_1$-odd.  Now we put:

  \begin{itemize}
  \item $Z_A = (D \cap Y_A) \cup (C \cap Y_B) \cup (C \sm (Y_A \cup
    Y_B))$;
  \item $Z_B = (D \cap Y_B) \cup (C \cap Y_A) \cup (D \sm (Y_A \cup Y_B)$.
  \end{itemize}

  From all the definitions and properties above, $Z_A$ and $Z_B$ are
  stable sets and $Z_A \subseteq A_1 \cup C_1$ and $Z_B \subseteq B_1
  \cup C_1$.  So, $c+d = w(Z_A) + w(Z_B) \leq a+b$.     
\end{proof}

\subsection*{Even and odd blocks}

We call \emph{flat claw}\index{flat!claw} of a weighted graph $G$ any set $\{q_1, q_2,
q_3, q_4\}$ of vertices such that:
\begin{itemize}
\item the only edges between the $q_i$'s are $q_1q_2$, $q_2q_3$ and
  $q_4q_2$;
\item $q_1$ and $q_3$ have no common neighbor in $V(G) \sm \{q_2\}$;
\item $q_4$ has degree~1 in $G$ and $q_2$ has degree~3 in $G$.
\end{itemize}

We define now the \emph{even block}\index{even!block} $G_2$ with
respect to a 2-join $(X_1, X_2)$.  We keep $X_2$ and replace $X_1$ by
a flat claw on $q_1, \dots, q_4$ where $q_1$ is complete to $A_2$ and
$q_3$ is complete to $B_2$.  We give the following weights: $w(q_1) =
d-b$, $w(q_2) = c$, $w(q_3) = d-a$, $w(q_4) = a+b-d$.  From
Lemma~\ref{l:ineqbasic}, all weights are non-negative.  By
Lemma~\ref{l:ineqEven}, the following Lemma applies in particular if
$(X_1, X_2)$ is a proper $X_1$-even 2-join.  The reader can check that
we really need $a+b \leq c+d$.

\begin{lemma}[with Vu\v skovi\'c \cite{nicolas.kristina:2-join}]
  \label{l:evenBlock}
  If $a+b \leq c+d$ and if $G_2$ is the even block of $G$, then
  $\alpha(G_2) = \alpha (G)$.
\end{lemma}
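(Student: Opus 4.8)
The plan is to prove the two inequalities $\alpha(G_2) \le \alpha(G)$ and $\alpha(G_2) \ge \alpha(G)$ separately, the whole argument resting on a dictionary between stable sets of the flat claw $\{q_1, q_2, q_3, q_4\}$ and the four ``modes'' in which a maximum stable set of $G$ can meet $X_1$, as classified by Lemma~\ref{l:4cases}. First I would record the weights of the relevant stable sets of the claw. Since its only edges are $q_1q_2$, $q_2q_3$, $q_4q_2$, with $q_1$ complete to $A_2$, $q_3$ complete to $B_2$, and $q_4$ a pendant having no neighbour in $X_2$, a direct computation gives $w(\{q_2\}) = c$ (blocking neither $A_2$ nor $B_2$), $w(\{q_1, q_4\}) = a$ (blocking $A_2$ only), $w(\{q_3, q_4\}) = b$ (blocking $B_2$ only), and $w(\{q_1, q_3, q_4\}) = d$ (blocking both). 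These four weights and blocking patterns match exactly the four cases of Lemma~\ref{l:4cases}, which is the heart of the correspondence. Lemma~\ref{l:ineqbasic} guarantees that the four prescribed weights $d-b$, $c$, $d-a$, $a+b-d$ are all non-negative, so the even block is well defined.

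For $\alpha(G_2) \ge \alpha(G)$, I would take a maximum weighted stable set $S$ of $G$, apply Lemma~\ref{l:4cases} to determine which of its four cases holds, and replace $S \cap X_1$ by the matching claw-set above, keeping $S \cap X_2$ unchanged. Because the chosen claw-set blocks exactly those of $A_2, B_2$ that $S$ already avoids (in case~1, for instance, $S \cap A_1 \ne \emptyset$ forces $S \cap A_2 = \emptyset$, matching the fact that $\{q_1, q_4\}$ blocks $A_2$ while leaving $B_2$ free), the result is a stable set of $G_2$ of weight $w(S) = \alpha(G)$.

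For the reverse inequality I would start from a maximum weighted stable set $S'$ of $G_2$ and argue that, up to domination, its trace on the claw is one of the four canonical sets. The routine checks are that $\{q_1\}, \{q_3\}, \{q_1,q_3\}, \{q_4\}, \emptyset$ are each dominated --- same or weaker blocking, and no less weight --- by one of $\{q_1,q_4\}, \{q_3,q_4\}, \{q_1,q_3,q_4\}, \{q_2\}$; non-negativity of $a+b-d$ handles the ``add $q_4$ for free'' reductions. The one place where the hypothesis $a+b \le c+d$ is genuinely needed is to dominate $\{q_4\}$ (weight $a+b-d$, blocking neither) by $\{q_2\}$ (weight $c$, blocking neither), which is exactly the inequality $a+b-d \le c$. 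Once $S'$ is normalised, I map it back to $G$ by keeping $S' \cap X_2$ and filling $X_1$ with a maximum stable set of $G[A_1 \cup C_1]$, $G[B_1 \cup C_1]$, $G[C_1]$, or $G[X_1]$ according to the mode, producing a stable set of $G$ of weight $w(S')$, whence $\alpha(G_2) = w(S') \le \alpha(G)$.

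The main obstacle, and the only subtle point, is this upper-bound direction: one must ensure that no ``unintended'' claw-set --- in particular $\{q_1, q_3\}$, which blocks both sides while carrying weight $2d - a - b$ --- can beat the canonical options. This is precisely what $d \le a+b$ (from Lemma~\ref{l:ineqbasic}) and $a+b \le c+d$ (the hypothesis) rule out, so the crux is to invoke these inequalities at the right moments rather than to build anything; once the domination bookkeeping is arranged, both inequalities fall out immediately.
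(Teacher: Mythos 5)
Your proof is correct: the weight computations on the claw are right, the nine possible traces are all handled, and you correctly isolate exactly where each inequality is used --- $d\le a+b$ (Lemma~\ref{l:ineqbasic}) to dominate $\{q_1\}$, $\{q_3\}$ and $\{q_1,q_3\}$, and the hypothesis $a+b\le c+d$ only to dominate $\{q_4\}$ by $\{q_2\}$. This memoir omits the proof (it is in the cited reference), but your argument is precisely the intended one: the flat-claw weights are designed so that $\{q_1,q_4\}$, $\{q_3,q_4\}$, $\{q_1,q_3,q_4\}$ and $\{q_2\}$ carry weights $a$, $b$, $d$, $c$ and block exactly the sides of the 2-join that the four cases of Lemma~\ref{l:4cases} force a maximum stable set of $G$ to avoid, so the two-way translation you describe is the published proof in essence.
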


We call \emph{flat vault}\index{flat!vault} of graph $G$ any set $\{r_1, r_2,
r_3, r_4, r_5, r_6\}$ of vertices such that:

\begin{itemize}
\item the only edges between the $r_i$'s are such that
  $r_3,r_4,r_5,r_6,r_3$ is a 4-hole;
\item $N(r_1) = N(r_5)\sm \{r_4, r_6\}$; 
\item $N(r_2) = N(r_6)\sm \{r_3, r_5\}$; 
\item $r_1$ and $r_2$ have no common neighbors;
\item $r_3$ and $r_4$ have degree 2 in $G$.
\end{itemize}

Let us now define the \emph{odd block}\index{odd block} $G_2$ with respect to a 2-join
$(X_1,X_2)$.  We replace $X_1$ by a flat vault on $r_1, \dots, r_6$.
Moreover $r_1, r_5$ are complete to $A_2$ and $r_2, r_6$ are complete
to $B_2$.  We give the following weights: $w(r_1) = d-b$, $w(r_2) =
d-a$, $w(r_3) = w(r_4) = c$, $w(r_5) = w(r_6) = a+b-c-d$.  Note that
if we suppose $c+d \leq a+b$ (which holds in particular if $(X_1,
X_2)$ is an $X_1$-odd proper 2-join by Lemma~\ref{l:ineqOdd}), all
the weights are non-negative by Lemma~\ref{l:ineqbasic}.  Finding this
odd block was difficult, we tried a lot of different ideas. It seems
that the two isolated vertices have to be there and this is a bit
strange.

\begin{lemma}[with Vu\v skovi\'c \cite{nicolas.kristina:2-join}]
  \label{l:oddBlock}
  If $c+d \leq a+b$ and if $G_2$ is the odd block of $G$, then
  $\alpha(G_2) = \alpha (G)$.
 \end{lemma}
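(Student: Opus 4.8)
The plan is to evaluate both $\alpha(G)$ and $\alpha(G_2)$ through the same decomposition over the two sides of the 2-join, and then to check that the flat vault reproduces exactly the four quantities $a,b,c,d$ that $X_1$ contributes. I would fix a stable set $S$ and write $S_2 = S\cap X_2$. Since $A_1$ is complete to $A_2$ and $B_1$ is complete to $B_2$, the only interaction across the join is recorded by the \emph{state} of $S_2$, namely whether $S_2\cap A_2$ and $S_2\cap B_2$ are empty. For each of the four states, the best weight an $X_1$-part can reach is, by Lemma~\ref{l:4cases}, equal to $a$ (when $S_2$ avoids $A_2$ and meets $B_2$), $b$ (meets $A_2$, avoids $B_2$), $c$ (meets both), or $d$ (avoids both). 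Hence
$$\alpha(G) = \max_{k}\bigl(M_k + f(k)\bigr),$$
where $k$ ranges over the four states, $M_k$ is the maximum weight of a stable set of $G[X_2]$ in state $k$, and $(f(1),\dots,f(4)) = (a,b,c,d)$. The vault side of $G_2$ forms a 2-join with $X_2$ in which $\{r_1,r_5\}$ is complete to $A_2$ and $\{r_2,r_6\}$ is complete to $B_2$, so the identical identity holds for $G_2$ with $f(k)$ replaced by $g(k)$, the maximum weight of a stable set of the vault $H = G_2[\{r_1,\dots,r_6\}]$ compatible with state $k$ (avoid $r_1,r_5$ when $S_2$ meets $A_2$, avoid $r_2,r_6$ when $S_2$ meets $B_2$). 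As $X_2$ and its weights are untouched the values $M_k$ agree, so it suffices to prove $g(k)=f(k)$ for all four states.

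The key observation making this tractable is that within $H$ the vertices $r_1,r_2$ are isolated, while $r_3,r_4,r_5,r_6$ induce a $4$-hole with diagonals $\{r_3,r_5\}$ and $\{r_4,r_6\}$. Using the hypothesis $c+d\le a+b$ together with $0\le c\le a,b\le d\le a+b$ from Lemma~\ref{l:ineqbasic}, all weights $w(r_1)=d-b$, $w(r_2)=d-a$, $w(r_3)=w(r_4)=c$, $w(r_5)=w(r_6)=a+b-c-d$ are non-negative, and each diagonal has weight $c+(a+b-c-d)=a+b-d$, which is $\ge c$ and $\ge a+b-c-d$. I would then read off the four optima:
\begin{itemize}
\item state $4$ (nothing forbidden): take $r_1,r_2$ and one diagonal, $g(4)=(d-b)+(d-a)+(a+b-d)=d$;
\item state $3$ (forbid $r_1,r_2,r_5,r_6$): only the adjacent pair $r_3,r_4$ remains, so $g(3)=c$;
\item state $1$ (forbid $r_2,r_6$): take $r_1$ and $\{r_3,r_5\}$ on the path $r_3 r_4 r_5$, $g(1)=(d-b)+(a+b-d)=a$;
\item state $2$ (forbid $r_1,r_5$): take $r_2$ and $\{r_4,r_6\}$, $g(2)=(d-a)+(a+b-d)=b$.
\end{itemize}
In each case optimality of the displayed choice is exactly the inequality $a+b-d\ge c\ge 0$, which is where the odd-side hypothesis $c+d\le a+b$ (equivalently Lemma~\ref{l:ineqOdd}) enters.

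Combining these, $g(k)=f(k)$ for every state $k$, whence $\alpha(G_2)=\alpha(G)$. To avoid routing everything through the max-formula, one can instead argue by two inequalities: given an optimum stable set of $G$, keep its $X_2$-part and substitute the optimal vault set for its state to get a stable set of $G_2$ of equal weight, and symmetrically in reverse; compatibility across the join is automatic because the substituted set obeys precisely the membership constraints of the state. I expect the only delicate point to be the bookkeeping: checking that the substituted vault set never conflicts with $S\cap X_2$ and that the ``best $X_1$-part for a given state'' really equals $a,b,c,d$ — this last fact is Lemma~\ref{l:4cases} and is what licenses the reduction to the four weight identities. The vault computations themselves are routine once one notes that $r_1,r_2$ are isolated in $H$ and that forbidding vertices collapses the $4$-hole to a path on which a diagonal is optimal.
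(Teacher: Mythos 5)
Your proof is correct. Note that this document does not actually contain a proof of Lemma~\ref{l:oddBlock} (it is deferred to the cited paper with Vu\v skovi\'c), so there is nothing to compare line by line; but your argument is precisely the one the surrounding machinery is built for: the interaction across the 2-join is summarized by the four patterns of Lemma~\ref{l:4cases}, the $X_2$-side contribution $M_k$ is identical in $G$ and $G_2$, and the lemma reduces to your four vault identities $g(k)=f(k)\in\{a,b,c,d\}$, where non-negativity of $w(r_5)=w(r_6)=a+b-c-d$ and the optimality of a diagonal ($a+b-d\ge c$) are exactly the two places the hypothesis $c+d\le a+b$ (Lemmas~\ref{l:ineqbasic} and~\ref{l:ineqOdd}) is used. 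Your closing remark that the max-formula can be replaced by two substitution inequalities (swap the $X_1$-part, respectively the vault part, for the optimal set of the same state) is also the cleaner way to organize it, since it sidesteps any bookkeeping about which states are feasible in $G[X_2]$.
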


\subsection*{Extension of basic classes}

To build a decomposition tree that allows keeping track of maximum
stable sets we use the even and odd blocks defined above.  As a
consequence, the leaves of our decomposition tree may fail to be
basic, but are what we call \emph{extensions}\index{extension!of a
  basic graph} of basic graphs.  Let us define this.

Let $P = p_1 \tp \cdots \tp p_k$, $k\geq 4$, be a flat path of a graph
$G$.  \emph{Extending} $P$ means:

\begin{itemize}
\item Either:
  \begin{enumerate}
  \item replace the vertices of $P$ by a flat claw on $q_1, \dots, q_4$
    where $q_1$ is complete to $N_G(p_1)\sm \{p_2\}$ and $q_3$ is
    complete to $N_G(p_k) \sm \{p_{k-1}\}$;
  \item replace $X_1$ by a flat vault on $r_1, \dots, r_6$ where $r_1,
    r_5$ are complete to $N_G(p_1)\sm \{p_2\}$ and $r_2, r_6$ are complete
    to $N_G(p_k) \sm \{p_{k-1}\}$.
  \end{enumerate}
\item Mark the vertices of the flat claw (or vault) with the integer
  $k$.
\item Give any non-negative weights to the vertices. 
\end{itemize}

An \emph{extension} of a pair $(G, {\cal M})$, where $G$ is a graph
and $\cal M$ is a set of vertex-disjoint flat paths of length at
least~3 of $G$, is any weighted graph obtained by extending the flat
paths of $\cal M$.  Note that since $\cal M$ is a set of
\emph{vertex-disjoint} paths, the extensions of the paths from $\cal
M$ can be done in any order and lead to the same graph.  An
\emph{extension} of a graph $G$ is any graph that is an extension of
$(G, {\cal M})$ for some $\cal M$.

We say that the extension of $P$ is
\emph{parity-preserving}\index{parity-preserving!extension}\index{extension!parity-preserving}
when $P$ has even length and is replaced by a flat claw, or when $P$
has odd length and is replaced by a flat vault. We define the
\emph{parity-preserving} extension of a pair $(G, {\cal M})$ and of a
graph $G$ by requiring that all extensions of paths are
parity-preserving.  The following is easy.

\begin{lemma}[with Vu\v skovi\'c \cite{nicolas.kristina:2-join}]
  \label{l:staybip}
  A parity-preserving extension of a bipartite graph is a bipartite
  graph.
\end{lemma}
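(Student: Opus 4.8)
The statement to prove is Lemma~\ref{l:staybip}: a parity-preserving extension of a bipartite graph is bipartite.

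The plan is to verify directly that each parity-preserving extension of a single flat path keeps the graph bipartite, and then conclude for the whole pair $(G, \mathcal{M})$ by observing that the paths of $\mathcal{M}$ are vertex-disjoint, so the extensions are performed independently. First I would fix a bipartite graph $G$ with bipartition $(V_1, V_2)$ and a single flat path $P = p_1 \tp \cdots \tp p_k$ of length at least~3 to be extended. Since $G$ is bipartite and $P$ is an induced path, $p_1$ and $p_k$ lie in opposite classes exactly when $P$ has odd length, and in the same class exactly when $P$ has even length. This parity observation is the crux: the construction replaces $P$ by a flat claw precisely when $P$ is even and by a flat vault precisely when $P$ is odd, and I would check that in each case the new gadget admits a 2-colouring that is consistent with the colours already forced on $N_G(p_1)\sm\{p_2\}$ and $N_G(p_k)\sm\{p_{k-1}\}$.

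Concretely, in the even case $p_1$ and $p_k$ are in the same class, say $V_1$, which forces $N_G(p_1)\sm\{p_2\}$ and $N_G(p_k)\sm\{p_{k-1}\}$ into $V_2$. In the flat claw on $q_1, \dots, q_4$ with edges $q_1q_2$, $q_2q_3$, $q_4q_2$, where $q_1$ is complete to $N_G(p_1)\sm\{p_2\}\subseteq V_2$ and $q_3$ is complete to $N_G(p_k)\sm\{p_{k-1}\}\subseteq V_2$, I would place $q_1, q_3, q_4$ in $V_1$ and $q_2$ in $V_2$; one then checks every internal edge of the claw and every edge to the old neighbourhoods joins $V_1$ to $V_2$. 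In the odd case $p_1, p_k$ lie in opposite classes, so $N_G(p_1)\sm\{p_2\}$ and $N_G(p_k)\sm\{p_{k-1}\}$ land in opposite classes; in the flat vault on $r_1, \dots, r_6$ (where $r_3r_4r_5r_6r_3$ is a $4$-hole, $r_1, r_5$ complete to $N_G(p_1)\sm\{p_2\}$ and $r_2, r_6$ complete to $N_G(p_k)\sm\{p_{k-1}\}$), I would two-colour the $4$-hole in the obvious way and assign $r_1, r_2$ the colours forced by their prescribed neighbourhoods, then verify no monochromatic edge is created. The only edges that could threaten bipartiteness are those joining the gadget to the rest of $G$, and these are exactly the edges to $N_G(p_1)\sm\{p_2\}$ and $N_G(p_k)\sm\{p_{k-1}\}$, whose colours are fixed; the parity matching guarantees the assignment is consistent.

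For the general statement I would note that extending $(G, \mathcal{M})$ means extending each flat path of $\mathcal{M}$, and because the paths are pairwise vertex-disjoint the extensions are carried out on disjoint portions of $G$ and, as remarked in the text, can be done in any order yielding the same graph. Hence the $2$-colourings constructed above for the individual gadgets can be combined with the original bipartition of $G$ on the untouched vertices into a single proper $2$-colouring of the whole extension. I do not expect a genuine obstacle here: the lemma is essentially a bookkeeping check, and the only subtle point is making sure the parity of each path is correctly paired with the correct gadget so that the forced colours on the two attachment neighbourhoods are compatible. The text itself flags that this lemma ``is easy,'' and the proof reduces to the parity observation about endpoints of an induced path in a bipartite graph together with the explicit $2$-colourings of the flat claw and flat vault.
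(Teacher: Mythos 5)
Your proof is correct and is exactly the intended argument: the paper offers no proof of this lemma (it is introduced only with ``The following is easy''), and the verification it has in mind is precisely what you do --- pair the parity of each path with the correct gadget, and check that the flat claw (even case, where $N_G(p_1)\sm\{p_2\}$ and $N_G(p_k)\sm\{p_{k-1}\}$ fall in the same colour class) and the flat vault (odd case, where they fall in opposite classes) each admit a $2$-colouring compatible with the colours forced on those attachment sets.

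The one imprecision is your claim that the extensions act on ``disjoint portions of $G$'': the degree-$2$ condition protects only the interiors of the paths of $\mathcal{M}$, so endpoints of two distinct paths may be adjacent, in which case a gadget attaches to another gadget's attachment vertices rather than to an untouched vertex of $G$; this costs nothing, however, because your colouring gives each attachment vertex the colour of the endpoint it replaces, so any such gadget-to-gadget edge mirrors a properly coloured edge of $G$.
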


The following lemma shows that maximum weighted stable sets can be
computed for all parity-preserving extensions of Berge basic classes,
except line-graphs of bipartite graphs.  It is easy for bipartite
graphs (that stay bipartite).  For the other classes, it relies on the
fact that there is a bounded number of long flat paths.

\begin{lemma}[with Vu\v skovi\'c \cite{nicolas.kristina:2-join}]
  \label{l:optBergeBasic}
  There is an algorithm with the following specification:
  \begin{description}
  \item[Input: ] A weighted graph $G$ that is a parity preserving
    extension of either a bipartite graph, the complement of a
    bipartite graph, the complement of a line-graph of a bipartite
    graph, a path-cobipartite graph, the complement of non-trivial
    path-cobipartite graph, a path-double split graph or the
    complement of a path-double split graph.
  \item[Output: ] A maximum weighted stable set of $G$.
  \item[Running time: ] ${\cal O} (n^5)$
  \end{description}
\end{lemma}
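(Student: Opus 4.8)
The statement to prove is Lemma~\ref{l:optBergeBasic}: for each fixed Berge basic class (other than line-graphs of bipartite graphs), a maximum weighted stable set of a parity-preserving extension can be computed in time ${\cal O}(n^5)$. My plan is to handle the classes one by one, since they are genuinely different, but to isolate the two common reduction principles first. The first principle is that extending a flat path of even length by a flat claw, or of odd length by a flat vault, is a \emph{local} operation: the new gadget has a bounded number of vertices and interacts with the rest of the graph only through $N_G(p_1)\sm\{p_2\}$ and $N_G(p_k)\sm\{p_{k-1}\}$. So first I would record, for each gadget (flat claw, flat vault) with prescribed weights, which of its vertices can be chosen together in a stable set, and how the choice constrains the two ``attachment ends''. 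This reduces the whole problem to computing a maximum weighted stable set in a graph obtained from the original basic graph by contracting each extended path back to a short path with appropriately modified weights and side-constraints at the two ends, plus a bounded additive constant.

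\textbf{The bipartite case and its complements.} By Lemma~\ref{l:staybip}, a parity-preserving extension of a bipartite graph is again bipartite, so here a maximum weighted stable set is computed directly by a single max-flow/min-cut (K\"onig-type) computation, well within ${\cal O}(n^5)$; no gadget bookkeeping is even needed. For the complement of a bipartite graph, a stable set is a clique in a bipartite graph, hence an edge or a vertex, so the problem is trivial. The complement of a line-graph of a bipartite graph corresponds to finding a maximum weighted stable set, which by complementation is a maximum weighted clique in a line-graph of a bipartite graph, i.e.\ a maximum weighted matching in a bipartite graph --- again polynomial and easily within budget. The essential observation across these three subcases is that the extension gadgets attach to vertices of bounded degree (the path ends $p_1,p_k$ have degree~2 in a flat path), so replacing them changes the problem by a controlled local modification that these classical algorithms absorb.

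\textbf{Path-cobipartite and path-double-split cases.} These are the substantive subcases. For a path-cobipartite graph the vertex set partitions into two cliques $A,B$ and a set $P$ of degree-2 interior vertices of odd paths between $A$ and $B$; a stable set meets each clique in at most one vertex, so it is essentially a choice of one vertex (or none) from $A$, one from $B$, and a selection along the connecting paths subject to endpoint constraints. Here I would enumerate the ${\cal O}(n^2)$ choices of which vertex of $A$ and which of $B$ the stable set uses, and for each such pair solve the induced path-selection problem, which is a disjoint union of interval/path DP's on the flat paths (each being a weighted independent-set-in-a-path problem, linear time). The flat claw/vault gadgets fit naturally here because each lies inside one connecting path. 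The path-double-split case is handled analogously, exploiting that the $c_j,d_j$ structure forces a stable set to use at most one of $\{c_j,d_j\}$ consistently and that the $a_i$--$b_i$ paths behave like the path-cobipartite connectors.

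\textbf{The main obstacle.} The hard part will be the careful gadget analysis: verifying that the assigned weights $w(q_i)$ and $w(r_i)$ on the flat claw and flat vault exactly encode the four cases of Lemma~\ref{l:4cases} (the $a,b,c,d$ quantities) so that a maximum stable set of the extension corresponds bijectively, up to the known additive constants, to a maximum stable set of the original graph respecting the endpoint parity. This is where Lemmas~\ref{l:ineqbasic}, \ref{l:ineqEven} and~\ref{l:ineqOdd} are needed to guarantee non-negative weights and correctness, and it is easy to make an off-by-one parity error. Once this local correctness is nailed down, fitting everything into the ${\cal O}(n^5)$ bound is routine: the number of long flat paths in each (non-extended) Berge basic class is bounded, so only a bounded number of gadgets appear, and the enumerations described above stay within ${\cal O}(n^5)$.
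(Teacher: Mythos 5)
Your overall plan follows the same lines as the paper's (whose proof is only sketched there: bipartite extensions stay bipartite by Lemma~\ref{l:staybip}, and the remaining classes are dealt with by exploiting their rigid structure and the locality of the gadgets), but one of your steps is genuinely wrong. In the subcase of the complement of a line-graph of a bipartite graph $B$, a maximum weighted stable set is indeed a maximum weighted clique of $L(B)$, but a clique of $L(B)$ is a set of pairwise \emph{intersecting} edges of $B$, that is, a star of $B$ (there are no triangles in $B$), not a matching. Matchings of $B$ correspond to stable sets of $L(B)$, i.e., to cliques of your input graph --- the opposite object. So the algorithm you describe for this subcase returns a clique instead of a stable set. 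The fix is one line (take, over all vertices $v$ of $B$, the star at $v$ of maximum total weight), but as written the step fails.

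Two further claims you lean on are false. First, the ends of a flat path need not have degree~2 (only the interior vertices must), so the gadgets do not attach ``to vertices of bounded degree''; they attach to the whole sets $N_G(p_1)\sm\{p_2\}$ and $N_G(p_k)\sm\{p_{k-1}\}$, which can be arbitrarily large --- what actually matters is only that each gadget has constant size and exactly two attachment sets. Second, your running-time wrap-up rests on ``the number of long flat paths in each non-extended basic class is bounded'', which is false for bipartite, path-cobipartite and path-double split graphs (a path-cobipartite graph can contain linearly many disjoint connector paths of length~3, each of them a long flat path); it is true, and needed, only for the four complement classes. Your per-class algorithms happen to survive this, because bipartite extensions stay bipartite and because your clique-representative enumeration followed by independent constant-size gadget optimizations never needs the number of gadgets to be bounded --- but then you should argue the complexity that way rather than invoke the false statement. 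Finally, a smaller confusion: for this lemma the gadget weights are arbitrary input data, so there is nothing to ``verify'' against the $a,b,c,d$ of Lemma~\ref{l:4cases}; for each gadget one simply computes its four optimal values by brute force. The consistency of specific block weights with those quantities is the content of Lemmas~\ref{l:evenBlock} and~\ref{l:oddBlock}, which concern the decomposition algorithm, not the present lemma, and Lemmas~\ref{l:ineqbasic}, \ref{l:ineqEven} and~\ref{l:ineqOdd} are not needed here.
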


Extensions of line-graphs are more difficult to handle than other
extensions because an extension of a line-graph may fail to be a
line-graph and a line-graph may contain arbitrarily many disjoint
long flat paths.  Note that in what follows, extensions are not
required to be parity-preserving.

Let $G'$ be a weighted graph that is an extension of a line-graph
$G=L(R)$.  We now define the \emph{transformation}\index{transformation} $G''$ of $G'$.  The
structure of $G''$, i.e.,\ its vertices and edges, depends only on $G$
but the weights given to its vertices depend only on $G'$.  Let $\cal
M$ be the set of vertex-disjoint flat paths of length at least~3 of
$G$ that are extended to get~$G'$.  So, ${\cal M} = \{P^1, \dots,
P^k\}$ and we put $P^i = p^i_1 \tp \cdots \tp p^i_{l_i}$.  For all
$1\leq i \leq k$, path $P^i$ is replaced in $G'$ by a set $Q^i$ that
induces either a flat claw on vertices $q^i_1, q^i_2, q^i_3, q^i_4$ or
a flat vault on vertices $r^i_1, r^i_2, r^i_3, r^i_4, r^i_5, r^i_6$.
For all flat paths $P^i$ of $\cal M$, we put $A^i_2= N_{G}(p^i_1)\sm
\{p^i_2\}$, $B^i_2 = N_{G}(p^i_{l_i}) \sm \{p^i_{l_i - 1}\}$.

For all $1\leq i \leq k$, we prepare a set $S^i$ of four new vertices
$p^i, p'^i, x^i, y^i$.  The graph $G''$ has vertex set:
$$V(G'') = (S^1 \cup \cdots \cup S^k ) \cup V(G) \sm (P^1 \cup \cdots \cup P^k).$$

\noindent Edges of $G''$ depend only on edges of $G$.  They are:

\begin{itemize}
\item $p^ip'^i$, $x^ip^i$, $p^iy^i$, $y^ip'^i$, $p'^ix^i$, $i= 1,
  \dots, k$;
\item $uv$ for all $u, v \in V(G) \cap V(G'')$ such that $uv\in
  E(G)$;
\item $p^iu$ for all $u\in A^i_2 \cap V(G'')$, $i=1, \dots, k$;
\item $p'^iu$ for all $u\in B^i_2 \cap V(G'')$, $i=1, \dots, k$;
\item $x^iu$ for all $u\in (A^i_2 \cup B^i_2) \cap V(G'')$, $i=1,
  \dots, k$;
\item $p^ip^j$ for all $i, j$ such that $p^i_1p^j_1 \in E(G)$;
\item $p'^ip^j$ for all $i, j$ such that $p^i_{l_i}p^j_1 \in E(G)$;
\item $p'^ip'^j$ for all $i, j$ such that $p^i_{l_i}p^j_{l_j} \in
  E(G)$;
\item $x^ip^j$ for all $i, j$ such that $p^i_1p^j_1 \in E(G)$ or
  $p^i_{l_i}p^j_1 \in E(G)$;
\item $x^ip'^j$ for all $i, j$ such that $p^i_1p^j_{l_j} \in E(G)$ or
  $p^i_{l_i}p^j_{l_j} \in E(G)$;
\item $x^ix^j$ for all $i, j$ such that $p^i_1p^j_{1} \in E(G)$ or
  $p^i_{1}p^j_{l_j} \in E(G)$ or $p^i_{l_i}p^j_{1} \in E(G)$ or
  $p^i_{l_i}p^j_{l_j} \in E(G)$.
\end{itemize}

\noindent We define the following numbers that depend only on $G'$:

\begin{itemize}
\item $a^i = \alpha(G'[\{q^i_1, q^i_2, q^i_4\}])$ for all $i$ such
  that $Q^i$ is a flat claw of $G'$;
\item $a^i = \alpha(G'[\{r^i_1, r^i_3, r^i_4, r^i_5\}])$ for all
  $i$ such that $Q^i$ is a flat vault of $G'$;
\item $b^i = \alpha(G'[\{q^i_2, q^i_3, q^i_4\}])$ for all $i$ such
  that $Q^i$ is a flat claw of $G'$;
\item $b^i = \alpha(G'[\{r^i_2, r^i_3, r^i_4, r^i_6\}])$ for all
  $i$ such that $Q^i$ is a flat vault of $G'$;
\item $c^i = \alpha(G'[\{q^i_2, q^i_4\}])$ for all $i$ such that
  $Q^i$ is a flat claw of $G'$;
\item $c^i = \alpha(G'[\{r^i_3, r^i_4\}])$ for all $i$ such that
  $Q^i$ is a flat vault of $G'$;
\item $d^i = \alpha(G'[\{q^i_1, q^i_2, q^i_3, q^i_4\}])$ for all
  $i$ such that $Q^i$ is a flat claw of $G'$;
\item $d^i = \alpha(G'[\{r^i_1, r^i_2, r^i_3, r^i_4, r^i_5,
  r^i_6\}])$ for all $i$ such that $Q^i$ is a flat vault of $G'$.
\end{itemize}

\noindent Note that from the definitions, $c^i \leq a^i, b^i \leq d^i$
for all $i=1, \dots, k$.  We give the following weights to the
vertices of $G''$ (they depend on the weights in $G'$):

\begin{itemize}
\item $w_{G''}(u) = w_{G'}(u)$ for all $u\in V(G) \cap V(G'')$;
\item $w_{G''}(p^i) = a^i$, $i= 1, \dots, k$;
\item $w_{G''}(p'^i) =b^i$, $i= 1, \dots, k$;
\item $w_{G''}(y^i) = c^i$, $i= 1, \dots, k$;
\item $w_{G''}(x^i) = d^i-c^i$, $i= 1, \dots, k$.
\end{itemize}

A \emph{multigraph}\index{multigraph} is a graph where multiple edges between vertices
are allowed (but we do not allow loops). 

\begin{lemma}[with Vu\v skovi\'c \cite{nicolas.kristina:2-join}]
  \label{l:L(multi)}
  $G''$ is the line-graph of a multigraph.
\end{lemma}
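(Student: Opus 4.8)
The plan is to exhibit an explicit multigraph $R''$ with $G'' = L(R'')$; since the \emph{structure} of $G''$ depends only on $G$ (the weights and the claw/vault distinction being irrelevant to the present statement), I may work purely combinatorially. First I would fix a graph $R$ with $G = L(R)$ and translate the flat paths of $\mathcal{M}$ into $R$. A flat path $P^i = p^i_1 \cdots p^i_{l_i}$ of length at least $3$ corresponds in $R$ to a path $v^i_0 v^i_1 \cdots v^i_{l_i}$ with $p^i_j = v^i_{j-1}v^i_j$, all of whose interior vertices $v^i_1, \dots, v^i_{l_i-1}$ have degree $2$ in $R$ (this is exactly the line-graph translation of ``the interior vertices of $P^i$ have degree $2$ and the ends are non-adjacent with no common neighbour''); in particular $v^i_0 \neq v^i_{l_i}$, and $A^i_2$ (resp.\ $B^i_2$) is precisely the set of edges of $R$ incident to $v^i_0$ (resp.\ $v^i_{l_i}$) other than $p^i_1$ (resp.\ $p^i_{l_i}$). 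Because the paths of $\mathcal{M}$ are vertex-disjoint in $G$, the only coincidences that can occur among the $v$'s are between \emph{end}-vertices $v^i_0, v^i_{l_i}, v^j_0, v^j_{l_j}$.

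Next I would build $R''$ from $R$ as follows. For each $i$, delete the edges $p^i_1, \dots, p^i_{l_i}$ and the degree-$2$ interior vertices $v^i_1, \dots, v^i_{l_i-1}$, keep the two end-vertices $v^i_0, v^i_{l_i}$ together with all background edges, introduce two new vertices $w^i, z^i$, and add the four edges $p^i = v^i_0 w^i$, $p'^i = v^i_{l_i} w^i$, $x^i = v^i_0 v^i_{l_i}$ and $y^i = w^i z^i$, with $z^i$ a new vertex of degree one. Since $v^i_0 \neq v^i_{l_i}$ and $w^i, z^i$ are new, no loops are created, but parallel edges may appear (for instance $x^i$ parallel to a background edge or to some $x^j$), so $R''$ is genuinely a multigraph. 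The intended identification is the obvious one: a background vertex of $G''$ is the corresponding edge of $R''$, and the four vertices of $S^i$ are $p^i, p'^i, x^i, y^i$.

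Finally I would verify $G'' = L(R'')$ by checking that each edge-type in the definition of $G''$ is exactly a shared-endpoint incidence in $R''$. Within $S^i$ the edges incident to $w^i$ give $p^i p'^i$, $p^i y^i$, $p'^i y^i$; the shared endpoint $v^i_0$ gives $x^i p^i$ and $v^i_{l_i}$ gives $x^i p'^i$; while $x^i y^i$ is a non-edge because $x^i$ and $y^i$ share no endpoint, so $G''[S^i]$ is the diamond $K_4$ minus $x^iy^i$, as required. The incidences at $v^i_0$ (namely $A^i_2 \cup \{p^i, x^i\}$) reproduce the edges $p^i u$ and $x^i u$ for $u \in A^i_2$, and symmetrically at $v^i_{l_i}$ for $B^i_2$; the inter-path edges $p^i p^j$, $p'^i p^j$, $p'^i p'^j$, $x^i p^j$, $x^i p'^j$, $x^i x^j$ each translate into the appropriate coincidence among $\{v^i_0, v^i_{l_i}\}$ and $\{v^j_0, v^j_{l_j}\}$, which by the first step is exactly the condition of the form $p^i_1 p^j_1 \in E(G)$ listed in the definition; and background--background edges are unchanged. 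Conversely every shared-endpoint incidence of $R''$ is one of these, since $w^i$ meets only $p^i, p'^i, y^i$ and $z^i$ only $y^i$.

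The routine but genuinely tedious part --- and the main obstacle --- is this last bookkeeping: one must rule out \emph{spurious} adjacencies, for example checking that a background edge lying in $A^i_2 \cap B^i_2$, or two paths sharing an end-vertex, produce exactly the incidences prescribed by the definition of $G''$ and no others. The cleanest way to organise it is to process the verification vertex-of-$R''$ by vertex-of-$R''$ (each $v^i_0$, $v^i_{l_i}$, $w^i$, $z^i$, and each untouched vertex of $R$), reading off the clique of $L(R'')$ that it induces and matching it against the edge list; the only non-obvious input needed is the flat-path/line-graph dictionary established in the first step.
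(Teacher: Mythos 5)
Your proposal is correct and follows essentially the same route as the paper: you construct the identical multigraph $R''$ (your $v^i_0, v^i_{l_i}, w^i, z^i$ are the paper's $r^i_1, r^i_{l_i+1}, u^i, v^i$), make the same identification of the edges $x^i, p^i, p'^i, y^i$ with the vertices of $S^i$, and observe, as the paper does, that coinciding path-ends force parallel edges and hence a genuine multigraph. The paper simply declares the final incidence bookkeeping ``a routine matter,'' so your more explicit verification is, if anything, a fuller write-up of the same argument.
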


\begin{proof}
  Path $P^i$ of $G$ corresponds to a path $R^i = r^i_1 \tp \cdots \tp
  r^i_{l_i+1}$ of $R$.  For all $i=1, \dots, k$, path $R^i$ is induced
  and has interior vertices of degree 2 in $R$ because $P^i$ is a flat
  path of $G$.  Since paths of $\cal M$ are vertex-disjoints, paths
  $R^1, \dots, R^k$ are edge-disjoint (but they may share
  end-vertices).  Now let us build a multigraph $R''$ from $R$.  We
  delete the interior vertices of all $R^i$'s.  For each $R^i$, we add
  two vertices $u^i$, $v^i$ and the edges $r^i_1r^i_{l_i+1}$,
  $u^ir^i_1$, $u^ir^i_{l_i+1}$ and $u^iv^i$.

  It is a routine matter to check that $L(R'')$ is isomorphic to
  $G''$.  Edge $r^i_1r^i_{l_i+1}$ corresponds to vertex $x^i$, edge
  $u^ir^i_1$ corresponds to vertex $p^i$, edge $u^ir^i_{l_i+1}$
  corresponds to $p'^i$ and edge $u^iv^i$ corresponds to vertex $y^i$.
  Note that possibly, two paths $R^i$ and $R^j$ have the same ends.
  For instance $r^i_1=r^j_1$ and $r^i_{l_i+1} = r^j_{l_j+1}$ is
  possible.  Then, the edge $r^i_1r^i_{l_i+1}$ is added twice.  This
  is why we need $R''$ to be a multigraph.
\end{proof}

\begin{lemma}[with Vu\v skovi\'c \cite{nicolas.kristina:2-join}]
  \label{l:Linealpha}
  $\alpha(G'') = \alpha(G')$.
\end{lemma}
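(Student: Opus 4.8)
The plan is to prove the two inequalities $\alpha(G'') \geq \alpha(G')$ and $\alpha(G'') \leq \alpha(G')$ by a weight-preserving, gadget-by-gadget exchange. The heart of the matter is that each gadget $S^i = \{p^i, p'^i, x^i, y^i\}$ of $G''$ is a diamond (it is $K_4$ minus the single edge $x^iy^i$, with shared edge $p^ip'^i$), so its only inclusion-wise maximal stable sets are $\{p^i\}$, $\{p'^i\}$, $\{y^i\}$ and $\{x^i, y^i\}$, of weights $a^i$, $b^i$, $c^i$ and $d^i$ respectively (using $w(x^i)+w(y^i) = (d^i-c^i)+c^i = d^i$). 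Moreover $p^i$ is complete to $A^i_2$, the vertex $p'^i$ to $B^i_2$, the vertex $x^i$ to $A^i_2 \cup B^i_2$, while $y^i$ has no neighbour outside $S^i$. Hence these four configurations block, respectively, exactly the external sets $A^i_2$, $B^i_2$, $\emptyset$ and $A^i_2\cup B^i_2$. This is precisely the list of behaviours that a side $Q^i$ realises in $G'$: viewing $Q^i$ as an $X_1$ whose end $A_1^i$ complete to $A^i_2$ is $\{q^i_1\}$ (resp.\ $\{r^i_1, r^i_5\}$), whose end $B_1^i$ complete to $B^i_2$ is $\{q^i_3\}$ (resp.\ $\{r^i_2, r^i_6\}$), and whose interior $C_1^i$ (which one checks has no external neighbour), the numbers $a^i, b^i, c^i, d^i$ are exactly $\alpha(G'[A_1^i\cup C_1^i])$, $\alpha(G'[B_1^i\cup C_1^i])$, $\alpha(G'[C_1^i])$, $\alpha(G'[Q^i])$, and Lemma~\ref{l:4cases} describes the four ways a maximum stable set can meet $Q^i$.

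For $\alpha(G'')\ge \alpha(G')$ I would take a maximum stable set $S'$ of $G'$ and build $S''$ as follows: keep $S' \cap W$, where $W = V(G)\cap V(G'')$ is the common core, and on each $S^i$ install the configuration dictated by which of the ends $A_1^i, B_1^i$ are met by $S'$ — namely $\{x^i,y^i\}$ if both are met, $\{p^i\}$ if only the $A$-end, $\{p'^i\}$ if only the $B$-end, and $\{y^i\}$ if neither. Each configuration's weight is at least $w(S'\cap Q^i)$ (for instance, if $S'$ meets no end then $S'\cap Q^i$ is a stable set of $G'[C_1^i]$, of weight at most $c^i$), whence $w(S'')\ge w(S')$; non-negativity of the gadget weights $d^i-c^i$, etc., is exactly the local instance $c^i\le a^i, b^i\le d^i$ in the spirit of Lemma~\ref{l:ineqbasic}. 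The reverse inequality is symmetric: a maximum stable set of $G''$ may be assumed to induce one of the four maximal configurations on each $S^i$ (replace a lone $x^i$, or an empty trace, by adding the free vertex $y^i$), and each configuration is then traded for a maximum stable set of the matching subgraph of $Q^i$ of equal weight.

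The step that needs real care — and that I expect to be the main obstacle — is checking that the sets produced by these exchanges are genuinely stable, i.e.\ that the inter-gadget edges of $G''$ (those among the $p^j, p'^j, x^j$) are respected. These edges are defined precisely to mirror the $G$-adjacencies of the path-ends $p^j_1, p^j_{l_j}$, so one must verify, for each edge type, that $p^i, p^j\in S''$ forces $p^i_1p^j_1\notin E(G)$ (hence no conflict in $G'$ either), and similarly for every edge involving an $x$-vertex, where $x^i$ must be read as representing \emph{both} ends of $Q^i$ at once. Here the defining properties of the flat vault — that $r^i_1, r^i_5$ (and $r^i_2, r^i_6$) have identical neighbourhoods outside $Q^i$ — are exactly what allow a single vertex $p^i$ (resp.\ $p'^i$) to stand in for a two-vertex end, making the emulation faithful. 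Once this boundary bookkeeping is discharged, together with the observation that $C_1^i$ and $y^i$ attach to nothing outside their gadget, the two inequalities combine to give $\alpha(G'') = \alpha(G')$.
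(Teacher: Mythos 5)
Your proposal is correct and takes essentially the same route as the paper's own argument (omitted here, given in the cited joint work with Vu\v skovi\'c): the trace of a stable set on each gadget $Q^i$ --- one of the four cases of Lemma~\ref{l:4cases} applied with $X_1 = Q^i$ --- is exchanged for the configuration $\{p^i\}$, $\{p'^i\}$, $\{y^i\}$ or $\{x^i, y^i\}$ of equal or larger weight, and the exchange is stable in both directions precisely because the inter-gadget edges of $G''$ mirror the $G$-adjacencies of the path ends and the two vertices of each vault end are non-adjacent external twins. One cosmetic slip: $\{y^i\}$ is not a maximal stable set of the diamond $S^i$ (it sits inside $\{x^i, y^i\}$), but your normalization step --- adding $y^i$, which has no neighbour outside $S^i$, to a lone $x^i$ or to an empty trace --- is what the argument actually needs and is stated correctly.
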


\begin{lemma}[with Vu\v skovi\'c \cite{nicolas.kristina:2-join}]
  \label{l:optLineGraphs}
  There is an algorithm with the following specification:
  \begin{description}
  \item[Input: ] A weighted graph $G'$ that is an extension of a
    line-graph $G$. 
  \item[Output: ] A maximum weighted stable set of $G'$.
  \item[Running time: ] ${\cal O} (n^3)$
  \end{description}
\end{lemma}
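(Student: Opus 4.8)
The plan is to reduce the maximum weighted stable set problem on $G'$ to a maximum weighted matching problem, through the transformation $G''$ defined just before Lemma~\ref{l:L(multi)}. First I would build $G''$ from $G'$. Recall that $G'$ is an extension of a line-graph $G=L(R)$, and that the extension operation marks the vertices of each flat claw or flat vault with the integer $k$ equal to the length of the flat path it replaced. Reading off these marks, one recovers the family of gadgets $Q^1,\dots,Q^k$, hence the set $\mathcal{M}$ of extended flat paths and the underlying line-graph $G$ (by contracting each gadget back to a flat path of the marked length). A linear-time line-graph recognition procedure then yields a root graph $R$ with $G=L(R)$, and from $R$ one constructs the multigraph $R''$ exactly as in the proof of Lemma~\ref{l:L(multi)}. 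The weights of $G''$ are assigned by the formulas preceding Lemma~\ref{l:L(multi)}; each of the quantities $a^i,b^i,c^i,d^i$ is the $\alpha$-value of an induced subgraph of $G'$ on at most six vertices, hence computable in constant time. All of this costs $O(n^2)$.

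Next I would solve the stable set problem on $G''$. By Lemma~\ref{l:L(multi)}, $G''=L(R'')$, and by construction the weight of a vertex of $G''$ equals the weight of the corresponding edge of $R''$. A stable set of $L(R'')$ is exactly a matching of $R''$, and this correspondence preserves weight; multiple edges cause no difficulty since each is simply one more candidate edge. Therefore a maximum weighted stable set $S''$ of $G''$ is obtained from a maximum weighted matching $M$ of $R''$, which can be computed by Edmonds' weighted matching algorithm. Since $R''$ has $O(n)$ vertices and $O(n)$ edges (each edge corresponds to a vertex of $G''$, and $|V(G'')|=O(n)$), this runs in time $O(n^3)$, the dominant cost of the whole procedure.

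Finally I would convert $S''$ back into a maximum weighted stable set $S'$ of $G'$. By Lemma~\ref{l:Linealpha} we have $\alpha(G'')=\alpha(G')$, so $w(S'')=\alpha(G')$ and it suffices to exhibit a stable set of $G'$ of that weight. The conversion is local to each gadget: on the vertices of $V(G)\cap V(G'')$ the set $S''$ already lies in $G'$ and is kept unchanged, while for each extended path $P^i$ the intersection $S''\cap S^i$, where $S^i=\{p^i,p'^i,x^i,y^i\}$, records which of the four overlap cases of Lemma~\ref{l:4cases} applies to the side $P^i$, i.e.\ whether $S'$ should meet the $A^i_2$-end, the $B^i_2$-end, both, or neither. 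In each case one replaces these gadget vertices by a locally optimal stable set of the claw or vault $Q^i$ of $G'$ realizing the corresponding value among $a^i,b^i,c^i,d^i$; these values were defined precisely as those local optima, so such a choice always exists, and it keeps $S'$ stable because the adjacencies of $G''$ were designed to mirror exactly the attachments $A^i_2,B^i_2$ of $Q^i$ to the rest of $G'$. Each gadget is handled in constant time, so this step is $O(n)$.

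The main obstacle, and the step requiring the most care, is this last one: verifying that the local substitution inside each gadget indeed produces a global stable set of the claimed weight. This amounts to checking that the four cases recorded by $S''\cap S^i$ are mutually exclusive and exhaustive, that the weight bookkeeping through $a^i,b^i,c^i,d^i$ agrees with Lemma~\ref{l:4cases}, and that no edge between two distinct gadgets is violated --- precisely the correspondence that the proof of Lemma~\ref{l:Linealpha} establishes. Once that correspondence is granted, correctness and the $O(n^3)$ bound follow immediately.
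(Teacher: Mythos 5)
Your proposal is correct and follows essentially the same route as the paper: construct the transformation $G''$, invoke Lemmas~\ref{l:L(multi)} and~\ref{l:Linealpha} to reduce the problem to a maximum weighted matching computation in the multigraph $R''$ (solved by Edmonds' algorithm in $O(n^3)$), and translate the optimal solution back gadget by gadget using the values $a^i, b^i, c^i, d^i$, whose defining subsets realize the required local stable sets. The one detail to repair is that you should not literally rebuild $G$ with flat paths of the marked lengths (these integers may be arbitrarily large relative to $|V(G')|$); since the vertices, edges and weights of $G''$ do not depend on those lengths, contracting each gadget to a representative flat path of constant length yields the same $G''$ and keeps the construction within the stated time bound.
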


With all the results above, we can now sketch the algorithm for
cliques and stable sets.  First, build a decomposition tree with usual
blocks of decomposition.  By Theorem~\ref{th.3} and
Lemma~\ref{l:recurseBerge}, such a tree exists (up to a unique
complementation at the beginning).  By Lemmas~\ref{k4} and~\ref{k23},
we can be sure that the tree uses only extreme 2-joins and that all
the marker path in the nodes of the tree are vertex-disjoint.  Then
reprocess this tree to use the relevant even or odd block according to
the parity of the 2-join (by replacing marker paths by flat claws or
flat vaults).  Note that the leaves of the tree are parity preserving
extension of graphs from $\Cl{Berge}{basic}$, so by
Lemmas~\ref{l:optBergeBasic} and~\ref{l:optLineGraphs} we know how to
optimize them.  Then, by Lemmas~\ref{komega}, \ref{l:evenBlock}
and~\ref{l:oddBlock} we can backtrack a maximum weighted clique or
stable set to the root of the tree.

We can sum up all the results in a single theorem.  Note that we
include also even-hole-free graphs in the theorem.  We did not mention
them above, but a similar approach, relying on~\cite{dsv:ehf}, allows
to deal with them.

\begin{theorem}[with Vu\v skovi\'c \cite{nicolas.kristina:2-join}]
  \label{l:CS}
  There is an algorithm with the following specification:
  \begin{description}
  \item[Input: ] A weighted graph $G$ that is either a Berge graph
    with no balanced skew partition and no homogeneous pair or an
    even-hole-free graph with no star cutset. 
  \item[Output: ] A maximum weighted stable set and a maximum weighted
    clique of $G$.
  \item[Running time: ] ${\cal O} (n^9)$
  \end{description}
\end{theorem}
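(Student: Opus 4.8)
The plan is to describe a recursive decomposition algorithm and to establish its correctness and running time by assembling the lemmas of this section, treating the maximum weighted clique and the maximum weighted stable set through one common decomposition tree but with different blocks and backtracking rules. The structural engine throughout is Theorem~\ref{th.3}: a graph in $\Cl{Berge}{no cutset}$ that is not in $\Cl{Berge}{basic}$ has, in $G$ or in $\overline{G}$, a proper non-path 2-join, and by Lemma~\ref{l:recurseBerge} the parity-preserving blocks of such a 2-join stay in $\Cl{Berge}{no cutset}$, with a single complementation possibly needed once at the root (after one step the blocks have no proper 2-join in the complement). So first I would build, top-down, a decomposition tree whose internal nodes are ordinary parity-preserving blocks and whose leaves lie in $\Cl{Berge}{basic}$.

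For the clique this tree already suffices: at each internal node I would use the clique-block, apply Lemma~\ref{komega} to get $\omega(G)=\omega(G_2^k)$, compute $\omega$ directly at the (basic) leaves, and backtrack, recording at each step which marker vertex is selected, so as to recover an actual maximum weighted clique at the root.

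The stable set is the crux, because a maximum stable set may meet both $A_1$ and $B_1$: the four cases of Lemma~\ref{l:4cases} force us to encode the four quantities $a,b,c,d$ into a block, which is exactly what the even and odd blocks do (Lemmas~\ref{l:evenBlock} and~\ref{l:oddBlock}, whose hypotheses $a+b\le c+d$ and $c+d\le a+b$ are guaranteed by Lemmas~\ref{l:ineqbasic}, \ref{l:ineqEven} and~\ref{l:ineqOdd} according to whether the 2-join is $X_1$-even or $X_1$-odd). The obstacle, and the reason the construction is delicate, is that these $\alpha$-preserving blocks contain a flat claw or a flat vault and hence are \emph{not} in $\Cl{Berge}{no cutset}$, so they cannot be fed back into Theorem~\ref{th.3}. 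The remedy is a two-phase strategy: build the tree with ordinary blocks as above, then \emph{reprocess} it, substituting at each internal node the appropriate even or odd block for the marker path. For this substitution to be well defined I would require the marker paths of the whole tree to be pairwise vertex-disjoint, which Lemma~\ref{k23} secures, and each 2-join used to be extreme, which Lemma~\ref{k4} provides for minimally-sided 2-joins since $\Cl{Berge}{no cutset}\subseteq\Cl{}{no sc}$. After reprocessing, every leaf is a parity-preserving extension of a graph in $\Cl{Berge}{basic}$ (Lemma~\ref{l:staybip} keeps bipartiteness), which I would optimize by Lemma~\ref{l:optBergeBasic}, except for extensions of line-graphs, handled through the transformation $G''$ of Lemmas~\ref{l:L(multi)}, \ref{l:Linealpha} and~\ref{l:optLineGraphs}. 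Backtracking then uses Lemmas~\ref{l:evenBlock} and~\ref{l:oddBlock}: reading which vertices of the flat claw or flat vault lie in a maximum stable set of the block identifies the case of Lemma~\ref{l:4cases} that holds, and hence how to assemble the stable set on the larger side.

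Finally I would bound the running time. Extremality (Lemma~\ref{k4}) makes the decomposition tree a caterpillar with $O(n)$ internal nodes, so the whole recursion costs $O(n)$ times the cost of one step; the dominant per-step cost is the search for a minimally-sided non-path 2-join with the algorithm of~\cite{conforti.c.k.v:eh2}, which is the bottleneck and yields the announced $O(n^9)$ bound, the leaf computations being cheaper. The main difficulty I expect is not the complexity accounting but making the two-phase block substitution simultaneously correct (preserving $\alpha$ and permitting reconstruction of an actual set) and legal (disjoint marker paths, extreme and parity-classified 2-joins); once Lemmas~\ref{k4}, \ref{k23}, \ref{l:evenBlock} and~\ref{l:oddBlock} are in place this reduces to careful bookkeeping. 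The even-hole-free case I would dispatch by the same template, replacing Theorem~\ref{th.3} by the analogous decomposition of~\cite{dsv:ehf}.
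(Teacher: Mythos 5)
Your proposal is correct and follows essentially the same route as the paper: build the decomposition tree with ordinary parity-preserving blocks via Theorem~\ref{th.3} and Lemma~\ref{l:recurseBerge}, use Lemmas~\ref{k4} and~\ref{k23} to ensure extreme 2-joins with vertex-disjoint marker paths, reprocess the tree by substituting even/odd blocks (flat claws/vaults), optimize the leaves by Lemmas~\ref{l:optBergeBasic} and~\ref{l:optLineGraphs}, and backtrack with Lemmas~\ref{komega}, \ref{l:evenBlock} and~\ref{l:oddBlock}, dispatching the even-hole-free case via~\cite{dsv:ehf}. The two-phase ``build then reprocess'' idea and the complexity accounting you give are exactly those of the paper.
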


\section{Coloring Berge graphs using subroutines for cliques and stable sets}
\label{sec:Lovasz}

In this section, we present results of Gr\"ostchel, Lov\'asz and
Schrijver~\cite{grostchel.l.s:color} in the context of our class
$\Cl{Berge}{no cutset}$.  Their usual proofs rely on the stability of
perfect graphs under taking induced subgraphs, complementation, and
Lov\'asz' replication of vertices.  Our class $\Cl{Berge}{no cutset}$
is not closed under taking induced subgraphs and the Replication Lemma
also fails here.  But we can simulate these operations with weights on
the vertices.  The proofs of Gr\"ostchel, Lov\'asz and Schrijver are
very short and beautiful, so we include them, rewritten in our
context.  More generally, the results of this section should hold for
any class of perfect graphs under the assumption that we can find
maximum weighted cliques and maximum weighted stable sets for the
class.  We do not use other structural properties.  Note that the
following theorem, known as the Perfect Graph Theorem, was proved
without assuming the Strong Perfect Graph Theorem.

\begin{theorem}[Lov\'asz \cite{lovasz:nh}]
  \label{th:lovasz}
  A graph is perfect if and only if its complement is perfect.
\end{theorem}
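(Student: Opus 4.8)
The statement to prove is the Perfect Graph Theorem: a graph $G$ is perfect if and only if its complement $\overline{G}$ is perfect. The context makes clear that the intended proof is the one of Gr\"otschel, Lov\'asz and Schrijver phrased via weights, using as black boxes the ability to compute maximum weighted cliques and maximum weighted stable sets (Theorem~\ref{l:CS}), together with the polyhedral machinery rather than the structure theorem. The plan is to establish a weighted good characterization of perfection and observe that it is visibly self-complementary.

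\medskip

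First I would recall the weighted reformulation of perfection. A graph $G$ is perfect if and only if for every non-negative integer weighting $w$ on $V(G)$ one has $\omega_w(G) = \overline{\chi}_w(G)$, where $\omega_w$ is the maximum weight of a clique and $\overline{\chi}_w$ is the minimum number of stable sets needed to cover each vertex $v$ exactly $w(v)$ times (the weighted clique cover / fractional-to-integral colouring quantity). The key point is that replication of a vertex $v$ by $w(v)$ copies turns this weighted statement about $G$ into the unweighted statement $\omega(G') = \chi(G')$ for the blow-up graph $G'$, so weighted perfection of $G$ is equivalent to ordinary perfection being inherited by all blow-ups; this is exactly the content of Lov\'asz' Replication Lemma, which I would state and prove in the blow-up form (adding a twin $v'$ to $v$ preserves $\chi=\omega$ on all induced subgraphs). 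Dually, the same weighted identity in $\overline{G}$ reads $\alpha_w(G) = \overline{\theta}_w(G)$, the maximum weighted stable set equals the minimum weighted clique cover.

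\medskip

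The central step is the polyhedral argument. Associate to $G$ the stable-set polytope and the clique-constrained polytope $\mathrm{QSTAB}(G)=\{x\ge 0 : \sum_{v\in K} x_v \le 1 \text{ for all cliques } K\}$. I would show that $G$ is perfect if and only if $\mathrm{QSTAB}(G)$ is integral, i.e.\ equals the convex hull of incidence vectors of stable sets. The forward direction uses that for perfect $G$ the weighted identity $\alpha_w=\overline{\theta}_w$ holds for all $w$, which by LP duality says the linear program $\max\{w^\top x : x\in \mathrm{QSTAB}(G)\}$ has an integral optimum for every $w$, forcing integrality of the polytope. The crucial self-dual observation is that $\mathrm{QSTAB}(G)$ is integral if and only if $\mathrm{QSTAB}(\overline{G})$ is integral: this is the antiblocking duality of Fulkerson, since the stable-set polytope of $G$ and the clique polytope of $\overline{G}$ are an antiblocking pair, and a pair of antiblocking polyhedra is integral simultaneously. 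Because integrality of $\mathrm{QSTAB}$ is my certificate for perfection, and this certificate transfers verbatim between $G$ and $\overline{G}$, the theorem follows. The ability to actually compute $\omega_w$ and $\alpha_w$ from Theorem~\ref{l:CS} is what lets me run the LP duality concretely inside the class $\Cl{Berge}{no cutset}$, rather than relying on closure under induced subgraphs.

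\medskip

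The main obstacle I anticipate is the polytope integrality step and its antiblocking symmetry: one must prove carefully that the weighted equalities for all $w$ are equivalent to vertices of $\mathrm{QSTAB}$ being $0/1$, and that Fulkerson's antiblocking theorem applies to give simultaneous integrality. A secondary subtlety, specific to this document's setting, is that $\Cl{Berge}{no cutset}$ is \emph{not} closed under taking induced subgraphs and the Replication Lemma fails there; so I would be careful to phrase replication as a \emph{weighting} of the fixed graph and to invoke the clique/stable-set subroutines of Theorem~\ref{l:CS} on weighted instances, exactly as the surrounding text advertises, rather than passing to blow-up graphs that might leave the class. With these two points handled, the remaining verifications (LP duality, the Replication Lemma in weighted form, and the antiblocking bookkeeping) are routine and short, matching the brief and self-complementary proof the section promises.
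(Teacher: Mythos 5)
There is nothing in the paper to compare your argument against: Theorem~\ref{th:lovasz} is not proved in this document. It is quoted from Lov\'asz's paper~\cite{lovasz:nh} as a known result (the surrounding text only remarks that it was proved without assuming the Strong Perfect Graph Theorem), and it is then used as a black box inside the proof of Lemma~\ref{l:compS}. Your reading of the context also reverses the paper's logic: Theorem~\ref{l:CS} is an algorithmic statement about the restricted class of Berge graphs with no balanced skew partition and no homogeneous pair, whereas Theorem~\ref{th:lovasz} is a statement about \emph{all} graphs, so class-restricted subroutines cannot enter its proof; in the paper the dependence runs the other way (Lemma~\ref{l:compS} cites Theorem~\ref{th:lovasz}, not conversely).

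Judged as a free-standing proof, your sketch follows the classical Fulkerson--Lov\'asz polyhedral route, but the forward direction of your central equivalence is circular as written. From the perfection of $G$ and the Replication Lemma what one obtains is $\omega_w(G)=\overline{\chi}_w(G)$ for every weight $w$ (maximum weight of a clique equals minimum size of a stable-set cover with multiplicities $w$); the LP-duality sandwich then yields integrality of $\mathrm{QSTAB}(\overline{G})$, \emph{not} of $\mathrm{QSTAB}(G)$. The identity you invoke instead, $\alpha_w(G)=\overline{\theta}_w(G)$, is --- as you yourself observe a paragraph earlier --- the weighted perfection identity of $\overline{G}$; via replication it is \emph{equivalent} to the conclusion of the theorem, so using it ``for perfect $G$'' assumes exactly what is to be proved. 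The non-circular organization is: replication gives $\mathrm{STAB}(\overline{G})=\mathrm{QSTAB}(\overline{G})$; Fulkerson's antiblocking involution ($\mathrm{QSTAB}(G)=A(\mathrm{STAB}(\overline{G}))$ together with $A(A(P))=P$) transfers this to $\mathrm{STAB}(G)=\mathrm{QSTAB}(G)$; and then a genuinely separate argument --- for instance complementary slackness producing a clique that meets every maximum stable set, plus induction on the number of vertices --- converts integrality of $\mathrm{QSTAB}(G)$ into $\alpha(H)=\theta(H)$ for every induced subgraph $H$ of $G$, which is precisely the perfection of $\overline{G}$. This last conversion is also where your ``routine'' claim undersells the work: integrality of the primal polytope does not by itself give the integral covering min-max that perfection requires. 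Your ingredients (replication and antiblocking duality) are the right ones, but they must be re-ordered in this way for the proof to close.
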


\begin{lemma}
  \label{l:compS}
  There is an algorithm with the following specification:
  \begin{description}
  \item[Input: ] A graph $G$ in $\Cl{Berge}{no cutset}$, and a sequence
    $K_1, \dots, K_t$ of maximum cliques of $G$ where $t\leq n$.
  \item[Output: ] A stable set of $G$ that intersects each $K_i$,
    $i=1, \dots, t$. 
  \item[Running time: ] ${\cal O}(n^9)$
  \end{description} 
\end{lemma}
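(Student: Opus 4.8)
The plan is to reduce the whole problem to a single maximum weighted stable set computation and to certify correctness through the weighted form of the perfect graph theorem. First I would assign to each vertex $v$ the integer weight
$$w(v) = |\{\, i : v \in K_i \,\}|,$$
which is non-negative and at most $t\le n$, then call the algorithm of Theorem~\ref{l:CS} on the weighted graph $(G,w)$ to obtain a maximum weighted stable set $S$, and simply output $S$. The entire content of the lemma is then the claim that such an $S$ must meet every $K_i$; the bookkeeping is negligible, since $w$ is computable in $O(n^2)$ time (as $t\le n$) and the single call to Theorem~\ref{l:CS} dominates the running time at $O(n^9)$.

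For correctness I would show that $\alpha_w(G)=t$, where $\alpha_w$ denotes the maximum weight of a stable set. Since $G\in\Cl{Berge}{no cutset}$ is Berge, it is perfect by the Strong Perfect Graph Theorem, so the weighted identity $\alpha_w(G)=\theta_w(G)$ holds, where $\theta_w(G)$ is the least total multiplicity of a family of cliques covering every vertex $v$ at least $w(v)$ times. The family $K_1,\dots,K_t$, each taken with multiplicity one, covers each $v$ exactly $w(v)$ times and has total multiplicity $t$, so $\theta_w(G)\le t$. Conversely, for any covering family $\{(Q_j,y_j)\}$ I would double count, using $|Q_j|\le\omega(G)$ and that each $K_i$ is a \emph{maximum} clique so $|K_i|=\omega(G)$:
$$\omega(G)\sum_j y_j \ \ge\ \sum_j y_j|Q_j| \ =\ \sum_v \Big(\sum_{j:\,v\in Q_j} y_j\Big)\ \ge\ \sum_v w(v)\ =\ \sum_{i=1}^{t}|K_i|\ =\ t\,\omega(G).$$
Dividing by $\omega(G)$ gives $\sum_j y_j\ge t$, hence $\theta_w(G)=t$ and $\alpha_w(G)=t$. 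Finally, because $S$ is stable and each $K_i$ is a clique we have $|S\cap K_i|\le 1$, so
$$t = w(S) = \sum_{i=1}^t |S\cap K_i| \le t,$$
forcing $|S\cap K_i|=1$ for every $i$; that is, $S$ meets all the given cliques.

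The step I expect to require the most care is the invocation of the weighted identity $\alpha_w(G)=\theta_w(G)$, precisely because of the warning opening this section that $\Cl{Berge}{no cutset}$ is not closed under taking induced subgraphs and that replication leaves the class. The point I would stress is that this identity is a statement about the fixed perfect graph $G$ alone: its standard proof replicates $v$ into $w(v)$ mutually non-adjacent copies to form an auxiliary graph $G_w$ with $\alpha(G_w)=\alpha_w(G)$ and $\theta(G_w)=\theta_w(G)$, and one only needs $G_w$ to be \emph{perfect}---which it is, since replication preserves perfection---not to lie in $\Cl{Berge}{no cutset}$. Thus no algorithm is ever run on $G_w$; the sole algorithmic role is played by Theorem~\ref{l:CS}, which computes the weighted optimum on $G$ itself, exactly as its weighted formulation allows. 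This cleanly separates the combinatorial certificate (the replication argument, used only to pin down the value $\alpha_w(G)=t$) from the computation (the weighted stable set routine, run only inside the class), which is the reason the lemma goes through despite the instability of the class under the operations used in the classical proofs.
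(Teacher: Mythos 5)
Your proposal is correct and takes essentially the same route as the paper: the same weighting $w(v)=|\{\,i : v\in K_i\,\}|$, a single call to the algorithm of Theorem~\ref{l:CS}, and a replication argument used purely as a combinatorial certificate (never run algorithmically) that the maximum weight of a stable set equals $t$, which forces $S$ to meet every $K_i$. The only difference is presentational: you pin down the value $t$ via the weighted duality $\alpha_w(G)=\theta_w(G)$ and an explicit double count, while the paper passes to the replicated graph $G'$ and notes that the partition of $V(G')$ into $t$ cliques of size $\omega(G)$ is an optimal coloring of $\overline{G'}$ (perfect by Lov\'asz's Perfect Graph Theorem) --- the same counting in dual form.
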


\begin{proof}
  By $\omega(G)$ we mean here the maximum \emph{cardinality} of a
  clique in~$G$.  Give to each vertex $v$ the weight $y_v= |\{
  i\suchthat v \in K_i \}|$.  Note that this weight is possibly zero.
  By Theorem~\ref{l:CS}, compute a maximum weighted stable set $S$
  of~$G$.

  Let us consider the graph $G'$ obtained from $G$ by replicating
  $y_v$ times each vertex $v$.  So each vertex $v$ in $G$ becomes a
  stable set $Y_v$ of size $y_v$ in $G'$ and two such stable sets
  $Y_u$, $Y_v$ are complete to one another if $uv\in E(G)$ and
  anticomplete otherwise.  Note that vertices of weight zero in $G$
  are not in $V(G')$.  Note also that $G'$ may fail to be in
  $\Cl{Berge}{no cutset}$, but it is easily seen to be perfect.  By
  replicating $y_v$ times each vertex $v$ of $S$, we obtain a stable
  set $S'$ of $G'$ of maximum cardinality.

  By construction, $V(G')$ can be partitioned into $t$ cliques of size
  $\omega (G)$ that form an optimal coloring of $\overline{G'}$
  because $\omega(G') = \omega(G)$.  Since by Theorem~\ref{th:lovasz}
  $\overline{G'}$ is perfect, $|S'|=t$.  So, in $G$, $S$ intersects
  every $K_i$, $i \in \{ 1, \ldots ,t\}$.
\end{proof}

\begin{theorem}[with Vu\v skovi\'c \cite{nicolas.kristina:2-join}]
  \label{th:color}
  There exists an algorithm of complexity ${\cal O}(n^{10})$ whose
  input is a Berge graph $G$ with no balanced skew partition and no
  homogeneous pair, and whose output is an optimal coloring of $G$.
\end{theorem}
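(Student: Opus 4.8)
There exists an algorithm of complexity $\mathcal{O}(n^{10})$ whose input is a Berge graph $G$ with no balanced skew partition and no homogeneous pair, and whose output is an optimal coloring of $G$.

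The plan is to follow the Gr\"ostchel, Lov\'asz and Schrijver scheme for coloring perfect graphs, adapted so that every operation reduces to a weighted clique or stable-set query that can be answered on $G$ itself. Since $G$ is Berge, it is perfect by the Strong Perfect Graph Theorem, so $\chi(G)=\omega(G)$, and the same holds for every induced subgraph of $G$. The coloring is then built by peeling off color classes one at a time: I would repeatedly extract a stable set $S$ that meets \emph{every} maximum clique of the current graph, assign it one color, and delete it. Because $S$ hits all maximum cliques, $\omega$ drops by exactly one at each peel, so after $\omega(G)\le n$ peels the graph is exhausted and the $\omega(G)$ color classes form an optimal coloring. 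The engines are the routine of Lemma~\ref{l:CS} (maximum weighted clique and stable set in $\mathcal O(n^9)$) and Lemma~\ref{l:compS} (a stable set meeting a \emph{prescribed} list of at most $n$ maximum cliques, also $\mathcal O(n^9)$), together with Theorem~\ref{th:lovasz}.

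To produce a stable set meeting every maximum clique of the current graph $H$ (not just a given list), I would run the standard GLS inner loop: maintain a family $\mathcal K$ of maximum cliques, initialized with a single maximum clique found by Lemma~\ref{l:CS}; use Lemma~\ref{l:compS} to compute a stable set $S$ meeting all of $\mathcal K$; then test with the clique oracle whether $H\setminus S$ still contains a clique of size $\omega(H)$. If it does, that clique is a maximum clique of $H$ disjoint from $S$, so I add it to $\mathcal K$ and iterate; if it does not, then $S$ meets every maximum clique and is the desired color class. This loop terminates after polynomially many steps by the usual GLS potential/linear-algebra argument, and in particular one keeps $|\mathcal K|\le n$, which is exactly the hypothesis under which Lemma~\ref{l:compS} applies.

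The crux, and the only place where the present setting departs from the classical one, is that $\Cl{Berge}{no cutset}$ is \emph{not} closed under taking induced subgraphs, so after removing stable sets the remaining graph $G_j=G\setminus D$ may leave the class, and one cannot invoke Lemma~\ref{l:CS} or Lemma~\ref{l:compS} on $G_j$ as a black box. I would simulate deletion by weights: to optimize over $G_j$, call the oracle of Lemma~\ref{l:CS} on all of $G$ with weight $0$ on $D$ and the prescribed weights elsewhere, then restrict the returned clique or stable set to $V(G)\setminus D$; since a clique or stable set of $G_j$ is one of $G$ and the deleted vertices carry weight $0$, this recovers $\omega(G_j)$, $\alpha(G_j)$ and witnesses for them. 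For the clique-meeting step I would re-use the \emph{proof} of Lemma~\ref{l:compS} rather than its statement: that proof only needs the weighted stable-set oracle and the perfection of $G_j$ and of its vertex-replication $G_j'$. Perfection is hereditary by the Strong Perfect Graph Theorem, hence available for every $G_j$ even though membership in the class is not, and the replication, which would also leave the class, is used only through weights exactly as in Lemma~\ref{l:compS}. Thus the entire GLS machinery survives with weighted queries on $G$ standing in for both deletion and the Replication Lemma.

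For the complexity, each use of Lemma~\ref{l:compS} (and of Lemma~\ref{l:CS}) costs $\mathcal O(n^9)$, and the coloring makes $\mathcal O(n)$ such calls overall; this dominates and gives the claimed $\mathcal O(n^{10})$, the initial reduction to blocks via Theorem~\ref{th.3} and the optional single complementation being absorbed. I expect the main obstacle to be conceptual rather than computational: carefully checking that every GLS step, whose textbook justification leans on heredity and on vertex replication within the class, can be re-expressed as a weighted clique/stable-set query on $G$ and is justified by inherited perfection. Once this bookkeeping is pinned down, correctness follows from perfection together with the termination of the inner loop, and the running-time bound from the count of oracle calls.
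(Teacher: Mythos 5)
Your proposal is correct and follows essentially the same route as the paper: an outer peeling loop that repeatedly extracts a stable set meeting all maximum cliques (via Lemma~\ref{l:compS}), with deletion and replication simulated by zero weights on the fixed graph $G$ so that the non-hereditary class and the failure of the Replication Lemma never matter, and termination of the inner loop controlled by the linear independence of the clique incidence vectors. The only difference is one of exposition: where you defer to ``the usual GLS potential/linear-algebra argument,'' the paper writes out that independence argument explicitly (via $M_t x = \mathbf{1}$ and $K_{t+1}\mathbf{1} = \omega$), which is exactly the step your appeal invokes.
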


\begin{proof}
  We only need to show how to find a stable set $S$ intersecting all
  maximum cliques of $G$, since we can apply recursion to $G \setminus
  S$ (by giving weight~0 to vertices of $S$).  Start with $t=0$. At
  each iteration, we have a list of $t$ maximum cliques $K_1, \ldots
  ,K_t$ and we compute by the algorithm in Lemma~\ref{l:compS} a
  stable set $S$ that intersects every $K_i$, $i \in \{ 1, \ldots ,t
  \}$.  If $\omega (G \setminus S) < \omega (G)$ then $S$ intersects
  every maximum clique, otherwise we can compute a maximum clique
  $K_{t+1}$ of $G \setminus S$ (by giving weight~0 to vertices of
  $S$).  This will finally find the desired stable set, the only
  problem being the number of iterations.  We show that this number is
  bounded by $n = |V(G)|$.

  Let $M_t$ be the incidence matrix of the cliques $K_1, \dots, K_t$.
  So the columns of $M_t$ correspond to the vertices of $G$ and each
  row is a clique (we see $K_i$ as row vector).  We prove by induction
  that the rows of $M_t$ are independent.  So, we assume that the rows
  of $M_t$ are independent and prove that this holds again for $M_{t+1}$.

  The incidence vector $x$ of $S$ is a solution to $M_tx = \mathbf{1}$
  but not to $M_{t+1}x = \mathbf{1}$.  If the rows of $M_{t+1}$
  are not independent, we have $K_{t+1} = \lambda_1 K_1 + \cdots +
  \lambda_t K_t$.  Multiplying by $x$, we obtain $K_{t+1}x = \lambda_1
  + \cdots + \lambda_t \neq 1$.  Multiplying by $\mathbf{1}$, we
  obtain $\omega = K_{t+1}\mathbf{1} = \lambda_1 \omega + \cdots +
  \lambda_t \omega$, so $\lambda_1 + \cdots + \lambda_t = 1$, a
  contradiction.

  So the matrices $M_1, M_2, \dots$ cannot have more than $|V(G)|$
  rows. Hence, there are at most $|V(G)|$ iterations.
\end{proof}

\section{A bit of science-fiction: coloring Berge graphs}
\label{sec:SF}

Recall that an \emph{even pair}\index{even!pair} in a graph is a pair
of vertices $u, v$ such that all induced paths linking $u$ to $v$ are
of even length.  Fonlupt and Uhry~\cite{fonlupt.uhry:82} proved that
contracting an even pair preserves the chromatic number of a graph.
Even pairs contractions are used in several efficient algorithms that
color classes of perfect graphs, see my thesis~\cite{nicolas:these} or
the more recent one by L\'ev\^eque~\cite{leveque:these}.  We recall
several theorems and conjectures suggesting that even pairs might
replace balanced skew partitions in several decomposition results.

Recall that a graph is a \emph{prism}\index{prism} if it consists of two
vertex-disjoint triangles (cliques of size $3$) with three
vertex-disjoint paths between them, and with no other edges than those
in the two triangles and in the three paths.  A graph is
\emph{Artemis}\index{Artemis graph} if it contains no odd hole, no prism and no antihole on
at least 5 vertices.

\begin{theorem}
  The following are equivalent:
  \begin{enumerate}
  \item\label{o:1i} $G$ is Artemis;
  \item\label{o:1ii} for every induced subgraph $G'$ of $G$ and every
    vertex $t$ of $G'$, either $N_{G'}(t)$ is a clique or $N_{G'}(t)$
    contains an even pair of $G'$;
  \item\label{o:1iii} for every induced subgraph $G'$ of $G$, either
    $G'$ has a balanced skew partition, or $G'$ is a clique or $G'$ is
    bipartite.
  \end{enumerate}
\end{theorem}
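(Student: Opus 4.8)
The plan is to establish the two equivalences (\ref{o:1i})$\,\Leftrightarrow\,$(\ref{o:1ii}) and (\ref{o:1i})$\,\Leftrightarrow\,$(\ref{o:1iii}) separately, treating the combinatorial definition of Artemis as the hub. In each equivalence one direction is a routine verification on the forbidden configurations, while the converse carries the real weight. For the two easy directions I would argue by contrapositive. If $G$ is not Artemis then some induced subgraph $H$ of $G$ is an odd hole, a prism, or an antihole on at least $5$ vertices. To refute (\ref{o:1ii}) it suffices to exhibit, inside $H$, a vertex $t$ whose neighborhood is neither a clique nor contains an even pair: for an odd hole $v_1\cdots v_{2k+1}$ the two neighbors $v_2,v_{2k+1}$ of $v_1$ are non-adjacent and joined by the induced path $v_2\cdots v_{2k+1}$ of odd length, so they form no even pair; the prism and the long antihole are handled by the same kind of short parity check on a well-chosen vertex. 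To refute (\ref{o:1iii}) it suffices to observe that each such $H$ is neither a clique nor bipartite and has no balanced skew partition at all (a hole has no skew cutset, and prisms and antiholes are easily seen to be skew-partition-free), so $H$ itself is an induced subgraph violating (\ref{o:1iii}).

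For (\ref{o:1i})$\,\Rightarrow\,$(\ref{o:1ii}) I would follow the pattern already used for Theorem~\ref{th:2pair}, replacing weak triangulation by the Artemis hypothesis. The first step is to prove a Roussel-and-Rubio-like lemma for Artemis graphs, in the spirit of Lemma~\ref{l:rrwt}: if $T$ is an anticonnected set and $P$ a sufficiently long path whose ends are $T$-complete, then some interior vertex of $P$ is $T$-complete, with the exceptional configurations controlled by the excluded prism and antiholes. From this one derives an analogue of Lemma~\ref{l:pathWT}. Then, given $G'$ and a vertex $t$ with $N_{G'}(t)$ not a clique, I would choose a maximal anticonnected set $T$ for which $C(T)$ is not a clique, and lift a $2$-pair, hence an even pair, of $C(T)$ back to $G'$ through the path lemma. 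This is precisely the even-pair-in-neighborhoods statement conjectured by Everett and Reed and proved with Maffray in~\cite{nicolas:artemis}; the machinery of that paper is what makes this step go through.

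For (\ref{o:1i})$\,\Rightarrow\,$(\ref{o:1iii}) I would use that Artemis graphs are Berge and invoke the decomposition Theorem~\ref{th.0}. Let $G'$ be an Artemis induced subgraph that is neither a clique nor bipartite; the goal is a balanced skew partition. The first task is to rule out the remaining basic outcomes for a non-bipartite, non-clique piece: a line-graph of a bipartite graph containing a prism is excluded, and a non-bipartite double split graph produces an antihole on at least five vertices, so the only surviving non-basic alternatives are the balanced skew partition, the $2$-join and the homogeneous pair. The delicate point is to show that in an Artemis graph a $2$-join or a homogeneous pair forces a balanced skew partition, or reduces, by induction over induced subgraphs, to a piece that has one; here Chudnovsky's result that homogeneous pairs are not needed to decompose Berge graphs, together with a parity analysis of the paths crossing the join, should dispose of these cases.

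The hard part will be (\ref{o:1i})$\,\Rightarrow\,$(\ref{o:1ii}): everything hinges on establishing the correct Roussel-and-Rubio variant for Artemis graphs and on verifying that its exceptional cases are governed precisely by prisms and long antiholes. By contrast the verifications refuting (\ref{o:1ii}) and (\ref{o:1iii}) are mechanical, and (\ref{o:1i})$\,\Rightarrow\,$(\ref{o:1iii}) is structural bookkeeping once Theorem~\ref{th.0} is available. I would therefore invest the main effort in the lemma underlying the second paragraph and treat the other directions as careful but routine case analysis.
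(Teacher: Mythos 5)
Your implications (\ref{o:1ii})$\Rightarrow$(\ref{o:1i}), (\ref{o:1iii})$\Rightarrow$(\ref{o:1i}) and (\ref{o:1i})$\Rightarrow$(\ref{o:1ii}) are essentially what the paper does: the two converses are the same contrapositive checks (odd holes, prisms and long antiholes have no skew partitions; odd holes and antiholes have no even pairs, and degree-3 vertices of prisms have no even pair in their neighbourhood), and for (\ref{o:1i})$\Rightarrow$(\ref{o:1ii}) the paper, like you, simply invokes the main theorem of \cite{nicolas:artemis} (the Everett--Reed conjecture); your sketch of how that theorem is proved is optional, and in fact the Artemis case is substantially harder than the weakly triangulated template you describe, but since you defer to the cited paper this is not a gap.

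The genuine gap is in (\ref{o:1i})$\Rightarrow$(\ref{o:1iii}), which cannot be derived from the final decomposition Theorem~\ref{th.0} (or~\ref{th.1}) in the way you propose, for two reasons. First, Artemis-ness does not eliminate the basic outcomes: you dispose only of line-graphs of bipartite graphs \emph{containing a prism} and of double split graphs, but ``basic'' also covers prism-free line-graphs of bipartite graphs (e.g.\ the net, the line-graph of $K_{1,3}$ with every edge subdivided), complements of bipartite graphs (e.g.\ the house $\overline{P_5}$), and complements of line-graphs of bipartite graphs, all of which can be Artemis, non-clique and non-bipartite; for a basic graph the decomposition theorem tells you nothing, so you would still have to prove directly that every such basic Artemis graph has a balanced skew partition, and your outline omits this entirely. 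Second, the 2-join case (in $G'$ or $\overline{G'}$) is not ``structural bookkeeping'': having a 2-join does not force a balanced skew partition --- Chapter~\ref{chap:Berge} of this document is devoted precisely to Berge graphs that have 2-joins and \emph{no} balanced skew partition --- and your proposed induction needs to lift a balanced skew partition from a block of decomposition back to $G'$, which is exactly the direction that fails in general: path (cutting) 2-joins create balanced skew partitions in the blocks that the original graph does not have (Figure~\ref{figP3loose}, Theorem~\ref{th.case}), and controlling this parity phenomenon is the long analysis of \cite{nicolas:bsp}, not a routine step. This is why the paper does not argue from Theorem~\ref{th.0} at all: it cites the intermediate statements 1.8.7--1.8.12 of \cite{chudnovsky.r.s.t:spgt}, which are tailored to Berge graphs with no prisms and no long antiholes and conclude ``clique, bipartite, or balanced skew partition'' directly, bypassing both the residual basic classes and the 2-joins. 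A self-contained version of your argument would have to either reprove those statements or supply the missing basic-class and 2-join arguments.
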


\begin{proof}
  Implication from \ref{o:1i} to \ref{o:1ii} is the main Theorem of
  \cite{nicolas:artemis}.  The converse is easy since antiholes and
  odd holes have no even pairs.  And vertices of degree~3 of prisms
  have no even pairs in their neighborhood.
  
  Implication from \ref{o:1i} to \ref{o:1iii} follows from statements
  1.8.7--1.8.12 of~\cite{chudnovsky.r.s.t:spgt}.  The converse is clear.
\end{proof}

\begin{figure}
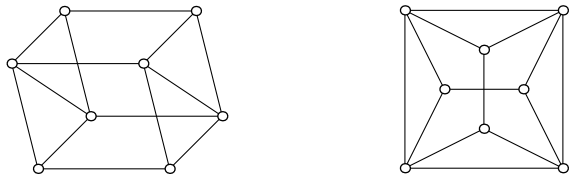

  \begin{center}
  \includegraphics{oddpair.1}
  \rule{2cm}{0cm}
  \includegraphics{oddpair.3}
  \caption{The double-diamond and $L(K_{3,3}\setminus e)$\label{fig:ddlk33e}}
  \end{center}
\end{figure}

A prism is \emph{long}\index{long!prism} if at least one of the three
paths in the definition is of length at least~2.  The double-diamond
and $L(K_{3,3}\setminus e)$ are the graphs represented in
Figure~\ref{fig:ddlk33e}.  A graph is
\emph{bipartisan}\index{bipartisan graph} if none of $G, \overline{G}$
contains an odd hole, a long prism, a double diamond or $L(K_{3, 3}
\sm e)$.

\begin{theorem}
  The following are equivalent:
  \begin{enumerate}
  \item\label{o:2i} $G$ is bipartisan;
  \item\label{o:2ii} for every induced subgraph $G'$ of $G$, either $G'$ has a
    balanced skew partition, or one of $G, \overline{G}$ is bipartite.
  \end{enumerate}
\end{theorem}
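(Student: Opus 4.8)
The plan is to prove the two implications separately, in the spirit of the preceding Artemis theorem: the implication \ref{o:2i}$\Rightarrow$\ref{o:2ii} is the substantial one and rests on the decomposition theorems for Berge graphs, while \ref{o:2ii}$\Rightarrow$\ref{o:2i} is a finite verification. First I would record three easy structural facts. The class of bipartisan graphs is hereditary and self-complementary, and every bipartisan graph is Berge, since forbidding odd holes in both $G$ and $\overline{G}$ forbids odd holes and odd antiholes. Consequently \ref{o:2ii} is equivalent to the single-graph statement: \emph{every bipartisan $G$ has a balanced skew partition, or one of $G,\overline{G}$ is bipartite}; this is what I would actually prove, freely passing to the complement whenever convenient.

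For \ref{o:2i}$\Rightarrow$\ref{o:2ii} I would assume $G$ is bipartisan with no balanced skew partition and argue that $G$ or $\overline{G}$ is bipartite, by induction on $|V(G)|$. Apply Theorem~\ref{th.1}: $G$ is basic, or one of $G,\overline{G}$ has a proper $2$-join. The basic case reduces to showing that the only bipartisan basic graphs are the bipartite and the cobipartite ones, which needs two lemmas. First, every double split graph contains a double-diamond or an $L(K_{3,3}\setminus e)$; hence, by Lemma~\ref{l.dsg} (a double split graph has only a single, non-balanced, skew partition), no double split graph is bipartisan. Second, a line-graph of a bipartite graph that avoids both the long prism and $L(K_{3,3}\setminus e)$ is bipartite or cobipartite: translating to the root graph $R$, the absence of a long prism says that $R$ has no theta with a subdivided branch, and the absence of $L(K_{3,3}\setminus e)$ bounds the local structure of $R$, which together force $L(R)$ into one of the two bipartite forms (the triangular prism $\overline{C_6}$ being the canonical cobipartite instance).

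The main obstacle is the $2$-join case. Here I would form the blocks of decomposition $G_1,G_2$ by replacing each side of the $2$-join by a marker path whose parity is chosen to keep holes balanced, and argue that the blocks are again bipartisan. This step is delicate for exactly the reasons discussed for path $2$-joins in Section~\ref{decth}: contracting or replacing a side can create skew partitions or forbidden configurations, so the parities must be controlled carefully. Granting it, induction gives that each block is bipartite, cobipartite, or carries a balanced skew partition, and the reassembly is the crux: I would show that gluing two bipartite (or two cobipartite) blocks across the $2$-join again yields a bipartite (resp.\ cobipartite) graph, whereas a genuinely ``mixed'' gluing, or a block possessing a balanced skew partition, lifts to a balanced skew partition of $G$ using the complete/anticomplete bipartite pattern $A_1$--$A_2$, $B_1$--$B_2$ of the $2$-join and the disconnection it induces. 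Ruling out the mixed cases without creating a long prism or a double-diamond in the process is where essentially all of the effort goes; this is the step I expect to be genuinely technical.

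Finally, \ref{o:2ii}$\Rightarrow$\ref{o:2i} is the easy converse, mirroring ``odd holes and antiholes have no even pairs'' in the Artemis proof. Since the class described by \ref{o:2ii} is hereditary, it suffices to check that each forbidden configuration itself violates \ref{o:2ii}, that is, has no balanced skew partition and is neither bipartite nor cobipartite. An odd hole and its complement both contain odd cycles; a long prism contains a triangle while its complement contains a triangle on three pairwise non-adjacent path vertices (so $\alpha\ge 3$); the double-diamond and $L(K_{3,3}\setminus e)$ likewise contain triangles in both themselves and their complements; and a direct inspection shows that none of these four graphs has a balanced skew partition. Hence if $G$ or $\overline{G}$ contained any of them, the corresponding induced subgraph would contradict \ref{o:2ii}, so $G$ must be bipartisan.
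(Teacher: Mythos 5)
Your direction \ref{o:2ii}$\Rightarrow$\ref{o:2i} is correct and is essentially what the paper dismisses as ``clear''. The gap is in \ref{o:2i}$\Rightarrow$\ref{o:2ii}. The paper does not reprove this implication: it invokes statements 1.8.7--1.8.12 of~\cite{chudnovsky.r.s.t:spgt}, that is, the wheel-analysis part of the proof of the Strong Perfect Graph Theorem, which yields directly that a Berge graph with no long prism, no double diamond and no $L(K_{3,3}\sm e)$ in itself or its complement is bipartite, cobipartite, or has a balanced skew partition. Your plan to rederive this from Theorem~\ref{th.1} by induction along 2-joins stalls exactly where you admit it does, and that point is not a deferrable technicality: the step ``a block possessing a balanced skew partition lifts to a balanced skew partition of $G$'' is precisely what is \emph{false} for path 2-joins. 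This is the phenomenon of cutting 2-joins described in Section~\ref{decth} (see Figure~\ref{figP3loose}): building blocks \emph{preserves} existing balanced skew partitions but can \emph{create} new ones, and it is the converse direction that your induction needs. Circumventing this failure is the content of the fifty-page proof of Theorem~\ref{th.th}; the only way to make your 2-join step sound is to invoke Theorem~\ref{th.case}, i.e.\ machinery far heavier than the citation it is meant to replace. Even granting that, the claims that blocks of a bipartisan graph are bipartisan, that gluing two bipartite (or two cobipartite) blocks across a 2-join stays bipartite (or cobipartite), and that mixed gluings cannot occur, are all left unproven, and the induction measure itself needs care since a block with a marker path need not be smaller than $G$ when one side of the 2-join is small. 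Writing ``granting it'' here concedes the entire difficulty of the implication.

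There is also a concrete error in your treatment of the basic classes. The claim that a line-graph of a bipartite graph containing no long prism and no $L(K_{3,3}\sm e)$ must be bipartite or cobipartite is false: the net (a triangle with a pendant vertex attached to each of its three vertices) is the line-graph of the tree obtained by subdividing each edge of a claw once; neither it nor its complement contains any of the four forbidden configurations (all of which need at least 7 vertices, except odd holes, which it plainly lacks), yet it is neither bipartite (it has a triangle) nor cobipartite (it has a stable set of size 3). It satisfies \ref{o:2ii} only because its star cutset $N[v]$, $v$ a triangle vertex, gives a balanced skew partition. So the line-graph case cannot be settled by translating forbidden configurations into the root graph alone; the no-balanced-skew-partition hypothesis must enter the argument, which your sketch does not allow for. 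On the positive side, your observation on double split graphs is correct and worth keeping: the double split graphs with $m=n=2$ are exactly the double diamond and $L(K_{3,3}\sm e)$ (according to the relative orientation of the two cross matchings), so every double split graph contains one of them and hence is not bipartisan; note that this containment alone gives the conclusion, and Lemma~\ref{l.dsg} plays no role in that particular deduction.
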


\begin{proof}
  Implication from \ref{o:2i} to \ref{o:2ii} follows from statements
  1.8.7--1.8.12 of~\cite{chudnovsky.r.s.t:spgt}.  The converse is
  clear.
\end{proof}

Maffray conjectures that for every bipartisan graph $G$ on at least
two vertices, one of $G, \overline{G}$ contains an even pair,
see~\cite{nicolas:cliques} where this conjecture is erroneously given
as a conjecture of Maffray and Thomas.  Chudnovsky and Seymour proved
a variant of this conjecture that allows a substantial simplification
of the proof of the Strong Perfect Graph Conjecture,
see~\cite{chudnovsky.seymour:even}.

Thomas conjectures that every Berge graph with no even pairs and no
even pairs in the complement can be explicitly constructed from basic
pieces by several simple operations including 2-joins.  It is tempting
to conjecture more, that Berge graphs with no even pairs have a structure, but
very recently Chudnovsky and Seymour found a counter-example, a Berge
graph with no even pair and no known decomposition.  Thomas'
conjecture was proved by Chudnovsky and Seymour for Berge graphs with
no $K_4$, see~\cite{chudnovsky.seymour:k4freeEven}.  And
Theorem~\ref{th.th} is a similar statement with ``balanced skew
partition'' instead of ``even pairs''.

All these theorems and conjectures suggest that Berge graphs with no
even pairs and Berge graphs with no balanced skew partition could be
close.  Since the even pair is a good tool to color a graph, coloring
a Berge graph could be performed by first contracting even pairs until
no more exist, and then use other decompositions, such as 2-joins,
homogeneous pairs and other things to be discovered \dots  It must be
pointed out that the counter-example of Chudnosky and Seymour
mentioned above is a serious problem.

\appendix

\chapter{Curriculum vit\ae} 
{\parindent0cm

\vspace{-1cm}
 {\large\bf\hspace{-0em}\'Etudes}

  \espSR
  {\bf  1995}  \parbox[t]{12cm}{Dipl\^ome de statisticien-\'economiste
    obtenu \`a 
    l'ENSAE    (\'Ecole Nationale
    de la Statistique et de l'Administration \'Economique).
  }

  \espSR
  {\bf 1997}  \parbox[t]{12cm}{Agr\'egation de Math\'ematiques.}

  \espSR {\bf 2001} \parbox[t]{12cm}{DEA Recherche Op\'erationnelle et
    Combinatoire, Universit\'e Grenoble~1 Joseph Fourier, Grenoble.

    Mention Tr\`es Bien.  M\'emoire~: \emph{L'argument de compacit\'e
      en combinatoire}, sous la direction de Sylvain Gravier.}
 
  \espSR
  {\bf  2004}  \parbox[t]{12cm}{Doctorat en Math\'ematiques et Informatique.

  \emph{Graphes parfaits : structure et algorithmes}, sous la direction
  de Fr\'ed\'eric Maffray et Michel Burlet.

  Universit\'e Grenoble~1 Joseph Fourier, laboratoire Leibniz, IMAG,  
  Grenoble, France.   Soutenue le $28$ Septembre 2004.}

  \espR

  {\large\bf\hspace{-0em}Postes occup\'es}
 
  \espSR
  {\bf 1998--1999} 

  Professeur de math\'ematiques, Lyc\'ee Charles de Gaulle,
  Rosny-sous-Bois.

  \espSR
  {\bf 1999--2005} 

  Professeur Agr\'eg\'e (PRAG) de math\'ematiques, Universit\'e
  Grenoble~2 Pierre Mend\`es-France, Grenoble.

  \espSR
  {\bf 2005--2008} 

  Ma\^itre de conf\'erence, Universit\'e Paris~1 Panth\'eon-Sorbonne,
  Paris. Chercheur au Centre d'\'Economie de la Sorbonne. 

  \espSR
  {\bf 2008--pr\'esent} 

  Charg\'e de recherche au CNRS, Universit\'e Paris~7  Paris-Diderot,
  LIAFA, Paris.

}
\chapter{Activit\'es d'encadrement} 

Ces derni\`eres ann\'ees, j'ai encadr\'e de nombreux TER en M1 et
quelques stages de M2.  J'ai aussi collabor\'e avec des doctorants et
des post-docs.  J'indique ci-dessous les stages d'\'etudiants et les
collaborations avec de jeunes chercheurs qui ont amen\'e des
r\'esultats int\'eressants scientifiquement.

\begin{itemize}
\item Master Recherche Op\'erationnelle et Combinatoire, Universit\'e
  Grenoble~1 Joseph Fourier, Grenoble.  \emph{G\'en\'eration des
    graphes autocompl\'ementaires} par Sylvain Bauchau, 2005.

  Sylvain Bauchau est parvenu \`a g\'en\'erer assez efficacement tous
  les graphes autocompl\'ementaires jusqu\`a 16 sommets, \`a
  isomorphisme pr\`es.  Pour cela, il a cr\'eé une interface entre un
  programme en langage C que j'avais \'ecrit auparavant et le
  programme Nauty de Brendan McKay.  Gr\^ace \`a cela, j'ai pu
  obtenir des contre-exemples pour des questions un peu na\"ives que
  je me posais \`a l'\'epoque sur les graphes autocompl\' ementaires. 

\item Master Recherche Op\'erationnelle et Combinatoire, Universit\'e
  Grenoble~1 Joseph Fourier, Grenoble.  \emph{D\'etection d'arbres
    induits} par Liu Wei, 2009.

  Liu Wei a travaill\'e sur les questions de type ``three-in-a-tree'',
  voir chapitre 3 de ce document.  Elle a trouv\'e seule le
  contre-exemple conduisant \`a la notion de $K_4$-structure
  (figure~\ref{f:k4s}), ce dont j'\' etais tr\`es content.  Les
  th\'eor\`emes que nous avons d\'emontr\'es ensemble ont \'et\'e
  tr\`es bien accueillis par les experts du jury de M2 et nous les
  avons soumis pour publication.

\item Master Recherche Math\'ematiques Avanc\'ees, \'Ecole normale
  sup\'erieure de Lyon.  \emph{Quelques cas simples d'une conjecture
    de Scott} par Amine Abdelkader, 2009.

  Amine Abdelkader a candidat\' e spontan\'ement pour un faire un
  stage de fin d'\'etude de l'\'ecole centrale de Lyon sur un sujet
  tr\`es th\'eorique.  Il suit parallèlement un M2 à l'\'Ecole Normale
  Sup\'erieure de Lyon.  Il est passionn\'e par les conjectures les
  plus difficiles de la th\'eorie des graphes, comme la conjecture
  d'Hadwiger.  Un tel sujet ne pouvant \^etre rai\-so\-na\-blement attaqu\'e
  lors d'un stage, je lui ai propos\'e d'\'etudier quelques cas simples
  de la conjecture de Scott sur les subdivisions induites.  Cela nous
  a conduit \`a beaucoup de pistes de recherche int\'eressantes,
  ainsi qu'\`a un r\'esultat pr\'esent\'e \`a la section 2.1 de ce
  document.

\item Th\`ese de Benjamin L\'ev\^eque, encadr\'ee par Fr\'ed\'eric Maffray,
  Universit\'e Grenoble 1, 2004--2007. 

  Benjamin L\'ev\^eque a commenc\'e sa th\`ese quand je terminais la
  mienne.  Nous avons discut\'e ensemble de quasiment tous les
  nombreux th\`emes de recherche qu'il a abord\'es avec succ\`es durant sa
  th\`ese.  Nous avons collabor\'e sur plusieurs r\'esultats pr\'esent\'es dans
  ce document.

\item Th\`ese de Nicolas Dehry, encadr\'ee par Christophe Picouleau au
  CNAM, Paris, 2007--2008.  

  J'ai rencontr\'e Christophe Picouleau en 2007 \`a une conf\'erence
  o\`u nous parlions tous les deux de probl\`emes li\'es \`a
  three-in-a-tree.  Nous avons alors entam\'e une collaboration avec
  son \'etudiant de th\`ese, Nicolas Dehry, qui nous a conduit \`a des
  r\'esultats pr\'esent\'es dans le chapitre 3 de ce document et
  publi\'es dans {\it Graphs and Combinatorics}.

\item PostDoc de Juraj Stacho, effectu\'e au LIAFA, 2008--2009. 

Juraj Stacho m'a demand\'e qu'on travaille ensemble durant son
PostDoc.  Je lui ai propos\'e de r\'efl\'echir \`a l'application de
m\'ethodes structurelles pour r\'esoudre des cas sp\'eciaux de la
conjecture de Gy\'arf\'as sur les arbres induits.  Nous sommes
parvenus \`a r\'esoudre le cas le plus simple signal\'e comme encore
ouvert par Gy\'arf\'as.  Nous avons ensuite r\'ealis\'e que ce cas
avait \'et\'e r\'esolu entre temps. Notre tentative est expliqu\'ee
\`a la section 6.2 de ce document.  Nous esp\'erons des
d\'eveloppement futurs.

\end{itemize}

\chapter{R\'esum\'e sur l'originalit\'e des recherches} 

Les chapitres 3, 4 et 5 de ce document pr\'esentent mes recherches les
plus originales.  Les principaux r\'esultats sont publi\'es dans des
journaux internationaux, gage d'originalit\'e.  En dehors de quelques
r\'esultats que j'ai d\^u inclure pour la coh\'erence du propos,
ces chapitres pr\'esentent des recherches conduites
apr\`es ma th\`ese.

\chapter{Expos\'e synth\'etique des recherches} 

Le chapitre~2 de ce document se veut p\'edagogique~: il pr\'esente des
th\'eor\`emes moins difficiles que par la suite, dans le but d'illustrer
les m\'ethodes et d'introduire des notions.  Il est donc synth\'etique
en ce sens qu'il permet d'aborder rapidement mon domaine de recherche.
Les trois chapitres sui\-vants r\'esument les r\'esultats principaux que
j'ai obtenus dans le th\`eme principal de mes recherches~: les graphes
parfaits, les bornes exactes pour le nombre chromatique et la
d\'etection de sous-graphes induits.

\chapter{Perspectives} 

Tout au long du document, des questions sont pos\'ees (elle sont
num\'erot\'ees comme les th\'eor\`emes).  Elles constituent un programme de
recherche pour les ann\'ees futures.

\chapter{Liste des travaux} 
 {\parindent0cm
      {\large\bf\hspace{-0em}Articles dans des journaux}

  \espSR

      {\bf Publiés}

      \espSR

      [1] S.~Gravier, F.~Maffray, J.~Renault, and N.~Trotignon.
      \newblock Ramsey-type results on singletons, co-singletons and monotone
      sequences in large collections of sets.
      \newblock {\em European Journal of Combinatorics}, 25(5):719--734, 2004.

     \espSSR
    
    Les résultats de cet article ne sont pas mentionnés dans ce document.

      \espSR

      [2] F.~Maffray and N.~Trotignon.
      \newblock Algorithms for perfectly contractile graphs.
      \newblock {\em SIAM Journal on Discrete Mathematics}, 19(3):553--574, 2005.

     \espSSR
    
      Certains résultats de cet article sont mentionnés chapitre~\ref{chap:reco}.

      \espSR

      [3] F.~Maffray and N.~Trotignon.  \newblock A class of perfectly
      contractile graphs.  \newblock {\em Journal of Combinatorial
        Theory Series B}, 96(1):1--19, 2006.

     \espSSR
    
      Certains résultats de cet article sont mentionnés section~\ref{sec:wt}

      \espSR

      [4] M.~Burlet, F.~Maffray, and N.~Trotignon.  \newblock Odd
      pairs of cliques.  \newblock In A.~Bondy, J. Fonlupt, J-L.
      Fouquet, J-C. Fournier, and J. L.~Ram{\'i}rez
      Alfons\'in, editors, {\em Graph Theory in Paris, Proceedings of
        a Conference in Memory of Claude Berge}, pages 85--95.
      Birkh{\"a}user, 2007.

     \espSSR
     
     Les résultats de cet article ne sont pas mentionnés dans ce document.

      \espSR

      [5] N.~Trotignon.  \newblock Decomposing {B}erge graphs and
      detecting balanced skew partitions.  \newblock {\em Journal of
        Combinatorial Theory Series B}, 98(1):173--225, 2008.

     \espSSR
    
      Les résultats de cet article sont mentionnés sections~\ref{algomotiv} et \ref{decth}.

  \newpage    \espSR

      [6] F.~Maffray, N.~Trotignon, and K.~Vu{\v s}kovi{\'c}.
      \newblock Algorithms for square-{$3PC(\cdot, \cdot)$}-free
                {B}erge graphs.  \newblock {\em SIAM Journal on
                  Discrete Mathematics}, 22(1):51--71, 2008.

     \espSSR
     
     Les résultats de cet article ne sont pas mentionnés dans ce document.

      \espSR
  
      [7] B.~L{\'e}v{\^e}que, F.~Maffray, B.~Reed, and N.~Trotignon.
      \newblock Coloring {A}rtemis graphs.  \newblock {\em Theoretical
        Computer Science}, 410:2234--2240, 2009.

     \espSSR
     
     Les résultats de cet article ne sont pas mentionnés dans ce document.

 \espR

 {\bf Acceptés}

      \espSR $\bullet$ N. Trotignon and K.~Vu\v skovi\'c. A structure
      theorem for graphs with no cycle with a unique chord and its
      consequences. \emph{Journal of Graph Theory}. Accepted.

     \espSSR
    
      Les résultats de cet article sont mentionnés section~\ref{sec:oneChord}.

      \espSR $\bullet$ B.~L\'ev\^eque, D.~Lin, F.~Maffray, and
      N. Trotignon, Detecting induced subgraphs. \emph{Discrete
        Applied Mathematics}. Accepted.
     \espSSR
    
      Certains résultats de cet article sont mentionnés chapitre~\ref{chap:reco}.

      \espSR 
      $\bullet$ N.~Dehry, C.~Picouleau and N. Trotignon,
      The four-in-a-tree problem for triangle-free graphs.
      \emph{Graphs and Combinatorics}.  Accepted.

     \espSSR
    
      Les résultats de cet article sont mentionnés section~\ref{sec:gen3}.

      \espR
 
{\bf Soumis}

     \espSR 
      $\bullet$ N. Trotignon and Liu Wei,
      {\it The $k$-in-a-tree problem for graphs of girth at least~$k$.}
      Submitted.

     \espSSR
    
      Les résultats de cet article sont mentionnés section~\ref{sec:gen3}. 

     \espSR
     $\bullet$ N.~Trotignon and K.~Vu{\v s}kovi{\'c},
    \newblock {\it Combinatorial optimization with 2-joins}, 2009.

     \espSSR
    
      Les résultats de cet article sont mentionnés sections~\ref{sec:structBergenoSP} à \ref{sec:Lovasz}.

     \espR

  {\bf Manuscrits}

     \espSR

     $\bullet$ N.~Trotignon, 
      \newblock {\it On the structure of self-complementary graphs}, 2004.
      
     \espSSR
     
     Les résultats de cet article ne sont pas mentionnés dans ce document.

      \espSR

      $\bullet$ B.~L\'ev\^eque, F.~Maffray, and N. Trotignon,
     \newblock {\it On graphs that do not contain a subdivision of the
     complete graph on four vertices as an induced subgraph}, 2007,
     revised in 2009.

     \espSSR
    
      Les résultats de cet article sont mentionnés section~\ref{sec:ISK4}.

      \espSR 
      $\bullet$ J.~Stacho and N. Trotignon,
      {\it Graphs with no triangle and no $F$ are 8-colourable}, 2009.

     \espSSR
    
      Les résultats de cet article sont mentionnés section~\ref{sec:juraj}.

     \espR

  {\bf En cours}

     \espSR

     $\bullet$ A.~Gy{\'a}rf{\'a}s, A. Seb{\H o} and N. Trotignon
     \newblock {\it Problems and results on the gap of graphs}, 2009.

     \espSSR
    
      Les résultats de cet article sont mentionnés chapitre~\ref{chap:gap}.

     \espR

      {\large\bf\hspace{-0em}Mémoires}

      \espSR
      $\bullet$ N.~Trotignon.
      \newblock {\em Pascal, Fermat et la g\'eom\'etrie du hasard}.
      \newblock IUFM de Cr\'eteil, 1999.
      \newblock Sous la direction d'\'Evelyne Barbin.

      \espSR
      $\bullet$ N.~Trotignon.
      \newblock {\it L'argument de compacit{\'e} en combinatoire.}
      \newblock Master's thesis, Universit{\'e} Joseph Fourier --- Grenoble I, 2001.
      \newblock Sous la direction de  Sylvain Gravier.

      \espSR
      $\bullet$ N.~Trotignon.
      \newblock {\em Graphes parfaits : structure et algorithmes}.
      \newblock PhD thesis, Universit{\'e} Joseph Fourier --- Grenoble I, 2004.
      \newblock Sous la direction de  Frédéric Maffray et Michel Burlet.
      \espSR
}

\newpage
\addcontentsline{toc}{chapter}{Bibliography}

\begin{thebibliography}{100}

\bibitem{abdelkader:memoire}
A.~Abdelkader.
\newblock Autour d'une conjecture de {S}cott.
\newblock Memoire de M2.

\bibitem{alekseev:83}
V.E. Alekseev.
\newblock On the local restrictions effect on the complexity of finding the
  graph independence number.
\newblock {\em Combinatorial-algebraic methods in applied mathematics},
  132:3--13, 1983.
\newblock Gorky University Press, Gorky, in Russian.

\bibitem{beineke:linegraphs}
L.W. Beineke.
\newblock Characterizations of derived graphs.
\newblock {\em Journal of Combinatorial Theory}, 9:129--135, 1970.

\bibitem{berge:61}
C.~Berge.
\newblock F{\"a}rbung von {G}raphen, deren s{\"a}mtliche bzw.~deren ungerade
  {K}reise starr sind.
\newblock Technical report, Wissenschaftliche Zeitschrift der
  Martin-Luther-Universit{\"a}t Halle-Wittenberg,
  Mathematisch-Naturwissenschaftliche Reihe 10, 1961.

\bibitem{berge.chvatal:topics}
C.~Berge and V.~Chv{\'a}tal, editors.
\newblock {\em Topics on Perfect graphs}, volume~21 of {\em Annals of Discrete
  Mathematics}.
\newblock North Holland, Amsterdam, 1984.

\bibitem{bienstock:evenpair}
D.~Bienstock.
\newblock On the complexity of testing for odd holes and induced odd paths.
\newblock {\em Discrete Mathematics}, 90:85--92, 1991.
\newblock See also Corrigendum by B. Reed, {\it Discrete Mathematics}, 102,
  (1992), p. 109.

\bibitem{bollobas:mgt}
B.~Bollob\'as.
\newblock {\em Modern Graph Theory}, volume 184 of {\em Graduate Texts in
  Mathematics}.
\newblock Springer, 1998.

\bibitem{bondy.murty:book}
J.A. Bondy and U.S.R. Murty.
\newblock {\em {G}raph {T}heory}, volume 244 of {\em Graduate Texts in
  Mathematics}.
\newblock Springer, 2008.

\bibitem{burlet.fonlupt:meyniel}
M.~Burlet and J.~Fonlupt.
\newblock Polynomial algorithm to recognize a {M}eyniel graph.
\newblock In W.R. Pulleyblank, editor, {\em Progress in Combinatorial
  Optimization, Waterloo, Ontario (1982)}, pages 69--99. Academic Press, 1984.
\newblock Annals of Discrete Mathematics, 21.

\bibitem{nicolas:cliques}
M.~Burlet, F.~Maffray, and N.~Trotignon.
\newblock Odd pairs of cliques.
\newblock In A.~Bondy, J.~Fonlupt, J.-L. Fouquet, J.-C. Fournier, and
  J.L.~Ram{\'i}rez Alfons{\'i}n, editors, {\em Graph Theory in Paris,
  Proceedings of a Conference in Memory of Claude Berge}, pages 85--95.
  Birkh{\"a}user, 2007.

\bibitem{chudnovsky:these}
M.~Chudnovsky.
\newblock {\em Berge trigraphs and their applications}.
\newblock PhD thesis, Princeton University, 2003.

\bibitem{chudnovsky:trigraphs}
M.~Chudnovsky.
\newblock Berge trigraphs.
\newblock {\em Journal of Graph Theory}, 53(1):1--55, 2006.

\bibitem{chudnovsky.c.l.s.v:reco}
M.~Chudnovsky, G.~Cornu{\'e}jols, X.~Liu, P.~Seymour, and K.~Vu{\v s}kovi{\'c}.
\newblock Recognizing {B}erge graphs.
\newblock {\em Combinatorica}, 25:143--186, 2005.

\bibitem{chudvovsky.r.s.t:progress}
M.~Chudnovsky, N.~Robertson, P.~Seymour, and R.~Thomas.
\newblock Progress on perfect graphs.
\newblock {\em Mathematical Programming Series B}, 97:405--422, 2003.

\bibitem{chudnovsky.r.s.t:spgt}
M.~Chudnovsky, N.~Robertson, P.~Seymour, and R.~Thomas.
\newblock The strong perfect graph theorem.
\newblock {\em Annals of Mathematics}, 164(1):51--229, 2006.

\bibitem{chudnovsky.seymour:excluding}
M.~Chudnovsky and P.~Seymour.
\newblock Excluding induced subgraphs.
\newblock In {\em Surveys in Combinatorics}, volume 346, pages 99--119. London
  Mathematical Society Lecture Notes Series, 2007.

\bibitem{chudnovsky.seymour:k4freeEven}
M.~Chudnovsky and P.~Seymour.
\newblock Three-colourable perfect graphs without even pairs.
\newblock Manuscript, 2008.

\bibitem{chudnovsky.seymour:even}
M.~Chudnovsky and P.~Seymour.
\newblock Even pairs in {B}erge graphs.
\newblock {\em Journal of Combinatorial Theory, Series B}, 99:370--377, 2009.

\bibitem{DBLP:conf/bcc/ChudnovskyS05}
M.~Chudnovsky and P.D. Seymour.
\newblock The structure of claw-free graphs.
\newblock In Webb \cite{DBLP:conf/bcc/2005}, pages 153--171.

\bibitem{chudnovsky.seymour:theta}
M.~Chudnovsky and P.D. Seymour.
\newblock The three-in-a-tree problem.
\newblock {\em Combinatorica}, 30(4):387--417, 2010.

\bibitem{chvatal:74}
V.~Chv\'atal.
\newblock The minimality of the {M}ycielski graph.
\newblock In {\em Graphs and combinatorics (Proceedings of Capital Conference,
  George Washington University, Washington, D.C., 1973)}, volume 406, pages
  243--246. Springer-Verlag, Lecture Notes in Mathematics, 1974.

\bibitem{chvatal:starcutset}
V.~Chv{\'a}tal.
\newblock Star-cutsets and perfect graphs.
\newblock {\em Journal of Combinatorial Theory, Series B}, 39:189--199, 1985.

\bibitem{chvatal.sbihi:bullfree}
V.~Chv{\'a}tal and N.~Sbihi.
\newblock Bull-free {B}erge graphs are perfect.
\newblock {\em Graphs and Combinatorics}, 3:127--139, 1987.

\bibitem{conforti.c:wp}
M.~Conforti and G.~Cornu{\'e}jols.
\newblock Graphs without odd holes, parachutes or proper wheels: a
  generalization of {M}eyniel graphs and of line graphs of bipartite graphs.
\newblock {\em Journal of Combinatorial Theory, Series B}, 87:331--347, 2003.

\bibitem{conforti.c.k.v:capfree}
M.~Conforti, G.~Cornu{\'e}jols, A.~Kapoor, and K.~Vu{\v s}kovi{\'c}.
\newblock Even and odd holes in cap-free graphs.
\newblock {\em Journal of Graph Theory}, 30:289--308, 1999.

\bibitem{conforti.c.k.v:eh2}
M.~Conforti, G.~Cornu{\'e}jols, A.~Kapoor, and K.~Vu{\v s}kovi{\'c}.
\newblock Even-hole-free graphs {P}art {II}: Recognition algorithm.
\newblock {\em Journal of Graph Theory}, 40:238--266, 2002.

\bibitem{conforti.c.v:square}
M.~Conforti, G.~Cornu{\'e}jols, and K.~Vu{\v s}kovi{\'c}.
\newblock Square-free perfect graphs.
\newblock {\em Journal of Combinatorial Theory, Series B}, 90:257--307, 2004.

\bibitem{conforti.c.v:dstrarcut}
M.~Conforti, G.~Cornu{\'e}jols, and K.~Vu\v{s}kovi\'{c}.
\newblock Decomposition of odd-hole-free graphs by double star cutsets and
  2-joins.
\newblock {\em Discrete Applied Mathematics}, 141:41--91, 2004.

\bibitem{ConfortiR:92}
M.~Conforti and M.R. Rao.
\newblock Structural properties and decomposition of linear balanced matrices.
\newblock {\em Mathematical Programming}, 55:129--169, 1992.

\bibitem{ConfortiR:93}
M.~Conforti and M.R. Rao.
\newblock Testing balancedness and perfection of linear matrices.
\newblock {\em Mathematical Programming}, 61:1--18, 1993.

\bibitem{cook:np}
S.A. Cook.
\newblock The complexity of theorem-proving procedures.
\newblock In {\em Proc. 3rd Ann. ACM Symp. on Theory of Computing}, pages
  151--158, New York, 1971.

\bibitem{cornuejols.cunningham:2join}
G.~Cornu{\'e}jols and W.H. Cunningham.
\newblock Composition for perfect graphs.
\newblock {\em Discrete Mathematics}, 55:245--254, 1985.

\bibitem{cunningham:1join}
W.H. Cunningham.
\newblock Decomposition of directed graphs.
\newblock {\em SIAM Journal on Algebraic and Discrete Methods}, 3:214--228,
  1982.

\bibitem{dsv:ehf}
M.V.G. da~Silva and K.~Vu{\v s}kovi{\'c}.
\newblock Decomposition of even-hole-free graphs with star cutsets and 2-joins.
\newblock preprint, 2008.

\bibitem{dahlhaus:split}
E.~Dahlhaus.
\newblock Parallel algorithms for hierarchical clustering and applications to
  split decomposition and parity graph recognition.
\newblock {\em Journal of Algorithms}, 36(2):205--240, 2000.

\bibitem{figuereido.k.k.r:sp}
C.M.H. de~Figueiredo, S.~Klein, Y.~Kohayakawa, and B.~Reed.
\newblock Finding skew partitions efficiently.
\newblock {\em Journal of Algorithms}, 37:505--521, 2000.

\bibitem{nicolas.d.p:fourTree}
N.~Dehry, C.~Picouleau, and N.~Trotignon.
\newblock The four-in-a-tree problem in triangle-free graphs.
\newblock {\em Graphs and Combinatorics}, 25:489--502, 2009.

\bibitem{dirac:SP}
G.A. Dirac.
\newblock A property of 4-chromatic graphs and some remarks on critical graphs.
\newblock {\em The Journal of the London Mathematical Society}, 27:85--92,
  1952.

\bibitem{dirac:chordal}
G.A. Dirac.
\newblock On rigid circuit graphs.
\newblock {\em Abhandlungen aus dem Mathematischen Seminar der Universit{\"a}t
  Hamburg}, 25:71--76, 1961.

\bibitem{duffin:SP}
R.J. Duffin.
\newblock Topology of series-parallel networks.
\newblock {\em Journal of Mathematical Analysis and Applications}, 10:303--318,
  1965.

\bibitem{feder.h.k.m:partition}
T.~Feder, P.~Hell, S.~Klein, and R.~Motwani.
\newblock Complexity of graph partition problems.
\newblock In {\em Proceedings of the 31st annual {ACM} {S}ymposium on Theory of
  Computing - {STOC}'99}, pages 464--472, New York, 1999. Plenum Press.

\bibitem{fonlupt.uhry:82}
J.~Fonlupt and J.P. Uhry.
\newblock Transformations which preserve perfectness and $h$-perfectness of
  graphs.
\newblock In A.~Bachem, M.~Gr{\"o}tschel, and B.~Korte, editors, {\em Bonn
  Workshop on Combinatorial Optimization}, pages 83--85. North-Holland, 1982.
\newblock Annals of Discrete Mathematics, 16.

\bibitem{gallai:factorCritical}
T.~Gallai.
\newblock Neuer {B}eweis eines {T}utte'schen {S}atzes.
\newblock {\em A Magyar Tudom{\'a}nyos Akad{\'e}mia --- Matematikai Kutat{\'o}
  Int{\'e}zet{\'e}nek K{\"o}zlem{\'e}nyei}, 8:135--139, 1963.

\bibitem{gibbons:agt}
A.~Gibbons.
\newblock {\em Algorithmic Graph Theory}.
\newblock Cambridge University Press, 1985.

\bibitem{graverYackel:ramsey}
J.E. Graver and J.~Yackel.
\newblock Some graph theoretic results associated with {R}amsey's theorem.
\newblock {\em Journal of Combinatorial Theory}, 4:125--175, 1968.

\bibitem{gg:ramsey}
R.E. Greenwood and A.M. Gleason.
\newblock Combinatorial relations and chromatic graphs.
\newblock {\em Canadian Journal of Mathematics}, 7:1--7, 1955.

\bibitem{grinsteadRoberts:ramsey}
C.~Grinstead and S.~Roberts.
\newblock On the {R}amsey numbers {$R(3,8)$ and $R(3,9)$}.
\newblock {\em Journal of Combinatorial Theory, Series B}, 33:27--51, 1982.

\bibitem{grostchel.l.s:color}
M.~Gr{\"o}stchel, L.~Lov{\'a}sz, and A.~Schrijver.
\newblock {\em Geometric Algorithms and Combinatorial Optimization}.
\newblock Springer Verlag, 1988.

\bibitem{gyarfas:perfect}
A.~Gy{\'a}rf{\'a}s.
\newblock Problems from the world surrounding perfect graphs.
\newblock {\em Zastowania Matematyki Applicationes Mathematicae}, 19:413--441,
  1987.

\bibitem{nicolas.gyarfas.sebo:gap}
A.~Gy{\'a}rf{\'a}s, A.~Seb{\H o}, and N.~Trotignon.
\newblock The chromatic gap and its extremes.
\newblock {\em Journal of Combinatorial Theory, Series B}, 102:1155--1178,
  2012.

\bibitem{gyarfasSzTuza:tree}
A.~Gy{\'a}rf{\'a}s, E.~Szemer{\'e}di, and Zs. Tuza.
\newblock Induced subtrees in graphs of large chromatic number.
\newblock {\em Discrete Mathematics}, 30:235--244, 1980.

\bibitem{harary.holzmann:lgbip}
F.~Harary and C.~Holzmann.
\newblock Line graphs of bipartite graphs.
\newblock {\em Revista de la Sociedad Matematica de Chile}, 1:19--22, 1974.

\bibitem{hayward.hoang.m:90}
R.~Hayward, C.T. Ho\`ang, and F.~Maffray.
\newblock Optimizing weakly triangulated graphs.
\newblock {\em Graphs and Combinatorics}, 5:339--349, 1989.
\newblock See also Erratum in Volume~6 (1990), 33--35.

\bibitem{hayward:wt}
R.B. Hayward.
\newblock Weakly triangulated graphs.
\newblock {\em Journal of Combinatorial Theory, Series B}, 39:200--209, 1985.

\bibitem{hayward.S.S:fastWT}
R.B. Hayward, J.P. Spinrad, and R.~Sritharan.
\newblock Improved algorithms for weakly chordal graphs.
\newblock {\em ACM Transactions on Algorithms}, 3(2), 2007.

\bibitem{heawood:90}
P.J. Heawood.
\newblock Map-colour theorem.
\newblock {\em The Quarterly Journal of Pure and Applied Mathematics},
  24:332--338, 1890.

\bibitem{hertz.lozin:survey}
A.~Hertz and V.~V. Lozin.
\newblock The maximum independent set problem and augmenting graphs.
\newblock In D.~Avis, A.~Hertz, and O.~Marcotte, editors, {\em Graph Theory and
  Combinatorial Optimization}, pages 69--99. Springer, 2005.

\bibitem{hopcroft.tarjan:447}
J.E. Hopcroft and R.E. Tarjan.
\newblock Algorithm 447: efficient algorithms for graph manipulation.
\newblock {\em Communications of the ACM}, 16:372--378, 1973.

\bibitem{hopcroft.tarjan:3con}
J.E. Hopcroft and R.E. Tarjan.
\newblock Dividing a graph into triconnected components.
\newblock {\em SIAM Journal on Computing}, 2(3):135--158, 1973.

\bibitem{kalbfleisch:edge}
J.G. Kalbfleisch.
\newblock Construction of special edge-chromatic graphs.
\newblock {\em Canadian Mathematical Bulletin}, 8:575--584, 1965.

\bibitem{Kalbfleisch:ramsey}
J.G. Kalbfleisch.
\newblock {\em Chromatic graphs and {R}amsey's theorem}.
\newblock PhD thesis, University of Waterloo, 1966.

\bibitem{kennedyreed:skew}
W.S. Kennedy and B.A. Reed.
\newblock Fast skew partition recognition.
\newblock Optimization Days 2006 Joint Conference, 2006.

\bibitem{kery:ramsey}
G.~K{\'e}ry.
\newblock On a theorem of {R}amsey.
\newblock {\em Matematikai Lapok}, 15:204--224, 1964.
\newblock In Hungarian.

\bibitem{kiersteadPenrice:90}
H.A. Kierstead and S.G. Penrice.
\newblock Recent results on a conjecture of {G}y{\'a}rf{\'a}s.
\newblock {\em Congressus Numerantium}, 79:182--186, 1990.

\bibitem{kim:95}
J.H. Kim.
\newblock The {R}amsey number {$R(3, t)$} has order of magnitude $t^2 / \log
  t$.
\newblock {\em Random Structures and Algorithms}, 7(3):173--208, 1995.

\bibitem{kuhnOsthus:04}
D.~K{\"u}hn and D.~Osthus.
\newblock Induced subdivisions in {$K_{s,s}$}-free graphs of large average
  degree.
\newblock {\em Combinatorica}, 24(2):287--304, 2004.

\bibitem{leveque:these}
B.~L{\'e}v{\^e}que.
\newblock {\em Colorations de graphes : structures et algorithmes}.
\newblock PhD thesis, Universit{\'e} Joseph Fourier --- Grenoble I, 2007.
\newblock Supervisor F.~Maffray.

\bibitem{leveque.lmt:detect}
B.~L{\'e}v{\^e}que, D.~Lin, F.~Maffray, and N.~Trotignon.
\newblock Detecting induced subgraphs.
\newblock {\em Discrete Applied Mathematics}, 157:3540--3551, 2009.

\bibitem{nicolas:isk4}
B.~L{\'e}v{\^e}que, F.~Maffray, and N.~Trotignon.
\newblock On graphs with no induced subdivision of {$K_4$}.
\newblock {\em Journal of Combinatorial Theory, Series B}, 102(4):924--947,
  2012.

\bibitem{linhares.maffray:evenpairsansc4}
C.~Linhares~Sales and F.~Maffray.
\newblock Even pairs in square-free {B}erge graphs.
\newblock {\em Matem{\'a}tica Contempor{\^a}nea}, 25:161--176, 2003.

\bibitem{nicolas.wei:kTree}
W.~Liu and N.~Trotignon.
\newblock The $k$-in-a-tree problem for graphs of girth at least~$k$.
\newblock {\em Discrete Applied Mathematics}, 158:1644--1649, 2010.

\bibitem{lovasz:nh}
L.~Lov{\'a}sz.
\newblock Normal hypergraphs and the perfect graph conjecture.
\newblock {\em Discrete Mathematics}, 2:253--267, 1972.

\bibitem{lovasz:pgsurvey}
L.~Lov{\'a}sz.
\newblock Perfect graphs.
\newblock In L.W. Beineke and R.L. Wilson, editors, {\em Selected Topics in
  Graph Theory}, pages 55--87. Academic Press, 1983.

\bibitem{lovasz:exercices2}
L.~Lov{\'a}sz.
\newblock {\em Combinatorial Problems and Exercices}.
\newblock AMS Chelsea publishing, second edition, 2007.

\bibitem{lovasz:Ex26P58}
L.~Lov{\'a}sz.
\newblock Exercise 7.26.
\newblock In {\em Combinatorial Problems and Exercices\/}
  \cite{lovasz:exercices2}.

\bibitem{lovasz:Ex27P69}
L.~Lov{\'a}sz.
\newblock Exercise 9.27.
\newblock In {\em Combinatorial Problems and Exercices\/}
  \cite{lovasz:exercices2}.

\bibitem{maffray.reed:claw}
F.~Maffray and B.A. Reed.
\newblock A description of claw-free perfect graphs.
\newblock {\em Journal of Combinatorial Theory, Series B}, 75:134--156, 1999.

\bibitem{maffray.t:reco}
F.~Maffray and N.~Trotignon.
\newblock Algorithms for perfectly contractile graphs.
\newblock {\em SIAM Journal on Discrete Mathematics}, 19(3):553--574, 2005.

\bibitem{nicolas:artemis}
F.~Maffray and N.~Trotignon.
\newblock A class of perfectly contractile graphs.
\newblock {\em Journal of Combinatorial Theory, Series B}, 96(1):1--19, 2006.

\bibitem{maffray.t.v:3pcsquare}
F.~Maffray, N.~Trotignon, and K.~Vu{\v s}kovi{\'c}.
\newblock Algorithms for square-{$3PC(\cdot, \cdot)$}-free {B}erge graphs.
\newblock {\em SIAM Journal on Discrete Mathematics}, 22(1):51--71, 2008.

\bibitem{matousek:borsuk}
J.~Matou{\u s}ek.
\newblock {\em Using the Borsuk-Ulam Theorem}.
\newblock Springer, 2008.
\newblock Second printing.

\bibitem{mcKayZhang:ramsey}
B.D. McKay and Zhang~Ke Min.
\newblock The value of the {R}amsey number {$R(3,8)$}.
\newblock {\em Journal of Graph Theory}, 16:99--105, 1992.

\bibitem{mcKayRadzi:R55}
B.D. McKay and S.P. Radziszowski.
\newblock A new upper bound for the {R}amsey number {$R(5, 5)$}.
\newblock {\em Australasian Journal of Combinatorics}, 5:13--20, 1992.

\bibitem{mcKayRadzi:ramsey}
B.D. McKay and S.P. Radziszowski.
\newblock {$R(4, 5) = 25$}.
\newblock {\em Journal of Graph Theory}, 19:309--322, 1995.

\bibitem{meyniel:76}
H.~Meyniel.
\newblock On the perfect graph conjecture.
\newblock {\em Discrete Mathematics}, 16:339--342, 1976.

\bibitem{meyniel:87}
H.~Meyniel.
\newblock A new property of critical imperfect graphs and some consequences.
\newblock {\em European Journal of Combinatorics}, 8:313--316, 1987.

\bibitem{mycielski:color}
J.~Mycielski.
\newblock Sur le coloriage des graphes.
\newblock {\em Colloquium Mathematicum}, 3:161--162, 1955.

\bibitem{naves:pc}
G.~Naves.
\newblock Personnal communication.
\newblock 2009.

\bibitem{petersen:98}
J.~Petersen.
\newblock Sur le th{\'e}or{\`e}me de {T}ait.
\newblock {\em L'Interm{\'e}diaire des Math{\'e}maticiens}, 5:225--227, 1898.

\bibitem{poljak:74}
S.~Poljak.
\newblock A note on the stable sets and coloring of graphs.
\newblock {\em Commentationes Mathematicae Universitatis Carolinae},
  15:307--309, 1974.

\bibitem{radzi:ramsey}
S.P. Radziszowski.
\newblock Small ramsey numbers.
\newblock {\em The Electronic Journal of Combinatorics}, 2006.
\newblock Dynamic Survey, revision \#11.

\bibitem{radziKreher:ramsey}
S.P. Radziszowski and D.L. Kreher.
\newblock On {$R(3, k)$ Ramsey} graphs: theoretical and computational results.
\newblock {\em Journal of Combinatorial Mathematics and Combinatorial
  Computing}, 4:37--52, 1988.

\bibitem{livre:perfectgraphs}
J.L. Ram{\'{i}}rez~Alfons{\'{i}}n and B.A. Reed, editors.
\newblock {\em Perfect graphs}.
\newblock Series in Discrete Mathematics and Optimization. Wiley-Interscience,
  2001.

\bibitem{ramsey:fl}
F.P. Ramsey.
\newblock On a problem of formal logic.
\newblock {\em Proceedings of the London Mathematical Society}, 30:264--286,
  1930.

\bibitem{reed:skewhist}
B.A. Reed.
\newblock Skew partitions in perfect graphs.
\newblock {\em Discrete Applied Mathematics}, 156(7):1150--1156, 2008.

\bibitem{roussel.rubio:01}
F.~Roussel and P.~Rubio.
\newblock About skew partitions in minimal imperfect graphs.
\newblock {\em Journal of Combinatorial Theory, Series B}, 83:171--190, 2001.

\bibitem{scott:tree}
A.D. Scott.
\newblock Induced trees in graphs of large chromatic number.
\newblock {\em Journal of Graph Theory}, 24:297--311, 1997.

\bibitem{scott:pc}
A.D. Scott.
\newblock Personnal communication.
\newblock 2009.

\bibitem{seinsche:P4}
D.~Seinsche.
\newblock On a property of the class of $n$-colorable graphs.
\newblock {\em Journal of Combinatorial Theory, Series B}, 16:191--196, 1974.

\bibitem{seymour:how}
P.~Seymour.
\newblock How the proof of the strong perfect graph conjecture was found.
\newblock {\em Gazette des Math\'ematiciens}, 109:69--83, 2006.

\bibitem{nicolas.juraj:F}
J.~Stacho and N.~Trotignon.
\newblock Graphs with no triangle and no {$F$} are 8-colourable.
\newblock Manuscript, 2009.

\bibitem{tarjan:dfs}
R.E. Tarjan.
\newblock Depth first search and linear graph algorithms.
\newblock {\em SIAM Journal on Computing}, 1(2):146--160, 1972.

\bibitem{nicolas:these}
N.~Trotignon.
\newblock {\em Graphes parfaits : structure et algorithmes}.
\newblock PhD thesis, Universit{\'e} Joseph Fourier --- Grenoble I, 2004.
\newblock Supervisor F. Maffray.

\bibitem{nicolas:bsp}
N.~Trotignon.
\newblock Decomposing {B}erge graphs and detecting balanced skew partitions.
\newblock {\em Journal of Combinatorial Theory, Series B}, 98:173--225, 2008.

\bibitem{nicolas.kristina:one}
N.~Trotignon and K.~Vu{\v s}kovi{\'c}.
\newblock A structure theorem for graphs with no cycle with a unique chord and
  its consequences.
\newblock {\em Journal of Graph Theory}, 63(1):31--67, 2010.

\bibitem{nicolas.kristina:2-join}
N.~Trotignon and K.~Vu{\v s}kovi{\'c}.
\newblock Combinatorial optimization with 2-joins.
\newblock {\em Journal of Combinatorial Theory, Series B}, 102(1):153--185,
  2012.

\bibitem{DBLP:conf/bcc/2005}
B.S. Webb, editor.
\newblock {\em Surveys in Combinatorics, 2005 [invited lectures from the
  Twentieth British Combinatorial Conference, Durham, UK, July 2005]}, volume
  327 of {\em London Mathematical Society Lecture Note Series}. Cambridge
  University Press, 2005.

\bibitem{whitney:graphs}
H.~Whitney.
\newblock Congruent graphs and the connectivity of graphs.
\newblock {\em American Journal of Mathematics}, 54:150--168, 1932.

\bibitem{zambelli:these}
G.~Zambelli.
\newblock {\em On Perfect Graphs and Balanced Matrices}.
\newblock PhD thesis, Carnegie Mellon University, 2004.

\end{thebibliography}

\addcontentsline{toc}{chapter}{Index}
\printindex

\newpage
\mbox{}
\thispagestyle{empty}

\newpage
\mbox{}

\thispagestyle{empty}

\vspace{13ex}

{\noindent\bf\hfill\Large Résumé --- Abstract\hfill}

\vspace{8ex}

  Ce document présente les recherches de l'auteur durant ces dix
  dernieres années, sur les classes de graphes définies en excluant
  des sous-graphes induits.
  
\vspace{3ex}

  This document presents the work of the author over the last ten
  years on classes of graphs defined by forbidding induced subgraphs.

\end{document}